\newtheorem{theorem}{Theorem}[chapter]
\newtheorem{lemma}[theorem]{Lemma}
\newtheorem{proposition}[theorem]{Proposition}
\newtheorem{corollary}[theorem]{Corollary}
\newtheorem{conjecture}[theorem]{Conjecture}
\newtheorem{question}[theorem]{Question}
\theoremstyle{definition}
\newtheorem{definition}[theorem]{Definition}
\newtheorem{remark}[theorem]{Remark}
\newtheorem{ipotesi}[theorem]{Assumption}
\newtheorem{example}[theorem]{Example}
\numberwithin{equation}{chapter}
\newcommand\supp{{\rm spt}}
\newcommand\res{\mathop{\hbox{\vrule height 7pt width .3pt depth 0pt\vrule height .3pt width 5pt depth 0pt}}\nolimits}
\newcommand{\cH}{{\mathcal{H}}}
\def\a#1{\left\llbracket{#1}\right\rrbracket}
\newcommand{\breg}{{\rm Reg_b}}
\newcommand\bsing{{\rm Sing_b}}
\newcommand\spi{{\rm Spine}}
\newcommand\bdim{{\rm Bdim}}
\newcommand{\cone}{{\times\hspace{-0.6em}\times\,}}
\newcommand\sS{{\mathscr S}}
\newcommand\sC{{\mathscr C}}
\newcommand\sW{{\mathscr W}}
\newcommand\N{{\mathbb N}}
\newcommand\R{{\mathbb R}}
\newcommand{\eps}{{\varepsilon}}
\newcommand{\allin}{\alpha_\be,\alpha_\bh,M_0,N_0,C_\be,C_\bh}
\newcommand{\Lip}{{\rm {Lip}}}
\newcommand{\diam}{{\rm {diam}}}
\newcommand{\dist}{{\rm {dist}}}
\newcommand{\bB}{{\mathbf B}}
\newcommand\weaks{{\stackrel{*}{\rightharpoonup}}\,}
\newcommand\weak{{\rightharpoonup}\,}
\newcommand{\D}{\textup{Dir}}
\newcommand{\xii}{{\bm{\xi}}}
\newcommand{\etaa}{{\bm{\eta}}}
\def\Is#1{{\mathcal{A}}_{#1} (\R^{n})}
\newcommand{\Iqs}{{\mathcal{A}}_Q(\R^{n})}
\newcommand{\Iqqs}{{\mathcal{A}}_{Q-1}(\R^{n})}
\newcommand{\Iq}{{\mathcal{A}}_Q}
\newcommand{\qhalf}{\left(Q-{\textstyle \frac12}\right)}
\newcommand{\abs}[1]{\lvert#1\rvert}
\newcommand{\norm}[1]{\left\lVert#1\right\rVert}
\newcommand{\G}{\mathcal{G}}
\newcommand{\bA}{{\mathbf{A}}}
\newcommand{\ba}{{\mathbf{a}}}
\newcommand{\bp}{{\mathbf{p}}}
\newcommand{\bE}{{\mathbf{E}}}
\newcommand{\me}{{\mathbf{me}}}
\newcommand{\be}{{\mathbf{e}}}
\newcommand{\bef}{{\mathbf{f}}}
\newcommand{\beg}{{\mathbf{g}}}
\newcommand{\bC}{{\mathbf{C}}}
\newcommand{\bh}{{\mathbf{h}}}
\newcommand{\bG}{{\mathbf{G}}}
\newcommand{\bL}{{\mathbf{L}}}
\newcommand{\bM}{{\mathbf{M}}}
\newcommand{\bmm}{{\mathbf{m}}}
\DeclareMathAlphabet{\mathpzc}{OT1}{pzc}{m}{it}
\newcommand\fancyN{{\mathpzc{N}}}
\newcommand\fancyD{{\mathpzc{D}}}
\newcommand\fancyE{{\mathpzc{E}}}
\newcommand{\gammaup}{\Gamma}
\newcommand{\gammado}{\gamma}
\newcommand{\sigmaexp}{\sigma}
\newcommand{\sigmaexpcm}{{\alpha_\bL}}
\newcommand{\InV}{{\mathrm InV}}
\newcommand{\gr}{{\mathrm{Gr}}}
\def\Xint#1{\mathchoice
{\XXint\displaystyle\textstyle{#1}}%
{\XXint\textstyle\scriptstyle{#1}}%
{\XXint\scriptstyle\scriptscriptstyle{#1}}%
{\XXint\scriptscriptstyle\scriptscriptstyle{#1}}%
\!\int}
\def\XXint#1#2#3{{\setbox0=\hbox{$#1{#2#3}{\int}$ }
\vcenter{\hbox{$#2#3$ }}\kern-.6\wd0}}
\def\mint{\Xint-}
\begin{document}

\frontmatter

\title[Boundary behavior of mass minimizers]{On the boundary behavior of mass-minimizing integral currents}

\author{Camillo De Lellis$^1$}
\address{$^1$ School of Mathematics,Institute for Advanced Study, 1 Einstein Dr., Princeton NJ 05840, USA}
\email{camillo.delellis@math.ias.edu}

\author{Guido De Philippis$^2$}
\address{$^2$ Courant Institute,,
New York University,
251 Mercer Street, New York, NY10012, USA}
\email{guido@cims.nyu.edu}

\author{Jonas Hirsch$^3$}
\address{$^3$ Mathematisches Institut, Universit\"at Leipzig, Augustus Platz 10, D04109 Leipzig, Germany}
\email{hirsch@math.uni-leipzig.de}

\author{Annalisa Massaccesi$^4$}
\address{$^4$ DTG, Universit\`a di Padova, Stradella San Nicola, 3, I36100 Vicenza, Italy}
\email{annalisa.massaccesi@unipd.it}

\date{Nov. 19th 2018}

\subjclass[2020]{49Q20,35B65}

\keywords{Boundary regularity, area-minimizing currents, minimal surfaces, calculus of variations,geometric measure theory}

\begin{abstract}
Let $\Sigma$ be a smooth Riemannian manifold, $\gammaup \subset \Sigma$ a smooth closed oriented submanifold of codimension higher than $2$ and $T$ an integral area-minimizing current in $\Sigma$ which bounds $\gammaup$. We prove that the set of regular points of $T$ at the boundary is dense in $\gammaup$. Prior to our theorem the existence of any regular point was not known, except for some special choice of $\Sigma$ and $\Gamma$. As a corollary of our theorem
\begin{itemize}
\item we answer to a question of Almgren (cf. \cite{Alm}) showing that, if $\gammaup$ is connected, then $T$ has at least one point $p$ of multiplicity $\frac{1}{2}$, namely there is a neighborhood of the point $p$ where $T$ is a classical submanifold with boundary $\gammaup$;
\item we generalize Almgren's connectivity theorem showing that the support of $T$ is always connected if $\gammaup$ is connected;
\item we conclude a structural result on $T$ when $\gammaup$ consists of more than one connected component, generalizing a previous theorem  proved by Hardt and Simon in \cite{HS} when $\Sigma = \mathbb R^{m+1}$ and $T$ is $m$-dimensional. 
\end{itemize}
\end{abstract}

\maketitle

\tableofcontents

\mainmatter

\chapter{Introduction}

Consider a smooth complete Riemannian manifold $\Sigma$\index{aags\Sigma@$\Sigma$} of dimension $m+\bar n$ and a smooth closed oriented submanifold $\gammaup\subset \Sigma$\index{aagc\gammaup@$\gammaup$} of dimension $m-1$ which is a boundary in integral homology. Since the  work of Federer and Fleming (cf. \cite{FF}) we know that $\gammaup$ bounds an integer rectifiable current $T$\index{aaltT@$T$} in $\Sigma$ which is mass minimizing. 


\medskip

Starting with the pioneering work of De Giorgi (see \cite{DG}) and thanks to the efforts of several mathematicians in 
the sixties and the seventies (see \cite{Fleming, DeGiorgi5,Alm2,Simons}), it is known that, if $\Sigma$ is of class $C^{2,a}$ for some $a>0$, in codimension $1$ (i.e., when $\bar n =1$) and away from the boundary $\gammaup$, $T$ is a smooth submanifold except for a relatively closed set of Hausdorff dimension at most $m-7$. Such set, which from now on we will call {\em interior singular set}, is indeed $(m-7)$-rectifiable (cf. \cite{Simon2}) and it has been recently proved that it must have locally finite Hausdorff $(m-7)$-dimensional measure (see \cite{NV}).\index{Interior singular set}

In higher codimension, namely when $\bar n \ge 2$, Almgren proved in a monumental work (known as Almgren's Big regularity paper \cite{Alm}) that, if $\Sigma$ is of class $C^5$, then the interior singular set has Hausdorff dimension at most $m-2$. Subsequently Chang proved in \cite{Chang} that such set is indeed discrete when $m=2$. In fact Chang's paper is missing one substantial step of the proof, which was completed only recently by the first author in a series of joint works with Emanuele Spadaro and Luca Spolaor, cf. \cite{DSS1,DSS2,DSS3,DSS4}. The latter papers are based on a  revisitation of Almgren's theory, due to the first author and Emanuele Spadaro (cf. \cite{DS1,DS2,DS3,DS4,DS5}), which  simplifies Almgren's proof introducing several new ideas. The latter works are indeed one of the starting points of this paper.

Both in codimension one and in higher codimension the interior regularity theory described above is, in terms of dimensional bounds for the singular set, optimal: 
\begin{itemize}
\item The celebrated paper by Bombieri, De Giorgi and Giusti \cite{BDG} (see \cite{Guido} for a very short proof) shows that Simons' cone $\{x_1^2+x_2^2 +x_3^2 + x_4^2 = x_5^2 + x_6^2 + x_7^2 + x_8^2\}$ is an area-minimizing current of dimension $7$ in $\mathbb R^8$ with an isolated singularity.
\item Federer's calibration theorem shows that any holomorphic subvariety of a K\"ahler manifold induces an area-minimizing current: in particular the holomorphic curve $\{(z,w)\in \mathbb C^2: z^2 = w^3\}$ is a $2$-dimensional area-minimizing current in $\mathbb R^4$ with an isolated singularity. 
\end{itemize} 

\medskip

The main purpose of this paper is to study the regularity of the minimizers at the boundary. In the rest of the note we will always assume that such boundary is the integer rectifiable current naturally induced by some oriented submanifold $\Gamma$ and we will use the notation $\a{\gammaup}$ for it. As it is customary in the literature, we take advantage of Nash's isometric embedding theorem and we consider $\Sigma$ as a submanifold of some Euclidean space $\R^{m+n}$. In particular we can regard any integer rectifiable current $T$ in $\Sigma$ as an integer rectifiable current in the Euclidean space whose support $\supp (T)$ is contained in $\Sigma$: hence $T$ minimizes the mass among all currents $S$ which are supported in $\Sigma$ and such that $\partial S = \a{\gammaup}$.

\begin{definition}\label{def:reg_points}
A point $x\in\gammaup$ is a \emph{boundary regular point}\index{Regular point} for $T$ if there exist a neighborhood $U\ni x$ and a regular $m$-dimensional submanifold $\Xi\subset U\cap\Sigma$  as in Definition \ref{def:reg_points} (without boundary in $U$)\index{aagO@$\Xi$} such that $\supp (T)\cap U\subset \Xi$. The set of such points will be denoted by $\breg(T)$\index{aalr\breg@$\breg(T)$} and its complement in $\gammaup$ will be denoted by $\bsing(T)$\index{aals\bsing@$\bsing(T)$}.

Analogously, the set of interior regular points and interior singular points will be denoted by ${\rm Reg_i} (T)$\index{aalr{\rm Reg_i}@${\rm Reg_i}(T)$} and ${\rm Sing_i} (T)$\index{aals\bsingi@${\rm Sing_i}(T)$}. 

 We further subdvide $\bsing (T)$ into two categories. We will say that $x\in \bsing (T)$ is of {\em crossing type} if there is a neighborhood $U$ of $x$ and two currents $T_1$ and $T_2$ in $U$ with the properties that:
\begin{itemize}
\item $T_1+T_2 = T$ and $\partial T_1=0$;
\item $x\in \breg (T_2)$.
\end{itemize}
If $x\in \bsing (T)$ is not of crossing type, we will then say that $x$ is a {\em genuine boundary singularity} of $T$.
\end{definition}

\begin{remark}\label{rmk:constancy}
Notice that $\bsing(T)$ is closed in $\gammaup$. Moreover, the Constancy Lemma has the following simple consequence. Let $p\in \gammaup$ be a regular point and $\Xi$. Assume the neighborhood $U$ is sufficiently small, so that $U\cap \Xi$ is diffeomorphic to an $m$-dimensional disk. Then the following holds:
\begin{itemize}
\item $\gammaup\cap U$ is necessarily contained in $\Xi$ and divides it in two disjoint regular submanifolds $\Xi^+$ and $\Xi^-$\index{aagO\pm@$\Xi^+$ (resp. $\Xi^-$)} of $U$ with boundaries $\pm\gammaup$;
\item there is a positive 
$Q\in\N$\index{aalqQ@$Q$}  such that 	
\[
T\res U=Q\a{\Xi^+}+(Q-1)\a{\Xi^-}.
\] 
\end{itemize}
We define the \emph{density}\index{Density of a point} of such points $p$ in $\gammaup\cap U$ as $Q-\frac 12$ and we denote it by $\Theta(T,p)=Q-\frac 12$\index{aagh\Theta(T,p)@$\Theta(T,p)$}. Later (in Definition \ref{def:density}) we will define, as customary, the density at every boundary point $p$ as the limit, as $r\downarrow 0$, of the ratio between the mass of the current in a ball of radius $r$ (denoted by $\|T\| (\bB_r (p))$) and the $m$-dimensional volume of an $m$-dimensional disk of radius $r$ (denoted by $\omega_m r^m$)\index{aagz\omega_m@$\omega_m$}. The two definitions clearly agree on regular points. \index{aalb\bB_r (p)@$\bB_r (p)$}
\end{remark}

Of particular interest are those regular points where $Q = 1$: at such points there is a neighborhood $U$ where the current $T$ is a classical submanifold with multiplicity $1$ and with boundary $\gammaup \cap U$. Such points will be called in the rest of the note \emph{density $\frac{1}{2}$ points}\index{Density $\frac{1}{2}$ points} or \emph{one-sided points}\index{One-sided points}. In contrast, the regular points where $Q>1$ will be called \emph{two-sided}\index{Two-sided points}. Note that, when $p$ is a one-sided point only $\Xi^+\cap U$ is determined (and coincides, in fact, with the support of the current in $U$): $\Xi^-\cap U$ can be chosen to be any ``smooth continuation'' of $\Xi^+\cap U$ across the boundary $\Gamma\cap U$. On the other hand when $p$ is two-sided then the whole submanifold $\Xi\cap U$ is determined by the current $T$
and coincides with its support in $U$. 

\medskip

The first boundary regularity result is due to Allard who, in his Ph.D. thesis (cf. \cite{AllPhD}), proved that, if $\Sigma = \mathbb R^{m+\bar n}$ and $\gammaup$ is lying on the boundary of a uniformly convex set, then every point $p\in \gammaup$ is regular and has multiplicity $\frac{1}{2}$. In his later paper \cite{AllB} Allard developed a more general boundary regularity theory from which he concluded the above result as a simpler corollary. In particular Allard's theory establishes, among other things, the following two facts: 
\begin{itemize}
\item[(a)] if $p\in \gammaup$ is a point where the density $\Theta (T, p)$, defined as 
$\lim_{r\downarrow 0} \frac{\|T\| (\bB_r (p))}{\omega_m r^m}$, equals $\frac{1}{2}$, then $p$ belongs to $\breg (T)$;
\item[(b)] if there is some wedge $W$ of opening angle smaller than $\pi$ whose tip contains $p$ and such that $\supp (T)\subset W$ then $\Theta (T,p) =\frac{1}{2}$  and  thus \(p\in \breg (T)\).
\footnote{A wedge $W\subset \mathbb R^{m+\bar n}$ with opening angle $\vartheta$ is a set which can be mapped via a suitable rigid motion to   $\{(x, y)\in \mathbb R^m \times \mathbb R^{\bar n} :  |y| \leq x_1 \tan \frac{\vartheta}{2} \}$;
the tip of $W$ is the set $\{(x,y): |y|=x_1=0\}$.} 
\end{itemize} 
In contrast to (b), a boundary point $p\in \gammaup$ with density $Q+ \frac{1}{2}$ for some $Q\in \mathbb N \setminus \{0\}$ is not necessarily a regular point.

\medskip

Suitable generalizations of (a) and (b) can be proved in more general ambient manifolds $\Sigma$ and they imply full boundary regularity under geometrically interesting assumptions: a simple example is given when $\gammaup$ lies on the boundary of a geodesic ball of sufficiently small radius. 
However, even when $\Sigma = \mathbb R^{m+\bar n}$, Allard's theory implies the existence of relatively few boundary regular points for general submanifolds $\gammaup$; in particular (b) above can be guaranteed for an appropriate subset of those points where \(\Gamma\) coincides with its convex envelope, for the proof see \cite{Hardt}.

In the codimension one case Hardt and Simon proved later in \cite{HS} that the set of boundary singular points is empty, hence solving  the boundary regularity problem when $\bar n =1$ (although the paper \cite{HS} deals only with the case $\Sigma = \mathbb R^{m+\bar n}$, its extension to a general Riemannian ambient manifold should not cause real issues). A major problem that Hardt and Simon have to face compared to Allard is that under their assumption two-sided boundary points may occur, as it is witnessed by the following example. 

\begin{example}\label{ex:due_cerchi} Let $\gammaup$ be the union of two concentric circles $\gammaup_1$ and $\gammaup_2$ contained in a given $2$-dimensional plane $\pi_0\subset \mathbb R^{2+\bar n}$ and having the same orientation. Then the area-minimizing current $T$ in $\mathbb R^{2+\bar n}$ which bounds $\gammaup$ is unique and it is the sum of the two disks bounded by $\gammaup_1$ and $\gammaup_2$ in $\pi_0$. In particular $T$ has density $\frac{3}{2}$ at every point $p$ which belongs to the inner circle, see Figure\ref{fig:sided}. 
\end{example}

\begin{figure}[htbp]
\begin{center}\label{fig:sided}
\input{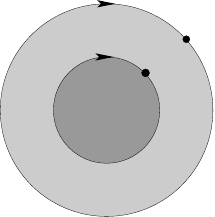_t}
\end{center}
\caption{\(p\) is a {\em two-sided} point while \(q\) is a {\em one-sided} point.}
\end{figure}

Nonetheless, an outcome of the Hardt-Simon boundary regularity theorem is that, if $\gammaup$ contains a two-sided point $p$, then the connected component $\gammaup'$ which contains $p$ arises from a situation like the one described in Example \ref{ex:due_cerchi}. Therefore the presence of regular two-sided points is very rare: for instance, when $\Sigma = \mathbb R^{m+1}$, we can immediately exclude it if we know that no connected component of $\gammaup$ can be included in the interior of a real analytic hypersurface. 

\medskip 

According to the results described so far, in higher codimension and for a general ambient manifold $\Sigma$ we cannot even exclude that the set of boundary regular points is empty.
In particular, in the last remark of the last section of his Big regularity paper, cf. \cite[Section 5.23, p. 835]{Alm}, Almgren states the following open problem:

\begin{question}[Almgren]\label{q:alm}
``I do not know if it is possible that the set of density $\frac{1}{2}$ points is empty when $\gammaup$ is connected.''
\end{question}

We will see in the next chapter that such question is equivalent to ask the existence of at least one regular boundary point. 

The interest of Almgren in Question \ref{q:alm} is motivated by an important geometric conclusion: in \cite[Section 5.23]{Alm} he shows that, if there is at least one density $\frac{1}{2}$ point and $\gammaup$ is connected, then 
$\supp (T)$ is as well connected and the current $T$ has (therefore) multiplicity $1$ almost everywhere, in other words
the mass of $T$ coincides with the Hausdorff $m$-dimensional measure of its interior regular set.

\medskip

In this note we fill the aforementioned  gap in the literature, proving the first general boundary regularity theorem without any restrictions on the codimension, on the ambient manifold $\Sigma$ or on the geometry of $\gammaup$. Since it will be used repeatedly throughout the paper, we isolate the assumptions of our main theorem for further reference. 

\begin{ipotesi}\label{ass:main}
Let $a_0\in ]0,1]$.  Consider a $C^{3,a_0}$\index{aala a_0@$a_0$} complete Riemannian submanifold $\Sigma\subset\R^{m+n}$\index{aags\Sigma@$\Sigma$} with dimension $m+\overline n$ and $\gammaup\subset\Sigma$\index{aagc\gammaup@$\gammaup$} a $C^{3,a_0}$ oriented submanifold without boundary. Let $T$ be an integral $m$-dimensional area-minimizing current in $\bB_2 \cap \Sigma$ with boundary $\partial T \res \bB_2 =\a{\gammaup\cap \bB_2}$, namely such that
\begin{itemize}
\item[(AM)] $\mathbf{M} (T') \geq \mathbf{M} (T)$ for every integer rectifiable current $T'$ with $\partial (T-T') \res \bB_2 =0$ and $\supp (T-T')\subset \Sigma \cap \bB_2$. 
\end{itemize}
\end{ipotesi}

\begin{theorem}\label{thm:main}\label{THM:MAIN}
Let $T,\Sigma,\gammaup$ be as in \ref{ass:main}. Then $\breg(T)$ is dense in $\gammaup\cap \bB_2$.
\end{theorem}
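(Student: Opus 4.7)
\emph{Overall strategy.} I would argue by contradiction: assume that there is an open subarc $V\subset\gammaup\cap\bB_2$ entirely contained in $\bsing(T)$ and produce a contradiction by exhibiting at least one regular point inside $V$. The crucial ingredients are (i) a boundary monotonicity formula yielding an upper semi-continuous density function on $\gammaup$ taking values in $\{\tfrac12,\tfrac32,\ldots\}$; (ii) a classification of tangent cones at boundary points of \emph{minimal} density; and (iii) an $\epsilon$-regularity theorem near the ``flat'' boundary model $Q\a{\pi^+}+(Q-1)\a{\pi^-}$. Allard's classical boundary theorem handles the case $Q=1$, so the genuinely new content is the adaptation of Almgren's scheme to the two-sheeted flat model with $Q\ge 2$.

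\emph{Selection of a good point and tangent cones.} On the compact subarc $\overline V$ the density $p\mapsto\Theta(T,p)$ attains its minimum, say $Q_0-\tfrac12$, at some point $p_0\in V$. Any tangent cone $C$ of $T$ at $p_0$ is area-minimizing with boundary $\a{T_{p_0}\gammaup\cone \{0\}}$ and, by translation invariance along $T_{p_0}\gammaup$, reduces to a planar one-dimensional boundary-minimizing cone in the orthogonal two-plane. A case analysis of such planar cones, combined with the fact that no boundary point of $C$ can have density below $Q_0-\tfrac12$, forces $C$ to be
\begin{equation*}
C \;=\; Q_0\,\a{\pi^+} \;+\; (Q_0-1)\,\a{\pi^-},
\end{equation*}
where $\pi^\pm$ are the two half-$m$-planes bounded by $T_{p_0}\gammaup$ in a single $m$-plane $\pi\supset T_{p_0}\gammaup$.

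\emph{Boundary $\epsilon$-regularity theorem.} The heart of the proof is then: if the density is $Q-\tfrac12$ at some $p\in\gammaup$ and $T$ is sufficiently close, in spherical excess and in one-sided flatness, to the model $Q\a{\pi^+}+(Q-1)\a{\pi^-}$, then $p\in\breg(T)$. I would mimic the interior regularity scheme of Almgren and of De Lellis--Spadaro with three boundary adaptations. First, a two-sheeted Lipschitz approximation of $T$: a $Q$-valued graph $u^+\colon\pi^+\to\Iq$ on the ``big'' side and a $(Q-1)$-valued graph $u^-\colon\pi^-\to\mathcal{A}_{Q-1}(\R^{\bar n})$ on the ``small'' side, whose traces on $T_{p_0}\gammaup$ match and reproduce $\gammaup$ up to higher-order errors. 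Second, a boundary center manifold $\mathcal M$ of class $C^{3,\kappa}$ interpolating the two averaged sheets across $\gammaup$ and carrying the boundary with the $C^{3,a_0}$ regularity inherited from the data. Third, a boundary normal approximation $N$ and a frequency-function monotonicity up to $\gammaup$, which together give decay of the Dirichlet energy, uniqueness of the tangent cone at $p$, and $C^{1,\alpha}$ regularity. Combining this with the tangent cone analysis of the previous paragraph yields $p_0\in\breg(T)$, contradicting $V\subset\bsing(T)$.

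\emph{Main obstacle.} The principal difficulty is to run the center manifold / frequency function scheme in the asymmetric boundary regime with multiplicities $Q$ and $Q-1$ on the two sides. Unlike the interior situation, the two sheets cannot be collapsed into a single symmetric $Q$-valued object, so the Lipschitz approximation, the center manifold and the normal approximation must simultaneously track both multiplicities and their gluing along $\gammaup$, while the frequency monotonicity must accommodate this mismatch. Equally delicate are a non-concentration estimate for the Dirichlet energy at the boundary, without which the frequency argument degenerates, and a persistence statement preventing the flat pair $Q_0\a{\pi^+}+(Q_0-1)\a{\pi^-}$ from collapsing into a more singular boundary cone under blow-up at $p_0$. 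These are the novel ingredients separating the boundary case from the well-understood interior theory.
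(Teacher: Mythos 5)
Your overall architecture coincides with the paper's: reduce to a distinguished boundary point admitting a flat tangent cone and locally minimizing the density along $\gammaup$ (a ``collapsed point'' in the sense of Definition \ref{def:cc_point}), and then prove that such points are regular via a boundary version of Almgren's scheme (collapsed $\qhalf$-valued Lipschitz approximation, center manifold, normal approximation, boundary frequency function, blow-up to the linear theory of Chapter \ref{chap:Qhalf}). The second half of your outline is consistent with what the paper actually does (the paper constructs two center manifolds $\mathcal{M}^\pm$, one on each side, glued only $C^{1,1}$ along $\gammaup$, rather than a single one, but this is implementation).

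The genuine gap is in your selection of the point $p_0$. The density $p\mapsto\Theta(T,p)$ is only \emph{upper} semicontinuous on $\gammaup$, so it need not attain its infimum on the compact set $\overline V$; nor is it known a priori that $\Theta$ takes only half-integer values at every boundary point (this is established only at points with flat tangent cones and almost everywhere in a suitable dimensional sense, cf. Corollaries \ref{c:uniqueness} and \ref{c:3dim}), so you cannot rescue attainment by discreteness of the values. Moreover, even if the infimum over $\overline V$ were attained, the minimizer could sit on the relative boundary of $V$ in $\gammaup$, and then the property you actually need, namely $\Theta(T,q)\ge\Theta(T,p_0)$ for all $q\in\gammaup$ in a \emph{full} neighborhood of $p_0$, is not guaranteed. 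This local minimality is not cosmetic: it is what forces every tangent cone at $p_0$ to be invariant along $T_{p_0}\gammaup$ (your ``by translation invariance'' step is not automatic; it follows from the density lower bound at nearby boundary points together with upper semicontinuity of the density under blow-up and the spine characterization, cf. Lemma \ref{lm:above} and Lemma \ref{lem:flat_cone}), and it is also hypothesis (v) of Assumption \ref{Ass:app}, which enters the height bound, the Lipschitz approximation and the excess decay throughout the $\eps$-regularity part. The paper circumvents the failure of attainment by a Baire category argument combined with the boundary stratification theorem: inside the (assumed nonempty) interior $G$ of $\bsing(T)$ one sets $C_i=\{\Theta\ge i-\frac12\}\cap G$, $D_i$ its interior and $E_i=D_i\setminus C_{i+1}$; Baire's theorem shows some $E_i$ is a nonempty relatively open subset of $\gammaup$, and Theorem \ref{thm:stratification} with Lemma \ref{lem:flat_cone} yields a point of $E_i$ with a flat tangent cone, whose density is then exactly $i-\frac12$ and is automatically a local minimum because $\Theta\ge i-\frac12$ on the open set $E_i$. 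You need this (or an equivalent device) for the first step of your proof to go through. A minor further imprecision: after using translation invariance, the residual one-dimensional boundary cone lives in the full orthogonal space (of dimension $1+\bar n$ inside $\Sigma$), not in a two-plane, and concluding flatness requires the pairing argument of Lemma \ref{lem:flat_cone} rather than a planar case analysis.
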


Of course by rescaling and translating, the ball of radius $2$ centered at $0$ can be replaced by any ball $\bB_r (p)$.

 It can be easily shown that boundary singular points can occur when $\gammaup$ is a $C^k$ curve in $\mathbb R^4$ for any $k$, cf. \cite{White_branching}. Such examples are isolated and can be both of crossing type or genuine boundary singularities. A typical construction of the latter goes as follows. We identify $\mathbb R^4$ with $\mathbb C^2$, we take a holomorphic subvariety with a singularity, as for instance $\Lambda := \{(z,w)\in \mathbb C^2: w^3 = z^{3k+1} \}$, and then we consider a suitable $C^k$ closed (real) curve $\Gamma$ lying in $\Lambda$ and passing through the singularity of $\Lambda$. In the specific case $\{(z,w)\in \mathbb C^2: w^3 = z^{3k+1}\}$, a $\Gamma$ of interest is defined so that:
\begin{itemize}
\item its projection on the plane $\pi = \{w=0\}$ contains an open segment $\sigma = \{w=0, {\rm Im}\, z =0, -r<{\rm Re}\, z< r\}$;
\item it bounds a disk $D\subset \Lambda$;
\item the intersection of $D$ with the cylinder $\{|z|<r\}$ covers once the half disk $\{w=0, {\rm Im}\, z <0, |z|<r\}$ and twice the half disk $\{w=0, {\rm Im}\, >0, |z|<r\}$. 
\end{itemize}
$T:= \a{D}$ is then the unique area-minimizing current which bounds $\a{\Gamma}$, while $0$ is an isolated genuine boundary singular point. 

Below we will show examples where $\bsing (T)$ has the same (Hausdorff) dimension of the boundary. Nonetheless the theorem above does not seem optimal from at least two points of view: first of all our example leaves open the possibility that $\bsing (T)$ has zero $(m-1)$-dimensional measure; secondly the singularities of the example are all of crossing type. Indeed it is tempting to advance the following conjecture, which in view of the examples known so far seems rather reasonable. 

\begin{conjecture}\label{conj:sczz}
Let $T,\Sigma,\gammaup$ be as in \ref{ass:main}. The Hausdorff dimension of the set of genuine singular points is at most $m-2$.
\end{conjecture}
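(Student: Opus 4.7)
The plan is to carry out a Federer-type dimension reduction at the boundary, in direct analogy with the arguments that bound the interior singular set by $m-2$ in higher codimension (Almgren) and by $m-7$ in codimension one. First I would set up the boundary tangent-cone machinery: a monotonicity formula at boundary points (essentially already required in the body of the paper to prove Theorem~\ref{THM:MAIN}), existence of tangent cones $C$ at every $p\in \gammaup$, the identity $\partial C = \a{T_p\gammaup}$ with multiplicity one, upper semicontinuity of $p\mapsto \Theta(T,p)$, and the fact that every tangent cone is itself an area-minimizing current in $\R^{m+n}$ satisfying Assumption~\ref{ass:main} with $\Sigma = \R^{m+n}$ and $\gammaup = T_p \gammaup$.

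Next I would stratify the boundary singular set by the dimension of the spine (maximal translation-invariance subspace) of tangent cones:
\[
\sS^k := \{p\in \bsing(T) : \dim \spi(C)\leq k \text{ for every tangent cone } C \text{ at } p\}.
\]
Federer's classical dimension-reduction argument, essentially unchanged in this boundary setting since tangent cones to tangent cones are again admissible minimizing boundary cones, gives $\dim_{\mathcal H} \sS^k \leq k$. It therefore suffices to prove that every point in $\sS^{m-1}\setminus \sS^{m-2}$ is either a regular point or a crossing-type singularity, so that the genuine boundary singular set is contained in $\sS^{m-2}$.

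The heart of the argument is the classification of minimizing boundary cones whose spine has the maximal dimension $m-1$. Since the spine automatically contains the line $T_p\gammaup$ on which $\partial C$ is supported, splitting off the $(m-1)$-dimensional invariance reduces the problem to classifying $1$-dimensional area-minimizing cones $C_0$ in $\R^{1+\bar n}$ whose boundary is a single oriented point of multiplicity one. Such a $C_0$ is a finite sum $\sum_i q_i \a{\ell_i^+}$ of oriented half-lines through the origin with integer multiplicities, constrained by the vector identity $\sum_i q_i \vec v_i = \vec \tau$. Using the minimality of $C_0$, together with the already established interior regularity and the $\eps$-regularity at density-$\tfrac12$ and two-sided regular points, one shows that the only possibilities are $C_0 = Q\a{\ell^+} + (Q-1)\a{\ell^-}$ (corresponding to a regular boundary sheet) plus a boundaryless minimizing $1$-cone $C_0'$ supported away from $\ell$. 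Lifting back to $\R^{m+n}$, the first summand gives an $(m-1)$-dimensional half-plane configuration making $p$ a regular boundary point, while the second summand decouples as $T_1 + T_2$ with $\partial T_1 = 0$ near $p$, producing a crossing-type decomposition in the sense of Definition~\ref{def:reg_points}.

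The main obstacle is precisely this classification step. The difficulty in higher codimension is to exclude exotic configurations of three or more half-lines $\ell_i^+$ balanced by the boundary vector constraint — the boundary analogue of the $Y$-type junctions that arise in the stationary (non-minimizing) category. The argument must use mass minimality in a sharp way, presumably through a calibration or a competitor built by pairing the most-aligned half-line with a portion of the boundary line to cancel boundary mass. A secondary obstacle is that, because the density $\Theta(T,p) = Q-\tfrac12$ can take arbitrarily large values, one cannot reduce the classification to a one-multiplicity comparison and must handle all $Q\in \N$ uniformly; and in order to promote the stratification $\sS^{m-2}$ to a true $(m-2)$-dimensional Hausdorff bound (as opposed to a topological-dimension statement), the Naber--Valtorta quantitative-stratification framework would have to be adapted to the boundary monotonicity formula developed for Theorem~\ref{THM:MAIN}.
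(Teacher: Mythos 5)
This statement is a \emph{conjecture} of the paper, not a theorem: the authors state explicitly (see the discussion of Conjectures \ref{conj:sczz2}--\ref{conj:sczz5}) that it is open even in the linearized setting and that it does not seem reachable with the techniques of the paper ``without introducing some substantially new ideas''. So there is no proof in the paper to compare against, and your proposal would have to be a genuinely new argument. As written it contains a gap that is fatal, and moreover it misplaces the difficulty.

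Your stratification skeleton is fine and is already in the paper (Theorem \ref{thm:stratification}, Lemma \ref{lm:above}), and the step you single out as ``the heart of the argument'' --- classifying boundary cones with spine of dimension $m-1$ --- is not the obstruction at all: Lemma \ref{lem:flat_cone} already shows that any such cone is flat, i.e.\ of the form $Q\a{\pi^+}+(Q-1)\a{\pi^-}$, so there are no exotic junction configurations to exclude at the top stratum and no calibration argument is needed there. The genuine gap is your concluding claim that every point admitting a flat tangent cone is either regular or a crossing-type singularity. This is false in codimension $\geq 2$: the paper's own examples (the construction with $\Lambda=\{(z,w):w^3=z^{3k+1}\}$ described in the introduction, and Theorem \ref{thm:example}(a)) exhibit \emph{genuine} boundary singular points --- boundary branch points --- whose tangent cone is flat, e.g.\ $2\a{\pi^+}+\a{\pi^-}$. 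Hence the genuine singular set is not contained in $\sS_{m-2}(T,\gammaup)$, and no classification of tangent cones can close the argument; what is missing is an $\varepsilon$-regularity (or a dimension bound) at flat boundary tangent cones with $Q\geq 2$, which fails pointwise because of branching. This is precisely why the paper's regularity theorem (Theorem \ref{thm:real_main}) needs the additional collapsedness hypothesis (ii) of Definition \ref{def:cc_point} (local minimality of the density along $\gammaup$), and why the paper only obtains \emph{density} of $\breg(T)$ through the Baire-category argument of Theorem \ref{thm:step0}, rather than any Hausdorff-dimension bound on the set of genuine singularities. Until you have a mechanism controlling the size of the set of flat branch points --- say a boundary frequency/center-manifold analysis valid without collapsedness, or a quantitative (Naber--Valtorta type) argument compatible with branching --- the proposal does not prove the conjecture.
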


When $m=2$ we cannot however expect that genuine singular points are isolated. 

\begin{theorem}\label{thm:example}\label{THM:EXAMPLE}
There are:
\begin{itemize}
\item[(a)] A smooth closed simple curve $\Gamma \subset \mathbb R^4$ and a mass minimizing current $T$ in $\mathbb R^4$ such that $\partial T = \a{\Gamma}$ and $\bsing(T)$ has a genuine boundary singularity which is an accumulation point.
\item[(b)] A smooth $1$-dimensional closed submanifold $\Gamma_1 \subset \mathbb R^4$ (consisting of two disjoint simple curves) and a mass minimizing current $T_1$ in $\mathbb R^4$ such that $\partial T_1 = \a{\Gamma_1}$ and $\bsing(T_1)$ has Hausdorff dimension $1$.
\end{itemize}
\end{theorem}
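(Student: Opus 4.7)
The key ingredient in both constructions is Federer's calibration theorem: a holomorphic $1$-cycle in $\mathbb{C}^2 \simeq \mathbb{R}^4$ (that is, a positive complex analytic $2$-current), oriented by its complex structure, is calibrated by the K\"ahler form $\omega = \tfrac{i}{2}(dz\wedge d\bar z + dw\wedge d\bar w)$ and hence area-minimizing. Combined with the cuspidal model $\{w^3 = z^{3k+1}\}$ discussed immediately above the statement of the theorem, a cusp singularity of such a cycle lying on the boundary curve is a genuine boundary singularity of the minimizer. The strategy is therefore to design locally holomorphic $2$-dimensional currents whose cuspidal singular points realize, respectively, an accumulation point (part (a)) or a set of Hausdorff dimension $1$ (part (b)) on the prescribed boundary.

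For part (a) the plan is as follows. Fix an accumulation point $p_\infty$ and a sequence $p_j \to p_\infty$. In pairwise disjoint balls $B_j = \bB_{r_j}(p_j)$ with $r_j$ decaying sufficiently fast, place a small cuspidal holomorphic patch $D_j$ with cusp at $p_j$, isomorphic (after an affine change of complex coordinates) to a piece of $\{w^3 = z^{3k+1}\}$ cut off by $\partial B_j$, bounded by a smooth loop $\ell_j := \partial D_j \subset \partial B_j$. Arrange all the arcs $\ell_j$ and the point $p_\infty$ to lie on a single auxiliary complex line $L \subset \mathbb{C}^2$ (say $\{w=0\}$) via a preliminary choice of local coordinates near each $p_j$. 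Stitch the $\a{D_j}$ together by adding compensating holomorphic annular pieces $A_j \subset L$, shaped so that the boundaries $\partial(\a{D_j}+\a{A_j})$ combine along $L$ into a single smooth simple closed curve $\gammaup$. The resulting current $T := \sum_j \a{D_j} + \sum_j \a{A_j}$ is locally a positive complex analytic $2$-current with boundary $\a{\gammaup}$, hence is area-minimizing by calibration; each $p_j$ is then a genuine boundary singularity by the cuspidal model, and $p_j \to p_\infty$ provides the required accumulation.

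For part (b) the plan is to exploit the topological freedom provided by two disjoint boundary components in order to iterate the construction of (a) on a Cantor-type scheme. Replace the sequence $\{p_j\}$ by a Cantor-type set $K$ of Hausdorff dimension $1$ sitting along an arc of one of the components $\gammaup_1^{(1)}$; at each scale $2^{-j}$, place $\sim 2^j$ pairwise disjoint cuspidal patches positioned near the level-$j$ pieces of $K$, connected by holomorphic strips to a \emph{base} disk whose boundary is the other component $\gammaup_1^{(2)}$. Tuning the scales so that patches belonging to different levels remain pairwise disjoint produces an area-minimizing $T_1$ calibrated by $\omega$ with $\bsing(T_1) \supset K$, hence of Hausdorff dimension~$1$.

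The main obstacle in both constructions is geometric and combinatorial: one must arrange the cuspidal patches and connecting strips so that (i) they are pairwise disjoint, (ii) their oriented boundaries glue together into a single smooth simple closed curve in part (a), resp.\ two disjoint ones in part (b), and (iii) the global $T$ remains locally a positive complex analytic current everywhere, so that the K\"ahler calibration applies globally. For part (b) this further requires a careful quantitative control of the scales $r_j$ and of the number of cusps placed at each scale, in order to saturate the Hausdorff dimension at~$1$. Once these assembly conditions are met, the calibration principle and the local cuspidal model deliver both conclusions.
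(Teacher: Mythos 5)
Your starting point (Federer's calibration plus the cuspidal model from the introduction) is the right one, but the assembly step is a genuine gap, and it cannot be repaired as stated. Holomorphic pieces are rigid: two distinct irreducible holomorphic curves in $\mathbb C^2$ meet only in isolated points, so a cuspidal patch of $\{w^3=z^{3k+1}\}$-type can never share a boundary \emph{arc} with an annulus inside the line $L=\{w=0\}$ (indeed its boundary loop $\ell_j\subset\partial B_j$ does not even lie in $L$, since the cusp curve meets $\{w=0\}$ only at its vertex). Consequently nothing cancels in $\partial\big(\sum_j(\a{D_j}+\a{A_j})\big)$: the boundary of your current is the union of infinitely many pairwise disjoint loops, not a single smooth simple closed curve (nor two of them in part (b)); the ``holomorphic strips'' of part (b) are ruled out by the same rigidity. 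There are two further problems. First, as you place them, the cusp $p_j$ is the center of $B_j$ while $\partial D_j\subset\partial B_j$, so $p_j$ would be an \emph{interior} singular point; to get a genuine boundary singularity the curve $\Gamma$ must pass through the cusp and split the sheets with multiplicities $2$ and $1$, which your assembled $\gammaup$ does not do. Second, even granting all of this, an example with infinitely many genuine boundary branch points accumulating at a point is precisely Question \ref{ques:sczz9}, stated as open in this paper; statement (a) only requires one genuine singularity which is an accumulation point of (possibly crossing-type) singular points. Finally, in (b) a countable family of cusps gives by itself only a countable (hence $0$-dimensional) set of singular points; dimension $1$ can only come from the closure, which again hinges on the unachievable gluing.

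The paper's proof avoids gluing altogether. For (a) it uses one global map $G(z)=(z^3,g(z))$ on a disk $D\subset\overline{\mathbb H}$, with $g=\prod_{k=0}^3 f_k$, $f_k(z)=e^{-z^{-\alpha}}\sin\big(\mathrm{Log}\,z+\tfrac{3-2k}{6}\pi i\big)$: $g$ is holomorphic in $\mathbb H$, extends to $C^\infty(\overline{\mathbb H})$, is flat at $0$, and its zero set accumulates at $0$. Then $T=G_\sharp\a{D}$ is calibrated, $\Gamma=G(\gamma)$ is a single smooth embedded curve \emph{because} of the infinite-order flatness of $g$ at $0$ (this smoothness at the accumulation point is exactly the analytic crux your sketch leaves open), the singular points $G(Z)$ accumulating at the origin are crossing-type double points, and the genuine singularity is only the limit point $0$, a boundary branch point coming from $z\mapsto z^3$. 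For (b) the paper takes $T_1=R+S$, where $R=\iota_\sharp\a{\mathbb D}$, $\iota(z)=(z,0)$, and $S$ is the graph current of a holomorphic $F\in C^\infty(\overline{\mathbb H})$ provided by Lemma \ref{lm:jonas}, whose boundary zero set $E$ has Hausdorff dimension $1$; then $\partial T_1$ is two disjoint smooth curves and $\bsing(T_1)\supset E\times\{0\}$ directly. The existence of such an $F$ is the nontrivial analytic input that your combinatorial scheme would need and does not supply.
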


Moreover the proof of (a) can be easily modified to provide an example of a two dimensional mass minimizing current for which there exists a sequence of {\em interior} singular points accumulating towards the boundary. This shows that the (interior) regularity results for two dimensional mass minimizing currents  in \cite{Chang,DS1,DS2,DS3,DS4,DS5} are actually optimal, see Remark \ref{rmk:intsing}. The proof of (b) is essentially contained in \cite{Jonas2}.

The example of Theorem \ref{thm:example} is related to a previous one of Gulliver\footnote{Gulliver's
example is a minimal immersed disk in the 3-dimensional space. It is obviously not a
minimizer as a current, but it is not known whether it is a solution of the Douglas-Rad\'o problem.} given in \cite{Gulliver}. In both examples there is a boundary branch point where the surface has an infinite order of contact with a plane. In view of
Gulliver's surface, White in \cite{White_branching} stated that ``Proving partial regularity for integral currents at $C^\infty$-boundaries seems to be much harder''. In the case of real analytic curves White proved in \cite{White_branching} that there is no branching boundary point for any solution of the Douglas-Rad\'o problem. In view of this he conjectured that the topology of any area minimizing $2$-dimensional integral current is finite if its boundary is a real analytic curve: combined with his result, White's conjecture would then imply that for real analytic curves both the boundary singular points and the interior singular points are isolated and that the boundary singular points can only be of ``crossing'' type,  i.e. there is no genuine boundary singularity.

\medskip 

 Even though at the moment we cannot progress further in a finer analysis of the singularities, as a corollary of Theorem \ref{thm:main} we can reduce it to the analysis of one-sided boundaries.

\begin{theorem}\label{thm:main2}\label{THM:MAIN2}
Let $\Sigma$ and $\gammaup$ be as in Assumption \ref{ass:main}. Assume $\gammaup$ is closed and $T$ is an area-minimizing integral current in $\Sigma$ with $\partial T = \a{\gammaup}$. Let $\gammaup'\subset\gammaup$ be a connected component of $\gammaup$. If $\gammaup'\cap \breg(T)$ contains a point $p$ with multiplicity $\Theta(T,p)>\frac 12$, then
\begin{itemize}
\item[(a)] the Hausdorff dimension of $\bsing(T)\cap\gammaup'$ is at most $m-2$;
\item[(b)] if $m=2$, then $\bsing(T)\cap\gammaup'$ consists of finitely many points.
\end{itemize}
\end{theorem}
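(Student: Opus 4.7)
The plan is to produce, in a tubular neighborhood $V$ of $\gammaup'$, a decomposition $T\res V = R + S$, where $R$ is a one-sided integral current with boundary $\a{\gammaup'}$ in $V$ and density $\frac{1}{2}$ at every point of $\gammaup'$, while $S$ has no boundary in $V$. A standard comparison argument then shows that both $R$ and $S$ are area-minimizing in $V$. Allard's boundary regularity theorem (fact (a) in the introduction) applied to $R$ will give $\gammaup'\subseteq \breg(R)$, while Almgren's interior regularity theorem applied to $S$ will yield that the Hausdorff dimension of ${\rm Sing_i}(S)$ is at most $m-2$. Since any $q\in\gammaup'$ that lies simultaneously in $\breg(R)$ and ${\rm Reg_i}(S)$ must lie in $\breg(T)$ (the current $T$ being locally the sum of two classical smooth pieces there, reconstituting exactly the local form in Remark \ref{rmk:constancy}), one obtains $\bsing(T)\cap\gammaup' \subseteq {\rm Sing_i}(S)\cap\gammaup'$, proving (a). For (b), with $m=2$, Chang's interior regularity theorem (as completed in \cite{DSS1,DSS2,DSS3,DSS4}) forces ${\rm Sing_i}(S)$ to be discrete, and hence finite on the compact set $\gammaup'$.

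The construction of $R$ and $S$ proceeds in two sub-steps. First, I would show that the multiplicity of $T$ at every boundary regular point on $\gammaup'$ equals $Q - \frac{1}{2}$, where $Q \geq 2$ is the integer determined by $p$. The density of regular points from Theorem \ref{thm:main} together with the local structure of Remark \ref{rmk:constancy} already imply that $Q$ is locally constant and integer-valued on the open dense set $\gammaup'\cap \breg(T)$; upper semi-continuity of density combined with a tangent cone analysis at points of $\bsing(T)\cap\gammaup'$ should then preclude any jump of $Q$ between distinct connected components of $\gammaup'\cap\breg(T)$ inside the connected set $\gammaup'$. Second, near each regular point $x$ with local form $T = \a{\Xi_x^+} + (Q-1)\a{\Xi_x}$, I would set $R := \a{\Xi_x^+}$ and $S := (Q-1)\a{\Xi_x}$; by the constancy of $Q$ these local definitions agree on overlaps, and they yield integer rectifiable currents defined on the open subset of $V$ sitting over $\gammaup'\cap\breg(T)$. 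Since $\bsing(T)\cap\gammaup'$ is closed and nowhere dense in $\gammaup'$, a weak closure argument using the uniform mass bounds coming from area-minimality should extend $R$ and $S$ to integer rectifiable currents on all of $V$, preserving the desired boundary conditions.

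The step I expect to be the main obstacle is exactly this extension of $R$ and $S$ across the exceptional set $\bsing(T)\cap\gammaup'$. At such a point $q$, the tangent cone of $T$ need not split cleanly into the form $Q\a{\pi^+} + (Q-1)\a{\pi^-}$ available at regular points, and in the limit one must verify that $R$ keeps density exactly $\frac{1}{2}$ at $q$ (so that Allard's theorem applies to give $q\in \breg(R)$) and that no spurious boundary mass for $S$ appears on $\gammaup'$. I expect that resolving this will require a careful monotonicity and tangent cone analysis at boundary singular points, leveraging Theorem \ref{thm:main} and the density of regular points in order to propagate the clean local decomposition into the limit and close the argument.
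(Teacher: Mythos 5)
Your overall strategy---peel off the boundary-carrying part near $\gammaup'$ and play Allard's theorem for it against interior regularity for a boundary-free remainder---is in the same spirit as the paper, but your specific decomposition $T\res V=R+S$ has two genuine gaps, only one of which you acknowledge. The step you flag as the ``main obstacle'' is not a technical loose end but essentially the assertion to be proved: producing an integral current $R$ with $\partial R\res V=\a{\gammaup'}$ and $\Theta(R,q)=\frac12$ at \emph{every} $q\in\gammaup'$, including the points of $\bsing(T)\cap\gammaup'$, already says that one sheet of $T$ detaches with the correct density along all of $\gammaup'$, and a weak-closure extension of the local definitions gives no control on the density of $R$ at the limit points, while ``monotonicity and tangent cone analysis at boundary singular points'' is precisely what is unavailable there. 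The paper sidesteps this entirely: in Theorem \ref{thm:primes} the pieces are defined globally as the multiplicity-one currents carried by the connected components of ${\rm Reg_i}(T)$, their boundaries are identified using $\cH^{m-1}({\rm Sing_i}(T))=0$ together with Federer's flatness theorem and the Constancy Theorem, and finiteness follows from the monotonicity formula, so no analysis at boundary singular points is needed. The constancy of $Q$ along $\gammaup'$, which you propose to get from semicontinuity of the density plus tangent cones (by itself this does not preclude a jump), also comes in the paper from that global decomposition: every regular point of $\gammaup'$ sees the same two components $\Lambda_{p(1)},\Lambda_{n(1)}$.

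More seriously, even granting your $R$ and $S$, the implication ``$q\in\breg(R)\cap{\rm Reg_i}(S)\Rightarrow q\in\breg(T)$'' does not hold as stated: regularity of $R$ and of $S$ separately produces a smooth half-sheet and a smooth sheet through $q$, but nothing in your argument forces the half-sheet to lie inside the sheet; if they are distinct surfaces (a crossing-type configuration, which is a real phenomenon for boundary singularities, cf.\ Theorem \ref{thm:example}), then $\supp(T)$ near $q$ is not contained in a single submanifold, so $q\in\bsing(T)$ even though both $R$ and $S$ are regular there. Hence your key inclusion $\bsing(T)\cap\gammaup'\subset{\rm Sing_i}(S)$ is unjustified: crossings between the $R$-sheet and the $S$-sheet are exactly the singularities your decomposition cannot see. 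This is why the paper takes $S:=T_{p(1)}+T_{n(1)}$, the sum of the two multiplicity-one sheets adjacent to $\gammaup'$ from either side: this $S$ has no boundary along $\gammaup'$, it is area minimizing by Theorem \ref{thm:primes}(e), any crossing or branching at $\gammaup'$ is an \emph{interior} singularity of $S$ and is therefore counted by the interior theory, and at an interior regular point of $S$ the local support of $T$ lies in one smooth surface, so the Constancy Theorem reconstitutes the local form of Remark \ref{rmk:constancy} and yields $q\in\breg(T)$. To repair your argument you would either have to rule out such crossings directly (which is the hard content) or modify your $S$ so that its support controls the full local support of $T$ near $\gammaup'$---at which point you have reproduced the paper's proof.
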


Theorem \ref{thm:main2} is a consequence of a suitable decomposition of the current $T$, which will be stated in the next chapter (cf. Theorem \ref{thm:primes}). One consequence of the latter result is that the two-sided components of $\gammaup$ are, in a suitable sense, ``internal to the current'', as in Example \ref{ex:due_cerchi}. So, even if Theorem \ref{thm:main} is not a full regularity statement as the one in \cite{HS}, it is still powerful enough to yield a similar description of the current $T$ in a neighborhood of the two-sided connected components of $\gammaup$. Moreover, the decomposition Theorem \ref{thm:primes} leads easily to a full answer to Question \ref{q:alm} and in particular we can show the connectedness of the support of any minimizer $T$ whose boundary $\gammaup$ is connected. 

\begin{corollary}\label{c:almgren_problem}
Let $\Sigma, \gammaup$ and $T$ be as in Theorem \ref{thm:main2} and assume in addition that $\gammaup$ is connected and that both $\Gamma$ and $\supp (T)$ are compactly contained contained in $\bB_2$. Then, 
\begin{itemize}
\item[(a)] $\breg (T)$ coincides with the set of density $\frac{1}{2}$ points;
\item[(b)] the set of interior regular points ${\rm Reg_i} (T)$ is connected;
\item[(c)] $\Theta (T,p)=1$ for all $p\in {\rm Reg_i} (T)$ and $\bM (T) = \cH^m ({\rm Reg_i} (T)) = \cH^m (\supp (T))$. 
\end{itemize}
\end{corollary}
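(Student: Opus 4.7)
The plan is to derive the corollary from the decomposition Theorem~\ref{thm:primes} (which will appear in the next chapter), together with Theorem~\ref{thm:main} and Allard's boundary regularity fact (a) recalled in the introduction. I expect Theorem~\ref{thm:primes} to provide a splitting $T=\sum_i T_i$ into area-minimizing integral currents with pairwise essentially disjoint supports, where every $\partial T_i$ is either zero or coincides with the one-sided current associated to some subcollection of connected components of $\gammaup$ (with multiplicity~$1$).

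My first step would be to collapse the splitting into a single one-sided summand. Since $\gammaup$ is connected, only one index can carry the boundary $\a{\gammaup}$; every remaining summand $T_0$ is then closed. Essential disjointness of supports gives $\bM(T)=\bM(T-T_0)+\bM(T_0)$, so if $T_0\neq 0$ the competitor $T-T_0$ has the same boundary as $T$ but strictly smaller mass, contradicting minimality. Thus $T$ itself must be a single one-sided minimizer with $\partial T=\a{\gammaup}$ and multiplicity $1$ at every boundary regular point.

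For (a), Theorem~\ref{thm:main} yields density of $\breg(T)$ in $\gammaup$, and by the one-sided structure just obtained each point of $\breg(T)$ has $Q=1$, i.e.\ density $\frac12$; the opposite inclusion is Allard's fact (a). For (b), I would use that the density on ${\rm Reg_i}(T)$ is a positive locally constant integer: at any $p\in\breg(T)$ the current equals $\a{\Xi^+}$ with multiplicity $1$ in a neighborhood, so the component of ${\rm Reg_i}(T)$ whose closure meets $\breg(T)$ has $\Theta\equiv 1$. Any hypothetical extra component $C\subset{\rm Reg_i}(T)$ whose closure avoids a neighborhood of $\gammaup$ would produce a nontrivial boundaryless summand $T\res\overline{C}$, again contradicting minimality by the same argument; this forces ${\rm Reg_i}(T)$ to be connected with $\Theta\equiv 1$. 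Then (c) is immediate from
\[
\bM(T)=\int_{{\rm Reg_i}(T)}\Theta\,d\cH^m=\cH^m({\rm Reg_i}(T))=\cH^m(\supp(T)),
\]
since both singular sets have dimension strictly less than $m$.

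The main obstacle will be confirming that Theorem~\ref{thm:primes} really delivers the two ingredients used above: essentially disjoint supports of the summands (so that masses add) and one-sidedness (multiplicity~$1$) of each boundary piece. Once that structural information is in hand, the three conclusions (a)--(c) follow almost automatically, and the genuine work of the corollary has been pushed entirely into the decomposition theorem.
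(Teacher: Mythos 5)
There is a genuine gap, and it sits exactly where you flagged your own uncertainty: Theorem \ref{thm:primes} does \emph{not} deliver the one-sidedness you feed into the argument. The decomposition it provides is $T=\sum_j Q_j T_j$ with integer multiplicities $Q_j\ge 1$, and a connected component of $\gammaup$ may be \emph{two-sided}: it is then carried by two summands $T_{p}$ and $T_{n}$ with $\partial T_{p}=\a{\gammaup}$, $\partial T_{n}=-\a{\gammaup}$ and $Q_{n}=Q_{p}-1$, where $Q_{p}\ge 2$ (this is the structure of Example \ref{ex:due_cerchi}). So for connected $\gammaup$ your step ``only one index can carry the boundary, hence every remaining summand is closed'' fails: in the two-sided alternative a second summand carries $-\a{\gammaup}$, it is not closed, and it cannot be discarded by your argument for boundaryless pieces. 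Ruling out this alternative is the real content of the corollary (it is what answers Almgren's question), and it requires a different competitor: if $\gammaup$ were two-sided, the decomposition would read $T=Q_1T_1+Q_2T_2$ with $Q_1=Q_2+1\ge 2$ and $\partial T_1=\a{\gammaup}$, so $T_1$ alone is admissible and, by the mass additivity $\bM(T)=Q_1\bM(T_1)+Q_2\bM(T_2)>\bM(T_1)$, strictly cheaper than $T$, contradicting minimality. Your removal of closed summands is in the same spirit but does not cover this case, and without it the proof does not go through.

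Once that step is supplied, the rest is essentially the paper's argument: the decomposition collapses to a single summand with coefficient $1$, Theorem \ref{thm:primes}(d) gives density $\frac12$ at every point of $\breg(T)$, and Theorem \ref{t:AllBfinal} gives the reverse inclusion in (a). For (b) it is cleaner to read connectivity directly off Theorem \ref{thm:primes}(b): the summands are in bijection with the connected components of ${\rm Reg_i}(T)$, so one summand means one component; your argument excludes only a component whose closure avoids a neighborhood of $\gammaup$, and by itself it does not rule out a second component whose closure meets $\gammaup$ only in $\bsing(T)$. Conclusion (c) then follows as you state, since the interior and boundary singular sets are $\cH^m$-null.
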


While Theorem \ref{thm:primes}, Theorem \ref{thm:main2} and Corollary \ref{c:almgren_problem} are rather straightforward consequences of Theorem \ref{thm:main} and of the interior regularity theory via well-established techniques in geometric measure theory, 
the proof of Theorem \ref{thm:main} is very long and will occupy essentially all the rest of the note.  In a nutshell we will develop a suitable counterpart of Almgren's interior regularity theory at the boundary in order to prove it. Such task poses many additional difficulties and in order to overcome them we introduce several new ideas and tools, some of which might be useful even for the interior regularity theory.

\medskip

Our work would have not been possible without the new insight provided by the papers \cite{DS1,DS2,DS3,DS4,DS5} and by the Ph.D. thesis of the third author, cf. \cite{Jonas, Jonas2}. In particular the latter contains two fundamental starting points: a suitable boundary regularity theory for ${\rm Dir}$-minimizing multiple valued map and a fruitful discussion on how the frequency function estimate of Almgren might fail at the boundary. Such discussion has been essential to identify the key ``estimate'' which underlies the present work. 

In Section \ref{s:on_the_road} we will give
a road map to the proof of Theorem \ref{thm:main}, we will discuss the most important ideas which 
enter into it and we will point out their relations with Almgren's big regularity paper \cite{Alm}, with the works \cite{DS1,DS2,DS3,DS4,DS5} and with \cite{Jonas}. 

\section{Acknowledgments}

The research of Camillo De Lellis has been supported by the ERC grant agreement RAM (Regularity for Area Minimizing currents), ERC 306247.

The research of Guido De Philippis has been supported by the ERC grant agreement RAM (Regularity for Area Minimizing currents), ERC 306247 and by the MIUR SIR-grant ``Geometric Variational Problems''.

The research of Jonas Hirsch has been supported by the MIUR SIR-grant ``Geometric Variational Problems''.

The research of Annalisa Massaccesi has been supported by European Union's Horizon 2020 research and innovation programme under the grant agreement No.752018 (CuMiN).

\chapter{Corollaries, open problems and plan of the paper}

\section{Indecomposable components of $T$}

We start this chapter by stating and proving our main structure theorem as corollary of Theorem \ref{thm:main}. 

\begin{theorem}\label{thm:primes}
Let  $\Sigma,\gammaup, T$ be as in Assumption \ref{ass:main} and assume in addition that \(\gammaup\) and $\supp (T)$ are compactly contained in $\bB_2$. Let us  denote by $\gammaup_1,\ldots,\gammaup_N$ the connected components of $\gammaup$. Then there exist a natural number \(\overline{N}\in \mathbb N \),  integer multiplicities \(Q_{j}\in \mathbb N\setminus \{0\}\) and currents \(T_{j}\) such that 
\begin{equation}\label{e:dec_indec}
T=\sum_{j=1}^{\overline N} Q_jT_j\,, 
\end{equation}
where:
\begin{itemize}
\item[(a)]  For every $j=1,\ldots,\overline N$, $T_j$ is an integral current with $\partial T_j=\sum_{i=1}^N\sigma_{ij}\a{\gammaup_i}$ and $\sigma_{ij}\in\{-1,0,1\}$.
\item[(b)] For every $j=1,\ldots,\overline N$, $T_j$ is an area-minimizing current and $T_j=\cH^{m}\res \Lambda_j$, where $\Lambda_1,\ldots,\Lambda_{\overline N}$ are the connected components of $\supp(T)\setminus(\gammaup\cup {\rm Sing}_i (T)) = {\rm Reg}_i (T)$.
\item[(c)] Each $\gammaup_i$ is 
\begin{itemize} 
\item either \emph{one-sided}, which means that there is one index $o(i)$ such that $\sigma_{io(i)}=1$ and $\sigma_{ij}=0$ $\forall j\neq o(i)$;
\item or \emph{two-sided}, which means that:
\begin{itemize}
\item there is one $j=p(i)$ such that $\sigma_{ip(i)}=1$,
\item there is one $j=n(i)$ such that $\sigma_{in(i)}=-1$,
\item all other $\sigma_{ij}$ equal $0$.
\end{itemize}
\end{itemize}
\item[(d)] If $\gammaup_i$ is one-sided, then $Q_{o(i)}=1$ and all points in $\gammaup_i\cap \breg{T}$ have multiplicity $\frac 12$. 
\item[(e)] If $\gammaup_i$ is two-sided, then $Q_{n(i)}=Q_{p(i)}-1$, all points in $\gammaup_i\cap\breg{T}$ have multiplicity $Q_{p(i)}-\frac 12$ and $T_{p(i)}+T_{n(i)}$ is area minimizing.
\end{itemize}
\end{theorem}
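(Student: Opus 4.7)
The plan is to decompose $T$ via the connected components of ${\rm Reg}_i(T)$ and then read off the boundary structure by combining the Constancy Lemma on a tubular neighborhood of each $\gammaup_i$ with the local picture at regular boundary points provided by Theorem~\ref{thm:main}.

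First I would invoke Almgren's interior regularity theorem: $\mathrm{Sing}_i(T)$ has Hausdorff dimension at most $m-2$, so ${\rm Reg}_i(T) = \supp(T)\setminus(\gammaup\cup \mathrm{Sing}_i(T))$ is a smooth oriented $m$-dimensional submanifold of $\Sigma$. Let $\{\Lambda_j\}$ be its connected components. The interior Constancy Lemma on each $\Lambda_j$ yields a constant positive integer multiplicity $Q_j$; setting $T_j:=\a{\Lambda_j}$ with the orientation inherited from $T$, we obtain $T=\sum_j Q_j T_j$ as rectifiable currents (since $\gammaup\cup\mathrm{Sing}_i(T)$ is $\cH^m$-negligible). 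To ensure $\overline N<\infty$, note that if $\overline{\Lambda_j}$ were disjoint from $\gammaup$, then $\partial T_j$ would be concentrated on $\mathrm{Sing}_i(T)$, a set of too low dimension to support an $(m-1)$-current; hence $\partial T_j=0$ and $T-Q_jT_j$ would be a competitor of strictly smaller mass, contradicting area-minimality. Thus every $\Lambda_j$ reaches $\gammaup$, and boundary monotonicity at a nearby regular point (which exists by the density of $\breg(T)$ in $\gammaup$) gives a uniform mass lower bound on each $T_j$, so the finite mass of $T$ forces finitely many components.

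Next I would identify $\partial T_j$. Since $\supp(\partial T_j)\subset \overline{\Lambda_j}\setminus\Lambda_j\subset \gammaup\cup \mathrm{Sing}_i(T)$ and the singular part cannot carry an $(m-1)$-current, we have $\supp(\partial T_j)\subset\gammaup$. On a tubular neighborhood $N_i$ of each connected component $\gammaup_i$, the Constancy Lemma then gives $\partial T_j\res N_i=\sigma_{ij}\a{\gammaup_i}$ for some integer $\sigma_{ij}$. To pin down $\sigma_{ij}$ and the local two-sided picture, apply Theorem~\ref{thm:main} to pick $p\in\breg(T)\cap \gammaup_i$ and use Remark~\ref{rmk:constancy}: near $p$ we have $T\res U=Q\,\a{\Xi^+}+(Q-1)\,\a{\Xi^-}$. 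If $Q=1$, only $\Xi^+$ contributes, lying in a unique $\Lambda_{o(i)}$ of multiplicity $Q_{o(i)}=1$, so $\sigma_{i,o(i)}=1$ and all other $\sigma_{ij}$ vanish near $p$. If $Q\geq 2$, $\Xi^+\setminus\gammaup$ and $\Xi^-\setminus\gammaup$ lie in distinct components $\Lambda_{p(i)}$ and $\Lambda_{n(i)}$ (distinct because a single $\Lambda_j$ cannot simultaneously carry the multiplicities $Q$ and $Q-1$), giving $\sigma_{i,p(i)}=1$, $\sigma_{i,n(i)}=-1$, $Q_{p(i)}=Q$, $Q_{n(i)}=Q-1$.

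The hardest step will be showing that the one-/two-sided classification and the value of $Q$ are constant along all of $\gammaup_i$, even when boundary singular points split $\gammaup_i\cap\breg(T)$ into disjoint arcs where a priori different pairs of components could appear. This is settled by the \emph{global} conclusion of the Constancy Lemma: if $\sigma_{ij}\neq 0$, then $\partial T_j\res N_i=\sigma_{ij}\a{\gammaup_i}$ holds on the entire tubular neighborhood, forcing $\Lambda_j$ to touch $\gammaup_i$ near every regular boundary point (otherwise the local contribution of $\partial T_j$ near such a point would vanish, contradicting the global identity). Hence the same $\Lambda_j$'s (one or two) appear at every regular point of $\gammaup_i$, and the multiplicities $Q_{o(i)}$ or $(Q_{p(i)},Q_{n(i)})$ are globally determined, proving (a), (c), (d), and the multiplicity assertions in (e). Finally, area-minimality of each $T_j$ follows from the standard substitution argument: for any competitor $T_j'$ with $\partial T_j'=\partial T_j$ supported in $\Sigma\cap \bB_2$, the current $T':=T-Q_jT_j+Q_jT_j'$ has $\partial T'=\partial T$, and subadditivity of mass together with mass-additivity on the disjoint $\Lambda_k$-supports yields $\mathbf{M}(T_j)\leq \mathbf{M}(T_j')$ by minimality of $T$. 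The area-minimality of $T_{p(i)}+T_{n(i)}$ in (e) is then obtained by rewriting $T=(Q_{p(i)}-1)(T_{p(i)}+T_{n(i)})+T_{p(i)}+\sum_{k\neq p(i),n(i)} Q_kT_k$ (using $Q_{n(i)}=Q_{p(i)}-1\geq 1$), replacing $T_{p(i)}+T_{n(i)}$ by a competitor $S$ and applying the analogous computation.
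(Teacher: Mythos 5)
Your proposal follows essentially the same route as the paper's proof: decompose $T$ along the connected components of ${\rm Reg}_i(T)$, use the Constancy Theorem for the multiplicities $Q_j$, the flat-support and flatness theorems plus constancy on a tubular neighborhood to get $\partial T_j=\sum_i\sigma_{ij}\a{\gammaup_i}$, the regular boundary points supplied by Theorem \ref{thm:main} to pin down the one-/two-sided picture, the global constancy of the coefficient along $\gammaup_i$ to propagate that picture past boundary singular points, and mass additivity to obtain minimality of each $T_j$ and of $T_{p(i)}+T_{n(i)}$. All of this is sound (a cosmetic remark: constancy alone gives a \emph{real} coefficient; integrality of $\sigma_{ij}$ comes from boundary rectifiability or, as you in effect do, from the analysis at a regular boundary point).

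The one step that does not work as written is the finiteness of $\overline{N}$. You claim that every $\Lambda_j$ reaches $\gammaup$ and that ``boundary monotonicity at a nearby regular point'' yields a uniform mass lower bound on each $T_j$. But the boundary monotonicity of Theorem \ref{thm:allard}(b) controls $\|T\|(\bB_r(q))$, not $\|T_j\|(\bB_r(q))$; moreover a point $q\in\breg(T)$ close to $\overline{\Lambda_j}\cap\gammaup$ need not see $\Lambda_j$ at all in its local two-sheet picture (a priori $\overline{\Lambda_j}$ could meet $\gammaup$ only at boundary singular points), and even when it does, the size of the regular neighborhood is not uniform in $q$. The repair is available with what you already have, and does not create circularity since your mass-additivity argument for the minimality of $T_j$ needs no finiteness: either argue as the paper does, producing in each $\Lambda_j$ a point at a fixed positive distance from $\gammaup$ (if $\supp(T_j)$ lay in an arbitrarily thin tubular neighborhood of $\gammaup$, the retraction onto $\gammaup$ would kill $T_j$) and applying the interior monotonicity formula to the minimizing current $T_j$ in a ball of fixed radius around that point; or note that your own cancellation argument already gives $\partial T_j=\sum_i\sigma_{ij}\a{\gammaup_i}\neq 0$ for every $j$ (otherwise $T-Q_jT_j$ would beat $T$), hence some $\gammaup_i\subset\supp(T_j)$, and apply the monotonicity formula at the boundary to $T_j$ itself at a point of $\gammaup_i$, with a radius bounded below by compactness of $\gammaup$. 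Either way $\bM(T_j)\geq c>0$ uniformly, and $\sum_j Q_j\bM(T_j)=\bM(T)<\infty$ forces $\overline{N}<\infty$.
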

\begin{proof} Let $\Lambda$ be a connected component of 
\[
\supp(T)\setminus (\gammaup\cup {\rm Sing}_i (T)) = {\rm Reg}_i (T)\, .
\]
Since $\Lambda$ is smooth and connected, by the Constancy Theorem the multiplicity of $T$ is a constant $Q\in \mathbb N\setminus \{0\}$ on $\Lambda$. Let $S:=Q\a{\Lambda\cap {\rm Reg}_i (T)}$, where we orient $\Lambda$ so that $S=T$ in every sufficiently small neighborhood of every point $p\in \Lambda$. Observe that $\supp (\partial S) \subset \Gamma \cup {\rm Sing}_i (T)$. Since $\mathcal{H}^{m-1} ({\rm Sing}_i (T)) =0$, from \cite[Theorem 4.1.20]{Fed} we then conclude that $\partial S =0$ on $\mathbb R^{m+n}\setminus \gammaup$. Thus $\supp (\partial S) \subset \gammaup$. Let now $\gammaup_i$ be a connected component of $\gammaup$ and let $\bp$ be a retraction of a neighborhood \(U\) of $\gammaup_i$ onto $\gammaup_i$. Since $\partial S$ is a flat chain supported in $\gammaup_i$, Federer's flatness theorem, cf. \cite[Section 4.1.15]{Fed}, implies that $R := \bp_\sharp (\partial S\res U) = \partial S\res U$. On the other hand, since $\partial (\partial S\res U) =0$, we also have $\partial R=0$ and we conclude from the Constancy theorem, cf. \cite[Section 4.1.7]{Fed}, that $R = c \a{\gammaup_i}$ for some $c\in \R$. Thus $\partial S = \sum_{i=1}^N c_i\a{\gammaup_i}$. 

From Theorem \ref{thm:main} there is at least one point $p\in\breg(T)\cap\gammaup_i$.  In a sufficiently small neighborhood $V$ of $p$,  the set $\supp(T)\setminus\gammaup_i$ consists of at most two connected components which are regular submanifolds and which we call $\Xi^+$ and $\Xi^-$, consistently with the notation of Definition \ref{def:reg_points} and Remark \ref{rmk:constancy}. Since $\Lambda$ is connected, we have the following three alternatives:
\begin{itemize}
\item[(i)] $p\not \in \overline\Lambda$;
\item[(ii)] $\Lambda$ contains only one of the two components $\Xi^\pm$;
\item[(iii)] $\Lambda$ contains both $\Xi^+$ and $\Xi^-$.
\end{itemize}
However, by the Constancy Lemma, the density of $T$ on $\Lambda$ must be constant, whereas, according to Remark \ref{rmk:constancy}, it differs on the two surfaces $\Xi^+$ and $\Xi-$. For this reason we can exclude the alternative (iii) and in particular,
\begin{itemize}
\item either $\partial S \res V = 0$,
\item or $\partial S \res V =(\Theta (p, T) +\frac{1}{2})  \a{\gammaup_i} \res V = Q \a{\gammaup_i}\res V$,
\item or $\partial S \res V = -(\Theta (T,p) - \frac{1}{2})\a{\gammaup_i}\res V = - Q\a{\gammaup_i} \res V$.
\end{itemize}
If we consider the connected components of ${\rm Reg}_i (T)$ we obtain a decomposition as in \eqref{e:dec_indec} with property (a), except that we have not yet shown that the number of connected components is finite (they might be countably infinite).
First observe that
\begin{equation}\label{e:dec_masses}
\bM(T)=\sum_{j\ge 1} Q_j\bM(T_j)\,,
\end{equation}
and hence we easily see that each $T_j$ must be area-minimizing. 
Next observe that each connected component $\Lambda_j$ must contain a point at a fixed positive distance from $\gammaup$ (otherwise we could retract $T_j$ on $\gammaup$). By the monotonicity formula the mass of each $T_j$ can be bounded from below with a constant independent of $j$. Thus from \eqref{e:dec_masses} we conclude that the number of $T_j$'s must be finite.

We now prove (c), (d) and (e): fix $\gammaup_i$ and fix a regular point $p\in\breg(T)\cap\gammaup_i$. If $\Theta(T,p)=\frac 12$, then in a suitable neighborhood $V$ of $p$ the set $(\supp(T)\setminus\gammaup)\cap V$ coincides with ${\rm Reg}_i (T) \cap V$ and consists of only one connected component, so there is one and only one $\sigma_{ij}\neq 0$. Moreover, for that particular $j=:o(i)$, $Q_{o(i)}=1$. In particular, $\breg(T)\cap\gammaup_i\cap\supp(T_j)=\emptyset$ for every $j\neq o(i)$, which proves (d) and the first part of (c).

Analogously, if $\Theta(T,p)>\frac 12$, then $V\cap \supp(T)\setminus\gammaup$ consists of exactly two connected components with two different multiplicities in the current $T$, namely there must be exactly $\Lambda_{j^+}$ and $\Lambda_{j^-}$ from which the two connected components of  $\supp(T)\setminus\gammaup\cap V = {\rm Reg}_i (T) \cap V$ arise. Moreover the difference of the two multiplicities $Q_{j^+}-Q_{j^-}$ must necessarily be $1$. As above, since all other $\sigma_{ij}$ are equal to $0$, at any other point $q\in \gammaup_i\cap \breg(T)$ there is a neighborhood $V$ which intersects only $\Lambda_{j^+}$ and $\Lambda_{j^-}$. On the other hand it must intersect at least one of them (otherwise $\partial T\res V=0$) and therefore it must intersect both of them (otherwise either $\partial T\res V=Q_{j^+}\a{\gammaup_i\cap V}$ or $\partial T\res V=- Q_{j^-}\a{\gammaup_i\cap V}$, which is not possible because $Q_{j^+}\ge 2$ and $Q_{j^-}\ge 1$). This completes the proof of (c) and shows the first part of (e).

In order to complete the proof of (e), consider a $\gammaup_i$ which is two-sided. Denote by $S$ the current $T_{p(i)}+T_{n(i)}$. Notice that
\[
\bM(T)=Q_{n(i)}\bM(S)+\bM(T_{p(i)})+\sum_{n(i)\neq j\neq p(i)}Q_j\bM(T_j).
\] 
From this it follows easily that $S$ must be area-minimizing. 
\end{proof}

\section{Almgren's question and proof of Theorem \ref{thm:main2}}

We can now use Theorem \ref{thm:primes} to prove Corollary \ref{c:almgren_problem} and Theorem \ref{thm:main2}.

\begin{proof}[Proof of Corollary \ref{c:almgren_problem}] 

When $\gammaup$ is connected the decomposition in \eqref{e:dec_indec} consists necessarily of at most two currents because of Theorem \ref{thm:primes}(c), depending on whether $\gammaup$ is one-sided or two-sided. On the other hand, if $\gammaup$ were two-sided, the decomposition \eqref{e:dec_indec} would consist of two currents $T_1$ and $T_2$ with $Q_1 = Q_2+1\geq 2$. Thus $T_1$ would have boundary $\a{\gammaup}$ and strictly less mass than $T$, contradicting the minimality of $T$. 
\end{proof}

\begin{proof}[Proof of Theorem \ref{thm:main2}] 
Consider $\gammaup'$ and $p$ as in the statement and apply Theorem \ref{thm:primes}. Without loss of generality assume $\gammaup'=\gammaup_1$. By point (d) of Theorem \ref{thm:primes}, $\gammaup_1$ is necessarily two-sided, therefore $S:=T_{p(1)}+T_{n(1)}$ is area-minimizing. Since all points of $\gammaup_1$ are interior points of $S$, we know from the interior regularity theory that $S$ is regular at $p$ in $\gammaup_1$, except for a set of points of dimension $m-2$ (which is finite if $m=2$). At any point $p$ where $S$ is regular, the boundary regularity of $T_{p(1)}$ and $T_{n(1)}$ follows easily from the Constancy Theorem \cite[Section 4.1.7]{Fed}.
\end{proof}

\begin{remark}
It is clear from the proof of Theorem \ref{thm:primes} and of Corollary \ref{c:almgren_problem} that the requirement that  \(\gammaup\) and $\supp (T)$ are compactly contained in $\bB_2$ can be somehow relaxed, and that suitably local versions of these results are true. Since however the proof will follow the same arguments described above, we leave these generalizations  to the interested reader.
\end{remark}

\section{Proof of Theorem \ref{thm:example}}

First of all consider the complex halfplane $\mathbb H := \{z\in \mathbb C: {\rm Re}\, z>0\}$
over which we fix the following determination of the complex logarithm: 
\[
{\rm Log}\, z = \log |z| + i \arctan \frac{{\rm Im}\, z}{{\rm Re}\, z}\, .
\]
(where $\arctan: \mathbb R \to (-\frac{\pi}{2}, \frac{\pi}{2})$ is the usual inverse trigonometric function on the real axis).
Correspondingly we define (again on $\mathbb H$) the functions $z^{-\alpha} = \exp (-\alpha {\rm Log}\, z)$ for
$\alpha\in (0,1)$ and
\[
f_k (z) = \exp (- z^{-\alpha}) \sin \left({\rm Log}\, z + \frac{3-2k}{6} \pi i \right)\,  \qquad \mbox{for $k=0,1,2,3$.}
\]
Observe that:
\begin{itemize}
\item[(i)] Each $f_k$ can be extended smoothly to a $C^\infty$ function on $\overline{\mathbb H}$. Indeed, observe first that there is an holomorphic extension of $f_k$ to $\mathbb C \setminus \{z\in \mathbb R: {\rm Im}\, z =0, {\rm Re}\, z\leq 0\}$, which, with a slight abuse of notation, we keep denoting by $f_k$. Such extension is thus defined on $\overline{\mathbb H}\setminus \{0\}$. Hence, in order to prove our claim it suffices to show that any partial derivative (of any order) of $f_k$ can be extended continuously from $\overline{\mathbb H}\setminus \{0\}$ to the origin. We claim in particular that such extension can be achieved by setting it $0$ at the origin. Since $\partial_{\overline z} f_k =0$ (on $\overline{\mathbb H}\setminus \{0\}$), it suffices to show our claim for any partial derivative $\partial_z^\ell f$. For the latter we easily have the inequality
\begin{equation}\label{e:stima_jonas}
|\partial^\ell_z f_k (z)|\leq C (\alpha, \ell) |z|^{-N (\alpha, \ell)} e^{ - {\rm Re}\, z^{-\alpha}} \leq C (\alpha, \ell) |z|^{-N (\ell,\alpha)} e^{- c (\alpha) |z|^{-\alpha}}\, ,
\end{equation}
where $N (\alpha, \ell), C(\alpha, \ell)$ and $c (\alpha)=\cos(\alpha \frac{\pi}{2})$ are positive constants. 
\item[(ii)] Since $\exp (- z^{-\alpha})$ does not vanish on $\overline{\mathbb H}\setminus \{0\}$, the zero set $Z_k$ of $f_k$ in $\overline{\mathbb H}\setminus \{0\}$ is given
by 
\[
Z_k= \left\{z\in \overline{\mathbb H}: {\rm Log}\, z + \frac{3-2k}{6} \pi i \in \pi \mathbb Z\right\}\, ,
\]
namely by 
\begin{equation}\label{e:formula_Z_k}
Z_k = \left\{\exp \left(n \pi + i\frac{2k-3}{6}\pi \right): n \in \mathbb Z\right\}\, . 
\end{equation}
\end{itemize}
Consider next the function
\[
g(z) = \prod_{k=0}^3 f_k (z)\, .
\]
We then conclude that $g$ is holomorphic on $\mathbb H$, it is $C^\infty$ on $\overline{\mathbb H}$ and its zero set, which we denote by $Z$, is given by
\[
Z = \{0\} \cup \bigcup_{k=0}^3 Z_k\, .
\]

Define now the map $G: \overline{\mathbb H}\to \mathbb C^2$ by $G(z) = (z^3, g(z))$. We consider a smooth simple
curve $\gamma\subset \overline{\mathbb H}$  which contains a nontrivial segment 
\begin{equation}\label{e:def_imaginary_segment}
\sigma = [-\tau i, \tau i]
\end{equation}
on the imaginary axis and we let $D\subset \mathbb H$ be the open disk bounded by $\gamma$. The current $T := G_\sharp \a{D}$ is integer rectifiable and 
\[
\partial T = G_\sharp \partial \a{D} = G_\sharp \a{\gamma}\, .
\] 
Observe that $G (D)$ is an holomorphic curve of $\mathbb C^2$, which carries a natural orientation. If $\a{G(D)}$ denotes the corresponding integer rectifiable current, we then have $T = \Theta \a{G(D)}$, where $\Theta$ is the integer-valued function which at $\mathcal{H}^m$-a.e. point $p\in G(D)$ counts the number of preimages in $D$, namely $\Theta (p) = \sharp \{z\in D: G(z) = p\}$ (indeed our argument below will show that $\Theta$ equals $1$ except for a countable number of points).  It follows from a classical result of Federer (cf. \cite{Fed}) that $T$ is an area-minimizing current.

We then claim that
\begin{itemize}
\item[(a)] for an appropriate choice of $\gamma$, $G_\sharp \a{\gamma} = \a{G (\gamma)}$ and $G (\gamma) \subset \mathbb C^2 = \mathbb R^4$ is a smooth embedded curve;
\item[(b)] $\sigma \cap G (Z)$ is contained in ${\rm Sing_b} (T)$.
\end{itemize}
Since 
\[
G(Z) = \{0\} \cup \bigcup_{k=0}^3 G (Z_k) = \{0\} \cup \{(\pm i e^{3 n\pi},0)\in \mathbb C^2 = \mathbb R^4: n \in \mathbb Z\}\, ,
\] 
we conclude from (b) that ${\rm Sing_b} (T)$ has an accumulation point at the origin. Thus, because of (a), $\Gamma = G (\gamma)$ is a closed curve which satisfies the claims of the theorem. 

\medskip

In order to show (a) and (b) consider first that the map $G$ is a local smooth embedding at every point $z\in \overline{\mathbb H}$ which is not the origin, because the differential of $z\mapsto z^3$ has full rank everywhere except at the origin. We next claim that
\begin{itemize}
\item[(c)] There is a discrete subset $W\subset \overline{\mathbb H}\setminus \{0\}$ such that the map $G$ is injective when restricted onto $\overline{\mathbb H}\setminus (W\cup\{0\})$.
\end{itemize} 
In order to show (c) consider first that, if $G(z) = G(w)$, then $z^3=w^3$. Thus our claim reduces to showing that
the map $\lambda (z) := g(z)- g(e^{2\pi i/3} z)$ has a discrete set of zeros on the domain 
\[
\Lambda := \left\{z\neq 0 : z\in \overline{\mathbb H} \;\;\mbox{and}\;\; e^{2\pi i/3} z \in \overline{\mathbb H}\right\}\, .
\]
By the holomorphicity of $\lambda$ and the connectedness of $\Lambda$, it suffices to show that $\lambda$ does not vanish identically on $\Lambda$. On the other hand, if it were $\lambda \equiv 0$, then we could extend $g$ holomorphically to a function $\tilde{g}$ on $\mathbb C^2\setminus \{0\}$ with the property that $\tilde{g} (z) = \tilde{g} (e^{2\pi i/3}z)$ for every $z$. From the discussion above it follows easily that such a map $\tilde{g}$ could be extended continuously at the origin and it would thus be holomorphic on the entire complex plane. On the other hand $\tilde{g}$ has a sequence of zeros which accumulate to the origin and thus it would be forced to vanish identically. In particular we would conclude that $g$ vanishes identically and that one of the $f_k$'s must vanish identically too. By the very definition of $f_k$ this is obviously false.

\medskip

Having proved (c) we now show the existence of $\gamma$ as in (a). First we show that $\gamma$ can be chosen so that $G|_\gamma$ is injective. As a preliminary remark, the only point of $\overline{\mathbb H}$ which $G$ maps to the origin $(0,0)$ of $\mathbb C^2$ is the origin $0$ of $\mathbb C$, so we just need to show the
injectivity of $G$ on $\gamma \setminus \{0\}$. 
Observe that, by (c), we can assume that both $G (\tau i)$ and $G (-\tau i)$ have exactly one preimage in $\overline{\mathbb H}$. Since $G$ is an immersion on $\overline{\mathbb H}\setminus \{0\}$, we can choose $\tau$ so that there are two neighborhoods $U_1$ and $U_2$ of, respectively, the endpoints $\tau i$ and $-\tau i$ of the segment $\sigma$ with the property that $G(z)$ has exactly one counterimage in $\overline{\mathbb H}$ for every $z\in (U_1\cup U_2)\cap \overline{\mathbb H}$. Moreover, a generic $\gamma$ will avoid the set $W$, which is discrete, and thus we have shown that
$G$ is injective on $\gamma\setminus \sigma$. Furthermore, we can ensure that all points $z$ in $\gamma\setminus \sigma$ have modulus strictly larger than $\tau$. Since $G(z) = G(w)$ implies $z^3 = w^3$ and hence $|z|=|w|$, such a choice enforces that $G (\gamma\setminus \sigma) \cap G (\sigma) = \emptyset$. It remains to show that $G$ is injective on $\sigma$, but this is easy because, if $z,w\in \sigma$, then both $z$ and $w$ are purely imaginary and the equation $z^3= w^3$ implies $z=w$.

We next wish to show that $G (\gamma)$ is a smooth curve. As already observed, $G$ is an immersion when restricted to $\overline{\mathbb H}\setminus \{0\}$. Thus we only have to show that $G (\gamma)$ is smooth in a neighborhood of $(0,0) = G (0)$. Observe that, in such a neighborhood $G(\gamma)$ is given by the points $\{(-is^3, g(is)): s \in ]-\delta, \delta[\}$, which we can rewrite as 
$\{(-is, g (i s^{\frac{1}{3}})): s \in ]-\delta^3, \delta^3[\}$. We thus have to show that the map
\[
\mathbb R \ni s\mapsto h(s) = g (i s^{\frac{1}{3}})\in \mathbb C
\]
is smooth in a neighborhood of the origin and we will then conclude that $G (\gamma)$ is indeed a smooth embedded curve. In fact the map $h$ is certainly smooth on $(-1,0)\cup (0,1)$. Computing its derivatives we conclude easily that
\[
|h^{(\ell)} (s)|\leq C (\ell) |s|^{-N (\ell)} \sum_{0\leq k \leq \ell} |D^k g (is^{\frac{1}{3}})|
\leq C (\ell, \alpha) |s|^{-N (\ell)} e^{- c(\alpha) |s|^{-\alpha/3}}\, ,
\]
where we have used the estimate \eqref{e:stima_jonas}. In particular
\[
\lim_{s\to 0} h^{(\ell)} (s) = 0\, 
\]
for every $\ell \in \mathbb N$. This shows the smoothness of $g$ in $0$. 

\medskip

We finally come to (b). We just have to show that every point $p\in G(Z)$ is singular: since the origin is an accumulation point of $G(Z)$ and ${\rm Sing_b} (T)$ is closed, the origin will be a singular point as well. Let  \(p\)  be in \(G(Z)\setminus \{0\}\), then \(p=(\pm i e^{3n\pi}, 0)\) for some \(n\in \mathbb Z\). Let us assume that \(p=(i e^{3n\pi},0)\) (the other case being analogous) and  note that $p$  has exactly two preimages in $\overline{\mathbb H}$ through $G$, namely 
\[
z_1=\exp \left(n \pi - i\frac{\pi}{2} \right)\qquad z_2=\exp \left(n \pi + i\frac{\pi}{6} \right)=e^{2\pi i/3}z_1.
\]
Since, as already observed, \(d G_{z_i}\) has full rank for \(i=1,2\), there are small neighborhoods \(U_1\) and \(U_2\) of \(z_1\) and \(z_2\) such that \(G|_{U_1}\) and \(G|_{U_2}\) are embeddings.  Since we have already shown that the set \(\{z: g(z)= g(e^{2\pi i/3}z)\}\) is discrete in $\overline{\mathbb H}\setminus \{0\}$, up to making the neighborhoods smaller we have that \(G(U_1) \cap G(U_2)=\{p\}\). This shows that around \(p\),  \(G(D)\) is an immersed surface with boundary and with a ``double point'' at $p$. Thus $p$ belongs to ${\rm Sing_b} (T)$. 

\begin{remark}\label{rmk:intsing}
Note that the curve \(\gamma\) in the above Theorem can be slightly  modified in order to have that \(G(\gamma)\) is still a smooth curve and that \(\gamma\) bounds a smooth connected  open disk \(\tilde D\) with \(0\in \partial \tilde D\) and \(\sigma = (-\tau i, \tau i)\setminus\{0\}\subset \tilde D\). In particular there is a sequence of points in \(Z\) which are in the interior of \(\tilde D\) and that accumulates towards \(\{0\}\). \(G(Z)\) now consist of interior singular points for \(\tilde T := G_\sharp \llbracket\tilde D\rrbracket\) which accumulate towards the boundary.
\end{remark}

%
\begin{remark}
It is not difficult to see that, in the example above, at any singular point $p\in G (Z)$ the tangent cone consists of one two-dimensional plane $\a{\pi (p)}$ and a two-dimensional half-plane $\a{\pi^+ (p)}$, which intersect only at the origin. By slightly modifying the example, namely by considering the map $G (z) = (z^3, (g (z))^2)$, we can easily ensure that the tangent cone at every $p\in G (Z)$ is contained in a single two-dimensional plane $\pi (p)$. In particular the tangent line to the boundary curve splits such planes in two halves $\pi^- (p)$ and $\pi^+ (p)$: the tangent cone is then 
$2\a{\pi^+ (p)}+ \a{\pi^- (p)}$. On the other hand we do not know whether it is possible to have a sequence of boundary branching singularities which accumulate somewhere. 
\end{remark}

\subsection{Proof of Statement (b)} We now turn to the proof of statement (b) in Theorem \ref{thm:example}.
The starting point is the following fact, proved by the third author in \cite{Jonas2},  where we keep using the notation 
\[
\mathbb H = \{z\in \mathbb C: {\rm Re}\, z>0\}
\]
for the complex halfplane.
\begin{lemma}[{\cite[Lemma 0.1]{Jonas2}}]\label{lm:jonas}
There exists a holomorphic function  \(g: \mathbb H\to \mathbb C\) which extends to a smooth function $F\in C^\infty (\overline{\mathbb H})$ and such that the set 
\[
E := \{F =0\}\cap \partial \mathbb H
\]
is contained in the segment $\sigma := \partial \mathbb H \cap \{{\rm Im}\, z\in [-\frac{1}{2}, \frac{1}{2}]\}$ and has Hausdorff dimension $\dim_{\cH} (E)$ equal to 1.
\end{lemma}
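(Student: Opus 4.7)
The plan is to promote the one-scale construction from the proof of statement~(a) --- where $\exp(-z^{-\alpha})\sin(\mathrm{Log}\,z)$ produced an isolated geometric sequence of boundary zeros --- to a multi-scale construction whose boundary zero set contains a prescribed Cantor-type subset $C\subset \sigma$ of Hausdorff dimension one. Since any subset of $\sigma$ has Hausdorff dimension at most one, the containments $C\subset \{F=0\}\cap\partial\mathbb{H}\subset\sigma$ automatically upgrade to $\dim_{\mathcal{H}}(\{F=0\}\cap\partial\mathbb{H})=1$, so it suffices to engineer these two containments and the smoothness up to $\overline{\mathbb{H}}$.

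First I would fix a generalized Cantor set $C\subset\sigma$ with $\mathcal{H}^1(C)=0$ and $\dim_{\mathcal{H}}(C)=1$: take $C=\bigcap_{n\geq 1}C_n$ with $C_n\subset\sigma$ a disjoint union of $2^n$ closed subintervals of common length $\ell_n=2^{-n}/n$. Then $2^n\ell_n=1/n\to 0$ whereas $2^n\ell_n^s\to\infty$ for every $s<1$, which gives the required dimension. Denote by $c_{n,j}\in\sigma$, $j=1,\dots,2^n$, the centers of the stage-$n$ intervals.

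Second, for sequences $\alpha_n\uparrow 1$ in $(0,1)$ and $\varepsilon_n\downarrow 0$ to be chosen, I would set
\[
\Phi(z):=\sum_{n=1}^\infty\sum_{j=1}^{2^n}\frac{\varepsilon_n}{(z-c_{n,j})^{\alpha_n}},\qquad F(z):=\exp(-\Phi(z)),
\]
with $(\cdot)^{\alpha_n}$ the branch of the fractional power having positive real part on $\overline{\mathbb{H}}\setminus\{c_{n,j}\}$. Each summand is then holomorphic on $\mathbb{H}$ and flat (all derivatives vanishing) at its pole $c_{n,j}$, mirroring the role of the building block $\exp(-z^{-\alpha})$ in (a). I would then verify the three required properties. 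Smoothness follows from absolute convergence of $\Phi$ in $C^m(\overline{\mathbb{H}}\setminus\{c_{n,j}\})$ for every $m$, combined with flatness of the exponential at each pole and, more subtly, at every accumulation point of the poles lying in $C$. Non-vanishing on $\partial\mathbb{H}\setminus\sigma$ is easy: the distance from any such point to every $c_{n,j}$ is bounded below, so $\Phi$ is uniformly bounded there. Finally, $F\equiv 0$ on $C$ relies on the estimate $|w-c_{n,j(n,w)}|\leq\ell_n/2$ at the unique stage-$n$ interval containing $w\in C$, which forces the level-$n$ contribution to $\mathrm{Re}\,\Phi(w)$ to be of size at least $\asymp \varepsilon_n\ell_n^{-\alpha_n}$.

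The main obstacle is the simultaneous balancing of these requirements: the smoothness condition roughly demands $\sum_n 2^n\varepsilon_n\ell_n^{-\alpha_n-m}<\infty$ for every $m$, while $F\equiv 0$ on $C$ demands $\sum_n \varepsilon_n\ell_n^{-\alpha_n}=\infty$. A fixed exponent $\alpha<1$ together with purely geometric $\ell_n$ and $\varepsilon_n$ cannot satisfy both; the resolution is to let $\alpha_n\uparrow 1$ (so the ``divergence gain'' $\ell_n^{-\alpha_n}$ asymptotically catches up with the ``smoothness cost'' $2^n\ell_n^{-m}$) and to calibrate $\varepsilon_n$ on a logarithmic scale. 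As a trial choice, $\alpha_n=1-1/n$ with $\varepsilon_n=2^{-n}/n^2$ yields the divergence of $\sum n^{-1-1/n}$ along $C$ while keeping each $C^m$-series summable; a further delicate step, which I expect to be the hardest, is verifying $C^\infty$ regularity of $F$ at the accumulation points of the pole set inside $C$, where infinitely many poles crowd arbitrarily close. This last point is the heart of the bookkeeping carried out in \cite{Jonas2}.
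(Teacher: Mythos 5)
First, a remark on the comparison itself: the paper offers no proof of this lemma — it is quoted directly from \cite{Jonas2} — so your proposal has to be judged on its own merits, and as written it has two genuine gaps. The first is quantitative. For $w,c_{n,j}\in\partial\mathbb H$ the increment $w-c_{n,j}$ is purely imaginary, so $\mathrm{Re}\,\big((w-c_{n,j})^{-\alpha_n}\big)=|w-c_{n,j}|^{-\alpha_n}\cos(\alpha_n\pi/2)$, and with $\alpha_n=1-1/n$ the factor $\cos(\alpha_n\pi/2)\sim\pi/(2n)$ cannot be dropped: it is the price of the branch you need to keep $\mathrm{Re}\,\Phi\ge 0$. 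With your trial choice $\varepsilon_n=2^{-n}/n^2$, $\ell_n=2^{-n}/n$, the nearest-pole contribution at level $n$ is then of order $n^{-2}$, not $n^{-1-1/n}$, so the series you exhibit converges at $w\in C$ and the stated mechanism does not give $F\equiv 0$ on $C$. (Summing over all $2^n$ level-$n$ poles instead of the nearest one partially repairs the divergence, but that is a different estimate from the one you wrote, and mere divergence is in any case not what is needed — see below.)

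The second gap is the decisive one: the $C^\infty$ extension at the accumulation points of the poles, i.e. exactly on $C$ where the zeros are supposed to sit, is the entire content of the lemma, and you defer it to \cite{Jonas2} rather than proving it — in effect invoking the result being quoted. For your ansatz this is not routine bookkeeping. Near a level-$n$ pole, at boundary distance $t$ with $\varepsilon_n\cos(\alpha_n\pi/2)\,t^{-\alpha_n}\approx 1$, one has $|\Phi^{(j)}(z)|\approx n\,(n/\varepsilon_n)^{j}$ while the damping from that pole is only $e^{-O(1)}$; so $|D^kF(z)|$ is of size $(n^2/\varepsilon_n)^k e^{-R_n}$, where $R_n$ is the real part contributed by the remaining poles on the level-$n$ intervals. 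Smoothness at points of $C$ therefore requires $R_n\ge k\log(1/\varepsilon_n)\sim kn$ for every $k$, i.e. $R_n/n\to\infty$, while convergence of $\Phi$ away from $\sigma$ simultaneously forces $\sum_n 2^n\varepsilon_n<\infty$; these two constraints are in real tension and are exactly what a correct calibration must reconcile. Your trial parameters give $R_n=O(n)$ at best (and only $O(\log n)$ by your nearest-pole accounting), so sufficiently high derivatives of $e^{-\Phi}$ blow up along boundary points approaching $C$ and $F\notin C^\infty(\overline{\mathbb H})$. A calibration may well exist — this is what the construction in \cite{Jonas2} accomplishes — but identifying it and verifying all derivatives uniformly near $C$ is the missing proof, not a deferrable detail.
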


Let now \(\gamma\) be a smooth curve contained in \(\overline{\mathbb {H}}\cap \{|z|\le 1\}\) such that 
\begin{itemize}
\item[(a)] $\sigma \subset \gamma$;
\item[(b)] \(\gamma \cap\{z \in \mathbb H : g(z)=0\}=\emptyset\).
\end{itemize}
Note that this is possible since \(\{g=0\} \cap \mathbb H\) is at most countable. We denote by  \(\mathbb D_+\subset \mathbb {H}\) the disk bounded by \(\gamma\). We let  
\[
G(z)=(z,F(z)) 
\]
and \(S=G_\sharp \a{D_+}\). Note that \(G(\gamma)\) is a smooth curve.  Arguing as in the proof of Statement (a),
we get 
\[
\partial S=G_\sharp \a{\gamma}=\a{G(\gamma)}.
\]
We furthermore let \(\mathbb D=\{|z|\le 1\}\) be the unit disk and \(R=\iota_\sharp\a{\mathbb D}\), \(\iota: \mathbb C\to \mathbb C^2\), \(\iota(z)=(z,0)\). Note that 
\[
\supp \partial S\cap \supp \partial R=\emptyset.
\]
Now the  current  \(T_1=R+S\) satisfies the conclusion of the first of the claim. Indeed 
\[
\partial T_1=\a{\gammaup_1}\qquad\mbox{with}\qquad \gammaup_1= G (\gamma)\cup \{(z, w): |z|=1, w=0\}
\]
and, since the latter union is disjoint, \(\Gamma_1\) is a smooth \(1\)-dimensional manifold. Furthermore, since  both \(R\) and \(S\) are calibrated by the K\"ahler form,  so is \(T_1\), implying that it is the only mass minimizing current spanned by \(\a{\gammaup_1}\). Finally
\[
\bsing(T_1)\supset E\times\{0\},
\]
from which we conclude that $\dim_{\cH} (\bsing (T_1))=1$.

\begin{remark}
In fact it is easy to see that $\bsing (T_1) = E \times \{0\}$, therefore, even though the latter set has Hausdorff dimension $1$, it is a $\cH^1$-null set. Note also that around points in \(E\), the current \(S\) can be represented by a smooth graph, and thus these are crossing singularities.

Eventually we remark that  by the F. and M. Riesz' Theorem,~\cite{Riesz}, the conclusion of \cite{Jonas2} is optimal, meaning that the set \(E\)  in Lemma \ref{lm:jonas} cannot have positive measure. Hence the above construction cannot give an example of a $2$-dimensional mass minimizing current which bounds a smooth submanifold and has a boundary singular set of positive $\cH^1$-measure.
\end{remark}

\section{Plan of the proof of Theorem \ref{THM:MAIN}}\label{s:on_the_road}

In this section we outline the long road which will take us finally to the proof of Theorem
\ref{thm:main}. We fix therefore $\Sigma, \gammaup$ and $T$ as in Assumption \ref{ass:main}.

\medskip

{\bf Reduction to  collapsed points.} We start in Chapter \ref{chap:monot} by recalling Allard's monotonicity formula at the boundary. First of all, combining it with a suitable variant of Almgren's stratification theorem, we conclude that, except for a set of Hausdorff dimension at most $m-2$, at any boundary point $p$ there is a tangent cone which is ``flat'', namely which is contained in an $m$-dimensional plane $\pi \supset T_0 \gammaup$. Secondly, using a classical upper semicontinuity argument, we will focus our attention on `` collapsed points'', cf. Definition \ref{def:cc_point}: additionally to the existence of a flat tangent cone, at such points $p$ we know that there is a sufficiently small neighborhood $U$ where $\Theta (T,q)\geq \Theta (T,p)$ for all $q\in \gammaup \cap U$. 
In particular we will reduce the proof of Theorem \ref{thm:main} to proving that any  collapsed point is regular, cf. Theorem \ref{thm:step0} and Theorem \ref{thm:real_main}. 

\medskip

{\bf The ``linear'' theory.} Assume next that $0\in \gammaup$ is a  collapsed point and let $Q-\frac{1}{2}$ be its density. Note that by Allard's regularity theory we know a priori that $0$ is a regular point if $Q=1$ and thus we can assume, without loss of generality, that $Q \geq 2$. Fix a flat tangent cone $S$ to $T$ at $0$ and assume, up to rotations, that it is supported in the plane $\pi_0 = \mathbb R^m \times \{0\}$ and that $T_0 \gammaup = \{x_1 =0\} \cap \pi_0$. Denote by $\pi_0^\pm$ the two half-planes $\pi_0^\pm := \{\pm x_1 > 0\} \cap \pi_0$, with the assumption that $S = (Q-1) \a{\pi_0^-}+ Q \a{\pi_0^+}$. It is reasonable to expect that, at suitably chosen small scales, the current $T$ is formed by $Q$ sheets over $\pi_0^+$ and $Q-1$ sheets over $\pi_0^-$, respectively. Taken all together such sheets form the current $T$ and have boundary $\a{\gammaup}$. Moreover, by a simple linearization argument such sheets can be expected to be almost harmonic. 

Having this picture in mind, it is natural to develop a theory of $\qhalf$-valued functions minimizing the Dirichlet energy. Their domain of definition is an open subset $\Omega$ of $\mathbb R^m$ which is divided into two halves $\Omega^\pm$ by some smooth $(m-1)$-dimensional surface $\gamma \subset \Omega$. A $\qhalf$-valued map consists then of a pair $(f^+, f^-)$ where $f^-$ is a $(Q-1)$-valued map over $\Omega^-$ (in the sense of Almgren, cf. \cite{DS1}) and $f^+$ is a $Q$-valued map over $\Omega^+$. Such pairs are required to satisfy an additional assumption: the trace of $f^+$ over $\gamma$ is obtained from that of $f^-$ by adding a classical single valued map $\varphi$, which is called the ``interface'', 
cf. Definition \ref{def:Qhalf} for the precise statement. The relevant problem is then that of minimizing the sum of the Dirichlet energies of the two maps subject to the constraint that their boundary values on $\partial \Omega$ and the interface $\varphi$ are both kept fixed. In Chapter \ref{chap:Qhalf} we develop a suitable existence theory for such objects, cf. Theorem \ref{thm:ex_dm}. Concerning their interior structure, we can apply all the conclusions of Almgren's theory (indeed in this paper we
will take advantage of the point of view developed in \cite{DS1}). 

The correct counterpart of the  collapsed situation in Theorem \ref{thm:real_main} must assume, however, that all the $2Q-1$ sheets meet at the interface $\varphi$; under such assumption we say that the $\qhalf$ ${\rm Dir}$-minimizer  collapses at the interface, cf. Definition \ref{def:totally_collapsed}. The core of Chapter \ref{chap:Qhalf} is a suitable regularity theory for minimizers which collapse at the interface. First of all their H\"older continuity follows directly from the Ph.D. thesis of the third author, cf. \cite{Jonas}. Secondly, the most important conclusion of our analysis is that a minimizer collapses at the interface  only if it consists of a single harmonic sheet ``passing through'' the interface, counted therefore with multiplicity $Q$ on one side and with multiplicity $Q-1$ on the other side, cf. Theorem \ref{thm:collasso}. 

Theorem \ref{thm:collasso} is ultimately the \emph{deus ex machina} of the entire argument leading
to Theorem \ref{thm:main}. The underlying reason for its validity is that a monotonicity formula for a suitable variant of Almgren's frequency function holds, cf. Theorem \ref{thm:limit_ff}. Given the discussion of \cite{Jonas2}, such monotonicity can only be hoped in the  collapsed situation and, remarkably, this suffices to carry on our program. 

The validity of the monotonicity formula is clear when the  collapsed interface is flat.
When we have a curved boundary a subtle yet important point becomes crucial: we cannot hope in general for the exact first variation identities which led Almgren to his monotonicity formula, but we can replace them with suitable inequalities. However the latter can be achieved only if we adapt the frequency function by integrating a suitable weight, cf. Definition \ref{def:ff}. The idea of ``smoothing'' Almgren's frequency function with a suitable weight is indeed already present in \cite{DS5} and in this paper we need to push it much further, distorting substantially the geometry of the domain.

\medskip

{\bf First Lipschitz approximation.} In Chapter \ref{chap:Lip1} we use the linear theory for approximating the current with the graph of a Lipschitz $\qhalf$-valued map and we then show that such approximation is close to be ${\rm Dir}$-minimizing, cf. Theorem \ref{t:Lipschitz_1} and Theorem \ref{t:harm_1}. The approximation algorithm is a suitable adaptation of the one developed in \cite{DS3} for interior points. In particular, after adding an ``artificial sheet'', we can directly use the Jerrard-Soner modified BV estimates of \cite{DS3} to give a rather accurate Lipschitz approximation: the subtle point is to engineer the approximation so that it collapses at the interface. 

\medskip

{\bf Height bound and excess decay.} In Chapter \ref{chap:decay} we use the Lipschitz approximation of Chapter \ref{chap:Lip1} together with the regularity theory of Chapter \ref{chap:Qhalf} to establish a power-law decay of the excess \emph{\`a la} De Giorgi in a neighborhood of a  collapsed point, cf. Theorem \ref{thm:decay_and_uniq}. The effect of such theorem is that the tangent cone is flat and unique at every point $p\in \gammaup$ in a suitable neighborhood of a  collapsed point $0\in \gammaup$. Correspondingly, the plane $\pi (p)$ which contains such tangent cone is H\"older continuous in the variable $p\in \gammaup$ and the current is contained in a suitable horned neighborhood of the union of such $\pi (p)$, cf. Corollary \ref{c:cone_cut}. 

An important ingredient of our argument is an accurate height bound in a neighborhood of any collapsed point in terms of the spherical excess, cf. Theorem \ref{t:height_bound}. The argument follows an important idea of Hardt and Simon in \cite{HS} and takes advantage of an appropriate 
variant of Moser's iteration on varifolds, due to Allard, combined with a crucial use of the remainder in the monotonicity formula. The same argument has been also used by Spolaor in a similar context in \cite{Spolaor}, where he combines it with the decay of the energy for ${\rm Dir}$-minimizers, cf. \cite[Proposition 5.1 \& Lemma 5.2]{Spolaor}.

\medskip

{\bf Second Lipschitz approximation.} The decay of the excess proved in Chapter \ref{chap:decay} is used in Chapter \ref{chap:Lip2} to improve the accuracy of the Lipschitz approximation of Theorem \ref{t:harm_1}, cf. Theorem \ref{thm:second_lip}.  In particular, by suitably decomposing the domain of the approximating map in a Whitney-type cubical decomposition which refines towards the boundary, we can take advantage of the interior approximation theorem of \cite{DS3} on each cube and then patch the corresponding graphs together. 

As in the case of the interior regularity, this new Lipschitz approximation is of key importance since it coincides with the current up to an error which is superlinear in the excess.

\medskip

{\bf Left and right center manifolds.} In Chapter \ref{chap:center_manifolds} we use the approximation Theorem \ref{thm:second_lip} and a careful smoothing and patching argument to construct a ``left'' and a ``right'' center manifold $\mathcal{M}^+$ and $\mathcal{M}^-$, cf. Theorem \ref{thm:center_manifold}. The $\mathcal{M}^\pm$ are $C^{3,\kappa}$ submanifolds of $\Sigma$ with boundary $\gammaup$ and they provide a good approximation of the ``average of the sheets'' on both sides of $\gammaup$ in a neighborhood of the  collapsed point $0\in \gammaup$. They can be glued together to form a $C^{1,1}$ submanifold $\mathcal{M}$ which ``passes through $\gammaup$'': each portion has $C^{3,\kappa}$ estimates {\em up to the boundary}, but we only know that the tangent spaces at the boundary coincide, whereas we have a priori no information on the higher derivatives (it must be noted though that, at the end of the argument for Theorem \ref{thm:main}, we will conclude that the center manifolds and the current coincide and that the latter is regular: a posteriori we will then conclude that $\mathcal{M}$ is indeed $C^{3,\kappa}$). The construction algorithm follows closely that of \cite{DS4} for the interior, but some estimates must be carefully adapted in order to ensure the needed boundary regularity. 

The center manifolds are coupled with two suitable approximating maps $N^\pm$, cf. Theorem \ref{thm:cm_app}. The latter take values on the normal bundles of $\mathcal{M}^\pm$ and provide an accurate approximation of the current $T$. Their construction is a minor variant of the one in \cite{DS4}. 

\medskip

{\bf Monotonicity of the frequency function.} In Chapter \ref{chap:frequency} we use a suitable Taylor expansion of the area functional to show that the monotonicity of the frequency function holds for the approximating maps $N^\pm$ as well, cf. Theorem \ref{thm:ff_estimate_current}. In particular we use the first variations of the current along suitably chosen vector fields in order to derive the same inequalities which allow to prove Theorem \ref{thm:limit_ff}. Such inequalities contain however several additional error terms which must be estimated with high accuracy: our proof follows crucially some ideas of \cite{DS5}. Moreover, the ``adapted'' frequency function introduced in Chapter \ref{chap:Qhalf} plays a central role in the estimate of Theorem \ref{thm:ff_estimate_current}. 

\medskip

{\bf Final blow-up argument.} In Chapter \ref{chap:blowup} we then complete the proof of Theorem \ref{thm:main}: in particular we show that, if $0$ were a singular  collapsed point, suitable rescalings of the approximating maps $N^\pm$ would produce, in the limit, a $\qhalf$ ${\rm Dir}$-minimizer violating the regularity Theorem \ref{thm:collasso}. On the one hand the estimate on the frequency function of Chapter \ref{chap:monot} plays a primary role in showing that the limiting map is nontrivial. On the other hand the properties of the center manifolds $\mathcal{M}^\pm$ enter in a fundamental way in showing that the average of the sheets of the limiting $\qhalf$ map is zero on both sides.

\section{Open problems}

Clearly, since the size of the boundary singular set in all known examples is much smaller than what proved in Theorem \ref{thm:main}, the most central open question is whether one can improve the ``generic boundary regularity'' proved in this paper. As already mentioned in the introduction, the most daring conjecture compatible with the examples known so far is the following:

 \begin{conjecture}\label{conj:sczz2}
Let $T,\Sigma,\gammaup$ be as in Assumption \ref{ass:main}. The Hausdorff dimension of the set of genuine boundary singularities is at most $m-2$.
\end{conjecture}

A somewhat milder statement, which would still give a substantial improvement of Theorem \ref{thm:main} is instead

\begin{conjecture}\label{conj:sczz3}
Let $T,\Sigma,\gammaup$ be as in Assumption \ref{ass:main}. Then $\mathcal{H}^{m-1} (\bsing(T)) =0$.
\end{conjecture}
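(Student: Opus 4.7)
The plan is to reduce Conjecture \ref{conj:sczz3} to a regularity statement about ``flat'' singular boundary points via the decomposition Theorem \ref{thm:primes} and a boundary Almgren--Federer stratification, and then to attack the residual set through a contradiction blow-up argument exploiting the linear theory of Chapter \ref{chap:Qhalf}.

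First I would split the analysis component by component using Theorem \ref{thm:primes}. For every two-sided component $\gammaup_i$, Theorem \ref{thm:main2} already gives $\dim_{\mathcal{H}}(\bsing(T) \cap \gammaup_i) \leq m-2$, so $\mathcal{H}^{m-1}(\bsing(T) \cap \gammaup_i) = 0$ for free. It therefore suffices to treat a single one-sided component $\gammaup'$, in which case we may replace $T$ by $T_{o(i)}$ and assume that $T$ has multiplicity $1$ near $\gammaup'$ and that every regular boundary point on $\gammaup'$ has density $\tfrac{1}{2}$.

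Next I would set up a boundary stratification using Allard's monotonicity formula (Chapter \ref{chap:monot}) together with the observation that every tangent cone $C$ at a boundary point $p$ has boundary equal to the $(m-1)$-plane $T_p\gammaup$. Its translation spine is therefore a subspace of $T_p\gammaup$, of dimension at most $m-1$, and of dimension exactly $m-1$ if and only if $C$ is a cylinder over a $1$-dimensional area-minimizing cone with a single boundary point at the origin---which by the elementary classification of such $1$-dimensional cones is forced to be flat in the sense of the paper. A boundary version of the Almgren--Federer dimension reduction then yields that the set $\mathcal{S}$ of $p \in \gammaup$ admitting \emph{no} flat tangent cone has Hausdorff dimension at most $m-2$ and contributes nothing to $\mathcal{H}^{m-1}(\bsing(T))$. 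It remains to control the set $\bsing_{\mathrm{flat}}(T)$ of singular boundary points that do admit a flat tangent cone.

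By Theorem \ref{thm:main} every $p \in \bsing_{\mathrm{flat}}(T)$ is necessarily non-collapsed; combined with the upper semi-continuity and half-integrality of $\Theta(T,\cdot)$, this gives
\[
\bsing_{\mathrm{flat}}(T) \;\subset\; \bigcup_{Q \geq 2} \partial_{\gammaup}\bigl\{q \in \gammaup : \Theta(T,q) \geq Q - \tfrac{1}{2}\bigr\}.
\]
The hardest step, and the one where I expect the essential obstruction to lie, is to upgrade this inclusion to $\mathcal{H}^{m-1}(\bsing_{\mathrm{flat}}(T)) = 0$. I would proceed by contradiction: if $\mathcal{H}^{m-1}(\bsing_{\mathrm{flat}}(T))>0$, then at some $\mathcal{H}^{m-1}$-density point $p_0 \in \bsing_{\mathrm{flat}}(T)$ of density $Q - \tfrac{1}{2}$ one selects scales $r_k \downarrow 0$ along which the rescaled currents $T_{p_0, r_k}$ converge to a flat cone of density $Q - \tfrac{1}{2}$ \emph{and} a positive $\mathcal{H}^{m-1}$-fraction of $\bB_{r_k}(p_0) \cap \gammaup$ is still occupied by flat singular points. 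Applying variants of the second Lipschitz approximation (Theorem \ref{thm:second_lip}) and of the center-manifold construction of Chapter \ref{chap:center_manifolds} to the rescalings, the goal would be to extract a limiting $\qhalf$ Dir-minimizer whose interface singularities have positive $\mathcal{H}^{m-1}$-density, contradicting Theorem \ref{thm:collasso}. The genuine difficulty is precisely the non-collapsed character of $p_0$: both the frequency monotonicity Theorem \ref{thm:limit_ff} and its current-level counterpart Theorem \ref{thm:ff_estimate_current} are established under the collapsing hypothesis, and extending at least an almost-monotonicity to non-collapsed boundary points---uniform in the scale and in the low-density sheets sitting below $\Theta(T,p_0)$---appears to require a genuinely new estimate beyond the techniques developed in this paper. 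This is the key step on which the whole strategy hinges.
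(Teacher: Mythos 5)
You should be aware that the statement you are trying to prove is not a theorem of this paper: Conjecture \ref{conj:sczz3} is stated as an \emph{open problem}, and nothing in the paper (nor in your proposal) settles it. The first two steps of your plan are fine and essentially cost-free given the paper's results: the reduction via Theorem \ref{thm:primes} and Theorem \ref{thm:main2} to one-sided components, and the fact that boundary points with no flat tangent cone form a set of dimension at most $m-2$ (Corollary \ref{c:uniqueness}). Likewise, the inclusion of the flat singular set into the union of the relative boundaries of the super-level sets $\{\Theta\geq Q-\frac12\}$ is correct (it is the Baire-category mechanism behind Theorem \ref{thm:step0}, since Theorem \ref{thm:real_main} forces flat singular points to be non-collapsed). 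But this inclusion is measure-theoretically vacuous: a relatively closed subset of $\gammaup$ with empty interior can have full $\cH^{m-1}$-measure, so everything rests on your final step, which is where the genuine gap lies.

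That final step fails for two distinct reasons, one of which is structural rather than merely technical. First, as you note, all the machinery needed to make the blow-up nontrivial (the frequency monotonicity of Theorems \ref{thm:limit_ff} and \ref{thm:ff_estimate_current}, the height bound, the center manifolds) is established only under the collapsed hypothesis $\Theta(T,q)\geq\Theta(T,p)$ nearby, which is exactly what fails at the points you must treat; without a monotone (or almost-monotone) quantity there is no persistence of singularities along the rescalings, and the limit could simply be $Q$ copies of a smooth sheet. Second, even granting convergence to a $\qhalf$ Dir-minimizer carrying a large singular set, Theorem \ref{thm:collasso} is not a valid contradiction target: it applies only to minimizers that collapse along the \emph{entire} interface, whereas the linearization at a non-collapsed point collapses at best on a subset of $\gamma$. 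In that regime Theorem \ref{thm:linearized_false} exhibits $\frac32$-valued minimizers whose singular set has dimension $m-1$, and the $\cH^{m-1}$-nullity of the singular set in the linear problem is itself the open Conjecture \ref{conj:sczz5}. So your scheme would at best reduce the nonlinear conjecture to an equally open linear one, not prove it; the "genuinely new estimate" you flag at the end is not an implementation detail but the entire content of the problem.
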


The ``linearized problem'' discussed in Chapter \ref{chap:Qhalf} enjoys a regularity theorem which is analogous to Theorem \ref{thm:main}. 

\begin{definition}
Let $(g^+,g^-)$ be a $\qhalf$-valued function with interface $(\gamma, \varphi)$ as defined in Chapter \ref{chap:Qhalf}. A point $p\in \gamma$ is regular if there are a ball $B_r (p)$, $Q-1$ functions $u_2, \ldots, u_Q : B_r (p) \to \mathbb R^n$ and a function $u_1: B_r^+ (p) \to \mathbb R^n$ such that
\begin{itemize}
\item[(i)] $g^+ = \sum_{i=1}^Q \a{u_i}$ on $B_r^+ (p)$ and $g^- = \sum_{i=2}^Q \a{u_i}$ on $B_r^- (p)$;
\item[(ii)] For any pair $i, j \geq 2$ either the graphs of $u_i$ and $u_j$ are disjoint or they coincide;
\item[(iii)] For any $i\geq 2$ either the graphs of $u_1$ and $u_i$ are disjoint or the graph of $u_1$ is contained in that of $u_i$. 
\end{itemize}
The complement of the regular points in $\gamma$ is called the {\em set of boundary singular points}.  If at a boundary singular point there are maps $u_j$'s which satisfy (i) and (ii) (but not (iii)), then the singular point will be called of {\em crossing type}. Singular points which are not of crossing type will be called {\em genuine boundary singularities}.

A point $p\in \Omega\setminus \gamma$ is {\em regular} if it is an interior regular point for either the $Q$-valued map $f^+$ or the $(Q-1)$-valued map $f^-$ (cf. the introduction of \cite{DS1} for the precise definition). The complement, in $\Omega\setminus \gamma$, is the set of {\em interior singular points}. The union of interior singular points and boundary singular points will be called the {\em singular set}. 

\end{definition}

\begin{theorem}\label{thm:linearized}
Let $(g^+,g^-)$ be a $\qhalf$-valued function with $C^3$ interface $(\gamma, \varphi)$ defined over a domain $\Omega$ and assume that it minimizes the Dirichlet energy in $\Omega \subset \mathbb R^m$. Then the set of boundary singular points is meager.
\end{theorem}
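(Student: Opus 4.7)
The strategy is to mirror, in the much simpler linearized setting, the architecture laid out in Section \ref{s:on_the_road} for the proof of Theorem \ref{thm:main}, dropping those steps that are specific to currents (the two Lipschitz approximations, the center manifolds, and the frequency monotonicity for currents) and invoking directly the rigidity result Theorem \ref{thm:collasso} together with the monotonicity of the adapted frequency function established in Chapter \ref{chap:Qhalf}.

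First I would reduce the statement to a density claim. Directly from Definition \ref{def:reg_points} applied to the linearized problem, the set $\breg(g)\cap\gamma$ of boundary regular points is relatively open in $\gamma$; hence $\bsing(g)\cap\gamma$ is relatively closed. A relatively closed set is meager in $\gamma$ if and only if it is nowhere dense, so it suffices to show that $\breg(g)\cap\gamma$ is dense. I argue by contradiction: assume there is a relative open portion $V\subset\gamma$ consisting entirely of boundary singular points. My goal is to produce a regular boundary point inside $V$.

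Second, I would perform the analogue of the "reduction to collapsed points" of Chapter \ref{chap:monot}. Using the adapted frequency function of Chapter \ref{chap:Qhalf} and its monotonicity, the frequency $I(p,0^+)$ is well-defined, upper semicontinuous in $p\in\gamma$, and bounded from below by the value associated to the totally collapsed flat configuration. Combining upper semicontinuity with a stratification-type argument on the values of $I$ and on the density of $g$ at boundary points, I would locate a point $p^\ast\in V$ which is \emph{boundary collapsed} in the sense that (i) $I(p^\ast,0^+)$ attains a local maximum along $\gamma$ in a neighborhood, and (ii) some blow-up of $(g^+,g^-)$ at $p^\ast$ is flat, i.e.\ supported on a linear interface with $Q$ sheets on one half-space and $Q-1$ sheets on the other, with all $2Q-1$ sheets collapsing at that linear interface.

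Third, I would exploit Theorem \ref{thm:collasso}. Pick a blow-up sequence at $p^\ast$; by the compactness of $\qhalf$ Dir-minimizers and by the frequency monotonicity, a subsequence converges strongly to a homogeneous Dir-minimizing pair $(h^+,h^-)$ of degree $\alpha=I(p^\ast,0^+)$, and the collapsed character of $p^\ast$ passes to the limit, so that $(h^+,h^-)$ collapses at its (flat) interface in the sense of Definition \ref{def:totally_collapsed}. Theorem \ref{thm:collasso} then forces $(h^+,h^-)$ to be one and the same classical harmonic function $u\colon \mathbb R^m\to\mathbb R^n$, counted with multiplicity $Q$ on one half-space and $Q-1$ on the other and meeting linearly along the interface.

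Fourth, I would close the loop via a standard compactness/decay argument: having a unique, "one-sheet" flat blow-up and the monotonicity of the frequency, one obtains uniqueness of the blow-up and a power-law decay of the natural $L^2$ excess of $(g^+,g^-)$ with respect to a single harmonic sheet through the interface. This decay in turn yields, near $p^\ast$, a decomposition of $g^+$ and $g^-$ into disjoint graphs satisfying conditions (i)--(iii) of the definition of regular boundary point; hence $p^\ast\in \breg(g)\cap\gamma$, contradicting $p^\ast\in V\subset \bsing(g)$. The main obstacle is the second step: one has to design the right notion of collapsed boundary point for the linearized problem and to show that every relative open portion of $\gamma$ contains one, because Theorem \ref{thm:collasso} only produces rigidity in the presence of such collapse. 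The remaining steps are then forced by the rigidity theorem and by the quasi-linear compactness theory of Chapter \ref{chap:Qhalf}, without any of the heavy technology (Lipschitz approximations, center manifolds, current frequency) required in the nonlinear proof of Theorem \ref{thm:main}.
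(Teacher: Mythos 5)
The paper does not actually write out a proof of Theorem \ref{thm:linearized}: it only states that, with the tools of Chapter \ref{chap:Qhalf}, the argument is an adaptation of the interior regularity theory for $Q$-valued maps of \cite{DS1}. Your overall architecture is consistent with that indication: the observation that the regular set is relatively open in $\gamma$ (so meagerness is equivalent to density of regular points, by Baire) is correct, and the endgame --- reach a configuration which collapses at the interface and invoke the rigidity Theorem \ref{thm:collasso} --- is certainly the intended use of Chapter \ref{chap:Qhalf}. (Minor point: the openness of the regular set should be argued from the linearized definition preceding Theorem \ref{thm:linearized}, not from Definition \ref{def:reg_points}, but restriction to smaller balls does the job.)

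The genuine gap is your second step, and it is not a technicality. You propose to locate ``collapsed'' boundary points by using the adapted frequency function: you assert that $I(p,0^+)$ is well defined, monotone and upper semicontinuous at \emph{every} $p\in\gamma$. But Theorem \ref{thm:limit_ff} is proved only under the hypothesis $f|_{\partial\Omega\cap B_1}\equiv Q\a{0}$, i.e.\ precisely in the collapsed situation, and the paper stresses (Section \ref{s:on_the_road}, the discussion of \cite{Jonas2}, and Theorem \ref{thm:linearized_false}) that no boundary monotonicity of the frequency can be hoped for otherwise. So your scheme is circular: it uses collapse-only tools to find collapsed points. Similarly, there is no defined ``density of $g$ at boundary points'' to stratify with --- the nonlinear reduction of Chapter \ref{chap:monot} rests on Allard's monotonicity formula and White's stratification for \emph{currents}, and neither has been provided for $\qhalf$ Dir-minimizers at non-collapsed boundary points; even the existence, flatness and collapsed character of a blow-up at your point $p^\ast$ is exactly what has to be produced, not an input. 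You acknowledge this (``the main obstacle is the second step''), but that is the heart of the theorem: a correct adaptation must replace the frequency by some quantized, semicontinuous quantity along $\gamma$ (e.g.\ measuring how many sheets of $g^+$ cluster onto the interface value), combined with a separation argument splitting off the sheets that stay away from $\varphi$, after which Theorem \ref{thm:collasso} applies \emph{directly} in a neighborhood of the point --- note that once genuine collapse on a neighborhood is known, your steps 3--4 (blow-up, uniqueness, excess decay) are unnecessary, since the rigidity theorem already yields a single harmonic sheet there. As written, the central step of your argument is missing, and the specific mechanism you propose for it would fail.
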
 

We do not give a proof of Theorem \ref{thm:linearized}: using the tools developed in Chapter \ref{chap:Qhalf}, the argument is a simple adaptation of the interior regularity theory for $Q$-valued maps, cf. \cite{DS1}. The conjectures corresponding to \ref{conj:sczz2} and \ref{conj:sczz3} are then open in the linearized case as well:

 \begin{conjecture}\label{conj:sczz4}
Let $(g^+, g^-)$ be as in Theorem \ref{thm:linearized}. The Hausdorff dimension of the set of genuine singularities is then at most $m-2$.
\end{conjecture}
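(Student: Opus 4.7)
The plan is to adapt Almgren's dimension reduction machinery to the $\qhalf$ setting, using the collapse dichotomy of Theorem \ref{thm:collasso} as the classification tool at the top stratum. Write $\mathcal{S}$ for the set of genuine boundary singularities of $(g^+, g^-)$ on $\gamma$. First I would use the frequency monotonicity available at totally collapsed points (Theorem \ref{thm:limit_ff}) together with a standard compactness argument to show that at every $p\in\mathcal{S}$ the blow-ups $(g^+_{p,r}, g^-_{p,r})$ are pre-compact, that any subsequential limit $(h^+, h^-)$ is an $\alpha$-homogeneous $\qhalf$ ${\rm Dir}$-minimizer on the model half-space with flat interface, and that such a limit still collapses at its (now straight) interface.

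Define then the stratification $\mathcal{S}_0\subseteq\mathcal{S}_1\subseteq\cdots\subseteq\mathcal{S}_{m-1}=\mathcal{S}$, where $\mathcal{S}_k$ consists of those $p\in\mathcal{S}$ for which no tangent function is translation-invariant along a subspace of $T_p\gamma$ of dimension strictly larger than $k$. Federer's general dimension reduction lemma then yields $\dim_{\mathcal H}\mathcal{S}_k \leq k$, once one has verified the closedness of the class of ``genuine collapsed homogeneous $\qhalf$ minimizers'' under weak limits and rescalings. The conjecture therefore reduces to showing $\mathcal{S}_{m-1}=\emptyset$, i.e.\ to classifying the top stratum.

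To dispatch the top stratum, pick $p\in\mathcal{S}_{m-1}$ and, after translation and rotation, assume $p=0$ with $T_p\gamma=\{x_1=0\}$; let $(h^+, h^-)$ be a tangent function invariant along the directions $x_2,\dots,x_m$. Such an $(h^+, h^-)$ descends to a $\qhalf$ ${\rm Dir}$-minimizer on the real line, with interface reduced to the single point $\{0\}$, inheriting the collapse there. Theorem \ref{thm:collasso}, applied to this effectively one-dimensional situation, forces $(h^+, h^-)$ to consist of a single affine function counted $Q$ times on one half-line and $Q-1$ times on the other. But then $0$ is a regular point in the sense of the definition preceding Theorem \ref{thm:linearized}, contradicting $p\in\mathcal{S}$.

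The main obstacle is the verification that the blow-up limits remain in the ``totally collapsed'' class and that genuineness is preserved in the limit: detachment of a sheet, or the emergence of a crossing structure on passing to a tangent, would both break the reduction argument. Preservation of collapse amounts to propagating a quantitative almost-collapse statement through the rescaling, which is exactly the sort of control that motivated the weighted frequency function of Definition \ref{def:ff}; preservation of genuineness requires ruling out that a full sheet of $(g^+, g^-)$ splits off at positive scale from what appears as a single coincident sheet in the limit, and should follow from a unique continuation/constancy argument for ${\rm Dir}$-minimizers in the spirit of \cite{DS1}. Handling these two points carefully is the substantive content; once they are in place, the rest is a direct transcription of the interior Federer--Almgren scheme to the $\qhalf$ framework developed in Chapter \ref{chap:Qhalf}.
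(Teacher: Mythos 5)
You have set out to prove a statement that the paper itself records as an \emph{open} conjecture: the authors prove it only under extra hypotheses ($\varphi\equiv 0$ in Theorem \ref{thm:linearized_zero}, real-analytic data in Corollary \ref{cor:linearized_zero}), and they explicitly list Conjecture \ref{conj:sczz4} among the open problems. Your proposal does not close the gaps that make it open. The first and most serious one is the starting point: Theorem \ref{thm:limit_ff} gives monotonicity (and hence existence, finiteness and positivity of the frequency, homogeneity of blow-ups, compactness of the rescalings) \emph{only} under the full collapse hypothesis $f|_{\partial\Omega\cap B_1}\equiv Q\a{0}$. A genuine boundary singularity of a general $\qhalf$ minimizer with interface $(\gamma,\varphi)$ need not collapse at the interface near that point, and the paper stresses (following \cite{Jonas2}) that the frequency estimate ``can only be hoped in the collapsed situation.'' So your very first step — pre-compact blow-ups with homogeneous, still-collapsed limits at \emph{every} $p\in\mathcal S$ — is unjustified; without a monotone quantity at all points of $\mathcal S$ you also lose the upper-semicontinuity and compactness needed to run Federer dimension reduction, and you cannot even guarantee a nontrivial normalized limit (the map may vanish to infinite order at the bad point, as in the mechanism behind Theorem \ref{thm:linearized_false}).

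The second fatal step is the top-stratum classification. From a cylindrical tangent function you conclude via Theorem \ref{thm:collasso} that the blow-up is $\bigl(Q\a{L},(Q-1)\a{L}\bigr)$ and then declare $p$ regular, ``contradicting $p\in\mathcal S$.'' But regularity of a tangent function does not imply regularity of the point: branch points at which the minimizer has infinite order of contact with a single sheet have exactly this kind of trivial, ``regular-looking'' blow-up (this is the Gulliver/White phenomenon; nonlinearly, the paper's modification $G(z)=(z^3,g(z)^2)$ produces genuine singularities whose tangent cone lies in one plane, and Theorem \ref{thm:linearized_false} is the linear analogue with $C^\infty$ interface). To turn ``the blow-up is a single sheet with multiplicities $Q,Q-1$'' into ``$p$ is regular'' you would need an $\varepsilon$-regularity theorem near collapsed configurations \emph{without} assuming collapse, i.e.\ precisely the missing ingredient; Theorem \ref{thm:collasso} assumes collapse as a hypothesis, it does not provide such a statement. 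Finally, your acknowledged obstacles — persistence of collapse and of genuineness under blow-up — are not minor verifications: genuine vs.\ crossing is not a closed condition under $L^2$/energy convergence, and the ``unique continuation/constancy argument'' you invoke is not available in the multivalued setting at this level of generality (indeed, if it were, it would essentially settle Question \ref{ques:sczz9} and Conjecture \ref{conj:sczz2} as well). As written, the argument proves nothing beyond what Theorem \ref{thm:linearized_zero} already covers.
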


\begin{conjecture}\label{conj:sczz5}
Let $(g^+, g^-)$ be as in Theorem \ref{thm:linearized}. The boundary singular set is then a $\mathcal{H}^{m-1}$-null set. 
\end{conjecture}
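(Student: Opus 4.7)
The plan is to combine three ingredients: the stratification \`a la Almgren--Federer of the boundary singular set, the rigidity statement of Theorem~\ref{thm:collasso} for $\qhalf$ ${\rm Dir}$-minimizers that collapse at the interface, and a one-dimensional harmonic-analysis input in the spirit of the F.\ and M.\ Riesz theorem already exploited in the proof of Theorem~\ref{thm:example}(b). The endpoint is the same as in the example of Statement (b): the genuine boundary singularities of a $\qhalf$ ${\rm Dir}$-minimizer should be, after a blow-up, traces of a holomorphic function on a half-plane, whose zero set on $\partial\mathbb H$ is $\cH^1$-null.

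First I would set up the stratification. At every $p\in\gamma$ the existence of tangent maps together with a linearized analogue of the adapted frequency monotonicity of Theorem~\ref{thm:limit_ff} allow one to stratify the singular set as $\bsing = \sS^0 \cup \sS^1 \cup \dots \cup \sS^{m-1}$, where $\sS^k$ consists of those points whose tangent maps are translation invariant in at most $k$ independent directions tangent to $\gamma$. The standard Federer--White dimension-reduction argument gives $\dim_\cH (\sS^k)\le k$ for $k\le m-2$, so the only stratum that can a priori carry positive $\cH^{m-1}$-mass is $\sS^{m-1}\setminus\sS^{m-2}$.

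A point $p\in \sS^{m-1}\setminus\sS^{m-2}$ has a tangent which is a cylinder over a $1$-dimensional $\qhalf$ ${\rm Dir}$-minimizer on $\R$ whose interface is a single point. Theorem~\ref{thm:collasso} forces such a $1$D tangent, if collapsed at its interface, to consist of a single harmonic sheet with multiplicities $Q$ and $Q-1$ on the two sides, which would give $p\in\breg$. Therefore the tangent maps at points of $\sS^{m-1}\setminus\sS^{m-2}$ must be non-collapsed: they split as the sum of a classical (single-valued) harmonic sheet plus a residual $(Q-\tfrac12)$-piece which does not reach the interface. Slicing the minimizer along the $(m-1)$-parameter family of affine $2$-planes transverse to $\gamma$ and using that on each such slice the trace of the minimizer on $\gamma$ extends to a (possibly many-valued) holomorphic function in the half-plane, one then applies the F.\ and M.\ Riesz theorem to the zero set of this trace on every slice and concludes by Fubini that $\cH^{m-1}(\sS^{m-1}\setminus\sS^{m-2})=0$.

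The main obstacle is the last step: one must upgrade the pointwise splitting provided by the tangent-map analysis into a genuine splitting of the minimizer on a neighborhood of $p$, in such a way that the extra sheet is encoded by a single scalar (possibly many-valued) holomorphic function to which F.\ and M.\ Riesz applies uniformly in the slice parameter; the multi-valued nature of the sheets, the crossing-type singularities allowed by the definition, and the curvature of $\gamma$ all make this reduction delicate. A secondary but nontrivial difficulty is the stratification itself: the adapted frequency function of Chapter~\ref{chap:Qhalf} is monotone only in the collapsed regime, so to stratify the full $\bsing$ one either restricts to collapsed points (and handles the non-collapsed ones by a separate ad-hoc argument) or produces a substitute frequency monotone throughout $\gamma$. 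This is, in my view, the technical heart of the problem and the reason the conjecture remains open.
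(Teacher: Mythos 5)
You should first note that this statement is not proved in the paper at all: Conjecture~\ref{conj:sczz5} is posed as an open problem, and the authors explicitly say that the conjectures corresponding to \ref{conj:sczz2} and \ref{conj:sczz3} are open in the linearized setting as well. So there is no proof to compare against, and your text is a program rather than a proof --- which you yourself acknowledge in the closing paragraph. As it stands the argument does not close the conjecture, and the gaps are not merely technical polish.

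Concretely, three steps would fail as written. First, the stratification: the adapted frequency function of Chapter~\ref{chap:Qhalf} (Theorem~\ref{thm:limit_ff}) is monotone only for maps that collapse at the interface, so you do not have a frequency-based stratification of the full boundary singular set; in particular the claim $\dim_{\cH}(\sS^k)\le k$ and the description of tangent maps at points of $\sS^{m-1}\setminus\sS^{m-2}$ are unsupported for non-collapsed points, which are exactly the ones carrying crossing-type singularities (cf.\ Theorem~\ref{thm:linearized_false}, where the singular set already has dimension $m-1=1$). Second, the slicing step: restricting a $\D$-minimizer to a family of $2$-planes transverse to $\gamma$ does not produce $\D$-minimizers on the slices, so no structure theorem for lower-dimensional minimizers can be invoked slice by slice, and a Fubini argument built on such slices has no foundation. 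Third, and most seriously, the harmonic sheets of a $\qhalf$ $\D$-minimizer are real harmonic, not holomorphic; the F.\ and M.\ Riesz input in the paper is used only for the \emph{specific} examples built from a holomorphic function on $\mathbb H$ (Lemma~\ref{lm:jonas} and Theorem~\ref{thm:example}(b)), where holomorphicity is part of the construction. For a general minimizer there is no reason the trace on $\gamma$ extends to (or is controlled by) a holomorphic function of one complex variable, so the uniqueness-of-boundary-values mechanism you want to apply is not available. Identifying a substitute rigidity statement for boundary values of multi-valued harmonic sheets is precisely what would be needed, and it is why the conjecture remains open.
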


Recently, in \cite{DeLellisZhao} the first author, together with Z.~Zhao, proved   that for $m=2$ and real analytic boundary data, the set of boundary singularities is discrete even though there is one example of \emph{genuine boundary singularity}. Note that the examples (a) and (b) of Theorem \ref{thm:example},  combined with a routine adjustment of the arguments given in
\cite{Emanuele}, see also \cite[Corollary 3.5]{Jonas2},  to the $\qhalf$-valued setting, gives a $\varphi$ which is not real analytic for which the above conclusions are indeed false.

\begin{theorem}\label{thm:linearized_false}
There is a real analytic\footnote{ In fact $\gamma$ is a segment, in our example.}  $\gamma \subset B_1 \subset \mathbb R^2$ passing through the origin, a $C^\infty$ function $\varphi: \gamma \to \mathbb R^2$ and a 
$\frac{3}{2}$-map $(g^+, g^-)$ with interface $(\gamma, \varphi)$ which is ${\rm Dir}$-minimizing on $B_1$ and whose singular set has Hausdorff dimension $1$.  
\end{theorem}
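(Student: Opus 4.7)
The plan is to linearize the construction of the area-minimizing current $T_1 = R + S$ from the proof of Theorem~\ref{thm:example}(b) into an explicit $\qhalf$-valued Dir-minimizer whose boundary singular set inherits the Hausdorff dimension one zero set produced by Lemma~\ref{lm:jonas}.

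\textbf{Construction.} Identify $\mathbb R^2$ with $\mathbb C$ and let $\Omega := B_1 \subset \mathbb C$, $\gamma := \Omega \cap \partial \mathbb H$ (a segment of the imaginary axis), $\Omega^{\pm} := \Omega \cap \{\pm \mathrm{Re}\,z > 0\}$. Let $F \in C^\infty(\overline{\mathbb H})$ be as in Lemma~\ref{lm:jonas}, so that the zero set $E := \{F=0\} \cap \partial\mathbb H$ has Hausdorff dimension one. Define
\[
g^+(z) := \a{0} + \a{F(z)} \quad \text{on}\ \Omega^+, \qquad g^-(z) := \a{0} \quad \text{on}\ \Omega^-,
\]
with interface data $(\gamma, \varphi)$, $\varphi := F|_\gamma$. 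Then $\varphi \in C^\infty(\gamma)$ takes values in $\mathbb C \cong \mathbb R^2$, and the interface compatibility holds because $g^-|_\gamma = \a{0}$ and the two sheets of $g^+|_\gamma$ are $0$ and $0 + \varphi$.

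\textbf{Dir-minimality.} Both sheets of $g^+$ are harmonic ($F$ is in fact holomorphic) and $g^-$ is constant, giving a total Dirichlet energy equal to $\int_{\Omega^+}|\nabla F|^2$. To rule out competitors with lower energy, I would argue by a sheet-by-sheet comparison, the ``routine adjustment'' of \cite{Emanuele} and \cite[Corollary 3.5]{Jonas2}: for any admissible competitor $(\tilde g^+, \tilde g^-)$, decomposing $\tilde g^+$ on a possibly branched double cover of $\Omega^+$ into an ``average'' and a ``dispersion'' component, the dispersion is pinned on $\gamma$ by $\varphi$ while the average is coupled to $\tilde g^-$ through the free trace $\tilde t := \tilde g^-|_\gamma$; the minimization then reduces to a single-valued harmonic extension problem parameterized by $\tilde t$, and a Green's identity computation identifies $\tilde t \equiv 0$ as the unique minimizer, recovering $(g^+, g^-)$. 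Equivalently, this step linearizes the K\"ahler calibration which underlies the area-minimality of $T_1$ in Theorem~\ref{thm:example}(b).

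\textbf{Boundary singular set.} Fix $p \in \gamma$ and suppose $p$ is regular, with functions $u_2: B_r(p) \to \mathbb R^2$ and $u_1: B_r^+(p) \to \mathbb R^2$ as in the definition preceding Theorem~\ref{thm:linearized}. Since $g^-|_{\Omega^-} = \a{0}$, condition (i) forces $u_2 \equiv 0$ on $B_r^-(p)$, and then continuity across $\gamma$ gives $u_2 \equiv 0$ on the whole ball $B_r(p)$; condition (i) on $\Omega^+$ next forces $u_1 = F$ on $B_r^+(p)$. Condition (iii) becomes the dichotomy that either $F$ has no zero on $B_r^+(p)$ or $F \equiv 0$ on $B_r^+(p)$; the latter is impossible since $F$ is a nontrivial holomorphic function. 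Hence $p$ is regular if and only if $p \notin E$, so the boundary singular set equals $E$, which has $\dim_{\cH}(E) = 1$.

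\textbf{Main obstacle.} The only delicate step is Dir-minimality: while it is natural in view of the harmonicity of each sheet of our candidate, a rigorous argument has to handle multi-valued competitors that may branch inside $\Omega^+$ and whose interface trace $\tilde t$ is a priori unconstrained. This is where the variational machinery of Chapter~\ref{chap:Qhalf} and the linearization framework of \cite{Emanuele} enter in an essential way.
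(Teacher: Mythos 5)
Your construction coincides with the one the paper intends (the paper itself only sketches the proof by citing \cite{Emanuele} and \cite[Corollary 3.5]{Jonas2}): take $F$ from Lemma \ref{lm:jonas}, set $g^+=\a{0}+\a{F}$, $g^-=\a{0}$, $\varphi=F|_\gamma$, which is precisely the linearization of $T_1=R+S$ from Theorem \ref{thm:example}(b). Your identification of the boundary singular set with $E$ is also essentially correct, granted the natural reading of the definition in which the graphs of the selections are taken up to the interface: at $p\in E$ the two traces of $g^+$ coincide at $p$ while $F\not\equiv 0$ near $p$, so neither disjointness nor containment in condition (iii) can hold (note, though, that the definition does not grant you continuity of $u_2$ for free, so it is cleaner to argue directly that no admissible selection can satisfy (iii) than to invoke ``continuity across $\gamma$'').

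The genuine gap is the $\D$-minimality step, and the mechanism you sketch does not work as described. The interface condition $\tilde g^+|_\gamma=\a{\tilde t}+\a{\varphi}$, $\tilde t:=\tilde g^-|_\gamma$, couples \emph{both} the average and the separation of $\tilde g^+$ to the free trace $\tilde t$ (the average on $\gamma$ is $\frac{1}{2}(\tilde t+\varphi)$ and the separation is $\varphi-\tilde t$), so the ``dispersion'' is not pinned by $\varphi$ alone; moreover a competitor $\tilde g^+$ may branch inside $\Omega^+$, so no sheet-by-sheet reduction to single-valued harmonic extensions is available, and there is no Green-identity reason why the optimal coupling should be $\tilde t\equiv 0$. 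The ``routine adjustment of \cite{Emanuele}'' that the paper has in mind is Spadaro's vertical-rescaling calibration argument, which is insensitive to all of these issues: for every $\varepsilon>0$ the graph current of $(\varepsilon g^+,\varepsilon g^-)$, namely $\a{B_1\times\{0\}}+\bG_{\varepsilon F}\res (\Omega^+\times\R^2)$, is a sum of holomorphic graphs, hence calibrated by the K\"ahler form of $\mathbb C^2$ and mass-minimizing among integral currents with the same boundary; any Lipschitz $\qhalf$ competitor with interface $(\gamma,\varepsilon\varphi)$ produces a comparison current with the same boundary, because the $\a{\varepsilon\tilde t}$ contributions over $\gamma$ from the two sides cancel and only $\a{\varepsilon\varphi}$ survives. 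The Taylor expansion of the mass of a multigraph then gives, after dividing by $\varepsilon^2$ and letting $\varepsilon\downarrow 0$, that $\D(g^+)+\D(g^-)\le \D(\tilde g^+)+\D(\tilde g^-)$ for Lipschitz competitors, and general $W^{1,2}$ competitors are handled by the approximation of Lemma \ref{l:lip_app} (after subtracting a smooth extension of $\varphi$, as done elsewhere in Chapter \ref{chap:Qhalf}). Your closing remark that the step ``linearizes the K\"ahler calibration'' points at exactly this argument, but that linearization is the content that must be carried out; the variational reduction you propose would not deliver it.
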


 Conjecture \ref{conj:sczz2}  is  widely open also for real analytic boundary data.  As we already mentioned, the ``linear''  \(2\)-dimensional case of the conjecture is addressed in \cite{DeLellisZhao}. On the other hand, the $2$-dimensional  ``fully non-linear'' counterpart of \cite{DeLellisZhao} is a well-known conjecture of White, cf. \cite{White_branching}:

\begin{conjecture}\label{conj:sczz7}
Let $T,\Sigma,\gammaup$ be as in \ref{ass:main}, let $m=2$ and assume $\Sigma$ and $\gammaup$ are real analytic. Then the union of the boundary and of the interior singular sets is discrete. 
\end{conjecture}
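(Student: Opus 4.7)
The plan is to combine the $m=2$ interior discreteness (Chang--DSS) with a refinement of the boundary regularity theory of this paper that exploits real analyticity of $\Sigma$ and $\gammaup$. Since the interior singular set is already discrete, and Theorem \ref{thm:main2}(b) yields finite $\bsing(T)$ on every two-sided component of $\gammaup$, the task splits into three subproblems: (A) show discreteness of $\bsing(T)$ on each one-sided component of $\gammaup$; (B) exclude accumulation of interior singularities at $\gammaup$; (C) rule out the ``infinite-order contact'' phenomenon underlying Theorem \ref{thm:example}(a) under the real analytic hypothesis.

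For (A) and (C), I would argue by contradiction at a putative accumulation point $p_0 \in \bsing(T)$, running the boundary center-manifold/frequency machinery of Chapters \ref{chap:center_manifolds}--\ref{chap:blowup}. The blow-up of the approximating maps $N^\pm$ on $\mathcal M^\pm$, whose nontriviality is guaranteed by Theorem \ref{thm:ff_estimate_current}, produces a $\qhalf$-valued Dir-minimizer on $\mathbb R^2$ with interface $T_{p_0}\gammaup$ and a nonempty singular set, and then one invokes Theorem \ref{thm:linearized_zero} to contradict regularity of the limit. The crucial new ingredient, absent from the nonlinear theory of this paper, is an \emph{analytic center manifold theorem}: when $\Sigma$ and $\gammaup$ are real analytic, the left/right center manifolds $\mathcal M^\pm$ built in Chapter \ref{chap:center_manifolds} should be real analytic near $p_0$, so that the interface data $(\gamma, \varphi)$ inherited from them through blow-up is analytic and Corollary \ref{cor:linearized_zero} becomes applicable. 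Alternatively one can try to replace this step by a direct Federer-style unique continuation for area-minimizing currents with real analytic boundary, which should forbid a genuine singular set of positive $\cH^0$-density at any accumulation point.

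For (B), I would localize near a hypothetical limit boundary point $q \in \gammaup$ via Theorem \ref{thm:primes}. Upper semicontinuity of the density combined with the boundary monotonicity formula of Chapter \ref{chap:monot} forces $\Theta(T,q) \geq 1$, hence the connected component of $\gammaup$ through $q$ is two-sided. The ``doubled'' current $T_{p(i)}+T_{n(i)}$ of Theorem \ref{thm:primes}(e) is area-minimizing with $q$ as an interior point, so the $m=2$ interior theory forces any accumulating sequence to collapse into a single interior singularity of the doubled current; a constancy/multiplicity bookkeeping at $q$, exactly as in the proof of Theorem \ref{thm:main2}, then propagates this to $T$ itself and prevents accumulation.

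The main obstacle, as the authors explicitly note, is the analytic center manifold theorem at the heart of step (A)-(C). The construction in Chapter \ref{chap:center_manifolds} proceeds by Whitney-type stopping times and a smoothing-patching procedure that naturally produces only $C^{3,\kappa}$ regularity; upgrading to analyticity requires Cauchy-type estimates on the glued pieces that are not visibly compatible with the cubical stopping conditions. A promising workaround would be a ``frequency gap'' statement specific to the real analytic setting: the admissible values of the limit frequency at collapsed points should form a discrete set bounded below by a positive constant depending only on $Q$, which, together with the quasi-monotonicity of Theorem \ref{thm:ff_estimate_current}, would preclude accumulation of genuine singularities directly. In either formulation this is precisely the ``substantially new idea'' the introduction flags, and any genuine attempt at Conjecture \ref{conj:sczz7} should be expected to live or die at this step.
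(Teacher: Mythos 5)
There is a fundamental mismatch here: Conjecture \ref{conj:sczz7} is not a result of the paper at all. It is White's conjecture, and the authors state explicitly that it is ``widely open''; the paper offers no proof to compare against, and indeed emphasizes that deducing even the weaker nonlinear analytic statements from Theorem \ref{thm:linearized_zero} ``does not seem possible\dots without introducing some substantially new ideas.'' Your text is a research plan rather than a proof, and its load-bearing steps are themselves unproven. The ``analytic center manifold theorem'' (or the ``frequency gap'') that you correctly identify as the crux is exactly the open problem, so steps (A) and (C) reduce the conjecture to a statement at least as hard. Moreover, the linearized input you invoke is weaker than you need: Theorem \ref{thm:linearized_zero} and Corollary \ref{cor:linearized_zero} give discreteness of the singular set only when $Q=2$; for general $Q$ they give Hausdorff dimension at most $m-2=0$, which does not exclude accumulation points, and the statement you would actually need is Conjecture \ref{conj:sczz6}, which the paper leaves open. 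Even granting all of that, a blow-up at a single accumulation point being regular does not by itself contradict accumulation of singularities of $T$: the example of Theorem \ref{thm:example}(a) has flat tangent behavior at the accumulation point, so one-point blow-up information is not enough without a uniqueness/decay mechanism, which is again the missing ingredient.

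Step (B) is concretely wrong as stated. Your argument nowhere uses real analyticity, yet Remark \ref{rmk:intsing} exhibits a $2$-dimensional mass-minimizing current with $C^\infty$ boundary whose \emph{interior} singular points accumulate at a boundary point; since that boundary is a single connected curve, Corollary \ref{c:almgren_problem} makes it one-sided, so your inference ``$\Theta(T,q)\geq 1$ at the accumulation point forces the component through $q$ to be two-sided'' cannot be correct. The error is that Theorem \ref{thm:primes}(d) constrains the density of \emph{regular} boundary points on a one-sided component to be $\frac12$; a \emph{singular} boundary point on a one-sided component may well have density $\geq \frac32$ (or fail to be collapsed), so two-sidedness does not follow and the doubled-current trick of Theorem \ref{thm:main2} is unavailable. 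Any genuine attack on (B) must use analyticity in an essential way, precisely to rule out the infinite-order-of-contact mechanism behind the counterexample.
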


Again such conjecture is widely open and in \cite{DDH} the first three authors have shown that the conclusion of the conjecture is false when $\Sigma$ and $\gammaup$ are just $C^\infty$. A first step in the positive direction is given in the paper \cite{HM} where the third author and Marini prove the uniqueness of tangent cones 
at any point $p\in \Gamma$ when the latter is merely $C^{1,\alpha}$. 

Coming back to the case of $C^\infty$ boundaries $\Gamma$, the example (a) in Theorem \ref{thm:example} shows that Conjecture \ref{conj:sczz2} must be taken with a grain of salt. One reason why Conjecture \ref{conj:sczz2} might still be correct is that, while the accumulation singular point in the example of Theorem \ref{thm:example}(b) is a boundary branch point, the singularities accumulating to it are of ``crossing type'', namely points where the minimizer is in fact an immersed surface. If it were possible to produce an example with an accumulating sequence of branch points, one could conceive to modify the construction to produce a Cantor-like set of genuine boundary singularities, possibly disproving Conjecture \ref{conj:sczz2}. The following question seems thus a very relevant one:

\begin{question}\label{ques:sczz9}
Is it possible to produce an example as in Theorem \ref{thm:example} with a boundary singular point which is an accumulation of boundary branch points?
\end{question}


\chapter{Stratification and reduction to collapsed points}\label{chap:monot}

\section{First variation and monotonicity formula}

Here and in the sequel we will denote by $A_\Sigma$\index{aala\A_\Sigma@$A_\Sigma$} and $A_\gammaup$\index{aala\A_\gammaup@$A_\gammaup$} the second fundamental forms  of $\Sigma$ and $\gammaup$ and we will assume that  $T$ is as in Assumption \ref{ass:main}.

As usual, given a vector field $X \in C^1_c (\bB_2)$ we let $\bB_2 \times \R \ni (x, t) \to \Phi_t (x)$ be the flow generated by $X$, namely
each curve $\eta_x (t) := \Phi_t (x)$ satisfies the ODE $\dot \eta_x (t) = X (\eta_x (t))$ subject to the initial condition $\eta_x (0) = x$. We
then define the first variation\index{First variation}\index{aagd\delta T(X)@$\delta T(X)$} of $T$ along $X$ as
\[
\delta T (X) := \left. \frac{d}{dt}\right|_0 \bM ((\Phi_t)_\sharp T)\, .
\]
If the vector field $X$ is tangent to $\supp (\partial T) = \gammaup$ and is tangent to the manifold $\Sigma$, we then know that $\delta T (X) = 0$. Moreover, it is well known that if $X$ vanishes on $\supp (\partial T)$ but it is not tangent to $\Sigma$, then 
\[
\delta T (X) = - \int_{\bB_2} X \cdot \vec{H}_T (x)\, d\|T\| (x)
\]
where the mean curvature vector $\vec{H}_T$\index{aalh\vec{H}_T@$\vec{H}_T$} can be explicitly computed from the second fundamental form $A_\Sigma$. More precisely,
if $\vec{T} (x) = v_1 \wedge \ldots \wedge v_m$ and $v_i$ are orthonormal, then
\begin{equation}\label{e:mean_curv}
\vec{H}_T (x) = \sum_{i=1}^m A_\Sigma (v_i, v_i)\, 
\end{equation}
(see for instance \cite{Sim}).
In this section we derive a similar formula for variations along general vector fields $X$, namely not necessarily vanishing on the boundary. 
As a consequence we also get Allard's monotonicity formula at the boundary, with precise error terms. We summarize all these conclusions
in the next theorem. These
are in fact classical facts, under our assumption. Since however it is not easy to pin-point precise references for our statements in the literature,
we include a short derivation from similar (more general) statements proved in other articles.

\begin{definition}\label{def:density}
For every point $p\in \bB_2$, the \emph{density}\index{Density of $T$ at some point}\index{aagh\Theta(T,p)@$\Theta(T,p)$} of $T$ at $p$ is defined as
\[
\Theta(T,p):=\lim_{r\downarrow 0}\frac{\|T\|(\bB_r(p))}{\omega_m r^m}\, ,
\]
whenever the latter limit exists.
\end{definition}

We then consider the functions\index{aagh\Theta_{\rm i} (T,p,r)@$\Theta_{\rm i} (T,p,r)$}\index{aagh\Theta_{\rm b}@$\Theta_{\rm b} (T,p,r)$}
\begin{align}
\Theta_{\rm i} (T,p,r):=\; &\exp \left( C_0 \|A_\Sigma\|_0 r\right)\frac{\|T\|(\bB_r(p))}{\omega_m r^m}\,, \\
\Theta_{\rm b} (T, p,r):=\; &\exp \left( C_0 (\|A_\Sigma\|_0 + \|A_\gammaup\|_0) r\right)\frac{\|T\|(\bB_r(p))}{\omega_m r^m}\, ,
\end{align}
where $C_0=C_0 (m,n, \bar{n})$ is a suitably large constant.

\begin{theorem}\label{thm:allard}\label{THM:ALLARD}\index{Monotonicity formula}
Let $T$ be as in Assumption \ref{ass:main}.
\begin{itemize}
\item[(a)] If $p\in \bB_2 \setminus \gammaup$, then $r\mapsto \Theta_{\rm i} (T,p,r)$ is monotone on the interval $(0,\min \{\dist (p, \gammaup), 2-|p|\})$;
\item[(b)] if $p\in \bB_2 \cap \gammaup$, then $r\mapsto\Theta_{\rm b} (T,p,r)$ is monotone on $(0,2-|p|)$.
\end{itemize}
Thus  the density exists at every point. Moreover, the restrictions of the map $p\mapsto \Theta (T,p)$ to $\Gamma\cap \bB_2$ and to $\bB_2\setminus \Gamma$ are both upper semicontinuous. 

If $X\in C^1_c (\bB_2, \R^n)$, then we have 
\begin{equation}\label{e:first_var}
\delta T (X) = - \int_{\bB_2} X \cdot \vec{H}_T (x)\, d\|T\| (x) + \int_\gammaup X\cdot \vec{n} (x)\, d\cH^{m-1} (x)\,  
\end{equation}
where $\vec{H}_T$ is the vector field in \eqref{e:mean_curv} and $\vec n$ is a Borel unit vector field orthogonal to $\gammaup$\index{aaln\vec n@$\vec n$}. 

Moreover, if $p\in \gammaup$ and $0<s<r < 2-|p|$, we then have the following precise monotonicity identity
\begin{align}
&\;r^{-m} \|T\| (\bB_r (p)) - s^{-m} \|T\| (\bB_s (p)) - \int_{\bB_r (p)\setminus \bB_s (p)} \frac{|(x-p)^\perp|^2}{|x-p|^{m+2}}\, d\|T\| (x)\nonumber\\
= &\; \int_s^r \rho^{-m-1} \Bigg[\int_{\bB_\rho (p)} (x-p)^\perp \cdot \vec{H}_T (x) d\|T\| (x)\nonumber\\
&\qquad\qquad\qquad +   \int_{\gammaup \cap \bB_\rho (p)} (x-p)\cdot \vec{n} (x)\, d\cH^{m-1} (x)\Bigg]\, d\rho\, ,
\label{e:monot_identity}
\end{align}
where $Y^\perp (x)$ denotes the component of the vector $Y (x)$ orthogonal to the tangent plane of $T$ at $x$ (which is oriented by $\vec{T} (x)$). 
\end{theorem}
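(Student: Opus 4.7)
\textbf{Proof plan for Theorem \ref{thm:allard}.} The plan is to obtain all four assertions (first variation identity, monotonicity identity, monotonicity of $\Theta_{\rm i}$, $\Theta_{\rm b}$, existence of density, and upper semicontinuity) as consequences of one single computation, namely the tangential divergence theorem applied to radial cutoff vector fields. Throughout the sketch I will use the notation $P_{\vec T(x)}$ for the orthogonal projection onto the approximate tangent plane of $T$ at $x$.

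\medskip

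\emph{Step 1: First variation identity.} The starting point is the general formula $\delta T(X) = \int \mathrm{div}_{\vec T} X \, d\|T\|$, valid for every $X \in C^1_c(\bB_2,\mathbb R^{m+n})$. Applying the standard integration-by-parts for rectifiable currents with rectifiable boundary (see e.g.\ \cite[\S4.2]{Sim}), one rewrites this as $\delta T(X) = -\int X\cdot \vec H_T\, d\|T\| + \int_\gammaup X\cdot \vec n\,d\cH^{m-1}$ for some Borel vector field $\vec H_T$ orthogonal to $\vec T$ at $\|T\|$-a.e.\ point and some Borel $\cH^{m-1}$-measurable unit field $\vec n$. To identify $\vec n$ as orthogonal to $\gammaup$, choose $X$ tangent to $\gammaup$ on $\gammaup$: by Almgren's minimality in $\Sigma$ with respect to deformations preserving the boundary, $\delta T(X)=0$ for all such $X$ tangent to $\Sigma$, which forces the projection of $\vec n$ onto $T\gammaup$ to vanish. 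To identify $\vec H_T$ as \eqref{e:mean_curv}, pick $X\in C^1_c(\bB_2\setminus \gammaup,\R^{m+n})$ tangent to $\Sigma$; minimality again gives $\delta T(X)=0$, so the $T\Sigma$-component of $\vec H_T$ vanishes, while the $(T\Sigma)^\perp$-component is forced by the standard computation of $\mathrm{div}_{\vec T}X$ along the normal directions of $\Sigma$.

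\medskip

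\emph{Step 2: Monotonicity identity.} Plug into \eqref{e:first_var} the radial test field $X^{p}_\rho(x):=\phi\bigl(|x-p|/\rho\bigr)(x-p)$ with $\phi$ a smooth non-increasing cutoff and let $\phi \uparrow \chi_{[0,1]}$. Writing
\[
\mathrm{div}_{\vec T}(x-p) = m - \frac{|(x-p)^\perp|^2}{|x-p|^2}\cdot\frac{|x-p|^2 - |P_{\vec T(x)}(x-p)|^2}{|(x-p)^\perp|^2},
\]
and rearranging in the standard way, one obtains for a.e.\ $\rho$ the identity
\[
\frac{d}{d\rho}\!\left[\rho^{-m}\|T\|(\bB_\rho(p))\right]
 \;=\; \frac{d}{d\rho}\!\int_{\bB_\rho(p)}\!\frac{|(x-p)^\perp|^2}{|x-p|^{m+2}}d\|T\|
 \;+\; \rho^{-m-1}\!\Bigl[\!\int_{\bB_\rho(p)}(x-p)^\perp\!\cdot\!\vec H_T\,d\|T\| + \!\int_{\gammaup\cap\bB_\rho(p)}(x-p)\!\cdot\!\vec n\,d\cH^{m-1}\Bigr],
\]
and integration in $\rho$ over $[s,r]$ yields \eqref{e:monot_identity}.

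\medskip

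\emph{Step 3: Monotonicity of $\Theta_{\rm b}$ and of $\Theta_{\rm i}$.} The two error integrals are estimated as follows: $|\vec H_T(x)|\leq C\|A_\Sigma\|_0$ by \eqref{e:mean_curv}, hence the first is bounded by $C\|A_\Sigma\|_0\,\rho\,\|T\|(\bB_\rho(p))$; for the second, since $\gammaup$ is $C^{2}$ and $\vec n(x)$ is normal to $\gammaup$ at $x\in\gammaup$, a Taylor expansion gives $|(x-p)\cdot \vec n(x)|\leq C\|A_\gammaup\|_0|x-p|^2$, so that this integral is bounded by $C\|A_\gammaup\|_0\,\rho^{m+1}$. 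To convert the latter bound into a multiplicative one, use the Allard-type lower density bound $\|T\|(\bB_\rho(p))\geq \tfrac12\omega_m\rho^m$ at boundary points (itself a consequence of the monotonicity identity applied in integrated form, or of a direct comparison with a half-disc competitor). Substituting these in the identity and dropping the non-negative first integral yields
\[
\frac{d}{d\rho}\!\left[\rho^{-m}\|T\|(\bB_\rho(p))\right] \;\geq\; -C_0\bigl(\|A_\Sigma\|_0+\|A_\gammaup\|_0\bigr)\rho^{-m}\|T\|(\bB_\rho(p)),
\]
which after multiplication by the integrating factor gives monotonicity of $\Theta_{\rm b}(T,p,\cdot)$ on $(0,2-|p|)$; the interior statement is the same computation with the boundary integral absent.

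\medskip

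\emph{Step 4: Density and upper semicontinuity.} Monotonicity implies that $\Theta_{\rm b}(T,p,r)$ (resp.\ $\Theta_{\rm i}(T,p,r)$) admits a finite limit as $r\downarrow 0$, coinciding with $\Theta(T,p)$ since the exponential prefactors tend to $1$. For upper semicontinuity, recall that for any Radon measure on $\R^{m+n}$ the map $p\mapsto \|T\|(\bar \bB_r(p))$ is upper semicontinuous in $p$ for every $r$ outside a countable exceptional set. Hence $p\mapsto \Theta_{\rm b}(T,p,r)$ (restricted to $\gammaup$) and $p\mapsto \Theta_{\rm i}(T,p,r)$ (restricted to $\bB_2\setminus\gammaup$) are upper semicontinuous. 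Since the density is the infimum over $r>0$ of these quantities (by the monotonicity of Step 3), we obtain, for $p_n\to p$ along $\gammaup$ (resp.\ in $\bB_2\setminus\gammaup$) and every admissible $r$,
\[
\limsup_{n\to\infty}\Theta(T,p_n)\;\leq\; \limsup_{n\to\infty}\Theta_{\rm b}(T,p_n,r)\;\leq\;\Theta_{\rm b}(T,p,r);
\]
letting $r\downarrow 0$ gives $\limsup_n\Theta(T,p_n)\leq \Theta(T,p)$.

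\medskip

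\emph{Main obstacle.} The only genuinely delicate point in this outline is Step 1, namely the identification of $\vec n$ as a \emph{unit} vector field orthogonal to $\gammaup$: once this is settled, Steps 2--4 are essentially bookkeeping. The identification rests on $\partial T = \a{\gammaup}$ being multiplicity-one and suitably oriented, combined with the tangential divergence theorem for rectifiable currents, and it is really here that the precise structural assumption on $\partial T$ is used in full. The quadratic vanishing $|(x-p)\cdot\vec n(x)|\leq C\|A_\gammaup\|_0|x-p|^2$ in Step 3 is what makes the boundary error term harmless and thus is the second technical point to be treated carefully.
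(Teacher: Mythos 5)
There is a genuine gap, and it sits exactly where you locate the ``only delicate point'': your Step 1 does not actually prove the first variation identity \eqref{e:first_var}. There is no ``standard integration-by-parts for rectifiable currents with rectifiable boundary'' that produces the boundary term as $\int_\gammaup X\cdot \vec n\, d\cH^{m-1}$ with a \emph{unit} vector field: near $\gammaup$ the current is not known to be a classical submanifold with boundary, and a priori the singular part of the first variation could be any measure supported on $\gammaup$ (for instance with density comparable to $2Q-1$ rather than $1$, since at a two-sided point the current looks like $Q$ sheets on one side and $Q-1$ on the other). The paper's proof has to work for this: it first invokes Allard's boundary theory (\cite[Lemma 3.1 and Section 3.1]{AllB}) to show that $\delta T = -\vec H_T\|T\| + \delta T_s$ with $\delta T_s$ a Radon measure supported in $\gammaup$ satisfying only $\|\delta T_s\| = \theta\,\cH^{m-1}\res\gammaup$ with $0\le\theta\le C\,\Theta_{\rm b}$, and then pins down $\theta\equiv 1$ and $\vec n(p)=\nu$ (the conormal of $\pi^+$) by blowing up at $\cH^{m-1}$-a.e.\ $p\in\gammaup$ and using that the tangent cone there is flat of the form $Q\a{\pi^+}+(Q-1)\a{\pi^-}$, so that the boundary contributions of the two sides cancel to exactly one copy of $\cH^{m-1}$. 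That a.e.\ flat tangent cone structure is itself a consequence of the stratification theory (Theorem \ref{thm:stratification} and Lemma \ref{lem:flat_cone}), which in turn needs the monotonicity statements (a) and (b). Your justification — multiplicity one of $\partial T$ plus a ``tangential divergence theorem'' — simply does not reach this conclusion, and no alternative argument is offered.

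This also exposes a structural problem with the order of your steps: you derive (a) and (b) from \eqref{e:first_var} (Steps 2--3), but \eqref{e:first_var} cannot be obtained before monotonicity, since the identification of $\theta=1$ and of $\vec n\perp\gammaup$ requires tangent-cone analysis, hence compactness and density bounds coming from (a), (b). This is why the paper quotes (a) and (b) directly from \cite{All} and \cite{AllB} and only afterwards proves \eqref{e:first_var} and \eqref{e:monot_identity} (the latter via \cite[Eq.\ (31)]{Theodora}). A smaller instance of the same circularity appears in your Step 3, where the multiplicative absorption of the boundary error uses the lower density bound $\|T\|(\bB_\rho(p))\ge \tfrac12\omega_m\rho^m$, which you propose to deduce from the very monotonicity you are proving; a positive lower bound $c\rho^m$ can be obtained independently (and suffices), but as written the logic is not closed. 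Steps 2 and 4 are fine as bookkeeping once \eqref{e:first_var} and the monotonicity are available, but the core of the theorem — the unit-density, orthogonal boundary term in \eqref{e:first_var} — is missing from your argument.
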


In this chapter we in fact only need (a) and (b), which are proved in \cite{All} and \cite{AllB}, and some consequences of the monotonicity formula
for which less precise versions are sufficient: in particular many of the statements needed can be easily derived from \cite{AllB} and for this reason we postpone the proof of Theorem \ref{thm:allard} to the last section. 

Note  that at any $p\in \breg (T)$ the density equals $Q-\frac{1}{2}$, where the positive integer $Q$ is as in Remark \ref{rmk:constancy}. Moreover we recall the following
\begin{theorem}[{cf. \cite[Theorem 3.5 (2)]{AllB}}]\label{t:Allard>1/2}
$\Theta(T,p)\ge \frac 12$ for every $p\in\gammaup$.
\end{theorem}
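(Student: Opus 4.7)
The plan is to reduce the boundary density lower bound to the classical density lower bound for stationary integer rectifiable varifolds, via a blow-up and reflection argument.

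First I would use Theorem \ref{thm:allard}(b) to conclude that $\Theta(T,p)$ exists and that the rescaled currents obtained by pushing $T$ forward under the maps $x\mapsto (x-p)/r$ have uniformly bounded mass on compact sets. By the compactness theorem for integer rectifiable currents, together with the smoothness of $\Sigma$ and $\gammaup$ (which makes any blow-up limit area-minimizing in $\R^{m+n}$), any sequence $r_k\downarrow 0$ admits a subsequence along which the rescalings converge to a tangent cone $C$: an area-minimizing integral $m$-current in $\R^{m+n}$ with $\partial C=\a{L}$, where $L:=T_p\gammaup$ is the tangent $(m-1)$-plane counted with multiplicity one. The monotonicity identity \eqref{e:monot_identity} forces $\Theta(C,0)=\Theta(T,p)$, and the cone homogeneity forces $\|C\|(\bB_r)/(\omega_m r^m)$ to be constant in $r$.

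Second, I would form the doubled integer rectifiable $m$-varifold $V:=|C|+\sigma_\sharp|C|$, where $\sigma$ is the orthogonal reflection through a hyperplane $H\subset\R^{m+n}$ containing $L$ and chosen so that $\sigma$ reverses the outer unit conormal $\vec n$ to $\a{L}=\partial C$ along $L$. Since $C$ has zero mean curvature in $\R^{m+n}$ (it is area-minimizing in Euclidean space) and since the boundary contributions to the first variation \eqref{e:first_var} of $C$ and of $\sigma_\sharp C$ cancel along $L$ by the choice of $H$, the varifold $V$ is stationary in $\R^{m+n}$. By construction $0\in\supp V$ and, because $\sigma$ is an isometry fixing the origin, $\Theta(V,0)=2\Theta(C,0)=2\Theta(T,p)$.

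Finally, the standard density lower bound for stationary integer rectifiable varifolds gives $\Theta(V,0)\geq 1$, so that $\Theta(T,p)\geq \tfrac{1}{2}$ as desired. The most delicate step is the second one: one must argue that, for a mass-minimizing cone $C$ whose boundary is a single $(m-1)$-plane of multiplicity one, the outer conormal field along $L$ is rigid enough to be reversed by one global reflection (equivalently, that the tangent planes to $C$ along $L\setminus\{0\}$ all share a common transverse direction). The detailed execution of this reflection construction, together with the slice analysis of $C$ along $L$ needed to justify the choice of $H$, is carried out in \cite{AllB}.
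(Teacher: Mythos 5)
The paper does not actually prove this statement: it is imported verbatim from Allard's boundary paper \cite[Theorem 3.5 (2)]{AllB}, so what you are really reconstructing is Allard's argument. Your overall strategy --- blow up at $p$, double the tangent cone by a reflection to kill the boundary, and invoke the interior lower density bound $\Theta\ge 1$ for stationary integral varifolds --- is indeed the classical route behind the cited result, and the first and last steps are fine: the tangent cone $C$ is area-minimizing with $\partial C=\a{L}$, $L=T_p\gammaup$ taken with multiplicity one, $\Theta(C,0)=\Theta(T,p)$, and $0\in L\subset \supp (C)$, so once a stationary integral varifold $V$ with $\Theta(V,0)=2\,\Theta(T,p)$ is produced the conclusion follows.

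The gap is in the reflection step. You reflect through a \emph{hyperplane} $H\supset L$ chosen so that $\sigma$ reverses the conormal $\vec n$ along $L$. For the boundary contributions of $|C|$ and $\sigma_\sharp|C|$ to cancel, $\vec n(x)$ must be parallel to the normal of $H$ for a.e.\ $x\in L$, i.e.\ the (weighted) conormal field of $C$ along $L$ must point in one fixed direction. Nothing available at this stage gives such rigidity: you do not yet know that the tangent cone is flat, nor even that $\delta|C|$ restricted to $L$ is a measure with a well-controlled direction field, and for $m\ge 3$ the sheets of $C$ attached to $L$ may a priori rotate as one moves along the unit sphere of $L$; this is not something \cite{AllB} supplies, so deferring it there does not close the argument (when the cone is flat the hyperplane reflection does work, but that is exactly what cannot be assumed). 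The fix is to reflect through $L$ itself: let $\sigma$ be the isometry of $\R^{m+n}$ which fixes $L$ and equals $-\mathrm{id}$ on $L^\perp$. Then for every $X\in C^1_c$ one has $\delta\bigl(|C|+\sigma_\sharp|C|\bigr)(X)=2\,\delta|C|(X^{s})$, where $X^{s}(x)=\tfrac12\bigl(X(x)+\sigma X(\sigma x)\bigr)$. Since $X^{s}(\sigma x)=\sigma X^{s}(x)$, along the fixed-point set $L$ the field $X^{s}$ takes values in $L$, hence its flow preserves $L$ and therefore preserves $\a{L}=\partial C$; minimality of $C$ (which holds in all of $\R^{m+n}$, since projecting onto $T_p\Sigma$ does not increase mass) then gives $\delta|C|(X^{s})=0$. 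Thus $V=|C|+\sigma_\sharp|C|$ is stationary with no information whatsoever on the structure or direction of the boundary first variation, $\Theta(V,0)=2\,\Theta(T,p)\ge 1$, and your conclusion follows.
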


\begin{definition}\label{def:tc}
Fix a point $p\in\supp(T)$ and define\index{aagi\iota_{p,r}@$\iota_{p,r}$}
\[
\iota_{p,r}(q):=\frac{q-p}{r}\quad\forall\,r>0\,.
\]
We denote by $T_{p,r}$ the currents\index{aaltT_{p,r}@$T_{p,r}$}
\[
T_{p,r}:=(\iota_{p,r})_\sharp T\quad\forall\,r>0\,.
\]
\end{definition}
We recall the following consequence of the Allard's monotonicity formula, cf. \cite{AllB}. From now on, given any smooth oriented submanifold of $\mathbb R^{m+n}$ like $\Gamma$ and $\Sigma$, we will use the notation $T_p \Gamma$ and $T_p \Sigma$ for the tangent space to the manifold at the point $p$ (which will be always identified with a linear {\em oriented} subspace of $\mathbb R^{m+n}$)\index{aaltT_p \Sigma@$T_p \Sigma$}\index{aaltT_p \Gamma@$T_p \Gamma$}. 

\begin{theorem}\label{thm:ex_tc}
Take $p\in\supp(T)$ and any sequence $r_k\downarrow 0$. Up to subsequences $T_{p,r_k}$ is converging locally to an area-minimizing integral current $T_0$ supported in $T_p\Sigma$ such that
\begin{itemize}
\item[(a)] $T_0$ is a cone with vertex $0$ and $\|T\|(\bB_1(0))=\omega_m\Theta(T,p)$;
\item[(b)] if $p\in\supp\, (T)\setminus\gammaup$, then $\partial T_0=0$;
\item[(c)] if $p\in\gammaup$, then $\partial T_0=\a{T_p\gammaup}$.
\end{itemize}
Moreover $\|T_{p,r_k}\|$ converges, in the sense of measures, to $\|T_0\|$. 
\end{theorem}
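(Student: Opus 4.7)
The plan is to combine the Federer--Fleming compactness theorem for integral currents with the boundary monotonicity formula of Theorem \ref{thm:allard}. First, I would use the monotonicity of $r\mapsto \Theta_{\rm b}(T,p,r)$ (or $\Theta_{\rm i}$ if $p\notin\gammaup$) to obtain, for every $R>0$ and every $k$ large enough that $Rr_k<2-|p|$, a uniform bound
\[
\|T_{p,r_k}\|(\bB_R)=r_k^{-m}\|T\|(\bB_{Rr_k}(p))\le C\,\Theta(T,p)\,R^m .
\]
Since $\gammaup$ is $C^{3,a_0}$, the rescaled boundaries $\partial T_{p,r_k}=(\iota_{p,r_k})_\sharp\a{\gammaup}$ are supported on $(\gammaup-p)/r_k$, which converges locally in $C^1$ to $T_p\gammaup$; in particular $\bM((\partial T_{p,r_k})\res \bB_R)$ is uniformly bounded, and a standard push-forward computation shows $\partial T_{p,r_k}\to \a{T_p\gammaup}$ as currents on every bounded set (resp.\ $\partial T_{p,r_k}\to 0$ on any fixed ball when $p\notin\gammaup$, since eventually $\gammaup\cap \bB_{Rr_k}(p)=\emptyset$). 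Federer--Fleming compactness then yields a subsequence, not relabeled, along which $T_{p,r_k}$ converges locally to an integer rectifiable current $T_0$, whose support lies in the Hausdorff limit of $(\Sigma-p)/r_k$, namely $T_p\Sigma$.

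Next I would verify that $T_0$ is area-minimizing in $T_p\Sigma$ with boundary as claimed. Given any competitor $S$ in $T_p\Sigma$ with $\partial S=\partial T_0$ and $\supp S\subset \bB_R$, the $C^{3,a_0}$ regularity of $\Sigma$ lets me transport $S$ via the nearest point projection $\bp_k$ onto $(\Sigma-p)/r_k$ to obtain $S_k$ with $\bM(S_k)=\bM(S)+O(r_k)$ and $\partial S_k$ close in flat norm to $\partial T_{p,r_k}$; a standard isoperimetric-style interpolation fills the gap at negligible cost, so that $\bM(T_{p,r_k}\res \bB_R)\le \bM(S)+o(1)$ by (AM) applied to $T$. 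Sending $k\to\infty$ and using lower semicontinuity of mass proves $\bM(T_0\res\bB_R)\le \bM(S)$, hence (AM) for $T_0$.

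The conical structure is the heart of the matter and follows from the precise monotonicity identity \eqref{e:monot_identity}. Rescaling \eqref{e:monot_identity} at $p$ to scale $r_k$, the bound $|\vec H_T|\le C\|A_\Sigma\|_0$ and the $C^{3,a_0}$ bound on $\gammaup$ make the right-hand side an $O(r_k)$ quantity on $\bB_R\setminus\bB_r$. Passing to the limit, writing the analogous identity for $T_0$ at the origin (with $\vec H_{T_0}\equiv 0$ on $T_p\Sigma$ and $\gammaup$ replaced by the linear $T_p\gammaup$, so that the right-hand integral vanishes), we obtain
\[
R^{-m}\|T_0\|(\bB_R)-r^{-m}\|T_0\|(\bB_r)=\int_{\bB_R\setminus\bB_r}\frac{|x^\perp|^2}{|x|^{m+2}}\,d\|T_0\|(x)\quad\text{and}\quad \|T_0\|(\bB_\rho)=\omega_m\Theta(T,p)\rho^m.
\]
The second equality comes from $\lim_k r_k^{-m}\|T\|(\bB_{\rho r_k}(p))=\omega_m\Theta(T,p)\rho^m$ (existence of the density) combined with weak convergence and the fact that $\|T_0\|(\partial \bB_\rho)=0$ for a.e.\ $\rho$. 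Subtracting forces $x^\perp=0$ for $\|T_0\|$-a.e.\ $x$, the standard characterization of a cone with vertex at the origin. The density equality in (a) is then immediate with $\rho=1$. Furthermore, weak convergence of currents together with the matching of total masses $\|T_{p,r_k}\|(\bB_\rho)\to \|T_0\|(\bB_\rho)$ on a dense set of radii upgrades to $\|T_{p,r_k}\|\weaks \|T_0\|$ as Radon measures.

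The main obstacle I expect is the correct identification of the limiting boundary as $\a{T_p\gammaup}$ with multiplicity one and the right orientation. This is where the $C^{3,a_0}$ assumption on $\gammaup$ is essential: one must show that the push-forwards $(\iota_{p,r_k})_\sharp \a{\gammaup}$ converge \emph{as oriented integer rectifiable currents}, not merely that their supports Hausdorff-converge. A local parametrization of $\gammaup$ by a $C^{3,a_0}$ graph over $T_p\gammaup$ with $O(|x|^2)$ error gives a natural comparison of orientations and multiplicities and reduces the claim to the smooth case, but it has to be combined carefully with the flat-norm convergence of $T_{p,r_k}$ to ensure that $\partial T_0$ really equals the limit of the boundaries (so that commutation of $\partial$ with the limit is justified on compact sets).
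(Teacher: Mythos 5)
Your argument is essentially correct, and it is the standard one: the paper itself gives no proof of Theorem \ref{thm:ex_tc} — it is recalled as a known consequence of Allard's boundary monotonicity formula with a citation to \cite{AllB} — so there is no ``paper proof'' to compare against, but your route (uniform mass bounds from $\Theta_{\rm b}$/$\Theta_{\rm i}$, flat convergence of the rescaled boundaries to $\a{T_p\gammaup}$, Federer--Fleming compactness, minimality of the limit by cut-and-paste with a projection onto the rescaled $\Sigma$, and coneness from the vanishing of the remainder in \eqref{e:monot_identity} applied to $T_0$, whose mean-curvature and boundary terms vanish since the ambient is the plane $T_p\Sigma$ and $x\cdot\vec n=0$ on the linear $T_p\gammaup$) is exactly the expected chain of reasoning. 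One point is stated too loosely: the identity $\|T_0\|(\bB_\rho)=\omega_m\Theta(T,p)\rho^m$ does \emph{not} follow from ``existence of the density combined with weak convergence and $\|T_0\|(\partial\bB_\rho)=0$'' — weak convergence of currents only gives lower semicontinuity of mass on open sets, and a priori mass could be lost in the limit. The missing upper bound is precisely the no-mass-loss property of minimizers, which you in fact already have: taking $S=T_0\res\bB_R$ (admissible, since $\supp T_0\subset T_p\Sigma$ and $\partial S$ matches) in your comparison inequality $\bM(T_{p,r_k}\res\bB_R)\le \bM(S)+o(1)$ yields $\limsup_k\|T_{p,r_k}\|(\bB_R)\le\|T_0\|(\bB_R)$, which together with lower semicontinuity gives the mass convergence you use both for (a) and for the upgrade $\|T_{p,r_k}\|\weaks\|T_0\|$. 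With that step made explicit (and the usual good-radius selection in the gluing, which you gesture at), the proof is complete.
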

\begin{definition}
Any cone $T_0$ as in Theorem \ref{thm:ex_tc} will be called \emph{a tangent cone to $T$ at $p$}\index{Tangent cone}. A tangent cone $T_0$ will be called \emph{flat}\index{Flat tangent cone} if $\supp(T_0)$ is contained in an $m$-dimensional plane.
\end{definition}

Note that a flat tangent cone at a point $p\in \supp (T)\setminus \gammaup$ is necessarily a positive integer multiple of $\a{\pi}$ for some $m$-dimensional plane $\pi$ contained in $T_{p} \Sigma$: this is a consequence of the Constancy Theorem and of (b) above. For $p\in \gammaup$ a flat tangent cone has instead the form $Q \a{\pi^+} + (Q-1) \a{\pi^-}$, where $Q\geq 1$ is an integer, $\pi = \pi^+ \cup \pi^-$ is an $m$-dimensional plane contained in $T_p \Sigma$ and $\partial \a{\pi^+}  = \a{T_p\gammaup} = - \partial\a{ \pi^-}$. The latter is again a consequence of the Constancy Theorem taking into account that, by (b), $\partial T_0 = \a{T_p \Gamma}$. 

\begin{definition}\label{def:cc_point}
A point $p\in \gammaup$ will be called a \emph{collapsed point}\index{Collapsed point} if
\begin{itemize}
\item[(i)] there exists a flat tangent cone to $T$ at $p$;
\item[(ii)] there exists a neighborhood $U$ of $p$ such that $\Theta (T,q)\geq \Theta (T,p)$ at every $q\in \gammaup\cap U$. 
\end{itemize}
\end{definition}

The first main point of this chapter is to show how standard regularity theory implies that

\begin{theorem}\label{thm:step0}\label{THM:STEP0}
If $\breg (T)$ is not dense in $\gammaup$ then there exists a  collapsed singular point.
\end{theorem}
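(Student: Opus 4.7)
The plan is to argue by contradiction: assume $\breg(T)$ is not dense in $\gammaup \cap \bB_2$, fix a relatively open $V\subset \gammaup\cap\bB_2$ disjoint from $\breg(T)$, and produce a collapsed point inside $V$.

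First I would invoke the stratification theorem (to be proved later in this chapter as a boundary analog of Almgren's classical stratification for area-minimizing currents): with the exception of a set of Hausdorff dimension at most $m-2$, every point of $\gammaup\cap \bB_2$ admits a flat tangent cone. In particular
\[
V_{\rm flat}:=\{p\in V : \text{some tangent cone to } T \text{ at } p \text{ is flat}\}
\]
is dense in $V$. By Theorem \ref{thm:ex_tc} combined with the Constancy Theorem applied to the two halves $\pi^\pm$ of a flat $m$-plane $\pi\supset T_p\gammaup$, any such flat tangent cone must have the form $Q\a{\pi^+}+(Q-1)\a{\pi^-}$ for some positive integer $Q$; in particular $\Theta(T,p)=Q-\tfrac12$. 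Since $V$ contains no boundary regular points, Allard's boundary regularity theorem at density $\tfrac12$ rules out $Q=1$ and forces $Q\geq 2$ at every $p\in V_{\rm flat}$.

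Next, set
\[
Q_0:=\min\bigl\{Q\geq 2 : V_{\rm flat}^{(Q)}\neq\emptyset\bigr\},\qquad V_{\rm flat}^{(Q)}:=\{p\in V_{\rm flat} : \Theta(T,p)=Q-\tfrac12\}.
\]
This minimum is well-defined because $V_{\rm flat}$ is nonempty (being dense in the nonempty open set $V$) and the values of $\Theta$ on $V_{\rm flat}$ lie in the discrete, bounded-below set $\{\tfrac32,\tfrac52,\ldots\}$. Pick any $p\in V_{\rm flat}^{(Q_0)}$; I claim $p$ is collapsed. Condition (i) of Definition \ref{def:cc_point} is immediate. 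For condition (ii), if it failed there would exist a sequence $q_n\to p$ in $\gammaup$ with $\Theta(T,q_n)<Q_0-\tfrac12$, and for $n$ large $q_n$ would lie in $V$ (since $V$ is open). The upper semicontinuity of $\Theta$ on $\gammaup$ (Theorem \ref{thm:allard}) would then give an open neighborhood $W_n\subset\gammaup$ of $q_n$ on which $\Theta<Q_0-\tfrac12$ identically; by density of $V_{\rm flat}$ in $V$, $W_n\cap V_{\rm flat}\neq\emptyset$, and any such point would be a flat-tangent-cone point of semi-integer density strictly smaller than $Q_0-\tfrac12$, contradicting the minimality of $Q_0$.

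The only non-routine ingredient is the stratification theorem producing the dense set $V_{\rm flat}$; I expect this to be the main obstacle, requiring an adaptation of Almgren's stratification to the boundary setting and leaning crucially on the monotonicity formula at the boundary (Theorem \ref{thm:allard}(b)) to identify tangent objects as cones and to push the dimension of the non-conical strata down to codimension two. Once that is available, the remaining argument is a soft interplay of upper semicontinuity and the discrete structure of semi-integer densities at flat-tangent-cone points, as carried out above.
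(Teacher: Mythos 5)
Your proof is correct, and all the ingredients you use are already available in the chapter: the boundary stratification (Theorem \ref{thm:stratification}, quoted from White) together with Lemma \ref{lem:flat_cone}, the upper semicontinuity of $\Theta$ along $\gammaup$ from Theorem \ref{thm:allard}, and Allard's Theorem \ref{t:AllBfinal}; so the "main obstacle" you flag is in fact already supplied, and it is precisely the same input the paper's own proof uses. Where you genuinely diverge from the paper is in the organizing mechanism. The paper works with the upper level sets $C_i=\{\Theta\ge i-\tfrac12\}$ intersected with the interior $G$ of $\bsing(T)$: these are relatively closed by upper semicontinuity, a Baire category argument shows that some $E_i=({\rm int}\, C_i)\setminus C_{i+1}$ is a nonempty relatively open subset of $\gammaup$, the stratification then yields a flat-tangent-cone point $p\in E_i$ whose density must equal $i-\tfrac12$, and condition (ii) of Definition \ref{def:cc_point} is automatic because $\Theta\ge i-\tfrac12$ holds on all of $E_i$. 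You avoid Baire category altogether: you first invoke stratification to get a dense set of flat-cone points in $V$, where densities are confined to the discrete set of semi-integers, then select a point of minimal such density, and verify condition (ii) by contradiction, using upper semicontinuity at a hypothetical nearby point of smaller density to spread the strict inequality $\Theta<Q_0-\tfrac12$ over a relatively open set, inside which the density of flat-cone points would produce a semi-integer density below the minimum. Both arguments are soft; the paper's category argument builds the "density bounded below on a neighborhood" property into the open set $E_i$ at the outset, whereas your minimal-density selection recovers it a posteriori. Your version is slightly leaner (no category argument, and Allard's theorem is not strictly needed, since your point is singular simply because it lies in $V$ and since $Q'=1$ at the contradiction step would again contradict $q'\in V$ via Theorem \ref{t:AllBfinal}), at the cost of the extra bridging step from arbitrary nearby points to flat-cone points via upper semicontinuity.
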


The proof of Theorem \ref{thm:main} will then be reduced to the following statement: 

\begin{theorem}\label{thm:real_main}
A  collapsed point is always a regular point.
\end{theorem}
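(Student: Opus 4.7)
The plan is to argue by contradiction: assume that $0\in \gammaup$ is a collapsed point of density $\Theta(T,0)=Q-\frac{1}{2}$ for some integer $Q\geq 2$ (the case $Q=1$ is covered by Allard's boundary regularity theorem, since then $0$ has density $\frac{1}{2}$) and that $0\in \bsing(T)$. The goal is to produce a blow-up limit which is a nontrivial $\qhalf$-valued Dir-minimizer in the sense of Chapter~\ref{chap:Qhalf} that collapses at a smooth interface, and then invoke the rigidity theorem for such minimizers to reach a contradiction. Up to a rotation and using Theorem~\ref{thm:ex_tc}, I may assume that a flat tangent cone at $0$ is $S=Q\a{\pi_0^+}+(Q-1)\a{\pi_0^-}$, with $\pi_0=\R^m\times\{0\}\subset T_0\Sigma$, $\pi_0^{\pm}=\{\pm x_1>0\}\cap\pi_0$, and $T_0\gammaup=\{x_1=0\}\cap\pi_0$.

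First I would develop the theory of $\qhalf$-valued Dir-minimizers $(g^+,g^-)$ with smooth interface $(\gamma,\varphi)$ as sketched in the roadmap, and prove the following rigidity statement: any Dir-minimizer that \emph{collapses at the interface} (meaning that all $2Q-1$ sheets meet along the interface) reduces to a single harmonic sheet passing through $\gamma$ and counted $Q$ times on one side and $Q-1$ times on the other. The proof of this rigidity rests on the monotonicity of a suitably weighted variant of Almgren's frequency function; the weight is introduced precisely to restore approximate inner-variation identities that would otherwise fail because of the curvature of $\gamma$. This rigidity theorem is the \emph{deus ex machina} of the whole argument. Next, combining this linear theory with the BV estimates of Jerrard--Soner adapted as in~\cite{DS3} and the height bound coming from an Allard--Moser iteration applied to the remainder in the monotonicity identity~\eqref{e:monot_identity}, I would obtain a $\qhalf$-valued Lipschitz approximation of $T$ that is almost Dir-minimizing and, crucially, collapses at the interface $\gammaup$.

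Using the linear decay provided by the rigidity theorem, a De~Giorgi type iteration would yield a power-law decay of the spherical excess at every boundary point in a neighbourhood of $0$, hence the uniqueness and H\"older continuity of the flat tangent cone $\pi(q)$ along $\gammaup$ near $0$, and the containment of $\supp(T)$ in a horned neighbourhood of $\bigcup_q \pi(q)$. This geometric control would then be used to refine the Lipschitz approximation on a Whitney-type cubical decomposition refining towards $\gammaup$, and to construct two \emph{side} center manifolds $\mathcal{M}^\pm$ of class $C^{3,\kappa}$ with boundary $\gammaup$, glued along $\gammaup$ to a $C^{1,1}$ manifold passing through $\gammaup$, together with normal approximations $N^\pm$ encoding the deviation of the sheets from $\mathcal{M}^\pm$. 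A careful Taylor expansion of the area functional along vector fields tangent to $\mathcal{M}^\pm$, paired with the superlinear error estimate for $N^\pm$ versus $T$, would then transfer the weighted frequency monotonicity from the linear setting to $N^\pm$.

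The argument would then conclude by a blow-up: rescaling $N^\pm$ at the rate prescribed by the frequency function, one obtains in the limit a $\qhalf$-valued Dir-minimizer $(g^+,g^-)$ on a half-space with \emph{flat} interface; the average of the sheets vanishes on each side by construction of $\mathcal{M}^\pm$, the collapsed hypothesis on $0$ guarantees that the limit also collapses at the interface, and the frequency lower bound ensures nontriviality. By the rigidity theorem for collapsed Dir-minimizers the limit would have to be the trivial single-sheet configuration, contradicting its nontriviality and proving that $0$ is in fact regular. The main obstacle is unquestionably the monotonicity of the frequency function at the boundary: as shown in~\cite{Jonas2}, the classical Almgren frequency need not be monotone there, and correcting it by a geometric weight that distorts the domain while preserving enough of the inner-variation identity, and then propagating this weighted monotonicity through the successive stages---linear problem, Lipschitz approximation, center manifold, and current---while keeping every error term under sharp quantitative control, is the technical heart of the whole proof.
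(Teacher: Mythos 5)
Your plan is correct and follows essentially the same route as the paper: reduction to the collapsed situation, the rigidity theorem for collapsed $\qhalf$ Dir-minimizers via a weighted frequency function, the two Lipschitz approximations, excess decay and uniqueness of tangent cones, the left/right center manifolds with normal approximations, frequency monotonicity for $N^\pm$, and the final blow-up contradicting Theorem~\ref{thm:collasso}. Nothing in your outline deviates in substance from the paper's own proof, which occupies Chapters~\ref{chap:Qhalf}--\ref{chap:blowup}.
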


All the remaining chapters will in fact be devoted to prove it. 

\medskip

Observe that at collapsed points the density $\Theta (T,p)$ equals $Q-\frac{1}{2}$ for some positive integer $Q$. The case $Q=1$ of the above theorem is indeed a consequence of Allard's boundary regularity theorem for varifolds. Moreover, if $p$ is a point where $\Theta (T,p)=\frac{1}{2}$, then by Theorem \ref{t:Allard>1/2} assumption (ii) in Definition \ref{def:cc_point} is automatically satisfied and in fact the theory of \cite{AllB} shows that even (i) holds necessarily. Therefore, multiplicity $\frac{1}{2}$ points are always regular:

\begin{theorem}[Allard's boundary regularity theorem]\label{t:AllBfinal}
All points $p\in \gammaup$ with $\Theta (T, p) = \frac{1}{2}$ are regular points. 
\end{theorem}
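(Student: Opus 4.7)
The plan is to reduce the statement to the $\varepsilon$-regularity form of Allard's boundary theorem from \cite{AllB}, which guarantees $C^{1,\alpha}$ regularity of $T$ in a neighborhood of a boundary point $p$ provided that at some sufficiently small scale the current is close, in flat norm and in mass, to a multiplicity-one half $m$-plane with boundary $T_p\gammaup$. Thus the task is to produce such a scale starting from the single piece of information $\Theta(T,p)=\frac{1}{2}$.

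First I would rule out any ``higher density trouble'' near $p$. Theorem \ref{t:Allard>1/2} says $\Theta(T,q)\geq\frac12$ for every $q\in\gammaup$, so by assumption $p$ realizes the minimum possible density. Combining this with the upper semicontinuity of $q\mapsto\Theta(T,q)$ on $\gammaup$ from Theorem \ref{thm:allard}, it follows that $\Theta(T,q)\to\frac{1}{2}$ as $q\to p$ along $\gammaup$. In particular condition (ii) of Definition \ref{def:cc_point} is automatic at $p$.

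Next I would classify the tangent cones at $p$. By Theorem \ref{thm:ex_tc} any tangent cone $T_0$ is an area-minimizing cone in $T_p\Sigma$ with $\partial T_0=\a{T_p\gammaup}$ and $\Theta(T_0,0)=\frac12$. The lower bound of Theorem \ref{t:Allard>1/2} applied to $T_0$ forces $\Theta(T_0,y)\geq\frac12$ for every $y\in T_p\gammaup$, while the monotonicity formula on the cone $T_0$ gives the reverse inequality, hence $\Theta(T_0,y)=\frac12$ identically on the spine. A standard cone-splitting argument based on the identity \eqref{e:monot_identity} then forces $T_0$ to be translation-invariant along $T_p\gammaup$, so that $T_0=\a{T_p\gammaup}\cone S_0$ for a one-dimensional area-minimizing cone $S_0$ in the orthogonal complement whose boundary is a single point of multiplicity one. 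The only such $S_0$ is a half-line with multiplicity one, and we conclude that $T_0=\a{\pi^+}$ for an $m$-dimensional multiplicity-one half-plane $\pi^+\subset T_p\Sigma$ with boundary $T_p\gammaup$. In particular (i) of Definition \ref{def:cc_point} also holds, so $p$ is a collapsed point with $Q=1$.

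Finally I would invoke Allard's $\varepsilon$-regularity theorem at the boundary, cf. \cite{AllB}. The rescaled currents $T_{p,r}$ converge, along some subsequence $r_k\downarrow 0$, to $\a{\pi^+}$ both as currents and in mass (Theorem \ref{thm:ex_tc}), while the rescalings of $\Sigma$ and $\gammaup$ converge smoothly to $T_p\Sigma$ and $T_p\gammaup$ respectively. Hence for $r$ small enough all the smallness hypotheses of Allard's boundary theorem are satisfied, and we conclude that in a neighborhood of $p$ the support of $T$ coincides with the graph of a $C^{1,\alpha}$ function over $\pi^+$ with boundary $\gammaup$; equivalently, $p\in\breg(T)$. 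The delicate step is the tangent cone classification, but once Theorem \ref{t:Allard>1/2} is available it is a short deduction from the equality case of the density lower bound, which rules out both higher-multiplicity half-planes and any non-flat minimizing boundary cone with minimal density.
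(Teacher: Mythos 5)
Your proposal is correct and follows essentially the same route as the paper, which treats this statement as a direct consequence of Allard's boundary theory: the density lower bound of Theorem \ref{t:Allard>1/2} makes condition (ii) of Definition \ref{def:cc_point} automatic, the tangent cone at a density-$\frac12$ point is a multiplicity-one half-plane, and Allard's $\varepsilon$-regularity theorem from \cite{AllB} then gives regularity. The only (harmless) difference is that you derive the tangent-cone classification yourself via the cone-splitting machinery (as in Lemma \ref{lm:above} and Lemma \ref{lem:flat_cone}), whereas the paper simply attributes this step to the theory of \cite{AllB}.
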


Finally, it is worth noticing the following two consequences of our analysis, which we will also prove in the last section of this chapter:

\begin{corollary}\label{c:uniqueness}\label{C:UNIQUENESS}
For every $\alpha>0$ at $\cH^{m-2+\alpha}$-a.e. $p\in \gammaup$ there is a flat tangent cone, and hence $Q =\Theta (T,p)+\frac{1}{2}$ is a positive integer. 
At $\cH^{m-1}$-a.e. $p\in \gammaup$ any flat tangent cone takes the form  $Q \a{\pi^+} + (Q-1)\a{\pi^-}$, where the plane $\pi$ is the unique plane containing $T_p \gammaup$ and the vector $\vec{n} (x)$ appearing in \eqref{e:first_var} (with the natural orientation). 
\end{corollary}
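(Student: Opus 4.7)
The plan is to establish the two claims separately. For the dimension bound, I would apply an Almgren--Federer stratification argument tailored to boundary points. Define, for $0 \le k \le m-1$,
\[
\mathcal{S}^k := \{p\in \gammaup\cap \bB_2 : \text{every tangent cone at } p \text{ has spine of dimension } \le k\},
\]
where the spine of a cone is its maximal group of translation symmetries. The monotonicity of $\Theta_{\rm b}$ and the upper semi-continuity of the density on $\gammaup$ from Theorem \ref{thm:allard}, together with Theorem \ref{thm:ex_tc} (existence and compactness of tangent cones at boundary points), provide exactly the ingredients to run the classical dimension reduction, which yields $\dim_\cH \mathcal{S}^k \le k$. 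Then I would check the inclusion
\[
\{p\in \gammaup\cap \bB_2 : \text{no tangent cone at } p \text{ is flat}\}\subseteq \mathcal{S}^{m-2},
\]
which gives the first claim. The statement that $Q=\Theta(T,p)+\tfrac12 \in \mathbb N$ follows directly from the formula $\|T_0\|(\bB_1) = (Q-\tfrac12)\omega_m$ on any flat tangent cone $Q\a{\pi^+}+(Q-1)\a{\pi^-}$.

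The above inclusion amounts to showing: any tangent cone $T_0$ at a boundary point $p$ whose spine $V$ has dimension at least $m-1$ is flat. Since $V$-translations preserve $T_0$, they preserve $\partial T_0 = \a{T_p\gammaup}$, forcing $V \subseteq T_p\gammaup$; the case $\dim V = m$ is ruled out because an $m$-invariant $m$-dimensional cone is a finite sum of multiples of a single plane, hence has vanishing boundary. Thus $V=T_p\gammaup$, and $T_0$ factors as a product $\a{T_p\gammaup} \times T_1$, where $T_1$ is an area-minimizing $1$-dimensional cone in the $(\bar n+1)$-dimensional orthogonal complement with $\partial T_1 = \a{0}$. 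The structure theorem for $1$-dimensional minimizing cones then forces $T_1 = Q\a{r^+}+(Q-1)\a{r^-}$ for two opposite rays $r^{\pm}$ from the origin and some $Q\in\mathbb N\setminus\{0\}$, giving the desired flat form of $T_0$.

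For the second assertion, I would use the first variation identity \eqref{e:first_var}. Since $\vec n$ is a bounded Borel vector field on the smooth submanifold $\gammaup$, the Lebesgue differentiation theorem (with respect to $\cH^{m-1}\res\gammaup$) guarantees that $\cH^{m-1}$-a.e.\ $p\in \gammaup$ is a Lebesgue point of $\vec n$. Fix such a $p$ admitting a flat tangent cone $T_0=Q\a{\pi^+}+(Q-1)\a{\pi^-}$ and a blowup sequence $T_{p,r_k}\to T_0$. Plugging a test field $X\in C^1_c$ into \eqref{e:first_var} for $T_{p,r_k}$ and passing to the limit --- the mean-curvature term vanishes because $\|A_\Sigma\|_0$ is bounded and the mass of $T_{p,r_k}$ scales with a vanishing factor, while the boundary integral converges to $\int_{T_p\gammaup}X\cdot\vec n(p)\,d\cH^{m-1}$ thanks to the Lebesgue-point property --- yields
\[
\delta T_0(X)=\int_{T_p\gammaup} X\cdot \vec n(p)\,d\cH^{m-1}.
\]
On the other hand, a direct divergence-theorem computation on each of the two flat half-planes, using $\vec H\equiv 0$ in the flat ambient tangent space, gives
\[
\delta T_0(X)=\int_{T_p\gammaup} X\cdot \bigl(Q\vec\eta^+ + (Q-1)\vec\eta^-\bigr)\, d\cH^{m-1}=\int_{T_p\gammaup} X\cdot \vec\eta^+\, d\cH^{m-1},
\]
where $\vec\eta^\pm$ are the outward unit conormals to $\pi^\pm$ at $T_p\gammaup$, satisfying $\vec\eta^- = -\vec\eta^+$. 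Matching the two expressions forces $\vec n(p)=\vec\eta^+$, so $\pi=\mathrm{span}(T_p\gammaup,\vec n(p))$ is uniquely determined, together with the labelling $\pi^+,\pi^-$ fixed by requiring that $\vec n(p)$ be the outward conormal to $\pi^+$.

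The main technical obstacle is to run the Almgren--Federer dimension reduction at the boundary with the same machinery as for interior minimizers: this requires that blowups of $T$ at boundary points be area-minimizing cones with the prescribed boundary $\a{T_p\gammaup}$ (granted by Theorem \ref{thm:ex_tc}) and that the density be upper semi-continuous on $\gammaup$ (granted by Theorem \ref{thm:allard}). Once these are in place, the classification of tangent cones with large spine reduces to the one-dimensional case via the product structure, and the identification of the plane $\pi$ becomes an essentially routine application of the divergence theorem combined with Lebesgue differentiation of $\vec n$.
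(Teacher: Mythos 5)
Your proposal is correct and follows essentially the same route as the paper: the first claim via the boundary stratification combined with the observation that a tangent cone whose spine has dimension $m-1$ splits off $\a{T_p\gammaup}$ and reduces to a one-dimensional minimizing cone, hence is flat (this is exactly the paper's Theorem \ref{thm:stratification} and Lemma \ref{lem:flat_cone}), and the second claim by blowing up the first variation at a point admitting a flat tangent cone and matching the limiting boundary term with the conormal of the cone. The only cosmetic difference is that the paper establishes $\theta\equiv 1$ in \eqref{e:first_var} within the same blow-up argument (starting from Allard's decomposition of $\delta T$), whereas you take \eqref{e:first_var} as given by Theorem \ref{thm:allard} and use the Lebesgue-point property of $\vec{n}$, which is legitimate given the order of the statements.
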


Finally, by the very same arguments of \cite[Theorem 35.3 (1)]{Sim} and a simple analysis of two dimensional tangent cones at the boundary,  one of the conclusions of the above corollary can be strengthened as follows. 

\begin{corollary}\label{c:3dim}\label{C:3DIM} For
every $\alpha>0$ and $\cH^{m-3+\alpha}$-a.a. $p\in \Gamma$, $\Theta (T,p)+\frac{1}{2}$ is a positive integer.
 \end{corollary}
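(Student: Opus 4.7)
The plan is to combine Corollary~\ref{c:uniqueness} with a quantitative Federer-type dimension reduction (as in Simon's Theorem~35.3(1)) and a structural analysis of $2$-dimensional tangent cones at the boundary. Let $E\subset \gammaup\cap\bB_2$ denote the set of $p$ at which $\Theta(T,p)+\frac12$ fails to be a positive integer; we aim at $\cH^{m-3+\alpha}(E)=0$ for every $\alpha>0$ (which is the intended content of the statement).

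First I would prove the following structural claim: if $T_0$ is any tangent cone to $T$ at $p\in\gammaup$ whose spine $\Sigma_0$ has dimension at least $m-2$, then $\Theta(T,p)+\frac12\in\mathbb N$. Since any translation direction of $T_0$ must preserve $\partial T_0=\a{T_p\gammaup}$, the spine $\Sigma_0$ is contained in $T_p\gammaup$; choosing an $(m-2)$-dimensional linear subspace of $\Sigma_0$ (still denoted $\Sigma_0$) yields the splitting
\[
T_0 \;=\; \Sigma_0 \times C_2,
\]
with $C_2$ a $2$-dimensional area-minimizing integral cone in $\Sigma_0^\perp\cap T_p\Sigma$ and $\partial C_2=\a{\ell_0}$, where $\ell_0:=T_p\gammaup\cap\Sigma_0^\perp$ is a line through the origin of multiplicity one. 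Its spherical link $C_2\cap S^{N-1}$ is a $1$-dimensional area-minimizing integral current on the sphere whose boundary consists of the two antipodal points $\ell_0\cap S^{N-1}$; the elementary classification of such minimizers shows that this link is a finite sum of great-circle geodesic arcs with integer multiplicities, and hence
\[
C_2 \;=\; \sum_i \sigma_i Q_i\,\a{H_i^+}
\]
for half-planes $H_i^+$ meeting along $\ell_0$, with $Q_i\in\mathbb N_{>0}$ and $\sigma_i\in\{\pm 1\}$. The boundary condition forces $\sum_i\sigma_i Q_i=\pm 1$, and $\Theta(C_2,0)=\frac12\sum_i Q_i$. Since $\sum_i\sigma_i Q_i$ and $\sum_i Q_i$ differ by $2\sum_{\sigma_i=-1}Q_i$, they share the same parity, so $\sum_i Q_i$ is odd and $\Theta(C_2,0)\in\frac12+\mathbb N$. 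Finally, the elementary product identity $2\omega_2\omega_{m-2}=m\omega_m$ yields $\Theta(T_0,0)=\Theta(C_2,0)$, so $p\notin E$.

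Next, I would apply the Simon-type stratification of the boundary: the set $\mathcal{S}^{m-3}_\gammaup$ of $p\in\gammaup$ at which every tangent cone has spine of dimension at most $m-3$ satisfies $\dim_{\cH}\mathcal{S}^{m-3}_\gammaup\leq m-3$. This statement is modeled on \cite[Thm 35.3(1)]{Sim}: the iterated-blow-up argument at density points of the bad set produces tangent cones whose spines grow by at least one direction at each iteration (necessarily in $T_p\gammaup$, thanks to the boundary constraint $\Sigma_0\subset T_p\gammaup$ observed above), and a standard Marstrand-type density argument then yields the dimensional bound. Combining this with the structural claim above gives $E\subset \mathcal{S}^{m-3}_\gammaup$ and so $\cH^{m-3+\alpha}(E)=0$ for every $\alpha>0$.

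The main technical obstacle I expect is checking that the classical Federer/Almgren stratification machinery, usually formulated for interior mass-minimizers, adapts cleanly to the boundary setting: one needs the upper semicontinuity of $\Theta(T,\cdot)$ on $\gammaup$ (cf.~Theorem~\ref{thm:allard}) together with the observation that the blow-up spines are constrained to lie in $T_p\gammaup$. With these ingredients the iterative blow-up argument proceeds exactly as in Simon's book, and the elementary $2$-dimensional cone analysis isolated above is the decisive input that bridges the gap from the dimension bound $m-2$ of Corollary~\ref{c:uniqueness} to the improved bound $m-3$.
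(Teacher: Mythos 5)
Your proposal is correct and follows essentially the same route as the paper: you place the bad set in the stratum $\sS_{m-3}(T,\gammaup)$ via the boundary stratification (which the paper simply quotes from \cite{White97} as Theorem \ref{thm:stratification} rather than re-deriving \`a la Simon), and at points admitting a tangent cone with spine of dimension $m-1$ or $m-2$ you obtain half-integrality from flatness (Lemma \ref{lem:flat_cone}) respectively from Federer's product splitting together with a classification of two-dimensional area-minimizing cones bounded by a multiplicity-one line, which is precisely the paper's concluding lemma. The only slip — writing the whole two-dimensional factor as signed half-planes bounded by $\ell_0$, whereas closed components of the link are great circles that need not pass through $\pm e_1$ and hence give full planes disjoint from $\ell_0$ — is harmless, since such components contribute integer density and your parity argument is unaffected.
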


\section{Stratification}

\begin{definition}
Let $p\in\gammaup$ and $T_0$ be a tangent cone at $p$. The \emph{spine}\index{Spine of a cone} $\spi(T_0)$\index{aals\spi(T_0)@$\spi(T_0)$} is the set of vectors $v\in T_p\gammaup$ such that $(\tau_v)_\sharp T_0=T_0$, where $\tau_v(q):=q+v$. \index{aagt\tau_v@$\tau_v$}
\end{definition}
We recall that the following conclusions are simple consequences of the monotonicity formula, cf. for instance \cite[Sections 3 \& 5]{White97}.
\begin{lemma}\label{lm:above}
$\spi(T_0)$ is a vector space and we have the following characterizations:
\begin{itemize}
\item[(a)] $v\in\spi(T_0)$ if and only if $\Theta(T_0,0)=\Theta(T_0,v)$;
\item[(b)] $v\in\spi(T_0)$ if and only if $(\iota_{v,r})_\sharp T_0=T_0$ for every $r>0$.
\end{itemize}
\end{lemma}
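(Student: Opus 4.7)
The plan is to derive both characterizations from the rigidity case of the boundary monotonicity formula \eqref{e:monot_identity}, applied to $T_0$ at points $v\in T_p\gammaup$, exploiting the very special geometry at hand: $T_0$ is an $m$-dimensional area-minimizing cone supported in the linear subspace $T_p\Sigma$, with boundary $\a{T_p\gammaup}$ also lying in a linear subspace.

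The forward implication of (a) is trivial: if $v\in \spi(T_0)$, then $(\tau_v)_\sharp T_0 = T_0$ gives $\|T_0\|(\bB_r(v)) = \|T_0\|(\bB_r(0))$ for every $r>0$, hence $\Theta(T_0,v) = \Theta(T_0,0)$. The heart of the argument is the converse. Since $T_p\Sigma$ is flat, $\vec{H}_{T_0}$ appearing in \eqref{e:monot_identity} is identically zero. Moreover, for $v\in T_p\gammaup$ and any $x\in T_p\gammaup$, the vector $x-v$ lies in the linear subspace $T_p\gammaup$, which is orthogonal to $\vec{n}(x)$; the boundary integral in \eqref{e:monot_identity} therefore also vanishes. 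The monotonicity identity collapses to
\[
r^{-m}\|T_0\|(\bB_r(v)) - s^{-m}\|T_0\|(\bB_s(v)) = \int_{\bB_r(v)\setminus \bB_s(v)} \frac{|(x-v)^\perp|^2}{|x-v|^{m+2}}\, d\|T_0\|(x)
\]
for all $0<s<r$. Using the cone property at $0$ one computes $r^{-m}\|T_0\|(\bB_r(v)) = \|T_0\|(\bB_1(v/r))$, which converges to $\omega_m\Theta(T_0,0)$ as $r\to\infty$ and to $\omega_m\Theta(T_0,v)$ as $r\downarrow 0$. Under the hypothesis the monotone quantity must be constant, forcing the integral on the right to vanish, and hence $(x-v)^\perp = 0$ for $\|T_0\|$-a.e. $x$. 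This means $T_0$ is a cone with vertex $v$ as well; the identity $\iota_{0,\lambda} = \iota_{v,\lambda}\circ \tau_v$, combined with $(\iota_{0,\lambda})_\sharp T_0 = T_0 = (\iota_{v,\lambda})_\sharp T_0$, then yields $(\tau_v)_\sharp T_0 = T_0$.

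Statement (b) is obtained along the same lines. Its forward direction is immediate from $\iota_{v,r} = \iota_{0,r}\circ \tau_{-v}$ together with $(\iota_{0,r})_\sharp T_0 = T_0$ and $(\tau_{-v})_\sharp T_0 = T_0$. Conversely, $(\iota_{v,r})_\sharp T_0 = T_0$ for every $r>0$ exhibits $T_0$ as a cone with vertex $v$, and the composition identity used above again gives $(\tau_v)_\sharp T_0 = T_0$; translation invariance of $T_0$ under $\tau_v$ forces the boundary $\a{T_p\gammaup}$ to be invariant as well, so $v\in T_p\gammaup$ and hence $v\in \spi(T_0)$.

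Finally, for the vector space structure, closure under sums follows from $\tau_{u+v}=\tau_u\circ\tau_v$ together with the fact that $T_p\gammaup$ is linear. For scalar multiples, the cone property at $0$ makes the density radially constant, so $\Theta(T_0,\lambda v) = \Theta(T_0,v) = \Theta(T_0,0)$ for every $\lambda>0$, and (a) yields $\lambda v\in \spi(T_0)$; negative scalars come from inverting $\tau_v$. The main obstacle I expect is being careful with the simultaneous vanishing of the mean curvature contribution and the boundary term in \eqref{e:monot_identity} when centering at $v\in T_p\gammaup$: once this is in place, the rigidity $(x-v)^\perp = 0$ coming from the equality case of the monotonicity formula does all the work.
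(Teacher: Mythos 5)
Your argument is correct and is essentially the route the paper has in mind: the paper gives no proof and simply points to the boundary monotonicity formula (citing White), and your proof is exactly the rigidity case of \eqref{e:monot_identity} centered at $v\in T_p\gammaup$ --- where $\vec{H}_{T_0}\equiv 0$ and $(x-v)\cdot\vec{n}(x)=0$ kill both error terms, so constancy of the density ratio forces $(x-v)^\perp=0$, hence a second cone vertex, and the composition $\tau_v=\iota_{v,\lambda}^{-1}\circ\iota_{0,\lambda}$ gives translation invariance. The only point worth making explicit is that in the converse of (a) you tacitly restrict to $v\in T_p\gammaup$, which is the intended reading since by definition $\spi(T_0)\subset T_p\gammaup$ (and in (b) you correctly recover $v\in T_p\gammaup$ from the invariance of $\partial T_0=\a{T_p\gammaup}$).
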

\begin{definition}\label{def:bdim}
Given a point $p\in\gammaup$, an area-minimizing current $T$ with boundary $\partial T=\gammaup$ and a tangent cone $T_0$ of $T$ at p, the \emph{building dimension}\index{Building dimension} $\bdim(T_0)$\index{aalb\bdim(T_0)@$\bdim(T_0)$} is the dimension of $\spi(T_0)$. We stratify the boundary $\gammaup$ according to the maximum of the building dimension of the tangent cones at the given point:\index{Strata}
\index{aals\sS_j (T, \gammaup)@$\sS_j (T, \gammaup)$}
\[
\sS_j(T,\gammaup):=\left\{p\in\gammaup:\,\bdim(T_0)\le j\text{ for every tangent cone }T_0\text{ at }p\right\}\,.
\]
\end{definition}

The following stratification result holds, cf. \cite[Theorem 5]{White97} (note that by definition $\spi(T_0) \subset T_p\gammaup$).
\begin{theorem}\label{thm:stratification}
$\sS_0(T,\gammaup)$ is at most countable, the Hausdorff dimension of each stratum $\sS_j(T,\gammaup)$ is at most $j$ and
\[
\sS_0(T,\gammaup)\subset\sS_1(T,\gammaup)\subset\ldots\subset\sS_{m-1}(T,\gammaup) = \gammaup\,.
\]
\end{theorem}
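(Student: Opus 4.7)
The inclusion chain $\sS_0 \subset \sS_1 \subset \cdots \subset \sS_{m-1}$ is immediate from Definition \ref{def:bdim}, while $\sS_{m-1}(T,\gammaup) = \gammaup$ follows from the fact that by definition $\spi(T_0) \subset T_p\gammaup$ and $\dim T_p\gammaup = m-1$, so $\bdim(T_0) \leq m-1$ at every boundary point.

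For the Hausdorff dimension bounds my plan is to apply the Almgren--Federer dimension reduction scheme, in the version for density-based stratifications developed by White in \cite{White97}. The crux is the following blow-up claim: if $p \in \sS_j$ is a point of positive upper $(j+\alpha)$-density for $\sS_j$, then some tangent cone $T_0$ at $p$ satisfies $\dim \spi(T_0) \geq j+1$; this contradicts $p\in\sS_j$ and, via the standard density result for Hausdorff measures, forces $\cH^{j+\alpha}(\sS_j)=0$. To prove the claim I would first exploit the upper semicontinuity of $\Theta(T,\cdot)$ on $\gammaup$ to reduce, by a countable Borel decomposition, to the case where $\Theta$ is essentially constant on a subset of $\sS_j$ of positive $(j+\alpha)$-measure near $p$. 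Next, I would pick $r_n\downarrow 0$ along which the rescaled sets $E_n:=\iota_{p,r_n}(\sS_j\cap\bB_{r_n}(p))$ enjoy a uniform positive lower bound on $(j+\alpha)$-dimensional density at $0$. By Theorem \ref{thm:ex_tc} and Hausdorff compactness of closed sets in $\overline{\bB_1}$, I would extract a further subsequence along which $T_{p,r_n}\to T_0$ and $E_n$ converge in Hausdorff distance to a closed set $A\subset \overline{\bB_1}\cap T_p\gammaup$ of Hausdorff dimension strictly greater than $j$. Finally, for any $q\in A$ and any $q_n=\iota_{p,r_n}(p_n)\to q$ with $p_n\in\sS_j$, combining the boundary monotonicity formula (Theorem \ref{thm:allard}(b)) with the weak-$*$ convergence $\|T_{p,r_n}\|\to\|T_0\|$ yields $\Theta(T_0,q)\geq \Theta(T_0,0)$; since $T_0$ is a cone the reverse inequality is automatic, and Lemma \ref{lm:above}(a) then forces $q\in\spi(T_0)$, producing $\dim\spi(T_0)\geq \dim_\cH A\geq j+1$.

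Countability of $\sS_0$ is the same argument specialized to $j=0$: if some $p\in\sS_0$ were an accumulation point of $\sS_0$, the rescaled points $v_n=\iota_{p,|p_n-p|}(p_n)$ would subconverge to a unit vector $v$ which the argument above places in $\spi(T_0)$ for some tangent cone $T_0$ at $p$, contradicting $\bdim(T_0)=0$; since $\gammaup$ is second countable, any subset of $\gammaup$ without accumulation points in itself is at most countable. The main technical obstacle throughout is the last step of the blow-up argument: transferring the density lower bound at scale $r_n$ through to the tangent cone requires carefully absorbing the exponential correction factors $\exp(C_0(\|A_\Sigma\|_0+\|A_\gammaup\|_0)\rho r_n)$ appearing in Theorem \ref{thm:allard}(b), handling the distinction between open and closed balls in the weak-$*$ convergence of mass measures, and simultaneously arranging the approximate constancy of $\Theta(T,\cdot)$ along the relevant approaching sequence via the countable level-set decomposition afforded by upper semicontinuity.
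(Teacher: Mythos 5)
The paper does not actually prove Theorem \ref{thm:stratification}: it invokes the general stratification theorem of White, \cite[Theorem 5]{White97}, after observing that $\spi(T_0)\subset T_p\gammaup$. Your proposal instead tries to reprove it by a one-blow-up Almgren--Federer scheme, and the easy parts are fine (the inclusions, $\sS_{m-1}=\gammaup$ via Theorem \ref{thm:ex_tc}, the selection of a point of positive upper content density, and the semicontinuity of Hausdorff content under Hausdorff convergence of the rescaled strata). The gap is in the decisive step, namely the claim that $\Theta(T_0,q)\ge\Theta(T_0,0)$ for $q\in A$. Upper semicontinuity of the density under the convergence $T_{p,r_n}\to T_0$, $q_n\to q$ gives $\Theta(T_0,q)\ge\limsup_n\Theta(T,p_n)$, but what you need on the right-hand side is a \emph{lower} bound by $\Theta(T,p)=\Theta(T_0,0)$, and upper semicontinuity of $\Theta(T,\cdot)$ along $p_n\to p$ goes in the wrong direction. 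Your proposed remedy --- decomposing $\sS_j$ into countably many pieces on which $\Theta$ oscillates by less than a fixed $\delta$ --- only yields $\Theta(T_0,q)\ge\Theta(T_0,0)-\delta$. This is not enough: Lemma \ref{lm:above}(a) characterizes the spine by \emph{exact} equality of densities, and for a fixed cone there is no uniform density gap, so the closed cone $\{q:\Theta(T_0,q)\ge\Theta(T_0,0)-\delta\}$ may be much larger than $\spi(T_0)$ (and of dimension $>j$); hence no contradiction follows from a single blow-up, and a diagonal in $\delta$ does not help because the selected piece, the base point and the tangent cone all change with $\delta$. The same defect affects your countability argument for $\sS_0$: placing the limit direction $v$ in $\spi(T_0)$ again requires $\liminf_n\Theta(T,p_n)\ge\Theta(T,p)$, which you do not have.

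To close the gap one needs a genuinely more elaborate mechanism, which is exactly what \cite{White97} (or Federer's dimension reduction) supplies: either (i) iterate the blow-up --- record after the first blow-up only that every point of $A$ is, at all scales, quantitatively far from cones with spine dimension $\ge j+1$ (a condition that survives limits once $\sS_j$ is written as a countable union of quantitative strata), then blow up $T_0$ again at a point of $A\setminus\spi(T_0)$ of positive upper content density, gaining one direction of translation invariance by cone-splitting, and repeat until the spine dimension exceeds $j$ --- or (ii) use White's covering argument based on quantitative almost-invariance, which compares density \emph{ratios} of $T$ at nearby points and positive scales via the monotonicity formula of Theorem \ref{thm:allard}, rather than the limiting densities, and thereby avoids the semicontinuity trap altogether. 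As written, your one-shot blow-up does not prove the dimension bounds or the countability of $\sS_0$; the quickest correct route in the context of this paper is the one the authors take, namely to verify the hypotheses of \cite[Theorem 5]{White97} and quote it.
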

We close this section proving the following elementary but useful lemma.
\begin{lemma}\label{lem:flat_cone}
If $\bdim(T_0)=m-1$ then $T_0$ is flat.
\end{lemma}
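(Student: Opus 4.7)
First, since $\spi(T_0)$ is a linear subspace of $T_p\gammaup$ and both have dimension $m-1$, we have $\spi(T_0) = T_p\gammaup =: V$. By Lemma~\ref{lm:above}(b), $T_0$ is then invariant under every translation $\tau_v$ with $v \in V$ in addition to being a cone with vertex $0$. The combination of these symmetries forces $\supp T_0$ to have the form $V + C$, where $C$ is a $1$-dimensional cone with vertex $0$ contained in the orthogonal complement of $V$ inside $T_p\Sigma$. Since $T_0$ is integer rectifiable, $C$ is a finite union of rays $\ell_i = \{tv_i : t \geq 0\}$, and the constancy theorem gives the decomposition
\[
T_0 = \sum_i m_i\, \a{V \oplus \ell_i}
\]
as a sum of half-planes through $V$ with integer multiplicities $m_i$ constrained by the boundary condition $\partial T_0 = \a{V}$.

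The main step is then to show that all the directions $v_i$ lie on a single line through the origin in $V^\perp \cap T_p\Sigma$: once this is established, $\supp T_0 \subset V \oplus \mathbb R v_1$ is contained in an $m$-plane, so $T_0$ is flat by definition.

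To prove this collinearity I would argue by contradiction. Suppose two rays satisfy $v_i \neq \pm v_j$ and consider the transverse $1$-dimensional current $S := \sum_i m_i \a{\ell_i}$ in $V^\perp \cap T_p\Sigma$, so that $T_0 = \a{V} \times S$ and $\partial S = \a{0}$. If $S'$ were a lower-mass competitor for $S$ with the same boundary, supported in a fixed ball, then the current $\a{V\cap\bB_R}\times S'$ completed by an isoperimetric correction along the lateral boundary $\partial(V\cap \bB_R)\times V^\perp$ would be a valid competitor for $T_0$ restricted to a slab: the mass gain is of order $R^{m-1}(\bM(S) - \bM(S'))$, while the correction cost is only of order $R^{m(m-2)/(m-1)}$, and for $R$ large enough the gain dominates, contradicting minimality of $T_0$. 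Thus $S$ is area-minimizing in $V^\perp \cap T_p\Sigma$. I would then invoke the classical classification of $1$-dimensional area-minimizing integer-rectifiable cones with boundary $\a{0}$, which shows that their support lies in a single line through the origin. The classification is proved elementarily by a Steiner-type vertex perturbation: the competitor obtained by moving the vertex from $0$ to a nearby point $p$ along the ``net flow'' direction $\sum |m_i| v_i$ produces a strict first-order decrease in mass unless the $v_i$ are collinear. The hypothesis $v_i \neq \pm v_j$ is then contradicted, which completes the argument.

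The main obstacle is this final $1$-dimensional classification step. The delicate point is that higher-multiplicity configurations such as $S = Q\a{\ell^+} + (Q-1)\a{\ell^-}$ with $\ell^\pm$ antiparallel are genuinely area-minimizing and correspond via the product structure to flat cones $Q\a{\pi^+}+(Q-1)\a{\pi^-}$ through $V$ of density $Q-\tfrac12$ (as in Example~\ref{ex:due_cerchi}). Thus the conclusion ``$\supp T_0$ lies in a plane'' is sharp, and a careful bookkeeping of multiplicities and orientations is needed to handle both the single-sheet case $Q=1$ and the multi-sheet case $Q\geq 2$ uniformly.
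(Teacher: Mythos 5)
Your overall route is the same as the paper's: reduce to the one-dimensional cross-section (the paper quotes Federer's cylindrical decomposition \cite[Section 5.4.8]{Fed} to write $T_0=\a{T_p\gammaup}\times S$ with $S$ a one-dimensional area-minimizing cone, and your slab comparison with a lateral interpolation is a standard substitute for that citation), and then classify the cone $S$ with $\partial S=\a{0}$. The genuine gap is in your classification step. Since $\partial S=\a{0}$, moving the junction from $0$ to $p=\epsilon w$ is only admissible if you insert a segment $[0,p]$ of multiplicity one to preserve the boundary, and then the first-order change of mass is $\epsilon\bigl(1-\langle \sum_i |m_i|v_i, w\rangle\bigr)$. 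So the vertex perturbation yields only the balancing condition $\bigl|\sum_i |m_i| v_i\bigr|\le 1$, which does \emph{not} force collinearity: take three unit-multiplicity rays at mutual $120^\circ$ angles, two oriented towards $0$ and one away, so that the boundary is $\a{0}$ and $\sum_i v_i=0$. Your competitor produces no first-order gain there, yet the support is not contained in a line. (That configuration is indeed non-minimizing, but one only sees it by re-routing mass between the endpoints on $\partial B_R$ — e.g.\ joining two of them by a chord of length $\sqrt{3}\,R$, giving total mass $(1+\sqrt 3)R<3R$ — not by translating the vertex.)

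The missing mechanism is cancellation between rays of opposite orientation, and this is exactly how the paper concludes. Writing $S=\sum_{i}\a{\ell_i^+}+\sum_j\a{\ell_j^-}$ with $\partial\a{\ell_i^\pm}=\pm\a{0}$ and with the masses adding, each pair $\a{\ell_i^+}+\a{\ell_j^-}$ is itself an area-minimizing current \emph{without} boundary; restricted to $\bB_R$ its boundary consists of the two endpoints on the sphere, and the straight segment joining them has mass $<2R$ unless the two rays are antipodal. Hence every such pair lies on a single line, and since any two of these lines share a ray they all coincide, giving $\supp(T_0)\subset T_p\gammaup+\ell$ (when $Q=1$ there are no negative rays and the conclusion is trivial). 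If you want to keep your competitor-based style, replace the vertex translation by this pairwise re-routing of sphere endpoints; the translation alone cannot close the argument.
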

\begin{proof}
Fix a tangent cone $T_0$ to $T$ at $p$ of maximal building dimension $m-1$ and observe that $\spi(T_0)=T_p\gammaup$. By a well-known result of Federer (cf. \cite[Section 5.4.8]{Fed}) there exists a one-dimensional area-minimizing current $S$  in $(T_p\gammaup)^\perp$ such that $T_0=\a{T_p\gammaup}\times S$. Note in particular that $\partial S=\a{0}$ and there exist $\ell_1^+,\ldots,  \ell_{Q-1}^+,\ell_Q^+$ and $\ell_1^-,\ldots,\ell_{Q-1}^-$ oriented half lines with endpoint at $0$ such that
\[
\partial\a{\ell_j^\pm}=\pm \a{0}\, ,
\]
\begin{equation}\label{e:decomp_1}
S=\sum_{i=1}^Q\a{\ell_i^+}+\sum_{j=1}^{Q-1}\a{\ell^-_j}\,
\end{equation}
and
\begin{equation}\label{e:decomp_2}
\|S\| = \sum_{i=1}^{Q} \left\|\a{\ell_i^+}\right\|+\sum_{j=1}^{Q-1}\left\|\a{\ell^-_j}\right\|\, , 
\end{equation}
cf. Figure \ref{fig:star}.

\begin{figure}[htbp]
\begin{center}\label{fig:star}
\input{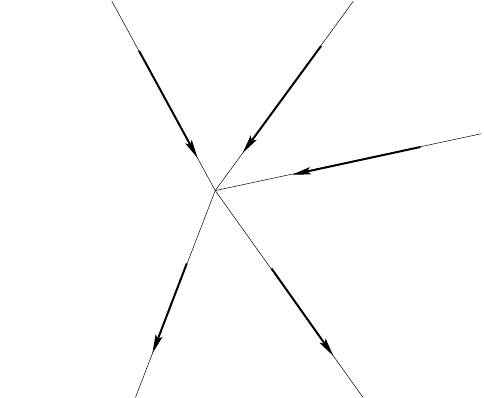_t}
\end{center}
\caption{An example of current $S$ and oriented lines $\ell^\pm_j$ when $Q=4$: the arrows represent the oriented tangent to the lines. Note that pairs of lines $\ell^+_j, \ell_k^+$ and $\ell^-_j, \ell_j^+$ might coincide: in the example we have $\ell^+_1=\ell^+_2$ and $\ell^-_1=\ell^-_2$. However the support of any line $\ell^+_j$ can intersect the support of any line $\ell^-_k$ only at the origin, otherwise \eqref{e:decomp_2} would be violated.}
\end{figure}

In particular $\a{\ell_i^+}+\a{\ell^-_j}$ is an area-minimizing current without boundary for every $i,j$. But then we conclude the existence of a single one-dimensional vector space $\ell_{ij}$ such that $\supp(\a{\ell^+_i}+\a{\ell^-_j}) = \ell_{ij}$. Since this has to be valid for any choice of $(i,j)$, we then also conclude that the $\ell_{ij}$ coincide all with a single line $\ell$. Hence $\supp(T_0)\subset T_p\gammaup+\ell$, which shows the flatness of $T_0$. 
\end{proof}

\section{Proof of Theorem \ref{thm:step0}}

Fix an area minimizing current $T$ with boundary $\partial T=\a{\gammaup}$ and assume that $\bsing(T)$ has nonempty interior, which we denote by $G$. Define 
\[
C_i:=\left\{p\in\gammaup\colon \Theta(T,p)\ge i-\textstyle{\frac 12}\right\}\cap G\,.
\]
Recall that, by upper semicontinuity of the density, $C_i$ is relatively closed in $G$. Let $D_i$ be the interior of $C_i$ and $E_i:=D_i\setminus C_{i+1}$. If $p$ is not in $\bigcup_{i\ge 1}E_i$, then fix the natural number $i\ge 1$ such that
\[
i-\textstyle{\frac 12}\le \Theta(T,p)<i+\textstyle{\frac 12}
\]
and observe that therefore $p\in C_i\setminus D_i$. The latter is a relatively closed meager subset of $G$ and thus we conclude that $G\setminus \bigcup_i E_i$ is the union of countably many closed meager subsets of $G$. By the Baire Category Theorem $\bigcup_{i\ge 1} E_i$ cannot be empty. 

This means that at least one $E_i$ is not empty and, being relatively open in $\gammaup$, by the stratification Theorem \ref{thm:stratification} we conclude that $E_i$ contains a point $p\notin\sS_{m-2}$. By the Lemma \ref{lem:flat_cone} there is at least one flat tangent cone $T_0$ at $p$, which in turn implies the existence of a positive integer $Q$ such that $\Theta(T_0,p)=Q-\frac 12$. Observe that $p\in E_i\subset C_i\setminus C_{i+1}$ and, hence, $Q=i$. Being $E_i$ relatively open in $\gammaup$, there is a neighborhood $U$ of $p$ such that $U\cap\gammaup\subset E_i\subset C_i$. Therefore $\Theta(T,q)\ge \Theta(T,p)$ for every $q\in U\cap\gammaup$. Thus $p$ is a  collapsed point. On the other hand $p\in G$, namely it is a singular point.   \qed

\section{Proofs of Theorem \ref{thm:allard} and Corollaries \ref{c:uniqueness} and \ref{c:3dim}}\label{sec:prufs}

Statement (a)  is the classical monotonicity formula, which in fact holds in a much more general situation, see for instance \cite[Theorem 5.1(1)]{All}. Statement (b) follows from Allard's monotonicity formula at the boundary for varifolds, see \cite[Theorem 3.4(2)]{AllB}\footnote{For an alternative approach, similar to the one used for proving Theorem \ref{thm:limit_ff} we refer the reader to \cite[Section 4]{Matrix}}. The upper semicontinuity of the restriction of the density on the two sets $\gammaup$ and $\bB_2\setminus \gammaup$ is then a standard consequence, see for instance \cite[Corollary 17.8]{Sim}. 

Since $T$ is stationary with respect to variations which vanish on $\gammaup$ and are tangential to $\Sigma$, we have the usual identity
\[
\delta T (X) = - \int_{\bB_2} X \cdot \vec{H}_T (x)\, d\|T\| (x) \qquad \mbox{for all $X\in C^1_c (\bB_2 \setminus \gammaup)$},
\]
cf. for instance \cite[Lemma 9.6]{Sim}. 
Thus we can apply \cite[Lemma 3.1]{AllB} to the integer rectifiable varifold naturally induced by $T$ to conclude $\delta T = \vec{H}_T \|T\| + \delta T_s$ where $\delta T_s$ is a singular Radon measure supported in $\gammaup$. By the Radon-Nikod\'ym decomposition, if we denote by $\|\delta T_s\|$ the total variation of $\delta T_s$ we conclude the existence of a unit Borel vector field $\vec{n}$ such that
\begin{equation}\label{e:reg_and_sing}
\delta T (X) = - \int_{\bB_2} X \cdot \vec{H}_T (x)\, d\|T\| (x) + \int_\gammaup X\cdot \vec{n} (x)\, d \|\delta T_s\| (x)\, 
\end{equation}
for all $X\in C^1_c (\bB_2)$.
 Note next that, by the explicit formula for $\vec{H}_T$ in \eqref{e:mean_curv}, $\vec{H}_T (x)$ is orthogonal to $T_x \Sigma$ and in particular it is orthogonal to the tangent plane to $T$ at $x$. Thus in the first integral of the right hand side of \eqref{e:reg_and_sing} we can certainly substitute $X$ with $X^\perp$.

Moreover, according to \cite[Section 3.1]{AllB}, $\|\delta T_s\|$ satisfies the following upper bound for any positive $\psi \in C_c (\bB_2)$:
\[
\int_{\Gamma} \psi\,  d \|\delta T_s\| \leq \lim_{h\to 0} \frac{1}{h}\int_{\{x:  \dist (x, \gammaup) <h\}} \psi (x) d\|T\| (x).
\]
Hence it follows easily from the existence and boundedness of the density $\Theta_{\rm b} (T,p)$ that $\|\delta T_s\| = \theta \cH^{m-1}\res \gammaup$ for a locally
bounded Borel function $\theta$ with $0\leq \theta (p) \leq C(m) \Theta_{\rm b} (T, p)$ 

Now, we know from the previous sections that at $\cH^{m-1}$-a.e. $p$ there exists a flat tangent cone $S_p = Q \a{\pi^+} + (Q-1) \a{\pi^{-}}$, 
where $\pi$ contains $T_p \gammaup$. On the other hand we know from the convergence of the currents together with the convergence of the respective total variations  that the varifolds induced by $(\iota_{p,r})_\sharp T$ converge to the varifold induced by $S_p$. Thus, by continuity of the first variation, we conclude that
\[
\delta S_p (X) = \lim_{r\downarrow 0} \delta (\iota_{p,r})_\sharp T (X)\, .
\]
On the one hand simple computations lead to the identity
\[
\delta S_p (X) = \int_{T_p \gammaup} \nu \cdot X\, d\cH^{m-1}\, ,
\]
where $\nu$ is the unique unit vector contained in $\pi$ which is orthogonal to $T_p \gammaup$ and is compatible with the orientations of $\pi$ and $T_p \gammaup$. On the other hand, by a simple rescaling argument
\begin{equation}\label{e:limit_without_subs}
\lim_{r\to 0}  \delta (\iota_{p,r})_\sharp T (X) = \int_{T_p \gammaup} \theta (p) \vec{n} (p) \cdot X d\cH^{m-1}
\end{equation}
at $\cH^{m-1}$-a.e. $p$. We thus conclude $\vec{n} (p) = \nu$, and $\theta =1$. This argument proves the identity \eqref{e:first_var}, but it shows as well the validity of the last conclusion of Corollary \ref{c:uniqueness}: if we fix a point $p$ where \eqref{e:limit_without_subs} holds, we have actually shown that, for any flat tangent cone $Q \a{\pi^+} + (Q-1)\a{\pi^-}$ at that point, the vector $\vec{n} (p)$ must belong to $\pi^-$, which uniquely determines the pair $(\pi^+, \pi^-)$. Since $Q$ is uniquely determined as $\Theta (T, p) + \frac{1}{2}$, we conclude that any flat tangent cone at $p$ is determined by $\vec{n} (p)$.  
The identity of \eqref{e:monot_identity} is then a consequence of \cite[Eq. (31)]{Theodora}. Finally, the first assertion of Corollary \ref{c:uniqueness} is a consequence of Theorem \ref{thm:stratification} and of Lemma \ref{lem:flat_cone}.

\medskip

To prove Corollary \ref{c:3dim}, by Theorem \ref{thm:stratification} it suffices to show that the density is a half integer at every point $p\in \sS_{m-2} (T, \Gamma)$: the latter claim follows if we can show that every boundary area-minimizing cone $T_0$ with building dimension $m-2$ satisfies the property that $\Theta (T_0, 0)$ is a half-integer. The latter property is in effect of the following characterization.

\begin{lemma}[Characterization of 2 dimensional area minimizing cones with boundary]
Let $T_0$ be an integral 2-dimensional locally area-minimizing current in $\R^{2+k}$ with $(\iota_{0,r})_\sharp T_0 = T_0$ for every $r>0$ and $\partial T_0 =  \a{\Gamma_0}$, where $\Gamma_0 = \{(x,y)\in \R^2\times \R^{k}: x_1=|y|=0\}$, Then 
\[
T_0 = \a{\pi^+} + \sum_{i=1}^N \theta_i \a{\pi_i}
\] 
where 
\begin{itemize}
\item[(a)] $\pi^+$ is a closed oriented half-plane;
\item[(b)] the $\pi_i$'s are all oriented $2$-dimensional planes which can only meet at the origin;
\item[(c)] the coefficients $\theta_i$'s are all natural numbers;
\item[(d)] if $\pi^+\cap \pi_i\neq \{0\}$, then $\pi^+\subset \pi_i$ and they have the same orientation.
\end{itemize}
\end{lemma}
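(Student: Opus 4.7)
The plan is to exploit the regularity of $T_0$ away from the vertex together with the structure of one-dimensional minimal networks on the sphere. First I would establish that $T_0$ is smooth (as a current with boundary) on $\R^{2+k}\setminus\{0\}$: interior regularity for $2$-dimensional area-minimizing currents in higher codimension gives isolated interior singularities, which for a cone can only lie at the vertex; at a boundary point $p=te_2\in\Gamma_0\setminus\{0\}$, any tangent cone has $T_p\Gamma_0$ in its spine, and since $m=2$ the stratification forces the building dimension to equal $m-1=1$, so Lemma \ref{lem:flat_cone} applies and yields a flat tangent cone of the form $\a{\Gamma_0}\times S_p$ with $S_p$ a $1$-dimensional area-minimizing cone in $\Gamma_0^\perp$ satisfying $\partial S_p=\a{0}$. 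Scale-invariance of $T_0$ then forces the density to be constant along $\Gamma_0\setminus\{0\}$.

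Next I would pass to the link $L := \langle T_0, |\cdot|, 1\rangle$, a $1$-integer rectifiable current on the sphere $S^{1+k}$ with $\partial L = \a{e_2}-\a{-e_2}$. Since $T_0$ is a scale-invariant cone smooth outside $\{0\}$, the support of $L$ away from $\{\pm e_2\}$ is a smooth $1$-manifold, and the stationarity inherited from $T_0$ forces each component to be a geodesic arc on the sphere. Thus
\[
L \;=\; \sum_{i=1}^{N_+}\epsilon_i\a{\alpha_i} \;+\; \sum_{j=1}^{N_0}\theta_j\a{\gamma_j},
\]
where the $\alpha_i$ are semicircles joining $-e_2$ to $e_2$, the $\gamma_j$ are full great circles, $\epsilon_i,\theta_j\in\mathbb{Z}$, and $\sum_i\epsilon_i=1$ to match the boundary. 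Coning gives $T_0=\sum_i\epsilon_i\a{\pi_i^+}+\sum_j\theta_j\a{\pi_j}$, with $\pi_i^+$ half-planes bounded by $\Gamma_0$ and $\pi_j$ full planes through $0$.

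Area-minimality then constrains this decomposition. If $\sum_i|\epsilon_i|>1$, replacing the entire half-plane part by a single $\a{\pi^+}$ with multiplicity one is a legitimate competitor with the same boundary $\a{\Gamma_0}$ and strictly smaller mass, contradicting minimality; hence (a) holds with a unique $\pi^+$ of multiplicity one. Flipping orientations lets every $\theta_j$ be taken in $\mathbb{N}\setminus\{0\}$, which is (c). For (b) and (d): if two components $\pi_i,\pi_j$ ($i\neq j$), or $\pi^+$ and some $\pi_i$ with $\pi^+\not\subset\pi_i$, shared a nonzero direction $v$, then slicing $T_0$ by a hyperplane orthogonal to $v$ would produce a $1$-dimensional cross-section in $\Gamma_0^\perp$ containing a ray and a transverse line meeting at a common point; an explicit Steiner-type rerouting (replacing the ray-along-$+e_1$ plus line-along-$\pm e_3$ configuration by the two segments $e_1\to -e_3$ and $e_3\to 0$) yields a $1$-current of strictly smaller mass with the same boundary data. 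Extending this rerouting to a ruled $2$-dimensional surface inside a ball $\bB_r$ and cutting off by $\partial\bB_r$ produces a $2$-dimensional competitor for $T_0\res\bB_r$ of strictly smaller mass, contradicting minimality.

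The main obstacle will be making this last step rigorous: the $2$-surface produced from the $1$-dimensional Steiner rerouting must match the boundary of $T_0\res\bB_r$ exactly, both on $\partial\bB_r$ and along $\Gamma_0\cap\bB_r$, without creating spurious boundary contributions. Once this is handled (in the spirit of the explicit ruled-surface constructions used in Theorem \ref{thm:example}(a)), the full conclusion (a)--(d) follows.
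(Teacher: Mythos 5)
Your outline (pass to the link, split it into a ``semicircle part'' plus great circles, cone back, and use minimality to rigidify) is close in spirit to the paper's argument, but the two steps where you invoke minimality are genuinely incomplete, and they are exactly the steps the paper settles by a different and softer tool. First, the reduction of the half-plane part: from $\sum_i\epsilon_i=1$ you want to conclude that there is a single half-plane with coefficient $1$, and you argue that replacing $\sum_i\epsilon_i\a{\pi_i^+}$ by one $\a{\pi^+}$ is ``a legitimate competitor with the same boundary $\a{\Gamma_0}$''. This is not a legitimate comparison as stated: $T_0$ and your replacement are cones of infinite mass, minimality is local, and once you restrict to a ball $\bB_r$ the two currents have different slices on $\partial\bB_r$, so they do \emph{not} have the same boundary there. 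Patching them on $\partial\bB_r$ costs mass of the same order $r^2$ as the conjectured gain, so the naive comparison does not close; one must either do a genuine filling/interpolation estimate on the sphere or argue differently. The paper avoids this entirely: it decomposes the slice $S=\langle T_0,|\cdot|,1\rangle$ by Federer's decomposition theorem for one-dimensional integral currents \cite[4.2.25]{Fed}, which comes with \emph{both} mass and boundary-mass additivity; since $\mathbf{M}(\partial S)=2$, the boundary-mass identity instantly forces exactly one curve with nonzero boundary and coefficient $1$, with no competitor construction at all. The same additivity is what later guarantees that every sub-sum of the decomposition is itself area-minimizing, and it also rules out the opposite-orientation overlaps that your link decomposition (obtained only from smoothness of the support plus the constancy theorem) does not exclude; note that you never address the ``same orientation'' claim in (d) at all.

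Second, for (b) and (d) your Steiner-rerouting competitor is left, by your own admission, as ``the main obstacle'', so it is a gap rather than a proof; moreover it is heavier than necessary. If two distinct sheets meet at a point $p\notin\Gamma_0$, then $p$ is an interior singular point of $T_0$, and since $T_0$ is a cone this would produce a whole ray of interior singularities, contradicting the $(m-2)$-dimensional bound on ${\rm Sing}_i$ (no competitor needed). The only case requiring an argument is an intersection along $\Gamma_0$, i.e.\ $\pi^+\subset$ a plane containing $\Gamma_0$: there the paper uses that, by the mass additivity above, $\a{\pi^+}+\theta_j\a{\pi_j}$ is itself an area-minimizing cone with boundary $\a{\Gamma_0}$ and spine containing $\Gamma_0$, so Lemma~\ref{lem:flat_cone} forces it to be flat, which is precisely $\pi^+\subset\pi_j$, and the orientation statement then follows from mass additivity. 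Finally, your opening claim that $T_0$ is smooth up to $\Gamma_0\setminus\{0\}$ does not follow from the flatness of the boundary tangent cones (that implication is essentially the subject of the whole paper, and even there only density of regular boundary points is proved); fortunately you never actually use it, since the link is only needed to be a smooth geodesic network away from $\pm e_2$, which follows from interior regularity alone.
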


\begin{proof}
Let $|\cdot| : \mathbb R^{2+k}\to \mathbb R^+$ be the Lipschitz map $(x,y) \mapsto |(x,y)|$ and
consider the 1-dimensional integral current $ S:=\langle T_0, |\cdot|, 1\rangle$. Recall that, since $T_0$ is a cone, 
\begin{align*}
T_0\res \bB_1 &= S \cone \a{0}\, ,\\
T_0 & = \lim_{r\uparrow \infty} (\iota_{0,r})_\sharp\, (S\cone \a{0})\, ,
\end{align*}
Note moreover that, by the usual formula on the boundary of slices, 
\begin{equation}\label{e:tardi_2}
\partial S =  \langle \partial T_0, |\cdot|, 1\rangle = \a{e_1} - \a{-e_1}\, ,
\end{equation} 
where $e_1 = (1, 0, \ldots, 0)$. By \cite[4.2.25]{Fed} we have 
\[ S = \sum_{j=0}^N \theta_j \a{\gamma_j}, \]
where $\gamma_j$ is a simple Lipschitz curve, $\theta_j \in \N$ and $\gamma_j \neq \gamma_i$ for $i \neq j$ and 
\begin{equation}\label{e:tardi} 
\mathbf{M} (S) = \sum_{j=0}^N \theta_j \mathbf{M} (\a{\gamma_j}), \quad \mathbf{M} (\partial S) = \sum_{j=0}^N \theta_j \mathbf{M} (\partial \a{\gamma_j})\, .
\end{equation}
From the second identity in \eqref{e:tardi} and from \eqref{e:tardi_2} we conclude that there is precisely one $i$ for which $\pm \partial \a{\gamma_i} =\a{e_1}- \a{-e_1}$, whereas all the other curves $\gamma_j$'s are closed. Without loss of generality we assume that such $i$ is $0$ and note that
$\theta_0=1$, so that we can write
\begin{equation}\label{e:tardi_3}
S = \a{\gamma_0} + \sum_{j=1}^N \theta_j \a{\gamma_j}\, .
\end{equation} 
Consider now the currents $Z_j = \lim_{r \uparrow \infty} (\iota_{0, r})_\sharp (\theta_j \a{\gamma_j} \cone \a{0})$ and observe that:
\begin{align}
T_0  &= Z_0 + \sum_{i=1}^N Z_i, \quad \mathbf{M} (T_0\res \bB_R)\nonumber\\
&= \mathbf{M} (Z_0\res \bB_R) + \sum_{i=1}^N \mathbf{M} (Z_i \res \bB_R)\qquad \forall R>0\, .\label{e:tardi_4}
\end{align} 
In addition ${\rm Sing}_i (T_0)$ must be empty, otherwise it would have dimension at least $1$. Thus all the $\gamma_j$'s are disjoint great circles for $j=1, \ldots, N$ and $\gamma_0$ is half of a great circle. This gives (a), (b) and (c),
where we let $\pi^+$ be the half-plane containing $\gamma_0$ and $\pi_j$ be the plane containing $\gamma_j$. Note next that if $\pi^+ \cap \pi_j$ contains one point $p$ besides the origin, then
\begin{itemize}
\item If $p\not\in \Gamma_0$, then $\pi^+$ must be a subset of $\pi_j$ because otherwise $p$ would be an interior singular point of $T_0$;
\item If $p\in \Gamma_0$, then $S_0 + S_j$ is, by \eqref{e:tardi}, an area minimizing $2$-dim. cone with boundary $\a{\Gamma_0}$ and it has building dimension $1$; thus by Lemma \ref{lem:flat_cone} we have again $\pi^+\subset \pi_j$.
\end{itemize}
We thus conclude that $\pi^+\subset \pi_i$. The fact that both have the same orientation follows finally from the second identity in \eqref{e:tardi_4}. 
\end{proof}

\chapter{Regularity for $\qhalf$ $\D$-minimizers}\label{chap:Qhalf}

As explained in the introduction  the second important step in the proof of  Theorem \ref{thm:main} is the understanding of its  ``linearized'' version. This requires the study of the boundary regularity of $\D$-minimizers \(Q\)-valued map subject to a particular type of boundary condition, see Definition \ref{def:Qhalf} and Remark \ref{rmk:bcond} below.

\medskip

We assume the reader to be familiar with the theory of \(Q\) valued maps as it is presented in \cite{DS1,DS2,Jonas}. We just recall here that a \(Q\)-valued map is a map \(u:\Omega \subset \mathbb R^m \to \Iqs\)  where \index{aala\Iqs@$\Iqs$}
\[
\Iqs:=\left\{\sum_{i=1}^Q\a{P_i}:\,P_i\in\R^n,\,\forall\,i=1,\ldots,Q\right\}
\]
can be thought as the set of \(Q\)-tuples of unordered points in $\mathbb R^n$. \(\Iqs\) can be easily given the structure of a metric space via the following definition: given  $F_1,F_2\in \Iqs$ with $F_1=\sum_i \a{P_i}$ and $F_2=\sum_i \a{S_i}$ we define their distance\index{aalg\G@$\G$} as
\[
\G(F_1,F_2):=\min_{\sigma\in{\mathscr P}_Q}\sqrt{\sum_{i=1}^Q \left|P_i-S_{\sigma(i)}\right|^2}\,,
\]
where ${\mathscr P}_Q$ denotes the group of permutations of $Q$ items.

Throughout all the chapter we will consider an open set $\Omega\subset\R^m$ together with a hypersurface $\gammado$\index{aagc\gammado@$\gammado$} dividing $\Omega$ in two disjoint open sets $\Omega^+$ and $\Omega^-$\index{aagz\Omega^\pm@$\Omega^+$ (resp. $\Omega^-$)}.

\begin{definition}\label{def:Qhalf}
Let $\varphi\in H^\frac 12(\gammado,\R^n)$ be given. A \emph{$(Q-\frac 12)$-valued function with interface $(\gammado,\varphi)$}\index{Q@$(Q-\frac 12)$-valued function with interface $(\gammado,\varphi)$} consists of a pair $(f^+,f^-)$ \index{aalf(f^+,f^-)@$(f^+,f^-)$} with the following properties:
\begin{itemize}
\item[(i)] $f^+\in W^{1,2}(\Omega^+,\Iqs)$ and $f^-\in W^{1,2}(\Omega^-,\Is{Q-1})$; 
\item[(ii)] $f^+|_{\gammado}=f^-|_{\gammado}+\a{\varphi}$.
\end{itemize}
Its Dirichlet energy\index{Dirichlet energy of a $(Q-\frac 12)$-valued function} is defined to be the sum of the Dirichlet energies of $f^+$ and $f^-$.

Such a pair will be called \emph{$\D$-minimizing}\index{Dir-minimizing@$\D$-minimizing $\left(Q-\frac 12\right)$-valued function with interface $(\gammado,\varphi)$} if any other $\left(Q-\frac 12\right)$-valued function with interface $(\gammado,\varphi)$ which agrees with $(f^+,f^-)$ outside of a compact set $K\subset\Omega$ has bigger or equal Dirichlet energy.
\end{definition} 

\begin{figure}[htbp]
\begin{center}\label{fig:Q-1/2}
\input{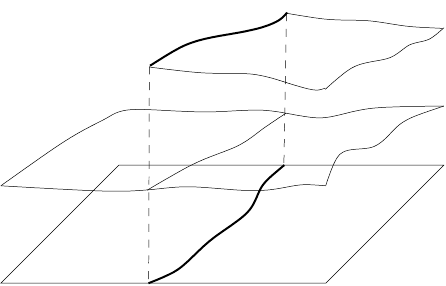_t}
\end{center}
\caption{A \(\frac 32\)-valued function with interface $(\gammado, \varphi)$: the function $f^+$ is the $2$-valued map $\a{f^+_1}+\a{f^+_2}$ and $f^-$ coincides with the (classical) single-valued $f_1^-$.}
\end{figure}

Although the definition makes sense also for $Q=1$, notice that, in that case, the pair $(f^+,f^-)$ consists of a single-valued function $f^+$ and its $\D$-minimality is equivalent to the harmonicity of $f^+$. In this chapter we will focus on the nontrivial case $Q\ge 2$.

The first result of this chapter is a ``soft'' existence theorem for $\qhalf$-valued $\D$-minimizers.

\begin{theorem}\label{thm:ex_dm}\label{THM:EX_DM}
Given a $\left(Q-\frac 12\right)$-valued function $(g^+,g^-)$ with interface $(\gammado,\varphi)$ on a bounded Lipschitz domain $\Omega$, there exists a $\qhalf$ $\D$-minimizer $(f^+,f^-)$ with interface $(\gammado,\varphi)$ such that $f^+=g^+$ on $\partial\Omega^+\setminus\gammado$ and $f^-=g^-$ on $\partial\Omega^-\setminus\gammado$.
\end{theorem}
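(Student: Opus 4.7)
The plan is to apply the direct method of the calculus of variations. I would define the admissible class
\[
\mathcal{F} := \bigl\{(f^+,f^-) \text{ a }\qhalf\text{-valued function with interface }(\gammado,\varphi) \,:\, f^\pm = g^\pm \text{ on } \partial\Omega^\pm \setminus \gammado \bigr\},
\]
where the boundary identifications are understood in the trace sense. Since $(g^+,g^-) \in \mathcal{F}$, the class is nonempty and the infimum
\[
m := \inf_{(f^+,f^-)\in\mathcal{F}} \bigl(\D(f^+)+\D(f^-)\bigr)
\]
is finite. I would pick a minimizing sequence $(f_k^+,f_k^-) \in \mathcal{F}$ and use the standard compactness theorem for $W^{1,2}$ $Q$-valued Sobolev maps (cf. \cite{DS1}) together with a Poincar\'e-type inequality (exploiting the fact that the traces on $\partial \Omega^\pm \setminus \gammado$ are fixed to $g^\pm$) to extract uniform $W^{1,2}$ bounds on $f_k^\pm$. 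Up to extraction, the sequence converges strongly in $L^2(\Omega^\pm,\Is{\cdot})$ to a limit $(f^+,f^-)$ whose Dirichlet energy is at most $m$ by lower semicontinuity of $\D$ under weak $W^{1,2}$ convergence.

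The key remaining step is to verify that the limit still belongs to $\mathcal{F}$. For the exterior boundary condition on $\partial\Omega^\pm \setminus \gammado$, I would appeal to continuity of the trace operator $W^{1,2}(\Omega^\pm, \Is{\cdot}) \to L^2(\partial \Omega^\pm, \Is{\cdot})$; strong $L^2$ convergence of traces preserves the identification with $g^\pm$. For the interface condition $f^+|_{\gammado} = f^-|_{\gammado} + \a{\varphi}$, I observe that the ``addition'' operation $h \mapsto h + \a{\varphi}$ from $\Is{Q-1}$ to $\Iqs$ is $1$-Lipschitz in the $\G$ distance: indeed, if $h = \sum_{i=1}^{Q-1} \a{h_i}$, then $h + \a{\varphi} := \sum_{i=1}^{Q-1} \a{h_i} + \a{\varphi}$, and any pairing of points between two such sums can be reduced to pairings of the $(Q-1)$-valued parts. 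I can therefore pass to the limit in $f_k^+|_{\gammado} = f_k^-|_{\gammado} + \a{\varphi}$ using strong $L^2$ convergence of the traces, obtaining the identity $\cH^{m-1}$-a.e. on $\gammado$.

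The main obstacle is precisely the interface condition, which is a nonlinear coupling between the two sides through a $Q$-valued ``sum.'' I expect this to be the delicate point, but the $1$-Lipschitz bound on the addition map makes the passage to the limit transparent: once the trace of $f_k^-$ converges strongly in $L^2(\gammado,\Is{Q-1})$ and $\varphi$ is fixed, the sum $f_k^-|_{\gammado} + \a{\varphi}$ converges strongly in $L^2(\gammado,\Iqs)$ to $f^-|_{\gammado}+\a{\varphi}$; this must coincide with the independently obtained $L^2$ trace limit of $f_k^+|_{\gammado}$, namely $f^+|_{\gammado}$. Once $(f^+,f^-) \in \mathcal{F}$ is established with $\D(f^+)+\D(f^-)\le m$, minimality is immediate, and the theorem follows.
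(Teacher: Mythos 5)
Your proposal is correct and follows essentially the same route as the paper: direct method with a minimizing sequence, uniform $L^2$ bounds via a Poincar\'e-type inequality exploiting the fixed traces on $\partial\Omega^\pm\setminus\gammado$, compactness and trace continuity from \cite{DS1}, and lower semicontinuity of the Dirichlet energy. Your extra observation that $h\mapsto h+\a{\varphi}$ is $1$-Lipschitz in $\G$ is a correct (and slightly more explicit) justification for passing to the limit in the interface condition, which the paper attributes directly to continuity of the trace operator.
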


A particular class of $\left(Q-\frac 12\right)$-valued functions with interface $(\gammado,\varphi)$ are the ones with  collapsed interface.

\begin{definition}\label{def:totally_collapsed}
A $\left(Q-\frac 12\right)$-valued function with interface $(\gammado,\varphi)$ is said to \emph{collapse at the interface}\index{Collapsed at the interface} if $f^+|_\gammado= Q\a{\varphi}$.
\end{definition}
\begin{remark}
Observe that $(f^+,f^-)$ collapses at the interface  if and only if $f^-|_\gammado=(Q-1)\a{\varphi}$.
\end{remark}

\begin{figure}[htbp]
\begin{center}\label{fig:collapsed}
\input{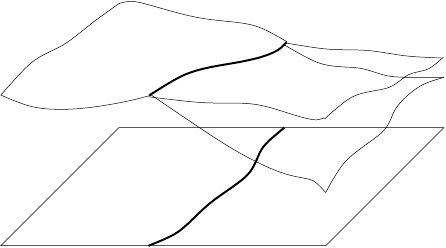_t}
\end{center}
\caption{A  $\frac{3}{2}$-valued function which collapses at the interface $(\gammado, \varphi)$.}
\end{figure}

The main theorem of this chapter is the following:
\begin{theorem}\label{thm:collasso}\label{THM:COLLASSO}
Let $\varphi:\gammado\to\R^n$ be of class $C^{1,\alpha}$, $\gammado$ be of class $C^3$, $Q\ge 2$ and $(f^+,f^-)$  be a $\left(Q-\frac 12\right)$-valued $\D$-minimizer with interface $(\gammado,\varphi)$. If $(f^+,f^-)$ collapses at the interface, then there is a single-valued harmonic function $h:\Omega\to\R^n$ such that $f^+=Q\a{h|_{\Omega^+}}$ and $f^-=(Q-1)\a{h|_{\Omega^-}}$.   
\end{theorem}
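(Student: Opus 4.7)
The plan is to exploit the monotonicity of the adapted frequency function (Theorem \ref{thm:limit_ff}) to produce well-defined homogeneous tangent maps at every point of \(\gammado\), prove a rigidity result for such tangent maps, and then bootstrap to the global statement via a unique-continuation argument.

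First I would reduce to a model situation by straightening \(\gammado\) through a local \(C^3\) diffeomorphism and subtracting from every sheet of \(f^\pm\) the single-valued harmonic extension \(h_0\) of \(\varphi\) to \(\Omega\). After this reduction one may assume \(\gammado=\{x_1=0\}\cap\Omega\), \(\varphi\equiv 0\), and the collapse takes the form \(f^+|_{\gammado}=Q\a{0}\), \(f^-|_{\gammado}=(Q-1)\a{0}\). The reduced pair is still \(\D\)-minimizing since \(h_0\) is harmonic, and the diffeomorphic distortion produces precisely the kind of perturbative terms that are absorbed by the weight in the frequency function of Definition \ref{def:ff}. The goal becomes to prove that the reduced pair is identically \((Q\a{0},(Q-1)\a{0})\), whence the original conclusion holds with \(h=h_0\). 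By Theorem \ref{thm:limit_ff}, at every \(p\in\gammado\) the adapted frequency \(r\mapsto I(r,p)\) is monotone non-decreasing, so the limit \(I_0(p):=\lim_{r\downarrow 0}I(r,p)\) exists. Combined with the H\"older continuity inherited from the thesis \cite{Jonas}, standard Almgren-type compactness applied to the rescalings of \((f^+,f^-)\) at \(p\) produces a tangent map \((g^+,g^-)\) which is an \(I_0(p)\)-homogeneous \(\qhalf\)-valued \(\D\)-minimizer on \(\R^m\) with flat interface \(T_p\gammado\), trivial interface map, and still collapsing at the interface (the collapse being scale-invariant, as an identity on traces).

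The heart of the argument, and the main obstacle, is the classification of such homogeneous collapsed minimizers: any such \((g^+,g^-)\) must be of the form \(g^+=Q\a{\ell|_{\Omega^+}}\), \(g^-=(Q-1)\a{\ell|_{\Omega^-}}\) for a single homogeneous harmonic polynomial \(\ell\) vanishing on \(\{x_1=0\}\). I would decompose each \(g^\pm\) into its pointwise average (a single-valued harmonic function with zero trace on \(\{x_1=0\}\)) plus a zero-average \(Q\)- or \((Q-1)\)-valued Dir-minimizing remainder. The two averages match in trace on \(\{x_1=0\}\), and a coupled first-variation computation along a vector field which preserves the collapse constraint yields a matching condition on their normal derivatives across the interface, so they glue into a single homogeneous harmonic function \(\ell\) on \(\R^m\). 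The remainders have vanishing trace on \(\{x_1=0\}\); the crucial point is that they cannot be nontrivial. Indeed, while an object like \(\a{x_1}+\a{-x_1}\) is Dir-minimizing with zero trace on \(\{x_1=0\}\) when considered \emph{in isolation} as a \(2\)-valued map on \(\{x_1>0\}\), the \(\qhalf\)-coupling with \(g^-\) across the interface---exploited via a competitor that transfers mass between the two sides using the ``extra sheet'' available only on the \(+\)-side as a buffer for the collapse constraint---produces a strict decrease of the combined Dirichlet energy unless both remainders vanish identically. This step fundamentally uses that it is a \emph{pair} rather than a single \(Q\)-valued map that is being minimized.

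Finally I would propagate the rigidity to the full statement. The classification forces \(I_0(p)\in\N\) at every \(p\in\gammado\); an Almgren-type decay iteration in the spirit of (but much simpler than) Chapter \ref{chap:decay} then yields \(C^{1,\beta}\)-regularity of \(f^\pm\) up to \(\gammado\) with single-valued traces there. The pointwise averages of \(f^\pm\) are therefore harmonic on \(\Omega^\pm\), match in both trace and normal derivative on \(\gammado\) by the coupled Euler-Lagrange equation, and extend to a single harmonic function \(\tilde h\) on \(\Omega\). The zero-average remainders vanish on \(\gammado\); by applying the boundary rigidity step pointwise (or equivalently by an interior unique-continuation argument based on the Dir-minimality and the structure of the singular set of \(Q\)-valued minimizers from \cite{DS1}), they must vanish identically throughout \(\Omega^\pm\). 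Undoing the reduction then produces the required single harmonic function \(h=h_0+\tilde h\).
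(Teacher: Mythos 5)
Your overall architecture (frequency monotonicity, blow\-up to homogeneous collapsed tangent pairs, rigidity, gluing of the averages) does track the paper's strategy, but two of your load-bearing steps have genuine gaps. First, the reduction to $\varphi\equiv 0$ by ``subtracting the single-valued harmonic extension $h_0$ of $\varphi$ to $\Omega$'' is not available: $\gammado$ is an \emph{interior} hypersurface of $\Omega$, and a function harmonic on all of $\Omega$ is real-analytic there, so for a generic $C^{1,\alpha}$ interface map $\varphi$ no harmonic $h_0:\Omega\to\R^n$ with $h_0|_{\gammado}=\varphi$ exists --- producing exactly such an $h$ is the content of the theorem, so positing it at the outset is circular. (Subtracting a function harmonic only on $\Omega^+$ and $\Omega^-$ separately destroys $\D$-minimality, since the cross term in the energy is no longer null against admissible competitors.) Moreover, even granting the reduction, your reformulated goal is wrong: with $\varphi\equiv 0$ the conclusion is that $(f^+,f^-)=(Q\a{h},(Q-1)\a{h})$ for some harmonic $h$ vanishing on $\gammado$ (e.g.\ $Q\a{x_m}$, $(Q-1)\a{x_m}$ is a nontrivial collapsed minimizer), not that the pair is trivial; triviality requires the extra hypothesis $\etaa\circ f^\pm=0$ (Corollary \ref{cor:av=0}). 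The paper needs no global extension: $\varphi$ is killed only to first order at a point by subtracting an affine (hence harmonic) function, and the interface map disappears in the blow-up limit.

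Second, the classification of homogeneous collapsed tangent pairs --- the heart of the matter --- is asserted rather than proved, and the two tools you invoke are not available as stated. The ``coupled first variation'' does not yield a transmission condition: an outer variation $f^\pm_i\mapsto f^\pm_i+\eps\zeta$ by a single-valued $\zeta$ is admissible only if $\zeta|_{\gammado}=0$ (otherwise it changes the fixed datum $\varphi$), and such variations give only one-sided harmonicity of $\etaa\circ f^\pm$, with no matching of normal derivatives; varying ``all sheets but the interface one'' is not well defined for multivalued maps near the collapsed interface. Your mass-transfer competitor for killing the zero-average remainders is precisely the crux and is left unconstructed, while the fallback ``interior unique continuation'' is false in the one-sided form you need: a $Q$-valued $\D$-minimizer can vanish on $\gammado$ without being trivial (this is exactly the point of Remark \ref{rmk:bcond}); only the pair structure forces triviality. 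The paper closes both issues at once by a Federer-type dimension reduction: iterated tangent maps along the interface reduce to one-dimensional homogeneous pairs, where rigidity is elementary because minimality permits regluing \emph{any} sheet of $g^+$ with \emph{any} sheet of $g^-$, so every such gluing is harmonic, hence linear, forcing all slopes to coincide (Lemma \ref{lem:fed_red}); the matching of $\partial_\nu(\etaa\circ f^\pm)$ for the original maps is then obtained a posteriori by a blow-up contradiction against this classification, not by a direct Euler--Lagrange identity. Unless you can actually construct your competitor and justify the transmission condition, your scheme needs this reduction (or an equivalent substitute) to close.
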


Note that the above theorem is the ``linearized'' version of Theorem \ref{thm:real_main}. Note also that we are requiring \(C^3\) regularity of \(\gamma\), this seems to be due to our method of proof more then to a serious technical obstruction, see Section \ref{sss:gv} below. However Theorem \ref{thm:collasso} is enough for our purposes because the boundary data $\Gamma$ is assumed to be of class $C^{3,a_0}$ in Assumption \ref{ass:main}.

\section{Preliminaries and proof of Theorem \ref{thm:ex_dm}}

In this Section we prove  existence of $\D$-minimizing $(Q-\frac 12)$-valued functions.

\begin{proof}[Proof of Theorem \ref{thm:ex_dm}]
Take a minimizing sequence $(f_k^+,f_k^-)$ with interface $(\gammado,\varphi)$ and $f^{\pm}_k=g^\pm$ on $\partial\Omega^\pm\setminus\gammado$. It is simple to see that $f^\pm_k$ enjoy a uniform bound in $L^2(\Omega^\pm)$. For instance, consider the bi-Lipschitz embeddings \index{aago\xii_{Q}@ $\xii_{Q}, \xii_{Q-1}$}
 \[
 \xii_{Q}:\Iqs\to\R^{N(Q,n)}, \qquad \xii_{Q-1}:\Iqqs\to\R^{N(Q-1,n)}
 \]
 of \cite[Theorem 2.1]{DS1}. Then it suffices to bound the $L^2$ norm of $\xii_{Q}\circ f_k^+$, $\xii_{Q-1}\circ f_k^-$ and the latter bounds are a simple consequence of the classical Poincar\'e inequality using the uniform $H^\frac 12$-bound for the restriction of $\xii\circ f^\pm_k$ to $\partial\Omega^\pm\setminus\gammado$.

By \cite[Proposition 2.11]{DS1} we can extract a subsequence (not relabeled) such that $f_k^+$ and $f_k^-$ converge strongly in $L^2$ to $W^{1,2}$ functions $f^+$ and $f^-$, respectively. By continuity of the trace operator (cf. \cite[Proposition 2.10]{DS1}) the pair $(f^+,f^-)$ has interface $(\gammado,\varphi)$ and coincides with $(g^+,g^-)$ on the boundary of $\Omega$. By lower semicontinuity of the Dirichlet energy (cf. \cite[Section 2.3.2]{DS1}),
\[
\D(f^+,\Omega^+)+\D(f^-,\Omega^-)\le\liminf_{k\to +\infty}\left(\D(f_k^+,\Omega^+)+\D(f_k^-,\Omega^-)\right)\,.
\]
This obviously implies that $(f^+,f^-)$ is one of the sought minimizers.
\end{proof}

Next we record the following continuity property for $\qhalf$ $\D$-minimizers which collapse at the interface. The property is a direct consequence of the main result in \cite{Jonas}. Note that, from now on, for every metric space $(X,d)$ and any map $f: \Omega \to X$ we will use the notation $[f]_{\beta,K}$ for the H\"older seminorm of the restriction of $f$ to the subset $K\subset \Omega$, more precisely
\index{H\"older seminorm}\index{aalf[f]_{\beta,K}@$[f]_{\beta,K}$}
\[
[f]_{\beta, K} := \sup_{x,y\in K, x\neq y} \frac{d (f(x), f(y))}{|x-y|^\beta}\, .
\]

\begin{theorem}\label{thm:cont_dm}
If $\gammado$ is of class $C^1$ and $\varphi$  of class $C^{0,\beta}$, with $\beta>\frac 12$, then there exist a positive constant $C=C(m,n,\gammado,Q)$ and a positive constant $\alpha=\alpha(m,n,Q,\beta)$ with the following property. Consider a $\qhalf$ $\D$-minimizer which collapses at the interface $(\gammado, \varphi)$.
Then the following estimates hold for every $x\in\Omega^+\cup\gammado$, respectively $x\in\Omega^-\cup\gammado$, and every $0<2\rho<\dist(x,\partial\Omega)$:
\begin{align*}
[f^\pm]_{\alpha,B_\rho(x)\cap\Omega^\pm}&\le C \rho^{1-\frac n2-\alpha}\left(\D(f^\pm,B_{2\rho}(x)\cap\Omega^\pm)\right)^\frac 12\\
&\qquad\qquad +C \rho^{\beta-\alpha}[\varphi]_{\beta,\gammado\cap B_{2\rho}(x)}\,.
\end{align*}
\end{theorem}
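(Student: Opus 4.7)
The statement is advertised as a direct consequence of the main result of \cite{Jonas}, so the plan is essentially a reduction argument. The essential new input is a decoupling observation: because $(f^+,f^-)$ collapses at the interface, the relations $f^+|_{\gammado}=Q\a{\varphi}$ and $f^-|_{\gammado}=(Q-1)\a{\varphi}$ turn the coupled minimization problem into two independent classical Dirichlet problems for $Q$-valued maps whose boundary data on $\gammado$ are single-valued (with multiplicity).

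\emph{Step 1: decoupling.} I will show that $f^+$ is a $Q$-valued $\D$-minimizer on $\Omega^+$ subject to the Dirichlet datum $Q\a{\varphi}$ on $\gammado$ and to its own trace on $\partial\Omega^+\setminus\gammado$; and analogously for $f^-$ on $\Omega^-$ with datum $(Q-1)\a{\varphi}$ on $\gammado$. Indeed, given any competitor $\tilde f^+\in W^{1,2}(\Omega^+,\Iqs)$ having the same trace as $f^+$ on $\partial\Omega^+$, the equality $\tilde f^+|_{\gammado}=Q\a{\varphi}=f^-|_{\gammado}+\a{\varphi}$ shows that $(\tilde f^+,f^-)$ is still an admissible $\qhalf$-valued function with interface $(\gammado,\varphi)$; minimality of the pair then forces $\D(\tilde f^+,\Omega^+)\ge \D(f^+,\Omega^+)$. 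The analogous argument works for $f^-$.

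\emph{Step 2: invoke the boundary H\"older regularity of \cite{Jonas}.} After Step 1, $f^\pm$ fall precisely in the scope of the main theorem of the Ph.D.\ thesis \cite{Jonas}, which treats boundary H\"older regularity for a classical $Q$-valued $\D$-minimizer whose trace on a $C^1$ portion of $\partial\Omega$ is prescribed to equal $Q$ copies of a single-valued $C^{0,\beta}$ function with $\beta>\tfrac 12$. That theorem delivers exactly the Campanato-type estimate written in the statement, with $\alpha=\alpha(m,n,Q,\beta)>0$ and with the constant depending only on $m,n,Q$ and the $C^1$-geometry of $\gammado$. The threshold $\beta>\tfrac 12$ is the standard one ensuring that the boundary datum is compatible with the comparison arguments based on trace/extension in $W^{1,2}$.

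\emph{Step 3: patching with the interior.} For points $x$ lying well inside $\Omega^+$ (resp.\ $\Omega^-$) relative to $\gammado$ the estimate reduces to the classical interior H\"older continuity of $\D$-minimizing $Q$-valued maps from \cite[Theorem 3.9]{DS1}. For a general $x\in\Omega^+\cup\gammado$ one runs the usual dichotomy: if the ball $B_{2\rho}(x)$ stays inside $\Omega^+$ one uses the interior estimate, otherwise one selects $x_0\in\gammado$ with $|x-x_0|\le 2\rho$ and applies the boundary estimate of Step 2 on $B_{4\rho}(x_0)\cap\Omega^+$, which contains $B_\rho(x)\cap\Omega^+$. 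Absorbing the geometric constants produces the displayed estimate. The only mildly delicate point, and hence the main potential obstacle, is bookkeeping the exact form of the inequality from \cite{Jonas} so that the Dirichlet term scales with the correct power $\rho^{1-n/2-\alpha}$ and the boundary datum term with $\rho^{\beta-\alpha}$; this is automatic once the H\"older exponent $\alpha$ is chosen smaller than both $1-n/2-\alpha_{\rm int}$ (with $\alpha_{\rm int}$ the interior exponent from \cite{DS1}) and $\beta$.
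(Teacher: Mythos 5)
Your proposal is correct and is essentially the argument the paper intends: the paper proves this theorem by a one-line reduction to the main result of \cite{Jonas}, and your Step 1 (pair-minimality plus collapse at the interface implies each of $f^\pm$ is a classical $\D$-minimizer with boundary datum $Q\a{\varphi}$, resp.\ $(Q-1)\a{\varphi}$, on $\gammado$) is precisely the observation the paper leaves implicit before invoking that result. The interior patching and exponent bookkeeping in Step 3 are routine and do not change the substance.
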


An  outcome of the proof of Theorem \ref{thm:cont_dm} in \cite{Jonas} is the following compactness statement:
\begin{lemma}\label{lem:cpt_dm}
Let $(f_k^+,f^-_k)$ be a sequence of $\qhalf$ $\D$-minimizers in $\Omega$ which collapse at the interfaces $(\gammado_k,\varphi_k)$ and satisfy the following assumptions:
\begin{itemize}
\item[(i)] $\limsup_{k\to +\infty}\left(\D(f_k^+)+\D(f_k^-)\right)<\infty$;
\item[(ii)] $\gammado_k$ is converging in $C^1$ to a hyperplane $\gammado$;
\item[(iii)] $\varphi_k$ is converging\footnote{By this we mean that for every \(k\)  there is a  \(C^{0,\beta}\) extension \(\tilde \varphi_k\) of \(\varphi_k\big|_{ \gammado_k}\) to the whole  \(\R^m\) such that the sequence \(\{\tilde \varphi_k\}\) converges  to  a constant function} in $C^{0,\beta}$ to a constant function $\varphi$ for some $\beta>\frac{1}{2}$.
\end{itemize} 
Then there exists a subsequence (not relabeled) and a $\qhalf$-valued function $(f^+,f^-)$ with interface $(\gammado, \varphi)$ such that
\begin{itemize}
\item[(a)] $f_k^\pm\to f^\pm$ in $L^2(K)$ for every compact set $K\subset\Omega^\pm$.
\item[(b)] $\D(f^\pm,\Omega^\pm\cap \Omega')=\lim_k \D(f_k^\pm,\Omega^\pm_k\cap\Omega')$ for every $\Omega'\subset\subset\Omega$, where $\Omega^\pm_k$ denote the two open domains in which $\Omega$ is subdivided by $\gammado_k$;
\item[(c)] $f^+$ is $\D$-minimizing in $\Omega^+$ and $f^-$ is $\D$-minimizing in $\Omega^-$.  
\end{itemize} 
\end{lemma}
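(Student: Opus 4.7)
\textbf{Proof plan for Lemma \ref{lem:cpt_dm}.} The strategy is to first upgrade the hypotheses to uniform H\"older equicontinuity up to the interfaces, then extract a limit by Arzel\`a-Ascoli, and finally exploit the collapse at the interface (together with $\varphi_k\to\text{const}$) to transplant competitors across the moving interfaces $\gammado_k$.

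First I apply Theorem \ref{thm:cont_dm} to each $(f_k^+,f_k^-)$ on a fixed exhaustion $\{K_j\}$ of $\Omega$. The constants in the H\"older estimates depend only on $m,n,Q$, the $C^1$-norm of $\gammado_k$ and the $C^{0,\beta}$-norm of $\varphi_k$, which are all uniformly bounded by (ii) and (iii); together with the uniform energy bound (i), this yields, for every compact $K\subset\subset\Omega$,
\[
[f_k^\pm]_{\alpha, K\cap\overline{\Omega_k^\pm}} \leq C(K),
\]
uniformly in $k$. Composing with the bi-Lipschitz embeddings $\xii_Q$, $\xii_{Q-1}$, a diagonal Arzel\`a-Ascoli argument then produces a subsequence (not relabeled) and $\qhalf$-valued limits $(f^+,f^-)$ such that $f_k^\pm\to f^\pm$ locally uniformly on $\overline{\Omega^\pm}$ (interpreting the limit up to $\gammado$ via the $C^1$-convergence of $\gammado_k$). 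This immediately yields (a). Moreover, passing to the limit in $f_k^+|_{\gammado_k}=Q\a{\varphi_k}$ and $f_k^-|_{\gammado_k}=(Q-1)\a{\varphi_k}$, and using that $\varphi_k\to\varphi$ is a constant, the limit pair collapses at $(\gammado,\varphi)$.

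For (b) the lower bound $\D(f^\pm,\Omega^\pm\cap\Omega')\leq\liminf_k\D(f_k^\pm,\Omega_k^\pm\cap\Omega')$ follows from the $L^2$-convergence on compacts and the standard lower semicontinuity of $\D$ for $Q$-valued Sobolev maps (cf.\ \cite[Section 2.3.2]{DS1}), after restricting to an open set $\Omega''\subset\subset\Omega^\pm\cap\Omega'$ and letting $\Omega''\uparrow\Omega^\pm\cap\Omega'$. The matching upper bound is obtained by using the limit $(f^+,f^-)$ itself as a competitor for $(f_k^+,f_k^-)$: on any $\Omega'\subset\subset\Omega$, define $\widetilde f_k^\pm := f^\pm$ on a compact $\Omega''\subset\subset\Omega^\pm\cap\Omega'$, and equal to $f_k^\pm$ outside a slightly larger set; the gap between $\Omega''$ and the moving interface $\gammado_k$ is a thin slab $S_k$ of width $o(1)$. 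The crucial observation is that on $S_k$ both $f_k^\pm$ and $f^\pm$ are H\"older-close to the common constant value $Q\a{\varphi}$ (resp. $(Q-1)\a{\varphi}$); therefore one can interpolate linearly in the selection $\xii_Q\circ f_k^\pm$, $\xii_Q\circ f^\pm$ across $S_k$ (inverting via the Lipschitz retraction $\ro_Q$ of \cite[Theorem 2.1]{DS1}) with energy cost going to zero. Minimality of $(f_k^+,f_k^-)$ and this construction give $\limsup_k\D(f_k^\pm,\Omega_k^\pm\cap\Omega')\leq \D(f^\pm,\Omega^\pm\cap\Omega')$, completing (b).

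Finally (c) is a routine consequence of (b): given a $\D$-competitor $h^+$ for $f^+$ on $\Omega^+$ agreeing with $f^+$ outside a compact $K\subset\Omega^+$, the same slab interpolation as above produces $h_k^+$ on $\Omega_k^+$ agreeing with $f_k^+$ outside a slight enlargement of $K$, with $\D(h_k^+,\Omega_k^+\cap\Omega')=\D(h^+,K)+\D(f_k^+,\Omega_k^+\cap\Omega'\setminus K)+o(1)$; the $\D$-minimality of $f_k^+$ plus (b) applied to both sides yields $\D(f^+,\Omega^+\cap\Omega')\leq\D(h^+,\Omega^+\cap\Omega')$, and symmetrically for $f^-$. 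The main obstacle is the transplant step in (b): without the collapse at the interface, the trace $f_k^+|_{\gammado_k}$ could consist of $Q$ genuinely distinct sheets whose labelling is not stable under $k$, and no cheap interpolation across $S_k$ would exist; the collapse forces all sheets to coalesce at the interface to a common constant limit, which is exactly what makes the slab modification $O(|S_k|)$ in energy.
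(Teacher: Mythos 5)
Your reduction of (a) to the uniform boundary H\"older estimate of Theorem \ref{thm:cont_dm} plus Arzel\`a--Ascoli is fine (and consistent with the fact that the paper does not prove this lemma here but quotes it as an outcome of the proof of that theorem in \cite{Jonas}), but the key step of your argument --- the ``slab interpolation with energy cost going to zero'' used for the upper bound in (b) and reused in (c) --- has a genuine gap. An interpolation across a strip of width $w$ costs, besides the energies of the two maps on the strip, a term of order $w^{-2}\int_{S_k}\G(\cdot,\cdot)^2\approx w^{-1}\sup_{S_k}\G^2$. H\"older closeness to the common constant only gives $\sup_{S_k}\G\lesssim w^{\alpha}+o_k(1)$, where $\alpha=\alpha(m,n,Q,\beta)$ is the (small) exponent of Theorem \ref{thm:cont_dm}; after letting $k\to\infty$ the cost is of order $w^{2\alpha-1}$, which blows up as $w\to 0$ unless $\alpha>\frac12$, and no such lower bound on $\alpha$ is available. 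So $C^0$-closeness to $Q\a{\varphi}$ is not what makes the gluing cheap. What is actually needed is the $W^{1,2}$ information: a Fubini/pigeonhole choice of the transition region so that the gradient contributions of $f_k^\pm$ and $f^\pm$ there appear with a small factor $\lambda$, a trace--Poincar\'e inequality in the thin strip exploiting the collapsed trace (so that $w^{-1}\int_{\{\dist(\cdot,\gammado_k)=w\}}\G(f^+,Q\a{\varphi})^2\lesssim \int_{\rm strip}|Df^+|^2\to 0$), and the smallness of $[\varphi_k-\varphi]_{H^{1/2}}$, which is exactly where $\beta>\frac12$ enters. This is precisely the content of the interpolation Lemma \ref{l:interpolation}, which the paper deploys for the generalization Theorem \ref{thm:cpt_dm}; your sketch silently replaces it by an estimate that is false in general.

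There is also a structural problem with how the competitor is arranged. If, as your wording suggests, the competitor coincides with $f_k^\pm$ in a neighborhood of $\gammado_k$ (so that the interface condition $(\gammado_k,\varphi_k)$ is inherited for free), then the energy of $f_k^\pm$ in that neighborhood appears on both sides of the minimality inequality and cancels: the comparison then gives no control on $\D(f_k^\pm,\{\dist(\cdot,\gammado_k)<\delta\})$, i.e.\ it does not exclude a fixed amount of energy of $f_k$ concentrating on the interface --- which is exactly the content of (b). Ruling out this defect is the hard part: in the paper's proof of the analogous Theorem \ref{thm:cpt_dm} it is done by a blow-up argument at a.e.\ point of the defect measure (via Lemma \ref{lm:mt1}), not by a direct comparison. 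If instead you modify the competitor all the way up to $\gammado_k$, then you must produce an admissible pair with traces $Q\a{\varphi_k}$, $(Q-1)\a{\varphi_k}$ at vanishing cost, and this again requires the interpolation-lemma machinery described above rather than H\"older closeness. Once (a) and (b) are available, your treatment of (c) is essentially the standard interior argument of \cite[Proposition 3.20]{DS1} (the competitor changes $f^+$ only on a compact subset of $\Omega^+$, away from the interface), and that part is sound.
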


In turn we can take advantage of a standard blow-up argument to upgrade Lemma \ref{lem:cpt_dm} to the following more general statement, where the convergence in (c) is to a general hypersurface $\gamma$ and we conclude additionally that the limiting $(f^+, f^-)$ is $\D$-minimizing as a $\qhalf$ map.

\begin{theorem}\label{thm:cpt_dm}
Let $\Omega$ be bounded and let $(f_k^+,f^-_k)$ be a sequence of $\qhalf$ $\D$-minimizers in $\Omega$ which collapse at the interfaces $(\gammado_k,\varphi_k)$ and satisfy the following assumptions:
\begin{itemize}
\item[(i)] $\limsup_{k\to +\infty}\left(\D(f_k^+)+\D(f_k^-)\right)<\infty$;
\item[(ii)] $\gammado_k$ is converging in $C^1$ to a hypersurface $\gammado$;
\item[(iii)] $\varphi_k$ is converging in $C^{0,\beta}$ to a function $\varphi$ for some $\beta>\frac{1}{2}$.
\end{itemize} 
Then there exist a subsequence (not relabeled) and a $\qhalf$-valued function $(f^+,f^-)$ with interface $(\gammado, \varphi)$ such that the conclusions (a) and (b) of Lemma \ref{lem:cpt_dm} apply. Moreover $(f^+, f^-)$ is a $\qhalf$ $\D$-minimizer which collapses at the interface. 
\end{theorem}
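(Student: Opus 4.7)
\emph{Plan of proof.} The overall strategy is compactness plus a ``transplant'' of competitors: uniform H\"older estimates from Theorem~\ref{thm:cont_dm} supply a convergent subsequence, and then $\D$-minimality of the limit is proved by pulling competitors of the limit problem back to the approximate problems via diffeomorphisms close to the identity, correcting for the varying interface data.

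First I would establish compactness. Since the energies $\D(f_k^\pm)$ are uniformly bounded, $\gamma_k\to \gamma$ in $C^1$ and the H\"older seminorms $[\varphi_k]_{\beta,\gamma_k}$ are uniformly bounded, Theorem~\ref{thm:cont_dm} applied to $(f_k^+,f_k^-)$ yields a uniform bound $[f_k^\pm]_{\alpha,K\cap \Omega_k^\pm}\le C(K)$ on every compact $K\subset\subset\Omega$. Together with the interior H\"older bound for $\D$-minimizing $Q$-valued maps from \cite{DS1}, this provides uniform equicontinuity on compact subsets of $\overline{\Omega_k^\pm}$. To handle the moving domains, I introduce $C^1$-diffeomorphisms $\Phi_k:\Omega\to\Omega$ with $\Phi_k(\gamma)=\gamma_k$, $\Phi_k(\Omega^\pm)=\Omega_k^\pm$ and $\Phi_k\to\mathrm{id}$ in $C^1_{\mathrm{loc}}$, obtained from a tubular neighborhood of $\gamma$, and apply Arzel\`a--Ascoli to $\xii_Q\circ f_k^+\circ \Phi_k$ and $\xii_{Q-1}\circ f_k^-\circ\Phi_k$ on the fixed domains $\Omega^\pm$. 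A diagonal extraction yields uniform limits $f^\pm$ on compact subsets of $\overline{\Omega^\pm}$; continuity of the trace forces $f^+|_\gamma=Q\a{\varphi}$ and $f^-|_\gamma=(Q-1)\a{\varphi}$, so $(f^+,f^-)$ is a $\qhalf$-valued function with interface $(\gamma,\varphi)$ which collapses at the interface. The uniform energy bound together with $\Phi_k\to\mathrm{id}$ in $C^1$ gives a uniform $W^{1,2}$-bound for the composed maps, and a further extraction yields weak $W^{1,2}$-convergence to $\xii_\bullet\circ f^\pm$. Combined with the uniform convergence, this proves (a); lower semicontinuity of the Dirichlet energy gives the ``$\le$'' direction of (b).

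The core of the proof is $\D$-minimality of $(f^+,f^-)$, which also yields the matching ``$\ge$'' direction of (b) by choosing the trivial competitor $(h^+,h^-)=(f^+,f^-)$. I plan to show that any competitor $(h^+,h^-)$ for $(f^+,f^-)$ coinciding with $(f^+,f^-)$ outside a compact $K\subset\subset\Omega$ can be transplanted to a competitor $(h_k^+,h_k^-)$ for $(f_k^+,f_k^-)$ with interface $(\gamma_k,\varphi_k)$ and $\limsup_k[\D(h_k^+)+\D(h_k^-)]\le \D(h^+)+\D(h^-)$. The transplant has three substeps. \emph{(i)} Set $H_k^\pm:=h^\pm\circ\Phi_k^{-1}$ on $\Omega_k^\pm$: this is a $\qhalf$-valued function with interface $(\gamma_k,\varphi\circ\Phi_k^{-1}|_{\gamma_k})$ whose Dirichlet energy equals $\D(h^\pm)+o(1)$ by the $C^1$-convergence $\Phi_k\to\mathrm{id}$. \emph{(ii)} Correct the interface data: let $\psi_k:=\varphi_k-\varphi\circ\Phi_k^{-1}|_{\gamma_k}$, which tends to zero in $C^{0,\beta}(\gamma_k)$; extend $\psi_k$ via $\tilde\psi_k(x):=\chi(\dist(x,\gamma_k))\,\psi_k(\pi_{\gamma_k}(x))$, with $\chi$ a smooth cutoff supported in a fixed tubular neighborhood of $\gamma_k$, and add $\tilde\psi_k$ to every sheet of both $H_k^+$ and $H_k^-$. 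Because $\beta>\tfrac12$ and $\|\psi_k\|_{C^{0,\beta}}\to 0$, one has $\|\tilde\psi_k\|_{C^0}+\|\tilde\psi_k\|_{W^{1,2}}\to 0$; this modification produces the interface data $\varphi_k$ on $\gamma_k$ at a Dirichlet-energy cost of $o(1)$. \emph{(iii)} Glue with $(f_k^+,f_k^-)$ on $\Omega\setminus\Phi_k(K)$ using the standard multi-valued collar interpolation from \cite[Section~2.2]{DS1} on a collar of width $\rho_k\to 0$ chosen so that $\rho_k^{-1}\|H_k^\pm+\tilde\psi_k-f_k^\pm\|_{L^2(\text{collar})}^2\to 0$; this is possible since $H_k^\pm+\tilde\psi_k\to h^\pm=f^\pm=\lim_k f_k^\pm$ uniformly on compact subsets of $\overline{\Omega^\pm}$. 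Minimality of $(f_k^+,f_k^-)$ then gives $\D(f_k^\pm)\le\D(h_k^\pm)$; passing to the limit delivers both the $\D$-minimality of $(f^+,f^-)$ and the energy-convergence in (b).

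The main obstacle is substep (ii): constructing the extension $\tilde\psi_k$ with controlled $W^{1,2}$ norm. The explicit computation gives
\[
\|D\tilde\psi_k\|_{L^2}^2\le C\|\psi_k\|_{C^0}^2+C\|\psi_k\|_{C^{0,\beta}}^2\int_0^r t^{2\beta-2}\,dt,
\]
and the last integral is finite precisely when $2\beta-1>0$; this is the same threshold $\beta>\tfrac12$ which already underpins Theorem~\ref{thm:cont_dm} and Lemma~\ref{lem:cpt_dm}. A secondary subtlety is the bookkeeping in substep (iii) ensuring that the gluing is compatible with the interface condition on $\gamma_k$ across $\partial\Phi_k(K)$, which follows from the fact that both $H_k^\pm+\tilde\psi_k$ (on the interior) and $f_k^\pm$ (on the exterior) have the same interface jump $\varphi_k$ on $\gamma_k$.
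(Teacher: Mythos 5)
Your architecture is coherent, and in one respect it genuinely deviates from the paper: you obtain the energy convergence in conclusion (b) from equi-minimality (transplanting the trivial competitor $h^\pm=f^\pm$) rather than, as the paper does, by excluding a defect measure on $\gammado$ through the measure-theoretic Lemma \ref{lm:mt1} and a blow-up that invokes Lemma \ref{lem:cpt_dm}. That shortcut is legitimate in principle, but it makes everything — minimality \emph{and} (b) — hinge on your gluing step (iii), and that is where there is a genuine gap. The ``standard multi-valued collar interpolation from \cite[Section 2.2]{DS1}'', applied separately to the $+$ and $-$ sides, does not preserve the interface condition across $\gammado_k$, and your justification (``both pairs have the same interface jump $\varphi_k$'') is not sufficient: the DS1 interpolant is a convex combination taken through Almgren's embeddings $\xii_Q$ and $\xii_{Q-1}$ followed by the retractions, and these operations do not commute with the map $P\mapsto P+\a{\varphi_k}$ relating the two sides; equality of the jumps of the two pairs being glued does not imply the interpolated pair has jump $\a{\varphi_k}$ on $\gammado_k$. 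What actually saves the construction — and is exactly the content of the paper's Lemma \ref{l:interpolation}, proved for this purpose — is that \emph{one} of the two pairs, $(f_k^+,f_k^-)$, collapses at the interface: after subtracting a $W^{1,2}$ extension of $\varphi_k$ the collapsed pair equals $Q\a{0}$, $(Q-1)\a{0}$ on $\gammado_k$, so at interface points the convex combination degenerates to a scalar multiple of $\xii_\bullet$ of the other map, and the cone property of $\xii_Q(\Iqs)$ turns this into a pointwise rescaling, which \emph{does} preserve the jump $\a{0}$. The lemma also needs collar coordinates adapted to the interface (a flow tangent to $\gammado$ along $\gammado$, and transversality of the collar boundary to $\gammado$, arranged via Sard), which your ``distance-to-$\partial\Phi_k(K)$'' collar does not automatically provide. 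Without this interface-compatible interpolation your transplanted $(h_k^+,h_k^-)$ need not be an admissible competitor for $(f_k^+,f_k^-)$, and the proof of both minimality and (b) breaks.

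Two smaller remarks. First, in substep (ii) the explicit extension $\tilde\psi_k(x)=\chi(\dist(x,\gammado_k))\,\psi_k(\pi_{\gammado_k}(x))$ is only H\"older (it inherits no tangential differentiability from $\psi_k$), so it is not in $W^{1,2}$ and the displayed gradient estimate does not apply to it; the bound with $\int_0^r t^{2\beta-2}\,dt$ is correct for a Whitney/mollified extension whose gradient at distance $t$ from $\gammado_k$ is of order $[\psi_k]_{\beta}\,t^{\beta-1}$, or one can simply argue, as the paper does, that $\beta>\tfrac12$ gives $\|\psi_k\|_{H^{1/2}(\gammado_k)}\to 0$ and invoke the trace-extension theorem to produce $\varkappa_k$ with $\|\varkappa_k\|_{W^{1,2}}\to 0$. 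Second, in substep (iii) you should choose the collar radii so that no energy concentrates on them (as the paper does by selecting radii where the limit energy measure gives no mass to the collar boundary and the annular energies are small); otherwise the term $C\lambda\int_{\mathrm{collar}}(|Dh_k|^2+|Df_k|^2)$ is only $O(\lambda)$ uniformly, which still works after letting $\lambda\downarrow 0$ \emph{after} $k\to\infty$, but this order of limits must be made explicit since the region on which minimality is tested depends on $\lambda$.
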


Before coming to the proof of the latter theorem we need two important technical ingredients.

\subsection{Interpolation lemma}

The following technical lemma allows to ``glue'' together two different functions and will be instrumental to several  proofs:

\begin{lemma}[Interpolation]\label{l:interpolation}
Let $U \subset \R^m$ be a domain with smooth boundary $\partial U$ and let  $\gammado\subset \R^m$ be a smooth interface that intersects $\partial U$ transversally and divides \(U\) into two subdomains \(U^\pm\).  Then for every  compact subset $K \subset U$ there exist constants $C, \lambda_0>0$ depending on
\begin{itemize}
\item $m, Q$, $K$,
\item the \(C^2\) regularity of \(U\) and  \(\gammado$,
\item  and \(\min \{|T_x\partial U - T_x \gammado| : x\in \gammado\cap \partial U\}\),
\end{itemize}
such that the following holds.

Let $(f^+,f^-), (g^+,g^-)$ be two $\qhalf$-valued maps in $U$ with interface $(\gammado, \varphi|_\gammado)$ for some $\varphi \in W^{1,2}(U)$. Additionally we assume that $(f^+, f^-)$ collapses at the interface.  Then for every $0<\lambda <\lambda_0$ there exist open sets $K \subset V_\lambda \subset W_\lambda \subset U$ and a $\qhalf$-valued map $(\zeta^+, \zeta^-)$ in $W_\lambda \setminus V_\lambda$ 
 with the following properties:
 \begin{itemize}
 \item[(a)] $\zeta^\pm(x)= \begin{cases} f^\pm(x), &\text{ if } x \in \partial W_\lambda^\pm \\ g^\pm(x), &\text{ if } x \in \partial V^\pm_\lambda \end{cases}$;
 \item[(b)] $\zeta \text{ has interface } (\gammado, \varphi|_\gammado )$;
 \item[(c)] the following estimate holds
 \begin{align}\label{e:raccordo_Dir}
 \int_{W_\lambda^\pm \setminus V_\lambda} \abs{D\zeta^\pm}^2 &\le  C \lambda \int_{U^\pm \setminus K} \left(\abs{D f^\pm}^2+ \abs{D g^\pm}^2 + Q \abs{D\varphi}^2\right)\nonumber\\
&\qquad+ \frac{C}{\lambda} \int_{U^\pm\setminus K} \G(f^\pm,g^\pm)^2.
 \end{align}
 \end{itemize}
If in addition $f$ and $g$ are Lipschitz then $\zeta$ can be chosen to satisfy
\begin{equation}\label{e:lip approx}
\operatorname{Lip}(\zeta^\pm) \le C \left( \operatorname{Lip}(f^\pm) + \operatorname{Lip}(g^\pm) + \frac{1}{\lambda} \sup_{x \in U \setminus K} \G(f^\pm, g^\pm)(x)  \right) .
\end{equation}
\end{lemma}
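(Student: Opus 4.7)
The proof adapts the classical interpolation scheme for $Q$-valued maps (cf.\ \cite{DS1} and \cite{Alm}) to the $\qhalf$-valued setting, using the collapsed hypothesis on $f$ as the key ingredient which guarantees the preservation of the interface condition. The plan is to select good slices of the distance function to $K$ via a Fubini/Chebyshev argument, perform a radial affine interpolation between $f$ and $g$ on the resulting annulus, and verify that the traces on $\gamma$ are automatically compatible.

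\textbf{Good slices and definition of $V_\lambda,W_\lambda$.} Set $d(x):=\dist(x,K)$ and fix $\lambda_0<\tfrac{1}{4}\dist(K,\partial U)$. For $\lambda<\lambda_0$, apply Chebyshev's inequality to the coarea decomposition of
\begin{equation*}
\int_{U\setminus K}\bigl(|Df^\pm|^2+|Dg^\pm|^2+Q|D\varphi|^2+\lambda^{-2}\G(f^\pm,g^\pm)^2\bigr)
\end{equation*}
to select radii $t_1\in(\lambda,\tfrac{3}{2}\lambda)$ and $t_2\in(\tfrac{3}{2}\lambda,2\lambda)$ whose level slices $\{d=t_i\}\cap U^\pm$ carry integrals of the above integrands bounded by $C/\lambda$ times the bulk integrals. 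Set $V_\lambda:=\{d<t_1\}$ and $W_\lambda:=\{d<t_2\}$. By the transversality of $\gammado\cap\partial U$ and the $C^2$ regularity of $\gammado$ and $\partial U$, after possibly shrinking $\lambda_0$ the level sets $\{d=t_i\}$ meet $\gammado$ transversally, so the annulus $A:=W_\lambda\setminus V_\lambda$ is cleanly split by $\gammado$ into $A^+\cup A^-$.

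\textbf{Interpolation and interface identity.} Foliate $A$ by the level sets $\{d=s\}$ for $s\in[t_1,t_2]$ and let $\pi_s$ be the projection onto $\{d=s\}$ along the gradient lines of $d$. Set $\chi(x):=(t_2-d(x))/(t_2-t_1)$, so $\chi\equiv 1$ on $\partial V_\lambda$ and $\chi\equiv 0$ on $\partial W_\lambda$. Define $\zeta^\pm$ on $A^\pm$ as the $\chi$-weighted affine combination, computed in $\Iqs$ (resp.\ $\Is{Q-1}$) via an optimal sheet-matching, of the radial extensions $g^\pm\circ\pi_{t_1}$ and $f^\pm\circ\pi_{t_2}$. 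The interface condition $\zeta^+=\zeta^-+\a{\varphi}$ on $\gammado$ then follows by a direct trace computation: by the collapsed hypothesis together with the H\"older continuity of Theorem \ref{thm:cont_dm}, the traces of $f^\pm\circ\pi_{t_2}$ along $\gammado\cap A$ are arbitrarily close (for $\lambda\downarrow 0$) to $Q\a{\varphi}$ and $(Q-1)\a{\varphi}$ respectively, while the traces of $g^\pm\circ\pi_{t_1}$ satisfy $(g^+-g^-)\circ\pi_{t_1}=\a{\varphi}$ by the interface assumption on $g$; hence
\begin{equation*}
(\zeta^+-\zeta^-)\big|_{\gammado\cap A}\,=\,\chi\a{\varphi}+(1-\chi)\bigl(Q\a{\varphi}-(Q-1)\a{\varphi}\bigr)\,=\,\a{\varphi}.
\end{equation*}
The estimate \eqref{e:raccordo_Dir} now follows from the standard bound on the Dirichlet energy of an affine interpolation (tangential gradient controlled by gradients on the two selected slices, radial gradient by $\G(f,g)/\lambda$) combined with the Fubini choice of $t_1,t_2$, while \eqref{e:lip approx} is immediate from the explicit affine formula together with $\operatorname{Lip}(\chi)=O(1/\lambda)$.

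\textbf{Main obstacle.} The delicate point is the consistency of the optimal sheet-matching used in the affine combination near $\gammado$: generically such matchings vary discontinuously. However, the collapsed structure of $f$ forces a \emph{canonical} matching in a tubular neighborhood of $\gammado$, since every sheet of $f^+$ (resp.\ $f^-$) is close to $\varphi$ on $\gammado$: the extra $\a{\varphi}$-sheet of $g^+$ which accounts for $g^+-g^-=\a{\varphi}$ is naturally paired with one prescribed sheet of $f^+$, and the remaining $Q-1$ sheets of $g^+$ are paired with the remaining sheets of $f^+$ as in the standard $Q$-valued Almgren/DS construction. This canonical choice makes the interpolation continuous across $\gammado$, while away from $\gammado$ it glues with the standard selection by a partition-of-unity argument whose additional energy cost is absorbed in the Fubini-selected bulk terms. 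Verifying that this patching is compatible both with the interface condition of step three and with the bound \eqref{e:raccordo_Dir} is the heart of the proof.
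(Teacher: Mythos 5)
Your overall scheme (Fubini selection of good slices, affine interpolation across a thin slab, verification of the interface) is the right family of ideas, but as written it has two genuine gaps. First, your foliation is not adapted to the interface: you retract along gradient lines of $\dist(\cdot,K)$, and these projections $\pi_{t_i}$ have no reason to map $\gammado$ into $\gammado$. Consequently, at a point of $\gammado$ in the annulus the values $f^\pm\circ\pi_{t_2}$ and $g^\pm\circ\pi_{t_1}$ are traces of $f$ and $g$ taken at points \emph{off} $\gammado$, where neither the collapsed condition $f^+=Q\a{\varphi}$ nor the relation $g^+=g^-+\a{\varphi}$ is available; your trace identity therefore does not follow. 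This is exactly why the paper's proof does not use the distance to $K$ at all: it builds a vector field near $\partial U$ which is transversal to $\partial U$ but \emph{tangent to $\gammado$ along $\gammado$} (this is where the transversality constant $\min\{|T_x\partial U-T_x\gammado|\}$ enters), so that the flow used to define the slab and the interpolation preserves $\gammado$, and the interface can be checked pointwise on $\gammado$. Relatedly, the interface condition must hold \emph{exactly}, being part of the definition of a $\qhalf$-map; your argument only gives that the traces of $f^\pm\circ\pi_{t_2}$ are ``arbitrarily close'' to $Q\a{\varphi}$, and it invokes the H\"older continuity of Theorem \ref{thm:cont_dm}, which is a property of $\D$-minimizers and is not available here, since $f$ is merely $W^{1,2}$ (its restriction to $\gammado$ is only an a.e.\ trace).

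Second, the ``$\chi$-weighted affine combination via an optimal sheet-matching'' is not a well-defined Sobolev (let alone Lipschitz) operation on multi-valued maps: optimal matchings between $f$ and $g$ are in general discontinuous and non-measurable in $x$, and the collapsed structure of $f$ does not produce a canonical matching, because $g$ need not collapse near $\gammado$ and its sheets can braid; your computation $(\zeta^+-\zeta^-)|_{\gammado}=\chi\a{\varphi}+(1-\chi)(Q\a{\varphi}-(Q-1)\a{\varphi})$ presupposes such a coherent pairing. The paper sidesteps this entirely: after subtracting $\varphi$ (reduction to interface $(\gammado,0)$), it interpolates linearly in the Euclidean image of Almgren's bi-Lipschitz embedding $\xii_Q$ and composes with the Lipschitz retraction $\boldsymbol{\rho}_Q$ onto $\xii_Q(\Iqs)$; since $\xii_Q(Q\a{0})=0$ and $\xii_Q(\Iqs)$ is a cone, on $\gammado$ the interpolant is exactly $\sum_i\a{s\,(\bar g^\pm)_i}$ for a scalar $s\in[0,1]$, whence the exact interface identity, with the energy and Lipschitz bounds coming from the Lipschitz character of $\xii_Q,\boldsymbol{\rho}_Q$ and the Fubini choice of the slab. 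To repair your proof you would need both the $\gammado$-preserving foliation and the embedding–retraction interpolation (or an equally robust substitute for the sheet-matching step).
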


\begin{remark}
	If $U=B_1 \subset \R^m$, we can take any $\lambda_0 \leq \frac{1}{4}$ and we may assume that $V_\lambda = B_{s-\lambda}$ and $W_\lambda= B_{s}$ for some $s \in ]1-\lambda_0, 1[$, while the constant $C$ in the estimates depends only on $m,n,Q$. Furthermore, with an obvious scaling and translation argument, we can get a corresponding statement for $U= B_r (x)$. 
\end{remark}

\begin{proof}
We divide the proof in some steps:

\medskip

\emph{Step 1: Choice of "cylindrical" coordinates around $\partial U$:} We may assume that there is a smooth function $d$ such that:
\begin{itemize}
\item $U=\{ d >0 \}$;
\item $0$ is a regular value of $d$.
\end{itemize}
In particular there is $\eta>0$ such that 
\begin{equation}\label{e:regular_value}
\abs{ \nabla d(x) } >\eta  \quad \mbox{in a neighborhood of $U'$ of $\partial U$.}
\end{equation} 
As it will be customary in the sequel, we will use the symbol $\bp_\pi$\index{aalp\bp_\pi@$\bp_\pi$} to denote the orthogonal projection onto a plane $\pi$.  
By assumption $\gammado$ intersects $\partial U$ transversally:  hence, possibly choosing $\eta>0$ and $U'$ smaller, we can also assume
\begin{equation}\label{e:transversal}
\left|\bp_{T_x \gamma} (\nabla d (x))\right|\geq \eta\qquad \forall x\in \gammado \cap U'\, .
\end{equation} 
In order to simplify our notation from now on we will set $(\nabla d (x))^T = \bp_{T_x \gamma} (\nabla d (x))$.

The inequalities above imply that we can define a smooth vectorfield  $X$ in a neighborhood $V$ of $\partial U$ with the following properties:
\begin{itemize}
	\item[(A)] $\abs{X}=1$  and $\langle \nabla d(x), X(x) \rangle > \frac{\eta}{2}$ for all $x \in V$;
	\item[(B)] $X= \frac{ (\nabla d(x))^T }{\abs{(\nabla d(x))^T}}$ for all $x \in V \cap \gammado$. 
\end{itemize}
Let $\psi: V \times [-t_0, t_0] \to \R^m$ be the flow generated by $X$. Hence the map
\[ (y, t) \in \partial U \times  [-t_0, t_0]  \mapsto \psi(y,t) \]
gives a parametrization of a neighborhood $V'$ of $\partial U$ with the additional property that 
\begin{equation}\label{eq:straight interface}
	\psi(y,t) \in \gammado \text{ for all } (y,t) \in \gammado \cap \partial U \times [0, t_0]. 
\end{equation}
Possibly decreasing $t_0$, we may assume that $\psi( \partial U \times ]0, t_0[ ) \subset U \setminus K$. 

\medskip

\emph{Step 2: Reduction to $\varphi=0$.} Instead of considering $f,g$ directly, we look first at the two functions
\[ \tilde{f}^\pm:= \sum_{i} \a{f^\pm_i - \varphi}, \qquad \tilde{g}^\pm:= \sum_{i} \a{g^\pm_i - \varphi}. 
\]
Note that they satisfy  the same assumptions of  $f$ and $g$ but with  interface $(\gammado, 0)$. Furthermore, one readily checks that 
\begin{equation}\label{e:energy shift by varphi} \abs{D\tilde{f}^\pm}^2(x) \le 2 \abs{Df^\pm}^2(x) + 2Q \abs{D\varphi}^2(x) \end{equation}
and similarly for $\tilde{g}$. Additionally we have that
\[ \G(\tilde{f}^\pm, \tilde{g}^\pm) = \G(f^\pm, g^\pm).
\]

\medskip

\emph{Step 3: Choice of $V_\lambda\subset W_\lambda$ and definition of $\tilde{\zeta}$ for $\tilde{f}, \tilde{g}$.} Define next
\begin{align*}
\bar f^\pm (y,t) &:= \tilde{f}^\pm (\psi (y,t))\\
\bar g^\pm (y,t) &:= \tilde{g}^\pm (\psi (y,t))\quad
\mbox{and}\\
\bar\varphi (y,t) &:= \varphi (\psi (y,t) \, .
\end{align*}
Set now $\lambda_0 := t_0$, let $\lambda$ be a positive number smaller than $\lambda_0$ and select the natural number $N$ such that $N \lambda \leq t_0 < (N+1) \lambda$. For our purposes, by making $t_0$ slightly smaller, from now on we can assume $\lambda = \frac{t_0}{N}$. Consider the disjoint intervals $I_j:=[(j-1) \frac{t_0}{N}, j \frac{t_0}{N}[$ for $j=1, \dotsc, N$. Then there must be at least one $j \in \{1, \dots, N-1 \}$ such that 
\begin{align*} 
\int_{(\partial U)^\pm \times I_j}  \abs{D \bar{f}^\pm}^2+ \abs{D \bar{g}^\pm}^2  &\le 8 \lambda \int_{(\partial U)^\pm \times [0, t_0]} \abs{D \bar{f}^\pm}^2+ \abs{D \bar{g}^\pm}^2 \,\\
\int_{(\partial U)^\pm \times I_j}  \G(\bar{f}^\pm, \bar{g}^\pm)^2  &\le 8 \lambda \int_{(\partial U)^\pm \times [0, t_0]} \G(\bar{f}^\pm, \bar{g}^\pm)^2 \,.
\end{align*}
If $\varphi \neq 0$ we require additionally that 
\begin{equation}\label{e.slice for phi} 
\int_{(\partial U)^\pm \times I_j} \abs{D\bar \varphi}^2 \le 8 \lambda \int_{(\partial U)^\pm \times [0,t_0]} \abs{D\bar \varphi}^2\,. 
\end{equation}
Fix such a $j$ and define 
\[
V_\lambda:= U \setminus \psi\Big( \partial U \times [0, j t_0/N]\Big)
\qquad
W_\lambda:= U \setminus \psi\Big( \partial U \times [0, (j-1)t_0/N]\Big),
\]
so that 
 \[
 W_\lambda \setminus V_\lambda = \psi\Big( \partial U \times ](j-1) t_0/N, j t_0/N]\Big).
 \]
We consider the Almgren  embedding  $\xii_{Q}: \Iqs \to \R^{N(Q,n)}$ (resp. $\xii_{Q-1}: \Iqqs\to \R^{N(Q-1,n)}$) and the retraction   \(\boldsymbol{\rho}_{Q}: \R^{N(Q,n)}\to \xii_Q(\Iqs)\) (resp. \(\boldsymbol{\rho}_{Q-1}\)) as in   \cite[Theorem 2.1]{DS1}.  We then define the functions  $\bar{\zeta}^+$ as
\[ \bar{\zeta}^+(y,t) = \xii_{Q}^{-1}\circ \boldsymbol{\rho}_{Q} \left( \frac{ j \lambda - t}{\lambda} \boldsymbol{\xi}_{Q}(\bar{f}^+(y,t)) + \frac{ t- (j-1)\lambda}{\lambda} \boldsymbol{\xi}_{Q}(\bar{g}^+(y,t)) \right).\]
and analogously for \(\bar{\zeta}^-\). Finally, we set $\tilde \zeta (x) := \zeta (\psi^{-1} (x))$. The estimates \eqref{e:raccordo_Dir} and \eqref{e:lip approx} are then routine calculations for the case $\varphi =0$. Hence, it remains to check that $(\tilde{\zeta}^+, \tilde{\zeta}^-)$  has interface $(\gammado, 0 )$ , namely  that  
\[
\bar{\zeta}^+ (y,t) = \bar{\zeta}^- (y,t) + \a{0} \qquad \mbox{whenever $x = \psi (y,t) \in \gammado$.}
\]

Fix thus $(y, t)\in \partial U  \times ](j-1)\lambda , j\lambda]$ such that $x = \psi (y,t) \in \gammado$ and observe that, since $\bar f^+ (y,t) = \tilde{f}^+ (x) = Q \a{0}$, $\bar f^- (y,t) = \tilde{f}^- (x) = (Q-1) \a{0}$,  and $\boldsymbol{\xi}_{Q}(Q\a{0}) =0$, we have
\[
\bar{\zeta}^+ (y, t) = \boldsymbol{\xi}_{Q}^{-1}\circ \boldsymbol{\rho}_Q \left(\frac{ t- (j-1)\lambda}{\lambda} \boldsymbol{\xi}(\bar{g}^+(y,t)) \right)\, .
\]
and the same for \(\bar{\zeta}^-\).
Note next that $\boldsymbol{\xi}_Q (\Iqs)$ is a cone and in fact
\[
\boldsymbol{\xi}_Q \left(\sum_i \a{\lambda T_i}\right) = \lambda \boldsymbol{\xi}_Q \left(\sum \a{T_i}\right)\, .
\]
We therefore conclude
\[
\bar{\zeta}^+ (y, t) = \sum_i \a{ \frac{ t- (j-1)\lambda}{\lambda} (\bar{g}^{+})_i (y, t)} \, .
\]
and the same for \(\bar{\zeta}^- (y, t)\). Since $\bar{g}^+ (y, t) = \bar{g}^- (y, t) + \a{0}$ we conclude as well that
$\bar{\zeta}^+ (y, t) = \bar{\zeta}^- (y, t) + \a{0}$.

\emph{Step 4: The general case}. 
To conclude the proof we finally define 
\[ \zeta^\pm(x):= \sum_i \a{ \tilde{\zeta}^\pm_i(x) + \varphi(x) } .\]
One readily checks that $\zeta$ satisfies the claimed boundary values and has interface $(\gammado, \varphi)$. Using once again \eqref{e:energy shift by varphi} for $\zeta$ and exploiting also \eqref{e.slice for phi}, we obtain the estimates \eqref{e:raccordo_Dir} and \eqref{e:lip approx}.
\end{proof}

\subsection{A simple measure theoretical lemma}
The second technical ingredient is the following simple measure theoretic fact.

\begin{lemma}\label{lm:mt1}Let $\mu$ be a Radon measure supported in a $C^1$ $k$-dimensional submanifold $M$ of some Euclidean space. Set
\[
A:= \left\{ x \in \supp(\mu) \colon \liminf_{r \to 0} \frac{ \mu(B_r(x))}{ r^{k} } > 0 \right\}\, 
\]
and 
\[ 
B:=  \left\{ x\in \supp(\mu) \colon \limsup_{r \to 0 } \frac{ \mu(B_{r}(x))}{\mu(B_{2r}(x))} \ge 2^{-k}  \right\}\, .
\]
Then $\mu(M\setminus A) = 0=\mu(M\setminus B)$.
\end{lemma}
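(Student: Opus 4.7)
The plan is to prove the two assertions in sequence, with the second reducing to the first.

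For the claim $\mu(M\setminus A)=0$, I would first stratify the exceptional set by quantifying how small the lower density is: for each $\eps>0$ set
\[
F_\eps := \bigl\{x\in \supp(\mu) : \text{there are arbitrarily small } r>0 \text{ with } \mu(B_r(x))<\eps\, r^k\bigr\},
\]
so that $M\setminus A \subset F_\eps$ for every $\eps$. Fix a bounded open set $U\subset \R^N$ and, for each $\delta>0$, note that the balls $\{B_r(x): x\in F_\eps\cap U,\ r<\delta,\ \mu(B_r(x))<\eps r^k\}$ form a fine (Vitali) cover of $F_\eps\cap U$. By the Vitali covering theorem for Radon measures I can extract a countable, pairwise disjoint subfamily $\{B_{r_i}(x_i)\}$ which covers $F_\eps\cap U$ up to a $\mu$-null set. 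Summing the defining inequality gives $\mu(F_\eps\cap U)\le \eps\sum_i r_i^k$. The crucial input is now that since $\supp\mu\subset M$ and $M$ is a $C^1$ $k$-dimensional submanifold, there is a constant $c>0$ and a radius $r_0>0$ (depending only on local geometry of $M$) such that $\cH^k(M\cap B_r(x))\ge c\, r^k$ for every $x\in M$ and $r\le r_0$; assuming $\delta\le r_0$ and that the chosen balls lie in a fixed enlargement $U'$ of $U$, disjointness yields $\sum_i r_i^k\le c^{-1}\cH^k(M\cap U')<\infty$. Hence $\mu(F_\eps\cap U)\le C\eps$, and letting $\eps\downarrow 0$ followed by exhausting $\R^N$ with bounded open sets $U$ gives $\mu(M\setminus A)=0$.

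For the claim $\mu(M\setminus B)=0$, I would show the set-theoretic inclusion $M\setminus B\subset M\setminus A$ and invoke the first part. Indeed if $x\in M\setminus B$ then $\limsup_{r\to 0}\mu(B_r(x))/\mu(B_{2r}(x))<2^{-k}$, so there exist $\alpha=\alpha(x)<2^{-k}$ and $r_0=r_0(x)>0$ with
\[
\mu(B_r(x))\le \alpha\,\mu(B_{2r}(x)) \qquad \text{for every } 0<r<r_0.
\]
Iterating with $r=r_0/2^{j+1}$ yields $\mu(B_{r_0/2^j}(x))\le \alpha^j\mu(B_{r_0}(x))$, hence
\[
\frac{\mu(B_{r_0/2^j}(x))}{(r_0/2^j)^k}\le (2^k\alpha)^j\,\frac{\mu(B_{r_0}(x))}{r_0^k}\xrightarrow[j\to\infty]{} 0
\]
since $2^k\alpha<1$. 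Thus $\liminf_{r\downarrow 0}\mu(B_r(x))/r^k=0$, i.e.\ $x\in M\setminus A$. Stratifying $M\setminus B$ as the countable union of the Borel sets $\{x:\mu(B_r(x))/\mu(B_{2r}(x))\le 2^{-k}-1/n \text{ for all } r<1/n\}$ makes the argument measurable, and combined with $\mu(M\setminus A)=0$ it gives $\mu(M\setminus B)=0$.

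The only genuinely substantive point is the first part, and within it the sole obstacle worth checking carefully is that the Vitali-type extraction can indeed be applied to the Radon measure $\mu$ and that the bound $\sum_i r_i^k\lesssim \cH^k(M\cap U')<\infty$ is legitimate; both hinge on $\supp\mu\subset M$ and the $C^1$ $k$-dimensional structure of $M$ (which gives both a uniform lower Ahlfors bound on $\cH^k\res M$ at small scales and local finiteness of $\cH^k(M)$). Everything else is routine measure theory, and the second assertion is formal once the first is established.
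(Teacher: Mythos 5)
Your proof is correct, but the first half takes a genuinely different route from the paper. For $\mu(M\setminus A)=0$ the paper flattens $M$ to $\mathbb R^k$ by a $C^1$ chart, writes $\mu=f\,dx+\mu_s$ by Radon--Nikod\'ym with respect to $k$-dimensional Lebesgue measure, and invokes the standard differentiation theorems: the density $\mu(B_r(x))/(\omega_k r^k)$ tends to $+\infty$ at $\mu_s$-a.e.\ point and to $f(x)>0$ at $\mu_a$-a.e.\ point, so $\mu(A^c)=0$. You instead run a direct Vitali/Besicovitch covering argument on the sets $F_\eps$ and trade the density theorems for the lower Ahlfors bound $\cH^k(M\cap B_r(x))\ge c\,r^k$; this avoids Radon--Nikod\'ym altogether and is arguably more self-contained, at the price of needing the covering theorem for general Radon measures (Besicovitch, fine in $\R^N$) and some care with the geometric input. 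On that last point, be slightly careful: for a $C^1$ submanifold that is merely embedded (not closed in $\R^N$), $\cH^k(M\cap U')$ need not be finite and the constant $c$ need not be uniform; the fix is exactly the localization the paper performs, namely cover the compact set $\supp(\mu)\cap\overline{U'}\subset M$ by finitely many $C^1$ graph patches $M'$ and run your estimate with $\cH^k\res M'$, which is finite and lower Ahlfors regular at the centers $x_i$. Your second half (iterating $\mu(B_r(x))\le\alpha\,\mu(B_{2r}(x))$ with $\alpha<2^{-k}$ to show the lower $k$-density vanishes, hence $M\setminus B\subset M\setminus A$) is essentially the same argument as the paper's, which phrases it as a contradiction on the stratified sets $B^{\varepsilon_0}$; your pointwise inclusion is if anything cleaner, and the stratification is only needed for bookkeeping, not for the null-set conclusion.
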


\begin{proof}
Since the statements can be easily localized and a \(C^1\) change of variable would not affect them, we can assume w.l.o.g. that \(M=\mathbb R^{k}\). By Radon-Nikod\'ym Theorem we can decompose  $\mu$ as  
\[
\mu_a + \mu_s = f dx + \mu_s\, \,
\] 
where $dx$ is the \(k\)-dimensional Lebesgue measure, $f$ is a nonnegative $L^1$ function and $\mu_s$ is a singular measure with respect to Lebesgue. Moreover, for $\mu_s$-a.e. $x$ we have
\[
 \lim_{r \to 0} \frac{ \mu(B_r(x))}{\omega_k r^k } = \infty
\]
and for $\mu_a$-a.e. $x$ we have
\[
 \lim_{r \to 0} \frac{ \mu(B_r(x))}{\omega_k r^k } = f(x) >0\, .
\]
Combining the above facts one immediately gets that \(\mu(A^c)=0\).

To prove the second claim assume by contradiction that there exists \(\varepsilon_0>0\) such that the set 
\[
B^{\varepsilon_0}=  \left\{ x\in \supp(\mu) \colon \limsup_{r \to 0 } \frac{ \mu(B_{r}(x))}{\mu(B_{2r}(x))} \le 2^{-k}(1-2 \varepsilon_0)  \right\}
\]
has positive measure. Since for all \(x_0\in B^{\varepsilon_0}\) there exists \(r_0\) such that
\[
\mu(B_{r}(x_0))\le 2^{-k}(1-\varepsilon_0) \mu(B_{2r}(x_0)) \qquad \textrm{for all \(r\in (0,r_0]\)},
\] 
one easily get that, for all \(\j\ge 1\)
\[
\frac{\mu(B_{2^{-j}r_{0}}(x_0))}{2^{-kj}r_0^k}\le (1-\varepsilon_0)^l\frac{\mu(B_{r_{0}}(x_0))}{r_0^k}.
\]
Hence, letting \(j\to \infty\), \(B^{\varepsilon_0}\subset A\), a contradiction with \(\mu(B^{\varepsilon_0})>0\).
\end{proof}

\begin{remark}\label{r:little_measure_theory}
Note that, as a consequence of the above Lemma, for \(\mu\)-a.e. \(x\) there exists a vanishing sequence $\{r_j\}$ such that 
\[
\lim_{j \to \infty} \frac{ \mu(B_{r_j}(x))}{\mu(B_{2r_j}(x))} \ge 2^{-k}.
\]
Recall moreover that $\mu(\partial B_{s}(y)) \neq 0$ for only countably many radii $s$. Since
\[
\lim_{s\uparrow r} \mu (B_s (x)) = \mu (B_r (x))\, ,
\]
we can choose $s_j < r_j$ so close to $r_l$ to ensure
\[
\lim_{j \to \infty} \frac{ \mu(B_{s_j}(x))}{\mu(B_{2s_j}(x))} =
\lim_{j \to \infty} \frac{ \mu(B_{r_j}(x))}{\mu(B_{2r_j}(x))} \ge 2^{-k}.
\]
and at the same time enforce the additional property $ \mu( \partial B_{2s_j}(x)) = 0 = \mu( \partial B_{s_j}(x))$.
\end{remark}

\subsection{Proof of Theorem \ref{thm:cpt_dm}: Compactness}:  Let $(f_k^+, f_k^-)$ be a sequence of $\qhalf$- $\D$-minimizers satisfying the assumption of the theorem. As in the proof of Theorem \ref{thm:ex_dm}, we can extract a subsequence such that $f^\pm_k $ converges strongly in $L^2$ to a $W^{1,2}$ function $f^\pm$ with $\D(f^\pm, \Omega^\pm) \le \liminf_k \D(f_k^\pm, \Omega_k^\pm) $. It remains to prove that, when $\Omega' \subset \Omega$ we actually have
\[
\D (f^\pm, \Omega^\pm \cap \Omega') = \lim_{k\to\infty} \D (f_k^\pm, \Omega_k^\pm \cap \Omega')\, .
\]
The argument is the same for $f^+$ and $f^-$ and for simplicity we focus on $f^+$.

Possibly passing to a further subsequence, we may assume that the sequence of Radon measures $\mu_k$ defined by $\mu_k(A):= \D(f_k^+, A\cap \Omega^+_k)$ converges, weakly$^\star$ in the sense of measures, to some $\mu$. By lower semicontinuity of the Dirichlet energy there is then a nonnegative ``defect measure $\nu$'' such that
\[
\mu(A) = \D( f^+, A\cap \Omega^+) + \nu (A)\qquad \mbox{for all Borel $A \subset\subset \Omega$.}
\] 
The goal is to show that $\nu=0$ and we therefore assume, by contradiction, that $\nu>0$. Observe that $\nu$ must be supported in $\gammado$, because in the interior of $\Omega^+$ we can appeal to \cite[Proposition 3.20]{DS1}. We can then apply  Lemma  \ref{lm:mt1} (with \(M=\gammado\)) and the Remark \ref{r:little_measure_theory} to find that at $\nu$-a.e. point $x_0 \in \supp(\nu)$ there is a sequence $r_j \downarrow 0$ such that: 
\begin{equation}\label{e:to_recall_alpha} 
\begin{split}
\liminf_{l\to \infty} \frac{ \nu(B_{r_j}(x_0))}{\omega_{m-1} r_{l}^{m-1}}& \geq \alpha > 0\\ 
 \nu(B_{r_j}(x_0)) & \leq  (2^{m-1}+o(1)) \nu( B_{r_j/2}(x_0)),
\\
\nu(\partial B_{r_j}(x_0)) &=0 =\nu( \partial B_{r_j/2}(x_0)).
\end{split}
\end{equation}
Moreover, since $\nu$ is singular with respect to the Lebesgue measure,  we also have 
\[
\frac{ \mu(B_{r_j}(x_0))}{\nu(B_{r_j}(x_0))} = 1 + o(1)
\]
for $\nu$-a.e. $x_0$.

\medskip

We thus fix an $x_0$ and a sequence $r_j$ with the properties above and also assume, after applying a suitable rotation, that  the blow up $\iota_{x_0, r_j}(\gammado)$ converges to the hyperplane $\gammado_0 = \{ x_m =0 \}$.
We next consider the sequences\footnote{In order to simplify our formulas, we will use the following abuse of notation: if $f = \sum_i \a{f_i}$ is a multivalued map and $\lambda$ is a classical real valued function, we will denote by $\lambda f$ the map $x\mapsto \sum_i \a{\lambda f_i (x)}$.}
\[ 
g_j(x) = \frac{ f^+(x_0 + r_j x) }{ \left(r_j^{m-2} \nu(B_{r_j}(x_0)\right)^{\frac 12}}\quad \text{ and }\quad  h_j(x) = \frac{ f_{k(j)}^+(x_0 + r_j x) }{ \left(r_j^{m-2} \nu(B_{r_j}(x_0)\right)^{\frac 12}},
 \]
where we have chosen $k(j)$ sufficient large such that
\begin{align}
 &\max\{ | \mu_{k(j)}(B_{r}(x_0)) - \mu(B_r(x_0))| \colon  r = r_j, r_j/2 \} \le 2^{-l} r_j^{m-2}\nu(B_{r_j}(x_0) )\, ;\nonumber\\
&\int_{B_{r_j}(x_0) \cap \Omega_{k(j)}^+ \cap \Omega^+} \G(f_{k(l)}^+, f^+)^2 \le 2^{-l} r_j^{m-2}\nu(B_{r_j}(x_0))\, .\nonumber
\end{align}
Furthermore the choice of $k(j)$ ensures that 
\[
\D(h_j, \Omega_{k(j)}^+ \cap B_1) = \frac{ \mu_{k(l)}( B_{r_j}(x_0))}{ \nu(B_{r_j}(x_0))}= 1 + o(1)
\] 
and
\[
\int_{B_1 \cap \{x_m >0 \}} \G(g_j, h_j)^2 \le 2^{-j}\, .
\] 
Note that $h_j$ and $g_j$ are $\qhalf$ $\D$ minimizers which collapse at their   interfaces $(\tilde{\gammado}_j, \tilde{\varphi}_j)$ and $(\hat{\gammado}_j, \hat{\varphi}_j)$, respectively, where   $ \tilde{\gammado}_j := \iota_{x_0, r_j}(\gammado) $,  $ \hat{\gammado}_j := \iota_{x_0, r_j}(\gammado_{k(l)}) $ and
 \[ \tilde{\varphi}_j(x) = \frac{ \varphi(x_0 + r_j x) }{ \left(r_j^{m-2} \nu(B_{r_j}(x_0)\right)^{\frac 12}} \quad\text{ and } \quad \hat{\varphi}_j(x) = \frac{ \varphi_{k(l)}(x_0 + r_j x) }{ \left(r_j^{m-2} \nu(B_{r_j}(x_0)\right)^{\frac 12}}\, . \]
Note that, as $l\to\infty$, $\tilde{\gammado}_j, \hat{\gammado}_j \to \gammado_0$ in $C^1$. Moreover $\tilde{\varphi}_j , \hat{\varphi}_j \to \varphi(x_0)$ in $C^\beta$, since,  thanks to \eqref{e:to_recall_alpha},
\[ [ \hat{\varphi}_j]_{\beta, \hat{\gammado_j}\cap B_1}= \frac{ r_j^\beta [\varphi_{k(l)}]_{\beta, \gammado_{k(l)} \cap B_{r_j}(x_0)} }{ \left(r_j^{m-2} \nu(B_{r_j}(x_0))\right)^{\frac 12}}  \le \frac{ r_j^\beta }{\alpha r_j^{\frac 12}}   [\varphi_{k(l)}]_{\beta, \gammado_{k(l)} \cap B_{r_j}(x_0)}\]
and $\beta > \frac{1}{2}$ (and  similarly for $\tilde{\varphi}$). 

We are therefore in the situation of Lemma \ref{lem:cpt_dm} and thus we can find functions \(h\) and \(g\) such that, passing to a subsequence, $h_j \to h$ and $g_j \to g$. Furthermore, by  condition (B) above,  $h = g$.

Let us show that this is a contradiction and thus conclude the proof. Indeed,  on the one hand,
\[ \D(g, B_1 \cap \{ x_m > 0 \}) \le \liminf_{ l \to \infty} \frac{ \D( f^+, B_{r_j}( x_0))}{\nu(B_{r_j}(x_0))}  = 0 \]
and, on the other hand, due to the conclusions of Lemma \ref{lem:cpt_dm}, 
\begin{align*}
&\D(h, B_{\frac 12}\cap\{ x_m >0 \}) = \lim_j \D( h_j , B_{\frac 12}\cap  \iota_{x_0, r_j}(\Omega_{k(j)}^+))\\&= \lim_{j \to \infty}  \frac{ \mu_{k(j)}( B_{r_j/2}(x_0))}{\nu( B_{r_j}(x_0))}
 = \lim_{j \to \infty} \frac{ \mu(B_{r_j/2}(x_0))}{\nu(B_{r_j}(x_0))}\ge 2^{-(m-1)}\, . 
\end{align*}

\subsection{Proof of Theorem \ref{thm:cpt_dm}: Minimality}\label{s:gluing} We now come to the second part of the theorem, namely to the claim that $(f^+, f^-)$ is a $\qhalf$ $\D$-minimizer. This requires a suitable modification of the same argument given in
\cite[Proposition 3.20]{DS1}. We assume by contradiction that $(f^+, f^-)$ is not a minimizer and let $(g^+, g^-)$ be a suitable competitor, which
coincides with $(f^+, f^-)$ outside of a compact set $K$. First of all we notice that we may assume that, by Sard Lemma,  we can find an open set $U \subset \Omega$ that contains $K$ and intersects $\gammado$ transversally. 

Thus we have that $(g^+, g^-)= (f^+, f^-)$ on $\partial U$, that $g^+|_\gammado = \a{\varphi} + g^-|_\gammado$ and that
\[
\D (g^+) + \D (g^-) \leq \D (f^+) + \D (f^-) - 4c
\]
for some positive $c$. For each $k$ we let $\Phi_k$ be a diffeomorphism which maps $U$ onto itself and $\gammado_k\cap U$ onto $\gammado\cap U$. Clearly this can be done so that $\|\Phi_k - \Phi\|_{C^1} \to 0$, where $\Phi$ is the identity map. Thus, from the convergence in energy of $(f_k^+, f_k^-)$ to $(f^+, f^-)$ we conclude that, for a  sufficiently large $k$,
\[
\D (g^+ \circ \Phi_k) + \D (g^-\circ \Phi_k) \leq \D (f_k^+) + \D (f_k^-) - 3c\, .
\]
Observe that each pair $(g^+\circ \Phi_k, g^-\circ \Phi_k)$ has interface $(\gammado_k, \varphi\circ \Phi_k)$, where $\|\varphi\circ \Phi_k - \varphi_k\|_{C^{0, \beta}} \to 0$. 

In particular, since $\beta>\frac{1}{2}$, we can fix first $\tilde{\varphi} \in W^{1,2}(U)$ such that $\tilde{\varphi}|_\gammado= \varphi$. Furthermore, since $\|\varphi\circ \Phi_k - \varphi_k\|_{H^{1/2} (\gammado_k)} \to 0$, there is a sequence of classical $W^{1,2}$ functions $\varkappa_k$ on $U$ such that
\begin{itemize}
\item $\varkappa_k = \varphi\circ \Phi_k - \varphi_k$ on $\gammado_k$;
\item $\|\varkappa_k\|_{W^{1,2}}\to 0$.
\end{itemize}
This implies that $\int_{U} \abs{D (\tilde{\varphi}\circ \Phi_k - \varkappa_k)}^2$ is uniformly bounded. 
We consider the maps 
\[
h_k^\pm := \sum_i \a{g^\pm_i \circ \Phi_k - \varkappa_k}\, .
\]
Observe that $(h_k^+, h_k^-)$ have interfaces $(\gammado_k, \varphi_k)$, that $\G(f_k^\pm, h_k^\pm) \to 0$ strongly in $L^2(U^\pm\setminus K)$ and that, for $k$ large enough,
\[
\D (h_k^+) + \D (h_k^-) \leq \D (f_k^+) + \D (f_k^-) - 2c\, .
\]
We apply the interpolation Lemma \ref{l:interpolation} to the maps $(f^+_k, f^-_k)$, $(h^+_k, h^-_k)$ and the set $K\subset U$. We obtain, for each $\lambda>0$, interpolation maps $(\zeta^+_k, \zeta^-_k)$ defined on $K \subset V_\lambda^k \subset W_\lambda^k \subset U$. We can now define competitors to $(f^+_k, f^-_k)$ on $W_\lambda^k$ by
\[ u^\pm_k := \begin{cases}
 \zeta^\pm_k &\text{ on } (W^k_\lambda)^+ \setminus V^k_\lambda\\
 h^\pm_k &\text{ on } (V^k_\lambda)^+.
 \end{cases}
\]
Using \eqref{e:raccordo_Dir} one readily checks that, for $k$ sufficiently large and $\lambda >0$ sufficiently small, 
\begin{align*} 
\D(u_k^+)+ \D(u_k^-) &\le \D(h_k^+) + \D(h_k^-) + \D(\zeta_k^+) + \D(\zeta_k^-)\\
&\le 	\D(f_k^+) + \D(f_k^-) -2 c +  \D(\zeta_k^+) + \D(\zeta_k^-)\\
&\le \D(f_k^+) + \D(f_k^-) -c.
\end{align*}
This contradicts the minimality of $(f^+_k, f^-_k)$.

\section{The main frequency function estimate}

We start this section by introducing the frequency function and deriving the main analytical estimate of the entire chapter.

\begin{definition}\label{def:ff}
Consider $f\in W^{1,2}_{loc}(\Omega,\Iqs)$ and fix any cut-off $\phi: [0,\infty[ \to [0, \infty[$ which equals $1$ in a neighborhood of $0$, it is non increasing and  equals $0$ on $[1, \infty[$. We next fix a function $d: \mathbb R^m \to \mathbb R^+$ which is $C^2$ on the punctured space $\mathbb R^m\setminus \{0\}$ and satisfies the following properties:
\begin{itemize}
\item[(i)] $d (x) = |x| + O (|x|^2)$;
\item[(ii)] $\nabla d (x) = \frac{x}{|x|} + O (|x|)$;
\item[(iii)] $D^2 d = |x|^{-1} ({\rm Id} - |x|^{-2} x\otimes x) + O (1)$.
\end{itemize}

We define the following quantities\index{aald D_{\phi,d}@$D_{\phi,d}$}\index{aalh H_{\phi,d}@$H_{\phi,d}$}:
\begin{align*}
D_{\phi,d} (f,r) & := \hphantom{-}\int_\Omega\phi\left(\frac{d(x)}{r}\right)|Df|^2(x)\,dx\\
H_{\phi,d} (f,r) & :=-\int_\Omega\phi'\left(\frac{d(x)}{r}\right) |\nabla d (x)|^2 \frac{|f(x)|^2}{d(x)}\,dx\,.
\end{align*}
The \emph{frequency function}\index{Frequency function} is then the ratio\index{aali I_{\phi,d}@$I_{\phi,d}$}
\[
I_{\phi,d} (f,r) :=\frac{rD_{\phi,d} (f,r)}{H_{\phi,d} (f,r)}\,.
\]
\end{definition}

$H$ obviously makes sense when $\phi$ is Lipschitz. When $\phi'$ is just a measure we understand $H$ as an integral with respect to the measure $\phi'$ in the variable $d(x)/r$ and this also makes sense because the integrand is bounded and continuous on the support of $\phi'$. Of particular interest is the case when $\phi$ is the indicator function of $[0,1[$ and $d(x) = |x|$: then $D (r)$ is the Dirichlet energy on $B_r (0)$, $H (r)$ is the integral $\int_{\partial B_r} |f|^2$ and $I$ is the usual frequency function defined by Almgren. 
In the sequel,
if we do not specify $\phi$ and $d$, we then drop the subscripts and understand that the claims hold for {\em all} cut-off functions $\phi$ and all $d$  as in Definition \ref{def:ff}. If instead we require some more assumptions on $\phi$ or $d$ (for instance a certain regularity) we then leave the cut-off $\phi$ or the function $d$ in the subscripts.

\begin{remark}\label{rmk:zoom}
Note that  if a function \(d\) satisfies (i), (ii) and (iii) in Definition \ref{def:ff} with certain implicit constants, than the function \(d_r(x)=d(rx)/r\) satisfies the same assumptions with the same constants (actually smaller). Moreover \(d_r(x)\to |x|\) in \(C_{\rm loc}^2(\mathbb R^m\setminus\{0\}) \cap C_{\rm loc}^0(\mathbb R^m)\).
\end{remark}

\begin{figure}[htbp]
\begin{center}\label{fig:Omega}
\input{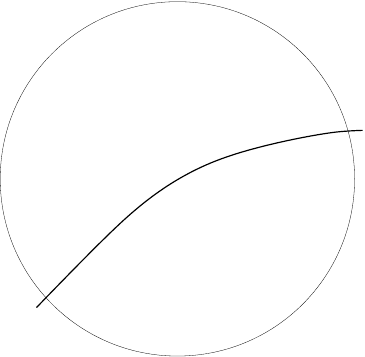_t}
\end{center}
\caption{The domain $\Omega$. $f$ in Theorem \ref{thm:limit_ff} collapses to $Q \a{0}$ on $\partial \Omega$.}
\end{figure}

\begin{theorem}\label{thm:limit_ff}
Let $\Omega\subset\R^m$ be an open set of class $C^3$, with $0\in\partial\Omega$. Then there is a function $d$ satisfying the requirements of Definition \ref{def:ff} such that the following holds for every $\phi$ as in the same definition.

If $f\in W^{1,2}(\Omega\cap B_1,\Iqs)$ satisfies
\begin{itemize}
\item[(i)] $f|_{\partial\Omega\cap B_1}\equiv Q\a{0}$;
\item[(ii)]$\D (f)\le\D (g)$ for every $g\in W^{1,2}(\Omega\cap B_1,\Iqs)$ such that $g|_{\partial(\Omega\cap B_1)}=f|_{\partial(\Omega\cap B_1)}$;
\end{itemize}
then, either $f\equiv Q \a{0}$ in a neighborhood of $0$, or the limit 
\[
\lim_{r\downarrow 0} I_{\phi,d} (f,r
\] 
exists and it is a positive finite number. 
\end{theorem}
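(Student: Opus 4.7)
The plan is to adapt Almgren's frequency-monotonicity strategy to the boundary setting, choosing $d$ so that $\nabla d$ is tangent to $\partial\Omega$; this renders a ``$d$-radial'' vector field an admissible variation, as the boundary condition $f\equiv Q\a{0}$ is preserved by its flow. The construction of $d$ proceeds as follows. Locally around $0$ I straighten $\partial\Omega$ via Fermi coordinates: there is a $C^3$ diffeomorphism $\Psi:U\to V$ with $\Psi(0)=0$, $D\Psi(0)=\mathrm{Id}$, $\Psi(\{y_m=0\}\cap U)=\partial\Omega\cap V$, and such that the pulled-back Euclidean metric $g=\Psi^\ast\delta$ satisfies $g^{im}(y)=0$ for $i<m$ along $\{y_m=0\}$. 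Setting $d(x):=|\Psi^{-1}(x)|$, a Taylor expansion based on $\Psi(y)=y+O(|y|^2)$ verifies (i)--(iii) of Definition \ref{def:ff}. Moreover, for $x\in\partial\Omega\cap V$ and $y=\Psi^{-1}(x)\in\{y_m=0\}$ we have $\nabla d(x)=D\Psi(y)^{-T}(y/|y|)$; since $y/|y|$ lies in $\{y_m=0\}$ and the Fermi condition sends this hyperplane into $T_x\partial\Omega$, the desired tangency follows.

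With this $d$ in hand, I consider
\[
X_r(x):=\phi\!\left(\frac{d(x)}{r}\right)\frac{d(x)}{|\nabla d(x)|^2}\,\nabla d(x),
\]
a $d$-radial cutoff field which is tangent to $\partial\Omega$ and compactly supported in $\Omega\cap B_1$ for small $r$. Because $f\equiv Q\a{0}$ on $\partial\Omega$, the flow of $X_r$ preserves the boundary data, so the inner-variation identity for $\D$-minimizers applies with no boundary contribution. Expanding and controlling the deviations of $D^2d$, $|\nabla d|$ and $\nabla d$ from their Euclidean counterparts via (i)--(iii), one obtains
\[
rD'(r)-(m-2)D(r)=2G(r)+\mathcal{E}_{\mathrm{in}}(r),\qquad G(r)\ge 0,\ \ |\mathcal{E}_{\mathrm{in}}(r)|\le C\, r\, D(r),
\]
where $G$ is the $d$-radial component of the energy. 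The outer variation $t\mapsto \sum_i\a{f_i+t\phi(d/r)f_i}$ is equally admissible (each sheet $f_i$ vanishes on $\partial\Omega$) and produces the standard identity relating $D(r)$ to $-\frac{1}{r}\int\phi'(d/r)\langle f,Df\cdot\nabla d\rangle\,dx$ up to analogous $O(rD)$ errors. Combining the two via Cauchy--Schwarz on $G$ yields
\[
\bigl(\log I_{\phi,d}(f,r)\bigr)'\ge -C\qquad \text{for } 0<r<r_0,
\]
so $e^{Cr}\, I_{\phi,d}(f,r)$ is nondecreasing and the limit $\lim_{r\downarrow 0}I_{\phi,d}(f,r)$ exists in $[0,+\infty)$.

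It remains to rule out the zero limit unless $f\equiv Q\a{0}$ near $0$. If $H_{\phi,d}(f,r_0)=0$ for some $r_0>0$, then the definition of $H$ forces $|f|\equiv 0$ on the support of $-\phi'(d/r_0)$; varying $r_0$ and the cutoff $\phi$ gives $f\equiv Q\a{0}$ in a neighborhood of $0$. Otherwise $H(f,r)>0$ for all small $r$, and the outer-variation identity yields $\bigl(\log(H(r)/r^{m-1})\bigr)'=2I(r)/r+O(1)$; combined with $H(r)/r^{m-1}\to 0$ (a consequence of the H\"older continuity at $0$ established in Theorem \ref{thm:cont_dm}, since $f(0)=Q\a{0}$) this forces $\int_0^{r_0}I(r)/r\,dr=+\infty$, which by monotonicity is incompatible with $\lim I=0$. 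Hence $\lim I_{\phi,d}(f,r)$ is a positive finite number. The principal difficulty lies in the inner-variation step: the numerous error terms produced by the non-Euclidean nature of $d$ --- in particular the discrepancies between $D^2 d$, $\mathrm{div}(X_r)$ and their Euclidean counterparts on the support of $\phi(d/r)$ --- must all be absorbed into a single term bounded by $C\, r\, D(r)$. This is achieved by exploiting the full pointwise expansions (i)--(iii) together with the boundary tangency $\nabla d\cdot\nu_{\partial\Omega}=0$ on $\partial\Omega$.
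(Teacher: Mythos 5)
Your construction of $d$ (straightening coordinates preserving the normal direction, $d=|\Psi^{-1}(x)|$, tangency of $\nabla d$ along $\partial\Omega$), your choice of inner variation field, and the almost-monotonicity $\frac{d}{dr}\log I_{\phi,d}\ge -C$ via Cauchy--Schwarz are exactly the paper's route (Lemma \ref{l:good_vector_field}, Lemma \ref{l:outer}, Lemma \ref{l:inner}). One point needs correction: you assert that the outer variation yields its identity only ``up to analogous $O(rD)$ errors''. In fact, since every sheet of the competitor $\sum_i\a{f_i+\varepsilon\phi(d/r)f_i}$ vanishes on $\partial\Omega$, the first variation gives the \emph{exact} identity $D(r)=E(r)$ (Lemma \ref{l:outer}), and this exactness is what makes the monotonicity step close: Cauchy--Schwarz gives $E^2\le HG$, hence $E/H\le G/E$, and only because $E=D$ does this bound $E/H-G/D$ by zero. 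If one really only knew $E=D(1+O(r))$, the leftover term in $-\frac{d}{dr}\log I$ would be of order $rG/D\le rD'(r)/D(r)$ (using $|Df_i\cdot\nabla d|\le|Df_i||\nabla d|$ and \eqref{der_D}), which is not a priori integrable near $0$ --- it would require a lower bound on $D(r)$ of exactly the kind the monotonicity is meant to produce --- so the claimed absorption ``into a single term bounded by $CrD(r)$'' would not go through as stated. So record the outer variation as an identity; your own setup gives it for free.

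Your positivity argument is genuinely different from the paper's and is correct. The paper proves the pointwise estimate $H(r)\le C_0\,rD(r)$ (Lemma \ref{lem:lb}) by combining the H\"older estimate of Theorem \ref{thm:cont_dm} with a Poincar\'e-type estimate on the distorted annuli $B_r^*$, which yields the uniform bound $\liminf_r I(r)\ge C_0^{-1}$ (Corollary \ref{cor:lb}). You instead integrate $(\log(H(r)/r^{m-1}))'=2I(r)/r+O(1)$ (which uses \eqref{der_H} and again the exact outer identity), deduce $H(s)\ge c_\epsilon s^{m-1+2\epsilon}$ for every $\epsilon>0$ if the limit of $I$ were zero, and contradict the H\"older-induced decay $H(s)\le Cs^{m-1+2\alpha}$. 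Both routes rest on the boundary H\"older continuity, used by the paper for the same one-sided minimizers in Lemma \ref{lem:lb}; yours is somewhat shorter but only excludes a zero limit, whereas the paper's gives a uniform lower bound with a constant depending only on $\Omega$, $\rho$ and $d$. Finally, your degenerate case is handled too loosely: $H_{\phi,d}(f,r_0)=0$ at a single radius does not propagate by ``varying $r_0$''; the clean dichotomy is that if $f\not\equiv Q\a{0}$ in every neighborhood of $0$, then vanishing of $f$ on an annulus $\{r/2<d<r\}$ would, by comparison with the competitor equal to $Q\a{0}$ on $\{d<r/2\}$ and minimality, force $f\equiv Q\a{0}$ on $\{d<r\}\cap\Omega$; hence $D(r)>0$ and $H(r)>0$ for all small $r$ in the nontrivial case, which is all the monotonicity computation needs.
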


\begin{remark}
In fact the conclusion of Theorem \ref{thm:limit_ff} holds for every $d$ which, additionally to the requirements of Definition \ref{def:ff}, has the property that $\nabla d$ is tangent to $\partial \Omega$. The existence of such a $d$ is then guaranteed by a simple geometric lemma,
cf. Lemma \ref{l:good_vector_field}.
\end{remark}

\begin{remark}
Note that if $(f^+, f^-)$ is a $\qhalf$-function  which collapses at its interface $(\partial\Omega\cap B_1,0)$, then $f^+$ satisfies the assumptions of Theorem \ref{thm:limit_ff}.
\end{remark}

\subsection{$H'$ and $D'$} In this section we compute  $H'$ and $D'$. Since there is no possibility of misunderstanding, we omit to specify the dependence of $D,H,I$ on $f$.

\begin{proposition}\label{p:H'_maiala}
Let $\phi$ and $d$ be as in Definition \ref{def:ff}, assume in addition that $\phi$ is Lipschitz and 
let $\Omega$ be as in Theorem \ref{thm:limit_ff}. If $f\in W^{1,2}(\Omega\cap B_1,\Iqs)$ satisfies
condition (i) of Theorem \ref{thm:limit_ff}, then the following identities hold for every $r\in ]0,1[$:
\begin{equation}\label{der_D}
D'(r)=-\int\phi'\left(\frac{|d (x)|}{r}\right)\frac{|d (x)|}{r^2}|Df|^2\,dx\,;
\end{equation}
\begin{equation}\label{der_H}
H' (r) = \left(\frac{m-1}{r} + O(1)\right) H (r) + 2 E(r)\,,
\end{equation}
where
\begin{equation}\label{e:E}
 E(r) := - \frac{1}{r} \int \phi' \left(\frac{d(x)}{r}\right) \sum_i f_i (x) \cdot (Df_i (x) \cdot \nabla d (x))\, dx\, 
\end{equation}
and the constant $O(1)$ appearing in \eqref{der_H} depends on the function $d$ but not on $\phi$.
\end{proposition}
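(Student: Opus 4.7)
Formula \eqref{der_D} is routine: since $\phi$ is Lipschitz and $|Df|^2\in L^1(\Omega\cap B_1)$, one differentiates under the integral, and the chain rule gives $\partial_r\phi(d(x)/r) = -\phi'(d(x)/r)\,d(x)/r^2$, with the identity understood a.e.\ in $r$.

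For \eqref{der_H}, the plan is to first establish the identity under the stronger assumption $\phi\in C^2$; the general Lipschitz case then follows by mollifying $\phi$ and passing to the limit, since both sides depend continuously on $\phi'$ when the latter varies in $L^\infty$ with uniformly compact support in $[0,1)$. Under the $C^2$ assumption, direct differentiation gives
\[
H'(r) \;=\; \frac{1}{r^2}\int \phi''(d/r)\,|\nabla d|^2\,|f|^2\,dx.
\]
The crucial observation is the algebraic identity $\phi''(d/r)\,|\nabla d|^2 = r\,\nabla_x[\phi'(d/r)]\cdot\nabla d$, which recasts the integrand in divergence form. Integration by parts then yields
\[
H'(r) \;=\; -\frac{1}{r}\int \phi'(d/r)\,\Delta d\,|f|^2\,dx \;-\; \frac{1}{r}\int \phi'(d/r)\,\nabla d\cdot\nabla(|f|^2)\,dx,
\]
with no boundary contributions: on $\partial\Omega$ because $|f|^2\equiv 0$ there by assumption (i) of Theorem \ref{thm:limit_ff}, and on $\partial B_1$ because $\phi'(d/r)$ is supported in $\{d\le r\}$ and so vanishes on $\partial B_1$ for $r$ small. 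Expanding $\nabla(|f|^2) = 2\sum_i f_i\cdot Df_i$ immediately identifies the second term with $2E(r)$.

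The last step, which I anticipate will be the main book-keeping obstacle, is to match the first integral with $\bigl(\tfrac{m-1}{r}+O(1)\bigr)H(r)$. The asymptotics prescribed in Definition \ref{def:ff} give $\Delta d = (m-1)/d + O(1)$ (from (i) and (iii)) and $|\nabla d|^2 = 1+O(d)$ (from (ii)); since $d\le r$ on the support of $\phi'(d/r)$, one has in particular $|\nabla d|^2 = 1+O(r)$ there. Using $-\phi'\ge 0$ this yields
\[
-\int \phi'(d/r)\,\frac{|f|^2}{d}\,dx \;=\; (1+O(r))\,H(r),
\]
while the bound $|f|^2 \le r\,|f|^2/d$ on the support controls $\bigl|\tfrac{1}{r}\int \phi'(d/r)\,O(1)\,|f|^2\,dx\bigr|$ by $O(1)\,H(r)$. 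Combining gives \eqref{der_H}. A minor technical point is that $d$ is only $C^2$ on $\R^m\setminus\{0\}$; the integration by parts is therefore first performed on $\Omega\cap B_1\setminus\overline{B_\varepsilon(0)}$ and one lets $\varepsilon\downarrow 0$, using $|f|^2/d\in L^1_{\rm loc}$ (which follows from the Sobolev regularity of $f$ combined with the Dirichlet condition $f|_{\partial\Omega}\equiv Q\a{0}$) to ensure the boundary contribution on $\partial B_\varepsilon$ vanishes.
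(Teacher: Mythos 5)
Your argument is correct in substance, but it takes a genuinely different route from the paper's. The paper never differentiates $\phi$: it writes $H(r)$ via the coarea formula as $-\int_0^\infty \sigma^{-1}\phi'(\sigma)\,h(r\sigma)\,d\sigma$ with $h(t)=\int_{\{d=t\}}|\nabla d|\,|f|^2\,d\mathcal{H}^{m-1}$, computes $h'(t)$ by applying the divergence theorem to the vector field $|f|^2\nabla d$ on the slab $\{t<d<t+\eps\}$ (this is exactly where $f|_{\partial\Omega}=Q\a{0}$ enters), and then differentiates the coarea expression in $r$; this works for Lipschitz $\phi$ with no regularization. You instead differentiate $H$ directly in $r$, which produces a $\phi''$ term and therefore forces the preliminary smoothing of $\phi$, then recognize the integrand as $r\,\nabla_x[\phi'(d/r)]\cdot\nabla d\,|f|^2$ and integrate by parts in $x$ — the same divergence-theorem computation with the same vector field and the same use of the boundary condition, just packaged differently. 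What the slicing buys the paper is that no approximation of $\phi$ is needed (and it is also the mechanism behind the remark that the identities extend beyond Lipschitz cut-offs); what your route buys is a more elementary, purely ``differentiate and integrate by parts'' computation, at the price of the limiting step in $\phi$. Two points there should be tightened: the mollified $\phi_\delta'$ converge to $\phi'$ only a.e.\ and weakly-$*$ in $L^\infty$, not in norm, so the passage $\delta\downarrow 0$ should be performed in the exact identity $H_{\phi_\delta}'(r)=2E_{\phi_\delta}(r)-\tfrac1r\int\phi_\delta'(d/r)\,\Delta d\,|f|^2$ (i.e.\ before invoking $\Delta d=\tfrac{m-1}{d}+O(1)$), checking local uniform convergence in $r$ of the right-hand sides so as to identify $H_\phi'$; and the vanishing of the boundary term on $\partial B_1$ needs the support of $\phi'(d(\cdot)/r)$ to stay inside $B_1$, which holds only for $r$ below a threshold depending on $d$ — the same implicit restriction the paper makes, so not a real defect. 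Your treatment of the singularity of $d$ at the origin (excision of $B_\eps$, a coarea choice of radii, and $|f|^2/d\in L^1_{\rm loc}$, which already follows from Sobolev embedding without the Dirichlet condition) is fine, as is the use of $-\phi'\ge 0$ and $|\nabla d|^2=1+O(r)$ to get an $O(1)$ constant depending on $d$ but not on $\phi$.
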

\begin{remark}
It is possible to make sense of the identities above even when $\phi$ is not Lipschitz. In that case, using the coarea formula appropriately, it is possible to see that the right hand sides of the two identities \eqref{der_D} and \eqref{der_H} are in fact well-defined for a.e. $r$ and that both $D$ and $H$ are absolutely continuous. Hence, if formulated appropriately, the proposition is valid for every $d$ and $\phi$ as in Definition \ref{def:ff}, without any additional regularity requirement on $\phi$. This will, however, not be needed in the sequel. 
\end{remark}
\begin{proof}
The identity \eqref{der_D} is an obvious computation. In order to compute $H'$ we first use the coarea formula to write
\begin{align}
H(r) & = -\int_0^\infty \int_{\{d=\rho\}} \rho^{-1} \phi' \left(\frac{\rho}{r}\right) |\nabla d (x)| |f|^2 (x)\, d\mathcal{H}^{m-1} (x)\, d\rho\nonumber\\
&=- \int_0^\infty \frac{\phi' (\sigma)}{\sigma} \underbrace{\int_{\{d= r \sigma\}} |\nabla d (x)| |f|^2 (x)\, d\mathcal{H}^{m-1} (x)}_{=: h (r\sigma)}\, d\sigma\, .  \label{e:H_coarea}
\end{align}
In order to compute $h' (t)$ we  note that $\nu (x) = \frac{\nabla d (x)}{|\nabla d (x)|}$ is orthogonal to the 
level sets of $d$ and we use the divergence theorem to obtain
\begin{align}
h(t+\varepsilon) - h(t) &= \int_{\{d=t+\varepsilon\}} |f|^2 \nabla d \cdot \nu
d\mathcal{H}^{m-1} - \int_{\{d=t\}}|f|^2 \nabla d \cdot \nu
d\mathcal{H}^{m-1}\nonumber\\
&= \int_{\{t<d<t+\varepsilon\}} {\rm div}\, (|f|^2 \nabla d (x))\, dx\label{e:per_parti_perche_0}\\
&= \int_{\{t<d<t+\varepsilon\}} 2 \sum_i f_i (x) \cdot (Df_i (x) \cdot \nabla d (x))\, dx\nonumber\\
&\qquad + \int_{\{t<d<t+\varepsilon\}} |f|^2 \Delta d (x)\, dx\nonumber
\end{align}
Dividing by $\varepsilon$, taking the limit (and using again the coarea formula) we conclude
\begin{equation}\label{e:der_h_piccola}
h' (t) = \int_{\{d=t\}} |\nabla d|^{-1} \left(2 \sum_i f_i \cdot (Df_i\cdot \nabla d)\, + |f|^2 \Delta d\right)\, d\mathcal{H}^{m-1}\, .  
\end{equation}
By  the properties of $d$, we have that
\[
\Delta d = \frac{m-1}{d(x)} + O(1).
\]
Differentiating \eqref{e:H_coarea} in $r$, inserting \eqref{e:der_h_piccola}  and using that if \(\phi(d/r)\ne 0\) then \(d=O(r)\) we conclude
\begin{align}
&\quad H'(r)\nonumber\\
&=  - \int_0^\infty \phi' (\sigma) \int_{\{d=\sigma r\}} \frac{1}{|\nabla d|} \Big(2 \sum_i f_i \cdot (Df_i\cdot \nabla d)\, + |f|^2 \Delta d\Big)\, d\mathcal{H}^{m-1}\, d\sigma\nonumber\\
&= 2E (r) - \frac{1}{r} \int \phi' \left(\frac{d(x)}{r}\right) |f|^2 \Delta d (x)\, dx\nonumber\\
&= 2E(r) - \frac{1}{r} \int \phi' \left(\frac{d(x)}{r}\right) |f|^2 \left(\frac{(m-1) + O(r)}{d (x)}\right)\, dx\\
&= 2E (r)+\left(\frac{m-1}{r}+ O(1)\right) H(r)\, .\qedhere
\end{align}
\end{proof}

\begin{remark}
Observe that the assumption $f = Q \a{0}$ on $\partial \Omega$ has been used only in deriving \eqref{e:per_parti_perche_0}: without
that condition we would have the additional term
\[
- \int_{\partial \Omega \cap \{t<d<t+\varepsilon\}} |f|^2 \nabla d \cdot n
\]
where $n$ is the outward unit normal to $\partial \Omega$. Note in particular that we could drop the assumption $f= Q \a{0}$ and add
instead the requirement that $\nabla d$ is tangent to $\partial \Omega$. 
\end{remark}

\subsection{Lower bound on $H$}

\begin{lemma}\label{lem:lb}
Assume $\phi$ is identically $1$ on some interval $[0, \rho[$. 
Under the assumption of Theorem \ref{thm:limit_ff} there exist constants $C_0$ and \(r_0\), depending only on the $C^1$-regularity of $\Omega$, on $\rho$ and on $d$ (but not on $\phi$), such that
\begin{equation}\label{lb_ff}
H(r)\le C_0 rD(r) \qquad \mbox{for all $r\leq r_0 $.}
\end{equation}
\end{lemma}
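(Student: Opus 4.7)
The plan is to deduce the estimate from a scalar Hardy-type inequality along integral curves of $\nabla d$ that emanate from the distinguished point $0\in\partial\Omega$, and then average it against the measure $-\phi'(d/r)\,dt$. As anticipated by the remark following Theorem \ref{thm:limit_ff}, I would first assume $d$ has the extra property that $\nabla d$ is tangent to $\partial\Omega$ (the existence of such a $d$ is Lemma \ref{l:good_vector_field}). Then the vector field $X:=\nabla d/|\nabla d|^{2}$ is smooth on $(\Omega\cap B_{r_{0}})\setminus\{0\}$, its flow $\Psi_{s}$ preserves $\overline{\Omega}$ by tangency, and satisfies $d(\Psi_{s}(x))=d(x)+s$. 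Since (i) forces $d^{-1}(0)=\{0\}$ near the origin and (ii) gives $|\nabla d|^{-1}$ bounded on $B_{r_{0}}$, for every $t<r_{0}$ each $x\in\{d=t\}\cap\Omega$ is the endpoint of a unique integral curve $\gamma_{x}:[0,t]\to\overline{\Omega}$ with $\gamma_{x}(0)=0$, $d(\gamma_{x}(s))=s$ and $|\dot\gamma_{x}|=|\nabla d|^{-1}\le C$.

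The boundary datum forces $|f(\gamma_{x}(0))|=0$; since $s\mapsto|f(\gamma_{x}(s))|$ is a scalar Sobolev function satisfying $\bigl|\tfrac{d}{ds}|f\circ\gamma_{x}|\bigr|\le|Df(\gamma_{x})|\,|\dot\gamma_{x}|$ (cf.~\cite{DS1}), the fundamental theorem of calculus and Cauchy--Schwarz yield the pointwise Hardy bound
\[
|f(x)|^{2}\le C\,t\int_{0}^{t}|Df(\gamma_{x}(s))|^{2}\,ds\,.
\]
Integrating over the level set $\{d=t\}\cap\Omega$ and using that $(x,s)\mapsto\gamma_{x}(s)$ realizes a bi-Lipschitz change of variables from $(\{d=t\}\cap\Omega)\times(0,t)$ onto $\{0<d<t\}\cap\Omega$ with Jacobian uniformly comparable to $1$ (by (i)--(iii) on $B_{r_{0}}$), I obtain the trace estimate
\[
\int_{\{d=t\}\cap\Omega}|\nabla d|\,|f|^{2}\,d\mathcal{H}^{m-1}\le C\,t\int_{\{d<t\}\cap\Omega}|Df|^{2}\,dx\qquad\forall\,t\in(0,r_{0}]\,.
\]

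To finish, rewriting $H(r)$ via coarea as in \eqref{e:H_coarea} and inserting the trace estimate gives
\[
H(r)=-\int_{0}^{\infty}\frac{\phi'(t/r)}{t}\int_{\{d=t\}\cap\Omega}|\nabla d|\,|f|^{2}\,d\mathcal{H}^{m-1}\,dt\le C\int_{0}^{\infty}(-\phi'(t/r))\int_{\{d<t\}\cap\Omega}|Df|^{2}\,dx\,dt\,.
\]
Swapping the order of integration via Fubini and using the elementary identity $\int_{d(x)}^{\infty}(-\phi'(t/r))\,dt=r\,\phi(d(x)/r)$, which follows from $\phi(+\infty)=0$, reduces the right-hand side to $C\,r\int_{\Omega}\phi(d/r)|Df|^{2}\,dx=C\,r\,D(r)$, which is precisely \eqref{lb_ff}, with a constant $C_{0}$ independent of $\phi$.

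The main obstacle in this scheme is purely geometric: guaranteeing that the integral curves of $\nabla d$ starting from $\{d=t\}\cap\Omega$ reach $0$ without leaving $\Omega$, with a Jacobian uniformly comparable to $1$. This is where the tangency of $\nabla d$ to $\partial\Omega$ enters crucially (so that the flow preserves $\Omega$) together with the asymptotics (i)--(iii) (so that the backward trajectories terminate at the unique zero of $d$ at controlled speed). Once these facts are granted, the remaining estimate is just a weighted Hardy computation which requires only the monotonicity of $\phi$ and $\phi(+\infty)=0$.
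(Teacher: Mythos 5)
Your scheme has a genuine gap, and it sits exactly where you yourself locate the ``main obstacle''. The map $(x,s)\mapsto\gamma_x(s)$ cannot be bi-Lipschitz with Jacobian uniformly comparable to $1$: for each fixed $s$ it sends $\{d=t\}\cap\Omega$ onto $\{d=s\}\cap\Omega$, and since $\mathcal{H}^{m-1}(\{d=s\})\approx s^{m-1}$ while $\mathcal{H}^{m-1}(\{d=t\})\approx t^{m-1}$, the Jacobian of the flow map degenerates like $(s/t)^{m-1}$ as $s\downarrow 0$ (all backward trajectories are squeezed into the single point $0$). Redoing your integration over the level set with the correct Jacobian gives
\begin{equation*}
\int_{\{d=t\}\cap\Omega}|f|^2\,d\mathcal{H}^{m-1}\;\le\; C\,t\int_0^t\Big(\frac{t}{s}\Big)^{m-1}\int_{\{d=s\}\cap\Omega}|Df|^2\,d\mathcal{H}^{m-1}\,ds\,,
\end{equation*}
and the non-integrable weight $(t/s)^{m-1}$ destroys the claimed trace estimate, hence the deduction of \eqref{lb_ff}. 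This is not a repairable technicality: the only boundary information your argument ever uses is the vanishing of $f$ at the single point $0$ (the common endpoint of all the curves), and no inequality of the claimed form can follow from that alone, because points have zero $W^{1,2}$-capacity for $m\ge 2$. Concretely, a function equal to a nonzero constant on $\{d=t\}$ and cut off near the origin has arbitrarily small Dirichlet energy in $\{d<t\}$ while satisfying every hypothesis your chain of claims invokes (it even vanishes identically near $0$, so all the radial limits $L_x$ are zero), yet it violates your trace inequality. Relatedly, for a genuine Sobolev competitor the limits $\lim_{s\downarrow 0}|f(\gamma_x(s))|$ along the curves are not forced to vanish by the trace condition $f|_{\partial\Omega\cap B_1}=Q\a{0}$, since $\{0\}$ is an $\mathcal{H}^{m-1}$-null subset of $\partial\Omega$.

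What the lemma really requires is the vanishing of $f$ along an $(m-1)$-dimensional portion of $\partial\Omega$ at every scale, and this is how the paper argues: $f$ is extended by $0$ to $B^*_1\setminus\Omega$, so on each distorted sphere $\partial B^*_r=\{d=r\}$ the extension vanishes on the nontrivial portion $\partial B^*_r\setminus\Omega$, and an $(m-1)$-dimensional Poincar\'e inequality on $\partial B^*_r$ bounds $\int_{\partial B^*_r}|f|^2$ by $\int_{\partial B^*_r}|f||Df|$; after the coarea formula, an integration by parts in $r$, and an absorption via Young's inequality this controls the outer region. The inner boundary term $\int_{\partial B^*_{\bar\rho}}|f|^2$ (invisible to $\phi'$, since $\phi\equiv 1$ on $[0,\bar\rho]$) is handled with the boundary H\"older estimate of Theorem \ref{thm:cont_dm}, which uses the minimality of $f$ at the collapsed interface --- an ingredient your proposal never invokes, a further sign that the purely ``radial Hardy'' route through the vertex is too weak.
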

\begin{proof} If we introduce the usual scaling 
\[
f_r (x) := f (rx) \qquad \mbox{and}\qquad d_r (x) = r^{-1} d (rx)\, ,
\] 
then 
\[
H_{\phi, d_r} (f_r, 1) = r^{m-1} H_{\phi,d} (f,r)\quad\mbox{and}\quad
D_{\phi, d_r} (f_r, 1) = r^{m-2} D_{\phi,d} (f, r)\, .
\] 
Observe also that for $r\leq 1$ the $C^1$ regularity of the boundary of $\Omega_r:=\left\{x/r:\,x\in\Omega\right\}$ improves compared to that of $\Omega$ and $d_r$ satisfies the same properties of $d$ with better bounds on the errors, see Remark \ref{rmk:zoom}. By taking \(r_0\) sufficiently small we can assume that 
\begin{equation}\label{e:comp}
B_{\varrho r/2}\subset  \{d_{r}<\varrho\} \subset B_{2\varrho r}\qquad \textrm{for all \(r\le r_0\) and \(\varrho\le 1\)}. 
\end{equation}
Let us assume without loss of generality that \(r_0=1\). If we define the  ``distorted balls'' 
\[
B^*_\rho :=\{x: d(x) < \rho\},
\]
the inclusions above imply that they are comparable to  the Euclidean ones up and thus we can transfer most estimates of the last sections to these new balls. Let us now extend \(f\) to be  identically $0$ outside on $\Omega\setminus B^*_1$ so that we can consider the integrals in the definitions of $H (1)$ and $D (1)$ as taken over the whole $B^*_1 $.

%

By a standard approximation procedure we can assume that $\phi$ is smooth. 
Let $0<\bar \rho < \frac{1}{4}$ be such that $\phi$ is identically $1$ on $[0, \bar \rho]$. Then, as a particular case of Theorem \ref{thm:cont_dm} we have
\[
[f]_{\alpha,B_{\bar \rho}^{*}\cap \Omega}\le C \D(f,B_{4\bar \rho}^{+}\cap \Omega)^\frac 12\le C D(1)^\frac 12\,,
\]
where $\alpha=\alpha(m,n,Q)$ and $C=C(m,n,Q, \bar \rho)$ and in the last inequality we have also used \eqref{e:comp}. Of course the same estimate extends trivially to $B_{\bar \rho} \setminus \Omega$, where the function vanishes identically. Thus
\begin{equation}\label{up_cont}
\int_{\partial B_{\bar \rho}^{*}}|\nabla d (x)| |f|^2(x)\,dx=\int_{\partial B_{\bar \rho}^{*}} |\nabla d (x)| \G(f(x),f(0))^2\le C D(1)\, .
\end{equation}
On the other hand, using the coarea formula
\begin{equation}\label{ub_H1}
H(1)=- \int_{\bar \rho}^1\frac{\phi' (r)}{r} \int_{\{d=r\}} |\nabla d (x)| |f|^2(x')\,dx'\,dr 
= - \int_{\bar \rho}^1 \frac{\phi'(r)}{r} h(r)\, dr\, ,
\end{equation}
where $h\ge 0$ is as in \eqref{e:H_coarea}.
Integrating by parts we get
\begin{align}
H(1) \leq\  & C \int_{\partial B^*_{\bar \rho}} |f|^2 + \int_{\bar \rho}^1 \phi (r) (r^{-1} h'(r) - r^{-2} h(r))\nonumber\\
\leq & C D(1) + \int_{\bar \rho}^1 \phi (r) \frac{h'(r)}{r}\, dr\nonumber\\
\stackrel{\eqref{e:der_h_piccola}}{=} & C D(1) + 
C\int_{B^*_1\setminus B^*_{\bar \rho}} \frac{\phi(d(x))}{d(x)} \left(|Df|^2 + |f|^2\right)\leq C D(1)\nonumber\\
&\quad + C \int_{B_1^*\setminus B_{\bar \rho}^*} \phi (d (x)) |f|^2 (x)\, dx\, .\label{e:inserendo}
\end{align}
where the constants depend only on ${\bar \rho}$ and $d$, but not on $\phi$.  The proof will be concluded if we can show that 
\begin{equation}\label{e:corona_est}
 \int_{B_1^*\setminus B_{\bar \rho}^*} \phi (d (x)) |f|^2 (x)\le C D(1)
\end{equation}
To this end note that for  $\bar \rho\le r\le 1$ the function \(|f|^2\) vanishes on a non trivial part of \(B_r^*\) (namely \( B_r^*\setminus \Omega\)). Hence by the \((m-1)\)-dimensional Poincar\'e inequality on \(\partial B^*_{r}\)  
\[
\int_{\partial B^*_{r}} |f|^2\le C \int_{\partial B^*_{r}} |D |f|^2|\le C \int_{\partial B^*_{r}} |f||D f|.
\]
Hence, the function \(h'\) defined in \eqref{e:der_h_piccola} satisfies:
\[
|h'(r)|\le C\int_{\partial B^*_{r}} |f||D f|
\]
Since \(\phi(t)\ge \phi(r)\) for $\bar \rho\le t \le r\le 1$, using again the coarea formula we can now  estimate
\begin{align*}
\phi (r) h(r) &\leq \phi (r) h (\bar \rho) + \phi (r) \int_{\bar \rho}^r |h'(t)|\, dt\\ 
&\leq
C D(1) + \int_{\bar \rho}^r \phi (t) |h'(t)|\, dt\\
& \leq C D(1) + C \int_{B_1^*\setminus B_{\bar \rho}^*} \phi (d(x)) |f||Df| (x)\, dx\, .
\end{align*}
Integrating in $r$ and using Young's inequality we obtain
\begin{align*}
& \int_{B_1^*\setminus B_{\bar \rho}^*} \phi (d (x)) |f|^2 (x)\, dx\\
 \leq & C D(1) 
+ C \int_{B_1^*\setminus B_{\bar \rho}^*} \phi (d(x)) |f||Df| (x)\, dx\\
\leq & C D(1) + \frac{C}{\varepsilon} D(1) + C \varepsilon  \int_{B_1^*\setminus B_{\bar \rho}^*} \phi (d(x)) |f|^2 (x)\, dx\, .
\end{align*}
Choosing $\varepsilon$ appropriately we get \eqref{e:corona_est} and thus  we conclude the proof.
\end{proof}

\begin{corollary}\label{cor:lb}
Assume $\phi$ is identically $1$ on some interval $[0, \rho[$. 
Unless $f\equiv Q\a{0}$ in a neighborhood of $0$, the following lower bound for the frequency function holds:
\[
\liminf_{r\downarrow 0} I(r)\ge C_0>0\, ,
\]
where $C_0$ depends only on the $C^1$ regularity of $\Omega$, on $\rho$ and on $d$. 
\end{corollary}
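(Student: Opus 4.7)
My plan is to derive the corollary essentially as a direct consequence of Lemma \ref{lem:lb}, after ruling out a degenerate case. Recall $I(r) = rD(r)/H(r)$, and the lemma provides the upper bound $H(r) \leq C_0 r D(r)$ on the definite range $r \leq r_0$. Reading this as a lower bound on the reciprocal yields $I(r) \geq 1/C_0$ whenever $H(r) > 0$ and $D(r) > 0$. The only obstruction to concluding is the possible vanishing of these quantities, so the heart of the argument is to show that under the hypothesis $f \not\equiv Q\a{0}$ near $0$, one has $D(r) > 0$ for all sufficiently small $r$.

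To rule out $D(r) = 0$, I would exploit the assumption that $\phi \equiv 1$ on $[0, \rho]$: the identity
\[
D(r) = \int \phi\!\left(\tfrac{d(x)}{r}\right) |Df|^2\, dx \geq \int_{\{d < \rho r\} \cap \Omega} |Df|^2\, dx
\]
shows that $D(r) = 0$ forces $|Df| \equiv 0$ on the open set $U_r := \{d < \rho r\} \cap \Omega$. For $r$ small enough, $U_r$ is a connected neighborhood of $0$ in $\Omega$ whose closure meets $\partial \Omega \cap B_1$, where $f$ is constrained to equal $Q\a{0}$. Combining the standard fact that a $W^{1,2}$ $\Iq$-valued map with a.e.\ vanishing derivative is locally constant on connected components (see \cite{DS1}) with the H\"older continuity up to $\partial\Omega$ furnished by Theorem \ref{thm:cont_dm} and the boundary trace $f|_{\partial\Omega\cap B_1} \equiv Q\a{0}$, the constant value of $f$ on $U_r$ must be $Q\a{0}$; this contradicts the assumption that $f \not\equiv Q\a{0}$ in any neighborhood of $0$.

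Having established $D(r) > 0$ for all $r \in (0, r_0]$, I divide the inequality of Lemma \ref{lem:lb} by $H(r)$ (which is nonnegative since $\phi' \leq 0$) to get $I(r) \geq 1/C_0$ at all such $r$ with $H(r) > 0$; should $H(r)$ vanish, I adopt the natural convention $I(r) = +\infty$, which trivially satisfies the same bound. Taking $\liminf_{r\downarrow 0}$ yields the claim with constant $1/C_0$, which, as recorded in Lemma \ref{lem:lb}, depends only on the $C^1$-regularity of $\Omega$, on $\rho$, and on $d$.

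The proof is therefore short: the only mildly delicate step is the unique-determination argument in the second paragraph, reducing to $Q$-valued Sobolev theory, and no real obstacle is present beyond Lemma \ref{lem:lb} itself, which is the substantive analytical input on which the corollary rests.
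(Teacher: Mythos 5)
Your proposal is correct and follows the route the paper intends: the corollary is stated immediately after Lemma \ref{lem:lb} with no separate proof, the bound $H(r)\le C_0 r D(r)$ being divided through exactly as you do. Your extra paragraph ruling out $D(r)=0$ (via $Df\equiv 0$ on $\{d<\rho r\}\cap\Omega$ and the boundary trace $Q\a{0}$) is a sound way to handle the degenerate case the paper leaves implicit, so nothing further is needed.
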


\subsection{Outer variations} We now derive the first interesting identity relating $D$ and $E$, which is proved variationally using a perturbation of the map in the target. 

\begin{lemma}[Outer variation]\label{l:outer}
Let $\Omega$ be open and $f\in W^{1,2}(\Omega\cap B_1,\Iqs)$ be as in Theorem \ref{thm:limit_ff}. Then $D(r)=E(r)$ for every $0<r<1$, where \(E(r)\) is defined in \eqref{e:E}.
\end{lemma}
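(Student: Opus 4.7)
The identity is the standard outer-variation computation adapted to the weighted frequency. I will test the minimality of $f$ against the one-parameter family of competitors
\[
f_{i,t}(x) := \bigl(1 + t\,\phi(d(x)/r)\bigr)\, f_i(x), \qquad f_t := \sum_{i=1}^Q \a{f_{i,t}(x)},
\]
for $|t|$ small. Two admissibility checks come first. On $\partial\Omega\cap B_1$ we have $f_i\equiv 0$, so $f_{i,t}\equiv 0$ there and the boundary value $Q\a{0}$ is preserved. On $\Omega\cap\partial B_1$ (when $r<1$) we have $d(x)\geq c>r$ in view of property (i) of $d$ in Definition \ref{def:ff}, so $\phi(d(x)/r)=0$ and $f_{i,t}=f_i$ on that part of the boundary. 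Hence $f_t$ is an admissible competitor in the sense of hypothesis (ii) of Theorem~\ref{thm:limit_ff}, and by minimality
\[
\left.\frac{d}{dt}\right|_{t=0}\D(f_t) = 0.
\]

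\textbf{Key computation.} Writing $\psi(x):=\phi(d(x)/r)$, so that $\nabla\psi=\tfrac{1}{r}\phi'(d/r)\nabla d$, we have
\[
Df_{i,t} = (1+t\psi)\,Df_i + t\, f_i\otimes \nabla\psi,
\]
and therefore
\[
|Df_{i,t}|^2 = (1+t\psi)^2 |Df_i|^2 + 2t(1+t\psi)\,(f_i\otimes\nabla\psi):Df_i + t^2|f_i|^2|\nabla\psi|^2.
\]
Differentiating at $t=0$ and summing over $i$ gives
\[
\left.\frac{d}{dt}\right|_{t=0}\sum_i |Df_{i,t}|^2 = 2\psi\sum_i |Df_i|^2 + 2\sum_i f_i\cdot(Df_i\cdot\nabla\psi).
\]
Integrating over $\Omega\cap B_1$, using $\nabla\psi=\tfrac{1}{r}\phi'(d/r)\nabla d$, and recognizing the two terms as $2D(r)$ and $-2E(r)$ (cf.\ the definitions of $D$ and $E$ in Definition~\ref{def:ff} and \eqref{e:E}), we obtain
\[
0 = \left.\frac{d}{dt}\right|_{t=0}\D(f_t) = 2D(r) - 2E(r).
\]

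\textbf{Obstacles.} There are essentially none beyond bookkeeping: the admissibility of the variation is immediate because rescaling each sheet by the common positive scalar $1+t\psi$ keeps us in $W^{1,2}(\Omega\cap B_1,\Iqs)$, and the trace conditions are preserved by the two observations above. The only point to be slightly careful about is that when $\phi$ is merely Lipschitz (rather than $C^1$), the pointwise computation of $\nabla\psi$ is understood a.e., which is sufficient for the $W^{1,2}$ variation; no regularity of $\phi$ beyond what is assumed in Definition~\ref{def:ff} is needed. This concludes $D(r)=E(r)$ for every $r\in(0,1)$.
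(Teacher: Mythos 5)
Your proposal is correct and follows essentially the same route as the paper: both test minimality against the competitor $\sum_i \a{(1+t\,\phi(d/r))f_i}$, use that $f\equiv Q\a{0}$ on $\partial\Omega$ (and that the cut-off kills the variation near $\Omega\cap\partial B_1$) to verify admissibility, and differentiate the Dirichlet energy at $t=0$ to obtain $0=2D(r)-2E(r)$. The only slight gloss is that Definition~\ref{def:ff} also allows non-Lipschitz $\phi$ (e.g.\ $\phi=\mathbf{1}_{[0,1[}$, with $\phi'$ a measure), for which the paper invokes a standard approximation argument after first treating the Lipschitz case, exactly as you do.
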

\begin{proof}
We first assume $\phi$ to be Lipschitz. 
Consider the family 
\[
g_\varepsilon(x):=\sum_i \a{f_i (x)+\varepsilon\phi\left({\textstyle{\frac{d (x)}{r}}}\right)f_i(x)}
\] 
and observe that on $\partial \Omega$ we have $f(x)= Q \a{0}$ and so $g_\varepsilon(x)=Q\a{0}$. Therefore each $g_\varepsilon$ is a  competitor and we conclude 
\[
\frac{d}{d\varepsilon}\Big|_{\varepsilon=0}\int_{\Omega\cap B_1}|Dg_\varepsilon|^2=0\,.
\]
Hence
\begin{align*}
0 &=\int\phi\left(\frac{d (x)}{r}\right)|Df(x)|^2\,dx\\
&\quad +\frac{1}{r}\int\phi'\left(\frac{d (x)}{r}\right)\sum_i\left( Df_i(x): \nabla d (x)\otimes f_i(x)\right)\,dx\\
&=D(r)-E(r)\,.
\end{align*}
For a general $\phi$ it suffices to use a standard approximation argument.
\end{proof}

\subsection{Inner variations} 

We now derive the second key identity, which uses perturbations of the domain. To this end consider a compactly supported vector field \(Y\) which is tangent to  \(\partial \Omega\) (i.e. such that such that \(Y (x)\cdot \nu (x)=0\) for all \(x\in \partial \Omega\), where $\nu$ denotes the outward unit normal to $\partial \Omega$). Let $\Phi_t$ the one-parameter family of diffeomorphisms generated by $Y$, namely $\Phi_t (x) = \Phi (x,t)$ where
\[
\begin{cases}
\partial_{t} \Phi (x,t)=Y (\Phi (x,t)) \\
\Phi (x,0)=x\, .
\end{cases}
\] 
Obviously $\Phi_t$ maps $\Omega$ into itself and, more importantly, maps $\partial \Omega$ into itself. 
In particular we have the following lemma.

\begin{lemma}[Inner variation]\label{l:inner}
Consider a modified distance function  $d$ as in Definition \ref{def:ff} such that $\nabla d (x) \cdot \nu (x) = 0$ for every $x\in \partial \Omega\cap B_1$, where $\nu$ denotes the outward unit normal to $\Omega$ and fix a Lipschitz $\phi$ as in the same same definition. Let
\[
Y (x) = \phi \left(\frac{d(x)}{r} \right) \frac{d (x) \nabla d (x)}{|\nabla d (x)|^2}\, .
\]
and let \(\Phi_t\) be the flow generated by \(Y\) . Then 
\begin{equation}\label{e:inner_maiala}
\InV:=\left. \frac{d}{dt}\right|_{t =0} \int |D (f (\Phi_t(x))|^2 = 0\, .
\end{equation}
In particular, if we define
\[
G(r):=-\frac{1}{r}\int\phi'\left(\frac{d (x)}{r}\right) \frac{d(x)}{r\abs{\nabla d(x)}^2} \sum_i\left|Df_i(x)\cdot\nabla d (x)\right|^2\,dx\, ,
\]
we conclude
\begin{equation}\label{lb_iv}
D' (r) - \left(\frac{m-2}{r}- O(1)\right) D(r)-2G(r) = \frac{\InV}{r} = 0\, \, ,
\end{equation}
where the constant $O(1)$ depends on $d$ and $\Omega$ but not on $\phi$. In particular the latter identity holds even for a general $\phi$ as in Definition \ref{def:ff}. 
\end{lemma}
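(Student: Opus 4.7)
The crucial observation is that the vector field $Y$ is tangent to $\partial\Omega$ and compactly supported in $\Omega\cap B_1$: tangency follows directly from the assumption $\nabla d\cdot \nu\equiv 0$ on $\partial\Omega\cap B_1$ (since $Y$ is a scalar multiple of $\nabla d$), while the support condition follows from the fact that $\phi(d/r)$ vanishes outside $\{d\le r\}$. Consequently, for sufficiently small $t$, the flow $\Phi_t$ is a diffeomorphism that maps $\partial\Omega\cap B_1$ into itself and is the identity outside a fixed compact subset of $\Omega\cap B_1$. Hence $f\circ\Phi_t$ has the same trace as $f$ on $\partial(\Omega\cap B_1)$---in particular $f\circ\Phi_t\equiv Q\a{0}$ on $\partial\Omega\cap B_1$---so it is an admissible competitor, and the $\D$-minimality of $f$ forces $\mathrm{InV}=0$, proving \eqref{e:inner_maiala}.

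To deduce \eqref{lb_iv} I would apply the standard $Q$-valued inner-variation formula (obtained by the change of variables $y=\Phi_t(x)$ followed by differentiation in $t$ at $t=0$):
\begin{equation*}
\mathrm{InV} = \int \Big[2\sum_{i,\alpha,\beta}(\partial_\alpha Y^\beta)\,\langle\partial_\alpha f_i,\partial_\beta f_i\rangle - |Df|^2\,\mathrm{div}\, Y\Big]\,dx.
\end{equation*}
Writing $Y=\phi V$ with $V := d\,\nabla d/|\nabla d|^2$ splits the integrand into a ``$\phi'$-part'' (from differentiating $\phi$) and a ``$\phi$-part''. Using the algebraic identities $\nabla d\cdot V=d$ and $\sum_{\alpha,\beta}\partial_\alpha d\,\partial_\beta d\,\langle\partial_\alpha f_i,\partial_\beta f_i\rangle = |Df_i\cdot \nabla d|^2$, one matches the $\phi'$-part directly against formula \eqref{der_D} and the definition of $G$ to get the contribution $rD'(r)-2rG(r)$.

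The $\phi$-part is where Definition \ref{def:ff} is used in full force. From (i)--(iii) one gets, via a short computation in which the summands $\partial_\alpha d\,\partial_\beta d/|\nabla d|^2$ and $d\,\partial^2_{\alpha\beta} d/|\nabla d|^2$ reduce respectively to $\hat{x}^\alpha\hat{x}^\beta$ and $\delta_{\alpha\beta}-\hat{x}^\alpha\hat{x}^\beta$ modulo $O(d)$,
\begin{equation*}
\partial_\alpha V^\beta = \delta_{\alpha\beta} + O(d), \qquad \mathrm{div}\, V = m+O(d),
\end{equation*}
the errors being controlled only in terms of $d$ and $\Omega$. Since $\phi(d/r)\neq 0$ forces $d\le r$, the $\phi$-part contributes $(2-m)D(r)+O(r)\,D(r)$. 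Summing the two contributions gives $0=\mathrm{InV}=rD'(r)-2rG(r)+(2-m)D(r)+O(r)\,D(r)$, which after division by $r$ is exactly \eqref{lb_iv}. The case of a general (non-Lipschitz) $\phi$ from Definition \ref{def:ff} is then handled by approximation with Lipschitz cutoffs, since both sides of \eqref{lb_iv} depend linearly and continuously on $\phi$ (respectively on $\phi'$ viewed as a measure).

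The main technical obstacle is precisely the cancellation behind $\partial_\alpha V^\beta=\delta_{\alpha\beta}+O(d)$: were $d$ literally the Euclidean distance this would be trivial ($V$ would equal $x$), but for a general distortion of the distance as permitted by Definition \ref{def:ff} the cancellation requires both the first-order expansion of $\nabla d$ and the precise Hessian estimate (iii); this is in fact the only point in the argument where all three conditions are genuinely needed. Everything else is bookkeeping once this geometric input is in place.
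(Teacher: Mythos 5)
Your proposal is correct and follows essentially the same route as the paper: \eqref{e:inner_maiala} from minimality plus the fact that the flow of the tangent field $Y$ preserves $\partial\Omega$ (and the boundary data), then the standard inner-variation formula $\InV = 2\int\sum_i Df_i:Df_i\,DY - \int|Df|^2\,\mathrm{div}\,Y$, with $DY$ and $\mathrm{div}\,Y$ split into the $\phi'$-part (matched against \eqref{der_D} and the definition of $G$) and the $\phi$-part (where (i)--(iii) of Definition \ref{def:ff} give $D\big(d\,\nabla d/|\nabla d|^2\big)=\mathrm{Id}+O(d)=\mathrm{Id}+O(r)$ on the support). Your explicit remark on the approximation for non-Lipschitz $\phi$ only makes explicit what the paper leaves implicit.
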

\begin{proof} \eqref{e:inner_maiala} is obvious  by the minimality of \(f\), because \(\Phi_t(\partial \Omega)=\partial \Omega\). We thus  just need to prove the identity between the left hand side of \eqref{lb_iv} and $\InV$ in \eqref{e:inner_maiala}. Note that, by standard computations (cf. \cite{DS1}) 
\begin{equation}\label{e:fatevelo_voi}
\InV = 2 \int \sum_i Df_i : Df_i DY - \int |Df|^2 {\rm div}\, Y\, .
\end{equation}
Hence, by the properties of $d$, we compute
\begin{align*}
D Y &= \phi' \left(\frac{d}{r} \right) \frac{d}{r} |\nabla d|^{-2} \nabla d \otimes \nabla d + \phi \left(\frac{d}{r}\right) D \left(|\nabla d|^{-2} d \nabla d\right)\\
&= \phi' \left(\frac{d}{r}\right) \frac{d}{r} |\nabla d|^{-2} \nabla d\otimes \nabla d + \phi \left(\frac{d}{r}\right) ({\rm Id} + O (d))\,\\
&= \phi' \left(\frac{d}{r}\right) \frac{d}{r} |\nabla d|^{-2} \nabla d\otimes \nabla d + \phi \left(\frac{d}{r}\right) ({\rm Id} + O (r))\, ,
\end{align*}
and
\[
{\rm div}\, Y = \phi' \left(\frac{d}{r}\right) \frac{d}{r} + \phi \left(\frac{d}{r}\right) (m + O (r))\, .
\]
Plugging the latter identities in \eqref{e:fatevelo_voi} and recalling the formula \eqref{der_D} for $D'$, we conclude the proof.
\end{proof}

\subsection{A good function $d$}\label{sss:gv} 
In this section, relying on the  \(C^3\) regularity of \(\partial \Omega\) we construct a modified distance function whose gradient is tangent to \(\partial \Omega\). We believe that the same result can be achieved with less regularity of $\partial \Omega$, namely \(C^2\), however since we will not need this in the sequel, we stick to \(C^3\) regularity, where the proof is rather straightforward.

\begin{lemma}\label{l:good_vector_field}
Let $\Omega$ be a $C^3$ domain such that $0\in \Omega$ and $T_0 \partial \Omega = \{x_m=0\}$. Then there is a continuous function $d: \Omega \to \mathbb R^+$ which belongs to $C^2 (\Omega\setminus \{0\})$ and such that
\begin{itemize}
\item[(a)] $\partial_J d (x) = \partial_J |x| + O (|x|^{2-|J|})$ for every multiindex $J$ with $|J|\leq 2$;
\item[(b)] $\nabla d$ is tangent to $\partial \Omega$.
\end{itemize}
\end{lemma}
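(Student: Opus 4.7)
My plan is to realize $d$ as the Euclidean norm of a carefully chosen $C^2$ diffeomorphism $\Psi$ from a neighborhood of $0$ in $\Omega$ onto a neighborhood of $0$ in a closed halfspace, with $\Psi(0)=0$, $D\Psi(0)=\mathrm{Id}$, and constructed so that the first $m-1$ components of $\Psi$ have gradients tangent to $\partial\Omega$ along $\partial\Omega$. Concretely, since $\partial\Omega$ is $C^3$, the signed distance function $\sigma$ to $\partial\Omega$ (chosen positive on $\Omega$) is $C^3$ in a tubular neighborhood $U$ of $0$, while the nearest-point projection $\pi: U\to\partial\Omega$ is $C^2$ in $U$. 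Set
\[
\Psi(x):=\bigl(\pi_1(x),\dots,\pi_{m-1}(x),\sigma(x)\bigr),\qquad d(x):=|\Psi(x)|.
\]

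First I would verify that $\Psi$ is a $C^2$ diffeomorphism near $0$ with $\Psi(0)=0$ and $D\Psi(0)=\mathrm{Id}$: indeed $\nabla\sigma(0)=-\nu(0)=e_m$ after choosing $e_m$ to point into $\Omega$, whereas $D\pi(0)$ coincides with the orthogonal projection onto $T_0\partial\Omega=\{x_m=0\}$, so $\nabla\pi_i(0)=e_i$ for $i<m$. Moreover $0$ is the unique zero of $\Psi$ in a neighborhood: if $\Psi(x)=0$ then $\sigma(x)=0$ forces $x\in\partial\Omega$, hence $\pi(x)=x$ and $x_i=\pi_i(x)=0$ for $i<m$, which combined with $x\in\partial\Omega=\{x_m=\psi(x')\}$ (with $\psi(0)=0$, $D\psi(0)=0$) gives $x=0$. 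Consequently $d$ is continuous on $\Omega$ and of class $C^2$ on $\Omega\setminus\{0\}$.

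For property (b), note that at any $x\in\partial\Omega\cap U$ we have $\sigma(x)=0$, so
\[
\nabla d(x) \;=\; \frac{1}{|\Psi(x)|}\sum_{i=1}^{m-1}\Psi_i(x)\,\nabla\pi_i(x).
\]
The key observation is that the identity $\pi\bigl(x+t\,\nu(\pi(x))\bigr)=\pi(x)$, valid for $x\in U$ and $|t|$ small, yields $\nabla\pi_i(x)\cdot\nu(\pi(x))=0$ throughout $U$; specializing to $x\in\partial\Omega$ gives $\nabla\pi_i(x)\in T_x\partial\Omega$, so $\nabla d(x)$ is tangent to $\partial\Omega$. Property (a) is then a Taylor expansion of $d=|\cdot|\circ\Psi$ based on the asymptotics $\Psi(x)=x+O(|x|^2)$, $D\Psi(x)=\mathrm{Id}+O(|x|)$, $D^2\Psi(x)=O(1)$: the cases $|J|=0,1$ reduce to a one-line computation, while for $|J|=2$ the chain rule gives
\[
D^2 d(x)\;=\;D^2|\cdot|(\Psi(x))\bigl[D\Psi(x),D\Psi(x)\bigr]\;+\;\frac{\Psi(x)}{|\Psi(x)|}\cdot D^2\Psi(x),
\]
so the singular $|\Psi|^{-1}\sim|x|^{-1}$ prefactor in the first term, combined with $D\Psi=\mathrm{Id}+O(|x|)$, reproduces $D^2|x|$ up to an $O(1)$ remainder, while the second term is bounded.

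The one step requiring some care is the second-derivative estimate in (a): both $D^2|x|$ and $D^2 d$ are individually singular of order $|x|^{-1}$, and one must track how the $O(|x|)$ corrections in $\Psi$ and $D\Psi$ cancel the leading singularities of the two terms in the chain rule against each other. This is, however, a routine computation once the first-order normalization $D\Psi(0)=\mathrm{Id}$ has been secured; no further geometric input beyond the $C^3$ regularity of $\partial\Omega$ (which is what guarantees $\sigma\in C^3$ and $\pi\in C^2$) is required.
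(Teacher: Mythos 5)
Your proposal is correct and is essentially the paper's own argument: the map $\Psi=(\pi_1,\dots,\pi_{m-1},\sigma)$ built from the nearest-point projection and the signed distance is precisely an explicit realization of the $C^2$ boundary-flattening diffeomorphism $\Phi$ (normal coordinates) used in the paper, satisfying $\Phi(0)=0$, $D\Phi|_0=\mathrm{Id}$, $\Phi(\partial\Omega)\subset\R^{m-1}\times\{0\}$ and mapping normals to normals, after which $d=|\Phi|$ and the verification of (a) and (b) proceeds by the same chain-rule computation. No gaps; your treatment of the second-derivative cancellation and of the tangency via $D\pi(x)[\nu(\pi(x))]=0$ matches the paper's estimates \eqref{e:nabla_d} and the subsequent pairing argument.
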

\begin{proof}
Consider normal coordinates on a sufficiently small tubular neighborhood $U_\delta$ of $\partial \Omega$ and construct a diffeomorphism between $U_\delta$ and a tubular neighborhood $V_\delta$ of a suitable subset of $\R^{m-1}\times \{0\}$ with the properties that:
\begin{itemize}
\item $\Phi\in C^2$, $\Phi (0) =0$ and $D\Phi|_0 = {\rm Id}$;
\item $\Phi (\partial \Omega)\subset \R^{m-1}\times \{0\}$;
\item For every $p\in \partial \Omega$ and every vector $\nu$ normal to \(\partial \Omega\) at $p$, $D\Phi|_p (\nu)$ is normal to $\R^{m-1}\times \{0\}$.
\end{itemize}
The existence of such diffeomorphism follows easily from our assumptions. Define then $d(x) := |\Phi (x)|$. It is obvious that $d(x) = |x| + O (|x|^2)$. Computing the first and second derivatives we get, using Einstein's summation convention,
\begin{align}
\partial_{i} d  &= \frac{\Phi^{k} \partial_{i} \Phi^k}{|\Phi|} = \frac{x_i}{|x|} + O (|x|)\label{e:nabla_d}\\
\partial_{ij}^2 d &= \frac{\partial_{j}\Phi^{k}\partial_{i} \Phi^k}{|\Phi|}+\frac{\Phi^{k} \partial_{ij} \Phi^k}{|\Phi|}-\frac{\Phi^{k} \partial_{i} \Phi^k\Phi^{l} \partial_{j} \Phi^l}{|\Phi |^{3}} 
\nonumber\\
&= |x|^{-1} \delta_{ij} - |x|^{-3} x_ix_j + O (1)\, .
\end{align}
In particular (a) follows easily. 

Next, consider a vector $v$ orthogonal to $\partial \Omega$ at $p\neq 0$, let $z= \Phi (p)$. Let $\langle\cdot,\cdot \rangle$ be the standard Euclidean scalar product and observe that, from the first equality in \eqref{e:nabla_d}, we get
\begin{align}
\langle \nabla d (p), v\rangle &= |z|^{-1} \langle z, D\Phi|_p (v)\rangle\, .
\end{align}
On the other hand, since $z = \Phi (p)\in \R^{m-1}\times \{0\}$ and \(D\Phi|_p (v) \in (\R^{m-1}\times \{0\})^\perp$ by the assumptions on $\Phi$ above, we clearly have 
\[
\langle \nabla d (p), v\rangle =0\, .
\] 
We conclude that $\nabla d$ is orthogonal to any vector field normal to $\partial \Omega$ and thus it must be tangent to $\partial \Omega$. 
\end{proof}

\subsection{Proof of Theorem \ref{thm:limit_ff}} 
Assume that $\phi$ and $d$ have the properties of Definition \ref{def:ff}. As a consequence of Lemma \ref{l:good_vector_field} we may assume that $\nabla d \cdot \nu = 0$ on $B_{r_0}(0)$. This implies that the conditions of Proposition \ref{p:H'_maiala}, Lemma \ref{l:outer}, \ref{l:inner} are satisfied. Hence,
\begin{align*}
-\frac{d}{dr} \ln(I(r)) = \quad\, & \frac{H'(r)}{H(r)} - \frac{D'(r)}{D(r)} - \frac{1}{r} \\ \overset{\eqref{der_H},\eqref{der_D}}{=} &\frac{2E(r)}{H(r)} - \frac{2 G(r)}{D(r)} + O(1) 
\end{align*}
Furthermore due to \eqref{l:outer} we have 
\begin{align*} 
& \hphantom{-} H(r) E(r) \left(\frac{E(r)}{H(r)} - \frac{G(r)}{D(r)}\right) =  \left( E(r)^2 - H(r) G(r) \right)\\
=&\hphantom{-}  \left( \frac{1}{r} \int \phi'\left(\frac{d}{r}\right) \sum_i f_i \cdot (Df_i \cdot\nabla d) \right)^2 \\
& - \left(\int \phi'\left(\frac{d}{r}\right) \frac{\abs{\nabla d}^2}{d} \abs{f}^2 \right)\left( \frac{1}{r} \int \phi'\left(\frac{d}{r}\right) \frac{d}{r} \frac{1}{\abs{\nabla d}^2} \sum_i (Df_i\cdot \nabla d)^2 \right)\\
 \le & 0, 
\end{align*}
due to the Cauchy--Schwarz inequality. Moreover the equality holds if and only if there is a function $\alpha_r$ such that 
\begin{equation}\label{eq: equality in frequency}
	f_i =\alpha_r \frac{d}{\abs{\nabla d}^2} (Df_i \cdot \nabla d)
\end{equation}
Finally we deduce, that 
\begin{equation}\label{e:mo}
-\frac{d}{dr} \ln(I(r)) \le O(1) 
\end{equation}
and therefore we deduce that, for $r<r_0$,
\[ r \mapsto e^{Cr} I(r) \]
is monotone. 
This directly implies that $\lim_{r\searrow 0} e^{Cr}I(r) = I_0$ exists. Moreover, by Corollary \ref{cor:lb}, we have $ I_0\ge C_0 >0$.

\section{Further consequences of the frequency estimate}

As a further consequence of the almost monotonicity of the frequency we obtain  the following result, compare \cite[Corollary 3.16]{DS1}.
\begin{corollary}\label{cor:consequence of monotone frequency for H and D}
	Under the assumptions of Theorem \ref{thm:limit_ff} there exists a constant \(C\) such that setting $I(0)=I_0>0$ for every $\lambda >1$ there exists $r_1\le r_0$ for which the following estimates hold true
	\begin{itemize}
	\item[(a)] $\lambda^{-1} I_0 \le I(r) \le \lambda I_0$ for all $r<r_1$;
	\item[(b)] for all $0\le s \le t \le r_1$ \begin{equation}\label{eq:H comparison}
 e^{-C(t-s)} \left( \frac{t}{s} \right)^{m-1+2 \lambda^{-1} I_0} \le \frac{H(t)}{H(s)} \le e^{C(t-s)} \left( \frac{t}{s} \right)^{m-1+2 \lambda I_0};
 \end{equation}
 \item[(c)] for all $0\le s \le t \le r_1$ \begin{equation}\label{eq: D comparison}
 	\lambda^{-2} e^{-C(t-s)} \left( \frac{t}{s} \right)^{m-2+2 \lambda^{-1} I_0} \le \frac{D(t)}{D(s)} \le \lambda^2 e^{C(t-s)} \left( \frac{t}{s} \right)^{m-2+2 \lambda I_0}\, .
 \end{equation}
	\end{itemize}
\end{corollary}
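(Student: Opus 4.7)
The plan is to derive all three estimates from the differential inequality $-\frac{d}{dr}\ln I(r)\le C$ established in the proof of Theorem \ref{thm:limit_ff}, equivalently that $r\mapsto e^{Cr}I(r)$ is nondecreasing on $(0,r_0)$ with $\lim_{r\downarrow 0}e^{Cr}I(r)=I_0$. For part (a), since $e^{Cr}I(r)$ decreases down to $I_0$ as $r\downarrow 0$, for a given $\lambda>1$ I choose $r_1\le r_0$ so small that $e^{Cr_1}\le \lambda^{1/2}$ and $e^{Cr_1}I(r_1)\le \lambda^{1/2}I_0$. Then for every $r\le r_1$ monotonicity gives $I_0\le e^{Cr}I(r)\le \lambda^{1/2}I_0$, and dividing by $e^{Cr}\in[1,\lambda^{1/2}]$ yields $\lambda^{-1}I_0\le I_0 e^{-Cr}\le I(r)\le \lambda^{1/2}I_0\le \lambda I_0$.

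For part (b) I combine formula \eqref{der_H} with the outer variation identity $D(r)=E(r)$ of Lemma \ref{l:outer} to rewrite the logarithmic derivative of $H$ as
\[
\frac{H'(r)}{H(r)}=\frac{m-1}{r}+\frac{2E(r)}{H(r)}+O(1)=\frac{m-1+2I(r)}{r}+O(1),
\]
where the $O(1)$ depends on $d$ and $\Omega$ but not on $\phi$. Inserting the two-sided bounds from part (a) I obtain on $(0,r_1)$
\[
\frac{m-1+2\lambda^{-1}I_0}{r}-C\le \frac{H'(r)}{H(r)}\le \frac{m-1+2\lambda I_0}{r}+C.
\]
Integrating this between $s$ and $t$ with $0\le s\le t\le r_1$ and exponentiating produces \eqref{eq:H comparison}.

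For part (c) I use the definition $D(r)=H(r)I(r)/r$ to write
\[
\frac{D(t)}{D(s)}=\frac{H(t)}{H(s)}\cdot\frac{I(t)}{I(s)}\cdot\frac{s}{t}.
\]
Part (a) bounds $I(t)/I(s)$ between $\lambda^{-2}$ and $\lambda^{2}$; plugging in \eqref{eq:H comparison} for $H(t)/H(s)$ and absorbing the factor $s/t$ into the exponent by reducing $m-1$ to $m-2$ gives \eqref{eq: D comparison}. There is no substantive obstacle here: once the almost-monotonicity of $I$ has been secured in Theorem \ref{thm:limit_ff}, the corollary is an exercise in integrating elementary differential inequalities. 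The only mild care needed is to verify that the $O(1)$ term in $H'/H$ is independent of the cut-off $\phi$, so that the constant $C$ in \eqref{eq:H comparison} and \eqref{eq: D comparison} can be chosen uniformly.
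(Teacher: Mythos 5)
Your proposal is correct and follows essentially the same route as the paper: (a) from the almost monotonicity of $r\mapsto e^{Cr}I(r)$, (b) by integrating $\frac{d}{dr}\ln\bigl(H(r)/r^{m-1}\bigr)=\frac{2}{r}I(r)+O(1)$ (using \eqref{der_H} together with $D=E$ from Lemma \ref{l:outer}), and (c) via the identity $\frac{D(t)}{D(s)}=\frac{I(t)}{I(s)}\frac{s}{t}\frac{H(t)}{H(s)}$. The only difference is that you spell out the elementary choice of $r_1$ in (a), which the paper leaves implicit.
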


\begin{proof}
Point (a) is an immediate consequence of the almost monotonicity of the frequency, \eqref{e:mo}

Concerning point (b), using \eqref{der_H} and Lemma \ref{l:outer}, we compute
\begin{align*}
	\frac{d}{dr} \ln\left( \frac{H(r)}{r^{m-1}} \right) = \frac{H'(r)}{H(r)} - \frac{m-1}{r} = \frac{2}{r} I(r) + O(1)\,.
\end{align*}
Integrating the above identity  between $0\le s\le t \le r_1$ and using point (a), we obtain the estimate \ref{eq:H comparison}.

To prove  (c), we have only to note that 
\[ \frac{D(t)}{D(s)} = \frac{I(t)}{I(s)} \left( \frac{t}{s} \right)^{-1} \frac{ H(t)}{H(s)} \]
and appeal to points (a) and (b).
\end{proof}

\begin{corollary}\label{cor:Dirichlet comparison}
Under the assumptions of Theorem \ref{thm:limit_ff} with $I_0 = I(0)$, there are constants $\lambda>1$ (depending only on $\phi$), $\bar C>1$ (depending on $\phi, d$ and $I_0$) and $r_1 >0$ such that the following estimate holds for all $0 < \lambda^2 s < t < r_1$:
\begin{equation}\label{eq:Dirichlet comparison}
	\bar C^{-1} \left(\frac{t}{s}\right)^{m-2+2\lambda^{-1} I_0} \le \frac{\int_{\Omega \cap B_t} \abs{Df}^2}{\int_{\Omega \cap B_s} \abs{Df}^2} \le \bar C \left(\frac{t}{s}\right)^{m-2+2\lambda I_0} .
\end{equation}
When $\phi = {\mathbf 1}_{[0,1]}$, we can choose both $\lambda$ and $\bar C$ arbitrarily close to $1$, provided $r_1$ is small enough. 
\end{corollary}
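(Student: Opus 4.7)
My plan is to derive the estimate \eqref{eq:Dirichlet comparison} from Corollary \ref{cor:consequence of monotone frequency for H and D}(c) by a sandwiching argument: I will bound the Euclidean quantity $\tilde{D}(r) := \int_{\Omega \cap B_r} |Df|^2$ between two values of the smoothed Dirichlet quantity $D_{\phi, d}(f, \cdot)$ for a suitably chosen cutoff $\phi$. The key geometric input is that, because $d(x) = |x| + O(|x|^2)$, for $r$ small one has the inclusions
\[
B_{r(1 - C_0 r)} \subset \{d < r\} \subset B_{r(1+C_0 r)},
\]
so Euclidean balls and distorted balls differ only by a multiplicative factor $1 + O(r)$ in their radius, an error which must ultimately be absorbed into $\bar C$ and $\lambda$.

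I would then fix a smooth non-increasing cutoff $\phi$ as in Definition \ref{def:ff} with $\phi \equiv 1$ on $[0,a]$ and $\phi \equiv 0$ on $[1, \infty)$ for a parameter $a \in (0,1)$ to be adjusted. The pointwise bound $\mathbf{1}_{\{d < ar\}} \le \phi(d/r) \le \mathbf{1}_{\{d < r\}}$ combined with the above inclusions gives the sandwich
\[
D_{\phi, d}\bigl(f, r(1 - C_0 r)\bigr) \le \tilde{D}(r) \le D_{\phi, d}\bigl(f, a^{-1} r(1 + C_0 r)\bigr).
\]
Applying this at radii $t$ and $s$ and feeding the resulting ratio into Corollary \ref{cor:consequence of monotone frequency for H and D}(c) delivers
\[
\frac{\tilde{D}(t)}{\tilde{D}(s)} \le \lambda^2 e^{C(t-s)} \left(\frac{t(1 + C_0 t)}{a\, s(1 - C_0 s)}\right)^{m-2 + 2\lambda I_0},
\]
which is of the claimed form after absorbing $a^{-(m-2+2\lambda I_0)} e^{C(t-s)}$ and the correction $(1+C_0 t)^{m-2+2\lambda I_0}(1-C_0 s)^{-(m-2+2\lambda I_0)}$ into $\bar{C}$, and slightly enlarging $\lambda$ to accommodate the mismatch between the exponents $2\lambda^{-1} I_0$ and $2\lambda I_0$. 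The technical hypothesis $\lambda^2 s < t$ ensures that even after the $a$-shrinking and the $(1\pm C_0 r)$-distortion, the ratio of the radii entering Corollary \ref{cor:consequence of monotone frequency for H and D}(c) stays above $1$, so that the power $(t/s)^{m-2+2\lambda I_0}$ can indeed be extracted as an upper bound. The lower bound in \eqref{eq:Dirichlet comparison} follows symmetrically by swapping the two ends of the sandwich.

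For the final assertion, when $\phi = \mathbf{1}_{[0,1]}$ and we want $\lambda, \bar C$ arbitrarily close to $1$, the plan is twofold: first, let $a \nearrow 1$ in the smooth cutoff, so the gap in the sandwich closes; second, by Corollary \ref{cor:consequence of monotone frequency for H and D}(a) we may choose the $\lambda$ appearing there as close to $1$ as we wish at the price of shrinking $r_1$. The main obstacle I expect is bookkeeping rather than substance: one must check that the various error factors $(1 \pm C_0 r)^{m-2+2\lambda I_0}$, $e^{C(t-s)}$, and $a^{-(m-2+2\lambda I_0)}$, which all tend to $1$ as $r_1 \downarrow 0$ and $a \nearrow 1$, can be combined simultaneously into constants $\bar C \to 1$ and $\lambda \to 1$ that are uniform over the admissible range $0 < \lambda^2 s < t < r_1$, and that the convergence rates for $I(r) \to I_0$ (determined by the almost-monotonicity constant $C$ from \eqref{e:mo}) do not force $r_1$ to depend on $a$ in a degenerate way.
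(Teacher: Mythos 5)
Your proposal is correct and follows essentially the same route as the paper: one sandwiches $\int_{B_r\cap\Omega}|Df|^2$ between values of $D_{\phi,d}(f,\cdot)$ at slightly shifted radii, using $d(x)=|x|+O(|x|^2)$ together with the flatness of $\phi$ near $0$, and then feeds the resulting radii into Corollary \ref{cor:consequence of monotone frequency for H and D}(c). The paper merely compresses your two parameters (the plateau length $a$ and the $1\pm C_0r$ distortion) into a single $\lambda>\bar\rho^{-1}$ via the sandwich $\mathbf{1}_{B_{\lambda^{-1}r}}\le\phi(d/r)\le\mathbf{1}_{B_{\lambda r}}$, giving $D(\lambda^{-1}r)\le\int_{B_r\cap\Omega}|Df|^2\le D(\lambda r)$ directly, so in particular the exponents $m-2+2\lambda^{-1}I_0$ and $m-2+2\lambda I_0$ come out exactly as stated and no further enlargement of $\lambda$ is needed for them.
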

\begin{proof}  Recall that $\phi \equiv 1$ on some interval $[0, \bar{\rho}[$. 
By the assumptions on $d$, for any $\lambda > \bar{\rho}^{-1}$ there is then a positive $r_1$ such that
\[ \mathbf{1}_{B_{\lambda^{-1}r}} (x) \le \phi \left(\frac{d(x)}{r}\right) \le \mathbf{1}_{B_{\lambda r }} (x) \qquad \forall r< r_1, \forall x\in \mathbb R^m\, .\]
Hence we deduce that 
\[ D(\lambda^{-1} r) \le \int_{B_r\cap \Omega} \abs{Df}^2 \le D(\lambda r),\]
and we conclude the proof from \eqref{eq: D comparison}.
When $\phi = \mathbf{1}_{[0,1]}$ we can choose any $\lambda >1$. Note moreover that the constant $\bar C$ in
\eqref{eq:Dirichlet comparison} can be taken to be $e^{Cr_1} \lambda^\tau$
where the exponent $\tau$ depends only on $I_0$ and $m$. The last claim of the corollary is thus obvious.
\end{proof}

\begin{lemma}\label{lem:one sided tangent functions}
	Let $\Omega \subset \R^m$ be an open set of class $C^3$ with $0\in \partial \Omega$. Furthermore assume $f\in W^{1,2}(\Omega\cap B_1, \Iqs)$ satisfies the assumption of Theorem \ref{thm:limit_ff}. Then, for any $r_k \downarrow 0 $, there is a subsequence, not relabeled, such that 
\footnote{Here again we are using the following abuse of notation: if $\lambda$ is a scalar and $P= \sum_i \a{P_i}$ an element in $\Iqs$, then
$\lambda P= \sum_i \a{\lambda P_i}$.}
	\begin{itemize}
		\item[(a)] $\hat{f}_{k}(x):= \left(r_k^{2-m}\int_{B_{r_k}\cap \Omega} \abs{Df}^2 \right)^{-\frac12} f(r_k x)$ converges to a map $g \in W^{1,2}(H, \Iqs)$ such that $g=Q\a{0}$ on $\partial H$, where $H$ is some halfspace containing the origin.
		\item[(b)] $g$ is Dirichlet minimizing, in the sense that 
		\[
		\D(g, B_R \cap H)\le \D(h)
		\] 
		for every $R>0$ and for every $h \in W^{1,2}(H\cap B_R, \Iqs)$ such that $g|_{\partial (H \cap B_R)}= h|_{\partial (H \cap B_R)}$. 
		\item[(c)] $g(x)=\abs{x}^{I_0} g(\frac{x}{\abs{x}})$, where 
		\[
		I_0 = \lim_{r \downarrow 0 }I_{d, \phi}(0)
		\] 
		(which exists thanks to  Theorem \ref{thm:limit_ff}).
	\end{itemize}
\end{lemma}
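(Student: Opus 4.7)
The argument is a standard blow-up procedure in the spirit of the tangent-function analysis for $\D$-minimizing multivalued maps (cf.\ \cite[Section 3.4]{DS1}), adapted to the one-sided boundary setting. What makes it work is the frequency function monotonicity of Theorem \ref{thm:limit_ff} and, in particular, the scale-invariant Dirichlet-energy bounds of Corollary \ref{cor:Dirichlet comparison}.

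First I would set up the rescaling. Let $\Omega_k := \iota_{0,r_k}(\Omega \cap B_{1})$ and $\gammado_k := \iota_{0,r_k}(\partial\Omega\cap B_1)$. Since $\partial\Omega$ is of class $C^3$, the surfaces $\gammado_k$ converge in $C^2_{\mathrm{loc}}$ to the tangent hyperplane $\partial H$, where $H = T_0\Omega$ is the half-space spanned by the inward directions at $0$. By construction $\int_{B_1\cap\Omega_k}|D\hat f_k|^2=1$, and by Corollary \ref{cor:Dirichlet comparison} applied to $f$ (with $\phi = \mathbf{1}_{[0,1]}$ and the modified distance $d$ given by Lemma \ref{l:good_vector_field}), for each fixed $R>0$ and all sufficiently large $k$,
\[
\int_{B_R\cap\Omega_k} |D\hat f_k|^2 \;\le\; \bar C\, R^{m-2+2\lambda I_0}.
\]
Since each $\hat f_k$ equals $Q\a{0}$ on $\gammado_k$, the $(m-1)$-dimensional Poincar\'e argument used in Lemma \ref{lem:lb} gives corresponding $L^2$ bounds on $\hat f_k$ on every ball. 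The $W^{1,2}$-compactness theorem for $Q$-valued Sobolev maps (\cite[Proposition 2.11]{DS1}) combined with a diagonal extraction on an exhausting sequence $B_R$ then yields a subsequence (not relabelled) and a map $g\in W^{1,2}_{\mathrm{loc}}(H,\Iqs)$ with $\hat f_k \to g$ strongly in $L^2$ on compact sets and weakly in $W^{1,2}$. The condition $\hat f_k|_{\gammado_k} = Q\a{0}$, together with $C^2$-convergence $\gammado_k \to \partial H$, passes to the limit by continuity of the trace (\cite[Proposition 2.10]{DS1}), proving (a).

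For (b), each $\hat f_k$ is a $\D$-minimizer on $\Omega_k\cap B_{1/r_k}$ by scaling invariance of the Dirichlet energy. Extending $\hat f_k$ to be identically $Q\a{0}$ on the complement of $\Omega_k$, we view $\hat f_k$ as the non-trivial side of a $\qhalf$-valued map which collapses at the interface $\gammado_k$ (with $\varphi_k\equiv 0$ and the trivial map on the other side). The compactness scheme of Theorem \ref{thm:cpt_dm} then applies almost verbatim: suppose $g$ were not $\D$-minimizing in some $B_R\cap H$; then a competitor $\tilde g$ with $\tilde g = g$ outside a compact $K\subset B_R\cap H$ would, via the interpolation Lemma \ref{l:interpolation} (applied only on the non-trivial side, exactly as in Section \ref{s:gluing}) and the pull-back $\Phi_k$ straightening $\gammado_k$ to $\partial H$, produce for large $k$ a competitor with strictly smaller Dirichlet energy than $\hat f_k$, contradicting its minimality. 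The same defect-measure argument based on Lemma \ref{lm:mt1} shows convergence of Dirichlet energies on compact subsets. This yields (b).

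Finally, for (c), let $I_g$ denote the frequency function of $g$ on $H$ computed with $\phi=\mathbf{1}_{[0,1]}$ and $d(x)=|x|$. On the one hand, by Remark \ref{rmk:zoom} the modified distances $d_{r_k}(x) = d(r_k x)/r_k$ converge in $C^2_{\mathrm{loc}}$ to $|x|$, so the convergence of energies proved in (b) together with the $L^2$-convergence of $\hat f_k$ gives
\[
I_g(r) \;=\; \lim_{k\to\infty} I(\hat f_k, r) \;=\; \lim_{k\to\infty} I(f, r\, r_k) \;=\; I_0
\qquad\text{for every }r>0.
\]
Thus $I_g$ is constant, so every inequality in the derivation of the frequency monotonicity for $g$ (applied on $H$) must be an equality. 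In particular the Cauchy--Schwarz step leading to $E(r)^2 \le H(r)G(r)$ is saturated at every scale, which by \eqref{eq: equality in frequency} forces the radial identity $g_i(x) = |x| I_0^{-1}\,\partial_r g_i(x)$ on $H$. Integrating this ODE along each ray emanating from the origin gives the homogeneity $g(x)=|x|^{I_0} g(x/|x|)$.

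The one genuinely technical point is (b): extending Theorem \ref{thm:cpt_dm} to the one-sided situation requires running the interpolation and defect-measure machinery of Section \ref{s:gluing} with a constant $Q\a{0}$ playing the role of $f^-_k$, so that the trace-matching in Lemma \ref{l:interpolation} becomes trivial on one side; conceptually this is strictly simpler than the two-sided case, but it must be checked that the resulting defect measure, if nonzero, still yields a blow-up contradicting minimality.
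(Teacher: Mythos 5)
Your argument is correct and its quantitative skeleton is the same as the paper's: the uniform scale-invariant growth bounds come from Corollary \ref{cor:Dirichlet comparison} applied to $f$, and homogeneity in (c) is obtained exactly as in the paper, by passing $D$, $H$ to the limit (using $d_{r_k}\to|\cdot|$ in $C^2_{\rm loc}$, strong $L^2$ convergence and convergence of energies) to see that the frequency of $g$ is constantly equal to $I_0$, and then exploiting the equality case \eqref{eq: equality in frequency} as in the adaptation of \cite[Corollary 3.16]{DS1}. Where you diverge is in how (a) and, above all, (b) together with the convergence in energy are justified: the paper disposes of both in one line by invoking \cite[Theorem 3.6]{Jonas}, a compactness-plus-minimality statement tailored to one-sided Dirichlet minimizers vanishing on a converging $C^2$ boundary, whereas you rebuild that statement from the paper's own two-sided machinery (the defect-measure argument of Theorem \ref{thm:cpt_dm} via Lemma \ref{lm:mt1}, the interpolation Lemma \ref{l:interpolation} with $\varphi\equiv 0$ and a trivial sheet on the other side, and the straightening diffeomorphisms $\Phi_k$ of Section \ref{s:gluing}). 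Your route is more self-contained and the trace-matching on the interface is indeed trivial in the collapsed one-sided situation, but be aware of the point you yourself flag: the pair $(\hat f_k,(Q-1)\a{0})$ is \emph{not} known to be $\qhalf$ $\D$-minimizing (only $\hat f_k$ minimizes with fixed boundary data), so in the secondary blow-up inside the defect-measure argument you cannot literally quote Lemma \ref{lem:cpt_dm}; you need its one-sided, fixed-boundary-data analogue, which is precisely what the proof in \cite{Jonas} provides (and what the paper's citation of \cite[Theorem 3.6]{Jonas} supplies). With that ingredient spelled out, your proof goes through; the paper's version simply buys brevity at the price of the external reference, while yours makes the mechanism explicit.
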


\begin{proof}
Let $d, \phi$ be a distance function and cut-off function that are admissible in the sense of Theorem \ref{thm:limit_ff}. As before we introduce the usual scaling $f_r(x)=f(rx)$,  $d_r (x) = r^{-1} d (rx)$ and $\Omega_r:=\left\{ x/r:\,x\in\Omega\right\}$. Observe that $\Omega_r$ converges locally in $C^2$ to a halfspace ${\mathbb H}$, which up to a rotation we may assume to be $\{x: x_m >0\}$. 
  Furthermore, by Remark \ref{rmk:zoom} $d_r(x) \to \abs{x}$ in $C_{\rm loc}^2(\R^m\setminus\{0\})$.   Moreover, by direct computation, $H_{\phi, d_r} (f_r, R) = r^{m-1} H_{\phi,d} (f,rR)$ and 
$D_{\phi, d_r} (f_r, R) = r^{m-2} D_{\phi,d} (f, rR)$, for any $R>0$.

Let us pick  $\lambda$ and  $r_1>0$ such that the conclusions of Corollary \ref{cor:Dirichlet comparison} apply. 
  Then, for every $R>1$, the following estimate holds provided $r$ is sufficiently small:
  \[ \int_{B_R\cap {\rm Dom}\, (\hat{f}_r^\pm)} \abs{D\hat{f}_r}^2 \le C(I_0,m)R^{m -2 + 2I_0^\pm} \int_{B_1 \cap
{\rm Dom}\, (\hat{f}_r^\pm)} \abs{D\hat{f}_r}^2\, , 
\]
where ${\rm Dom}\, (\hat f^\pm)$ denote the domains of the rescaled functions $\hat{f}^\pm$.
  Appealing to \cite[Theorem 3.6]{Jonas} we deduce the existence of $g$ satisfying (a) and (b).
  
  It remains to prove (c). Observe that (a), (b) together with $d_r \to \abs{\cdot}$ in $C^2$ imply, for  $R>0$,
  \begin{align*} I_{d,\phi}(0)&=\lim_{k \to \infty} \frac{Rr_k D_{d,\phi} (f, r_kR)}{H_{d,\phi} (f, r_kR)}= \lim_{k \to \infty} \frac{R D_{d_{r_k},\phi} (\hat{f}_{r_k}, R)}{H_{d_{r_k},\phi} (\hat{f}_{r_k}, R)}
= \frac{R D_{\abs{\cdot},\phi} (g, R)}{H_{\abs{\cdot},\phi} (g, R)}.\end{align*}
Now (iii) follows by straightforward adaption of the proof of \cite[Corollary 3.16]{DS1} using \eqref{eq: equality in frequency}. 
\end{proof}

\section{Blowup: proof of Theorem {\ref{thm:collasso}} with $\varphi \equiv 0$}

The proof is based on the monotonicity of the frequency function and the fact that it ensures two things: non-triviality of the blow-ups and radial homogeneity.

More precisely, we have the following:

\begin{lemma}\label{lem:blowup}
Let $(f^+, f^-)$ be a $\qhalf$ $\D$-minimizer which collapses at the  interface $(\gammado, 0)$, where $\gammado$ is $C^3$. Fix $p\in \gammado$
and, unless $(f^+, f^-)$ is identically $(Q\a{0}, (Q-1) \a{0})$ in some ball $B_r (0)$, for every $r$ define
\[
\hat{f}^\pm_{p, r} (x) := \frac{1}{\Delta_{p,r}} f^\pm (p + r x)\,.
\]
The normalizing factor $\Delta_{p,r}$ is chosen to fulfill 
\[
\Delta_{p,r}^2=r^{2-m} \int_{B_r^+(p)} \abs{Df^+}^2 +r^{2-m} \int_{B^-_r(p)} \abs{Df^-}^2, 
\]
so that
\[
\D (\hat{f}^+_{p, r}, B_1) + \D (\hat{f}^-_{p, r}, B_1) = 1.
\]
If we set $\pi = T_p \gammado$, then, up to subsequences, the pair of sequences $(f^+_{p, r}, f^-_{p, r})$ converges to a
$\qhalf$ $\D$-minimizer $(g^+, g^-)$ which collapses at the interface $(\pi, 0)$ satisfying the following properties:
\begin{itemize}
\item[(a)] The convergence is as in Theorem \ref{thm:cpt_dm}.
\item[(b)] $\D (g^+) + \D (g^-) =1$.
\item[(c)] $(g^+, g^-)$ is radially homogeneous, namely $g^\pm (r x) = r^{I_0} g^\pm (x)$, where, if we fix $\phi = \mathbf{1}_{[0,1]}$
in Definition \ref{def:ff}, then
\begin{equation}\label{e:Io}
I_0 = \lim_{r\downarrow 0} \frac{r \left(D (f^+, r) + D (f^-, r)\right)}{H (f^+, r)+ H (f^-,r)}\, 
\end{equation}

\end{itemize}
\end{lemma}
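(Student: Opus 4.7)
The plan is to adapt the proof of Lemma \ref{lem:one sided tangent functions} to the bilateral collapsed setting, replacing the single-map compactness by the joint compactness Theorem \ref{thm:cpt_dm} and handling the two sides in parallel.

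First I would verify that each of $f^+, f^-$ satisfies, locally near $p$, the hypotheses of Theorem \ref{thm:limit_ff}. Because the pair collapses at the interface, the trace of $f^+$ on $B_r(p)\cap\gammado$ is $Q\a{0}$ and the trace of $f^-$ is $(Q-1)\a{0}$. Since $\gammado$ is $C^3$, the domains $B_r(p)\cap\Omega^\pm$ are locally $C^3$; moreover any single-sided competitor fixing the collapsed trace extends to a valid pair competitor by leaving the other side unchanged, so $f^+$ (resp. $f^-$) is itself $\D$-minimizing in its domain. Unless $f^\pm\equiv (\text{const})\a{0}$ near $p$, Theorem \ref{thm:limit_ff} then yields well-defined positive limits
\[ I_0^\pm := \lim_{r\downarrow 0} I_{\phi,d}(f^\pm,r)\, . \]

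Next I would extract the blowup. Corollary \ref{cor:Dirichlet comparison} applied to $f^\pm$ separately gives two-sided polynomial bounds $D(f^\pm,r)\approx r^{m-2+2I_0^\pm}$, so the rescaled energies $\D(\hat f^\pm_{p,r_k},B_R\cap\Omega^\pm_{p,r_k})$ are uniformly bounded in $k$ for every $R>0$. The rescaled interfaces $\iota_{p,r_k}(\gammado)$ converge in $C^2_{\rm loc}$ to the hyperplane $\pi:=T_p\gammado$, and the (trivially zero) boundary data rescales to itself. Theorem \ref{thm:cpt_dm} therefore applies: passing to a subsequence, $(\hat f^+_{p,r_k},\hat f^-_{p,r_k})$ converges to a $\qhalf$ $\D$-minimizer $(g^+,g^-)$ with interface $(\pi,0)$, with the energy convergence of Theorem \ref{thm:cpt_dm}(b) on relatively compact subsets of every ball. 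The normalization $\Delta_{p,r_k}$ then yields $\D(g^+,B_1)+\D(g^-,B_1)=1$, proving (a) and (b).

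For (c), I choose $\phi=\mathbf{1}_{[0,1]}$ and $d(x)=|x|$, and I rewrite the joint frequency as the convex combination
\[ \frac{r(D(f^+,r)+D(f^-,r))}{H(f^+,r)+H(f^-,r)} = \frac{H(f^+,r)}{H(f^+,r)+H(f^-,r)}\, I(f^+,r) + \frac{H(f^-,r)}{H(f^+,r)+H(f^-,r)}\, I(f^-,r)\, . \]
Using the polynomial asymptotics of $H^\pm$ from Corollary \ref{cor:consequence of monotone frequency for H and D}, the right hand side has a limit as $r\downarrow 0$: it equals the common value when $I_0^+=I_0^-$, and it equals the smaller of the two exponents when $I_0^+\neq I_0^-$ (the corresponding weight tends to $1$). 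This defines $I_0$. The energy convergence from Theorem \ref{thm:cpt_dm} together with $d_{p,r_k}\to |\cdot|$ in $C^2_{\rm loc}(\R^m\setminus\{0\})$ then gives, for every $R>0$,
\[ \frac{R(D(g^+,R)+D(g^-,R))}{H(g^+,R)+H(g^-,R)} = I_0\, . \]
Since the limit is attained on $\pi$, which is a hyperplane, all the error terms $O(1)$ in Proposition \ref{p:H'_maiala} and Lemma \ref{l:inner} vanish identically, and constancy of the joint frequency forces equality in the Cauchy--Schwarz step \eqref{eq: equality in frequency} on each side. Standard ODE considerations, as in the concluding argument of \cite[Corollary 3.16]{DS1}, then yield $g^\pm(rx)=r^{I_0}g^\pm(x)$.

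The main obstacle is the possible asymmetry $I_0^+\neq I_0^-$: the joint normalization $\Delta_{p,r}^2$ privileges the side with the smaller exponent, so the opposite side may degenerate in the limit to the constant map $(Q$ or $Q-1)\a{0}$, which is consistent with homogeneity of \emph{any} degree. One must check that this trivial limit is compatible with the interface $(\pi,0)$ for $(g^+,g^-)$, but this is immediate since $(Q-1)\a{0}+\a{0}=Q\a{0}$ and the collapse condition is preserved. A secondary nuisance is that the cutoff $\phi=\mathbf{1}_{[0,1]}$ is only Lipschitz in a limiting sense, but Corollary \ref{cor:Dirichlet comparison} explicitly covers this choice and the last assertion there ensures the sharp radial/homogeneity constants, so no further approximation is needed.
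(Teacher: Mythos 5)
Your proposal is essentially correct and uses the same ingredients as the paper (Theorem \ref{thm:limit_ff} applied to each of $f^\pm$, the asymptotics of Corollaries \ref{cor:consequence of monotone frequency for H and D} and \ref{cor:Dirichlet comparison}, the pair-compactness Theorem \ref{thm:cpt_dm}), but it reaches (c) by a different route. The paper never re-derives homogeneity for the pair: it applies the one-sided blow-up Lemma \ref{lem:one sided tangent functions} to each side with its \emph{own} normalization $\Delta^\pm_r$, so that each single-sided blow-up $\tilde g^\pm$ is already $I_0^\pm$-homogeneous, and then only tracks the scalars $\alpha^\pm=\Delta^\pm_{r_k}/\Delta_{r_k}$ with $(\alpha^+)^2+(\alpha^-)^2=1$; when $I_0^+>I_0^-$ it shows $\alpha^+\to 0$ via Corollary \ref{cor:Dirichlet comparison} (your weight argument, run on the Dirichlet energies rather than on $H^\pm$), so the limit is $(Q\a{0},\tilde g^-)$, while the case $I_0^+=I_0^-$ is handled by precisely the convex-combination identity you write down. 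Your route is more self-contained on the pair (no comparison of normalizations needed), at the price of re-running the equality analysis in the frequency monotonicity for the limit pair $(g^+,g^-)$.

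That is exactly where your argument is under-justified. Constancy of the joint frequency gives $(G^++G^-)(H^++H^-)=(E^++E^-)^2$, and the chain $(G^++G^-)(H^++H^-)\ge\bigl(\sqrt{G^+H^+}+\sqrt{G^-H^-}\bigr)^2\ge(E^++E^-)^2$ forces not only the per-side Cauchy--Schwarz equalities \eqref{eq: equality in frequency}, which is all you invoke, but also the cross condition $G^+H^-=G^-H^+$. The per-side equalities alone only make each side radial, $g^\pm(rx)=\lambda^\pm(r)\,g^\pm(x)$ with two a priori different profiles; it is the cross condition that identifies the two logarithmic rates ($\alpha^+_r=\alpha^-_r$), after which constancy of the joint frequency pins the common profile to $r^{I_0}$. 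You should either spell this out or, as the paper does, import homogeneity from Lemma \ref{lem:one sided tangent functions} side by side. Two smaller points: the limits $I_0^\pm$ and the $H^\pm$ asymptotics are proved for the adapted distance of Lemma \ref{l:good_vector_field}, not for $d(x)=|x|$, so the convex combination must be written with that $d$ and one passes to $|\cdot|$ only in the blow-up limit (as you in fact do when you let $d_{p,r_k}\to|\cdot|$); and the exact normalization $\D(g^+,B_1)+\D(g^-,B_1)=1$ requires applying Theorem \ref{thm:cpt_dm} on a slightly larger ball, since its conclusion (b) is stated for $\Omega'\subset\subset\Omega$.
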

\begin{proof} 
After a translation we may assume that $p=0$. Observe that both $x \mapsto f^+(x)$ and $x \mapsto f^-(x)$ satisfy the assumptions of Theorem \ref{thm:limit_ff}. 
Let us define the single normalization factors 
\[(\Delta^\pm_r)^2:=r^{2-m} \int_{B_r^\pm} \abs{Df^\pm}^2, \]
so that $\Delta^2_r=(\Delta^+_r)^2 + (\Delta_r^-)^2$. Thanks to Lemma \ref{lem:one sided tangent functions}, given any sequence $r_k \to 0$ there is a subsequence (not relabeled) such that $ \tilde{f}^\pm_k(x):= \frac{1}{\Delta_{r_k}^\pm} f^\pm(r_k x )$ converge to some $\tilde{g}^\pm(x)$, which are homogeneous with exponent $I_0^\pm$.  Since 
\[ 
\left(\hat{f}^+_r(x), \hat{f}^-_r(x)\right)= \left(\frac{\Delta_r^+}{\Delta_r} \tilde{f}^+_r(x), \frac{\Delta_r^-}{\Delta_r} \tilde{f}^-_r(x) \right),
\]
it is sufficient to understand the possible limits of $\alpha_k^\pm:=\frac{\Delta_{r_k}^\pm}{\Delta_{r_k}} \in [0,1]$.
Up to subsequences, we may assume that their limits exist and are $\alpha^\pm \ge 0$. Due to the properties of $\Delta^\pm_r$ and $\Delta_r$, we have
\[ 
(\alpha^+)^2+ (\alpha^-)^2 = 1.
\]
Point (a) agrees with the statement of Theorem \ref{thm:cpt_dm} since 
\[ \left(\hat{f}^+_{r_k}(x), \hat{f}^-_{r_k}(x)\right) \to (\alpha^+ \tilde{g}^+, \alpha^- \tilde{g}^-)=(g^+, g^-) .\]
We now distinguish three cases depending on the values of 
\[
I^\pm=\lim_{r \to 0} \frac{r D(f^\pm, r)}{H(r)}
\]
\emph{Case $I_0^+=I_0^-$:} In this case the tangent function $(g^+, g^-)$ is $I_0^+=I_0^-$ homogeneous and satisfies (b). Point (c) follows from the simple observation that
\[\frac{r \left(D (f^+, r) + D (f^-, r)\right)}{H (f^+, r)+ H (f^-,r)}=\frac{ \left(\frac{\Delta^+_r}{\Delta_r}\right)^2 D (\tilde{f}_r^+, 1) + \left(\frac{\Delta^-_r}{\Delta_r}\right)^2D (\tilde{f}_r^-, 1)}{\left(\frac{\Delta^+_r}{\Delta_r}\right)^2 H (\tilde{f}_r^+, 1)+ \left(\frac{\Delta^-_r}{\Delta_r}\right)^2 H(\tilde{f}^-_r,1)}.\]
\emph{Case $I_0^+ > I_0^-$:} We claim that in this case $\alpha^{+} =0 $, so that $(g^{+} , g^-)=(Q \a{0} , \tilde{g}^-)$ 
is $I_0=I_0^-$ - homogeneous. Pick $\lambda>1$ such that $\lambda I_0^- < \lambda^{-1} I_0^+$. For $r_1>0$ sufficiently small, such that Corollary \ref{cor:Dirichlet comparison} applies for $f^+$ and $f^-$, we may choose $r< r_1$. Using \eqref{eq:Dirichlet comparison}, for some fixed $t<r_1$ and for any $s<t$, we have that
\[ 
\frac{ \int_{B_s^+} \abs{Df^+}^2}{\int_{B_s^-} \abs{Df^-}^2} \le \lambda^{2m + 2\lambda I_0^-} \left(\frac{s}{t}\right)^{\lambda^{-1}I_0^+ - \lambda I_0^-} \frac{ \int_{B_t^+} \abs{Df^+}^2}{\int_{B_t^-} \abs{Df^-}^2}. 
\]
By our choice of $\lambda$ this converges to $0$ as $s \to 0$. \\
\emph{Case $I_0^+ < I_0^-$:} We argue as in the previous case swapping $+$ and $-$ and conclude that $\alpha^-=0$.
\end{proof}

\begin{definition}\label{d:tangent_function}
A $(g^+, g^-)$ as above will be called, from now on, a \emph{tangent function}\index{Tangent function} to $(f^+, f^-)$ at $p$. 
\end{definition}

\begin{remark}\label{rmk:inv} Let $(g^+, g^-)$ be a tangent function to some  $(f^+, f^-)$ at some point \(p\). Let  $q\in T_p \gammado\setminus \{0\}$ and let us consider a further tangent function $(g^+_1, g^-_1)$ to $(g^+, g^-)$ at $q$. 
Then, by \cite[Lemma 12.3]{DS1},  $(g^+_1, g^-_1)$ is invariant along the direction $q$, namely $g^\pm_1 (x+\lambda q) = g^\pm (x)$ for every $\lambda \in \R$.
\end{remark}

As a simple corollary we then conclude the following:

\begin{lemma}\label{lem:fed_red}
Let $(f^+, f^-)$ and $p\in \gammado$ be as in Lemma \ref{lem:blowup}. Consider a tangent function $(g^+, g^-)$ to $(f^+, f^-)$ at $p$.
Moreover fix a base $e_1, \ldots , e_{m-1}$ of $\pi = T_p \gammado $, and define inductively $(g_1^+, g_1^-)$ to be a tangent function to $(g^+, g^-)$ at $e_1$
and $(g^+_j, g^-_j)$ to be a tangent function to $(g^+_{j-1}, g^-_{j-1})$ at $e_j$. Then $(h^+, h^-)= (g^+_{m-1}, g^-_{m-1})$ is given by
$(Q \a{L}, (Q-1) \a{L})$, where $L$ is a nonzero linear function which vanishes on $\pi$.
\end{lemma}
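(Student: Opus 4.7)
My plan is a Federer-type dimension reduction via iterated blowups, with Remark \ref{rmk:inv} providing the mechanism that propagates translation invariances. I will prove inductively on $j = 0, 1, \ldots, m-1$ that at each stage $(g_j^+, g_j^-)$ is a $\qhalf$ $\D$-minimizer which (i) collapses at the flat interface $(\pi, 0)$, (ii) is radially homogeneous of some positive degree, and (iii) is translation invariant along $\mathrm{span}(e_1, \ldots, e_j)$. The base case $j = 0$ follows directly from Lemma \ref{lem:blowup} applied at $p$: $(g^+, g^-)$ collapses at the tangent hyperplane $\pi = T_p \gammado$, which is flat, and is radially homogeneous of degree $I_0 > 0$. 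For the inductive step $j - 1 \to j$, I apply Lemma \ref{lem:blowup} to $(g_{j-1}^+, g_{j-1}^-)$ at the point $e_j \in \pi$. The resulting tangent function still collapses at the flat interface $\pi$ (since the tangent hyperplane of $\pi$ at $e_j$ is $\pi$ itself), and Remark \ref{rmk:inv} yields translation invariance along the new direction $e_j$. The invariances along $e_1, \ldots, e_{j-1}$ pass to the limit by a direct check: if $g_{j-1}^\pm$ is invariant along $e_i$ for $i < j$, then so is every rescaling $\Delta_r^{-1} g_{j-1}^\pm(e_j + r\, \cdot)$, and hence so is the limit by Theorem \ref{thm:cpt_dm}.

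After $m - 1$ iterations, $(h^+, h^-) = (g_{m-1}^+, g_{m-1}^-)$ is translation invariant along the entire hyperplane $\pi$. Choosing coordinates so that $\pi = \{x_m = 0\}$, the pair depends only on $x_m$; combined with radial homogeneity of some degree $\alpha > 0$, this gives
\[
h^+(x) = x_m^\alpha \sum_{i=1}^{Q} \a{v_i^+}, \qquad h^-(x) = (-x_m)^\alpha \sum_{k=1}^{Q-1} \a{v_k^-},
\]
for $x_m > 0$ and $x_m < 0$ respectively, with suitable vectors $v_i^\pm \in \R^n$. Since the sheets of $h^+$ on $\{x_m > 0\}$ are disjoint (they meet only on the boundary $\pi$), each individual sheet $x \mapsto x_m^\alpha v$ must be harmonic by the standard selection theory for $Q$-valued $\D$-minimizers, and $\Delta(x_m^\alpha v) = \alpha(\alpha - 1) x_m^{\alpha - 2} v = 0$ forces $\alpha \in \{0, 1\}$ whenever $v \neq 0$. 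The normalization $\D(h^+) + \D(h^-) = 1$ in $B_1$ from Lemma \ref{lem:blowup}(b) rules out $\alpha = 0$, so $\alpha = 1$ and every sheet of $h^\pm$ is a linear function vanishing on $\pi$.

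The last step is to identify all of these sheets with a single linear function $L$. This exploits the $\qhalf$-minimality across the interface beyond the individual $\D$-minimality of $h^+$ and $h^-$: the ``extra'' $Q$-th sheet of $h^+$ that terminates at $\pi$ must align with the $Q - 1$ common sheets that extend continuously across $\pi$ from $h^-$ to $h^+$, because any discrepancy in slopes could be removed by a rearrangement of sheets that respects the interface condition while strictly decreasing the Dirichlet energy. The nonvanishing of $L$ is then again guaranteed by Lemma \ref{lem:blowup}(b).

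The main obstacle I anticipate is precisely this final consolidation. The Federer reduction up to a one-dimensional collection of linear sheets is fairly routine machinery; however, showing that all $Q + (Q - 1)$ sheets must share a single slope vector $v \in \R^n$ requires a careful variational argument using the full $\qhalf$-structure. A natural path is to assume that two sheets of $h^+$ carry different slopes, and then, exploiting linearity, homogeneity, and the trace identity $h^+|_\pi = h^-|_\pi + \a{0}$, construct an explicit competitor that swaps sheet slopes across $\pi$ and decreases the energy, thereby contradicting minimality.
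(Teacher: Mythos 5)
Your reduction to a one-variable, homogeneous configuration is essentially the paper's route: iterate Remark \ref{rmk:inv} (tracking the accumulated translation invariances, which you correctly note pass to the limit), conclude that $(h^+,h^-)$ depends only on $x_m$ and is homogeneous of some degree $\alpha>0$, and use harmonicity of the individual sheets together with the normalization $\D(h^+)+\D(h^-)=1$ to force $\alpha=1$. Up to that point the proposal is sound.

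The gap is exactly where you flag it, and it is the heart of the lemma: you never prove that the $Q+(Q-1)$ slopes coincide, and the mechanism you sketch does not work as stated. A ``rearrangement'' or ``swap'' of sheet slopes across $\pi$ leaves the total energy $\D(h^+)+\D(h^-)$ unchanged, since the Dirichlet energy is blind to how sheets are paired at the interface, so no contradiction with minimality can come from re-pairing alone. The paper's argument (cf.\ Remark \ref{rmk:bcond}) is different: because the interface condition only requires the identity of unordered traces $h^+|_{\gammado}=h^-|_{\gammado}+\a{0}$, one may simultaneously modify one chosen sheet of $h^+$ and one chosen sheet of $h^-$, letting them take an arbitrary common (possibly nonzero) value on $\gammado$, and the competitor remains admissible because the extra $\a{0}$ is matched to one of the untouched sheets. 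Consequently, for every pair $(i,j)$ the glued one-variable function $k$ equal to $\beta_j^+x_m$ for $x_m\ge 0$ and to $\beta_i^-x_m$ for $x_m<0$ must minimize the Dirichlet energy subject only to its outer boundary values; if $\beta_j^+\neq\beta_i^-$, replacing this pair by the linear (harmonic) interpolation strictly decreases the energy, a contradiction, so $\beta_j^+=\beta_i^-$ for all $i,j$. Without this admissibility observation --- that only one sheet is pinned at the interface while the others may be moved over $\gammado$ --- the conclusion does not follow; indeed it is false if competitors are required to keep all sheets fixed on $\gammado$, which is precisely the point of Remark \ref{rmk:bcond}.
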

\begin{proof} Assume $\pi = \{x: x_m =0\}$.
Applying the remark above \(m\) times we infer the existence of a  map  $(h^+, h^-)$ with the following properties:
\begin{itemize}
\item $(h^+, h^-)$ is a $\qhalf$ $\D$-minimizer which collapses at the interface $(\pi, 0)$;
\item $(h^+, h^-)$ depends only on $x_m$, namely there exist  $Q$-valued function $\alpha^+:\mathbb R_{+}\to \Iqs$ and a 
$(Q-1)$-valued function $\alpha^-: \mathbb R_{-}\to \Iqqs$ such that $h^\pm (x) = \alpha^\pm (x_m)$;
\item $(h^+, h^-)$ is an $I$-homogeneous function for some $I>0$, namely there is a $Q$-point $P$ and a $(Q-1)$-point $P'$ such that $\alpha^+ (x_m)= x_m^I P$ and $\alpha^- (x_m) = (-x_m)^I P'$.
\item $\D (h^+, B_1)+ \D (h^-, B_1)  = 1$.
\end{itemize}
Since $(h^+, h^-)$ is a $\D$-minimizer both $h^+$ and $h^-$ are classical harmonic functions and, since they depend only upon one variable, we necessarily have that $I=1$. So there are coefficients $\beta_1^+, \ldots, \beta_Q^+$ and $\beta_1^-, \ldots , \beta_{Q-1}^-$ such that
\begin{align*}
h^+ (x) = & \sum_{i=1}^Q \a{\beta_i^+ x_m}\\
h^- (x) = & \sum_{i=1}^{Q-1} \a{\beta_i^- x_m}\, .
\end{align*}
If $Q=1$, then there is nothing to prove. If $Q>1$, then necessarily for every choice of $i$ and $j$ the function 
\[
k (x) =
\left\{
\begin{array}{ll}
\beta^+_j x_m \qquad & \mbox{if $x_m\geq 0$}\\ \\
\beta_i^- x_m \qquad & \mbox{if $x_m<0$}
\end{array}
\right.
\]
must be harmonic and hence linear. This implies that all $\beta^-_i$ and $\beta^+_j$ coincide. The claim of the lemma follows.
\end{proof}

\begin{remark}\label{rmk:bcond}
The above result is the key step to establish Theorem \ref{thm:collasso}. Note that in proving that the only \(1\) homogeneous \(1\) dimensional $\qhalf$ $\D$-minimizer which collapses at the interfaces $(\pi, 0)$ we have used in an essential way that only one sheet has to take care of the interface, while the values of the others can be modified even over $\gamma$. In other words the above result is easily seen to be false if we would have required to be minimizers only with respect to variations that keep the pair $f^+$ and $f^-$ completely fixed over $\gamma$.
\end{remark}

As a simple corollary  of the above Lemma we have:

\begin{corollary}\label{cor:av=0}
Assume $(f^+, f^-)$ is a $\qhalf$ $\D$-minimizer with collapsed interface $(\gammado, 0)$, where $\gammado$ is $C^3$. If $\etaa \circ f^- = \etaa\circ f^+ = 0$, then $f^+= Q\a{0}$ and $f^- = (Q-1) \a{0}$. 
\end{corollary}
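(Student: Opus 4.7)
The plan is to argue by contradiction and combine the blow-up/Federer reduction machinery just developed (Lemmas \ref{lem:blowup} and \ref{lem:fed_red}) with the observation that the zero-average condition is preserved by rescalings and by the convergence of Theorem \ref{thm:cpt_dm}.

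Suppose, towards a contradiction, that $(f^+,f^-)\not\equiv (Q\a{0},(Q-1)\a{0})$; without loss of generality, $f^+\not\equiv Q\a{0}$ on $\Omega^+$. The first step is to locate a point $p\in\gammado$ at which $(f^+,f^-)$ is nontrivial on every scale, i.e., such that $\D(f^+,B_r^+(p))+\D(f^-,B_r^-(p))>0$ for all $r>0$. If no such $p$ existed, a covering argument would produce an open neighborhood $U\supset\gammado$ on which $(f^+,f^-)$ is constant and equal to $(Q\a{0},(Q-1)\a{0})$; in particular $f^+\equiv Q\a{0}$ on the nonempty open set $U\cap\Omega^+$. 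Since $f^+$ is a classical Almgren $\D$-minimizer in $\Omega^+$, the interior unique continuation for $Q$-valued $\D$-minimizers (consequence of the monotonicity of Almgren's interior frequency function, cf.\ \cite{DS1}) would then force $f^+\equiv Q\a{0}$ throughout $\Omega^+$, contradicting our standing assumption.

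Having fixed such $p\in\gammado$, apply Lemma \ref{lem:blowup} to extract a tangent function $(g^+,g^-)$ which is a $\qhalf$ $\D$-minimizer collapsing at the flat interface $(T_p\gammado,0)$, which is radially homogeneous of degree $I_0>0$, and which satisfies $\D(g^+)+\D(g^-)=1$. The crucial remark is that the zero-average property persists: since $\etaa\circ\bigl(\sum_i\a{h_i}\bigr)=\frac{1}{Q}\sum_i h_i$ is linear in the sheets and commutes with dilations, each rescaling $\hat f^\pm_{p,r}$ satisfies $\etaa\circ\hat f^\pm_{p,r}\equiv 0$. The convergence provided in part (a) of Lemma \ref{lem:blowup} (strong $L^2$ convergence, as granted by Theorem \ref{thm:cpt_dm}) is strong enough to transfer the identity to the limit, whence $\etaa\circ g^\pm\equiv 0$.

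Now feed $(g^+,g^-)$ into the Federer reduction of Lemma \ref{lem:fed_red}: iterating the tangent-function construction along a basis $e_1,\ldots,e_{m-1}$ of $T_p\gammado$ yields a $\qhalf$ $\D$-minimizer $(h^+,h^-)$ of the explicit form
\[
h^+ = Q\a{L}\,,\qquad h^- = (Q-1)\a{L}\,,
\]
where $L:\R^m\to\R^n$ is a \emph{nonzero} linear map vanishing on $T_p\gammado$. At each step of the iteration we are again taking a blow-up limit in the sense of Lemma \ref{lem:blowup}, so the vanishing average property is preserved; in particular $\etaa\circ h^+=L\equiv 0$, contradicting $L\neq 0$. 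This contradiction forces $(f^+,f^-)\equiv(Q\a{0},(Q-1)\a{0})$.

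The main conceptual point—and the only nontrivial check—is the existence of a good blow-up point on $\gammado$; the rest is a clean application of the two structural lemmas plus the obvious linearity/stability of $\etaa$ under the rescaling and convergence used throughout the chapter.
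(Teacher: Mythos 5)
Your proposal is correct and follows essentially the same route as the paper: argue by contradiction, use interior unique continuation to find a boundary point with positive energy at every scale, feed it into the Federer reduction of Lemma \ref{lem:fed_red}, observe that the vanishing-average condition is inherited by every tangent function, and conclude that the nonzero linear map $L$ would have to equal $\etaa\circ h^\pm$ and hence vanish. The only difference is that you spell out the stability of $\etaa\circ(\cdot)$ under rescaling and strong $L^2$ convergence, which the paper leaves implicit.
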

\begin{proof} If $(f^+, f^-)$ is identically $(Q\a{0}, (Q-1) \a{0})$ in a neighborhood $U$ of a point $p\in \gammado$, then, by the interior regularity theory of $\D$-minimizer, $(f^+, f^-)$ is identically $(Q\a{0}, (Q-1) \a{0})$ in the connected component of the domain of $(f^+, f^-)$ which contains $p$. Thus, if the corollary were false, then there would be a point $p$ such that $\D (f^+, B_r (p)) + \D (f^-, B_r (p)) >0$ for every $r>0$.

If we consider $(h^+, h^-)$ as in Lemma \ref{lem:fed_red}, we conclude that $\etaa \circ h^+ = \etaa \circ h^-=0$, since such property is inherited by each tangent map. But then the nonzero linear function $L$ of the conclusion of Lemma \ref{lem:fed_red} should equal $\etaa \circ h^+$ on $\{x_m>0\}$ and $\etaa\circ h^-$ on $\{x_m \leq 0\}$. Hence $L$ should vanish identically, contradicting Lemma \ref{lem:fed_red}.
\end{proof}

\begin{corollary}\label{cor:interface=0}
Theorem \ref{thm:collasso} holds when $\varphi =0$.
\end{corollary}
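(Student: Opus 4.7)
The plan is to reduce Corollary \ref{cor:interface=0} to Corollary \ref{cor:av=0} by subtracting off a canonical harmonic ``average''. Since any variation of $f^+$ supported in $\Omega^+$ with trace unchanged on $\partial \Omega^+$ (in particular on $\gammado$) is an admissible variation for the pair $(f^+, f^-)$, the map $f^+$ is $\D$-minimizing as a classical $Q$-valued map on $\Omega^+$ with its own Dirichlet data; hence $h^+:=\etaa\circ f^+$ is harmonic on $\Omega^+$, and analogously $h^-:=\etaa\circ f^-$ is harmonic on $\Omega^-$. The collapse condition $f^+|_\gammado = Q\a{0}$, $f^-|_\gammado = (Q-1)\a{0}$ forces $h^+|_\gammado = 0 = h^-|_\gammado$, so the assignment $h|_{\Omega^\pm}:=h^\pm$, $h|_\gammado:=0$ defines a continuous function $h:\Omega\to\R^n$ whose restrictions to $\Omega^\pm$ are harmonic.

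The second step is to centre the map. I will introduce
\[
g^\pm \,:=\, \sum_i \a{\,f^\pm_i - h|_{\Omega^\pm}\,},
\]
which by construction satisfies $\etaa\circ g^\pm \equiv 0$ and still collapses at the interface $(\gammado,0)$, since $h$ vanishes on $\gammado$. What remains is to verify that $(g^+,g^-)$ is itself a $\qhalf$ $\D$-minimizer: granted this, Corollary \ref{cor:av=0} yields $g^+ = Q\a{0}$ and $g^- = (Q-1)\a{0}$, which rewrites as $f^\pm = (Q,Q-1)\a{h|_{\Omega^\pm}}$, the desired conclusion.

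The $\D$-minimality of $(g^+,g^-)$ is proved by a Pythagorean-type identity, and this is the only technical point of the argument. Given an admissible competitor $(\tilde g^+,\tilde g^-)$ for $(g^+,g^-)$ that agrees with it outside some $K\subset\subset\Omega$, I set $\tilde f^\pm := \sum_i \a{\,\tilde g^\pm_i + h|_{\Omega^\pm}\,}$ and check that $(\tilde f^+,\tilde f^-)$ is a competitor for $(f^+,f^-)$ with the same interface $(\gammado, 0)$ and the same values outside $K$. Expanding the squares and writing $Q_+=Q$, $Q_-=Q-1$, I obtain
\[
\int_{\Omega^\pm}\!|D\tilde f^\pm|^2 = \int_{\Omega^\pm}\!|D\tilde g^\pm|^2 \,+\, 2\,Q_\pm\!\int_{\Omega^\pm}\! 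D(\etaa\circ\tilde g^\pm)\cdot Dh^\pm \,+\, Q_\pm\!\int_{\Omega^\pm}\!|Dh^\pm|^2.
\]
The cross term vanishes after one integration by parts: $h^\pm$ is harmonic on $\Omega^\pm$, and the scalar function $\etaa\circ\tilde g^\pm$ vanishes on the whole boundary $\partial\Omega^\pm$, namely on $\gammado$ because the competitor has interface $(\gammado,0)$, and on $\partial\Omega\cap\partial\Omega^\pm$ because there $\tilde g^\pm = g^\pm$ (the condition $K\subset\subset \Omega$ is used crucially here) and $\etaa\circ g^\pm\equiv 0$ by construction. Writing the same identity with $\tilde g^\pm,\tilde f^\pm$ replaced by $g^\pm,f^\pm$ and subtracting, the minimality of $(f^+,f^-)$ transfers verbatim to minimality of $(g^+,g^-)$, whereupon Corollary \ref{cor:av=0} closes the proof.
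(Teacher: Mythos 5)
Your overall strategy---subtract a harmonic average and invoke Corollary \ref{cor:av=0}---is the same as the paper's, but your proof has a genuine gap at its only technical point, and that gap is precisely what the paper's argument is about. A competitor $(\tilde g^+,\tilde g^-)$ for the pair is only required to have interface $(\gammado,0)$, i.e. $\tilde g^+|_\gammado=\tilde g^-|_\gammado+\a{0}$; it is \emph{not} required to collapse at the interface. Hence $\etaa\circ\tilde g^\pm$ need not vanish on $\gammado$: the interface condition only gives $Q\,(\etaa\circ\tilde g^+)=(Q-1)\,(\etaa\circ\tilde g^-)$ there. Your integration by parts therefore leaves the boundary term $2\int_{\gammado}Q\,(\etaa\circ\tilde g^+)\,(\partial_\nu h^+-\partial_\nu h^-)$ (with $\nu$ a fixed unit normal to $\gammado$), which vanishes for \emph{all} competitors only if $\partial_\nu h^+=\partial_\nu h^-$ along $\gammado$. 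This matching of the one-sided normal derivatives is exactly what you have not proved, and it is not free: your $h$ is only piecewise harmonic, and a priori the two normal derivatives can differ (e.g. $h^+=a x_m$, $h^-=b x_m$ with $a\neq b$ satisfy everything you have established). Note also that even if the minimality-transfer step were repaired, you would end with a merely continuous, piecewise harmonic $h$, whereas Theorem \ref{thm:collasso} asserts the existence of a \emph{single} function harmonic on all of $\Omega$; so the final sentence "the desired conclusion" overstates what your argument delivers.

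The paper's proof is organized around exactly this missing step: it first proves the claim $\partial_\nu(\etaa\circ f^+)=\partial_\nu(\etaa\circ f^-)$ on $\gammado\cap\Omega$, so that the glued average $\zeta$ is harmonic \emph{across} the interface, and only then subtracts $\zeta$ and applies Corollary \ref{cor:av=0}. The matching of normal derivatives is obtained by contradiction via a blow-up at an interface point where they would differ: the rescaled maps converge to a collapsed $\qhalf$ $\D$-minimizer of unit energy; if the Dirichlet energy decays slower than linearly the limit has vanishing averages and Corollary \ref{cor:av=0} forces it to be trivial, a contradiction, while in the linear-decay case the limit has two \emph{distinct} linear averages, and iterating tangent maps one contradicts Lemma \ref{lem:fed_red}, which says the fully homogeneous one-dimensional pair must be $(Q\a{L},(Q-1)\a{L})$ with the \emph{same} linear $L$ on both sides. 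This rigidity uses the full $\qhalf$ minimality---competitors may move the sheets along $\gammado$, cf. Remark \ref{rmk:bcond}---whereas your argument only exploits variations supported in each half-domain separately and therefore cannot see it.
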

\begin{proof}
We start noticing that by classical elliptic regularity, the functions $\etaa \circ f^\pm$  belong to \(C^1(\Omega^\pm\cup \gamma)\).
Let $\nu$ be the unit normal to $\gammado$.  We claim that 
\begin{equation}\label{e:int0_claim}
\partial_\nu (\etaa\circ f^+)(p)= \partial_\nu (\etaa\circ f^-)(p) \qquad \textrm{for all \(p\in \gamma \cap \Omega\)}.
\end{equation} 
The claim will be proved below, whereas we first show that it is enough to conclude.
Indeed it implies that the function
\begin{equation}\label{e:incolla_media}
\zeta =
\left\{\begin{array}{ll}
\etaa \circ f^+\qquad \mbox{on $\Omega^+$}\\ \\
\etaa \circ f^-\qquad \mbox{on $\Omega^-$}
\end{array}\right.
\end{equation}
is a harmonic function. Now let us subtract it from $(f^+, f^-)$, namely let us define the functions
\begin{align}
\tilde{f}^+ = & \sum_i \a{f^+_i - \zeta}\label{e:tilde+}\\
\tilde{f}^- = & \sum_i \a{f^-_i - \zeta}.\label{e:tilde-}
\end{align}
We conclude that $(\tilde{f}^+, \tilde{f}^-)$ is a $\qhalf$ $\D$-minimizer which collapses at the interface $(\gammado , 0)$
and that $\etaa\circ \tilde{f}^+ = \etaa\circ \tilde{f}^- =0$. Thus we apply Corollary \ref{cor:av=0} and conclude that $\tilde{f}^+ = Q \a{0}$ and $\tilde{f}^- = (Q-1) \a{0}$, which complete the proof.

To prove claim \eqref{e:int0_claim} assume by contradiction that, at some point $p\in \gammado\cap \Omega$, we have $\partial_\nu (\etaa\circ f^+) (p) \neq \partial_\nu (\etaa \circ f^-) (p)$ and consider a tangent function $(g^+, g^-)$ to $(f^+, f^-)$ at $p$, which is the limit of some $(f^+_{p, \rho_k}, f^-_{p, \rho_k})$. Observe that, since at least one among $\partial_\nu (\etaa\circ f^+) (p)$ and $\partial_\nu (\etaa \circ f^-) (p)$ differs from $0$, we necessarily have
\[
\D (f^+, B_{\rho_k} (p)) + \D (f^-, B_{\rho_k} (p)) \geq c_0 \rho_k^m
\]
for some constant $c_0$. We then have just two possibilities:
\begin{itemize}
\item[(A)] $\limsup_k (\rho_k)^{-m} (\D (f^+, B_{\rho_k} (p)) + \D (f^-, B_{\rho_k} (p))) = \infty$. In this case the tangent function $(g^+, g^-)$ has zero average, i.e. 
\[
\etaa\circ g^+ = \etaa\circ g^-=0\, .
\] 
By Corollary \ref{cor:interface=0}, $(g^+, g^-)$ should be trivial. But this is not possible because $\D (g^+, B_1) + \D (g^-, B_1)=1$. 
\item[(B)] $\limsup_k (\rho_k)^{-m} (\D (f^+, B_{\rho_k} (p)) + \D (f^-, B_{\rho_k} (p))) < \infty$. In this case we have that $\etaa\circ g^+$ and $\etaa\circ g^-$ are also nontrivial and linear. Moreover they are two distinct linear functions.
\end{itemize}
We can apply this argument to the tangent functions of $(g^+, g^-)$ and since the case (A) is always excluded, after applying it $m-1$ times, we reach  a pair $(h^+, h^-)$ as in Lemma \ref{lem:fed_red}, with the property that $\etaa\circ h^+$ and $\etaa\circ h^-$ are two distinct linear functions. However this contradicts the conclusion of Lemma \ref{lem:fed_red}.
\end{proof}

\section{Proof of Theorem {\ref{thm:collasso}}: general case}

\begin{proof}
Let $\nu$ be the unit normal to $\gammado$. As above, we claim that \[
\partial_\nu (\etaa\circ f^+) = \partial_\nu (\etaa\circ f^-)\,.
\]
With this claim,
proceeding as in the proof of Corollary \ref{cor:interface=0}, we can define $\zeta$ as in \eqref{e:incolla_media} and conclude that it is a harmonic function. We then define $(\tilde{f}^+, \tilde{f}^-)$ as in \eqref{e:tilde+} and \eqref{e:tilde-}. To this pair we can apply Corollary \ref{cor:av=0} and conclude.

To prove the claim, assume by contradiction that, for some $p\in \gammado$, we have that  $\partial_\nu (\etaa\circ f^+)(p)\ne \partial_\nu (\etaa\circ f^-)(p)$. . Without loss of generality we can assume that $p=0$, $\varphi (0) =0$ and $D\varphi (0) =0$. Since at least one among $Df^\pm (0)$ does not vanish, we must have
\begin{equation}\label{e:lineare}
\D (f^+, B_\rho) + \D (f^-, B_\rho) \geq c_0 \rho^{m}
\end{equation}
for some positive constant $c_0$. It also means that there exist a
constant $\eta>0$ and a sequence $\rho_k\downarrow 0$ such that
\[
\D (f^+, B_{\rho_k}) + \D (f^-, B_{\rho_k}) \geq \eta (\D (f^+, B_{2\rho_k}) + \D (f^-, B_{2\rho_k}))\, ,
\]
otherwise we would contradict the lower bound \eqref{e:lineare}. If we now define the blow-up functions 
\[
f^\pm_{ \rho_k}(x):=\frac{f^\pm(\rho_k x)}{\D (f^+, B_{\rho_k}) + \D (f^-, B_{\rho_k})}\, .
\]
 we see  that they have finite energy on $B_2$ and thus there is strong convergence of a subsequence to a $\qhalf$ $\D$-minimizer $(g^+, g^-)$ with interface $(T_p \gammado, 0)$. 
 The latter must then have Dirichlet energy $1$ on $B_1$. We then have two possibilities:
\begin{itemize}
\item[(A)] $\limsup_k (\rho_k)^{-m} (\D (f^+, B_{\rho_k}) + \D (f^-, B_{\rho_k})) = \infty$. Arguing as in the proof of Corollary \ref{cor:av=0}, this gives that $\etaa \circ g^+ = \etaa\circ g^-=0$. Thus, applying Corollary \ref{cor:av=0} we conclude that $(g^+, g^-)$ is trivial, which is a contradiction.
\item[(B)] $\limsup_k (\rho_k)^{-m} (\D (f^+, B_{\rho_k}) + \D (f^-, B_{\rho_k})) < \infty$. Assuming in this case that $T_0 \gammado = \{x_m =0\}$, we conclude that $(g^+, g^-)$ is a $\qhalf$ $\D$-minimizer with flat interface $(T_0 \gammado, 0)$, but also that
$\etaa \circ g^\pm (x) = \bar{c} \partial_\nu (\etaa \circ f^\pm) (0) x_m$ for some positive constant $\bar c$. By Corollary \ref{cor:interface=0}, we then conclude that $\partial_\nu (\etaa\circ f^+) (0) = \partial_\nu (\etaa\circ f^-) (0)$.
\end{itemize}
\end{proof}

\chapter{First Lipschitz approximation and harmonic blow-up}\label{chap:Lip1}

In this chapter we assume that $\pi_0 = \R^m\times \{0\}$\index{aagq\pi_0@$\pi_0$} and we use the notation $\bp$
and $\bp^\perp$
for the orthogonal projections onto $\pi_0$ and $\pi_0^\perp$ respectively., whereas $\bp_\pi$ and $\bp_\pi^\perp$ will denote, respectively, the orthogonal projections onto the plane $\pi$ and its orthogonal complement $\pi^\perp$. We also introduce the notation $B_r (p, \pi)$\index{aalb B_r(p, \pi)@$B_r(p, \pi)$} for the disks $\bB_r (p) \cap (p+\pi)$ and 
$\bC_r (p, \pi)$\index{aalc\bC_r (p, \pi)@$\bC_r (p, \pi)$} for the cylinders $B_r (p, \pi) + \pi^\perp$. If $\pi$ is omitted, then we assume $\pi=\pi_0$. 

\begin{definition}\label{e:excess}
For a current $T$ in a cylinder $\bC_r (p, \pi)$ we define the \emph{cylindrical excess}\index{Cylindrical excess} $\bE$\index{aale\bE (T, \bC_r (p, \pi))@$\bE (T, \bC_r (p, \pi))$} and the \emph{excess measure}\index{Excess measure} $\be_T$\index{aale\be_T@$\be_T$} of a set $F\subset B_{4r}(\bp_\pi(p),\pi)$ as
\begin{align*}
\bE (T, \bC_r (p, \pi)) := \; &\frac{1}{2\omega_m r^m} \int_{\bC_r (p, \pi)} |\vec{T}-\vec{\pi}|^2\, d\|T\|\\
\be_T(F):=\; & \frac{1}{2} \int_{F + \pi^\perp} |\vec{T}-\vec{\pi}|^2\, d\|T\|\,.
\end{align*}
The \emph{height}\index{Height}\index{aalh\bh@$\bh$} in a set $G\subset \mathbb R^{m+n}$ with respect to a plane $\pi$ is defined as
\begin{equation}
\bh (T, G, \pi) := \sup \{|\bp^\perp_\pi (q-p)|: q,p\in \supp (T)\cap G\}\, .
\end{equation}
\end{definition}

The aim of this chapter is to produce a Lipschitz $\qhalf$-valued approximation for area-minimizing currents in a neighborhood of boundary points where the latter are sufficiently flat. For this reason we will introduce a set of assumptions: in this chapter we will work under these assumptions and only later we will show
when we will in fact fall under them. In what follows, in order to simplify our notation, we will assume that $(x,0)\in \pi_0$ and we will abuse the notation by identifying
$\mathbb R^m$ with $\pi_0 = \mathbb R^m\times \{0\}$: in particular we will use $\bC_r (x)$ for the cylinder $\bC_r (x, \pi_0)$ and
we will use the same symbol $F$ for subsets $F\subset \mathbb R^m$ and for the corresponding $F\times \{0\} \subset \pi_0$. Similarly we will write $F\times \mathbb R^n$ for the set $F\times \{0\} + \pi_0^\perp$. 

\begin{ipotesi}\label{Ass:app} $\gammaup \subset \Sigma$ is a $C^2$ submanifold of dimension $m-1$ and $\Sigma \subset \R^{m+n}$ is a $C^2$ submanifold of dimension $m + \bar{n}= m+ n - l$ containing $\gammaup$. We assume moreover that both $\Sigma$ and $\gammaup$ are graphs of entire functions\index{aagx\Psi@$\Psi$}\index{aagx\psi@$\psi$} $\Psi: \R^{m+ \bar{n}} \to \R^l$ and $\psi: \R^{m-1}\to \R^{\bar{n}+1+l}$ satisfying the bounds \index{aala\bA@$\bA$}
\begin{equation}\label{eq:joni-1}
\norm{D\psi}_0 + \norm{D\Psi}_0\le c_0 \text{ and } \bA:= \norm{A_\gammaup}_0 + \norm{A_\Sigma}_0 \le c_0
\end{equation}
where $c_0$ is a positive (small) dimensional constant.

$T$ is an integral current of dim. $m$ with $\partial T \res \bC_{4r} (x)= \a{\gammaup}\res \bC_{4r} (x)$ and $\supp (T)\subset \Sigma$. Moreover we assume that
\begin{itemize}
\item[(i)] $p = (x,0) \in \gammaup$ and $T_p \gammaup = \R^{m-1}\times \{0\}\subset \pi_0$;
\item[(ii)] $\gammado=\bp(\gammaup)$ divides $B_{4r}(x)$ in two disjoint open sets $\Omega^+$ and $\Omega^-$;
\item[(ii)]  for some integer $Q$
\begin{equation}\label{eq:joni-2}
\bp_{\#} T= Q \a{ \Omega^+} + (Q-1) \a{\Omega^-};
\end{equation}
\item[(iv)] $T$ is area minimizing in $\Sigma\cap \bC_{4r}(x)$;
\item[(v)] $Q-\frac{1}{2} \leq \Theta (T,q)$ for every $q\in \gammaup \cap \bC_{4r} (x)$.  
\end{itemize}
\end{ipotesi}

Observe that thanks to  \eqref{eq:joni-2} we have the identities
\begin{align}\label{e:excess_2}
\bE (T, \bC_{4r} (x)) =\; &\frac{1}{\omega_m r^m} \left( \|T\| (\bC_{4r} (x)) - (Q |\Omega^+| + (Q-1) |\Omega^-|)\right)\\
\be_T (F) =\; & \|T\| (F\times\R^n) - (Q |\Omega^+\cap F| + (Q-1) |\Omega^-\cap F|)\,.\label{e:excess_3}
\end{align}

\begin{definition}\label{def:mexc}
Given a current $T$ in a cylinder $\bC_{4r} (p,\pi)$ we introduce the \emph{non-centered maximal function}\index{Non-centered maximal function}\index{aalm\me_T@$\me_T$} of $\be_T$ as
\[
\me_T(y):=\sup_{y\in B_s(z, \pi)\subset B_{4r}(p, \pi)}\frac{\be_T(B_s(y, \pi))}{\omega_m s^m}\,.
\]
\end{definition}

Again abusing the notation, under Assumption \ref{Ass:app} we regard $\me_T$ has a function on $B_{4r} (x) \subset \mathbb R^m$. 

In what follows, given a \(Q\)-valued function \(u\), we denote by $\gr (u)$\index{aalg\gr(u)@$\gr (u)$} and $\bG_u$\index{aalg\bG_u@$\bG_u$} respectively the set theoretic graph of $u$ and the integer rectifiable current
naturally induced by it. For the precise definition we refer to \cite{DS2}.  We next rotate the coordinates keeping $\pi_0$ fixed and
achieving suitable estimates for $D\Psi$: the argument is the same as in \cite[Remark 2.5]{DS3}. 

\begin{remark}[Estimates on $\Psi$ in good Cartesian coordinates]\label{r:Psi}
Assume that $T$ is as in Assumption \ref{Ass:app} in the cylinder $\bC_{4r} (x)$. If 
$E := \bE( T, \bC_{4r} (x))$ is smaller than a geometric constant,
we can assume, without loss of generality, that the function
$\Psi: \R^{m+\bar n} \to \R^l$ parameterizing $\Sigma$ satisfies 
$\Psi(x) = 0$, $\|D \Psi\|_0 \leq C\, E^{\sfrac{1}{2}} + C \bA r$ and $\|D^2 \Psi\|_0 \leq C\bA$.
Indeed observe that
\[
E = \bE(T, \bC_{4r} (x)) =
\frac{1}{2\,\omega_m\,(4r)^m} \int_{\bC_{4r}(x)} |\vec T(y) - \vec \pi_0|^2 \, d\|T\|(y)\, .
\]
Thus, we can fix a point $p \in \supp(T)\cap \bC_{4r} (x)$ such that 
$|\vec{T}(p) - \vec \pi_0|\leq C\,E^{\sfrac{1}{2}}$.
Then, we can find an associated rotation $R \in  {\mathcal O}(m+\overline n,\R)$ such that
$R_\sharp\vec{T}(p) = \vec\pi_0$ and $|R- {\rm Id}|\leq C\, E^{\sfrac{1}{2}}$. 
It follows that $\pi := R (T_p\Sigma)$ is a $(m + \bar n)$-dimensional 
plane such that $\pi_0 \subset\pi$ and 
$\|\pi - T_p \Sigma\| \leq C E^{\sfrac12}$. We choose new coordinates
so that $\pi_0$ remains equal to $\R^m\times \{0\}$ but $\R^{m+\bar{n}}\times \{0\}$
equals $\pi$. 
Since the excess $E$ is assumed to be sufficiently small, 
we can write $\Sigma$ as the graph of a function $\Psi: \pi\to \pi^\perp$. If $(z, \Psi (z))=p$,
then $|D \Psi (z)|\leq C \|T_p \Sigma - \R^{m+\bar n}\times\{0\}\| \leq C E^{\sfrac12}$.
However, $\|D^2 \Psi\|_{0}\leq C\bA$ and so
$\|D \Psi\|_{0} \leq C E^{\sfrac{1}{2}} + C\bA r$. Moreover,
$\Psi (x) =0$ is achieved translating the system of reference by a vector orthogonal to
$\R^{m+\bar n}\times \{0\}$ and, hence, belonging to $\{0\}\times \R^l$.
\end{remark}

We introduce the notation $\Lip (u)$ for the Lipschitz constant of a $Q$-valued map $u= \sum_i u_i$ and ${\rm osc}\, u$ for its oscillation, which is defined
as in \cite{DS3} by \index{aall\Lip@$\Lip (u)$} \index{oscillation} \index{aalo{\rm osc}@${\rm osc}\, (u)$}
\[
{\rm osc}\, (u) = \sup_{z,y,i,j} |u_i (z) - u_j (y)|\, ,
\]
and let $\psi':\gamma \to \mathbb R^n$ be the function\footnote{ If $\psi_1$ is the first of component of the map $\psi$, then 
\[
\gamma = \{(x', \psi_1 (x'), 0): x'\in \mathbb R^{m-1}\}\, .
\] 
In particular $\psi'$ can be regarded as a function of $x'$ and in particular we have $\psi (x')=(\psi_1 (x'), \psi' (x'))$. In the remaining part of the section we will adopt the latter convention.} whose graph coincides with $\Gamma$.

\begin{theorem}\label{t:Lipschitz_1}\label{T:LIPSCHITZ_1}
There are positive geometric constants $C$ and $c_0$ with the following properties. Assume $T$ satisfies Assumption \ref{Ass:app},
$E := \bE (T, \bC_{4r} (x)) \leq c_0$ and $\|D\Psi\|_0 \leq C ( E^{\sfrac{1}{2}} + \bA r)$. Then, for any $\delta_*\in (0,1)$, there are a closed set $K\subset B_{3r} (x)$ and a $\qhalf$-valued function $(u^+, u^-)$
on $B_{3r} (x)$ which collapses at the interface {\color{red} $(\gammado, \psi')$} satisfying the following properties:
\begin{align}
& \Lip (u^\pm)\leq C  (\delta_*^{\sfrac{1}{2}} + r^\frac12 \bA^{\frac12})\label{prop:Lipschitz_11}\\
& {\rm osc} (u^\pm) \leq  C \bh (T, \bC_{4r} (x), \pi_0) + C r E^{\sfrac{1}{2}} + C r^2 \bA\\
& \gr (u^\pm)\subset \Sigma\\
& K \subset B_{3r} (x) \cap \{\me_T\leq \delta_*\}\label{prop:Lipschitz_16}\\
& \bG_{u^\pm} \res[(K \cap \Omega^\pm) \times \R^n] = T \res [(K\cap \Omega^\pm)\times \R^n] \label{e:differenza}\\
& |B_{s} (x) \setminus K| \leq \frac{C}{\delta_*}\:\be_T\left(\{\me_T> \delta_*\}\cap B_{s+ r_1 r}(x)\right) \quad \forall s \le (3-r_1)r \label{e:stimaK}\\
& \frac{\|  T- \bG_{u^+}-\bG_{u^-}\|(\bC_{3r}(x))}{r^m}\le\frac{C(m,n,Q)}{\delta_*}E \quad  \label{e:graphmass}
\end{align}
where $r_1 = c \sqrt[m]{\frac{E}{\delta_*}}$.
\end{theorem}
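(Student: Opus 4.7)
The plan is to reduce Theorem \ref{t:Lipschitz_1} to the interior Lipschitz approximation of \cite{DS3} via an ``artificial sheet'' construction that removes the boundary of $T$ inside the cylinder. Using Assumption \ref{Ass:app} together with Remark \ref{r:Psi}, I first extend the $C^2$ boundary parameterization $\psi$ to a $C^2$ single-valued map $\bar{\psi}: B_{4r}(x) \to \R^n$ satisfying $\gr(\bar{\psi}) \subset \Sigma$, $\bar{\psi}|_{\gammado} = \psi$, $\|D\bar{\psi}\|_0 \leq C(E^{1/2} + \bA r)$ and $\|D^2\bar{\psi}\|_0 \leq C\bA$. Setting $R := \bG_{\bar{\psi}} \res (\Omega^- \times \R^n)$ with the orientation that gives $\partial R \res \bC_{4r}(x) = -\a{\gammaup}\res \bC_{4r}(x)$, the auxiliary current $\tilde T := T + R$ is boundaryless in $\bC_{4r}(x)$, projects onto the base with constant multiplicity $Q$, has support in $\Sigma$, and its cylindrical excess satisfies $\bE(\tilde T, \bC_{4r}(x)) \leq C(E + \bA^2 r^2)$.

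Although $\tilde T$ is no longer mass-minimizing, it is a smooth perturbation of a minimizer, which is enough for the Jerrard--Soner modified BV estimates to apply. I would apply the interior Lipschitz approximation of \cite{DS3} to $\tilde T$ in the cylinder $\bC_{4r}(x)$, obtaining a closed set $\tilde K \subset B_{3r}(x)$ and a Lipschitz $Q$-valued map $v: B_{3r}(x)\to \Iqs$ with $\gr(v)\subset \Sigma$, $\Lip(v)\leq C(\delta_*^{1/2} + (\bA r)^{1/2})$, ${\rm osc}\, v \leq C\bh(\tilde T, \bC_{4r}(x), \pi_0) + Cr(E^{1/2}+\bA r)$, $\bG_v \res (\tilde K\times \R^n) = \tilde T \res (\tilde K\times \R^n)$, and $\tilde K \subset \{\me_{\tilde T}\leq \delta_*\}$, together with the maximal-function estimate for $|B_s(x)\setminus \tilde K|$ and the mass gap $\|\tilde T - \bG_v\|(\bC_{3r}(x))\leq C\delta_*^{-1} r^m E$. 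Since $R$ is the graph of a smooth single-valued map, $\me_{\tilde T}$ and $\me_T$ differ by $O(\bA^2)$, so every estimate transfers to $T$ up to constants.

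To build $(u^+, u^-)$ set $u^+ := v|_{\Omega^+}$, so that $\bG_{u^+} \res ((K\cap \Omega^+)\times \R^n) = T \res ((K\cap \Omega^+)\times \R^n)$ where $K := \tilde K$. On $\Omega^-$, at each $y$ select the index $j(y)$ minimizing $|v_i(y) - \bar{\psi}(y)|$ and set $u^-(y) := \sum_{i\neq j(y)} \a{v_i(y)}$. On $\tilde K\cap \Omega^-$ the identity $\bG_v = \tilde T = T + \bG_{\bar\psi}|_{\Omega^-}$ forces exactly one sheet of $v$ to coincide with $\bar{\psi}$, so the selection is unambiguous and locally Lipschitz there; a standard continuation argument extends $u^-$ to a globally Lipschitz $(Q-1)$-valued map on $\Omega^-$ with $\Lip(u^-)\leq C\Lip(v)$. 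The identity $\bG_{u^+}+\bG_{u^-} = \bG_v - R$ on $\Omega^-$, combined with the estimates for $v$, then yields \eqref{prop:Lipschitz_11}--\eqref{e:stimaK} and the mass gap \eqref{e:graphmass}.

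The main obstacle is enforcing the collapsing condition $u^+|_{\gammado} = Q\a{\psi}$ and $u^-|_{\gammado} = (Q-1)\a{\psi}$, since the raw output of \cite{DS3} gives no a priori control on the values of $v$ along $\gammado$. To overcome this I would exploit the density hypothesis (v) of Assumption \ref{Ass:app}: combined with Allard's boundary monotonicity (Theorem \ref{thm:allard}), it produces a uniform height bound $\bh(T, \bC_s(y, \pi_0), \pi_0) \leq Cs(E^{1/2}+\bA s)$ for every $y\in \gammaup\cap \bC_{3r}(x)$ and every scale $s$ of order $r$, which forces all sheets of $v$ to lie within $o(1)$ of $\bar{\psi}$ in a thin strip $N_\rho$ around $\gammado$ of width $\rho$ of order $E^{\alpha}$. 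In that strip I would replace $v$ by an interpolation, built via a boundary variant of Lemma \ref{l:interpolation}, between $v$ on $\partial N_\rho\cap B_{3r}(x)$ and the constant configuration $Q\a{\bar{\psi}}$ on $\gammado$; the modification contributes only lower-order energy and Lipschitz errors absorbable in the constants, preserves $\gr(v)\subset \Sigma$ after a harmless projection, and yields the required collapsing. This engineering is the delicate part because the modified $v$ must simultaneously respect $\supp\subset \Sigma$, preserve the coincidence $\bG_v = \tilde T$ on $\tilde K$, and retain all the quantitative estimates with the constants of the theorem.
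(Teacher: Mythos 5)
Your first half runs parallel to the paper: the artificial sheet over $\Omega^-$ killing the boundary, the Jerrard--Soner BV estimates (which indeed do not need minimality), and the splitting of the slices into a $Q$-valued part on $\Omega^+$ and a $(Q-1)$-valued part on $\Omega^-$ obtained by removing the artificial sheet on the good set are all exactly what the paper does. The genuine gap is in your treatment of the collapsing at the interface, which is the heart of the theorem. The paper never modifies the map near $\gammado$: it proves (claim (Cl) in the proof) that at \emph{every} good point $y\notin U^*$ \emph{every} sheet already satisfies $|g_i(y)-\psi'(x')|\leq C(\delta_*^{\sfrac12}+\bA^{\sfrac12})\,\dist(y,\gammado)$, so that assigning the boundary values $Q\a{\psi}$, $(Q-1)\a{\psi}$ on $\gammado$ is just a Lipschitz extension of values the map already attains, with the correct constant. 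That pointwise, linear-rate estimate is obtained by a contradiction argument which is where hypothesis (v) of Assumption \ref{Ass:app} really enters: if a sheet at a good point sat at distance $\gg (\delta_*^{\sfrac12}+\bA^{\sfrac12})\dist(y,\gammado)$ from $\psi'$, then in a ball $\bB_\rho(p_0)$ centered at the nearest boundary point an entire annular slab of mass would be missing, giving $\|T\|(\bB_\rho(p_0))\leq (Q-\tfrac12+\delta_*+C\bA\rho-\tfrac{1}{4N})\,|B_\rho|$, incompatible with the lower bound $\geq(1-C\bA\rho)(Q-\tfrac12)|B_\rho|$ coming from the monotonicity formula and $\Theta\geq Q-\tfrac12$. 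Nothing in your proposal produces this sheet-by-sheet estimate.

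Your substitute --- a height bound near $\gammado$ plus an interpolation to $Q\a{\bar\psi}$ in a strip $N_\rho$ of width $\rho\sim E^{\alpha}r$ --- fails on several counts. First, the height bound you invoke is not a consequence of "density plus Allard's boundary monotonicity" alone; it is essentially the Hardt--Simon type estimate of Theorem \ref{t:height_bound} (Moser iteration on the varifold combined with the remainder in the monotonicity formula), a substantive result you assert without proof, and even granted it only controls the distance of the sheets to the plane $\pi_0$ at scale $\rho$, not the linear-rate approach to $\psi$ needed for the Lipschitz bound. Second, and fatally, interpolating in a full strip destroys the coincidence $\bG_{u^\pm}\res[(K\cap\Omega^\pm)\times\R^n]=T\res[(K\cap\Omega^\pm)\times\R^n]$ on a set of measure $\sim\rho\, r^{m-1}=E^{\alpha}r^{m}$, whereas \eqref{e:stimaK} forces $|B_s(x)\setminus K|\leq \frac{C}{\delta_*}\be_T(\{\me_T>\delta_*\}\cap\cdots)$, a quantity which is at most $C E r^m/\delta_*$ and can even vanish; so for fixed $\delta_*$ your $K$ is too small and \eqref{e:differenza}--\eqref{e:stimaK} cannot both hold. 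Third, the Lipschitz cost of the interpolation, of order (height)/(strip width), is a power of $E$ independent of $\delta_*$ and hence is not absorbed by the required bound $C(\delta_*^{\sfrac12}+r^{\sfrac12}\bA^{\sfrac12})$ uniformly in $\delta_*\in(0,1)$. To repair the argument you should drop the strip modification entirely and prove the analogue of claim (Cl) at good points via the density/monotonicity comparison sketched above.
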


From now on the approximation of Theorem \ref{t:Lipschitz_1} is called the \emph{$\delta_*^\frac 12$-approximation of $T$ in $\bC_{3r} (x)$}\index{delta@$\delta_*^\frac 12$-approximation of $T$ in $\bC_{3r} (x)$}. 
Actually in the sequel we will choose \(\delta_*^\frac 12\) to be \(E^\beta\) for a suitable chosen small \(\beta\).

In a second step we will prove that, if \(E\)  is chosen sufficiently small and $T$ is area minimizing, then $u$ is close to a $\qhalf$ $\D$-minimizer which which collapses at its interface
and thus, by Theorem \ref{thm:collasso}, consists of a single harmonic sheet. 

\begin{theorem}\label{t:harm_1}\label{T:HARM_1}
For every $\eta_*>0$ and every $\beta\in (0, \frac{1}{4m})$ there exist constants $\varepsilon>0$ and $C>0$ with the following property.
Let $T$ be as in Theorem \ref{t:Lipschitz_1} and mass-minimizing in $\Sigma$, let $(u^+, u^-)$ be the $E^\beta$-approximation of $T$ in $B_{3r} (x)$ and let $K$ be the set satisfying all the properties \eqref{prop:Lipschitz_11}-\eqref{e:graphmass}. If $E\le \varepsilon$ and 
$r\bA\le \varepsilon E^{\frac 12}$, then 
\begin{equation}\label{e:eta-star-condition}
\be_T(B_{5r/2}\setminus K))\le \eta_*E\,,
\end{equation}
and 
\begin{equation}\label{e:small-energy-condition}
\D(u^+,\Omega^+\cap B_{2r}(x)\setminus K)+\D(u^-,\Omega^-\cap B_{2r}(x)\setminus K)\le C\eta_* E\,. 
\end{equation}
Moreover, there exists a (single) harmonic function $h:B_{2r}(x)\to \R^{\overline n}$ such that $h|_{x_m =0} \equiv  0$ and the function $\kappa(y):=(h(y),\Psi(y,h(y)))$ satisfies the following inequalities:
\begin{align}
&r^{-2}\int_{B_{2r}(x)\cap\Omega^+}\!\!\G(u^+,Q\a{\kappa})^2\nonumber\\
+ &\int_{B_{2r}(x)\cap\Omega^+}\!\!\left(|Du^+|-\sqrt Q|D\kappa|\right)^2\le\; \eta_*E r^m\,\label{e:harm-app1}\\
&r^{-2}\int_{B_{2r}(x)\cap\Omega^-}\!\!\G(u^-,(Q-1)\a{\kappa})^2\nonumber\\
+ & \int_{B_{2r}(x)\cap\Omega^-}\!\!\left(|Du^-|-\sqrt{Q-1}|D\kappa|\right)^2 \le\;  \eta_*E r^m\,\label{e:harm-app2}\\
&\int_{B_{2r}(x) \cap \Omega^\pm} \abs{D(\etaa\circ u^\pm) - D\kappa}^2 \le\;  \eta_* E r^m\,.\label{e:harm-app3}
\end{align}
\end{theorem}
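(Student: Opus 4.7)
The plan is to combine a direct minimality comparison, which controls the excess measure outside the good set $K$ by the Dirichlet energy of the Lipschitz approximation, with a compactness--blow-up scheme whose crucial ingredient is Theorem \ref{thm:collasso}. The structure is modeled on the interior harmonic approximation \cite[Theorem 2.4]{DS3}, the essential novelty being that the limiting $\qhalf$ $\D$-minimizer must collapse at the flat interface, so Theorem \ref{thm:collasso} forces it to be a single harmonic sheet. After a standard rescaling I may assume $r=1$, and I argue by contradiction, assuming a sequence $T_k$ of area-minimizing currents satisfying Assumption \ref{Ass:app} on $\bC_4$, with $E_k := \bE(T_k, \bC_4) \to 0$ and $\bA_k\le \varepsilon_k E_k^{1/2}$, $\varepsilon_k\downarrow 0$, for which one of the three sets of conclusions fails.

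First I would prove \eqref{e:eta-star-condition}. Fix a Fubini-type slicing radius $s\in\,]5/2,3[$ and build the competitor obtained by replacing $T_k$ inside $\bC_s$ with the graph current $\bG_{u^+_k}+\bG_{u^-_k}$, suitably interpolated along a thin collar near $\partial\bC_s$ by means of Lemma \ref{l:interpolation}. By \eqref{e:differenza}, \eqref{e:stimaK} and \eqref{e:graphmass} the mass of the interpolating region is controlled by $\delta_*^{-1}\be_{T_k}(\{\me_{T_k}>\delta_*\})$, and the projection of the result onto $\Sigma$ introduces only an $O(\bA_k)=o(E_k^{1/2})$ correction. Because the competitor agrees with $T_k$ near $\partial\bC_s$, has the correct boundary $\a{\gammaup_k}$ (which uses that the $\qhalf$ approximation collapses at the interface $(\gammado_k,\psi_k)$) and lies in $\Sigma$, the area-minimality of $T_k$ combined with a Taylor expansion of the area functional on graphs yields
\begin{equation*}
\be_{T_k}(B_{5/2}\setminus K_k) \;\le\; \tfrac12\D(u_k^+,\Omega_k^+\cap B_s\setminus K_k)+\tfrac12\D(u_k^-,\Omega_k^-\cap B_s\setminus K_k)+o(E_k).
\end{equation*}
Since on $K_k$ we have $\bG_{u_k^\pm}= T_k$ and the maximal-function bound \eqref{prop:Lipschitz_16} forces $|Du_k^\pm|^2$ pointwise of order $\delta_*=E_k^{2\beta}\ll E_k$, a reabsorption argument using \eqref{e:stimaK} to estimate the Dirichlet energy on the bad set yields \eqref{e:eta-star-condition}. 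The estimate \eqref{e:small-energy-condition} is then an immediate consequence of \eqref{e:eta-star-condition}, \eqref{e:differenza} and the Taylor expansion of the area.

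Next I would normalize $v_k^\pm := E_k^{-1/2}u_k^\pm$. The bound from the previous step gives $\D(v_k^\pm)\le C$, while $\bA_k\le \varepsilon_k E_k^{1/2}$ (together with $\psi_k(x_k)=0$ and $D\psi_k(x_k)=0$, in the good coordinates of Remark \ref{r:Psi}) ensures that $\gammado_k\to \{x_m=0\}$ in $C^2$ and $E_k^{-1/2}\psi_k\to 0$ in $C^{0,\beta'}$ for some $\beta'>\tfrac12$. The normalized maps $(v_k^+,v_k^-)$ are $\qhalf$-valued with interfaces $(\gammado_k, E_k^{-1/2}\psi_k)$ that collapse at the interface, so Theorem \ref{thm:cpt_dm} applies to a subsequence, producing a limit $(v^+,v^-)$. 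The core step is to verify that $(v^+,v^-)$ is itself a $\qhalf$ $\D$-minimizer on $B_{5/2}$ with collapsed interface $(\{x_m=0\},0)$. This is proved, exactly as in Section \ref{s:gluing}, by contradiction: any strict competitor for $(v^+,v^-)$ would, after pulling back through the scaling and patching via Lemma \ref{l:interpolation}, yield a competitor with strictly smaller mass for $T_k$ for $k$ large, contradicting minimality. Once this is established, Theorem \ref{thm:collasso} forces $v^+=Q\a{h}$ and $v^-=(Q-1)\a{h}$ for a single harmonic $h:B_{5/2}\to\R^{\bar n}$ with $h|_{\{x_m=0\}}=0$. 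Setting $h_k:=E_k^{1/2}h$ and $\kappa_k(y):=(h_k(y),\Psi_k(y,h_k(y)))$, the strong $L^2$ convergence together with the convergence of Dirichlet energies provided by Theorem \ref{thm:cpt_dm} translate into \eqref{e:harm-app1}--\eqref{e:harm-app3} with $\eta_*$ replaced by $o(1)$, contradicting the failure assumption on $T_k$. The principal obstacle will be the competitor construction behind Step 1: it must simultaneously respect the ambient constraint $\supp T_k\subset\Sigma$, preserve the correct boundary $\a{\gammaup_k}$ (which requires that the interpolation of Lemma \ref{l:interpolation} preserves the $\qhalf$ structure collapsing at $(\gammado_k,\psi_k)$), and provide a comparison effective at the level of masses rather than merely of Dirichlet energies.
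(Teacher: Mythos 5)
Your second half (blow-up of the normalized approximations, minimality of the limit via comparison with $T_k$, and the use of Theorem \ref{thm:collasso} to produce the single harmonic sheet $h$) follows essentially the paper's route, and your derivation of \eqref{e:small-energy-condition} from \eqref{e:eta-star-condition} is exactly the paper's. The genuine gap is in your first step, the claim that \eqref{e:eta-star-condition} follows from a direct mass comparison between $T_k$ and the glued graph of its \emph{own} Lipschitz approximation plus a ``reabsorption argument''. That comparison, after cancelling the contribution of the good set (where $\bG_{u_k^\pm}=T_k$ by \eqref{e:differenza}), only yields $\be_{T_k}(B_s\setminus K_k)\le \tfrac12\int_{B_s\setminus K_k}|Du_k|^2 + o(E_k)$, and the only available control of the right-hand side is $\Lip(u_k)^2\,|B_s\setminus K_k|\le C E_k^{2\beta}\cdot C E_k^{1-2\beta}=C E_k$ (using \eqref{prop:Lipschitz_11} and \eqref{e:stimaK} with $\delta_*=E_k^{2\beta}$), with $C$ independent of $\eta_*$. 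So you merely reproduce the trivial bound $\be_T(B\setminus K)\le CE$; nothing in \eqref{prop:Lipschitz_11}--\eqref{e:graphmass} prevents the bad set from carrying a fixed fraction of the excess. This is precisely why, both here and in the interior theory of \cite{DS3}, the estimate \eqref{e:eta-star-condition} is itself proved by the contradiction/compactness scheme: assuming it fails along a sequence, the Dirichlet energy of $f_k$ on $K_k\cap B_s$ is at most $\be_{T_k}(B_s)-4c_2E_k$; the limit of the normalized maps has energy bounded by the \emph{liminf of the energies restricted to $K_k$} (here $|B_3\setminus K_k|\to 0$ and the weak convergence of $|D\hat g_k^\pm|$ enter), so the competitor current built from a Lipschitz approximation of the limit, interpolated via Lemma \ref{l:interpolation} and corrected at the sliced boundary by an isoperimetric filling, has Dirichlet energy at most $2\be_{T_k}(B_r)-c_2E_k+o(E_k)$, while minimality of $T_k$ and the Taylor expansion force it to be at least $2\be_{T_k}(B_r)-o(E_k)$. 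You should therefore fold the failure of \eqref{e:eta-star-condition} into the contradiction hypothesis rather than attempt to establish it beforehand by direct comparison.

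A secondary point: Theorem \ref{thm:cpt_dm} concerns sequences of $\qhalf$ $\D$-\emph{minimizers}, so it cannot be invoked for the rescaled approximations $(v_k^+,v_k^-)$. For extracting an $L^2$-convergent subsequence this is harmless (Poincar\'e plus Sobolev compactness suffice, as in the paper), but you also rely on it for convergence of the Dirichlet energies in the final step; that strong convergence must instead be obtained, together with the minimality of the limit, from the same comparison-with-the-current construction you sketch (the paper's claims (A) and (B): if either fails, one builds a competitor with an energy drop $2c_3$ and contradicts the minimality of $T_k$).
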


\begin{remark}\label{r:odd_harmonic}
Observe that from the Schwarz reflection principle and the unique continuation for harmonic functions, it follows immediately that the $h$ of the previous theorem is in fact odd in the variable $x_m$.
\end{remark}

\section{Proof of Theorem \ref{t:Lipschitz_1}}

\subsection{Artificial sheet and ``bad set''}\label{s:artificial}
Since the statement is invariant under translations and dilations, without loss of generality we assume $x =0$ and $r=1$.  We add to the current $T$ an artificial sheet \index{artificial sheet}, constructed by translating the boundary $\gammaup$ in the ``negative direction'' $-e_m$
over the negative domain $\Omega^-$. Clearly,  if the current $T$ were area minimizing, the addition would (in general) destroy such property. On the other hand  we do not assume that $T$ is area minimizing in Theorem \ref{t:Lipschitz_1} and the ``augmented current''
has
no boundary in the cylinder, while it still has small excess. This will allow us to apply the first part of the approximation theory in the interior developed in \cite[Section 3]{DS3}, where the area minimizing assumption is not relevant. 

Let  therefore $\psi(x')=(\psi_1(x'), \psi'(x'))$ be the map introduced in Assumption \ref{Ass:app}, whose graph gives $\gammaup$, and let $(x',x_m)=x$ be the coordinates of $\R^m$.
We introduce further the map $G_{\psi'} : \pi_0=\R^m  \to \R^{m+ \bar{n} + l}$ given by $G_{\psi'}(x', x_m):=( x',x_m, \psi'(x'))$: the image of $G_{\psi'}$ is just the translation of $\Gamma$ in the direction $e_m = (0, \ldots , 0, 1, 0, \ldots , 0)$. Consider then the current $Z:= {G_{\psi'}}_{\#} \a{\Omega^-} $, cf. Figure \ref{fig:current $Z$}.

\begin{figure}[htbp]
\begin{center}\label{fig:current $Z$}
\input{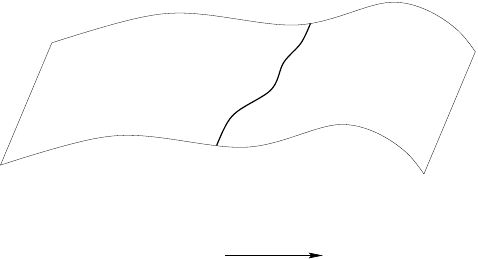_t}
\end{center}
\caption{The current $Z$ is the graph over $\Omega^-$ of a function $\psi'$ which does not depend on $x_m$: $\psi'$ is chosen so that $\partial Z = \a{\gammaup}$.}
\end{figure}

Using the Taylor expansion of the mass, e.g. \cite[Remark 5.4]{DS3}, we can estimate, for any Borel set $F \subset \R^m$.
\[ \bM(Z \res (F\times \mathbb R^n)) = \abs{F \cap \Omega^-} + \int_{F \cap \Omega^-} \frac{ \abs{D\psi'}^2}{2} + \int_{F \cap \Omega^-} R(D\psi') \]
where \(R(D\psi')=O(|D\psi'|^4)\). By assumption 
\[
\abs{D\psi'(x')} \le \abs{x'} \norm{D^2\psi'}_\infty \le c \abs{x'} \bA
\] 
for some dimensional constant $c$. Hence, assuming that the constant $c_0$ in \eqref{eq:joni-1} sufficiently small,
\[ \be_{Z}(F) \le  \int_{ F \cap \Omega^-} \abs{D\psi'}^2 \le c \bA^2 \abs{ F \cap \Omega^-}\,.\]
By construction we have $\partial Z \res \bC_{4}  = {G_{\psi'}}_{\#} \a{ \partial \Omega^- \cap B_4} = - \a{\gammaup}$ and $\bp_{\#} Z = \a{\Omega^-}$. Therefore $ S:= T + Z$ satisfies
\begin{align}\label{e:exess_S} 
 &\bp_{\#} S = Q \a{B_4}\,,\quad \partial S \res \bC_4 = 0\quad  \text{and} \nonumber\\
 &\be_S(F) \le \be_T(F) + \be_{Z}(F)  \le \be_T(F) + c {\bA}^2 \abs{ F \cap {\Omega^-} }\,.
\end{align}
We can thus apply the modified Jerrard-Soner estimate of \cite[Proposition 3.3]{DS3} which gives:
\begin{itemize}
\item[(JS)]
For every $\varphi \in C^\infty(\R^n)$ set $\mathbf{\Phi}_\varphi(x):= S_x(\varphi)$ with 
\[
S_x:= \bp^\perp_{\#} \langle S, \bp, x\rangle \in \mathbf{I}_0 (\R^n)
\] 
(the space of zero-dimensional integral currents in $\R^n$). If $\norm{D\varphi}_\infty \le 1$ then $\mathbf{\Phi}_\varphi(x) \in BV(B_4)$ and satisfies
\begin{equation}\label{e:BVestimate} \left( \abs{D\mathbf{\Phi}_\varphi} ( F) \right)^2 \le 2 m^2 \be_S(F) \norm{S}(F \times \pi_0^\perp) \text{ for every Borel set } F \subset B_4.\end{equation}
\end{itemize}
Following a classical terminology we define noncentered maximal functions for Radon measures $\mu$ and (Lebesgue) integrable functions $f: \R^k \to \R_+$ by setting
\begin{align*}
\bmm(f)(z) & := \sup_{z\in B_s(y)\subset B_{4}}\frac{1}{\omega_m s^m} \int_{B_s(y)} f\\
 \bmm(\mu)(z) &:= \sup_{z\in B_s(y)\subset B_{4}}\frac{\mu(B_s(y))}{\omega_m s^m}. 
\end{align*}
Note that the functions $ z\mapsto \bmm(f)(z), z\mapsto \bmm(\mu)(z)$ and $z \mapsto \me_Z(z)$ are lower semi-continuous. Indeed, since
$\bmm(f)$ is obviously the maximal function of the measure $f \mathscr{L}^m$, it suffices to show the claim for $\bmm (\mu)$. Next observe that for a general Radon measure $\mu$ the map $y \mapsto \mu(B_s(y))$ is lower semicontinuous, and thus the claim follows from the fact that the map
$z\mapsto \bmm (\mu) (z)$ is the supremum of lower semicontinuous functions.

Let us fix a small constant $0<\lambda <1$ and define the following ``bad'' sets, which are, respectively, the upper level set $U$ of $\me_T$
\begin{equation}\label{e:badsets}
U:= \{ x \in B_4 \colon \me_T(x) > \delta_*\} 
\end{equation}
and the upper level set of $\bmm (\mathbf{1}_U)$:
\begin{equation}\label{e:badsets_2}
U^*:= \{ x \in B_4 \colon \bmm(\mathbf{1}_U)(x) > \lambda \}\, .
\end{equation}
As proven in \cite[Proposition 3.2.]{DS3} we have a weak $L^1$ estimate for the Lebesgue measure of $U$. Indeed, fix $r<3$ and
for every
point $x\in U\cap B_r$ consider a ball $B^x$ of radius $r(x)$ which contains $x$ and satisfies $\me_T (B^x) \geq \delta_* \omega_m r(x)^m$. Since
$\me_T (B^x) \leq E$ we obviously have
\[
r (x) \leq r_0 = \sqrt[m]{\frac{E}{\omega_m \delta_*}}
\]
Now, by the definition of the maximal function it follows clearly that $B^x \subset U \cap B_{r+r_0}$. In turn, by the $5r$ covering theorem we
can select countably many pairwise disjoint $B^{x_i}$ such that the corresponding concentric balls $\hat{B}^i$ with radii $5 r (x_i)$ cover $U\cap B_r$  
Then we get
\[ \abs{U \cap B_r} \le 5^m\sum_i \omega_m r(x_i)^m \leq \frac{5^m}{\delta_*} \sum_i \me_T (B^{x_i}) \leq \frac{5^m}{\delta_*} \me_T (U\cap B_{r+r_0})\, . 
\]
Since $U$ is open we have $U \subset U^*$ and by the classical weak $L^1$ estimate  (see e.g. \cite[1.3 Theorem 1]{Stein}), 
we have again
\begin{equation}\label{e:O*}
\abs{ U^*\cap B_r} \le \frac{5^m}{\lambda} \abs{U\cap B_{r+r_1} } \quad \forall r< 3 \text{, where } r_1 = 5 \sqrt[m]{\frac{E}{\omega_m \lambda \delta_*}}.
\end{equation}

\subsection{Lipschitz estimate}
Since $\delta_* + c\bA^2 <1 $, we infer that $\bM (S_x) < Q+1$ for a.e. $x \notin U$. Indeed recall that $\norm{S}(F \times \pi_0^\perp) \ge \int_{F} \bM (S_x)\, dx $ for every open set $F$ (e.g. \cite[Lemma 28.5]{Sim}). Therefore 
 using \eqref{e:exess_S}
\begin{align*}\bM(S_x)& \le \lim_{r \to 0} \frac{ \norm{S}(\bC_r(x))}{\omega_m r^m} \\
 &\le \lim_{r\to 0} \frac{\norm{T}(\bC_r(x))}{\omega_m r^m} +  c\bA^2 \le \me_T(x) + c\bA^2 + Q. 
\end{align*}
There are then $Q$ measurable functions $_i: B_4 \setminus U \to \R^n$ such that $S_x = \sum_{i=1}^Q \a{g_i(x)}$ and we define $g: B_4\setminus U \to \Iqs$ by 
\[
g(x)= \sum_{i=1}^Q \a{g_i(x)}\, .
\] 
Since the slicing is a linear operator and $Z_x =  Z_{(x', x_m)} = \bp^\perp_{\#} \langle Z, \bp, x\rangle = \a{\psi'(x')} $ for all $x \in \Omega^-$,  we have that 
\[
S_x = \sum_{i=1}^{Q-1} \a{g_i(x)} + \a{\psi'(x')} \quad \mbox{for a.e. $x \in \Omega^- \setminus U$.}
\] 
In conclusion  we can define  a $\qhalf$-valued function  $(g^+, g^-)$ as
\begin{align*}
g^+(x) & := \sum_{i=1}^Q \a{g_i(x)} \quad \text{ for a.e. } x \in \Omega^+ \setminus U\\
g^-(x)  &:= \sum_{i=1}^{Q-1} \a{g_i(x)} \quad \text{ for a.e. } x \in \Omega^-\setminus U,
\end{align*}
i.e. $g(x)= g^-(x) + \a{\psi'(x')}$ for all $x \in \Omega^-\setminus U$. 

Combining \eqref{e:BVestimate} and \eqref{e:exess_S} we infer
\begin{align*} 
\bmm{\abs{D\mathbf{\Phi}_\varphi}} (x)^2 &\le 2m^2 ( \me_T(x) + c\bA^2) ( \me_T(x)+ c\bA^2 + Q)\\
& \le 2m(Q+1)(\delta_* + c\bA^2)\, .
\end{align*}
Therefore, the theory of BV functions gives a dimensional constant $C$ such that, for any $\varphi \in C^\infty(\R^n)$ with $\norm{D\varphi}_\infty  \le 1$, 
\begin{align*}
 \abs{ \mathbf{\Phi}_{\varphi}(x) - \mathbf{\Phi}_{\varphi}(y)} & \le C \sqrt{2m(Q+1)(\delta_* + c\bA^2)} \abs{x-y}\\
& \le L_* \abs{x-y} \qquad\qquad \text{ for } x,y \in B_3 \setminus U\, ,
\end{align*}
where $ L_*:=C \sqrt{2m(Q+1)} (\delta_*^\frac12 + c^\frac12 \bA) $.
As pointed out in the proof of \cite[Proposition 3.2]{DS3} one has
\[ 
\sup\{ \abs{\mathbf{\Phi}_{\varphi}(x) - \mathbf{\Phi}_{\varphi}(y)} \colon \abs{D\varphi}_\infty \le 1\} = W_1(g(x), g(y))
\]
where we have set 
\begin{align*}
W_1(S_1,S_2) &:= \sup\{  (S_1- S_2) (\varphi) : \norm{D\varphi}_\infty \le 1 \}\\
& = \min_{\sigma \in \mathcal{P}_Q} \sum_{i} \abs{S_{1i} - S_{2\sigma(i)}} \ge \G(S_1,S_2) 
\end{align*}
for $S_k = \sum_{i=1}^Q \a{S_{ki}} \in \Iqs$.
This implies the Lipschitz continuity of $g$ on $B_3 \setminus U$ and of $g^\pm$ on \(\Omega^\pm\setminus U\). For $g$  it follows directly from the above estimate:
\begin{equation}\label{e:Lipschitz_g} \G(g(x), g(y)) \le W_1(g(x), g(y)) \le L_* \abs{x-y} \text{ for all } x,y \in B_3 \setminus U\end{equation}
and similarly for $g^+$ and $x,y \in \Omega^+ \cap B_3\setminus U$. In the case of $g^-$ we use the triangle inequality  to infer 
\begin{align*}
 & \G(g^-(x),g^-(y)) \le W_1(g^-(x), g^-(y))\\
 \le & W_1(g^-(x) + \a{\psi'(x')}, g^-(y) + \a{\psi'(y')}) + W_1(\a{\psi'(x')},  \a{\psi'(y')})\\
\le & L_* \abs{x-y} + \abs{\psi'(x') - \psi'(y')} \le (L_* + c \bA) \abs{x-y}.
\end{align*}
We now claim that for some dimensional constant $a>c$ we have
\begin{align*} \G(g^+(y), Q \a{\psi'(x')}) &\le 33\sqrt{Q}(L_* + a\bA^\frac12)\abs{ y - x}
\end{align*}
 for all  $y \in \Omega^+ \setminus U^*, x \in \gammado$ and 
\begin{align*}
\G(g^-(y),(Q-1) \a{\psi'(x')}) &\le 33\sqrt{Q}(L_*+ a \bA^\frac12) \abs{ y - x} 
\end{align*}
for all  $y \in \Omega^- \setminus U^*, x \in \gammado$.
The latter estimates are implied by the following claim: 
\begin{itemize}
\item[(Cl)] for  $y \in B_3 \setminus U^*$ with $ \abs{x -y} = \dist(y, \gammado)$ we have
\[ \abs{ g_i(y) - \psi'(x')} \le 33(L_*+a\bA^\frac12) \abs{x-y}\qquad \forall i \]
(where we recall that, given a point $x\in \mathbb R^m$, we write $x'$ for the vector $x'\in \mathbb R^{m-1}$ having the first $m-1$ coordinates
of $x$.)
\end{itemize}
We will argue by contradiction. Assume $y_0 \in B_3\setminus U^*$, $x_0 \in \gammado$ and $i\in \{ 1, \dotsc, Q \}$ satisfy \[\abs{ g_i(y_0) - \psi'(x_0')} \ge 33(L_* + a \bA^\frac12)r, \] where $r=\abs{y_0-x_0}=\dist(y_0,\gammado)<1$. Firstly, we note that
\begin{equation}\label{e:Lipschitz_psi}
\abs{\psi'(x'_1) - \psi'(x'_2)} \le c\bA \abs{x_1-x_2} \text{ for all } x_1, x_2 \in B_4.
\end{equation}
Moreover $g_i(y_0) \in \supp(T)\setminus \supp(Z)$ . Secondly, since $y_0 \notin U^*$ we have $\bmm(\mathbf{1}_U)(y_0)\le\lambda$ and so \begin{equation}\label{e:density}\abs{B_r(x_0)\cap U} \le \lambda \abs{B_r(x_0)}.\end{equation}
Due to \eqref{e:Lipschitz_g} for all $y\in B_r(x_0)\setminus U$ there must be a $j \in \{1, \dotsc, Q\}$ 
with \[\abs{g_j(y)- \psi'(x'_0)} \ge \abs{g_i(y_0) - \psi'(x_0')} -\G(g(y),g(x_0)) \ge 32(L_* +a \bA^\frac12) r \]
and, because of \eqref{e:Lipschitz_psi}, $g_j(y) \in \supp(T)\setminus \supp(Z)$.

Choose $N \in \N$ such that
\begin{equation}\label{e:choiceN} 
\frac{1}{N} \le (4 (L_* + a \bA^{\frac12}))^2 < \frac{1}{(N-1)}
\end{equation} 
and set $r_i: = (1 -  \frac{i}{2N})r$ for $i={\color{red} 0, \ldots, N}$.  This choice ensures that, if $(y,z) \in \bB_{r_i}((x_0, \psi'(x'_0)))$ and $ y $ belongs to the annulus $A_i:= B_{r_i}(x_0) \setminus B_{r_{i+1}}(x_0)$, we must have 
\[
\abs{z- \psi'(x'_0)}^2 \le r_i^2 - r_{i+1}^2 \le \frac{1}{N} r r_i \le (4(L_* + a\bA^\frac12))^2 r^2.
\]
 Therefore, if $ y \in A_i\setminus U$,  the point $(y, g_j(y))$ determined above cannot be contained in $\bB_{r_i}((x_0, \psi'(x'_0)))$. In order to simplify our notation, set $p_0:= (x_0, \psi' (x'_0))$. We then have
 \[
 A_i\setminus U\subset \bp \big(\supp T\cap  \bC_{r_i}(p_0)\setminus \bB_{r_i} (p_0)\big)\, 
 \]
and thus
 \begin{equation}\label{e:misura}
 \norm{T}\big (\bC_{r_i}(p_0)\setminus \bB_{r_i} (p_0)\big) \ge  |A_i\setminus U|.
 \end{equation}

We now claim that there should be \(i\in 1,\dotsc, N\) such that  $\abs{A_i\setminus U} \ge \frac12 \abs{A_i}$, indeed  otherwise
 \begin{align*}
\abs{B_r(x_0)\cap U}&\ge \sum_{ i=0}^N  \abs{A_i \cap U}\ge \frac12 \sum_{ i=0}^N \abs{A_i} \ge \frac12\abs{B_r(x_0)\setminus B_{\frac{r}{2}}(x_0)}\\
&\ge \frac12\left(1- \frac{1}{2^m}\right)\abs{B_r(x_0)}
 \end{align*}
which contradicts \eqref{e:density} because $\lambda \le \frac{1}{4}$. Fix an annulus $A_i$ with $\abs{A_i\setminus U} \ge \frac12 \abs{A_i}$ and define $\rho:=r_i$.
Now we can estimate the mass of $T$ in $\bB_{\rho}(p_0)$ from above using \eqref{e:excess_3}, in fact
\begin{align}
 &\norm{T}(\bB_{\rho}(p_0) = \norm{T}(\bC_\rho(p_0)) - \norm{T}(\bC_\rho(p_0)\setminus \bB_{\rho}(p_0))\nonumber\\
& \overset{\eqref{e:misura}}{\le} \norm{T}(\bC_\rho(p_0))-\frac12 \abs{A_i}\nonumber
\\
& \overset{\eqref{e:excess}} {\le} Q \abs{\Omega^+\cap B_\rho(x_0)} + (Q-1) \abs{\Omega^-\cap B_\rho(x_0)} + \me_T(B_\rho(x_0)) - \frac12 \abs{A_i}\nonumber\\
&\le Q \abs{\Omega^+\cap B_\rho(x_0)} + (Q-1) \abs{\Omega^-\cap B_\rho(x_0)}\nonumber\\
&\qquad + \me_T(B_\rho(x_0)) -  \frac{m}{4N} \abs{B_\rho(x_0)}.\label{e:above}
\end{align}
Notice that
\begin{align} 
 Q \abs{\Omega^+\cap B_\rho(x_0)} &+ (Q-1) \abs{\Omega^-\cap B_\rho(x_0)}\nonumber\\
\le &\Big(Q-\frac12\Big) \abs{B_\rho(x_0)} + \abs{B_\rho(x_0)\cap \{ \psi_1(x')\le x_m < \psi_1(x_0')\}}\nonumber\\
\le &\Big(Q-\frac12\Big) \abs{B_\rho(x_0)} + c\bA \rho \abs{B_\rho(x_0)}.\label{e:above_2}
\end{align}
Moreover $B_\rho(x_0)\setminus U \neq \emptyset$ and  $\me_T(B_\rho(x_0))\le \delta_* \abs{B_\rho(x_0)}$. Combining the latter inequality with \eqref{e:above} and \eqref{e:above_2} we have 
\begin{equation}\label{e:cazzo}
\norm{T}(\bB_{\rho}(p_0)) \le  \abs{B_\rho(x_0)} \left( \Big(Q-\frac12\Big) + c \bA \rho + \delta_* - \frac{1}{4N}\right).
\end{equation}
On the other hand, by   Allard's monotonicity formula and  (v) in Assumption \eqref{Ass:app} we have
\[
e^{C_0 \bA\rho} \omega_m \rho^{-m}  \norm{T}(\bB_{\rho}(p_0))\ge \Theta(T,p_0)\ge Q-\frac{1}{2}
\]
from which we deduce that 
\begin{equation}\label{e:cazzo1}
 \norm{T}(\bB_{\rho}(p_0)) \ge (1 - C_0 \bA \rho) \Big(Q - \frac12\Big) \abs{B_\rho(x_0)}
 \end{equation}

The comparison of \eqref{e:cazzo} and \eqref{e:cazzo1}  gives a contradiction, because, for sufficiently large $a>0$,
\begin{align*}
 \delta_* + (c+ C_0) \bA \rho - \frac{1}{4N} &\le  L_*^2 +4(c+C_0)\bA- \frac{1}{8}\frac{1}{N-1}\\
&\stackrel{\eqref{e:choiceN}}{\le} L_*^2 +(c +C_0) \bA - 4 L_*^2 - 4a^2 \bA < 0.
\end{align*}
This concludes the proof of the claim (Cl). 

\subsection{Conclusion}
Having established the Lipschitz bounds, first we restrict $g^\pm$ to the sets $\Omega^\pm \cap B_3 \setminus U^*$ and then we extend them to $\gammado$ setting:
\begin{align*} g^+(x) &= Q\a{\psi'(x')} \\ g^-(x) &= (Q-1) \a{\psi'(x')}.\end{align*}
We define the ``good'' set to be 
\begin{equation}
K:= (\Omega\cap B_3 \setminus U^* )\cup \gammado
\end{equation}
and \eqref{e:O*} agrees with the claimed estimate on $\abs{B_s \setminus K}$.

Next, write $g^{\pm}(y) = \sum_{i} \a{ ( h_i^\pm(y), \Psi(y, h_i^\pm(y)))}$. Obviously the maps
\[
y \mapsto h^\pm(y):= \sum_{i} \a{h_i^\pm(y)}
\]
are Lipschitz on $K^\pm:=K\cap \Omega^{\pm}$ with Lipschitz constant $33(L_*+ a \bA^\frac12)$.
Recalling \cite[Theorem 1.7]{DS1}, we can extend $h^\pm$ to maps $\bar{u}^\pm \in \Lip (B_3\cap \Omega^\pm, \Iq (\R^{\bar n}))$ satisfying \[ \Lip(\bar u^\pm)\leq C(\delta_*^{\sfrac{1}{2}} + a \bA^{\frac12}) \quad \text{ and } \quad
{\rm osc}\, ( \bar{u}^\pm)\leq C {\rm osc}\, ( h^\pm).\]
Set finally $u^\pm(x) := \sum_i \a{(\bar{u}^\pm_i (x), \Psi(x, \bar{u}^\pm_i(x)))}$. 
We start showing the Lipschitz bound. Fix $x_1, x_2 \in B_3\cap \Omega^\pm$ and assume, without loss of generality, that $\G(\bar u^\pm(x_1), \bar u^\pm(x_2))^2 = \sum_i |\bar u^\pm_i(x_1) - \bar u^\pm_i(x_2)|^2$. Then
\begin{align*}
&\G(u^\pm(x_1), u^\pm(x_2))^2\\ 
& \leq \sum_i \big\vert (\bar{u}^\pm_i (x_1), \Psi(x_1, \bar{u}^\pm_i(x_1))) - (\bar{u}^\pm_i (x_2), \Psi(x_2, \bar{u}^\pm_i(x_2))) \big\vert^2\\
&\leq 2\sum_i \Big((1+\|D_y\Psi\|_0^2)|\bar u^\pm_i(x_1) - \bar u^\pm_i(x_2)|^2 + \|D_x\Psi\|_0^2 |x_1-x_2|^2 \Big)\\
& \leq 2 (1+\|D\Psi\|_0^2) \G(\bar u^\pm(x_1), \bar u^\pm(x_2))^2 + 2 \|D\Psi\|_0^2 |x_1-x_2|^2\\
&\leq C (\delta_* + a^2 \bA+\|D\Psi\|_0^2) |x_1 -x_2|^2\, .
\end{align*}
Recalling that $\norm{D\Psi}_0 \le C(E^{\sfrac{1}{2}} + \bA)$ the Lipschitz bound follows. 
As for the $L^\infty$ bound, recall that
 ${\rm osc}(u^\pm) = \inf_p \sup_{x \in B_3} \G(u^\pm(x), Q\a{p})$. Proceeding as above we then conclude
\begin{align*}
{\rm osc}(u^\pm)^2 &\leq \inf_p \sup_{x \in B_3} \G(u^\pm(x), Q\a{(p, \Psi(0,p))})^2 \\
&\leq {\color{red} 2} \inf_p \sup_{x \in B_3} \Big((1+\|D \Psi\|_0^2)\G(\bar u^\pm(x), Q\a{p^\pm})^2  +  \|D \Psi\|_0^2 |x|^2\Big)\\
& \leq {\color{red} 2} (1+\|D\Psi\|_0^2) {\rm osc}(\bar u^\pm)^2 + {\color{red} 18} \,\|D\Psi\|_0^2.
\end{align*}
The identity $\mathbf{G}_{u^\pm} \res (K^\pm\times \R^{n})= T\res (K^\pm\times\R^{n})$ is a consequence
of $u^\pm(x) = T_x$ for a.e. $x\in K^\pm$. Indeed, recall that both $T$ and $\bG_{u^\pm}$ are rectifiable and observe that\footnote{ Here we use the notation $\langle \vec{v_1}, \vec{v_2}\rangle$ for the standard inner product between $m$-vectors and $S\res \omega$ for the restriction of currents $S$ on forms $\omega$.} $\langle \vec{T}, 
\vec\pi_0\rangle \neq 0$ $\|T\|$-a.e. on $K\times \R^n$, because $\me_T < \infty$ on $K$. Similarly,
$\langle \vec{\bG}_{u^\pm}, \vec{\pi}_0\rangle \neq 0$ $\|\bG_{u^\pm}\|$-a.e. on $K^\pm\times \R^n$, by \cite[Proposition 1.4]{DS2}.
Thus, $(\bG_{u^\pm} - T)\res K^\pm\times \R^n =0$ if and only if $(\bG_{u^\pm} - T) \res dx_1\wedge \ldots \wedge dx_n\,  {\mathbf 1}_{K^\pm\times \R^n}=0$.
The latter identity follows from the slicing formula and the property $\langle T, \bp, x \rangle =
\langle \bG_{u^\pm}, \bp, x\rangle = \sum_i \a{(x, u^\pm_i (x))}$, valid for a.e. $x\in K^\pm$. Finally, to prove  \eqref{e:graphmass} we simply not that by \eqref{e:stimaK}, \eqref{e:differenza} and \eqref{e:excess_3},
\[
\begin{split}
\|T-\bG_{u^+}-\bG_{u^-}\|(\bC_s(x))&=\|T-\bG_{u^+}-\bG_{u^-}\|(\bC_s(x)\setminus (K\times \R^n))
\\
&\le \|T\|(\bC_s(x)\setminus (K\times \R^n))+C|B_{3}\setminus K|\\
&\le E+(C+Q)|B_{3}\setminus K|\le C E.
\end{split}
\]

\section{Lipschitz approximation of Sobolev maps}
Before coming to Theorem \ref{t:harm_1}, we need a preliminary lemma, which is a modification
of a corresponding statements in \cite{DS3}.

\begin{lemma}\label{l:lip_app}
Let $(f^+, f^-)$ be a $\qhalf$-valued function on $B_r$ with interface $(\gammado, 0)$ where $\gammado = \{x_m =0\}$. Then for every $\varepsilon$ there
exists a $\qhalf$-valued function $(f_\varepsilon^+, f_\varepsilon^-)$ with interface $(\gammado, 0)$ such that
\begin{itemize}
\item[(a)] $f_\varepsilon^+$ and $f_\varepsilon^-$ are Lipschitz continuous;
\item[(b)] The following estimate holds:
\begin{align}
\int_{B^\pm_r}\G(f^\pm,f^\pm_\varepsilon)^2 &+\int_{B^\pm_r}\big(|Df^\pm|-|Df^\pm_\varepsilon|\big)^2\nonumber\\
&+ \int_{B^\pm_r} |D (\etaa\circ f^\pm)- D(\etaa\circ f^\pm_\eps)|\big)^2
\leq \eps.\label{e:lip_smoothing}
\end{align}
\end{itemize}
If $f\vert_{\partial B^\pm_r}\in W^{1,2}(\partial B^\pm_r,\Iq)$,
then $f^\pm_\eps$ can be chosen to satisfy also
\begin{equation}\label{e:approx bordo}
\int_{\partial B^\pm_r}\G(f^\pm,f^\pm_\eps)^2+\int_{\partial B^\pm_r}\big(|Df^\pm|-|Df^\pm_\eps|\big)^2 \leq \eps.
\end{equation}
\end{lemma}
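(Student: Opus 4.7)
My strategy is to construct $(f^+_\eps, f^-_\eps)$ by first applying the classical Lusin–Lipschitz approximation theorem for multi-valued Sobolev maps \cite{DS1} separately on $B_r^+$ and $B_r^-$, and then performing a thin boundary-layer correction near $\gammado$ to restore the interface condition exactly. The correction is the Almgren-embedding interpolation already used in the proof of the Interpolation Lemma \ref{l:interpolation}.

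\emph{Step 1 (Independent approximations).} By the standard Lusin-type Lipschitz approximation of \cite{DS1}, applied separately to $f^+\in W^{1,2}(B_r^+,\Iqs)$ and $f^-\in W^{1,2}(B_r^-,\Iqqs)$, for every $k$ there exist Lipschitz multi-valued maps $h^+_k, h^-_k$ with
\[
\int_{B_r^\pm} \G(f^\pm, h^\pm_k)^2 + \bigl(|Df^\pm|-|Dh^\pm_k|\bigr)^2 + \bigl|D(\etaa\circ f^\pm)-D(\etaa\circ h^\pm_k)\bigr|^2 \le \tfrac{1}{k}.
\]
When the trace of $f^\pm$ on $\partial B_r^\pm$ is already in $W^{1,2}$, the same construction may be arranged to also approximate $f^\pm$ on $\partial B_r^\pm$ in $W^{1,2}$.

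\emph{Step 2 (Trace control).} Combining the interface condition $f^+|_{\gammado} = f^-|_{\gammado} + \a{0}$ with the continuity of the $W^{1,2}\to H^{1/2}(\gammado)$ trace operator, the triangle inequality yields
\[
\alpha_k := \int_{\gammado} \G\bigl(h^+_k|_{\gammado},\; h^-_k|_{\gammado} + \a{0}\bigr)^2 \longrightarrow 0 \quad\text{as } k\to\infty.
\]

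\emph{Step 3 (Boundary-layer interpolation).} Fix $\tau>0$ and set $U^+_\tau := \{0<x_m<\tau\}\cap B_r$. Put $\tilde h^+_k := h^+_k$ on $B_r^+\setminus U^+_\tau$, and on $U^+_\tau$ define
\[
\tilde h^+_k(x',x_m) := \xii_Q^{-1}\!\circ\!\ro_Q\!\left(\tfrac{x_m}{\tau}\,\xii_Q\bigl(h^+_k(x',x_m)\bigr) + \tfrac{\tau-x_m}{\tau}\,\xii_Q\bigl(h^-_k(x',0)+\a{0}\bigr)\right).
\]
By the very same estimates as in Lemma \ref{l:interpolation}, $\tilde h^+_k$ is Lipschitz on $B_r^+$, it equals $h^-_k(x',0)+\a{0}$ on $\gammado$, and using the Lipschitz bounds on $h^\pm_k$ to convert the bulk $\G^2$-integral in $U_\tau^+$ into the $L^2(\gammado)$ quantity $\alpha_k$ plus an $O(\tau^3)$ remainder, one obtains
\[
\int_{U^+_\tau} |D\tilde h^+_k|^2 \le C\tau \!\int_{U^+_\tau}\!\! \bigl(|Dh^+_k|^2 + |Dh^-_k|^2\bigr) + \frac{C}{\tau}\,\alpha_k + C\tau^{2},
\]
together with the analogous control on $\etaa\circ \tilde h^+_k$.

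\emph{Step 4 (Conclusion).} Set $f^+_\eps := \tilde h^+_k$ and $f^-_\eps := h^-_k$. By construction $f^+_\eps|_{\gammado} = h^-_k|_{\gammado}+\a{0} = f^-_\eps|_{\gammado}+\a{0}$, hence $(f^+_\eps, f^-_\eps)$ is a $\qhalf$-valued map with interface $(\gammado,0)$, and both components are Lipschitz. Estimate \eqref{e:lip_smoothing} follows by the triangle inequality from Step 1 and the strip bound of Step 3: choosing first $k$ large (depending on $\tau$) to make $\alpha_k\ll\tau^2$, and then $\tau$ small, drives the total error below $\eps$. The boundary version \eqref{e:approx bordo} follows identically, since the correction is confined to $U^+_\tau \cap \partial B_r$, a set of $(m-1)$-dimensional measure $O(\tau)$.

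\emph{Main obstacle.} The heart of the argument is Step 3: one must produce a Lipschitz $Q$-valued interpolation in the strip $U^+_\tau$ that restores the interface condition without inflating the Dirichlet energy. This is already provided by Lemma \ref{l:interpolation}, whose proof carries out the Almgren embedding-retraction construction. Consequently the proof of Lemma \ref{l:lip_app} reduces to a clean application of existing machinery, and the only remaining subtlety is the two-scale order of limits $k\to\infty$ before $\tau\to 0$, which ensures that the $\tau^{-1}\alpha_k$ term in the strip estimate does not dominate.
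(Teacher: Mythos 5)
There is a genuine gap, and it sits exactly where you locate the ``heart'' of your argument, in Step 3. The strip estimate you write,
\[
\int_{U^+_\tau} |D\tilde h^+_k|^2 \le C\tau \int \big(|Dh^+_k|^2+|Dh^-_k|^2\big) + \frac{C}{\tau}\,\alpha_k + C\tau^2\,,
\]
is not uniform in $k$: when you convert the $\G^2$-integral over the strip into $\alpha_k$ ``plus an $O(\tau^3)$ remainder'' you are using $\G\big(h^+_k(x',x_m),h^+_k(x',0)\big)\le \Lip(h^+_k)\,x_m$, so the remainder is really $C\,\Lip(h^\pm_k)^2\,\tau$, not $C\tau^2$ with a $k$-independent constant; moreover, if the interpolation endpoint is the trace $h^-_k(\cdot,0)$, the tangential derivative of $\tilde h^+_k$ also carries $D_{x'}\big(h^-_k|_{\gammado}\big)$, whose $L^2(\gammado)$ norm is of order $\Lip(h^-_k)$ and is not controlled by the $W^{1,2}$ norm of $f^-$ (the trace of a $W^{1,2}$ map is only $H^{1/2}$). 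Since $\Lip(h^\pm_k)\to\infty$ whenever $f^\pm$ is not itself Lipschitz, your prescribed order of limits (``$k$ large depending on $\tau$, then $\tau\to 0$'') makes these terms blow up, and to salvage the scheme you would need $\alpha_k\,\Lip(h^\pm_k)^2\to 0$ along some sequence, which the Lusin-type approximation does not provide: $\alpha_k$ is only bounded by the tail $\int_{\{\bmm(|Df|)>\lambda_k\}}|Df|^2$, and $\lambda_k^2$ times this tail need not tend to zero. The same non-uniformity undermines the final claim that \eqref{e:approx bordo} ``follows identically'' (there the $\tau^{-1}$-terms live on a sphere portion of measure $O(\tau)$, which does not help), and already in Step 1 the assertion that the interior Lusin approximation ``may be arranged'' to approximate the trace on $\partial B_r^\pm$ in $W^{1,2}$ is exactly the nontrivial point the paper handles separately (by a $0$-homogeneous extension plus a Fubini selection of good radii after rescaling).

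The paper's proof avoids the boundary layer altogether, and this is the idea you are missing: because of the interface condition one can glue $f^+$ and $f^-+\a{0}$ into a single $Q$-valued map $h\in W^{1,2}(B_1,\Iqs)$, truncate with the maximal function of $|Dh|$ on the \emph{whole} ball, and then symmetrize the good set $K_\lambda$ under $(x',x_m)\mapsto(x',-x_m)$. On the symmetrized set one keeps $f^+$ on the $+$ side and a Lipschitz extension of $f^-$ on the $-$ side, and the reflection trick ($|x-x^s|=2x_m\le 2|x-y|$) shows the glued map is Lipschitz with constant $C\lambda$, so a Lipschitz extension matches the interface \emph{exactly}, with no $\tau^{-1}$ factors; all the errors are supported in $B_1\setminus K^s_\lambda$, where the maximal-function estimate $\lambda^2|B_1\setminus K_\lambda|\le C\int_{B_1\setminus K_{\lambda/2}}|Dh|^2$ makes them vanish as $\lambda\to\infty$. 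Your construction could in principle be repaired (e.g.\ interpolate with the even reflection of $h^-_k$ rather than its trace and estimate the normal derivative via the fundamental theorem of calculus instead of the Lipschitz constant), but that amounts to redoing the key estimate rather than quoting Lemma \ref{l:interpolation}, whose hypotheses (in particular the collapsed interface of one of the two maps) you are not in a position to verify for the pair $(h^+_k,h^-_k)$ anyway.
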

\begin{proof}
Firstly we argue that once we have the properties (a) and (b), the additional conclusion \eqref{e:approx bordo}
can be easily inferred using the same trick of \cite[Lemma 4.5]{DS3}. Indeed, without loss of generality, assume $r=1$ and, using the hypothesis $f\vert_{\partial B^\pm_1}\in W^{1,2}(\partial B^\pm_1,\Iq)$, extend
the maps on $B^\pm_{2}\setminus B^\pm_1$ as $0$-homogeneous: the extension $(\hat{f}^+, \hat{f}^-)$ are then still in $W^{1,2}$ and they form a $\qhalf$-valued function with interface $(\gammado, 0)$ (note that \(\gamma\) is flat). Moreover $\hat f^\pm ((1+\delta) x) = f^\pm (x)$ for every $\delta>0$ and every $x\in \partial B^\pm_1$. 

Assuming that we can prove (a) and (b) for a general $r$, we infer the existence of a sequence $(u_k^+, u_k^-)$ of Lipschitz $\qhalf$ approximations such that
\begin{align*}
\int_{B^\pm_2}\G(\hat f^\pm,u^\pm_k)^2 &+\int_{B^\pm_2}\big(|D\hat f^\pm|-|Du^\pm_k|\big)^2\\
&+ \int_{B^\pm_2} |D (\etaa\circ \hat f^\pm)- D(\etaa\circ u^\pm_k)|\big)^2 \to 0\, .
\end{align*}
By Fubini, there is a sequence $\delta_k\downarrow 0$ such that
\[
\int_{\partial B^\pm_{1+\delta_k}}\G(\hat f^\pm,u^\pm_k)^2+\int_{\partial B^\pm_{1+\delta_k}}\big(|D\hat f^\pm|-|Du^\pm_k|\big)^2  \to 0\, .
\]
By a straightforward computation, if we define $f^\pm_k (x) := u^\pm_k (x/(1+\delta_k))$, then we have at the same time
\begin{align*}
&\int_{B^\pm_1}\G(f^\pm,f^\pm_k)^2+\int_{B^\pm_1}\big(|D f^\pm|-|Df^\pm_k|\big)^2\\
&\qquad\qquad\qquad + \int_{B^\pm_1} |D (\etaa\circ f^\pm)- D(\etaa\circ f^\pm_k)|\big)^2 \to 0\\
&\int_{\partial B^\pm_{1}}\G(f^\pm,f^\pm_k)^2+\int_{\partial B^\pm_{1}}\big(|D f^\pm|-|Df^\pm_k|\big)^2  \to 0\, .
\end{align*}

\medskip

We now come to the main part of the lemma, namely the points (a) and (b). First of all, without loss of generality, we can assume that $r=1$. We next define the auxiliary function $h\in W^{1,2} (B_1, \Iqs)$ as
\[
h (x) :=
\left\{\begin{array}{ll}
f^+ (x) \qquad &\mbox{if $x_m>0$}\\
f^- (x) + \a{0} &\mbox{if $x_m <0$.}
\end{array}\right.
\]
Observe that $|Df^+ (x)| = |Dh (x)|$ for every $x\in B_1^+$ and $|Df^- (x)| = |Dh (x)|$ for every $x\in B_1^-$. Consider the maximal function
$\bmm (|Dh|) (x)$ and let
\[
K_\lambda := \{x: \bmm (|Dh|) (x) \leq \lambda\}\, 
\]
which is a closed set, since maximal functions are lower semicontinuous. 
Arguing as in \cite[Proposition 4.4]{DS1} we conclude that $h|_{K_\lambda}$ is Lipschitz with a constant $C \lambda$ (where $C$ depends only upon $m$). Moreover, by the standard maximal function estimates, we have 
\begin{equation}\label{maxest}
\lambda^2 |B_1\setminus K_\lambda| \leq C \int_{B_1\setminus K_{\lambda/2}} |Dh|^2\, .
\end{equation}
We next consider the symmetrized set 
\[
K^s_\lambda := \{(x', x_m)\in K_\lambda : (x', -x_m)\in K_\lambda\}
\] 
and observe that
\[
|B_1\setminus K_\lambda^s|\leq 2 |B_1\setminus K_\lambda|\, .
\]
By an elementary comparison\footnote{ Indeed, fix $x$ and $y$ and assume without loss of generality that $h_Q (x) = h_Q (y) =0$, and that $h_i (x)= f^-_i (x)$ and $h_i (y) = f^-_i (y)$ for every $i\leq Q-1$. Let $\pi$ be a permutation of the set $\{1, \ldots, Q\}$ such that 
\[
\mathcal{G} (h(x), h (y))^2 = \sum_i |h_i (x) - h_{\pi (i)} (y)|^2\, .
\]
We define a permutation $\sigma$ of $\{1, \ldots , Q-1\}$ in the following way. If $\pi (Q) = Q$, then we simply set $\sigma (j) = \pi (j)$ for every $j\leq Q-1$ and we easily that $\mathcal{G} (h(x),h(y))\geq \mathcal{G} (f^- (x), f^- (y))$. Otherwise there is a $j_0 \leq Q-1$ such that $\pi (j_0) = Q$ and an $i_0 \leq Q-1$ such that $\pi (i_0) = Q$. We then set $\sigma (i_0) = j_0$ and
$\sigma (k) = \pi (k)$ for every $k\in \{1, \ldots , Q-1\}\setminus \{i_0\}$. We can therefore compute
\begin{align*}
& \mathcal{G} (f^- (x), f^- (y))^2\\ 
&\leq \sum_{i\leq Q-1} |f_i^- (x) - f_{\sigma (i)}^- (y)|^2
= \sum_{i\leq Q-1, i\neq i_0} |h_i (x) - h_{\pi (i)} (y)|^2 + |h_{i_0} (x) - h_{j_0} (y)|^2\\
&\leq \sum_{i\leq Q-1, i\neq i_0} |h_i (x) - h_{\pi (i)} (y)|^2 + 2 |h_{i_0} (x)|^2 + 2|h_{j_0} (y)|^2\\ 
&=  \sum_{i\leq Q-1, i\neq i_0} |h_i (x) - h_{\pi (i)} (y)|^2 + 2 |h_{i_0} (x) - h_{\pi (i_0)} (y)|^2 + 2 |h_Q (x) - h_{\pi (Q)} (y)|^2\\
&= \mathcal{G} (h(x). h(y)^2 +  |h_{i_0} (x) - h_{\pi (i_0)} (y)|^2 + |h_Q (x) - h_{\pi (Q)} (y)|^2 \leq 2 \mathcal{G} (h(x), h(y))^2\, .
\end{align*}}  
we easily see that  
\[
\G (f^- (x), f^- (y))\leq \sqrt{2}\, \G (h (x), h(y))\, .
\]
 Hence the Lipschitz constant of 
the restriction of $f^-$ to \(K_\lambda^s\cap B_1^-\) is at most $3C\lambda$ and we can extend it to a function $g^-$ on $B_1^-$ with Lipschitz constant at most $C'\lambda$, for some $C'$ depending only upon $m,n$ and $Q$, cf. \cite[Theorem 1.7]{DS1}. Consider now the function $k: B_1^-\cup (B_1^+\cap K^s_\lambda) \to \Iqs$ such that
\[
k (x) := \left\{
\begin{array}{ll}
g^- (x) + \a{0}\qquad &\mbox{for $x\in B_1^-$}\\
f^+ (x) \qquad &\mbox{for $x\in B_1^+ \cap K^s_\lambda$}\, .
\end{array}\right.
\]
We claim that $k$ is in fact Lipschitz with constant at most $C \lambda$. Fix two points $x,y$ in the domain of the function: if they are both in $B^+_1$ or both in $B^-_1$ then our claim is obvious, given the Lipschitz bounds on $g^-$ and $f^+|_{K^s_\lambda}$, respectively. Fix otherwise 
$x = (x', x_m)\in K^s_\lambda \cap B^+_1$ and $y\in B^-_1$. Consider now $x^s := (x', -x_m)$ and observe that $x^s\in K^s_\lambda$. On the other hand
\[
|x^s -x| = 2 x_m \leq 2 |x-y|\, .
\]
We can therefore estimate
\begin{align*}
\G (k(x), k(y)) \leq\; &\G (k(x), k(x^s)) + \G (k (x^s), k (y))\\
 =\; & \G (h(x), h(x^s)) + \G (k (x^s), k (y))\\
\leq\; & \G (h(x), h(x^s)) + 3 \G (g^- (x^s), g^- (y))\\
\leq\;  &C \lambda |x-x^s| + C \lambda |x^s - y| \leq C \lambda |x-y|\, .
\end{align*}
We can now extend $k$ to a Lipschitz map on the whole ball $B_1$ and we define $g^+ (x)$ equal to such extension for every $x\in B_1^+$. Observe therefore that $(g^+, g^-)$ is a $\qhalf$-valued function with interface $(\gammado, 0)$. Moreover the Lipschitz constant is controlled by $C\lambda$.  Note also that  $g^\pm$ and $f^\pm$ coincide on $K_\lambda^s \cap B_1^\pm$. 

Consider next that the functions 
\[
\alpha^\pm := \G (f^\pm, g^\pm)\, ,
\]
vanish on $K^s_\lambda$. Furthermore  by  choosing $\lambda$ sufficiently large we can assume that \(|K^s_\lambda\cap B_1^\pm|\ge 1/2 |B_1^\pm|\). Thus the Poincar\'e inequality gives
\[
\int_{B_1^\pm} \G (f^\pm, g^\pm)^2 = \int_{B_1^\pm} (\alpha^\pm)^2 \leq C \int_{B_1^\pm} |D\alpha^\pm|^2\, .
\]
Moreover, recalling that \(|B_1 \setminus K^s_\lambda|\leq 2 |B_1\setminus K_\lambda|\) and \eqref{maxest}
\begin{align*}
 \int_{B_1^\pm} &\left( |D\alpha^\pm|^2 + (|Df^\pm| - |Dg^\pm|)^2 + |D (\etaa\circ f^\pm) - D (\etaa\circ g^\pm)|^2\right)\\
\leq &\;C  \int_{B_1^\pm\setminus K^s_\lambda} \left( |Df^\pm|^2 + |Dg^\pm|^2\right)
\leq C \int_{B_1^\pm\setminus K^s_\lambda} \left( |Df^\pm|^2 + \lambda^2\right)\\
\leq & \; C \int_{B_1^\pm\setminus K^s_\lambda} |Df^\pm|^2 + C\lambda^2 |B_1\setminus \lambda|\\
\leq& C \int_{B_1^\pm\setminus K^s_\lambda} |Df^\pm|^2+C\int_{B_1\setminus K_{\lambda/2}} |Dh|^2\,\to 0\, .
\end{align*}
Since the latter converges to $0$ as $\lambda\to \infty$, we conclude the proof.
\end{proof}

%

\section{Proof of Theorem \ref{T:HARM_1}}
It is not restrictive to assume that
$x=0$ and $r=1$. Thus $\Psi (0) =0$ and $\psi (0) =0$.

\subsection{Proof of (\ref{e:eta-star-condition}) and (\ref{e:small-energy-condition}).}
Firstly we want to note that \eqref{e:small-energy-condition} is a consequence of \eqref{e:eta-star-condition}. Indeed, use first
\eqref{prop:Lipschitz_16}, \eqref{e:stimaK} and
\eqref{e:eta-star-condition} to estimate
\[
|B_{2}\setminus K| \leq C \eta_* E^{1-2\beta}\, .
\]
Since $\Lip (u^\pm) \leq C E^{\beta}$, \eqref{e:small-energy-condition} follows easily.

We  fix $\beta$ and $\eta_*$. Assuming by contradiction that the statement is false we find a sequence of area-minimizing currents $T_k$ and submanifolds $\Sigma_k$, $\gammaup_k$ satisfying the following properties:
\begin{itemize}
\item[(i)] The cylindrical excesses satisfy the estimate
\begin{equation}
E_k := \bE (T_k, \bC_4 (0), \pi_0) = \frac{1}{2\omega_m} \int_{\bC_4 (0, \pi_0)} |\vec{T_k}-\vec{\pi_0}|^2\, d\|T_k \| \leq \frac{1}{k}\, .
\end{equation}
\item[(ii)]  $\gammaup_k$ are smooth submanifolds of dimension $m-1$ and $\Sigma_k \subset \R^{m+n}$ are smooth submanifolds of dimension $m + \bar{n}= m+ n - l$ containing $\gammaup_k$. After possibly changing coordinates appropriately (cf. Remark \ref{r:Psi}), $\Sigma_k$ and $\gammaup_k$ are graphs of entire functions $\Psi_k: \R^{m+ \bar{n}} \to \R^l$ and $\psi_k: \R^{m-1}\to \R^{\bar{n}+1+l}$ satisfying the bounds 
\begin{align}
\norm{\Psi_k}_{C^2 (B_8)}\le C (E_k^{\sfrac{1}{2}} + \bA_k) \le C E_k^{\sfrac{1}{2}}\\
\norm{\psi_k}_{C^2 (B_8)}\le C \bA_k\leq \frac{C}{k} E_k^{\sfrac{1}{2}}\, \label{e:interface_k}.
\end{align}
\item[(iii)] Assumption \ref{Ass:app} holds for each $T_k$. 
\item[(iv)] The estimate \eqref{e:eta-star-condition} fails, i.e.,
\begin{equation}\label{e:contradiction_1}
\be_{T_k} (B_{5/2}\setminus K_k)>  \eta_*E_k = 5 c_2 E_k\, ,
\end{equation}
for some positive $c_2$. The pair of $\qhalf$-valued maps $(f_k^+, f_k^-)$ denotes the $E_k^\beta$-Lipschitz approximations of the current $T_k$. 
\end{itemize}
For every $s> 5/2$, we have
\begin{equation}\label{e:improv}
\be_{T_k}(K_k\cap B_s)\leq \be_{T_k}(B_s)-5\,c_2\,E_k.
\end{equation}
In order to simplify our notation, we use $B^\pm_{k,r}$ for the domains of the functions $f^\pm_k$ intersected with the ball $B_r (0) \subset \pi_0$. Instead $B_r^\pm$ denotes the corresponding limits,
namely the sets $B_r^\pm := B_r (0) \cap \{\pm x_m \geq 0\}$. Using this notation and
the Taylor expansion of the area functional, since
$E_k\downarrow 0$, we conclude the following inequalities for every $s\in\left[5/2,3\right]$:
\begin{align}\label{e:improv2}
\int_{K_k\cap B^+_{k,s}}\frac{|Df^+_k|^2}{2}+ \int_{K_k\cap B^-_{k,s}}\frac{|Df^-_k|^2}{2} & \leq (1+C\,E_k^{2\beta})\,\be_{T_k}(K_k\cap B_s)\notag\\
&\leq (1+C\,E_k^{2\beta}) \,\Big(\be_{T_k}(B_s)-5\,c_2\,E_k\Big)
\;\\
& \leq \be_{T_k}(B_s)-4c_2\,E_k.
\end{align}
Our aim is to show that \eqref{e:improv2} contradicts the minimizing property of $T_k$. To construct a competitor we write $f^\pm_k(x) =  \sum_{i} \a{ (f^\pm_k)_i(x)}$ and denote by $(f^\pm_k)''_i(x)$ the first $\bar{n}$ components of the point $(f^\pm_k)_i(x)$. This induces a $\qhalf$ valued map $(f^\pm_k)'':=\sum_{i} \a{(f^\pm_k)''_i(x)}$, namely a pair of maps taking values, respectively, in $\Iq (\R^{\bar{n}})$ and $\mathcal{A}_{Q-1} (\R^{\bar n})$. 
Observe that, since $(f^\pm_k)_i(x)$ are indeed point of the manifold $\Sigma_k$, then 
\[ f^\pm_k(x) = \sum_i \a{\left((f^\pm_k)''_i(x), \Psi_k(x,(f^\pm_k)''_i(x)) \right) }\,.\]
Moreover, by \eqref{e:improv2},  the fact that $\Lip(f_k^\pm)\le C E_k^\beta$ and $\abs{B_3\setminus K_k} \le C E_k^{1-2\beta}$ gives 
\begin{equation}\label{e:dirichletbound} 
\D(f_k^+) + \D(f_k^-) \le C E_k\,.
\end{equation}

Let $((\psi_k)^1(x'), (\psi_k)''(x'))$ be the first $\bar{n}+1$ components of the map $\psi$ whose graph gives $\gammaup_k$. We consider the $\qhalf$ valued map $(g^+_k, g^-_k)$ with $g^\pm_k:= E_k^{-\frac 12} (f^\pm_k)''$ with interface $(\gammado_k, \varphi_k)$ where 
\[
\gammado_k=\{x_m =  (\psi_k)^1(x') \}\qquad \textrm{and}\qquad \varphi_k(x') = E_k^{-\frac 12} (\psi_k)''(x'). 
\]
By assumption \eqref{e:interface_k}, denote by $\gamma$ the plane $\{x_m=0\}\subset \pi_0$, we have that $(\gammado_k, \varphi_k) \to (\gammado, 0)$ in $C^1$. 

For each $k$ we let $\Phi_k$ be a diffeomorphism which maps $B_3$ onto itself and $\gammado_k\cap B_3$ onto $\gammado\cap B_3$. Clearly this can be done so that $\|\Phi_k - {\rm Id}\|_{C^1} \to 0$. Moreover, given the convergence of $\gammado_k$ to $\gammado = \{x_m =0\}$, it is not difficult to see that we can require the property $\Phi_k (\partial B_r) = \partial B_r$ for
every $r\in [2,3]$ (provided $k$ is large enough)\footnote{ A simple procedure to define the map on each sphere $\partial B_r$ is the following. Consider the north and south poles $P^\pm_r= (0,\ldots, 0, r)$. On each great circle $C_r$ passing through $P^+_r$ and $P^-_r$ consider the corresponding half circles connecting $P^\pm_r$. Each have exactly one intersection with, respectively, $\{x_m =0\}$ and $\gamma_k$. We then map both half circles onto themselves by keeping the map an identity around the poles and moving the intersections with $\gamma_k$ to the intersections with $\{x_m = 0\}$. If we use polar coordinates on the circle $C_r$ so that the north and south poles are given by $\pm\frac{\pi}{2}$, we then can assume that one half circle is parametrized by $[-\frac{\pi}{2}, \frac{\pi}{2}]$: we seek a map which is the identity around $\pm \frac{\pi}{2}$ and which maps a small given $\alpha$ in $0$. Consider then a bump function $\lambda$ which is supported in $(-1,1)$ and identically $1$ on $(-\frac{1}{2}, \frac{1}{2})$: an explicit formula for such a map is 
\[
\theta \mapsto \theta (1-\lambda (\theta)) + \lambda (\theta) (\theta -\alpha)\,.
\]} 
Furthermore we have that $\norm{\varphi_k \circ \Phi^{-1}_k }_{C^1(B_3)} \to 0$ so we can choose $\varkappa_k \in C^1(B_3)$ with $\varkappa_k = \varphi_k \circ \Phi^{-1}_k $ on $\gammado$ and $\norm{\varkappa_k}_{C^1(B_3)} \to 0 $. Now define the $\qhalf$ valued maps
\[ \hat{g}^\pm_k(x):= \sum_i \a{ (g^\pm_k)_i\circ \Phi_k^{-1}(x) - \varkappa_k(x) }. \]
We observe that $(\hat{g}_k^+, \hat{g}_k^-)$ is a $\qhalf$ valued map with interface $(\gammado, 0)$ and by straightforward computations 
\begin{align}
 &\D(\hat{g}_k^\pm , \Phi^{-1}_k(A)\cap B^\pm)\nonumber\\
=\; & (1+ o(1)) \left(\D(g_k^+, A\cap B^\pm_{k})+ \D(g_k^-)\right) +o(1)\label{e:sconv}
\end{align}
for all measurable \(A\subset B_{3}\)\, 
where \(o(1)\) is independent of the set \(A\).
From \eqref{e:dirichletbound} we conclude that the Dirichlet energy of $(\hat{g}_k^+, \hat{g}_k^-)$ is uniformly bounded. By the Poincar\'e inequality and since the maps collapse at their  interfaces, their $L^2$ norms are uniformly bounded as well. By compactness we can find a subsequence (not relabeled) and a $\qhalf$ valued map $(g^+, g^-)$ with interface $(\gammado, 0)$ such that 
\[
\norm{\G(\hat{g}^\pm_k\circ \Phi_k^{-1}, g^\pm)}_{L^2(B^\pm_3)} \to 0
\] 
and 
\begin{align*}
\D(g^+) + \D(g^-) &\le \liminf_{k \to \infty } (\D(\hat{g}^+_k) + \D(\hat{g}^-_k))\\
 &= \liminf_{k \to \infty } (\D(g^+_k) + \D(g^-_k))\,.
\end{align*}
Up to extracting a subsequence, we can assume that \(|D\hat g_k^\pm|\weak G^\pm\) weakly in \(L^2(B_3)\). One can then easily check, see for instance the proof of \cite[Proposition 4.3]{DS3}, that 
\[
|D g^\pm|\le G^\pm.
\]
In particular, since  \(|B_3\setminus K_k|\to 0\), we deduce that for every \(s \in (0,3)\):
\begin{equation}\label{e:stica}
\begin{split}
 &\D(g^\pm, B^\pm_s)\le \liminf_{k\to \infty} \int_{B_s^\pm \cap\Phi_k(K_k)}  (G^\pm)^2
\\
\le\; & \liminf_{k\to \infty}\D(\hat g_k^\pm, B^\pm_s\cap \Phi_k(K_k) )\le \liminf_{k\to \infty}\D(g_k^\pm, B^\pm_s\cap K_k )
\end{split}
\end{equation}
where in the last inequality we have used \eqref{e:sconv}.

Let \(\varepsilon>0\) be a small parameter to be chosen later, we apply Lemma \ref{l:lip_app} to $(g^+, g^-)|_{B_3}$ with \(\varepsilon \) to  produce  a Lipschitz functions  $(g^+_\varepsilon, g^-_\varepsilon)$ satisfying all the estimates there.

We would like  to use Lemma \eqref{l:interpolation} to interpolate between  $(\hat{g}^+_k, \hat{g}^-_k)$ and $(g^+_\varepsilon, g^-_\varepsilon)$ (note that  both have interface $(\gammado,0)$). However we would like the functions $(\hat{g}^+_k, \hat{g}^-_k)$ not to  concentrate too much  energy in the transition region. To this end let us define the Radon measures 
\[
\mu_k(A) = \int _{A\cap B^+_{3}} |D \hat{g}^+_k|^2+\int _{A\cap B^-_{3}} |D \hat{g}^-_k|^2 \qquad A \subset B_3.
\]
Up to the extraction of a subsequence we can assume that \(\mu_k\weaks \mu\) for some Radon measure \(\mu\). We now choose \(r\in (5/2,3)\) and a subsequence, not relabeled, such that 
\begin{itemize}
\item[(A)] \(\mu(\partial B_r)=0\) 
\item[(B)] \(\bM (\langle T_k - (\bG_{f_k^+}+ \bG_{f_k^-}), |\bp|, r \rangle) \leq C E_k^{1-2\beta}\), 
where the map $|\bp|$ is given by $\pi_0\times \pi_0^\perp \ni (x,y) \to |x|$. 
\end{itemize}
Indeed (A) is true for all but countably many radii while (B) can be obtained from the estimate \eqref{e:graphmass} through the combination of Fatou's Lemma and Fubini's Theorem.  In particular, by (A) and the properties of weak  convergence of measures,  we have
\begin{equation*}
\begin{split}
\limsup_{s\to r} &\limsup_{k\to \infty} \int _{ B^+_{r}\setminus B^+_s} |D \hat{g}^+_k|^2+\int _{A\cap B^-_{r}\setminus B^-_s} |D \hat{g}^-_k|^2
\\
&\le \limsup_{s \to r} \mu (\overline{B}_r\setminus B_s)=0.
\end{split}
\end{equation*}
Hence, given \(r\in (5/2,3)\) satisfying (A) and (B) above, we can now choose \(s\in (5/2,3)\) such that 
\begin{equation}\label{e:aereo}
\limsup_{k\to \infty} \int _{ B^+_{r}\setminus B^+_s} |D \hat{g}^+_k|^2+\int _{B^-_{r}\setminus B^-_s} |D \hat{g}^-_k|^2\le \frac{c_2}{3}.
\end{equation}
We now apply, for  each $k$,  Lemma \eqref{l:interpolation} to connect the functions  $(\hat{g}^+_k, \hat{g}^-_k)$ and $(g^+_\varepsilon, g^-_\varepsilon)$ on the annulus \(B_r\setminus B_s\) . This gives sets $\overline{B}_{s} \subset V^k_{\lambda,\varepsilon} \subset W^k_{\lambda,\varepsilon} \subset B_r$ and a $\qhalf$ valued interpolation map $(\zeta_{k, \varepsilon}^+, \zeta_{k, \varepsilon}^-)$ with
\begin{align*}
& \int_{(W^k_{\lambda,\varepsilon})^\pm \setminus V_{\lambda,\varepsilon}^k} \abs{D\zeta^\pm_{k, \varepsilon}}^2\\
\le\; & C\lambda \int_{(W^k_{\lambda,\varepsilon})^\pm \setminus V_{\lambda,\varepsilon}^k} \abs{D\hat{g}^\pm_k}^2 + \abs{Dg^\pm_\varepsilon}^2 + \frac{C}{\lambda} \int_{(W^k_{\lambda,\varepsilon})^\pm \setminus V_{\lambda,\varepsilon}^k} \G(\hat{g}^\pm_k, g^\pm_\varepsilon)^2
\\
\le\; & C\lambda \int_{(W^k_{\lambda,\varepsilon})^\pm \setminus V_{\lambda,\varepsilon}^k} \abs{D\hat{g}^\pm_k}^2 + \abs{Dg^\pm_\varepsilon}^2\\
&\quad + \frac{C}{\lambda} \int_{(W^k_{\lambda,\varepsilon})^\pm \setminus V_{\lambda,\varepsilon}^k} \big(\G(\hat{g}^\pm_k, g^\pm)^2 +\G(\hat{g}^\pm, g^\pm_\varepsilon)^2\big)
\end{align*}
Hence
 \[
 \limsup_{\lambda \to 0}\limsup_{\varepsilon \to 0} \limsup_{k\to  \infty } \int_{(W^k_\lambda)^\pm \setminus V_\lambda^k} \abs{D\zeta^\pm_{k, \varepsilon}}^2=0.
 \]
Thus we can find $\lambda, \varepsilon>0$  sufficiently small such that 
\begin{equation}\label{e:sonno}
\limsup_{k\to \infty} \int_{(W^k_{\lambda,\varepsilon})^\pm \setminus V_{\lambda,\varepsilon}^k} \abs{D\zeta^\pm_{k, \varepsilon}}^2 < \frac{c_2}{3}.
 \end{equation}
Moreover, up to further reduce \(\varepsilon\), we can also assume that  
 \begin{equation}\label{e:sonno2}
 \int_{B_r^\pm} |D g_\eps^\pm|^2\le \int_{B_r^\pm} |D  g^\pm|^2+\frac{c_2}{6}.
 \end{equation}
Next we define Lipschitz-continuous function  on $B_r$ with interface $(\gammado, 0)$ by (note that since \(\lambda\) and \(\varepsilon\) are fixed we drop the dependence on those parameters for the sake of readability)
\begin{equation}\label{e:h}
\hat{h}^\pm_k := 
\begin{cases} \hat{g}_k^\pm &\text{ on } B_r \setminus (W_{\lambda, \varepsilon}^k)^\pm \\
 \zeta^\pm_{k, \varepsilon} & \text{ on } (W^k_{\lambda, \varepsilon})^\pm \setminus V^k_\lambda\\
 g^\pm_\varepsilon & \text{ on } (V^k_{\lambda, \varepsilon})^\pm.	
 \end{cases}
\end{equation}
Let us  then consider the functions  $h_k^\pm:=\sum_i \a{ (\hat{h}_k^\pm)_i \circ \Phi_k + \varkappa_k \circ \Phi_k }$,  defined on $B^\pm_{k,3}$. The resulting $\qhalf$ valued map $(h_k^+, h_k^-)$ has interface $(\gammado_k, \varphi_k)$ and satisfies 
\begin{align}
\liminf_{k \to \infty} \,&\Big(\D( h_k^+, B_{k,r}^+)+ \D(h_k^-, B_{k,r}r^-) \Big)\nonumber
\\
&= \liminf_{k \to \infty} \Big(\D( \hat h_k^+, B_r^+)+ \D(\hat h_k^-, B_r^-) \Big)\nonumber
\\
 &\le  \D(g_\eps^+, B_r^+) + \D(g_\eps^-, B_r^-)\nonumber
 \\
 &\quad
 +\limsup_{k\to \infty} \Big(\D( \zeta_k^+, (W^k_{\lambda,\varepsilon})^+ \setminus V_{\lambda,\varepsilon}^k)+ \D(\zeta _k^-, (W^k_{\lambda,\varepsilon})^- \setminus V_{\lambda,\varepsilon}^k)\big)\nonumber
 \\
 &\quad+\limsup_{k\to \infty} \Big(\D( \hat g_k^+, B_r^+\setminus B_s)+ \D(\hat g_k^-, B_r^-\setminus B_s)\Big)\nonumber
 \\
 &\le \D(g^+, B_r^+) + \D(g^-, B_r^-)+c_2\label{e:one_c2_lost}
 \\
 &\le \liminf_{k\to \infty} \Big(\D(\hat g_k^+, B_r^+\cap K_k ) + \D(\hat g_k^-, B_r^-\cap K_k)\Big)+c_2\label{e:liminf}
\end{align}
where  in the third inequality  we have used  \eqref{e:sonno}, \eqref{e:sonno2}, \eqref{e:aereo} and  the fourth  one \eqref{e:stica}.

We thus conclude that, for infinitely many  $k$,
\begin{align}
 E_k &\D (h_k^+, B_{k,r}^+) + E_k \D (h_k^-, B_{k,r}^-)\nonumber\\
&\leq\;  \D ((f^+_k)'', B_{k,r}^+\cap K_k) + \D ((f_k^-)'', B_{k,r}^-\cap K_k) + 2 c_2 E_k\, .\label{e:small_loss2}
\end{align}
Let us  consider the functions
\[
v_k^\pm (x) := E_k^{\sfrac{1}{2}} h_k^\pm (x)  
\]
and
\[
w_k^\pm (x) := \sum_i \a{\left(v_k^\pm (x), \Psi_k (x, v_k^\pm (x))\right)}\, .
\] 
Observe that
$w_k^\pm|_{\partial B_r} = f_k^\pm$ and $\Lip(w_k^\pm) \le C E_k^\beta$. 

We are now ready to construct our competitor currents to test the minimality of the sequence $T_k$. First of all, by the isoperimetric inequality, there is a current $S_k$ supported in \(\Sigma_k\) such that 
\begin{align*}
& \partial S_k = \langle T_k - (\bG_{f_k^+}+ \bG_{f_k^-}), |\bp|, r \rangle\\ 
\mbox{and}\quad
&\bM (S_k) \leq C (E_k^{1-2\beta})^{\frac{m}{m-1}} = o (E_k)\, .
\end{align*}
where we have used that \(\beta<\frac{1}{4m}\).
Let $Z_k = \bG_{w_k^+} \res \bC_r + \bG_{w_k^-} \res \bC_r + S_k$. We easily see that the boundary of $Z_k$ matches that of $T_k \res \bC_r$ and that the support of $Z_k$ is contained in $\Sigma_k$. Thus it is an admissible competitor and we must have
\[
\bM (Z_k) \geq \bM (T_k \res \bC_r)\,.
\]
On the other hand, using the Taylor expansion of the mass, the bound on $\Lip (h_k^\pm)$ and the bound on $\bM (S_k)$, we easily conclude that
\begin{equation}\label{e:test_minimality}
\D (w_k^+, B^+_{k,r}) + \D (w_k^-, B^-_{k,r}) \geq 2 \be_{T_k} (B_r) - o (E_k)\, . 
\end{equation}
We next compute
\begin{equation}\label{e:bus}
\begin{split}
& \D (w^+_k, B_{k,r}^+) - \D (f^+_k , B_{k,r}^+ \cap K_k)\nonumber\\
= &
\underbrace{\int_{B_{k,r}^+} |D v^+_k |^2- \int_{B_{k,r}^+\cap K_k} |D(f_k^+)''|^2}_{I_1}\nonumber\\
&\quad 
+ \underbrace{\int_{B_{k,r}^+} |D (\Psi_k (x, v^+_k))|^2 - \int_{B_{k,r}^+} |D (\Psi_k (x, (f^+_k)''))|^2}_{I_2}\,\nonumber\\ 
&\quad +\underbrace{\int_{B_{k,r}^+\setminus K_k} |D(\Psi_k (x, (f_k^+)''))|^2}_{I_3}\notag .
\end{split}
\end{equation}
By \eqref{e:small_loss2} we already know that $I_1\leq 2 c_2 E_k$ for infinitely many \(k\). For what concerns $I_2$, we proceed as follows. First we write 
\begin{align*}
I_2 & = \sum_i\int_{B_{k,r}^+} (D(\Psi_k(x,v^+_k(x))_i-D(\Psi_k(x,(f_k^+)''(x))_i):\\
&\qquad\qquad\qquad\qquad(D(\Psi_k(x,v^+_k(x))_i+D(\Psi_k(x,(f_k^+)'' (x))_i) .\notag
\end{align*}
Next, recalling the chain rule \cite[Proposition~1.12]{DS1}, we get
\begin{align}
&\big|D(\Psi_k (x,v^+_k(x))_i + D(\Psi_k (x,(f_k^+)'' (x))_i\big|\nonumber\\
\leq\; & C \|D_x\Psi_k\|_0 + C \|D_u \Psi_k\|_0 (\Lip (v_k) + \Lip ((f_k^+)''))
 = C E_k^{\sfrac{1}{2}}\, .\notag
\end{align}
Using the latter inequality and the chain rule  again, we obtain
\begin{align}
I_2 \leq & C E_k^{\sfrac{1}{2}} \int_{B^+_{k,r}} \Big(\sum_i | D_x\Psi_k (x, (v_k^+)_i(x))  - D_x\Psi_k (x, ((f_k^+)'')_i (x))|\notag\\
& \qquad \qquad\qquad + \|D_u \Psi_k\|_0 \left(|Dv^+_k| + |D (f^+_k)''|\right)\Big)\notag\\
\leq{}& C\, E_k^{\sfrac{1}{2}} \|D^2\Psi_k\|_0 \int_{B^+_{k,r}} \G(v^+_k,(f^+_k)'') + C\,E_k \int_{B^+_{k,r}} \left(|Dv^+_k| + |D(f_k^+)''|\right)\notag\\
\leq{}& C\, E_k^{\sfrac{3}{2}}\, .
\end{align}
Finally,
 \[
 I_3\leq C \|D\Psi_k\|_\infty^2 |B_3\setminus K_k|+C\|D_u \Psi_k\|_\infty^2 \int_{B_r} |(Df^+_k)''|^2 \leq C E_k^{2-2\beta}+CE_{k}^{2}.
 \]
Hence $I_1+I_2+ I_3 \le 2c_2E_k + o (E_k)$. Since an analogous estimates holds replacing \(+\) with \(-\), we conclude that
\begin{align}
& \D (w_k^+, B_{k,r}^+) + \D (w_k^-, B_{k,r}^-)\nonumber\\
\leq & \D (f_k^+, B_{k,r}^+ \cap K_k) + \D (f_k^-, B_{k,r}^-\cap K_k) + 4 c_2 E_k + o (E_k)\, . \label{e:small_loss3}
\end{align}
However, the latter inequality combined with \eqref{e:improv2} implies
\begin{equation}\label{e:improv3}
\D (w_k^+, B_{k,r}^+) + \D (w_k^-, B_{k,r}^-) \leq 2 \be_{T_k} (B_r ) -  c_2 E_k + o (E_k)\, .
\end{equation}
Clearly \eqref{e:test_minimality} and \eqref{e:improv3} are incompatible for $k$ large enough. 
This completes the proof of the first part of the theorem.

\subsection{Proof of {\eqref{e:harm-app1}}, {\eqref{e:harm-app2}} and {\eqref{e:harm-app3}}.} We again argue by contradiction. Assume the second part of the theorem is false for some $\eta_*$. We then have again a sequence of area-minimizing currents $T_k$ and submanifolds $\Sigma_k$, $\gammaup_k$ satisfying the properties (i), (ii) and (iii) of the previous step, which we recall here for the reader's convenience together with 
the  fourth contradiction assumption. More precisely:
\begin{itemize}
\item[(i)] The cylindrical excesses satisfy the estimate
\begin{equation}
E_k := \bE (T_k, \bC_4 (0), \pi_0) = \frac{1}{2\omega_m} \int_{\bC_r (0, \pi_0)} |\vec{T_k}-\vec{\pi_0}|^2\, d\|T_k \| \leq \frac{1}{k}\, .
\end{equation}
\item[(ii)] $\gammaup_k$ are smooth submanifolds of dimension $m-1$ and $\Sigma_k \subset \R^{m+n}$ are smooth submanifolds of dimension $m + \bar{n}= m+ n - l$ containing $\gammaup_k$. $\Sigma_k$ and $\gammaup_k$ are graphs of entire functions $\Psi_k: \R^{m+ \bar{n}} \to \R^l$ and $\psi_k: \R^{m-1}\to \R^{\bar{n}+1+l}$ satisfying the bounds 
\begin{align}
\norm{\Psi_k}_{C^2 (B_8)}\le C (E_k^{\sfrac{1}{2}} + \bA_k) \le C E_k^{\sfrac{1}{2}}\\
\norm{\psi_k}_{C^2 (B_8)}\le C \bA_k\leq \frac{C}{k} E_k^{\sfrac{1}{2}}\, \label{e:interface_k_2}.
\end{align}
\item[(iii)] Assumption \ref{Ass:app} holds for each $T_k$. 
\item[(iv)] The $E_k^\beta$-Lipschitz approximations $(f_k^+, f_k^-)$ fail to satisfy one among the estimates \eqref{e:harm-app1},
\eqref{e:harm-app2} and \eqref{e:harm-app3} for any choice of the function $\kappa$. 
\end{itemize}
As in the previous step we write $f^\pm_k(x) =  \sum_{i} \a{ (f^\pm_k)_i(x)}$ and denote by $(f^\pm_k)''_i(x)$ the first $\bar{n}$ components of the point $(f^\pm_k)_i(x)$. This induces a $\qhalf$ valued function  $(f^\pm_k)'':=\sum_{i} \a{(f^\pm_k)''_i(x)}$ with values in $\Iqs(\R^{\bar{n}})$ and $\mathcal{A}_{Q-1} (\R^{\bar{n}})$. Observe that, since $(f^\pm_k)_i(x)$ are indeed points of the manifold $\Sigma_k$, then 
\[ f^\pm_k(x) = \sum_i \a{\left((f^\pm_k)''_i(x), \Psi_k(x,(f^\pm_k)''_i(x)) \right) }\,.\]
We keep using the notation of the previous step. In particular we let 
\[
((\psi_k)^1(x'), (\psi_k)''(x'))
\] 
be the first $\bar{n}+1$ components of the graph map of $\gammaup_k$ and $\varphi_k = E_k^{-\frac 12} (\psi_k)''(x')$. We consider the $\qhalf$ valued map $(g^+_k, g^-_k)$ defined by 
\[
g^\pm_k:= E_k^{-\frac 12} (f^\pm_k)''\, ,
\] 
with interface $(\gammado_k, \varphi_k)$. For each $k$ we let $\Phi_k$ be a diffeomorphism which maps $B_3$ onto itself and $\gammado_k\cap B_3$ onto $\gammado\cap B_3$. 
Again this is done in such a way that $\|\Phi_k - \Phi\|_{C^1} \to 0$, where $\Phi$ is the identity map. Furthermore, since $\norm{\varphi_k \circ \Phi^{-1}_k }_{C^1(B_3)} \to 0$, we can choose $\varkappa_k \in C^1(B_3)$ with $\varkappa_k = \varphi_k \circ \Phi^{-1}_k $ on $\gammado$ and $\norm{\varkappa_k}_{W^{1,2}(B_3)} \to 0 $. Now define the $\qhalf$ valued maps
\[ \hat{g}^\pm_k(x):= \sum_i \a{ (g^\pm_k)_i\circ \Phi_k^{-1}(x) - \varkappa_k(x) }. \]
As in the previous step we can find a subsequence (not relabeled) and a $\qhalf$ valued map $(g^+, g^-)$ with interface $(\gammado, 0)$ such that $\norm{\G(\hat{g}^\pm_k, g^\pm)}_{L^2(B^\pm_3)} \to 0$.
We next claim that
\begin{itemize}
\item[(A)] The convergence of $\hat g_k^\pm$ to $g^\pm$ is strong in $W^{1,2} (B_{5/2})$, namely 
\[
\lim_{k\to \infty} (\D (\hat g_k^+, B_{5/2}^+) + \D (\hat g_k^-, B_{5/2}^-)) = \D (g^+, B_{5/2}^+) + \D (g^-, B_{5/2}^-)\, .
\]
\item[(B)] $g^\pm$ is a $\qhalf$-minimizer. 
\end{itemize}
Assuming that (A) and (B) are proved, from  Theorem \ref{thm:collasso} we would then infer the existence of a classical harmonic function $\hat h$ which vanishes identically
on $\{x_m =0\}$ and such that $g^+ = Q \a{h}$ and $g^- = (Q-1) \a{h}$. Setting $h_k := E_k^{\sfrac{1}{2}} \hat h$ and
$\kappa_k (x) := (h_k (x), \Psi_k (x, h_k (x)))$ we would then conclude that
\begin{align*}
&\int_{B_{k,5/2 }^+}\G(f_k^+,Q\a{\kappa_k})^2+\int_{B_{k,5/2}^+}\left(|Df_k^+|-\sqrt Q|D\kappa_k|\right)^2 = \; o (E_k)\, ,\\
&\int_{B_{k,5/2 }^-}\G(f_k^-,(Q-1)\a{\kappa_k})^2+\int_{B_{k,5/2}^-}\left(|Df_k^-|-\sqrt {(Q-1)}|D\kappa_k|\right)^2\\
& = \; o (E_k)\,, \\
&\int_{B_{k,5/2}^\pm} \abs{D(\etaa\circ f_k^\pm) - D\kappa_k}^2 =\; o (E_k)\,.
\end{align*}
But these estimates are incompatible with (iv) above.  Hence,  at least one between (A) and (B) needs to fail. As in the previous section we will use this to contradict the minimality of \(T_k\). Note that in both cases there exists a  $\qhalf$ valued function  $(\bar g^+, \bar g^-)$ with  interface \((\gammado, 0)\), \(\gammado=\{x_m=0\}\), and a positive constant \(c_3>0\), such that 
\begin{equation}\label{e:lossliminf}
\D(\bar g^+,B^+_s)+\D(\bar g^-,B^-_s)\le \liminf_{k\to \infty} \D(\hat g_k^+,B^+_s)+\D(\hat g_k^-,B^-_s)-2c_3
\end{equation}
for all \(s\in (5/2,3)\). Indeed this is  true with $(\bar g^+, \bar g^-)=(g^+,  g^-)$ if (A) fails, while if (B) fails we choose $(\bar g^+, \bar g^-)$ to be a $\qhalf$-minimizer with boundary data \(g^\pm\) on \(\partial B_{5/2}\) extended to be equal to \(g^\pm\) on  \(B_{3}\setminus B_{5/2}\). We can now argue exactly as in the previous step to find a radius \(r\in (5/2,3)\) and functions  \(\hat h_k^\pm\) such that
\[
\bM (\langle T_k - (\bG_{f_k^+}+ \bG_{f_k^-}), |\bp|,r \rangle) \leq C E_k^{1-2\beta}
\]
and, arguing as we have done for \eqref{e:one_c2_lost},
\begin{align}
&\liminf_{k\to \infty} \D(h^+,B_{k,r}^+)+ \D(h^-,B_{k,r}^-)\le \D(\bar g^+,B^+_r)+\D(\bar g^-,B^-_r)+c_3
\\
&\le \liminf_{k\to \infty} \D(g^+,B_{k,r}^+)+ \D(g^-,B_{k,r}^-)-c_3.\label{e:ddai}
\end{align}
As in the previous section we consider $v_k^\pm (x) := E_k^{\sfrac{1}{2}} h_k^\pm (x)$ and 
\[
w_k^\pm (x) := \sum_i \a{\left(v_k^\pm (x), \Psi_k (x, v_k^\pm (x))\right)}
\] 
and observe that
$w_k^\pm|_{\partial B_r} = f_k^\pm$. We then construct the same competitor currents to test the minimality of $T_k$. First we consider a current \(S_k\) supported in \(\Sigma_k\) such that 
\[
\partial S_k = \langle T_k - (\bG_{f_k^+}+ \bG_{f_k^-}), |\bp|, r\rangle \mbox{ and }
\bM (S_k) \leq C (E_k^{1-2\beta})^{\frac{m}{m-1}} = o (E_k)\, ,
\]
Then we define, as before,  $Z_k := \bG_{w_k^+} \res \bC_r + \bG_{w_k^-} \res \bC_r + S_k$, for which we can verify that
\begin{equation}\label{e:contra100}
\bM (Z_k) \geq \bM (T_k \res \bC_r)\, .
\end{equation}
By  the result of the previous section,  we know that 
\begin{equation}\label{e:contra101}
2 \be_{T_k}(B_r)= \D (f_k^+, B_{k,r}^+) + \D (f_k^-, B_{k,r}^-) +  O(\eta_k E_k)\, .
\end{equation}
Observe that now we can choose \(\eta_k\to 0\) as \(k\to \infty\). On the other hand, using the bound on $\bM (S_k)$ and 
Taylor expansion we infer 
\begin{equation}\label{e:contra102}
2 \be_{Z_k}(B_r) =  \D (w_k^+, B_{k,r}^+) + \D (w_k^-, B_{k,r}^-)+  o (E_k)\, .
\end{equation}
Arguing as in the previous section (see \eqref{e:bus}) and relying on \eqref{e:ddai} we also have 
\begin{align}
 &\D (w_k^+, B_{k,r}^+) + \D (w_k^-, B_{k,r}^-)\nonumber\\ 
\leq & \D (f_k^+, B_{k,r}^+) + \D (f_k^-, B_{k,r}^-) - c_3 E_k + o (E_k)\, . \label{e:contra103}
\end{align}
Clearly \eqref{e:contra100}, \eqref{e:contra101}, \eqref{e:contra102} and \eqref{e:contra103} are in contradiction for $k$ large enough, which completes the proof.

\chapter{Decay of the excess and uniqueness of tangent cones}\label{chap:decay}

In this chapter we prove the decay of the excess at totally collapsed points for area minimizing currents. As a consequence
we will conclude that the tangent cone at each such point is in fact unique.

\begin{definition}\label{def_spher} Let $T$ be an integral current of dimension $m$ in $\R^{m+n}$. We define the \emph{excess}\index{Excess} $\bE (T, \bB_r (p), \pi)$\index{aale\bE (T, \bB_r (p), \pi)@$\bE (T, \bB_r (p), \pi)$} of $T$ in the ball $\bB_r (p)$ with respect to the (oriented) plane $\pi$ as
\begin{equation}\label{e:spher_excess}
\bE (T, \bB_r (p), \pi) := \frac{1}{2\omega_m r^m} \int_{\bB_r (p)} |\vec{T} (x) - \vec{\pi}|^2 \, d\|T\| (x)\, .
\end{equation}
If $T$ is area minimizing in a Riemannian manifold $\Sigma\subset \R^{m+n}$, we then define the \emph{spherical excess}\index{Spherical excess}\index{aale\bE (T, \bB_r (p))@$\bE (T, \bB_r (p))$} of $T$ at any ball $\bB_r (p)$ centered at some point $p\in \supp (T)\subset \Sigma$ as
\begin{equation}\label{e:spher_excess_2}
\bE (T, \bB_r (p)) := \min \{\bE (T, \bB_r (p), \pi): \pi\subset T_p \Sigma\}\, .
\end{equation}
We underline that $\pi$ is constrained to be a subset of $T_p \Sigma$, so probably a more appropriate, yet cumbersome, notation would be $\bE^\Sigma (T, \bB_r (p))$. 
Moreover we let $\bh (T, \bB_r (p))$\index{aalh\bh (T, \bB_r (p))@$\bh (T, \bB_r (p))$} be the minimum of $\bh (T, \bB_r (p), \pi)$ while $\pi\subset T_p \Sigma$ runs among those planes which optimize the right hand side of \eqref{e:spher_excess_2}. 
\end{definition}

Before stating the main theorem of this chapter we need to introduce a modified excess function for boundary points, where we constrain the ``minimal'' reference planes to contain $T_{p}\gammaup$.

\begin{definition}\label{def:exc_flat}
Let $T$, $\Sigma$ and $\gammaup$ be as in Assumption \ref{ass:main} and assume that $p\in\gammaup$. We define the \emph{modified excess}\index{Modified excess for boundary points}\index{aale\bE^\flat(T,\bB_r(p))@$\bE^\flat(T,\bB_r(p))$} in $\bB_r(p)$ as
\begin{equation}\label{e:exc_flat}
\bE^\flat(T,\bB_r(p)):=\min\left\{\bE(T,\bB_r(p),\pi):\,T_{p}\gammaup\subset\pi\subset T_{p}\Sigma\right\}\,.
\end{equation}
\end{definition}

With this notation, the main result of this chapter is the following

\begin{theorem}\label{thm:decay_and_uniq}\label{THM:DECAY_AND_UNIQ}
Let $\gammaup$ be a $C^2$ $(m-1)$-dimensional submanifold of a  $C^2$ $(m+\bar n)$-dimensional submanifold $\Sigma \subset \R^{m+n}$ and consider an area minimizing current $T$ in $\Sigma$ with the property that 
$\partial T \res U = \a{\gammaup}$ for some open set $U$. If $p\in \gammaup \cap U$ is a collapsed point with density $\Theta (T, p) = Q - \frac{1}{2}$, then there exists  \(r>0\) such that:
\begin{itemize}
\item[(a)] Each $q\in \gammaup \cap \bB_r (p)$ is a collapsed point for $T$ with density $Q-\frac{1}{2}$;
\item[(b)] At each $q\in \gammaup \cap \bB_r (p)$ there is a unique flat tangent cone $Q \a{\pi (q)^+} + (Q-1) \a{\pi (q)^-}$, where 
$\pi (q)\subset T_q \Sigma$ is an oriented $m$-dimensional plane containing $T_q \gammaup$;
\item[(c)] For each $\varepsilon >0$ there is a constant $C = C (\varepsilon)$ with the property that
\begin{align}
\bE^\flat (T, \bB_\rho (q)) & \leq \bE (T, \bB_\rho (q), \pi (q))\nonumber\\
& \leq C \left(\frac{\rho}{r}\right)^{2-2\varepsilon}  \bE^\flat (T, \bB_{2r} (p)) + C \rho^{2-2\varepsilon}r^{2\varepsilon} \bA^2 \label{e:decay-1}
\end{align}
for all $q\in \gammaup \cap \bB_r (p)$ and for all $\rho\in ]0, r[$;
\item[(d)] For each $\varepsilon > 0$ there is a constant $C = C (\varepsilon)$ such that
\begin{equation}\label{e:hoelder-1}
|\pi (q) - \pi (q')| \leq C (r^{\eps-1} \bE^\flat (T, \bB_{2r} (p))^{\sfrac{1}{2}} + \bA r^\varepsilon) |q'-q|^{1-\varepsilon}
\end{equation}
$\forall q, q'\in \gammaup \cap \bB_r (p)$;
\item[(e)] There is a constant $C $ such that
\begin{equation}\label{e:decay-height}
\bh (T, \bB_\rho (q), \pi (q)) \leq C (r^{-1} \bE^\flat (T, \bB_{2r} (p)) + \bA)^{\sfrac{1}{2}} \rho^{\sfrac{3}{2}} 
\end{equation}
for all $q\in \gammaup \cap \bB_r (p)$ and  for all $\rho\in ]0, {\textstyle{\frac{r}{2}}}[$.
\end{itemize}
\end{theorem}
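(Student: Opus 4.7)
Part (a) is the soft consequence of the framework already built. By Theorem \ref{t:Allard>1/2} every point of $\gammaup$ has density at least $\tfrac12$, while the collapsed assumption gives $\Theta(T,q)\geq Q-\tfrac12$ for $q$ near $p$; combined with upper semicontinuity of $\Theta(T,\cdot)$ on $\gammaup$ (Theorem \ref{thm:allard}), this forces $\Theta(T,q)=Q-\tfrac12$ in a neighborhood, and the existence of a flat tangent cone at each such $q$ follows because a non-flat area-minimizing boundary cone cannot have density $Q-\tfrac12$ (this is why $Q-\tfrac12$ places us in the ``regular stratum'' part of the stratification, by Theorem \ref{thm:stratification} and Lemma \ref{lem:flat_cone}). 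Hence every $q\in\gammaup\cap\bB_r(p)$ is collapsed. The bulk of the work is to prove (c); parts (b) and (d) are classical consequences of (c), while (e) is an independent ingredient that must actually be proved \emph{along with} (c).

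The heart of the argument is the following one-step decay, which I would prove by contradiction and compactness: there exist $\eta_0\in(0,\tfrac12)$ and $\varepsilon_0>0$ such that, whenever $q\in\gammaup$ and the modified excess plus $\bA^2 r^2$ is smaller than $\varepsilon_0$,
\[
\bE^\flat(T,\bB_{\eta_0 r}(q))\;\leq\;\eta_0^{2-2\varepsilon}\,\bE^\flat(T,\bB_r(q))\;+\;C(\eta_0 r)^{2-2\varepsilon}\bA^2.
\]
Suppose not: rescale a bad sequence $T_k$ so that $r_k=1$ and rotate coordinates so that the optimal plane for $\bE^\flat(T_k,\bB_1(q_k))$ is $\pi_0=\mathbb R^m\times\{0\}$ and $T_{q_k}\gammaup\subset\pi_0$. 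Using the height bound (e), which I discuss below, together with Allard's monotonicity formula and the fact that $q_k$ is collapsed with density $Q-\tfrac12$, one verifies that Assumption \ref{Ass:app} holds on a cylinder $\bC_{4\rho}$ for some uniform $\rho>0$. Theorem \ref{t:Lipschitz_1} then produces $\qhalf$-valued Lipschitz approximations $(u_k^+,u_k^-)$ of $T_k$ collapsing at the rescaled interface, and Theorem \ref{t:harm_1} delivers a single harmonic $h_k:B_{2\rho}\to\mathbb R^{\overline n}$ vanishing on $\{x_m=0\}$ which approximates both sheets in $L^2$ and in energy. By Remark \ref{r:odd_harmonic}, after odd reflection $h_k$ is a classical harmonic function, so its first-order Taylor polynomial $L_k$ at the origin satisfies the sharp harmonic decay estimate $\|h_k-L_k\|_{C^0(B_{\eta_0})}\leq C\eta_0^2\|\nabla h_k\|_{L^2}$. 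The tilted plane $\pi_k'$ associated to $L_k$ (pulled back through $\Psi_k$) then realises $\bE(T_k,\bB_{\eta_0},\pi_k')\leq C\eta_0^{2-2\varepsilon}E_k$, which is the desired contradiction.

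Once the one-step decay is established, an iteration along radii $\rho_j=\eta_0^j r$ gives (c), provided one controls the angle $|\pi_j(q)-\pi_{j+1}(q)|$ between consecutive optimal planes by $C\bE^\flat(T,\bB_{\rho_j}(q))^{1/2}+C\bA\rho_j$ (a standard consequence of the triangle inequality for planes and the near-graphicality of $T$ over the minimizing plane). The same estimate makes the sequence $\{\pi_j(q)\}$ Cauchy with geometric rate, proving (b). The Hölder estimate (d) is then obtained by the classical ``good-ball/bad-ball'' dichotomy: for $q,q'\in\gammaup\cap\bB_r(p)$ at distance $s$, apply (c) at scale $2s$ at both $q$ and $q'$ (using also (a) for $q'$), compare the two optimal planes via an intermediate ball of radius $\approx s$ containing both, and sum the resulting geometric series.

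The main obstacle, and the ingredient that must be in hand \emph{before} the compactness argument above can even be carried out, is the height bound (e). I would prove it following the Allard--Hardt--Simon--Spolaor strategy outlined in the introduction. The key is to combine the precise boundary monotonicity identity \eqref{e:monot_identity}, whose remainder involves the crucial term $|(x-q)^\perp|^2/|x-q|^{m+2}$, with a Moser iteration for the subharmonic-type function $x\mapsto |\bp_{\pi(q)}^\perp(x-q)|^2$ on the varifold $\|T\|$. Testing the first variation \eqref{e:first_var} against vector fields of the form $\phi(|x-q|)\,\bp_{\pi(q)}^\perp(x-q)$ that are tangent to $\gammaup$ (as in Lemma \ref{l:good_vector_field}) yields a Caccioppoli inequality; the monotonicity remainder provides the ``gain'' needed to close the Moser iteration with exponent $\tfrac32$ in the radius, and the interface multiplicity information (density $Q-\tfrac12$) controls the boundary contribution via \eqref{e:first_var}. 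The constants depend on $r^{-1}\bE^\flat(T,\bB_{2r}(p))$ and $\bA$ exactly in the form stated in \eqref{e:decay-height}. The delicate point is that (e) and (c) need to be proved together in a single induction on scales: at each dyadic radius the height bound is fed into Theorem \ref{t:Lipschitz_1} to justify the Lipschitz approximation, and the resulting excess decay is fed back to allow the Moser iteration at the next smaller scale.
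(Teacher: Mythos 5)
Your scheme for (b)--(e) is essentially the paper's: harmonic approximation via Theorems \ref{t:Lipschitz_1} and \ref{t:harm_1}, decay for the odd harmonic competitor, iteration on dyadic scales, Cauchy estimates on the optimal planes for (b), and the dyadic comparison for (d). The genuine gap is in your treatment of (a), and it propagates into the compactness step. Upper semicontinuity of the density only gives $\Theta(T,q)\le Q-\frac12+\delta$ for $q$ near $p$, not equality, because boundary densities are not known a priori to be quantized; and neither Theorem \ref{thm:stratification} nor Lemma \ref{lem:flat_cone} produces a flat tangent cone at \emph{every} nearby point: the stratification only bounds the dimension of the strata of points all of whose tangent cones have small spine, and your claim that a non-flat area-minimizing boundary cone cannot have density $Q-\frac12$ is false --- for instance a multiplicity-one half-plane together with a transversal multiplicity-one plane through the boundary line has density $\frac32$ and can be minimizing in codimension $\ge 2$ (compare the classification lemma used for Corollary \ref{c:3dim}, whose cones are precisely of this non-flat type with half-integer density). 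This is not a harmless misattribution, because in your one-step decay you then invoke ``$q_k$ is collapsed with density $Q-\frac12$'' to verify Assumption \ref{Ass:app} at the rescaling points (in particular hypothesis (v) and the projection identity $\bp_\sharp T=Q\a{\Omega^+}+(Q-1)\a{\Omega^-}$ with the \emph{correct} multiplicities), which is exactly the content of (a): the argument is circular as written. The repair is what the paper does: from $\Theta(T,p)=Q-\frac12$ and the monotonicity formula inherit a mass upper bound $\|T\|(\bB_\sigma(q))\le (Q-\frac14)\omega_m\sigma^m$ for all $q\in\gammaup$ near $p$ and all small $\sigma$, use this bound together with the density lower bound $\Theta\ge Q-\frac12$ (which does follow from the collapsed hypothesis at $p$) to pin the multiplicities in the compactness/limit argument, and only at the end, once the decay forces a unique flat tangent cone $Q^*\a{\pi(q)^+}+(Q^*-1)\a{\pi(q)^-}$ at each $q$, conclude $Q^*=Q$, hence $\Theta(T,q)=Q-\frac12$ and (a).

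A secondary point: the claimed coupled induction between (c) and (e) is unnecessary, and the exponent $\frac32$ does not come out of the Moser iteration. The Allard-type iteration combined with the Hardt--Simon use of the remainder in \eqref{e:monot_identity} yields, once and for all under Assumption \ref{Ass:app}, a bound of the form $\bh(T,\bC_{2\rho}(q),\pi)\le C(\bE(T,\bC_{4\rho}(q,\pi))^{\sfrac12}+\bA^{\sfrac12}\rho^{\sfrac12}+\bA\rho)\rho$, i.e.\ linear in $\rho$ with the excess at the same scale; the $\rho^{\sfrac32}$ in \eqref{e:decay-height} is then obtained by substituting the decay (c) into this estimate at scale $\rho$, not by closing an iteration with that exponent.
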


Before coming to the proof we state an important corollary of the theorem which will be used often in the remaining chapters (for a geometric illustration of the conclusions we refer to Figure \ref{fig:cones}). 

\begin{corollary}\label{c:cone_cut}\label{C:CONE_CUT}
Let $\gammaup, \Sigma, T$ and $p$ be as in Theorem \ref{thm:decay_and_uniq},  assume $r=2\sigma$ is a radius for which all the conclusions of Theorem \ref{thm:decay_and_uniq} hold, set
$E= \bE^\flat (T, \bB_{r} (p))$. Furthermore let $\pi$ be an optimal plane for the right hand side of \eqref{e:exc_flat} and $\pi(q)$ be the tangent plane to \(T\) in \(q\) as in conclusion (b) of  Theorem \ref{thm:decay_and_uniq}. If we denote by $\bp, \bp^\perp, \bp_q$ and $\bp_q^\perp$ respectively the
orthogonal projections onto $\pi, \pi^\perp, \pi (q)$ and $\pi (q)^\perp$, then
\begin{equation}\label{e:tilt_pi(q)}
|\pi (q) - \pi| \leq C (E + \bA r)\, ,
\end{equation}
\begin{align}
	\supp(T) \cap \bB_\sigma (q) & \subset  \{x  \colon \abs{\bp^\perp(x-q)} \le C (E + \bA r)^{\sfrac{1}{2}} \abs{x-q} \}\label{eq:height-envelope2}\\
\end{align}
for all $q \in \gammaup\cap \bB_\sigma (p)$ and
\begin{align}
	\supp(T) \cap \bB_\sigma (q) & \subset \{ x : \abs{\bp^\perp_q(x-q)} \le C (r^{-1} E + \bA)^{\sfrac{1}{2}} \abs{x-q}^{\frac32} \} \label{eq:height-envelope1} 
\end{align}
for all $q\in \gammaup\cap \bB_\sigma (p)$.
\end{corollary}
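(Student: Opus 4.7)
The three statements should all follow directly from parts (b), (d), and (e) of Theorem~\ref{thm:decay_and_uniq}, together with a standard tilt-excess comparison at the base point $p$. Set $E := \bE^\flat(T,\bB_r(p))$, fix $q\in\gammaup\cap\bB_\sigma(p)$ (recall $r=2\sigma$), and note that conclusion (e) is applicable on every ball $\bB_\rho(q)$ with $\rho<r/2=\sigma$. Since $q\in\gammaup\subset\supp(T)$, for any $x\in\supp(T)\cap\bB_\sigma(q)$ we set $\rho:=|x-q|\leq\sigma$ and read off from \eqref{e:decay-height}
\[
|\bp_q^\perp(x-q)|\;\leq\;\bh(T,\bB_\rho(q),\pi(q))\;\leq\;C\,(r^{-1}E+\bA)^{\sfrac12}\,|x-q|^{\sfrac32},
\]
which is exactly \eqref{eq:height-envelope1}. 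So (e) directly yields the sharp height envelope.

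\textbf{Step toward the tilt estimate \eqref{e:tilt_pi(q)}.} I would first bound $|\pi(p)-\pi|$ using the classical tilt-excess lemma: since the boundary density at $p$ is $Q-\tfrac12$, Allard's monotonicity gives the mass lower bound $\|T\|(\bB_{r/2}(p))\geq c\,r^m$. A bilinear algebraic identity comparing the orientation of $T$ with two candidate planes then implies
\[
|\pi(p)-\pi|^2\;\leq\;C\bigl(\bE(T,\bB_{r/2}(p),\pi)+\bE(T,\bB_{r/2}(p),\pi(p))\bigr),
\]
and both summands are $\leq C(E+\bA^2 r^2)$ by the definition of $E$ and by conclusion (c) applied with $q=p$ and $\rho=r/2$. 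Next, \eqref{e:hoelder-1} with $\varepsilon$ fixed (say $\varepsilon=1/2$) controls the Hölder oscillation of $\pi(\cdot)$ on $\gammaup\cap\bB_r(p)$ and yields $|\pi(q)-\pi(p)|\leq C(E^{\sfrac12}+\bA r)$ for $|q-p|\leq r$. The triangle inequality produces $|\pi(q)-\pi|\leq C(E+\bA r)^{\sfrac12}$, which is \eqref{e:tilt_pi(q)} (modulo absorbing $(E+\bA r)^{\sfrac12}$ into $C(E+\bA r)$ once $E+\bA r$ is small, i.e.\ under the smallness provided by the theorem).

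\textbf{Deducing \eqref{eq:height-envelope2}.} With $\pi(q)$ close to $\pi$, write
\[
\bp^\perp(x-q)\;=\;\bp_q^\perp(x-q)+(\bp^\perp-\bp_q^\perp)(x-q),
\]
and use $|\bp^\perp-\bp_q^\perp|\leq C|\pi-\pi(q)|$. The first term is bounded by Step~1, giving $C(r^{-1}E+\bA)^{\sfrac12}|x-q|^{\sfrac32}\leq C(E+\bA r)^{\sfrac12}|x-q|$ since $|x-q|^{\sfrac12}\leq\sigma^{\sfrac12}\leq r^{\sfrac12}$. The second term is bounded by $C(E+\bA r)^{\sfrac12}|x-q|$ from \eqref{e:tilt_pi(q)}. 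Summing yields \eqref{eq:height-envelope2}.

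\textbf{Main obstacle.} None of the three conclusions requires new analysis: the entire content is packaged inside Theorem~\ref{thm:decay_and_uniq}. The only delicate bookkeeping is in establishing \eqref{e:tilt_pi(q)} — balancing the three ingredients (a tilt-excess lemma at $p$, the Hölder continuity \eqref{e:hoelder-1}, and the quadratic decay \eqref{e:decay-1} used to control $\bE(T,\bB_{r/2}(p),\pi(p))$) so that the error contributions from $\bA$ and the non-flat ambient manifold $\Sigma$ combine into the single term $\bA r$. Once the exponent in \eqref{e:decay-1} is fixed (any $\varepsilon<1/2$ suffices), this is routine.
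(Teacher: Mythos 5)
Your argument is correct and follows essentially the same route as the paper: \eqref{eq:height-envelope1} is read off directly from conclusion (e) (the paper merely takes $\rho$ slightly larger than $|x-q|$, a cosmetic difference), the tilt $|\pi(q)-\pi|$ is obtained by comparing $\pi$ with $\pi(p)$ at the base point (the paper invokes \eqref{e:franco}, which encodes exactly your tilt-excess comparison) and then applying the H\"older estimate \eqref{e:hoelder-1} with $\varepsilon=\tfrac12$, and \eqref{eq:height-envelope2} follows by the same triangle-inequality splitting with the absorption $|x-q|^{\sfrac12}\le r^{\sfrac12}$. One caveat: your final "absorption" goes the wrong way, since for $E+\bA r\le 1$ one has $(E+\bA r)^{\sfrac12}\ge E+\bA r$, so your triangle inequality only yields the square-root bound $|\pi(q)-\pi|\le C(E+\bA r)^{\sfrac12}$ rather than the linear form written in \eqref{e:tilt_pi(q)}; this is not a substantive gap, however, because the paper's own proof likewise establishes (and subsequently uses) precisely this square-root tilt bound, which is all that is needed for \eqref{eq:height-envelope2}.
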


\begin{figure}[htbp]
\begin{center}\label{fig:cones}
\input{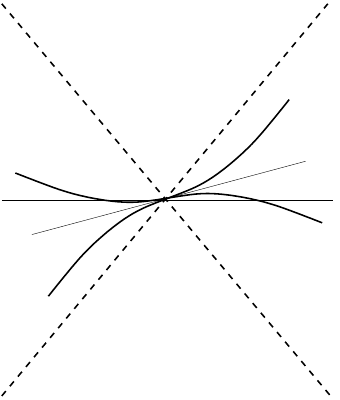_t}
\end{center}
\caption{The region delimited by the thick curved lines is the right hand side of \eqref{eq:height-envelope1}, whereas the cone delimited by the thick dashed straight lines is the right hand side of \eqref{eq:height-envelope2}}. 
\end{figure}

\section{Hardt--Simon height bound}

In this section we show the validity, at the boundary, of the classical interior height bound, under Assumption \ref{Ass:app}.
The argument follows an important idea of Hardt and Simon in \cite{HS} and takes advantage of an appropriate 
variant of Moser's iteration on varifolds, due to Allard, combined with a crucial use of the remainder in the monotonicity formula. 

\begin{theorem}\label{t:height_bound}
There are positive constants $\eps=\eps (Q, m, \bar n, n)$ and $C_0=C_0 (Q, m, \bar n, n)$ with the following property.
Let $T$, $\bC_{4r} (x)$, $\Sigma$, $\gammaup$ and $\pi_0:= \R^m \times \{0\}$ be as in Assumption \ref{Ass:app} and set
\[
E := \bE (T, \bC_{4r} (x))\, , \quad \ba:= \|A_\gammaup\|_0\quad \mbox{and} \quad \bar\ba:= \|A_\Sigma\|_0\, .
\]
If $E + \ba + \bar \ba \leq \varepsilon$, then 
\[
\bh (T, \bC_{2r} (x), \pi_0) \leq C_0 (E^{\sfrac{1}{2}} + \ba^{\sfrac{1}{2}} r^{\sfrac{1}{2}} + \bar\ba r) r\, .
\]
\end{theorem}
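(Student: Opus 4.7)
The strategy, following Hardt--Simon \cite{HS} and its adaptation by Spolaor, is to derive first an $L^2$ height estimate from the remainder term in the boundary monotonicity formula (Theorem \ref{thm:allard}), and then to upgrade it to an $L^\infty$ bound by a Moser-type iteration on the varifold $\mathbf{v}(T)$ due to Allard \cite{AllB}. Throughout, the density lower bound $\Theta(T,q)\geq \frac{1}{2}$ of Theorem \ref{t:Allard>1/2} at boundary points (and the stronger $\Theta(T,q)\geq 1$ at interior points) plays the role of an ``almost half-space'' condition that makes the iteration close.

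The $L^2$ step goes as follows. Fix $q \in \supp(T)\cap \bC_{2r}(x)$ and apply the precise monotonicity identity \eqref{e:monot_identity} on the interval $s\in (0, r]$. The boundary term is $O((\ba + \bar\ba) r)$ since $\gammaup$ and $\Sigma$ have curvature at most $\bA$, and the mean-curvature term is $O(\bar\ba r)$; using $\Theta(T,q) \geq \min\{1, Q-\tfrac12\}\geq \tfrac12$ and the upper bound
\[
\frac{\|T\|(\bB_r(q))}{\omega_m r^m} \leq \frac{\|T\|(\bC_r(\bp(q)))}{\omega_m r^m} \leq Q + C\,E,
\]
obtained from $\bp_\sharp T = Q\a{\Omega^+}+(Q-1)\a{\Omega^-}$ and the definition of the excess, one gets
\[
\int_{\bB_{r}(q)} \frac{|(y-q)^\perp|^2}{|y-q|^{m+2}}\, d\|T\|(y) \leq C\bigl(E + (\ba+\bar\ba) r\bigr).
\]
Splitting $(y-q)^\perp = \bp^\perp(y-q) + O(|\vec T(y) - \vec\pi_0|)\cdot |y-q|$ and using again that the total $\pi_0$-tilt is controlled by the excess, the same bound holds for $\bp^\perp(y-q)$ in place of $(y-q)^\perp$, up to replacing $C$ by a larger constant.

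The pointwise step is Allard's varifold Moser iteration. Consider the non-negative function $u(y):=|\bp^\perp(y-x)|^2$. A direct computation with the first-variation formula \eqref{e:first_var} shows that, on $\supp(T)$, $u$ satisfies a weak subharmonic-type inequality
\[
\int_{\bC_{2r}(x)} \nabla_T\varphi\cdot \nabla_T u\, d\|T\| \leq C\int \varphi\, (|\vec T - \vec\pi_0|\,\bar\ba + \bar\ba^2)|y-x|^2\, d\|T\| + \int_{\gammaup\cap \bC_{2r}(x)}\varphi\,\nabla_T u\cdot\vec n\, d\mathcal{H}^{m-1}
\]
for every nonnegative test function $\varphi\in C^1_c(\bC_{2r}(x))$. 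Because $\pi_0 \supset T_x\gammaup$ and $\gammaup$ has curvature at most $\ba$, we have $|\bp^\perp(y)|\leq C\ba|y-x|^2$ on $\gammaup\cap \bC_{2r}(x)$, so the $\gammaup$-boundary contribution is of order $\ba r$ times a tangential integral — a lower-order perturbation of the mean-value inequality. Then the standard Moser iteration, run at the point $q\in \supp(T)\cap \bC_{2r}(x)$ where $|\bp^\perp(q-x)|$ is maximal and down to scale $0$ with density lower bound $\Theta(T,q)\geq \tfrac12$, yields
\[
|\bp^\perp(q-x)|^2 \leq \frac{C}{r^m}\int_{\bB_{2r}(q)\cap \supp(T)} u\, d\|T\| + C(\ba^2 r^4 + \bar\ba^2 r^4).
\]
Combining this with the Hardy-type reformulation of the $L^2$ step (multiply $u(y) = |\bp^\perp(y-x)|^2$ by a kernel involving $|y-q|^{-m-2}$ to recover the integral bound from the first paragraph) finishes the proof.

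The principal technical obstacle is justifying the Moser iteration uniformly across scales that are larger, comparable to, and smaller than $\dist(q,\gammaup)$. When $q$ is close to $\gammaup$ the boundary integral $\int_{\gammaup\cap \bB_s(q)} \nabla_T u\cdot\vec n\, d\mathcal{H}^{m-1}$ is the dangerous term, and it has to be absorbed into the error rather than the leading term of the mean-value inequality; this is possible precisely because $u$ vanishes quadratically on $\gammaup$ (up to the curvature $\ba$), so that the boundary term contributes at most $C\ba\,s^{m+1}$. A second, more routine, subtlety is the passage from $(y-q)^\perp$ (orthogonal to $\vec T$) to $\bp^\perp(y-q)$: the discrepancy integrates against $|\vec T-\vec\pi_0|^2$ and is therefore absorbed by the excess, but one must be careful at small scales where $|y-q|^{-m-2}$ is singular, and this is handled by splitting into dyadic annuli and summing the resulting geometric series.
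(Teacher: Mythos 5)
Your high-level strategy (an $L^2$ height estimate extracted from the remainder in the boundary monotonicity formula, then upgraded to a sup bound by an Allard-type Moser iteration) is indeed the paper's, but both halves have genuine gaps as written. In the $L^2$ step, what the monotonicity identity \eqref{e:monot_identity} controls is the gap $\rho^{-m}\|T\|(\bB_\rho(q))-\omega_m\Theta(T,q)$, and the Hardt--Simon mechanism only produces something small if the density lower bound at the chosen center nearly matches the mass upper bound in the ball. With the bounds you state, namely $\|T\|(\bB_r(q))\leq (Q+CE)\,\omega_m r^m$ and $\Theta(T,q)\geq\tfrac12$, this gap is of order one (it is genuinely of size $Q-1$ at an interior point $q$ sitting on one of $Q$ nearly parallel sheets, and your crude upper bound leaves a gap $\tfrac12+CE$ even at boundary points), so the claimed conclusion $\int_{\bB_r(q)}|(y-q)^\perp|^2|y-q|^{-m-2}\,d\|T\|\leq C(E+(\ba+\bar\ba)r)$ does not follow and is in fact false at interior points. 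The cancellation works only at the boundary center $x$ itself: there Assumption \ref{Ass:app}(v) gives $\Theta(T,x)\geq Q-\tfrac12$, while the projection formula $\bp_\sharp T=Q\a{\Omega^+}+(Q-1)\a{\Omega^-}$ together with the fact that $\gammado$ splits $B_\rho(\bp(x))$ into two nearly equal halves (up to an error $C\ba\rho$) gives $\|T\|(\bB_\rho(x))\leq\big((Q-\tfrac12)+C\ba+CE\big)\omega_m\rho^m$, so the gap is $O(E+\ba+\bar\ba^2)$ after rescaling. This is exactly Lemma \ref{l:L^2_est}: a single application of \eqref{e:monot_identity} centered at $x\in\gammaup$, followed by $|z^\perp|^2\geq\tfrac12|\bp^\perp(z)|^2-|z|^2|\vec{T}-\vec{\pi}_0|^2$ and a separate compactness argument showing $\supp(T)\cap\bC_{3r}(x)\subset\bB_{4r}(x)$; and note that this single estimate centered at $x$ is all the Moser step needs, so there is no reason to run the monotonicity at every $q$.

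In the Moser step, the dangerous term is the one you isolate, $\int_{\gammaup}\varphi\,\nabla_T u\cdot\vec n\,d\cH^{m-1}$, but your justification that it is of order $\ba\,s^{m+1}$ because $u$ vanishes (almost) quadratically on $\gammaup$ does not hold: the smallness of $u$ \emph{along} $\gammaup$ gives no control on the derivative of $u$ in directions tangent to $T$ but transversal to $\gammaup$, and estimating that normal derivative at the boundary is essentially equivalent to the height bound you are trying to prove. The paper sidesteps the boundary term entirely: instead of $u=|\bp^\perp(\cdot-x)|^2$ it iterates with smoothings of the truncated coordinate functions $\max\{x_i-C_0\ba,0\}$, corrected by $C_0\bar\ba|x|^2$ so as to be subharmonic on the varifold; for $C_0$ large the truncated part vanishes identically on $\gammaup$ inside the cylinder, so the Caccioppoli inequality \eqref{e:Caccioppoli} and the Sobolev inequality \eqref{e:Sobolev} hold with no boundary contributions and the iteration closes exactly as in the interior, producing the additive error $C_0(\ba^2+\bar\ba^2)r^4$ in \eqref{e:Moser}. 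Without either this truncation device or an actual estimate for $\nabla_T u\cdot\vec n$ on $\gammaup$, your iteration does not close; the final ``Hardy-type reformulation'' you invoke to combine the two steps is then moot, since what is really needed there is just the $L^2$ bound centered at $x$ described above.
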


We will split the proof of the theorem in the following two lemmas, where again the corresponding geometric constants $C_0$ depend only upon $m,\bar n, n$ and $Q$. 

\begin{lemma}\label{l:Moser}
Under the assumptions of Theorem \ref{t:height_bound} there is a constant $C_0$ such that 
\begin{align}
\sup_{z\in \supp (T) \cap \bC_{2r} (x)} |\bp_{\pi_0}^\perp (z-x)|^2 & \leq C_0 r^{-m} \int_{\bC_{3r} (x)}  |\bp_{\pi_0}^\perp (z-x)|^2\, d\|T\| (z)\nonumber\\
&\qquad + C_0 (\ba^2 + \bar \ba^2) r^4\, .\label{e:Moser}
\end{align}
\end{lemma}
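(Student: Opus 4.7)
The plan is to establish \eqref{e:Moser} by a Moser iteration on the varifold associated with $T$, applied to the height function $u(z) := |\bp^\perp_{\pi_0}(z-x)|^2$ seen as a function on $\supp(T)$. The strategy follows the classical one of Allard in the interior, adapted at the boundary along the lines of Hardt--Simon \cite{HS}, with the curvatures $\ba$ and $\bar{\ba}$ feeding controlled additive errors. The first ingredient is a Caccioppoli-type inequality. Testing the first variation formula of Theorem \ref{thm:allard} against vector fields of the form $X = \varphi^2 u^{p-1} \nabla u$ with $\varphi \in C^1_c(\bC_{4r}(x))$ nonnegative and $p \geq 1$, and expanding the tangential divergence, I would obtain (modulo manipulation)
\[
\int \varphi^{2} u^{p-1} \sum_i |\nabla^T y_i|^2\, d\|T\| + c_p \int \varphi^{2} |\nabla^T u^{p/2}|^{2}\, d\|T\| \leq C \int |\nabla \varphi|^2 u^{p}\, d\|T\| + \mathcal{E}_{\mathrm{bdry}} + \mathcal{E}_\Sigma,
\]
where $y := \bp^\perp_{\pi_0}(z-x)$, $\mathcal{E}_\Sigma$ collects the contributions of $|\vec{H}_T| \leq C \bar{\ba}$, and $\mathcal{E}_{\mathrm{bdry}}$ is the boundary error coming from $\int_\gammaup (\varphi^{2} u^{p-1} \nabla u) \cdot \vec{n}\, d\cH^{m-1}$.

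The core new point is bounding $\mathcal{E}_{\mathrm{bdry}}$. Here I plan to use that by Assumption \ref{Ass:app} one has $\psi(x')=0$, $D\psi(x')=0$ and $\|D^{2}\psi\|_0 \leq C\ba$, so that on $\gammaup \cap \bC_{4r}(x)$
\[
|y|\leq C\ba r^{2} \qquad \text{and}\qquad |\nabla u|\leq C \ba^{2} r^{3},
\]
while $\cH^{m-1}(\gammaup \cap \bC_{4r}(x))\leq C r^{m-1}$. A Young's inequality then lets me absorb $\mathcal{E}_{\mathrm{bdry}}$ into $C \ba^2 r^{m+2}$ at every step, and $\mathcal{E}_\Sigma$ is handled analogously using $|\vec H_T| \leq C \bar\ba$ and the standard density bound $\|T\|(\bC_{4r}(x)) \leq C r^{m}$.

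Plugging this Caccioppoli inequality into the Michael--Simon Sobolev inequality on $T$ and iterating along a geometric sequence of radii $r_k \downarrow $ (with balls centered at an arbitrary $q \in \supp(T)\cap \bC_{2r}(x)$) and exponents $p_k \to \infty$, one obtains reverse-Hölder iterates of the form
\[
\Bigl(r_{k+1}^{-m}\!\!\int_{\bB_{r_{k+1}}(q)}\!\!\! u^{p_{k+1}}\,d\|T\|\Bigr)^{\!1/p_{k+1}}\! \leq C_k \Bigl(r_k^{-m}\!\!\int_{\bB_{r_k}(q)}\!\!\! u^{p_k}\,d\|T\|\Bigr)^{\!1/p_k}\! + C(\ba^{2}+\bar{\ba}^{2})\,r^{4},
\]
which telescope to the pointwise estimate $u(q)\leq C_0 r^{-m}\int_{\bC_{3r}(x)} u\, d\|T\|+C_0(\ba^{2}+\bar{\ba}^{2})r^{4}$.

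The main obstacle will be making the boundary estimates at every iteration step uniform in $p$, so that $\mathcal{E}_{\mathrm{bdry}}$ really stays of order $(\ba^2 + \bar\ba^2) r^4$ and does not accumulate through the iteration; the Hardt--Simon observation that $u$ itself vanishes to second order at $\gammaup$ is precisely what makes this possible. The smallness assumption $E + \ba + \bar{\ba} \leq \varepsilon$ is used exactly to start the iteration and to ensure that $\supp(T)\cap \bC_{2r}(x)$ actually lies in a slab around $\pi_0$ where the expansions above are valid.
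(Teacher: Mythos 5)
Your skeleton (Caccioppoli inequality from the first variation, Sobolev inequality on the varifold, Moser iteration, with the Hardt--Simon observation that the height vanishes to second order on $\gammaup$) is the right one, but the proposal has a genuine gap at precisely the point you flag and do not resolve: the treatment of the boundary and curvature errors \emph{inside} the iteration. At exponent $p_k=\beta^k$ the boundary contribution is of the form $\int_{\gammaup}\varphi^2 u^{p_k-1}|\nabla u|\,d\cH^{m-1}\lesssim r^{m-1}(\ba^2r^4)^{p_k-1}\ba r^2$ (note in passing that on $\gammaup$ one has $|\nabla u|=2|\bp^\perp_{\pi_0}(z-x)|\le C\ba r^2$, not $C\ba^2r^3$), so after extracting the $p_k$-th root each step contributes an additive error of size comparable to $(\ba^2+\bar\ba^2)r^4$ that does \emph{not} decay in $k$. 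Since the iteration runs over infinitely many steps and the multiplicative constants at step $k$ are of order $C^{k}$ before root extraction, the naive unrolling of the recursion accumulates these additive errors; one needs an explicit absorption device (e.g.\ iterating $\max\{I(k),K\}$ with $K\sim(\ba^2+\bar\ba^2)r^4$, or replacing $u$ by a suitable truncation) to make the "telescoping" you assert legitimate. A second omission of the same nature: the Michael--Simon/Allard Sobolev inequality you invoke is stated for functions whose support avoids the boundary part of the first variation; since $\delta T$ has the singular term $\vec n\,\cH^{m-1}\res\gammaup$, applying it to $\varphi u^{p_k}$ produces yet another boundary integral per step, which again must be tracked uniformly in $p_k$.

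The paper avoids all of this by a different device: instead of iterating the height $u$ itself, it iterates, for each normal coordinate $y_i$, the corrected function $f=\max\{y_i-C_0\ba,0\}+C_0\bar\ba|x|^2$ (suitably smoothed), with $C_0$ large. The shift by $C_0\ba$ forces the truncated part to vanish in a neighborhood of $\gammaup$ (because $|\bp^\perp_{\pi_0}(\cdot-x)|\le C\ba$ on $\gammaup\cap\bC_4$ after rescaling), and the additive term $C_0\bar\ba|x|^2$ compensates the mean curvature of $\Sigma$ so that $f$ is subharmonic on the varifold in the sense of Allard. Consequently the Caccioppoli inequality and Allard's Sobolev inequality apply with \emph{no} boundary or curvature error terms at any step, the iteration is verbatim the interior one on the nested cylinders $\bC_{2+2^{-k}}$, and the constants $(\ba^2+\bar\ba^2)r^4$ in \eqref{e:Moser} appear only once, at the very end, when translating the bound on $\sup f^2$ back into a bound on the height and summing over $\pm y_i$. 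Your route can in principle be completed, but as written the key analytic mechanism that keeps the errors from accumulating is missing; either supply the absorption argument (including the boundary term in the Sobolev step), or adopt the paper's trick of modifying the function before iterating.
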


\begin{lemma}\label{l:L^2_est}
Under the assumptions of Theorem \ref{t:height_bound} there is a constant $C_0$ such that
\begin{equation}\label{e:Hardt-Simon}
r^{-m} \int_{\bC_{3r} (x)}  |\bp_{\pi_0}^\perp (z-x)|^2\, d\|T\| (z) \leq C_0 E r^2 +  C_0 \bar\ba^2 r^4 + C_0 \ba r^3\, .
\end{equation}
\end{lemma}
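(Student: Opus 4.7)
The plan is to apply Allard's boundary monotonicity formula \eqref{e:monot_identity} centered at the point $x\in\gammaup$ and to convert the resulting control on the ``cone excess'' $\int |(z-x)^\perp|^2/|z-x|^{m+2}\,d\|T\|$ into the desired $L^2$ estimate. The key observation is that under Assumption \ref{Ass:app} the density at $x$ satisfies $\Theta(T,x)=Q-\sfrac{1}{2}$ (by combining the lower bound in (v) with the upper bound that one obtains from monotonicity plus the mass computation below), so monotonicity compares the normalized mass $r^{-m}\|T\|(\bB_r(x))$ to $\omega_m(Q-\sfrac{1}{2})$, and the gap is controlled by excess and curvature corrections.

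First, I would bound the mass defect. Using the identity $\bp_\sharp T=Q\a{\Omega^+}+(Q-1)\a{\Omega^-}$ together with \eqref{e:excess_3} and the elementary estimate $||\Omega^\pm\cap B_{3r}(x)|-\tfrac{1}{2}\omega_m(3r)^m|\le C\ba r^{m+1}$ (which follows from $T_{x}\gammaup=\R^{m-1}\times\{0\}$ and the $C^2$ character of $\psi$), one gets
\[
(3r)^{-m}\|T\|(\bB_{3r}(x))-\omega_m\Theta(T,x)\le C(E+\ba r).
\]
Next I would estimate the two error contributions on the right-hand side of \eqref{e:monot_identity}. For the mean curvature term, the bound $|\vec H_T|\le C\bar\ba$ combined with a Fubini swap and a weighted AM--GM inequality of the form
\[
|(z-x)^\perp\cdot\vec H_T|\cdot|z-x|^{-m}\le \eps\,\frac{|(z-x)^\perp|^2}{|z-x|^{m+2}}+\tfrac{1}{4\eps}|\vec H_T|^2|z-x|^{2-m}
\]
allows one to absorb a small multiple of the cone-excess integrand into the left-hand side and produces an additive error $C\bar\ba^2 r^2$. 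For the boundary contribution, the key inequality is $|(z-x)\cdot\vec n(z)|\le C(\ba+\bar\ba)|z-x|^2$ on $\gammaup\cap\bB_\rho(x)$, which follows from $D\psi(x')=0$ and from $\vec n(x)=\pm e_m$; this gives a contribution of order $\ba r+\bar\ba r$, the $\bar\ba r$ piece being reabsorbed into $\bar\ba^2 r^2$ via $\bar\ba r\le\eps$.

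Combining these yields the master inequality
\[
\int_{\bB_{3r}(x)}\frac{|(z-x)^\perp|^2}{|z-x|^{m+2}}\,d\|T\|(z)\le C\bigl(E+\ba r+\bar\ba^2 r^2\bigr).
\]
Multiplying by $(3r)^{m+2}$ and using $\supp(T)\cap\bC_{3r}(x)\subset\bB_{Cr}(x)$ (a priori small height, obtained by iterating Lemma \ref{l:Moser} from larger scales, or equivalently assuming $\bh(T,\bC_{3r}(x),\pi_0)\le Cr$) gives
\[
\int_{\bC_{3r}(x)}|(z-x)^\perp|^2\,d\|T\|\le C r^{m+2}\bigl(E+\ba r+\bar\ba^2 r^2\bigr).
\]
Finally, the pointwise bound
\[
|\bp_{\pi_0}^\perp(z-x)|^2\le 2|(z-x)^\perp|^2+2\,|\vec T(z)-\vec{\pi_0}|^2\,|z-x|^2,
\]
together with $\int|\vec T-\vec{\pi_0}|^2\,d\|T\|\le 2\omega_m(3r)^m E$ from the definition of the cylindrical excess, yields \eqref{e:Hardt-Simon} after dividing by $r^m$.

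The main obstacle I anticipate is the careful AM--GM bookkeeping in the mean-curvature step: a naive Cauchy--Schwarz estimate produces only the linear-in-$\bar\ba$ error $\bar\ba r^3$, strictly weaker than the quadratic $\bar\ba^2 r^4$ in the target, so one must balance the splitting weight precisely so that the bad half can be absorbed into the cone-excess integral being estimated. A secondary technicality is the mild circularity with Lemma \ref{l:Moser}, which forces one to know a priori that the height is small before identifying $\bC_{3r}(x)$ with a Euclidean ball; this is handled as usual by a continuity/iteration argument on the radius, exploiting the smallness of $E+\ba+\bar\ba$ assumed in Theorem \ref{t:height_bound}.
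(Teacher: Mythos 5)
Your main chain of estimates is the same Hardt--Simon argument the paper uses: let $s\downarrow 0$ in \eqref{e:monot_identity} centered at $x$, use the density lower bound in Assumption \ref{Ass:app}(v) together with the projection identity $\bp_\sharp T=Q\a{\Omega^+}+(Q-1)\a{\Omega^-}$ to bound the mass-ratio defect by $C(E+\ba r)$, absorb half of the mean-curvature error into the cone-excess integral via the weighted Young inequality, bound the boundary error with the chord estimate $|(z-x)\cdot\vec n(z)|\le C\ba|z-x|^2$, and finally trade $|(z-x)^\perp|$ for $|\bp_{\pi_0}^\perp(z-x)|$ at the price of the cylindrical excess. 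Two small corrections within this chain: only $\Theta(T,x)\ge Q-\frac12$ is needed (no equality), and the chord estimate involves only $\ba$, since $\vec n(z)$ is a unit vector orthogonal to $\gammaup$ at $z$; the extra $\bar\ba$ you insert there is spurious, and your proposed ``reabsorption via $\bar\ba r\le\eps$'' would not actually produce the stated right-hand side, because a term $\bar\ba r^3$ is not dominated by $\bar\ba^2 r^4+\ba r^2$. Fortunately the term is simply not there.

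The genuine gap is the step you dismiss as a secondary technicality: passing from the ball, where the monotonicity formula lives, to the cylinder $\bC_{3r}(x)$, where \eqref{e:Hardt-Simon} is stated. This requires $\supp(T)\cap\bC_{3r}(x)\subset\bB_{4r}(x)$, and under Assumption \ref{Ass:app} there is no a priori height bound at any scale, so ``iterating Lemma \ref{l:Moser} from larger scales'' cannot get started: Lemma \ref{l:Moser} presupposes exactly the $L^2$ control you are trying to prove, and in any case it only estimates the height on a smaller cylinder in terms of data on a larger one where you would already need to know the support is confined; likewise a ``continuity argument in the radius'' has no base case. The paper closes this step with a soft compactness argument that crucially uses the minimality of $T$ (Assumption \ref{Ass:app}(iv)): if there were a sequence of currents with $E+\ba+\bar\ba\to 0$ and points of $\supp(T_k)\cap\bC_{3}$ outside $\bB_{4}$, then $T_k$ would converge to the flat cone $Q\a{\pi_0^+}+(Q-1)\a{\pi_0^-}$ with convergence of the masses, while the interior monotonicity formula forces a definite amount of mass in a fixed ball around the bad points, away from $\pi_0$, contradicting that mass convergence. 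You need an argument of this type (or some other genuine proof of the containment, and of the bound $|z-x|\le Cr$ on $\supp(T)\cap\bC_{3r}(x)$ used in your last two displays) to complete the proof.
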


After rescaling and translating we can assume in all our statements that $r=1$ and $x=0$ . Moreover, we use $\bp$ and $\bp^\perp$ in place
of $\bp_{\pi_0}$ and $\bp_{\pi_0}^\perp$. 

\subsection{Proof of Lemma \ref{l:Moser}}
The estimate is a classical one in Allard's interior regularity theory. The proof in our setting follows from a minor modification of the arguments, which we however report for the reader's convenience. 

We fix a system of coordinates so that $\pi_0 = \{y: y_{m+1} = \ldots = y_{m+n}=0\}$ and fix $i\in \{m+1, \ldots, m+n\}$. We fix a constant $C_0$, to be chosen in a moment, and consider the function
\[
f (x) :=  \max \{x_i - C_0 \ba+ C_0 \bar\ba |x|^2, 0 \}\, .
\]
We wish to show the estimate
\begin{equation}\label{e:Moser_2}
\sup_{z\in \supp (T)\cap \bC_2} f^2 (z) \leq C_1 \int_{\bC_3} f^2 (z)\, d\|T\| (z)\, ,
\end{equation}
from which we will get \eqref{e:Moser} simply summing up all the corresponding inequalities when taking $i\in \{m+1, \ldots, m+n\}$ and $-y_i$ in place of $y_i$. 

In fact we let $r_{+,\delta}$ be a suitable convex smoothing of the function $\mathbb R\ni t\mapsto r_+(t) := \max \{t, 0\}$, with the additional properties that $r_{+,\delta}$ vanishes on the negative half line and equals the identity for $t> \delta$: then we will show the inequality \eqref{e:Moser_2} for the function $f (x) := r_{+,\delta} (x_i - C_0 \ba + C_0 \bar \ba |x|^2)$. Since the constant $C_1$ will not depend on $\delta$, we will achieve the correct inequality by simply letting $\delta \downarrow 0$.  For the rest of this proof $f$ denotes such a fixed smoothing of  \(\max \{x_i - C_0 \ba+ C_0 \bar\ba |x|^2, 0 \}.\) 

Observe that, by choosing $C_0$ sufficiently large we achieve that $f$ vanishes on $\gammaup$ and, according to \cite[Section 7.5]{All}, that $f$ is subharmonic\footnote{We recall that a function \(h\) is said to be subharmonic on the varifold induced by \(T\) if 
\[
\int \nabla _T h\ \cdot \nabla_T \varphi \,d\|T\| \le 0 \qquad \mbox{$\forall \varphi \in C_c^1$ with $\varphi \ge 0$,} 
\]
where \(\nabla_T  h$ is the orthogonal projection of $\nabla h$ on the tangent space to $T$ (i.e., if $v_1, \ldots, v_m$ is an orthonormal frame such that $\vec{T} (x) = v_1 \wedge \ldots \wedge v_m$, then $\nabla_T h =\sum_i \frac{\partial h}{\partial v_i} v_i$).} on the varifold  induced by $T$. 

 We next show that \eqref{e:Moser_2} holds under these two assumptions. Note that Allard in \cite[Section 7.5]{All} proves precisely this statement, but we cannot use \cite[Theorem 7.5(6)]{All} directly because the constant in the inequality depends upon the distance of the support of  $f$ and the boundary $\gammaup$: the purpose of the following argument is to show that in fact such dependence is absent in our case.

We denote by $\bC^k$ the decreasing sequence of cylinders $\bC_{2+ 2^{-k}}$. We then observe that the (short) paragraph proving \cite[Lemma 7.5(5)]{All} applies to our situation and implies the inequality 
\begin{equation}\label{e:Caccioppoli}
 \int_{\bC^{k+1}} |\nabla_T h|^2 d\|T\| \leq 2^{2k+2} \int_{\bC^k} h^2 d\|T\|
\end{equation}
for any subharmonic function $h$ which vanishes on a neighborhood of $\gammaup$.  
We next use the Sobolev inequality on stationary varifolds, namely from \cite[Theorem 7.3]{All} we know that, for $\bar\ba$ smaller than a positive geometric constant,
\begin{equation}\label{e:Sobolev}
\left(\int_{\bC^k} (h\varphi)^{\frac{m}{m-1}} d\|T\|\right)^{\frac{m-1}{m}} \leq C_0 \int_{\bC^k} |\nabla_T (h\varphi)|\, 
\end{equation}
whenever $\varphi$ is a smooth function compactly supported in $\bC^k$ (remember that \(h\) vanishes in a  neighborhood of $\gammaup$). 

Following the classical scheme of Moser's iteration, cf.  \cite[Theorem 7.5(6)]{All}, we introduce $\beta:= \frac{m}{m-1}$ and 
\[
I (k) := \left(\int_{\bC^{2k}} f^{2\beta^k}\right)^{\sfrac{1}{\beta^k}}\, .
\]
Next we fix a cutoff $\varphi_k$ identically equal to $1$ on $\bC^{2k+2}$, compactly supported in $\bC^{2k+1}$ and with $|\nabla \varphi_k|\leq C_0 2^{2k}$. Substituting $h = f^{2\beta^k}$ and $\varphi = \varphi_k$ inside \eqref{e:Sobolev} we then conclude
\begin{equation}\label{e:Moser_piece1}
I (k+1)^{\beta^k} \leq C_0 \int_{\bC^{2k+1}} |\nabla_T (f^{2\beta^k})| d\|T\| + C_0 2^{2k} \int_{\bC^{2k+1}} f^{2\beta^k} d\|T\|\, .
\end{equation}
Next we compute
\begin{align*}
& \int_{\bC^{2k+1}} |\nabla_T (f^{2\beta^k})| d\|T\| \leq 2 \int_{\bC^{2k+1}} |\nabla_T (f^{\beta^k})| f^{\beta^k}|d\|T\|\\
&\leq 2 \left(\int_{\bC^{2k+1}} |\nabla_T (f^{\beta^k})|^2 d\|T\|\right)^{\sfrac{1}{2}} \left(\int_{\bC^{2k+1}}f^{2\beta^k}d\|T\|\right)^{\sfrac{1}{2}}\, .
\end{align*}
Now, since $\mathbb R_{+}\ni t \mapsto t^{\beta^k}$ is $C^2$, convex, and increasing, the function $h := f^{\beta^k}$ is subharmonic (cf. \cite[Lemma 7.5(4)]{All}). Moreover it vanishes in a neighborhood of \(\gammaup\).  From \eqref{e:Caccioppoli}, we then conclude
\begin{equation}\label{e:Moser_piece2}
 \int_{\bC^{2k+1}} |\nabla_T (f^{2\beta^k})| d\|T\| \leq 2^{2k+2} \int_{\bC^{2k}}f^{2\beta^k}d\|T\|\, .
\end{equation}
Putting together \eqref{e:Moser_piece1} and \eqref{e:Moser_piece2}, we then easily conclude
\[
I (k+1) \leq C^{k/ \beta^k} I (k)\, .
\]
The estimate \eqref{e:Moser_2} follows from
\[
\sup_{z\in \supp (T)\cap \bC_2} f^2 (z) \leq \limsup_{k\to\infty} I (k) \leq C I (0)\, .
\]

\subsection{Proof of Lemma \ref{l:L^2_est}}
We follow here the proof of \cite[Lemma 1.8]{Spolaor} (note that essentially the same idea was used in \cite{HS}). First of all,
we let $r=4$ and $s$ go to $0$ in \eqref{e:monot_identity} to achieve
\begin{align}\label{e:HS1}
\int_{\bB_4} \frac{|x^\perp|^2}{|x|^{m+2}} d\|T\| (x) \leq 4^{-m} \|T\| (\bB_4) - \omega_m \Theta (T, 0) + {\rm Err_1} + {\rm Err_2}\, ,
\end{align}
where
\begin{align*}
{\rm Err_1} := & \int_0^4 \rho^{-m-1} \int_{\bB_\rho} |x^\perp \cdot \vec{H}_T (x)| d\|T\| (x)\, d\rho\\
{\rm Err_2} := & \int_0^4 \rho^{-m-1} \int_{\bB_\rho\cap \gammaup} |x\cdot \vec{n} (x)|\, d\cH^{m-1} (x)\, d\rho\, .
\end{align*}
Straightforward computations\footnote{Observe that $|x^\perp\cdot \vec{H}_T (x)|\leq \frac{1}{8\rho } |x^\perp|^2 + 2\rho |\vec{H}_T (x)|^2$, while $|\vec{H}_T (x)|\leq m \|A_\Sigma\|_0 \leq m \ba$ by \eqref{e:mean_curv}.} show that $|x\cdot \vec{n} (x)| \leq C_0 \ba |x|^2$ for \(x\in \gammaup\) and  $|x^\perp\cdot \vec{H}_T (x)|\leq \frac{1}{8\rho} |x^\perp|^2 + 2m^2 \rho\bar \ba^2$. Thus we can bound
\begin{align*}
{\rm Err_2} \leq & C_0 \ba \int_0^4 \rho^{1-m} \cH^{m-1} (\bB_\rho \cap \gammaup)\, d\rho \leq C_0 \ba\, 
\end{align*}
and
\begin{align*} 
{\rm Err_1} \leq & \frac{1}{8} \int_0^4 \frac{1}{\rho^{m+2}} \int_{\bB_\rho} |x^\perp|^2 \, d\|T\| (x)\, d\rho + 2  m^2 \bar \ba^2 \int_0^4 \frac{\|T\| (\bB_\rho)}{\rho^m} \, d\rho\\
\leq & \frac{1}{2} \int_{\bB_4} \frac{|x^\perp|^2}{|x|^{m+2}} d\|T\| (x) + 2 C_0 \bar \ba^2 \|T\| (\bB_4)
\end{align*}
where in the last inequality we have used the monotonicity of \(\rho\mapsto e^{C\rho}\rho^{-m}\|T\|(B_\rho)\). Plugging these two estimates in \eqref{e:HS1} and recalling that $\Theta (T, 0) \geq Q-\frac{1}{2}$ we then conclude
\begin{equation}\label{e:HS2}
\int_{\bB_4} \frac{|x^\perp|^2}{|x|^{m+2}} d\|T\| (x) \leq 4^{-m} \|T\| (\bB_4) - (Q-\textstyle{\frac{1}{2}}) \omega_m + C_0 \ba + C_0 \bar \ba^2 \|T\| (\bB_4)\, .
\end{equation}
Next,  by \eqref{e:excess_2} and computations as in \eqref{e:above_2}, we infer
\begin{align}
& 4^{-m} \|T\| (\bB_4) - (Q-\textstyle{\frac{1}{2}}) \omega_m \nonumber\\
= & \omega_m \left(\frac{\|T\| (\bC_4)}{\omega_m 4^m}  - (Q-\textstyle{\frac{1}{2}})\right)\leq  \omega_m \bE (T, \bC_4) +  C_0 \ba\, .
\end{align}
Hence we easily conclude from \eqref{e:HS2} that
\begin{equation}\label{e:HS3}
\int_{\bB_4} |x^\perp|^2 d\|T\| (x) \leq C_0 (E + \ba + \bar\ba^2)\, .
\end{equation}
Next, a straightforward computation gives
\[
|z^\perp|^2 \geq \frac{1}{2}|\bp^\perp (z)|^2 - |z|^2 |\vec{T} (z) - \pi_0|^2
\]
for every $z\in \supp (T)$. Integrating the latter inequality and inserting in \eqref{e:HS3} we then conclude
\begin{equation}\label{e:HS4}
\int_{\bB_4} |\bp^\perp (z)|^2 d\|T\|(z) \leq C_0 (E + \ba + \bar\ba^2)\, .
\end{equation}

In order to complete the proof we need to show that $\supp (T) \cap \bC_{3} \subset \bB_4$, if  the parameter $\varepsilon$ in Theorem \ref{t:height_bound} is chosen sufficiently small. Arguing by contradiction, if this were not the case there would be a sequence of currents $T_k$ in $\bC_4$ and submanifolds $\gammaup_k$, $\Sigma_k$ satisfying all the requirements of Assumption \ref{Ass:app} with $\bE (T_k, \bC_4) + \|A_{\gammaup_k}\|_0 + \|A_{\Sigma_k}\|_0 \to 0$ but with the additional property that there is a point $p_k\in \supp (T_k) \cap \bC_3$ with $|p_k |\geq 4$. Note however that, under these assumptions, the mass of $T_k$ in $\bC_4$ converges to $(Q-\frac{1}{2}) 4^m \omega^m$ and $T_k$ converges, up to subsequences, to a current $T_\infty$ of the form $Q \a{\bC_4 \cap \pi_0^+} + 
(Q-1) \a{\bC_4 \cap \pi_0^-}$. On the other hand this means that, for some geometric constant $r>0$, $\bB_r (p_k)$ has positive distance from the plane $\pi_0$ and is contained in $\bC_4$. Let $U$ be an open set which contains the closure of $\bC_4 \cap \pi_0$ and has empty intersection with $\bB_r (p_k)$. 
Then
\[
\bM (T_k) \geq \|T_k\| (U) + \|T_k\| (\bB_r (p_k))\, .
\]
Letting $k\to\infty$ and using the semicontinuity of the mass we conclude
\[
\Big(Q-\frac{1}{2}\Big) 4^m \omega^m \geq \|T_\infty\| (U) + \limsup_{k\to\infty} \|T_k\| (\bB_r (p_k))\, .
\] 
On the other hand $\|T_\infty\| (U) =  (Q-\frac{1}{2}) 4^m \omega^m$ and so 
\[
\lim_{k\to\infty}  \|T_k\| (\bB_r (p_k)) = 0\, .
\] 
Since $p_k \in \supp (T_k)$ and $\bB_r (p_k)\subset \bC_4 \setminus \gammaup$, for $k$ large enough we contradict the interior monotonicity formula. 

\section{Excess decay}\label{sec:exc_dec}

The core of Theorem \ref{thm:decay_and_uniq} is in fact the decay estimate \eqref{e:decay-1}, which we prove in this section for the modified excess function introduced in Definition \ref{def:exc_flat}, under a suitable smallness assumption.

\begin{theorem}\label{t:exc_dec}
For any $\varepsilon >0$ there is an $\eps_0=\varepsilon_0 (\varepsilon, Q, m,n)>0$ and a $M_0=M_0 (\varepsilon, Q, m,n)$ with the following property. Let $T$, $\Sigma$ and $\gammaup$ be as in Assumption \ref{ass:main} and assume that 
\begin{itemize}
\item[(i)] $\bA^2 \sigma^2 + E = (\|A_\Sigma\| + \|A_\gammaup\|)^2 \sigma^2 + \bE^\flat (T, \bB_{4 \sigma} (q)) < \varepsilon_0$;
\item[(ii)]  $\Theta (T,x) \geq Q-\frac{1}{2}$ for all $x\in \gammaup\cap \bB_{4\sigma} (q)$;
\item[(iii)] $q\in \gammaup$ and $\|T\| (\bB_{4\sigma} (q)) \leq (Q-\frac{1}{4}) \omega_m (4\sigma)^m$.
\end{itemize}
Then, if we set $e (t) := \max\{\bE^\flat (T, \bB_t (q)) ,M_0 \bA^2 t^2\}$ we have 
\begin{equation}\label{e:decay_alternativo}
e (\sigma) \leq \max \{ 2^{-4+4\varepsilon} e (4\sigma),
 2^{-2+2\varepsilon}  e (2\sigma)\}\, .
\end{equation}
\end{theorem}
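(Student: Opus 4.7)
I will argue by contradiction. Suppose the conclusion fails: extract sequences $T_k$, $\Sigma_k$, $\gammaup_k$, $q_k\in \gammaup_k$, and $\sigma_k>0$ satisfying the hypotheses with $\varepsilon_0=1/k$ but violating the decay inequality. After translating to $q_k=0$ and rescaling to $\sigma_k=1$, we have $\bA_k\to 0$, $E_k:=\bE^\flat(T_k,\bB_4)\to 0$, the mass and density bounds, and
\[
e_k(1) > \max\bigl\{2^{-4+4\eps}\, e_k(4),\ 2^{-2+2\eps}\, e_k(2)\bigr\}.
\]
A preliminary observation rules out the curvature-dominated regime: if $\bE^\flat(T_k,\bB_1)\le M_0\bA_k^2$ then $e_k(1)=M_0\bA_k^2$ while $e_k(2)\ge 4M_0\bA_k^2$, forcing $e_k(1)\le (1/4)e_k(2)<2^{-2+2\eps}e_k(2)$, a contradiction. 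A similar argument (choosing $M_0$ large enough, depending on $\varepsilon$) disposes of the other mixed regimes, so in what remains $e_k(t)=\bE^\flat(T_k,\bB_t)$ at the scales $t\in\{1,2\}$.

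Next, I would rotate coordinates so that, for each $k$, the plane $\pi_0=\R^m\times\{0\}$ realizes $\bE^\flat(T_k,\bB_4)$, $T_0\gammaup_k$ tends to $\R^{m-1}\times\{0\}$, and $\Sigma_k$ is the graph of some $\Psi_k$ with $\|D\Psi_k\|_0\le C(E_k^{\sfrac12}+\bA_k)$ (Remark \ref{r:Psi}). The Hardt--Simon height bound (Theorem \ref{t:height_bound}) confines $\supp(T_k)$ to a thin neighborhood of $\pi_0$ inside cylinders contained in $\bB_4$, making the cylindrical excess of $T_k$ comparable to $\bE^\flat$ up to $O(\bA_k^2)$ errors. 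I would then apply Theorem \ref{t:Lipschitz_1} followed by Theorem \ref{t:harm_1} with $\eta_*=\eta_*(k)\to 0$, producing Lipschitz $\qhalf$-valued approximations $(f_k^+,f_k^-)$ and single-valued harmonic maps $h_k$ defined on a ball $B_\tau$ of universal radius $\tau$, vanishing on $\gammaup_k\cap B_\tau$. Setting $\hat h_k:=E_k^{-\sfrac12}h_k$ and extracting a subsequence, we obtain a harmonic limit $\hat h\colon B_\tau\to\R^{\overline n}$ with $\hat h\equiv 0$ on $\{x_m=0\}$. By Schwarz reflection $\hat h$ extends harmonically to $B_\tau$ and is odd in $x_m$, hence is real-analytic at the origin; in particular $\partial_m\hat h$ is even and $\partial_j\hat h$ is odd in $x_m$ for $j\ne m$.

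The decay then follows from the harmonic structure of $\hat h$. Expanding $\hat h=\sum_{j\ge 1}\hat h_j$ into homogeneous harmonic polynomials, each odd in $x_m$, one has $\hat h_1(x)=\ell\,x_m$ with $\ell:=\partial_m\hat h(0)\in\R^{\overline n}$, because the other columns of $D\hat h(0)$ vanish by oddness. The $L^2$-orthogonality of $\{\hat h_j\}$ on concentric spheres yields, for all $\rho<\tau$,
\[
\int_{B_\rho} |D\hat h-D\hat h_1|^2 \,=\, \sum_{j\ge 2} c_j\,\rho^{m+2j-2},
\]
and the symmetry of $D\hat h$ in $x_m$ makes this integrand even in $x_m$, so its integrals over $B_\rho^+$ and $B_\rho^-$ are equal. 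Combining this with the estimates \eqref{e:harm-app1}--\eqref{e:harm-app3}, the cylindrical-spherical excess comparison, and the identification of the limit optimal admissible plane with the one tilted by $\hat h_1$, I would obtain
\[
\lim_{k\to\infty}\frac{\bE^\flat(T_k,\bB_\rho)}{E_k} \,=\, A(\rho) \,:=\, C_{m,n,Q}\,\rho^{-m}\int_{B_\rho}|D\hat h-D\hat h_1|^2
\]
for all $\rho$ on which the harmonic approximation is defined. Consequently
\[
\frac{A(1)}{A(2)} \,=\, \frac{\sum_{j\ge 2}c_j}{\sum_{j\ge 2}c_j\,2^{2j-2}} \,\le\, \frac14 \,<\, 2^{-2+2\eps},
\]
which contradicts $e_k(1)>2^{-2+2\eps}e_k(2)$. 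A parallel computation at scales $1$ and $4$ (ratio $\le 1/16<2^{-4+4\eps}$) handles the complementary half of the maximum.

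The hardest part will be reconciling the scale on which the harmonic approximation is produced with the scales $\sigma,2\sigma,4\sigma$ appearing in the conclusion: Theorems \ref{t:Lipschitz_1}--\ref{t:harm_1} yield harmonic control only on a ball of radius a fixed fraction of $\sigma$, so to compare $e_k(1)$ with $e_k(2)$ or $e_k(4)$ I would apply the approximation at a dyadic family of subscales inside $\bB_{4\sigma}$ (each such application being admissible thanks to the height bound), stitch the resulting decay statements together, and absorb the universal multiplicative constants into the $\varepsilon$-slack in the exponents. A second delicate point is verifying that the optimal $m$-plane for $\bE^\flat(T_k,\bB_\rho)$---which must simultaneously contain $T_0\gammaup_k$ and lie in $T_0\Sigma_k$---converges, after the $E_k^{-\sfrac12}$ rescaling, to the graph of $\hat h_1$; this requires using the $C^1$-convergence of $T_0\gammaup_k$ and $T_0\Sigma_k$ together with a careful accounting of how the two linear constraints on admissible planes decouple in the blow-up.
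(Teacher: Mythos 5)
Your core mechanism is the same as the paper's: discard the regime where $M_0\bA^2$ dominates, apply Theorems \ref{t:Lipschitz_1} and \ref{t:harm_1} to produce a single harmonic $h$ vanishing on $\{x_m=0\}$, use Schwarz reflection to see that $h$ is odd in $x_m$ (so the plane determined by $Dh(0)$ contains $T_0\gammaup$, hence is admissible for $\bE^\flat$, and $|Dh-Dh(0)|^2$ is even in $x_m$, which merges the $Q$- and $(Q-1)$-sheeted sides), and conclude via the orthogonality of homogeneous harmonic polynomials. Recasting this as a compactness/contradiction argument is legitimate, but note two points you leave implicit: hypotheses (ii)--(iii) are what identify the projection multiplicity with $Q$ when you set up the Lipschitz approximation, and your exact limit identity $\bE^\flat(T_k,\bB_\rho)/E_k\to A(\rho)$ is vacuous if $\hat h$ happens to be linear ($A(1)=A(2)=0$); you must invoke the scale-$4$ half of the negated inequality, which gives $\bE^\flat(T_k,\bB_1)\geq 2^{-4+4\varepsilon}E_k$ up to the $M_0\bA_k^2$ terms, to rule this out (or, more simply, prove only the one-sided bounds that are actually needed).

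The genuine gap is the step you yourself call the hardest, and the repair you propose would not work. Theorem \ref{t:harm_1}, applied in a cylinder $\bC_{4r}$ with $4r=4-\eta$ --- admissible once $\supp(T)\cap\bB_{4-\eta}$ is confined to that cylinder as in \eqref{e:supp1}--\eqref{e:supp2} and Theorem \ref{t:height_bound} --- delivers the harmonic function on $B_{2r}=B_{2-\eta/2}$, i.e.\ at scale essentially $2\sigma$, so no multi-scale stitching is needed to compare $\sigma$ with $2\sigma$; the only losses are the radius defect $\bar\eta$ and the $\eta_*E$ errors, and these are absorbable because the resulting constant $(2-\bar\eta)^{-(2-\varepsilon)}\bigl(\tfrac{2}{2-\bar\eta}\bigr)^{m}$ can be driven below $2^{-2+2\varepsilon}$ by choosing $\bar\eta,\eta_*$ small (this is exactly how the paper argues). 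By contrast, applying the approximation on a dyadic family of subscales and ``absorbing the universal multiplicative constants into the $\varepsilon$-slack'' cannot yield \eqref{e:decay_alternativo}: that inequality compares two fixed scales with a constant within the factor $2^{2\varepsilon}$ of the sharp harmonic value $\tfrac14$ for \emph{every} $\varepsilon>0$, so a fixed patching constant $C>1$ cannot be hidden in the exponent --- only constants that can be made arbitrarily close to $1$ are absorbable. Finally, the ``parallel computation at scales $1$ and $4$'' is both unnecessary (the negation of the max gives you both strict inequalities, so refuting the $1$-versus-$2$ one suffices) and unfeasible with the given data, since harmonic control at scale $4\sigma$ would require a cylinder inside $\bB_{8\sigma}$, which hypothesis (i) does not provide. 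With these corrections your argument collapses onto the paper's: choose coordinates so the optimal plane at scale $2$ is $\pi_0$, bound $\bE^\flat(T,\bB_1)$ from above by the excess against the plane of $Dh(0)$, and conclude with the one-scale harmonic inequality \eqref{e:gollonzo6}.
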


The rest of this section is devoted to the proof of Theorem \ref{t:exc_dec}.

\subsection{Preliminary considerations}\label{subsec:exc_dec_prel}
Without loss of generality by scaling, translating and rotating, we can assume $\sigma=1$, $q=0$, $\bE^\flat(T,\bB_2)=\bE (T,\bB_2,\pi_0)$, where $\pi_0=\R^m\times\{0\}\subset T_0\Sigma=\R^m\times\R^{\overline n}\times\{0\}$, and $T_0 \gammaup = \R^{m-1}\times \{0\}$. We also recall that, if we do not specify the center of a ball or a cylinder, we implicitly assume that such center is the origin.  

We start by observing that, without loss of generality, we can assume
\begin{equation}\label{e:Apiccolo}
\bE^\flat(T,\bB_2)\ge 2^{-m} M_0 \bA^2,
\end{equation}
and 
\begin{equation}\label{e:2/4}
\bE^\flat(T,\bB_2)\ge 2^{-4-m}  \bE^\flat(T,\bB_4).
\end{equation}
Indeed, note that 
\[
e(1) = \max\{ M_0\bA^2, \bE^\flat (T, \bB_1)\} \leq \max \{M_0 \bA^2, 2^{m}\bE^\flat(T,\bB_2)\}\, .
\] 
So, if \eqref{e:Apiccolo} fails, then
\begin{align*}
e(1) &\leq M_0 \bA^2= 2^{-2} (2^2 M_0 \bA^2) \leq 2^{-2} e (2)\, ,
\end{align*}
whereas, if \eqref{e:2/4} fails, then
\begin{align*}
e(1)&\leq \max \{M_0 \bA^2, 2^{-4} \bE^\flat (T, \bB_4)\} = 2^{-4} e(4)\, .
\end{align*}
Hence in both cases the conclusion would hold trivially.

Summarizing, under  assumptions \eqref{e:Apiccolo} and \eqref{e:2/4}, we need to show the decay estimate:
\begin{equation}\label{e:gollonzo}
\bE^\flat(T,\bB_1)\le 2^{2\varepsilon-2}\bE^\flat(T,\bB_2)\,.
\end{equation}

Let us now fix a positive $\eta<1$, to be chosen sufficiently small later, and consider the cylinder $U:=B_{4-\eta} (0, \pi_0) + B^n_{\sqrt\eta} (0, \pi_0^\perp)$, which by abuse of notation we denote by $B_{4-\eta}\times B^n_{\sqrt{\eta}}$. If $\varepsilon_0$ is sufficiently small, we claim that 
\begin{align}
\supp (T)\cap\partial U & \subset \partial B_{4-\eta}\times B^n_{\sqrt\eta} \label{e:supp1}\\
\bB_{4-\eta}\cap\supp(T)& \subset  U\,.\label{e:supp2}
\end{align} 
Otherwise, arguing by contradiction, we would have a sequence of currents $T_k$ satisfying the assumptions of the theorem with $\varepsilon_0=\frac 1k$, but violating either \eqref{e:supp1} or \eqref{e:supp2}.  Then $T_k$ would converge, in the sense of currents, to 
\[
T_\infty:=Q' \a{B^+_4}+(Q'-1)\a{B^-_4}\, ,
\]
where $B_4^\pm=B_4 (0, \pi_0) \cap\{\pm x_m>0\}$ and $Q'$ is a positive integer. By the area-minimizing property, this implies that the supports of $T_k$ converge to either $\overline B_4$ (if $Q'>1$) or $\overline B_4^+$ (if $Q'=1$) in the Hausdorff sense in every compact subset of $\bB_4$. This would be a contradiction because both  $\overline{\bB_{4-\eta}\setminus U}$ and $\partial U\setminus(\partial B_{4-\eta}\times B^n_{\sqrt \eta})$ are compact subsets of $\bB_4$ with positive distance from $\overline B_{4}$. We have therefore proved \eqref{e:supp1} and \eqref{e:supp2}. 

\medskip

We remark further that we must necessarily have
$\|T_\infty\| (\bB_4) \leq (Q-\frac{1}{4}) \omega_m 4^m$ by assumption (iii). Hence, by the monotonicity formula $Q' - \frac{1}{2} = 
\Theta (T_\infty, 0) \leq Q-\frac{1}{4}$. On the other hand, by assumption (ii) and the upper semicontinuity of the density of area-minimizing currents under convergence of the latter, we must have $\Theta (T_\infty, 0) \geq Q-\frac{1}{2}$. Since $Q'$ is an integer we conclude $Q'=Q$.  Observe also that, by the area-minimizing property, $\|T_k\|(A)\to\|T_\infty\|(A)$ for every compact subset $A$ of $\bB_4$. Thus, for $\varepsilon_0$ is sufficiently small, we
have that:
\begin{itemize}
\item[(A)] the mass of $T$ in the ball $\bB_r$ is, for any radius $1\le r\le 4-\frac\eta 2$ and up to a small error, $\qhalf \omega_mr^m$ . 
\end{itemize}

\medskip

Next, let us define $T_0:=T\res U$. Observe that \eqref{e:supp1} and \eqref{e:supp2} imply:
\begin{itemize}
\item[(B)] $\partial T_0\res\bC_{4-\eta}=\a{\gammaup\cap\bC_{4-\eta}}$;
\item[(C)] $T\res\bB_{4-\eta}=T_0\res\bB_{4-\eta}$.
\end{itemize}
Choose a plane $\overline\pi\subset T_0\Sigma$ which contains $T_0\gammaup$ and such that 
\[
\bE(T,\bB_4,\overline\pi)=\bE^\flat(T,\bB_4)\, .
\] 
Let us observe that (since \(\pi_0\) is the optimal plane for \(\bE^\flat(T,\bB_2)\)):
\begin{align*}
|\overline\pi-\pi_0|^2\|T\|(\bB_2) & =\int_{\bB_2}|\overline\pi-\pi_0|^2\,d\|T\|\\
& \le 2\int_{\bB_2}|\vec{T}-\pi_0|^2\,d\|T\|+2\int_{\bB_2}|\vec T-\overline\pi|^2\,d\|T\|\\
& \le 2\cdot 2^m \omega_m\bE^\flat(T,\bB_2)+2\cdot 4^m\omega_m\bE^\flat(T,\bB_4)\\
&\le C\bE^\flat(T,\bB_4)\,.
\end{align*}
Moreover
\begin{align}
\bE (T_0,\bC_{4-\eta}) & \le\bE(T,\bB_{4-\frac\eta 2},\pi_0)\nonumber\\
&\le 2\bE^\flat(T,\bB_{4-\frac{\eta}{2}})+\textstyle\frac{2}{\omega_m 4^m}|\overline\pi-\pi_0|^2\|T\|(\bB_{4-\frac\eta 2})\nonumber\\
& \le 2\bE^\flat(T,\bB_{4-\frac\eta 2})+C|\overline\pi-\pi_0|^2\|T\|(\bB_2)\le C\bE^\flat(T,\bB_4)\,,\label{e:nonsisa}
\end{align}
where in the third inequality we have used (A), namely that the mass of $T$ in a ball of radius $r\le 4-\frac\eta 2$ is comparable to $\qhalf\omega_m r^m$. 
Thus 
\begin{itemize}
\item[(D)] $\bE (T_0,\bC_{4-\eta})\le C\bE^\flat(T,\bB_4)$.
\end{itemize}
Moreover, recalling that ${\bf p}:\R^{m+n}\to\pi_0$ is the orthogonal projection, by the Constancy Theorem
\begin{itemize}
\item[(E)] ${\bf p}_\sharp T_0=Q^*\a{\Omega^+}+(Q^*-1)\a{\Omega^-}$, where $Q^*$ is a suitable positive natural number
and $\Omega^\pm$ are the regions in which $B_4$ is divided by ${\bf p} (\gammaup)$; in particular  
\[
\partial\a{\Omega^+}\res\bC_{4-\eta}=-\partial\a{\Omega^-}\res\bC_{4-\eta}={\bf p}_\sharp\a{\gammaup}\res\bC_{4-\eta}\, .
\]
\end{itemize}
Since $T_0 = T \res U$ and $U\subset \bB_{4-\eta/2}$, clearly $\|T_0\| (\bC_{4-\eta}) \leq \|T\| (\bB_{4-\eta/2})$. On the other hand, by (D) and (E), 
\[
\|T_0\| (\bC_{4-\eta}) \geq Q^* |\Omega^+| + (Q^*-1) |\Omega^-|\, .
\]
Assuming that the constant $\varepsilon_0$ in the assumption (i) of the theorem is sufficiently small, we conclude that ${\bf p}_\sharp\a{\gammaup}\res\bC_{4-\eta}$ is close to an $m-1$-dimensional plane passing through the origin. In particular 
$Q^* |\Omega^+| + (Q^*-1) |\Omega^-|$ is close to $(Q^*-\frac{1}{2}) \omega_m (4-\eta)^m$. Thus, if $\varepsilon_0$ is smaller than a geometric constant, we infer
\[
\|T_0\|  (\bC_{4-\eta}) \geq (Q^* - \frac{3}{4}) \omega_m (4-\eta)^m\, .
\]
However, by (A), a sufficiently small $\varepsilon_0$ would imply $\|T\| (\bB_{4-\eta/2}) \leq (Q-\frac{1}{4}) \omega_m (4-\frac{\eta}{2})^m$ and hence we achieve
$Q^* \leq Q$ provided $\eta$ is chosen smaller than a geometric constant.

On the other hand, 
\[
\|T_0\| (\bC_{4-\eta}) \leq Q^* |\Omega^+| + (Q^*-1) |\Omega^-| + \bE (T_0,\bC_{4-\eta})\, .
\] 
Using (D) and the argument above, if $\varepsilon_0$ is sufficiently small we get $\|T_0\| (\bC_{4-\eta}) \leq (Q^*-\frac{1}{4}) \omega_m (4-\eta)^m$. 
Recall that we have shown that $T\res \bB_{4-\eta} = T_0 \res \bB_{4-\eta}$. Thus $\|T\|(\bB_{4-\eta})\le\|T_0\|(\bC_{4-\eta})$ and, using (A), we also have $\|T\| (\bB_{4-\eta}) \geq (Q-\frac{3}{4}) (4-\eta)^m$. Thus necessarily $Q^*\geq Q$.

Next, since $T\res\bB_2=T_0\res\bB_2$, then
\[
\begin{split}
\bA^2&\stackrel{\eqref{e:Apiccolo}}{\le}  2^{m+2}M_0^{-1}\bE^\flat(T,\bB_2)\le 2^{m+2}\left(\frac{2}{4-\eta}\right)^mM_0^{-1}\bE(T_0,\bC_{4-\eta})
\\
&\stackrel{\eqref{e:nonsisa}}{\le} C M_0^{-1}\bE^\flat(T,\bB_4)\,.
\end{split}
\]
Thus we can apply Theorem \ref{t:harm_1} with $\beta=\frac{1}{5m}$ and a sufficiently small parameter $\eta_*$ to be chosen later, provided $\varepsilon_0$ is sufficiently small and $M_0$ is sufficiently large. 

\subsection{Reduction to excess decay for graphs}

From now on we let $(u^+,u^-), h$ and $\kappa$ be as in Theorem \ref{t:harm_1}. In particular, recall that $(u^+,u^-)$ is the $E^\beta$-approximation of Theorem \ref{t:Lipschitz_1} (and therefore it satisfies the estimate \eqref{prop:Lipschitz_11}-\eqref{prop:Lipschitz_16}) and $h$ is the single harmonic function which ``supports'' the collapsed $\qhalf$ $\D$-minimizer $\kappa$. Moreover, denote by $E$ the excess $\bE(T_0,\bC_{4-\eta})$ and record the estimates:
\begin{align}
\bA^2 & \le C_0M_0^{-1}E\label{e:bibi}\\
E & \le C_0\bE^\flat(T,\bB_2)\,,\label{e:bibo}
\end{align}
where $C_0$ is a geometric constant and the second inequality follows by combining \eqref{e:nonsisa} and \eqref{e:2/4}. Next, define $\pi$ to be the plane given by the graph of the linear function $x\mapsto (Dh(0)x,0)$. Since, by Remark \ref{r:odd_harmonic}, \(h(x',0)=0\) we have  that 
\[
\pi\supset T_{0}\gammaup=\R^{m-1}\times\{0\}.
\]
Moreover, by elliptic estimates, 
\begin{equation}\label{e:grst}
 |\pi|\le |Dh(0)|\le ( C\D(h,B_{\frac 52(4-\eta)}))^\frac 12\le CE^\frac 12.
\end{equation}

Fix $\overline\eta$ to be chosen later; in the next steps we show that
\begin{equation}\label{e:gollonzo2}
\bE(\bG_{u^+}+\bG_{u^-},\bC_1,\pi)\le(2-\overline\eta)^{ - (2-\varepsilon)}\bE(\bG_{u^+}+\bG_{u^-},\bC_{2-\overline\eta})+\overline\eta E\,.
\end{equation}
From this we easily conclude \eqref{e:gollonzo} as follows. First of all, by the Taylor expansion of the mass of a Lipschitz graph and the Lipschitz bounds on $u^\pm$, we conclude
\begin{align*}
&\bE(\bG_{u^+}+\bG_{u^-},\bC_{2-\overline\eta})\\
\le & \bE(T_0,\bC_{2-\overline\eta})+ C\int_{\Omega^+\setminus K}|Du^+|^2+C\int_{\Omega^-\setminus K}|Du^-|^2\,.
\end{align*}
Secondly, 
\begin{align*}
\bE(T,\bB_1,\pi)&\le\bE(T_0,\bC_1,\pi)\\
& \le\bE(\bG_{u^+}+\bG_{u^-},\bC_{1},\pi)+2\be_T(B_1\setminus K)+2|\pi|^2|B_1\setminus K|\,.
\end{align*}
From \eqref{e:eta-star-condition}, \eqref{e:small-energy-condition} and \eqref{e:grst} we infer
\begin{align*}
\bE(\bG_{u^+}+\bG_{u^-},\bC_{2-\overline\eta}) & \le\bE(T_0,\bC_{2-\overline\eta})+C\eta_*E\\
\bE(T,\bB_1,\pi) & \le \bE(\bG_{u^+}+\bG_{u^-},\bC_1,\pi)+C\eta_* E\,.
\end{align*}
Combining these two last inequalities with \eqref{e:gollonzo2}, we conclude
\begin{equation}\label{e:quasi_gollonzo1}
\bE(T,\bB_1,\pi)\le(2-\overline\eta)^{2-\varepsilon}\bE(T_0,\bC_{2-\overline\eta})+C\eta_*E+\overline\eta E\,.
\end{equation}
Using the height bound in Theorem \ref{t:height_bound}, we infer
\[
\supp(T)\cap\bC_{2-\overline\eta}\subset\bB_2\,.
\] 
Since $T_0\res\bB_2=T\res\bB_2$, \eqref{e:quasi_gollonzo1} gives us that
\begin{align*}
\bE^\flat(T,\bB_1) & \le \bE(T,\bB_1,\pi)\\
&\le(2-\overline\eta)^{- (2-\varepsilon)}\left(\frac{2}{2-\overline\eta}\right)^m\bE(T,\bB_2,\pi_0)+C\eta_*E+\overline\eta E\\
& = (2-\overline\eta)^{- (2-\varepsilon)}\left(\frac{2}{2-\overline\eta}\right)^m\bE^\flat(T,\bB_2)+C\eta_*E+\overline\eta E\,.
\end{align*}
Hence, since the constant $C$ in the last inequality is independent of the parameters $\eta_*,\overline\eta$, choosing the latter sufficiently small and recalling \eqref{e:bibo}, we conclude \eqref{e:gollonzo}.

\subsection{Reduction to $L^2$-decay}

In this section we want to replace the excesses in \eqref{e:gollonzo2} with suitable $L^2$ quantities. In particular the Taylor expansion of the area functional and the estimate ${\rm Lip}(u^\pm)\le E^\beta$ give
\begin{align}
& \Bigg|2\omega_m(2-\overline\eta)^m\bE(\bG_{u^+}+\bG_{u^-},\bC_{2-\overline\eta})\nonumber\\
&\qquad\quad -\int_{B_{2-\overline\eta}\cap\Omega^+}|Du^+|^2+\int_{B_{2-\overline\eta}\cap\Omega^-}|Du^-|^2\Bigg|\nonumber\\
\le & CE^{2\beta}\left(\int_{B_{2-\overline\eta}\cap\Omega^+}|Du^+|^2+\int_{B_{2-\overline\eta}\cap\Omega^-}|Du^-|^2\right)\le \frac{\overline\eta}{3}E\,,\label{e:exc_vs_dir1}
\end{align}
provided $\varepsilon_0$ is sufficiently small. Let us define the linear map $x\mapsto Ax:=(Dh(0)x,0)$. We now claim that
\begin{align}
2\omega_m\bE(\bG_{u^+}+\bG_{u^-},\bC_1,\pi)&\le\int_{B_1\cap\Omega^+}\!\!\G(Du^+\!,Q\a{A})^2\nonumber\\
&\qquad +\!\int_{B_1\cap\Omega^-}\!\!\G(Du^-\!,(Q-1)\a{A})^2+\frac{\overline\eta}{3}E\,.\label{e:exc_wwass_claim}
\end{align}
If we introduce the notation $\vec\tau$ for the unit simple $m$-vector orienting $\pi$, then the latter inequality is implied by 
\begin{equation}\label{e:tilt_wass}
 \int_{\Omega^+\cap B_1\times\R^n}\left|\vec\bG_{u^+}-\vec\tau\right|^2\,d\|\bG_{u^+}\|\le\int\G(Du^+,Q\a{A})^2+\frac{\overline\eta}{3}E
\end{equation}
and the analogous inequality for $u^-$. In fact, since the argument is entirely similar, we only show \eqref{e:tilt_wass}. The argument follows the one of \cite[Theorem 3.5]{DS2}. Arguing as in \cite{DS2}, thanks to \cite[Lemma 1.1]{DS2},
we can write $u^+= \sum_i \a{u^+_i}$ and process local computations (when needed) as if each $u^+_i$ were Lipschitz. 
Moreover, we have that
\[
\vec\tau = \textstyle{\frac{\xi}{|\xi|}} \quad \text{with }\;
\xi = (e_1+ A \,e_1)\wedge \ldots \wedge (e_m+A\,e_m).
\]
Here and for the rest of this proof, we identify $\R^m$ and $\R^n$ with
the subspaces $\R^m \times \{0\}$ and $\{0\} \times \R^n$ of $\R^{m+n}$, respectively:
this justifies the notation $e_j+A\,e_j$ for $e_j \in \R^m$ and $A\, e_j \in \R^n$.
Next, we recall that
\[ 
|\xi| = \sqrt{\langle \xi, \xi\rangle} = \sqrt{\det (\delta_{ij} + \langle A \, e_i , A\, e_j\rangle)} = 1 + \textstyle{\frac{1}{2}} |A|^2
 +O (|A|^4).
\]
By \cite[Corollary 1.11]{DS2}
\begin{align}
E^{\rm tilt}:= & \hphantom{-}\int_{(\Omega^+\cap B_1)\times\R^n} \left| \vec{\bG}_{u^+} - \vec\tau\right|^2\, d \|\bG_{u^+}\|\\
 = & 2\,\bM (\bG_{u^+}) - 2 \int_{(\Omega^+\cap B_1)\times\R^n} \langle \vec\bG_{u^+}, \vec\tau \,\rangle\, d\|\bG_{u^+}\|\nonumber\\
= & \hphantom{-}2 \, Q \, |\Omega^+\cap B_1| + \int_{\Omega^+\cap B_1} (|Du^+|^2 + O (|Du^+|^4))\nonumber\\ 
&- 2\int_{\Omega^+\cap B_1} \sum_i \langle (e_1+Du^+_i\, e_1)\wedge \ldots \wedge (e_m+Du^+_i \, e_m), \vec\tau \,\rangle\nonumber.
\end{align}
On the other hand $\langle A\,e_j , e_k\rangle = 0 = \langle Du^+_i\, e_j , e_k\rangle$. Therefore,
\begin{align*}
 & \langle (e_1+Du^+_i \, e_1)\wedge\ldots \wedge (e_m+Du^+_i\,e_m), \vec \tau \, \rangle\\
= & |\xi|^{-1} \det (\delta_{jk} +
\langle Du^+_i \, e_j , A \,e_k\rangle)\nonumber\\
= &
\left(1 + \frac{|A|^2}{2} + O (|A|^4)\right)^{-1}
 \left(1 + Du^+_i : A + O (|Du^+|^2|A|^2)\right) \, .
\end{align*}
By the mean value property of harmonic functions 
\begin{equation}\label{e:grst2}
|A|=\left|\mint_{B_1}Dh\right|\le CE^\frac 12
\end{equation}
 and the Lipschitz bound ${\rm Lip}(u^+)\le E^\beta$, we conclude
\begin{align*}
E^{\rm tilt} & = \int_{B_1\cap\Omega^+} |Du^+|^2 +  Q\,|\Omega^+\cap B_1|\,|A|^2\\
&\qquad - 2 \int_{B_1\cap\Omega^+} \sum_i Du^+_i :A  + O \left(E^{1+2\beta}\right)\\ 
&=\int_{\Omega^+\cap B_1} \sum_i |Du^+_i - A|^2 + O\left(E^{1+2\beta}\right)\\
& = \int_{\Omega^+\cap B_1} \G (Du^+, Q\a{A})^2 + O(E^{1+2\beta})\, .
\end{align*}
The claim \eqref{e:exc_wwass_claim} follows from the latter identity for $\varepsilon_0$ small enough.

Combining \eqref{e:exc_vs_dir1} and \eqref{e:exc_wwass_claim}, \eqref{e:gollonzo2} is reduced to
\begin{align}
& \int_{\Omega^+\cap B_1}\G(Du^+,Q\a{A})^2+\int_{\Omega^+\cap B_1}\G(Du^-,(Q-1)\a{A})^2\nonumber\\
< &\,(2-\overline\eta)^{-m-2+\varepsilon}\left(\int_{\Omega^+\cap B_{2-\overline\eta}}|Du^+|^2+\int_{\Omega^-\cap B_{2-\overline\eta}}|Du^-|^2\right)+\frac{\overline\eta}{3}E\,.\label{e:gollonzo3}
\end{align}

\subsection{Reduction to $L^2$-decay for harmonic functions} 

As a first step, we substitute $u^+$ and $u^-$ in the inequality \eqref{e:gollonzo3} with $Q\a{\kappa}$ and $(Q-1)\a{\kappa}$, where $\kappa$ is as in Theorem \ref{t:harm_1}. In fact, from \eqref{e:harm-app1} and \eqref{e:harm-app2}
\begin{align*}
 &\int_{\Omega^+\cap B_{2-\overline\eta}}\!\!|Du^+|^2+\int_{\Omega^-\cap B_{2-\overline\eta}}\!\!|Du^-|^2\\
\ge & Q\int_{\Omega^+\cap B_{2-\overline\eta}}\!\!|D\kappa|^2+(Q-1)\int_{\Omega^-\cap B_{2-\overline\eta}}|\!\!|D\kappa|^2-4\sqrt{\eta_*}E\,.
\end{align*}
Moreover, using again \eqref{e:harm-app1}, \eqref{e:harm-app2} and \eqref{e:harm-app3}, the identity 
\begin{align*}
&\int_{\Omega^+\cap B_1}\G(Du^+,\a{A})^2\\ =&\int_{\Omega^+\cap B_1}\left(|Du^+|^2-2 Q (D(\etaa\circ u^+):A) + Q|A|^2\right)\,,
\end{align*}
and \eqref{e:grst2}, we also conclude
\begin{align*}
  & \int_{\Omega^+\cap B_1}\G(Du^+,\a{A})^2+\int_{\Omega^-\cap B_1}\G(Du^-,(Q-1)\a{A})^2\\
&\le Q\int_{\Omega^+\cap B_1}|D\kappa-A|^2+(Q-1)\int_{\Omega^-\cap B_1}|D\kappa-A|^2+C\eta_*^{\sfrac{1}{2}}E\,.
\end{align*}
Next, notice that
\[
\left|\Omega^+\setminus B^+_{2-\overline\eta}\right|+\left|B^+_{2-\overline\eta}\setminus\Omega^+\right|\le C\|\bA_\gammaup\|\le C\bA\le CM_0^{-\sfrac 12}E^{\sfrac 12}
\]
and compute
\begin{align*}
|D\kappa| \le &|Dh|+|D_x\Psi(x,h)|+|D_u\Psi(x,h)||Dh|\nonumber\\
\le & \frac{C}{\bar \eta^m} E^\frac 12 \qquad \mbox{for $x\in B_{2-\eta}$.}
\end{align*}
In the latter estimate we are using that the harmonic function $h$ is defined on $B_{2-\frac{\eta}{2}}$ and that $\int |Dh|^2 \leq C E$, together with the usual interior estimates for harmonic functions. Note that, in particular, we have the better bound $|D\kappa|\leq C E^\frac 12$ on the smaller ball $B_1$. 

Thus
\begin{align*}
& \,Q\int_{\Omega^+\cap B_1}|D\kappa-A|^2+(Q-1)\int_{\Omega^-\cap B_1}|D\kappa-A|^2 \\ 
\le & \,Q\int_{B_1^+}|D\kappa-A|^2+(Q-1)\int_{B_1^-}|D\kappa-A|^2+CE^\frac 32\\
\end{align*}
and
\begin{align*}
&\, Q\int_{\Omega^+\cap B_{2-\overline\eta}}|D u^+|^2+(Q-1)\int_{\Omega^+\cap B_{2-\overline\eta}}|Du^-|^2\\
\ge &\, Q\int_{B^+_{2-\overline\eta}}|D\kappa|^2+(Q-1)\int_{B_{2-\overline\eta}^-}|D\kappa|^2-\frac{C}{\bar \eta^m} E^\frac 32\,.
\end{align*}
In conclusion, if $\varepsilon_0$ is sufficiently small (depending on $\bar \eta$) \eqref{e:gollonzo3} is reduced to
\begin{align}\label{e:gollonzo4}
&\, Q\int_{B_1^+}|D\kappa-A|^2+(Q-1)\int_{B_1^-}|D\kappa-A|^2\nonumber\\
\le & \,(2-\overline\eta)^{-m-2+\varepsilon}\left(Q\int_{B^+_{2-\overline\eta}}|D\kappa|^2+(Q-1)\int_{B^-_{2-\overline\eta}}|D\kappa|^2\right)+\frac{\overline\eta}{8}E\,.
\end{align}

Now we will substitute $\kappa$ with the harmonic function $h$ in \eqref{e:gollonzo4}. To this regard, recall that $A=(Dh(0),0)$ and  \[
D\kappa=(Dh,D_x\Psi+D_u\Psi(x,h)Dh)\,,
\]
where
\[
|D_x\Psi|+|D_u\Psi|\le C\bA\le \frac{C}{M_0^\frac 12}E^\frac 12\,.
\]
Therefore 
\begin{align*}
|D\kappa-A|^2 & \le|Dh-Dh(0)|^2+\frac{C}{M_0}E\\
|D\kappa|^2 & \ge |Dh|^2\,.
\end{align*}
Hence, assuming $M_0$ sufficiently large, the proof of \eqref{e:gollonzo4} will be completed in the next paragraph, where we show that
\begin{align}
& \, Q\int_{B_1^+}|Dh-Dh(0)|^2+(Q-1)\int_{B_1^-}|Dh-Dh(0)|^2\nonumber\\
\le & \,(2-\overline\eta)^{-m-2}\left(Q\int_{B^+_{2-\overline\eta}}|Dh|^2+(Q-1)\int_{B^-_{2-\overline\eta}}|Dh|^2\right)\,.\label{e:gollonzo5}
\end{align}

Recall that $h$ vanishes on $\{x_m=0\}$, hence by the Schwarz reflection principle and unique continuation for harmonic functions, $h(x',x_m)=-h(x',-x_m)$
(see Remark \ref{r:odd_harmonic}). This implies that the left hand side of \eqref{e:gollonzo5} equals $\qhalf\int_{B_1}|Dh-Dh(0)|^2$, whereas the right hand side equals $(2-\overline\eta)^{-m-2}\qhalf\int_{B_{2-\overline\eta}}|Dh|^2$. Thus \eqref{e:gollonzo5} is equivalent to
\begin{equation}\label{e:gollonzo6}
\int_{B_1}|Dh-Dh(0)|^2\le(2-\overline\eta)^{-m-2}\int_{B_{2-\overline\eta}}|Dh|^2\,,
\end{equation}
which is a classical inequality for harmonic functions. In order to show \eqref{e:gollonzo6} it suffices to decompose
$Dh$ in series of homogeneous harmonic polynomials $Dh (x) = \sum_{i =0}^\infty P_i (x)$, where $i$ is the degree. 
In particular the restriction of this decomposition
on any sphere $S := \partial B_\rho$
gives the decomposition of $Dh|_S$ in spherical harmonics, see \cite[Chapter 5, Section 2]{SW}. It turns out, therefore, that the $P_i$ are $L^2 (B_\rho)$-orthogonal.  Since the constant polynomial $P_0$ is $Dh (0)$ and $\int_{B_{1}} |P_i|^2 = (2-\overline\eta)^{-m-2i}\int_{B_{2-\overline\eta}} |P_i|^2$, \eqref{e:gollonzo6} follows at once.

\section{Proof of Theorem \ref{thm:decay_and_uniq}}\label{sec:proof_exc_dec}

We  first notice that, by definition of collapsed point, for every \(\delta>0\) there exists \(\bar\rho =\bar\rho (\delta)\) small such that  
\begin{itemize}
\item[(i)]  \(\bE^\flat(T,\bB_{2\sigma}(p))+ 4 \bA \sigma^2\le \delta\) for every $\sigma\leq \bar\rho$;
\item[(ii)] \(\Theta(T,q)\ge \Theta(T,p)=Q-\frac{1}{2}\) for all \(q\in \gammaup \cap \bB_{2\bar\rho} (p)\).
\end{itemize}
Next, since $\Theta (T, p)= Q-\frac{1}{2}$, if the radius $\bar\rho$ is chosen small enough we can assume that
\[
\|T\| (\bB_{4\bar\rho} (p)) \leq \omega_m \left(Q-\frac{3}{8}\right) (4\bar\rho)^m\, .
\]
By a simple comparison, for $\eta$ sufficiently small, if $q\in \bB_\eta (p)\cap \Gamma$ and $\bar\rho' =\bar \rho-\eta$, then
\begin{align*}
\|T\| (\bB_{4\bar\rho '} (q)) \leq \|T\| (\bB_{4\bar\rho} (p)) &\leq \omega_m \left(Q-\frac{3}{8}\right) (4\bar\rho)^m \\
&\leq \omega_m \left(Q-\frac{5}{16}\right) (4\bar\rho')^m\, .
\end{align*}
Next, by the monotonicity formula
\begin{align*}
\sigma^{-m} \|T\| (\bB_\sigma (q)) &\leq e^{\bA (4\bar\rho'-\sigma)} (4\bar \rho')^{-m} \|T\| (\bB_{4\bar \rho'} (q))\\
&\leq e^{\bA (4\bar\rho'-\sigma)}\omega_m \left(Q-\frac{5}{16}\right)\\ 
&\leq e^{4 \bA\bar\rho} \omega_m \left(Q-\frac{5}{16}\right)
\end{align*}
for all $\sigma \leq 4\bar\rho' $. In particular, if $\bar\rho$ is chosen sufficiently small, we then conclude
\begin{equation}\label{e:condizione_iii_Teorema6.8}
\|T\| (\bB_\sigma (q)) \leq \omega_m \left(Q-\frac{1}{4}\right) \sigma^m \qquad \forall q\in \bB_\eta (p)\cap \Gamma 
\;\mbox{and}\; \forall \sigma \leq 4\bar \rho' \, .
\end{equation}
Set now $r:= \min \{\eta, \bar\rho'\}$. For all  points \(q\) in \(\bB_r\cap \gammaup\) we claim that
\begin{equation}\label{e:start}
\bE^\flat(q,\bB_r) \le 2^m \bE^\flat (p, \bB_{2r}) + C \bA^2 r^2 \leq C\delta.
\end{equation}
Indeed let $\pi$ be a plane for which $\bE^\flat (p, \bB_{2r} (p)) = \bE (p, \bB_{2r} (p), \pi)$. By the regularity of
$\Gamma$ and $\Sigma$ we find a plane $\pi (q)$ such that $|\pi - \pi (q)|\leq C r \bA$ and 
$T_q \Gamma \subset \pi (q) \subset T_q \Sigma$. Then we can estimate
\begin{align*}
\bE^\flat(T,\bB_{r}(q))& \le\bE(T,\bB_{r}(q),\tilde\pi(q))\le 2^m \bE(T,\bB_{2r}(p),\pi (q))\nonumber\\
& \le 2^m \bE^\flat (T,\bB_{2r}(p)) + C r^2 \bA^2 \le C\delta\, .
\end{align*}

We will now show that the conclusions of the theorem hold for this particular radius $r$. 
First, without loss of generality we translate $p$ in $0$ and rescale $r$ to $1$. Summarizing our discussion above,
for every $q\in \bB_1 \cap \Gamma$ we have the following three properties
\begin{itemize}
\item[(A)] $\bE^\flat (T, \bB_1 (q)) + \bA^2 \leq 2^m \bE^\flat (T, \bB_2) + C \bA^2 \leq C\delta$;
\item[(B)] $\Theta (T, x) \geq Q-\frac{1}{2}$ for every $x\in \bB_1 (q) \cap \Gamma$;
\item[(C)] $\|T\| (\bB_s (q)) \leq (Q-\frac{1}{4}) \omega_m s^m$ for every $s\leq 1$.  
\end{itemize}
We now fix any point \(q\in \Gamma\cap \bB_1\) and define $e (s) := \bE^\flat (T, \bB_s (q))$.  We claim that
\begin{equation}\label{e:dec_et}
e(2^{-k-1})\le \max\{2^{-2(1-\varepsilon)k}e({\textstyle{\frac{1}{4}}}),2^{-2(1-\varepsilon)k+2}e({\textstyle{\frac{1}{2}}})\} \qquad
\mbox{for all $k\in \N$.}
\end{equation}
We prove it by induction on $k$: notice that the inequality is trivially true for $k=0,1$. If the inequality is true for $k=k_0\geq 1$, we want to show it for $k=k_0+1$. We set $\sigma=2^{-k-2}$ and notice that, by inductive assumption  
\[
e(4\sigma)\le\max\{e({\textstyle{\frac{1}{4}}}),e({\textstyle{\frac{1}{2}}})\} \le C e (1) \stackrel{(A)}{\leq} C\delta.
\]
Hence, provided we choose \(\delta=\delta(m,Q)\) (and thus \(r\)) sufficiently small,  we are in the position of applying Theorem \ref{t:exc_dec}: note that the induction assumption covers hypothesis (i) of Theorem \ref{t:exc_dec}, whereas (B) and (C)
imply the hypotheses (ii) and (iii). We thus deduce that  
\begin{align*}
e(2^{-k-2}) &= e(\sigma)\le\max\{2^{-2+2\varepsilon}e(2\sigma),2^{-4+4\varepsilon}e(4\sigma)\}\\
\le &\max\{2^{-2(1-\varepsilon)k}e({\textstyle{\frac{1}{4}}}),2^{-2(1-\varepsilon)k+2}e({\textstyle{\frac{1}{2}}})\}\,.
\end{align*}
From \eqref{e:dec_et} we easily conclude that for all such points \(q\) and for \(\rho\in ]0,\frac{1}{2}[\)
\begin{align}
\bE(T,\bB_\rho (q))\le\bE^\flat(T,\bB_\rho(q)) & \le C\rho^{2-2\varepsilon}e({\textstyle{\frac{1}{2}}})\\
&\le C\rho^{2-2\varepsilon}\bE^\flat(T,\bB_{1}(q))+C\rho^{2-2\varepsilon}\bA^2\nonumber\\
&\le C\rho^{2-2\varepsilon}\bE^\flat(T,\bB_{1} (q))+C\rho^{2-2\varepsilon}\bA^2\nonumber\\
&\stackrel{(A)}{\le} C \rho^{2-2\varepsilon} \bE^\flat (T, \bB_2) + C \rho^{2-2\varepsilon}\bA^2\,. \label{e:dec_rrho}
\end{align}
In addition, the estimate is trivial for $\frac{1}{2} \leq \rho \leq 1$.  Next, given \(0<t<s<1\),  if \(\pi(q,s)\) and \(\pi(q,t)\) are the optimal planes for \(\bE(q,t)\) and \(\bE^\flat(q,s)\), \eqref{e:dec_rrho} implies 
\begin{align*}
|\pi(q,s)-\pi(q,t)|^2 & \le\frac{1}{\|T\|(\bB_s(q))}\int_{\bB_s(q)}|\pi(q,t)-\pi(q,s)|^2\\
& \le C\bE(T,\bB_s(q),\pi(q,s))+C\bE(T,\bB_t(q)),\pi(t))\\
& \le Cs^{2-2\varepsilon}\bE^\flat(T,\bB_1)+Cs^{2-2\varepsilon}\bA^2\,.
\end{align*}
We thus conclude the existence of a unique limit $\pi(q)$ such that
\begin{equation}\label{e:franco}
|\pi(q)-\pi(q,s)|^2\le Cs^{2-2\varepsilon}\bE^\flat(T,\bB_1)+Cs^{2-2\varepsilon}\bA^2\quad\forall\,s\le 1\,.
\end{equation}
From the latter inequality and \eqref{e:dec_rrho}, we conclude \eqref{e:decay-1}, namely statement (c) of the theorem, for all \(q\in \bB_1\cap \gammaup\).

Next, notice that, at every such $q\in \bB_1\cap \gammaup $, $T_q\gammaup\subset \pi(q)\subset T_q\Sigma$ and that, from \eqref{e:decay-1}, the tangent cone is unique and takes the form
\[
Q^*\a{\pi(q)^+}+(Q^*-1)\a{\pi(q)^-}\,.
\]
for some \(Q^*\in \mathbb N\) (since the tangent cone is an integral current). By (ii)  $Q^*-\frac{1}{2} =\Theta(T,q)\ge Q-\frac{1}{2}$. Furthermore, by (C) $Q< Q+1$ and thus \(Q^*=Q\). Therefore \(\Theta(T,q)=Q-\frac{1}{2}\) and  this proves  statements (a) and (b) of the theorem.

We next turn to (e): arguing as in Section \ref{subsec:exc_dec_prel}, we let 
\[
T_0=T\res\left(B_\rho(q,\pi(q))\times B^n_\rho(0,\pi(q)^\perp)\right)
\]
and we note that it  satisfies \eqref{Ass:app} in the cylinder $\bC_\rho(q,\pi(q))$. In addition we have  
\[
\bE(T_0,\bC_\rho(q,\pi(q)))\le C\bE(T,\bB_\rho(q),\pi(q))
\] 
and $T\res\bB_\rho(q)=T_0\res\bB_\rho(q)$. Thus, we can apply Theorem \ref{t:height_bound} to get
\begin{align*}
\bh(T,\bB_\rho(q),\pi(q)) &\le\bh(T_0,\bC_\rho(q,\pi(q)),\pi(q))\\
& \le C(\bE(T,\bB_\rho(q),\pi(q))^\frac 12+\bA^\frac 12\rho^\frac 12)\rho\,.
\end{align*}
The estimate \eqref{e:decay-height} follows at once from the latter inequality and \eqref{e:decay-1}.

We conclude by proving (d) of Theorem \ref{thm:decay_and_uniq}. First of all, observe that it suffices to show \eqref{e:hoelder-1} when $\rho:=|q-q'|\le 1/2$. Recall the estimate \eqref{e:franco}: 
\[
\max\{|\pi(q)-\pi(q,\rho)|,|\pi(q')-\pi(q',\rho)|\}\le C(\bE^\flat(T,\bB_{1})^\frac 12+\bA) \rho^{1-\varepsilon}\,.
\]
Hence to complete the proof of \eqref{e:hoelder-1}, we notice that
\begin{align*}
& |\pi(q,\rho)-\pi(q',\rho)|^2  \le\mint_{B_\rho(q)\cap B_\rho(q')}|\pi(q,\rho)-\pi(q',\rho)|^2\\
& \le \frac{C}{\omega_m\rho^m}\int_{B_\rho(q)}|\vec T-\pi(q,\rho)|^2+\frac{C}{\omega_m\rho^m}\int_{B_\rho(q')}|\vec T-\pi(q',\rho)|^2\\
& =C(\bE^\flat(T,\bB_\rho(q))+\bE^\flat(T,\bB_\rho(q')))\\
& \le C(\bE^\flat(T,\bB_1)+\bA^2)\rho^{2-2\varepsilon}\, ,
\end{align*}
where we have also used that \(\|T\|(B_{\rho}(p)\ge c \rho^m\), a simple consequence of the monotonicity formula in Theorem \ref{thm:allard}.

\section{Proof of Corollary \ref{c:cone_cut}}

The inclusion \eqref{eq:height-envelope1} follows immediately from \eqref{e:decay-height} applied to some $\rho$ with $2|x-q|> \rho > |x-q|$, where $x\in \supp (T) \cap \bB_\sigma (q)$. 
Next we observe that \eqref{eq:height-envelope1} is in fact stronger than \eqref{eq:height-envelope2}, because, by \eqref{e:tilt_pi(q)}, we can control the tilt $|\pi (q)-\pi(p)|$. 
Indeed,
\begin{align*}
\abs{\bp^\perp -\bp_q^\perp}^2&= \abs{\bp -\bp_q}^2\le m \abs{\pi- \pi(q)}^2 \stackrel{\eqref{e:franco}}{\le} C E.
\end{align*}
Using Theorem \ref{thm:decay_and_uniq}(d) with $q'=p$ and $\varepsilon = \frac{1}{2}$ we conclude the crude estimate $|\pi (q)-\pi (p)|\leq C (E^{\sfrac{1}{2}} + \bA r)$. In particular
\[
\abs{\bp_q^\perp -\bp^\perp}^2= \abs{\bp_q -\bp}^2\le m \abs{\pi(q)- \pi}^2 \le C (E + \bA^2 r^2)\, .
\]
Fix therefore a point $x\in \bB_\sigma (q)\cap \supp (T)$. Then
\begin{align*}
|\bp^\perp (x-q)|\leq & |x-q| |\bp^\perp - \bp_q^\perp|+ |\bp_q^\perp (x-q)|\nonumber\\
\leq &C (E^{\sfrac{1}{2}} + \bA r) |x-q| + C (r^{-1} E + \bA)^{\sfrac{1}{2}} \abs{x-q}^{\frac32}\nonumber\\
\leq & C (E + \bA r)^{\sfrac{1}{2}} |x-q|\, ,
\end{align*}
which proves \eqref{eq:height-envelope2}.

\chapter{Second Lipschitz approximation}\label{chap:Lip2}

Recalling Theorem \ref{thm:step0}, our main task is to show that, under Assumption \ref{ass:main}, any  collapsed point $q\in \gammaup$ is regular. 
By  the usual scaling and translation argument,  we can moreover assume that:
\begin{itemize}
\item[(i)] $0\in \gammaup$ is a collapsed point with multiplicity $\Theta (T, 0) = Q - \frac{1}{2}$;
\item[(ii)] at any point $q\in \gammaup\cap \bB_1$ the conclusions of Theorem \ref{thm:decay_and_uniq} apply for every radius $r\leq 1$;
\item[(iii)] $\bA$ and $\bE^\flat (T, \bB_2)$ are small, namely
\begin{equation}\label{eq:parametro_eps_0}
\bA^2 + \bE^\flat (T, \bB_2) < \varepsilon_0\, ,
\end{equation}
where $\varepsilon_0$ is a sufficiently small constant whose choice will be specified in the remaining proofs. 
\end{itemize}

Let $\pi_0$ be a plane which minimizes the expression defining $\bE^\flat (T, \bB_1)$. By Corollary \ref{c:cone_cut}, we know that
\begin{equation}
\supp (T)\cap \bB_1 \subset \{x: |\bp_0^\perp (x)|\leq C \varepsilon_0^{\sfrac{1}{2}}\abs{x}\}\, ,
\end{equation}
where $\bp_0^\perp$ is the orthogonal projection on $\pi_0^\perp$.
Since we can restrict the current $T$ to $\bB_1$ and further scale by a factor $2$, we can assume, without loss of generality, that
\begin{itemize}
\item[(iv)] There is a plane $\pi_0$ such that $\bE^\flat (T, \bB_2) =\bE (T, \bB_2, \pi_0)$, $T_0 \gammaup \subset \pi_0\subset T_0 \Sigma$ and 
\begin{equation}
\supp (T)\cap \bB_2 \subset \{x: |\bp_0^\perp (x)|\leq C \varepsilon_0^{\sfrac{1}{2}}\abs{x}\} \, .
\end{equation}
\end{itemize}

From now on we will work under the above assumptions, which we summarize together in the following

\begin{ipotesi}\label{ass:decay+cone}
 $T$, $\Sigma$ and $\gammaup$ are as in Assumption \ref{ass:main} and they satisfy additionally the conditions (i), (ii), (iii) and (iv) above. 
\end{ipotesi}

In particular, Theorem \ref{thm:step0} is implied by the following milder version:

\begin{theorem}\label{thm:step4}
If $T, \Sigma$ and $\gammaup$ are as in Assumption \ref{ass:decay+cone}, then $0$ is a regular boundary point of $T$. 
\end{theorem}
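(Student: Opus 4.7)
The plan is to argue by contradiction, assuming that $0$ is a singular collapsed boundary point of $T$ with density $Q-\frac12$, and to derive a contradiction by producing, via a blow-up procedure, a nontrivial $\qhalf$ $\D$-minimizer which collapses at its interface and has zero average on both sides, thereby violating Theorem \ref{thm:collasso} (combined with Corollary \ref{cor:av=0}). Under Assumption \ref{ass:decay+cone} the full strength of Theorem \ref{thm:decay_and_uniq} and Corollary \ref{c:cone_cut} is available on every sub-ball $\bB_\rho (q)$ with $q\in\gammaup\cap \bB_1$ and $\rho\leq 1$: this supplies a H\"older-continuous assignment $q\mapsto \pi(q)$ of tangent planes and a horned enclosure $\supp (T)\subset \{|\bp_{\pi(q)}^\perp (x-q)|\lesssim (r^{-1}\bE^\flat + \bA)^{1/2}|x-q|^{3/2}\}$, which will provide the geometric backbone for everything that follows.

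First, I would upgrade the Lipschitz approximation of Theorem \ref{t:Lipschitz_1} to a \emph{second} Lipschitz approximation $(u^+,u^-)$ that, on a Whitney-type refinement of the domain toward $\gammaup$, coincides with $T$ outside a set of measure superlinear in the excess, and enjoys energy estimates accurate up to order higher than $\bE$. On each Whitney cube the interior approximation of \cite{DS3} is applied; the cubes accumulating on $\gammaup$ are handled using the boundary Lipschitz approximation of Theorem \ref{t:Lipschitz_1} and glued via Lemma \ref{l:interpolation}. The Corollary \ref{c:cone_cut} provides the height control needed to make the projections onto the varying tangent planes $\pi(q)$ well defined on the whole refinement.

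Second, I would smooth and patch the graphs of $u^\pm$ to build a pair of $C^{3,\kappa}$ center manifolds $\mathcal{M}^\pm\subset\Sigma$ with common boundary $\gammaup$, following closely the construction of \cite{DS4} but with boundary estimates matching across $\gammaup$ to at least $C^{1,1}$ regularity. Coupled to these one gets normal approximations $N^\pm:\mathcal{M}^\pm\to (T\mathcal{M}^\pm)^\perp$, $\qhalf$-valued in the sense that $N^+$ takes $Q$ values and $N^-$ takes $Q-1$ values, that approximate $T$ with error estimates superlinear in the excess and whose interface collapses at $\gammaup$ by construction. By the defining property of a center manifold, one also obtains the crucial smallness of $\etaa\circ N^\pm$ — the averages vanish at strictly higher order than $N^\pm$ itself.

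Third, a monotonicity inequality for an adapted frequency function $I(r)$ of $(N^+,N^-)$ would be established, mimicking Theorem \ref{thm:limit_ff}: the needed first-variation identities are derived along outer variations and along the inner variation generated by the weighted vector field associated to the good distance function of Lemma \ref{l:good_vector_field}. Here the $C^3$ regularity of $\gammaup$ is essential, and the errors arising from the area functional (as opposed to the Dirichlet energy) must be absorbed using the center manifold estimates. From this monotonicity one derives a two-sided control of the rescaled energies, which is the main obstacle: matching the boundary first variation with the interior Taylor expansion of the area produces many extra terms whose smallness requires the full combined strength of Theorem \ref{thm:decay_and_uniq}, the second Lipschitz approximation, and the $C^{3,\kappa}$ estimates on $\mathcal{M}^\pm$.

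Finally, I would rescale: set
\[
\hat N^\pm_r (x) := \frac{N^\pm (r x)}{\mathbf{h}_r},\qquad \mathbf{h}_r^2 := r^{-m}\int_{\bB_r\cap \mathcal{M}^+}|N^+|^2 + r^{-m}\int_{\bB_r\cap \mathcal{M}^-}|N^-|^2,
\]
and pass to the limit along some $r_k\downarrow 0$. The frequency estimate forces $\mathbf{h}_{r_k}>0$ and provides uniform energy bounds, so a subsequence converges, by Theorem \ref{thm:cpt_dm}, to a $\qhalf$ $\D$-minimizer $(g^+,g^-)$ which collapses at the flat interface $(T_0\gammaup,0)$ and has Dirichlet energy $1$ on $B_1$. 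The higher-order vanishing of $\etaa\circ N^\pm$ ensures $\etaa\circ g^\pm\equiv 0$, and then Corollary \ref{cor:av=0} (via Theorem \ref{thm:collasso}) forces $g^+=Q\a{0}$, $g^-=(Q-1)\a{0}$, contradicting $\D(g^+)+\D(g^-)=1$. Therefore $0\in \breg(T)$.
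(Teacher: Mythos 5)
Your plan is essentially the paper's own proof: it reduces Theorem \ref{thm:step4} to the same chain of steps, namely the second Lipschitz approximation on a Whitney decomposition refining towards $\gammado$, the construction of the left/right center manifolds with their normal approximations $N^\pm$, the monotonicity of the adapted frequency function proved via outer and inner variations along the weighted vector field built from the good distance function, and a final blow-up yielding a nontrivial, average-free $\qhalf$ $\D$-minimizer collapsing at the interface, which contradicts Theorem \ref{thm:collasso} via Corollary \ref{cor:av=0}. The only cosmetic deviation is your normalization of the blow-ups by the $L^2$ height $\mathbf{h}_r$ instead of the Dirichlet quantity $\fancyD(r)^{\sfrac12}$ used in the paper, which is equivalent once the frequency bounds are in place.
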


In this framework we can then refine our Lipschitz approximation in cylinders with small excess. We first note the following corollary
of Theorem \ref{thm:decay_and_uniq} and of the cone condition in Assumption \ref{ass:decay+cone}(iv). 

\begin{proposition}\label{p:good_cylinders}
Let $T, \Sigma$  and $\gammaup$ be as in Assumption \ref{ass:decay+cone} with $\varepsilon_0$ sufficiently small (depending only upon $m,n, \bar n$ and $Q$). Then there are positive constants $C = C(m,n,\bar n, Q)$ and $\bar \varepsilon = \bar \varepsilon (m,n,\bar n,Q)$ with the following properties. Assume that $q\in \gammaup\cap \bB_1$, $r<\frac{1}{8}$ and $\pi$ is an $m$-dimensional plane such that $T_q \gammaup \subset \pi \subset T_q \Sigma$ and
\begin{equation}\label{e:exc_small_100}
E = \bE (T, \bC_{4r} (q,\pi)) < \bar \varepsilon\, .
\end{equation}
Then 
\[
\supp (\partial (T\res \bC_{4r} (q, \pi))) \subset \partial \bC_{4r} (q, \pi)  \cup \gammaup
\]
and
\begin{equation}\label{e:height_100}
\bh (T, \bC_{2r} (q, \pi), \pi) \leq C r (E+ \bA r)^{\sfrac{1}{2}}\, .
\end{equation}
\end{proposition}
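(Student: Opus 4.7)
The proposition asks for two facts: a structural statement about the boundary of $T\res\bC_{4r}(q,\pi)$, and the Hardt--Simon height bound. My plan is to reduce the second conclusion to a direct application of Theorem \ref{t:height_bound}, after using Corollary \ref{c:cone_cut} and Theorem \ref{thm:decay_and_uniq} to put ourselves into the setting of Assumption \ref{Ass:app}. The first conclusion is then essentially a byproduct of the same reduction.

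\emph{Step 1: the plane $\pi$ is close to $\pi_0$.} Both $\pi$ and the tangent plane $\pi(q)$ from Theorem \ref{thm:decay_and_uniq}(b) contain $T_q\gammaup$ and lie inside $T_q\Sigma$. From Theorem \ref{thm:decay_and_uniq}(d) with $q'=0$ we have $|\pi(q)-\pi_0|\le C(\bE^\flat(T,\bB_2)+\bA)^{\sfrac12}$. Comparing the excess of $T$ with respect to $\pi$ and with respect to $\pi(q)$ on $\bB_{4r}(q)$ (and using Theorem \ref{thm:decay_and_uniq}(c) on the latter) gives, if $\bar\varepsilon$ is small enough, $|\pi-\pi(q)|\le C(E+\bA r)^{\sfrac12}$, hence $|\pi-\pi_0|$ is small.

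\emph{Step 2: localization and the first conclusion.} By Assumption \ref{ass:decay+cone}(iv) together with Step 1, every point $x\in\supp(T)\cap\bC_{4r}(q,\pi)$ satisfies $|\bp_\pi^\perp(x-q)|\le Cr$, and therefore $\supp(T)\cap\bC_{4r}(q,\pi)$ is compactly contained in $\bB_{C_0r}(q)\subset\bB_2$. The slicing formula
\[
\partial\bigl(T\res\bC_{4r}(q,\pi)\bigr)=(\partial T)\res\bC_{4r}(q,\pi)-\langle T,\mathbf{1}_{\bC_{4r}(q,\pi)}\rangle
\]
then immediately yields the first conclusion, since $\supp(\partial T\cap\bB_2)\subset\gammaup$ and the slicing term is supported on $\partial\bC_{4r}(q,\pi)$.

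\emph{Step 3: verifying Assumption \ref{Ass:app}.} Choose coordinates in which $\pi=\R^m\times\{0\}$ and $T_q\gammaup=\R^{m-1}\times\{0\}$. The smallness of $|\pi-\pi_0|$ (Step 1) together with the $C^2$ regularity of $\Sigma$ and $\gammaup$ allow us to write both as graphs over $\pi$ and $T_q\gammaup$ respectively, with the $C^2$--bounds required in Assumption \ref{Ass:app}. The containment from Step 2 shows $\bp_\sharp T\res\bC_{4r}(q,\pi)$ is a well-defined integer rectifiable current supported in $B_{4r}(q,\pi)$, with boundary (via the commutation of $\bp_\sharp$ with $\partial$) equal to $\bp_\sharp\a{\gammaup\cap\bC_{4r}(q,\pi)}$, which is $\pm\a{\gammado}$ where $\gammado=\bp(\gammaup)$ divides $B_{4r}(q,\pi)$ into $\Omega^+$ and $\Omega^-$. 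By the Constancy Theorem, $\bp_\sharp T=Q^+\a{\Omega^+}+Q^-\a{\Omega^-}$ with $Q^+-Q^-=1$. To fix the values, I use Theorem \ref{thm:decay_and_uniq}(a)--(b): the density at $q$ equals $Q-\tfrac12$ and the unique tangent cone is $Q\a{\pi(q)^+}+(Q-1)\a{\pi(q)^-}$; projecting this cone onto $\pi$ (which is close to $\pi(q)$) and using the continuity of density together with the integer multiplicities, we conclude $Q^+=Q$, $Q^-=Q-1$. Finally, the density lower bound $\Theta(T,q')\ge Q-\tfrac12$ on $\gammaup\cap\bB_1$ is exactly Theorem \ref{thm:decay_and_uniq}(a).

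\emph{Step 4: applying the height bound.} All hypotheses of Theorem \ref{t:height_bound} are now verified in $\bC_{4r}(q,\pi)$ with $\ba,\bar\ba\le\bA$. The theorem yields
\[
\bh(T,\bC_{2r}(q,\pi),\pi)\le C\bigl(E^{\sfrac12}+\bA^{\sfrac12}r^{\sfrac12}+\bA r\bigr)r\le Cr(E+\bA r)^{\sfrac12},
\]
which is \eqref{e:height_100}. The main delicate point is Step 3, and specifically identifying the multiplicities $Q^+, Q^-$: one has to argue, via the tangent cone description and continuity of density, that the constancy theorem must distribute the multiplicities as $Q$ and $Q-1$ rather than some other pair differing by one. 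Once this is in place, the rest is a routine check of Assumption \ref{Ass:app} plus direct citation.
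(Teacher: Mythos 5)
Your argument is correct and is essentially the proof the paper has in mind: the paper states this proposition without a written proof, as a corollary of Theorem \ref{thm:decay_and_uniq} and the cone condition in Assumption \ref{ass:decay+cone}(iv), and its own treatment of the analogous statements (the height bound in Theorem \ref{thm:decay_and_uniq}(e), and the projection identity in Proposition \ref{prop:yes_we_can_more}) follows exactly your reduction, namely verifying Assumption \ref{Ass:app} in the cylinder and invoking Theorem \ref{t:height_bound}, with your tangent-cone-plus-constancy identification of the multiplicities $Q, Q-1$ being a perfectly acceptable substitute for the paper's plane-rotation argument. Only two intermediate claims are stronger than what your citations actually give and should be weakened (nothing downstream is affected): the comparison in Step 1 yields $|\pi-\pi(q)|^2\le C E+C r^{2-2\varepsilon}\big(\bE^\flat(T,\bB_2)+\bA^2\big)$ rather than $C(E+\bA r)$, and in Step 2 the cone condition only gives $|\bp_\pi^\perp(x-q)|\le C\big(r+\varepsilon_0^{\sfrac{1}{2}}\big)$ (a bound of order $r$ at this stage would essentially be the height bound you are trying to prove), but all you actually use is smallness of $|\pi-\pi_0|$ and compact containment of $\supp(T)\cap\bC_{4r}(q,\pi)$ in $\bB_2$, and these do follow.
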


We are then ready to state  our improved approximation theorem:

\begin{theorem}\label{thm:second_lip} \label{THM:SECOND_LIP}
Let $T$, $\Sigma$, $\gammaup$, $q$, $r$ and $\pi$ be as in Proposition \ref{p:good_cylinders}. Consider the orthogonal projection $\gammado$ of $\gammaup\cap \bC_{4r} (q, \pi)$ onto the plane $q+\pi$ and observe that, since $\varepsilon_0$ is sufficiently small, $\gammaup \cap \bC_{4r} (q, \pi)$ is the graph over $\gammado$ of a $C^{3,a_0}$ function $\psi$. 
Then there are a closed set $K\subset B_{r} (q) = B_r (q, \pi)$ and a $\qhalf$-valued map $(u^+, u^-)$ on $B_r (p)$ which collapses at the interface $(\gammado, \psi)$ satisfying the following estimates:
\begin{align}
\Lip (u^\pm)\leq\; & C  (E+ \bA^2 r^2)^{\sigmaexp}\label{e:slip1}\\
{\rm osc} (u^\pm) \leq\; & C (E +  \bA r)^{\sfrac{1}{2}} r\label{e:slip2}\\
\bG_{u^\pm} \res[(K \cap \Omega^\pm) \times \pi^\perp] =\; & T \res [(K\cap \Omega^\pm)\times \R^n]\label{e:slip3}\\
\gr (u^\pm)\subset\; &\Sigma\label{e:slip4}\\
|B_r (q)\setminus K|\leq & C (E + \bA^2 r^2)^{1+\sigmaexp}r^m\label{e:slip4.25}\\
\be_T(B_{r} (q)\setminus K) \le & C (E + \bA^2 r^2)^{1+\sigmaexp}r^m\label{e:slip4.5}\\
\int_{B_r (q)\setminus K} \abs{Du}^2 \le & C (E +\bA^2r^2)^{1+\sigmaexp}r^m \label{e:slip5}\\
\left| \be_T(F) - \frac12 \int_F \abs{Du^\pm}^2\right| \le & C (E + \bA^2 r^2)^{1+\sigmaexp} r^m \; \quad \forall F \subset \Omega^\pm\text{ measurable,}\label{e:slip6}
\end{align}
where $\Omega^\pm$ are the two regions in which $B_r (q)$ is divided by $\gamma$, whereas
$C\ge 1$ and $\sigmaexp\in ]0,\frac{1}{4}[$ are two positive constants which depend on $m,n, \bar n$ and $Q$. 
\end{theorem}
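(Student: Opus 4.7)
The plan is to combine the first boundary Lipschitz approximation of Theorem \ref{t:Lipschitz_1} with the interior approximation theorem of \cite{DS3}, patched together via a Whitney-type decomposition refining towards \(\gammado\), and to use the power-law excess decay of Theorem \ref{thm:decay_and_uniq} to upgrade the linear-in-\(E\) bounds of Chapter \ref{chap:Lip1} to the superlinear bounds \((E+\bA^2r^2)^{1+\sigmaexp}\) required here. After translating and rescaling, I assume \(q=0\), \(\pi=\pi_0\), \(r=1\).

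The first step is to propagate Assumption \ref{ass:decay+cone} from the origin to every point of \(\gammaup\cap \bC_4\): by Proposition \ref{p:good_cylinders} and the upper semicontinuity of the density each such point is again collapsed with density \(Q-\tfrac12\), so Theorem \ref{thm:decay_and_uniq} furnishes, at every \(p\in\gammaup\cap \bC_4\) and every \(\rho\le 1\), a decay of the form \(\bE(T,\bB_\rho(p))\le C\rho^{2-2\varepsilon}(E+\bA^2)\), together with a H\"older-continuous field of optimal planes \(\pi(p)\) and the cone/height bounds of Corollary \ref{c:cone_cut}. In particular the interior approximation of \cite{DS3} is applicable on every cylinder at positive distance from \(\gammaup\) with small excess, and Theorem \ref{t:Lipschitz_1} on every sufficiently small boundary ball centered on \(\gammaup\).

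Next I would decompose \(B_1\setminus\gammado\) into dyadic Whitney cubes \(L\) with \(\ell(L)\sim\dist(L,\gammado)\), stopped at the smallest scale for which the interior approximation of \cite{DS3} still produces a good superlinear estimate. On each such \(L\), the excess on the concentric cylinder \(\bC_{c\ell(L)}\) over \(\pi(z_L)\) is controlled by \(\bE_L:=C\ell(L)^{2-2\varepsilon}(E+\bA^2)\) by Step 1, and the interior Lipschitz approximation yields a \(Q\)-valued map \(v_L\) whose exclusion set has measure \(\lesssim \bE_L^{1+\sigmaexp}\ell(L)^m\); summing over the Whitney cubes, the exponent \(2-2\varepsilon\) produces a convergent geometric series with total \(\lesssim (E+\bA^2)^{1+\sigmaexp}\). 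In a collar of width comparable to a small power of \(E+\bA^2\) around \(\gammado\), I instead apply Theorem \ref{t:Lipschitz_1} to obtain the boundary approximation \((u_0^+,u_0^-)\) collapsing at \((\gammado,\psi)\), whose bad set is again superlinearly small by \eqref{e:stimaK} combined with the excess decay at scale comparable to the collar width. Gluing the cube-wise maps to each other and to \((u_0^+,u_0^-)\) is done via (the simpler, fixed-plane version of) the interpolation Lemma \ref{l:interpolation}, using the Poincar\'e-type estimate \(\int\G(v_L,v_{L'})^2\lesssim \ell(L)^{m+2}\bE_L\) on overlaps of neighboring cubes, which follows since both \(v_L,v_{L'}\) coincide with \(T\) on the majority of \(L\cup L'\) thanks to \eqref{e:differenza} and its interior analogue. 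The patching near \(\gammado\) is arranged so that the final \(u^\pm\) still collapses at \((\gammado,\psi)\): I leave \(u_0^\pm\) unchanged on a definite subcollar and interpolate only further out, where no trace constraint at the interface is active.

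I expect the main obstacle to be precisely this patching step: the interior Lipschitz constant of each \(v_L\) is only controlled by \(\bE_L^\beta\), not by the global \((E+\bA^2)^\beta\), so the interpolation parameter \(\lambda\) in Lemma \ref{l:interpolation} must be tuned on each cube to avoid losing the superlinear factor in \eqref{e:slip4.5}--\eqref{e:slip5}, and the contributions coming from the excess decay must cover the Lipschitz losses produced by each transition collar. Once this is in place, \eqref{e:slip1} and \eqref{e:slip2} follow from the cube-wise Lipschitz and oscillation bounds together with the height estimate \eqref{e:height_100}; \eqref{e:slip3}--\eqref{e:slip4} are inherited from the individual pieces; \eqref{e:slip4.25}--\eqref{e:slip5} follow by summing cube-wise bad-set measures and Dirichlet energies and closing the geometric series via excess decay; and \eqref{e:slip6} comes from the Taylor expansion of the area functional on \(K\cap\Omega^\pm\) (where \(T=\bG_{u^\pm}\) by \eqref{e:slip3}) combined with \eqref{e:slip4.5} and \eqref{e:slip5} on the complement.
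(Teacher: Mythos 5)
There is a genuine gap, and it sits exactly where you place it yourself: the gluing. But the deeper point is that the gluing problem you are trying to solve with Lemma \ref{l:interpolation} does not exist if you set up the cube-wise approximations the way the paper does. You approximate over the \emph{varying} planes $\pi(z_L)$ with the decayed excess $\ell(L)^{2-2\varepsilon}(E+\bA^2)$, and then have to (i) re-graph every piece over the fixed plane $\pi$, (ii) interpolate between neighbouring pieces with a cube-dependent parameter $\lambda$, and (iii) hope that the transition bands do not destroy \eqref{e:slip3} and \eqref{e:slip6}. Step (i) already threatens \eqref{e:slip6}: the Dirichlet energy over $\pi$ and the excess $\be_T$ over $\pi$ each acquire a tilt contribution of size $|\pi-\pi(z_L)|^2\ell(L)^m\sim (E+\bA^2r^2)\,\ell(L)^m$, which is only \emph{linear} in the excess, so the identity \eqref{e:slip6} (required for \emph{every} measurable $F$, with superlinear error) would need a separate Taylor-expansion cancellation argument that you do not supply. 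Step (ii) is unnecessary: as you yourself note when stating the Poincar\'e bound, on the overlap of two cubes both $v_L$ and $v_{L'}$ coincide with the slices of the \emph{same} current $T$ outside their bad sets, hence they literally agree there; there is nothing to interpolate, and invoking Lemma \ref{l:interpolation} with per-cube tuning of $\lambda$ only re-introduces energy and measure losses that you then have to fight to keep superlinear. Finally, your boundary collar based on Theorem \ref{t:Lipschitz_1} runs into Assumption \ref{Ass:app}(i): at boundary points other than $q$ the fixed plane $\pi$ does not contain $T_p\gammaup$, so either you tilt again (re-creating problem (i)) or you need a version of Theorem \ref{t:Lipschitz_1} with only approximate containment, which you do not prove; and you never verify the Lipschitz estimate \eqref{e:slip1} across cubes that are far apart relative to their size, nor the collapse of the glued map at $(\gammado,\psi)$ after the interpolation near the collar.

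The paper's proof removes all of these issues at once by applying the interior approximation Theorem \ref{t:old_approx} in cylinders $\bC_{4r_L}(c_L,\pi)$ over the \emph{fixed} plane $\pi$ on every Whitney cube. The excess there is bounded \emph{uniformly} by $C(E+\bA^2r^2)$: one writes $\bE(T,\bC_{4r_L}(c_L))\le C\,\bE(T,\bB_{16r_L}(q_L),\pi(q_L))+C|\pi-\pi(q_L)|^2$ and uses Theorem \ref{thm:decay_and_uniq} for the first term and \eqref{e:tilt_pi(q)} for the second — no geometric series in the decay exponent is needed, since the superlinear gain comes from the exponent $1+\sigmaexp$ of the interior theorem applied with this uniform excess and $\sum_L\ell(L)^m\le Cr^m$. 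Because every $f_L$ is a graph over the same $\pi$ coinciding with the slices of $T$ on $K_L$, the glued map is simply defined as $f_L$ on $L\cap K_L'$ with $K_L'=K_L\cap\bigcap_{H\in\mathcal N^+(L)}K_H$ (no interpolation, no transition regions), the Lipschitz bound for far-apart cubes and the continuous extension to $\gammado$ with values $Q\a{\psi}$, $(Q-1)\a{\psi}$ follow from the height/cone bound \eqref{e:height_100} in boundary-centered cylinders, and \eqref{e:slip6} is inherited cube-wise from \eqref{e:main(iii)} and summed. If you want to rescue your route, the minimal fix is: drop the collar and the interpolation, work over the fixed plane $\pi$ with the uniform excess bound above, and exploit the agreement of the cube-wise maps with the slices of $T$ to glue by restriction to the common good sets followed by the Lipschitz extension theorem of \cite[Theorem 1.7]{DS1}.
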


\section{Preliminary observations}

We start recalling \cite[Theorem 2.4]{DS3} in our context. 

\begin{theorem}[Almgren's strong approximation]\label{t:old_approx}
There exist constants $C, \sigmaexp,\bar \eps>0$ (depending on $m,n,\bar n,Q$)
with the following property. Let $T$, $\Sigma$ and $\gammaup$ be as in Assumption \ref{ass:decay+cone}, $\pi$, $q$ and $r$ as in Proposition \ref{p:good_cylinders} and let \(x\in \bB_1\) such that 
\begin{itemize}
\item[(i)] the cylinder $\bC := \bC_{4\rho} (x, \pi)$ does not intersect $\gammaup$ and is contained in $\bC_{4r} (q, \pi)$; 
\item[(ii)] $\bA^2 \rho^2 + \bar E = \bA^2 + \bE(T,\bC_{4\,\rho} (x, \pi)) < \bar \varepsilon$. 
\end{itemize}
Then, there is a map $f: B_\rho (x, \pi) \to \mathcal{A}_Q (\pi^\perp)$, or a map $f:B_\rho (x, \pi) \to \mathcal{A}_{Q-1} (\pi^\perp)$, with $\supp (f(z))\subset \Sigma$ for every  $z\in B_\rho(x, \pi)$,
and a closed set $\bar K\subset B_\rho (x, \pi) $ such that
\begin{equation}\label{e:main(i)}
\Lip (f) \leq C (\bar E + \bA^2\rho^2)^{\sigmaexp}
\end{equation}
\begin{align}
&\bG_f\res (\bar K\times \R^n)=T\res (\bar K\times\R^{n})\nonumber\\
\quad\mbox{and}\quad
&|B_\rho (x, \pi)\setminus \bar K| \leq
 C  \left(\bar E+ \bA^2\rho^2\right)^{1+\sigmaexp}\, \rho^m, \label{e:main(ii)}
\end{align}
\begin{align}
 & \left| \|T\| (\bC_{s\rho} (x)) - Q \,\omega_m\,(s\rho)^m -
{\textstyle{\frac{1}{2}}} \int_{B_{s\rho} (x, \pi)} |Df|^2\right|\\ 
\leq
& C   \left(\bar E+ \bA^2\rho^2\right)^{1+\sigmaexp}\, \rho^m \quad \forall\,0<s \leq 1\, \label{e:main(iii)}
\end{align}
and
\begin{equation}\label{e:L-infty_est}
{\rm osc}\, (f) \leq C \bh (T, \bC,  \pi) + C (\bar E^{\sfrac{1}{2}} + \bA\rho)\, \rho\, .
\end{equation}
\end{theorem}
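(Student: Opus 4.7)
My plan is to reduce Theorem \ref{t:old_approx} directly to the interior strong Lipschitz approximation of De Lellis--Spadaro, \cite[Theorem 2.4]{DS3}, by verifying two structural points before invoking it as a black box. Since $\bC := \bC_{4\rho}(x,\pi)$ is disjoint from $\gammaup$ by hypothesis (i), the restriction $S := T\res \bC$ satisfies $\partial S = 0$ in the interior of $\bC$ and is therefore an interior area-minimizing integral current in $\Sigma \cap \bC$, with cylindrical excess $\bar E = \bE(T,\bC) = \bE(S,\bC)$. The ambient smallness hypothesis (ii) keeps us well inside the regime of the interior theorem.

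The first step is to pin down the sheet number on $B_{4\rho}(x,\pi)$. Since $\bC\cap \gammaup = \emptyset$ and $\bC = B_{4\rho}(x,\pi) + \pi^\perp$ is a full cylinder in the $\pi^\perp$-direction, the projection $\bp_\pi(\gammaup)\cap B_{4\rho}(x,\pi)$ is also empty; equivalently $B_{4\rho}(x,\pi)$ is disjoint from the curve $\gammado := \bp_\pi(\gammaup \cap \bC_{4r}(q,\pi))$. By Assumption \ref{ass:decay+cone} and Theorem \ref{thm:decay_and_uniq}(d) the plane $\pi$ is close to the minimal plane $\pi_0$, so $\gammado$ is a $C^{3,a_0}$-graph close to a hyperplane in $\pi$ and divides $B_{4r}(q,\pi)$ into two connected open sets. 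Consequently $B_{4\rho}(x,\pi)$ lies entirely in one of these two components. The Constancy Theorem applied to the projection $\bp_{\pi\,\sharp}S$, which has zero boundary in $B_{4\rho}(x,\pi)$, then yields $\bp_{\pi\,\sharp}S = N\,\a{B_{4\rho}(x,\pi)}$ with $N\in \{Q,Q-1\}$, the value of $N$ being determined by which side of $\gammado$ contains $B_{4\rho}(x,\pi)$ (inherited from the boundary-side decomposition of $\bp_{\pi\,\sharp}T$ on $\bC_{4r}(q,\pi)$, which is in turn forced by the boundary identity and the Constancy Theorem as used in Chapter \ref{chap:Lip1}).

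The second step is to apply \cite[Theorem 2.4]{DS3} to $S$ in $\bC$ with $N$ sheets. Its output is precisely an $N$-valued map $f\colon B_\rho(x,\pi)\to \mathcal{A}_N(\pi^\perp)$ with $\supp(f(z))\subset \Sigma$, and a closed set $\bar K \subset B_\rho(x,\pi)$ satisfying exactly \eqref{e:main(i)}--\eqref{e:main(iii)}: the Lipschitz bound with gain exponent $\sigmaexp>0$, the set-theoretic coincidence $\bG_f\res(\bar K\times\R^n) = T\res(\bar K\times\R^n)$, the measure bound $|B_\rho\setminus \bar K|\le C(\bar E + \bA^2\rho^2)^{1+\sigmaexp}\rho^m$, and the mass-versus-Dirichlet comparison with the same superlinear error. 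The oscillation bound \eqref{e:L-infty_est} is not part of \cite[Theorem 2.4]{DS3} per se but follows immediately: combine $\Lip(f)\le C(\bar E+\bA^2\rho^2)^{\sigmaexp}$ on $\bar K$ with the height control on $\supp(T)\cap \bC$ (a direct consequence of Theorem \ref{t:height_bound} applied in $\bC$, which applies because $\bC$ avoids $\gammaup$ and reduces to the interior setting) and with the fact that $|B_\rho\setminus \bar K|$ is small.

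The main obstacle, which is \emph{not} an obstacle here because we import the result, is the derivation of the super-linear error $(\bar E + \bA^2\rho^2)^{1+\sigmaexp}$ from the linear-in-$E$ error of the weak approximation (the analogue of Theorem \ref{t:Lipschitz_1}). This gain is the heart of \cite{DS3}: it requires harmonic comparison together with a higher-integrability/reverse-Hölder estimate for the Dirichlet energy density of the weak Lipschitz approximation, combined with a Whitney-dyadic iteration. No new phenomenon appears at interior cylinders disjoint from $\gammaup$, so the argument of \cite{DS3} applies verbatim.
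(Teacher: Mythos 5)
Your proposal is correct and matches the paper's treatment: Theorem \ref{t:old_approx} is simply \cite[Theorem 2.4]{DS3} recalled ``in our context,'' the only points to check being exactly the ones you verify (the cylinder avoids $\gammaup$, so $\partial(T\res\bC)=0$ and the Constancy Theorem pins the sheet number to $Q$ or $Q-1$ according to the side of $\gammado$). The only nitpick is that the oscillation estimate \eqref{e:L-infty_est} is already part of the cited interior theorem, so your separate derivation of it, while fine, is not needed.
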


From now on, in order to simplify our notation, we assume that $\pi=\pi_0 = \mathbb R^m \times \{0\}$ and use the shorthand notation $B_t (x)$ for $B_t (x, \pi)$.

In addition to the conclusions of the theorem above, we observe that they imply the following further estimates 
\begin{align}
\be_T(B_\rho (x)\setminus \bar K) \le & C (\bar E+\rho^2\bA^2)^{1+\sigmaexp}\rho^m\label{eq:additional_estimates_strongapprox1}\\
\int_{B_\rho(x)\setminus \bar K} \abs{Df}^2 \le & C (\bar E+\rho^2\bA^2)^{1+\sigmaexp}\rho^m\label{eq:additional_estimates_approx3} \\
\label{eq:additional_estimates_strongapprox2}
\left| \be_T(F) - \frac12 \int_F \abs{Df}^2\right| \le & C \bar (\bar E+\rho^2\bA^2)^{1+\sigmaexp}\rho^m \;\; \forall F \subset B_\rho (x)\; \text{ measurable.}	
\end{align}
This can be seen as follows. First of all \eqref{e:main(i)} and \eqref{e:main(ii)} give
\[ \int_{F\setminus \bar K} \abs{Df}^2 \le C (\bar E+\bA^2\rho^2 )^{2\sigmaexp} \abs{B_\rho (x)\setminus \bar K} \le C (\bar E+\bA^2\rho^2 )^{1+\sigmaexp}\rho^m \]
for every $F \subset B_\rho (x)$ measurable.
In particular we achieve \eqref{eq:additional_estimates_approx3} setting $F= B_\rho (x)$. 

Next recall that $\|T\| (B_\rho (x)) - Q \omega_m \rho^m = \be_T (B_\rho (x))$ and hence \eqref{e:main(iii)} can be reformulated, for $ s=1$, as
\[ \left|\be_T (B_\rho (x)) - {\textstyle{\frac12}}\int_{B_\rho (x)} \abs{Df}^2\right| \le  C (\bar E+\bA^2\rho^2)^{1+\sigmaexp}\rho^m\, . \]
In particular
\[
\frac{1}{2} \int_{B_\rho (x)} \abs{Df}^2 \leq\left(\bar E +  C(\bar E+\bA^2\rho^2)^{1+\sigmaexp}\right)\rho^m \leq C\left(\bar E +\bA^2\rho^2\right)\rho^m\, .
\]
Secondly, the Taylor expansion of the area functional and \eqref{e:main(i)} give
\[ 
\left|\be_{\bG_f}(F)-{\textstyle{\frac12}} \int_F \abs{Df}^2\right| \le C\Lip(f)^2 \int_{F}\abs{Df}^2 \le C (\bar E+\bA^2\rho^2)^{1+2\sigmaexp}\rho^m
\]
for every $F \subset B_\rho (x)$ measurable.

Combining the inequalities just obtained we achieve 
\begin{align*}
& \be_T(B_\rho (x)\setminus \bar K) = \be_T(B_\rho (x)) - \be_{\bG_f}(B_\rho (x)\cap \bar K) \\
\le &\left|\be_T(B_\rho (x))-{\frac12} \int_{B_\rho (x)} \abs{Df}^2 \right|\\
&\quad + \left|{\frac12} \int_{B_\rho (x)\cap \bar K} |Df|^2- \be_{\bG_f}(B_\rho (x)\cap \bar K)\right| + \int_{B_\rho (x)\setminus \bar K} \abs{Df}^2\\
\le &C(\bar E+\bA^2\rho^2 )^{1+\sigmaexp}\rho^m\, ,
\end{align*}
which implies \eqref{eq:additional_estimates_strongapprox1}.

Finally, for every $F\subset B_\rho (x)$ measurable we have
\begin{align*}
\left| \be_T(F) - \frac12 \int_F \abs{Df}^2\right| &\le \left|\be_{\bG_{\color{blue}f}}(F\cap K) - \frac12 \int_{F\cap K} \abs{Df}^2\right|\\
&\qquad + \be_T(F\setminus K) + \frac12 \int_{F\setminus K} \abs{Df}^2\\
&\le C (\bar E+\bA^2\rho^2 )^{1+\sigmaexp}\rho^m\, .
\end{align*}

\section{Proof of Theorem \ref{THM:SECOND_LIP}}

Without loss of generality we assume that $T_q \gammaup = \R^{m-1}\times \{0\}$, $\pi = \R^m \times \{0\}$ and $T_q \Sigma = \R^{m+\bar n}\times \{0\}$. We then use $\bC_s (q)$ in place of $\bC_s (q, \pi)$,  and $B_s (q)$ in place of
$B_s (q, \pi)$. 
Note that
\begin{align}
\partial T \res \bC_{4r} (q) &= \a{\gammaup\cap \bC_{4r} (q)}\nonumber\\
\qquad\mbox{and}\qquad  \bp_\sharp (\partial T \res \bC_{4r} (q)) &=\a{\gammado\cap B_{4r}(\bp(q))}\, .\label{eq:assumptions1}
\end{align}
As in the previous sections, denote by $\Omega^+$ and $\Omega^-$ the two connected components of $B_{4r} (q)\setminus \gammado$, chosen so that
\[
\bp_\sharp T\res \bC_{4r} (q) = Q \a{\Omega^+} + (Q-1) \a{\Omega^-}\, .
\] 
Let $L_0$ be the cube $q+ [-r,r]^m$ and, for any natural number $k$, let $\mathcal{Q}_k$ be the collection of cubes $L$ of the form
\[
L = q + r 2^{-k} x + [-2^{-k}r, 2^{-k} r]^m\, 
\] 
for $x\in \mathbb Z^m$, which are contained in $L_0$ and intersect $B_r (q)$. 
We fix a number $N\in \N$ such that the $16\sqrt{m}2^{-N}r$-neighborhood of $\cup_{L\in \mathcal{Q}_N} L$ is
contained in $\bC_{4r}(q)$ and construct a Whitney decomposition of 
\[
\tilde\Omega = \bigcup_{L\in \mathcal{Q}_N} L \setminus \gammado
\]
in the following way. We set $\mathcal{R}_N = \mathcal{Q}_N$. If $L\in \mathcal{R}_N$ has ${\rm diam}\, (L) \leq \frac{1}{16} {\rm sep}\, (L, \gammado)$, then
we assign $L$ to the class $\mathcal{W}_N$. Here and in what follows we set
\[
{\rm sep}\, (L, \gamma) = \min \{|x-y|:x\in \gamma, y\in L \}\, .
\]
Otherwise we subdivide it in $2^m$ subcubes of side $2^{-N} r$ and assign them to $\mathcal{R}_{N+1}$. We then inductively define $\mathcal{W}_k$ and $\mathcal{R}_{k+1}$ for every $k\geq N$. The Whitney decomposition $\mathcal{W} = \cup_{k\geq N} \mathcal{W}_k$ is then a collection of closed dyadic cubes whose interiors are pairwise disjoint, which cover $\Omega^+ \cup \Omega^-$ and such that 
\begin{equation}\label{eq:Whitney_second_lip}
\min\left\{\frac{1}{32} {\rm sep}\, (L, \gammado), \sqrt{m} 2^{-N+1}\right\} \leq\; \diam(L) \le \frac{1}{16}{\rm sep}\, (L, \gammado).
\end{equation}
We denote with $c_L$ the center of the cube $L\in \mathcal{W}$ and set $r_L:= 3\, \diam(L)$ so that 
$L \subset B_{\frac14 r_L}(c_L)$.

\begin{figure}[htbp]
\begin{center}\label{fig:whitney}
\input{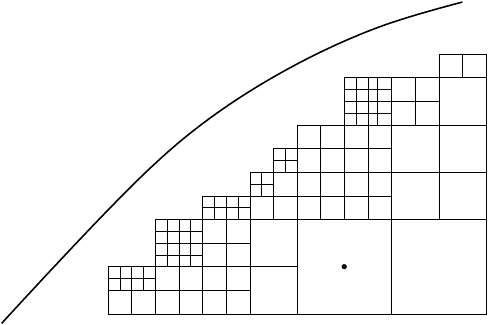_t}
\end{center}
\caption{The Whitney decomposition $\mathcal{W}$ in $\Omega^-$.}
\end{figure}

We claim that for each cube $L$ the current $T$ restricted to the cylinder $\bC_{4r_L}(c_L)$ satisfies the assumptions of Theorem \ref{t:old_approx}.

 First note that, by the  construction of the Whitney decomposition, we have $\bC_{4r_L}(c_L) \cap \gammaup = \emptyset$ and $\bB_{6r_L}(c_L)\subset \bB_{4r}(q)$ and thus $\partial T \res \bC_{4r_L} (c_L) =0$. Moreover, either $B_{4r_L} (c_L) \subset \Omega^+$ or $B_{4r_L} (c_L) \subset \Omega^-$ and thus
$\bp_\sharp T \res \bC_{4r_L} (c_L)$ equals either $Q \a{B_{4r_L} (c_L)}$ or $(Q-1) \a{B_{4r_L} (c_L)}$.

To check the second assumption of Theorem \ref{t:old_approx} we distinguish the two cases $r_L = 2^{-N}r$ and $r_L< 2^{-N}r$.
If $r_L = 2^{-N}r$ we simply have
\[ 
\bE(T,\bC_{4r_L}(c_L)) \le 2^{Nm} \bE(T,\bC_{4r}(q)) = 2^{Nm}E. 
\]
For each $L \in \mathcal{W}$ with $r_L < 2^{-N}r$ let $x_L$ be the point of $\gammado$ closest to $c_L$ and let $q_L\in \gammaup$ be the point $(x_L, \psi(x_L))$. From the first inequality of \eqref{eq:Whitney_second_lip} we deduce that $\bC_{4r_L}( c_L ) \subset \bC_{13r_L}(q_L)$. In particular notice that by the cone condition \eqref{e:height_100}, $\supp (T) \cap \bC_{14r_L} (q_L)\subset \bB_{16r_L} (q_L)$ and by our choice of $N$ we have $\bC_{14r_L} (q_L)\subset \bB_{16r_L} (q_L) \subset \bC_{4r} (q)$. 

Next, observe that
\begin{align*}
\bE(T, \bC_{4r_L}(c_L))& \le 4^m \bE(T, \bB_{16r_L} (q_L), \pi)\\
& \leq C \bE (T, \bB_{16 r_L}, \pi (q_L)) + C |\pi- \pi (q_L)|^2
\end{align*}
According to Theorem \eqref{thm:decay_and_uniq} we then conclude
\begin{equation}\label{e:bruno}
\bE(T, \bC_{4r_L}(c_L)) \leq C (E + \bA^2 r^2)\, .
\end{equation}

So, provided $\varepsilon_0$ is chosen sufficiently small, we can apply Theorem \ref{t:old_approx} in every cylinder $\bC_{4r_L}(c_L)$ and obtain:
\begin{itemize}
\item[-] a $Q$-valued (or $(Q-1)$-valued) map $f_L$ on each ball $B_{r_L}(c_L)$ with $\supp(f_L(x)) \in \Sigma$ for every $x\in B_{r_L}(c_L)$
\item[-] a closed sets $K_L \subset B_{r_L}(c_L)$
\end{itemize}
 such that 
 \begin{align}
\Lip(f_L) \le & C (E + \bA^2 r_L^2)^{\sigmaexp} \label{eq:strong-approx-in_L_1}\\
\bG_{f_L}\res(K_L \times \R^n)= & T\res(K_L \times \R^n)\\
\abs{ B_{r_L}(c_L)\setminus K_L} \le & C (E + \bA^2 r_L^2)^{1+\sigmaexp} r_L^m \label{eq:strong-approx-in_L_2}\\
\be_T(B_{r_L}(c_L)\setminus K_L) \le & C (E + \bA^2 r_L^2)^{1+\sigmaexp}r_L^m\\
\int_{B_{r_L}(c_L)\setminus K_L} \abs{Df_L}^2 \le & C (E+ \bA^2 r_L^2)^{1+\sigmaexp}r_L^m \label{eq:strong-approx-in_L_3}\\
\left| \be_T(F) - \frac12 \int_F \abs{Df_L}^2\right| \le & C (E + \bA^2r_L^2 )^{1+\sigmaexp} r_L^m\nonumber\\
&\qquad\qquad\qquad \;\; \forall F \subset B_{r_L}(c_L) \text{ measurable}	 \label{eq:strong-approx-in_L_4}
\end{align}
whereupon \eqref{eq:strong-approx-in_L_3}, \eqref{eq:strong-approx-in_L_4} follow as explained in \eqref{eq:additional_estimates_strongapprox1}, \eqref{eq:additional_estimates_strongapprox2}.

Next, for each $L$ we let $\mathcal{N}^+ (L)$ be the neighboring cubes in $\mathcal{W}$ with larger or equal radius, i.e. 
\[
\mathcal{N}^+(L)=\{ H \in \mathcal{W} \colon H \cap L \neq \emptyset, r_H \ge r_L \}.
\]
Note that by the construction of the Whitney decomposition we ensured that if $H\in \mathcal{N}^+ (L)$, then $L \subset B_{r_{H}} (c_{H})$.  
We define 
\begin{align*}
K'_L &= K_L \cap \bigcap_{H\in \mathcal{N}^+ (L)} K_{H}\\
K^+ &= \bigcup_{L \in \mathcal{W}, L\subset \Omega^+} K'_L\cap L\\
K^- &=\bigcup_{L \in \mathcal{W}, L\subset \Omega^-} K'_L\cap L
\end{align*}
and further 
\[ 
\tilde{u}^+(x) := f_L(x) \text{ if } x \in L \cap K^+ \text{ and } \tilde{u}^-(x) := f_L(x) \text{ if } x\in L\cap K^-. 
\]
Since the cardinality of $\mathcal{N}^+ (L)$ is bounded by a geometric constant $C(m)$,  we conclude from 
from \eqref{eq:strong-approx-in_L_2} that
\begin{equation}\label{eq:strong_Jonas_additional}
|L\setminus K'_L|\leq C (E + \bA^2 r^2)^{1+\sigmaexp}r_L^m.
\end{equation}
In particular, if $\varepsilon_0$ is sufficiently small, we conclude that $L\cap K'_L \neq \emptyset$. 
We next claim that
\begin{align}
\Lip (\tilde{u}^\pm) \leq & C (E + \bA^2 r^2)^{\sigmaexp}\label{e:muta1}\\
\bG_{\tilde{u}^\pm}\res (K^\pm\times \R^n) = & T \res (K^\pm\times \R^n)\label{e:muta2}\\
\be_T (L\setminus K'_L) \leq &  C (E + \bA^2 r^2)^{1+\sigmaexp}r_L^m\label{e:mutanda4.5}\\
\int_{L\setminus K'_L} \abs{D\tilde{u}^\pm}^2 \le & C (E+ \bA^2 r^2)^{1+\sigmaexp}r_L^m\label{e:mutanda5}\, .
\end{align}
Inequalities \eqref{e:muta2}, \eqref{e:mutanda4.5} and \eqref{e:mutanda5} follows easily by the fact that $L\setminus K'_L \subset B_{r_L} (c_L) \setminus K_L$ and $\tilde{u}^\pm$ coincides with $f_L$ on $K'_L$. To show the  the Lipschitz \eqref{e:muta1} we let  $H, L \in \mathcal{W}$ be any two cubes and we  assume that $\diam(H) \ge \diam(L)$ and $x \in H, y \in L$.

If $H \cap L \neq \emptyset$ (and in particular if $H=L$) by construction $\tilde{u}^\pm=f_H$ on $K^\pm\cap B_{r_H}(c_H) \subset K_H$, hence the inequality $\G (\tilde{u}^\pm (x), \tilde{u}^\pm (y))\leq C  (E + \bA^2 r^2)^{\sigmaexp} |x-y|$ follows from the Lipschitz bound for $f_H$.

If $H \cap L = \emptyset$ we have
\[ \frac{1}{2\sqrt{m}} r_H \le \abs{x-y}. \]
In case $r_H=2^{-N}r$ then the Lipschitz estimate follows from the hight bound \eqref{e:height_100}: $\G (\tilde{u}^+ (x), \tilde{u}^+ (x'))\leq 2 Cr(E+\bA r)^{\sfrac12} \le C (E+\bA r)^{\sfrac12} \abs{x-x'}.$

If $r_H < 2^{-N}r$ consider for the points $x,y\in \gammado$  which are the closest to  $x',y'$ respectively
We claim  that
\begin{align}
\G (\tilde{u}^\pm (x), Q \a{\psi (x')}) \leq & C |x-x'| (E + \bA r)^{\sfrac{1}{2}}\\
\G (\tilde{u}^\pm (y), Q \a{\psi (y')}) \leq & C |y-y'| (E + \bA r)^{\sfrac{1}{2}}\, .
\end{align}
Indeed, both inequalities are due to the fact that $\dist\, (x, \gammado)$ is comparable to $r_L$ and that, in the cylinder $\bC_{C 16r_L} (x')$, we have the height bound \eqref{e:height_100} (recall that the points $(x', \psi (x'))$ and $(x, \tilde{u}_i (x))$ are all in the support of the current $T$). Note also that, by the regularity of \(\Gamma\), 
\[
|\psi (x')-\psi (y')|\leq C (E + \bA r)^{\sfrac{1}{2}} |x'-y'|\, .
\]
In particular we can estimate
\begin{align*}
 & \G (\tilde{u}^\pm (x), \tilde{u}^\pm (y))\\ 
\leq & \G (\tilde{u}^\pm (x), Q \a{\psi (x')}) + Q^{\sfrac{1}{2}} |\psi (x') - \psi (y')|
+ \G (\tilde{u}^\pm (y), Q \a{\psi (y')}) \\
\leq & C (E + \bA r)^{\sfrac{1}{2}} (|x-x'| + |x'-y'| + |y'-y|)\\
\leq & C (E + \bA r)^{\sfrac{1}{2}}  (2|x-x'| + |x-y| +2 |y'-y|)\\ 
\leq &C (E + \bA^2 r^2)^{\sigmaexp} |x-y|
\end{align*}
where we have used that $\sigmaexp\leq \frac{1}{4}$ and that
\[
|x-x'|+|y'-y|=\dist(x,\gammado)+\dist(y,\gammado)\le C(r_L+r_H)\le Cr_H\le C|x-y|.
\] 
Note in particular that we have also  proved  that $\tilde{u}^+$ (resp. $\tilde{u}^-$) has a unique Lipschitz extension to $(K^+\cup \gammado)\cap B_r (q)$ (resp. $(K^-\cup \gammado) \cap B_r (q)$) which on $\gammado \cap B_r (q)$ coincides with $Q \a{\psi}$ (resp. $(Q-1) \a{\psi}$). 

We next wish to extend $\tilde{u}^\pm$ to the whole $\Omega^\pm$ keeping the Lipschitz estimate (up to a multiplicative geometric constant) and the property that $\supp (x, \tilde{u}^\pm (x))\subset \Sigma$. This can be easily done observing that $\Sigma\cap \bC_r (q)$ is the graph of a function 
$\Psi: T_0 \Sigma \cap \bB_r (q) \to T_0 \Sigma^\perp = \{0\} \times \R^{n-\bar n}$ with Lipschitz constant controlled by $C \bA r$. Therefore we can write 
\[
\tilde{u}^\pm (x) =\sum_i \a{v^\pm_i (x), \Psi (x, v^\pm_i (x))}
\]
for an appropriate Lipschitz $Q$-valued map $v^+ : K^+ \to \mathcal{A}_Q (\R^{\bar n})$ and an appropriate Lipschitz $(Q-1)$-valued map $v^-: K^- \to \mathcal{A}_{Q-1} (\R^{\bar n})$ with $\Lip (v^\pm) \leq C (E + \bA^2 r^2)^{\sigmaexp}$. Extending first $v^\pm$ to $\Omega^\pm$ and then composing with $\Psi$, we achieve the desired extension $u^\pm$ of $\tilde{u}^\pm$ to $\Omega^\pm$. Note moreover that, by the observation above, the pair $(u^+, u^-)$ collapses at the  interface $(\gammado\cap B_r (q), \psi)$. Recalling the height estimate \eqref{e:height_100}, we also have that ${\rm osc}\, (\tilde{u}^\pm) \leq C (E + \bA r)^{\sfrac{1}{2}} r$ and the Lipschitz extension can be constructed so to preserve the oscillation bound as well (up to a geometric factor, cf. \cite[Theorem 1.7]{DS1}).

Setting $K = K^+\cup K^-$, we have so far proved the conclusions \eqref{e:slip1}, \eqref{e:slip2}, \eqref{e:slip3} and \eqref{e:slip4}. For the remaining estimates, observe first that
\[
\sum_{L\in \mathcal{W}} r_L^m \leq C(m) r^m\, .
\]
Hence, \eqref{e:slip4.25}, \eqref{e:slip4.5} and \eqref{e:slip5} follow from summing, respectively, \eqref{eq:strong_Jonas_additional}, \eqref{e:mutanda4.5} and \eqref{e:mutanda5}. 

Finally, fix a measurable set $F\subset \Omega^+$ and observe that, for any cube $L$ in the Whitney decomposition of $\Omega^+$
\begin{align*}
& \left| \be_T (F\cap L) - \frac{1}{2} \int_{F\cap L} |Du^+|^2\right|\\
\leq\  & 
\left| \be_T (F\cap L\cap K^+) - \frac{1}{2} \int_{F\cap L\cap K^+} |Du^+|^2\right|\\
& + \be_T (L\setminus K^+) + \Lip (u^+)^2 |L\setminus K^+|\\
\leq\  & \left| \be_T(F\cap L\cap K^+) - \frac12 \int_{F \cap L\cap K^+} \abs{Df_L}^2\right| + C (E + \bA^2r^2 )^{1+\sigmaexp} r_L^m\\
 \leq \ &
 C (E + \bA^2r^2 )^{1+\sigmaexp} r_L^m\, .
\end{align*}
Summing over $L$ we obtain \eqref{e:slip6}. The same arguments work for $u^-$ and conclude the proof.


\chapter{Center manifolds}\label{chap:center_manifolds}

As already pointed out in the previous chapter, our task is to prove  Theorem \ref{thm:step4}, which for the reader's convenience we recall here:

\begin{theorem}\label{thm:step4_bis}
If $T, \Sigma$ and $\gammaup$ are as in Assumption \ref{ass:decay+cone}, then $0$ is a regular boundary point of $T$. 
\end{theorem}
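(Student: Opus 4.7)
I argue by contradiction: suppose $0 \in \bsing(T)$ under Assumption~\ref{ass:decay+cone}. The plan is to produce, in a suitable rescaled limit, a nontrivial $\qhalf$ $\D$-minimizer that collapses at its interface and has vanishing average on both sides, directly contradicting Corollary~\ref{cor:av=0}. The first step, and the subject of the present chapter, is the construction of two ``sided'' center manifolds $\mathcal{M}^+$ and $\mathcal{M}^-$: these are $C^{3,\kappa}$ submanifolds of $\Sigma$ with common boundary $\gammaup$ that track the $\etaa$-average of the $Q$ (respectively $Q-1$) sheets of $T$ on the two sides of $\gammaup$. Following the Whitney/stopping scheme of \cite{DS4}, on each dyadic cube of a decomposition refining toward $\gammaup$ (of the Whitney type used in Chapter~\ref{chap:Lip2}) we apply the second Lipschitz approximation of Theorem~\ref{thm:second_lip} on an optimal tilted plane, smooth the resulting $\etaa$-average, and glue via a partition of unity. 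The crucial new feature at the boundary is that along $\gammaup$ the Dirichlet data on each side degenerates to $\varphi$ (the graph function of $\gammaup$), so the two manifolds match $C^{1,1}$: they share tangent planes along $\gammaup$, producing a single $C^{1,1}$ manifold $\mathcal{M} := \mathcal{M}^+ \cup \mathcal{M}^-$ passing through $\gammaup$, with $C^{3,\kappa}$ estimates up to $\gammaup$ from either side.

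\textbf{Normal approximations and frequency monotonicity.} Paired with the center manifolds are $Q$- and $(Q-1)$-valued maps $N^+$ and $N^-$ defined on the normal bundles of $\mathcal{M}^+$ and $\mathcal{M}^-$ respectively, approximating $T$ on each side with an error superlinear in the excess and collapsing at the interface in the sense that $N^+|_\gammaup = Q\a{0}$, $N^-|_\gammaup = (Q-1)\a{0}$ (Chapter~\ref{chap:frequency} after the construction of this chapter). On this pair we then run the frequency function machinery of Chapter~\ref{chap:Qhalf}: using inner and outer variations of $T$ along vector fields that are tangent to $\gammaup$ and to $\Sigma$, built from the distorted distance $d$ of Lemma~\ref{l:good_vector_field} and lifted to $\mathcal{M}^\pm$, one obtains \emph{inequalities} (not equalities, as in the linear case) analogous to Lemmas~\ref{l:outer} and \ref{l:inner}. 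Careful Taylor expansion of the area functional in a tubular neighborhood of $\mathcal{M}$, in the spirit of \cite{DS5}, lets the error terms be absorbed into the good weight produced by $\phi(d/r)$, yielding an ``almost monotonicity'' of the form $-\frac{d}{dr}\log I_{\phi,d}(N^\pm,r) \le O(1)$, so that $\lim_{r\downarrow 0} I_{\phi,d}(N^\pm,r) =: I_0 \in (0,\infty)$ unless $N^\pm$ vanishes identically in a neighborhood of $0$.

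\textbf{Blow-up and contradiction.} Since $0$ is assumed singular, $N^\pm \not\equiv 0$ near $0$. Define the rescalings
\[
\hat N^\pm_r(x) := \frac{N^\pm(rx)}{\mathbf{h}(r)}, \qquad \mathbf{h}(r)^2 := r^{-m+1}\bigl(H(N^+,r)+H(N^-,r)\bigr).
\]
The frequency bound and the excess decay of Theorem~\ref{thm:decay_and_uniq} give a locally uniform $W^{1,2}$ bound on $\hat N^\pm_r$, and the tilt estimates on $\mathcal{M}^\pm$ ensure that in the limit the normal bundles flatten onto $T_0\Sigma$ with $T_0 \gammaup$ as straight interface. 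Passing to a subsequence $r_k \downarrow 0$ and invoking the compactness Theorem~\ref{thm:cpt_dm}, we extract a limit $(g^+,g^-)$ that is a $\qhalf$-valued map with interface $(T_0\gammaup,0)$, is $\D$-minimizing, collapses at the interface, is $I_0$-homogeneous, and satisfies $D(g^+,1)+D(g^-,1)=1$ (in particular is nontrivial). The fundamental feature of the center manifolds, namely that $\mathcal{M}^\pm$ captures the $\etaa$-average of $T$ up to a superlinear error in the excess, forces $\etaa \circ \hat N^\pm_r \to 0$ strongly, hence $\etaa \circ g^+ = \etaa \circ g^- = 0$. But Corollary~\ref{cor:av=0} then forces $g^+ = Q\a{0}$ and $g^- = (Q-1)\a{0}$, contradicting $D(g^+,1)+D(g^-,1)=1$. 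This contradiction proves Theorem~\ref{thm:step4_bis}.

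\textbf{Where the work lies.} The genuinely hard step is the construction of the center manifolds and the frequency estimate, carried out in the present chapter and Chapter~\ref{chap:frequency}. In the interior regularity theory \cite{DS4,DS5} the center manifold is one-sided and the frequency monotonicity rests on clean first-variation identities. Here we must instead (i) construct $\mathcal{M}^+$ and $\mathcal{M}^-$ simultaneously, with the refinement of Whitney cubes biased toward $\gammaup$ and with boundary data $\psi$ encoded correctly so that $\mathcal{M}^\pm$ glue to a $C^{1,1}$ object through $\gammaup$; (ii) exploit the weighted frequency $I_{\phi,d}$ with the tangential distance of Lemma~\ref{l:good_vector_field}, which is essential to convert the \emph{inequalities} coming from the variational identities (polluted by $A_\Sigma$, $A_\gammaup$, and the center-manifold error) into an actual monotonicity; and (iii) push the approximation estimates for $N^\pm$ to superlinear accuracy up to $\gammaup$, which is what allows all error terms in both the frequency argument and the blow-up to vanish in the limit. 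Once these ingredients are in place, the final blow-up and the appeal to Theorem~\ref{thm:collasso} are short.
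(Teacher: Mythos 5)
Your overall route is exactly the paper's: the two sided center manifolds $\mathcal{M}^\pm$ with normal approximations $N^\pm$, the weighted frequency $I_{\phi,d}$ built from the tangential distance, and a final blow-up producing a nontrivial $\qhalf$ ${\rm Dir}$-minimizer collapsing at the interface with $\etaa\circ g^\pm=0$, which contradicts Theorem \ref{thm:collasso} (equivalently Corollary \ref{cor:av=0}). Your framing as a direct contradiction (``$0$ singular $\Rightarrow N^\pm\not\equiv 0$ near $0$'') is just the contrapositive of the paper's dichotomy in Theorem \ref{thm:ff_estimate_current}, where alternative (a) — $T\res\mathcal{C}^\pm=Q\a{\mathcal{M}^+}$, $(Q-1)\a{\mathcal{M}^-}$ near $0$ — is what yields regularity; and normalizing by $H$ instead of by the Dirichlet energy $\fancyD(r)$ is a cosmetic variant. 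The claim that the limit is $I_0$-homogeneous is neither needed for the contradiction nor established by your argument, but nothing hinges on it.

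There is, however, one genuine gap: you invoke the compactness Theorem \ref{thm:cpt_dm} to conclude that the blow-up limit $(g^+,g^-)$ is ${\rm Dir}$-minimizing. That theorem assumes the approximating sequence consists of $\qhalf$ ${\rm Dir}$-minimizers, whereas the rescaled maps $\hat N^\pm_r$ (or $\fancyN^\pm_r$) are not ${\rm Dir}$-minimizers — they are Lipschitz approximations, on the curved manifolds $\mathcal{M}^\pm$, of an area-minimizing \emph{current}, and transferring the minimality of $T$ to the limit of the rescalings is precisely the hard part of the blow-up chapter. In the paper this is done by taking an arbitrary Dirichlet competitor for the limit, making it Lipschitz (Lemma \ref{l:lip_app}), patching it to $\fancyN^\pm_{r_k}$ on an annulus via the interpolation Lemma \ref{l:interpolation}, pushing the resulting multigraph back onto $\Sigma$ through the center manifold, correcting the boundary mismatch between $T$ and $\mathbf{T}_F$ by an isoperimetric filling of mass $o(\fancyD(r_k))$, and comparing masses through the Taylor expansion with superlinear error control; this single comparison argument delivers \emph{both} the ${\rm Dir}$-minimality of $(g^+,g^-)$ and the convergence in energy. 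The latter is also what secures nontriviality of the limit (together with the doubling property of $\fancyD$ from Lemma \ref{l:asymptotic}, a consequence of the frequency bounds): with your $H$-normalization you would analogously have to exclude concentration or vanishing of $H$ along the sequence, which does not follow from the uniform $W^{1,2}$ bound and weak convergence alone. Without this comparison step your final appeal to Corollary \ref{cor:av=0} is applied to a map that you have not shown to be minimizing, and the contradiction does not yet close.
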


We thus work from now on under the assumption that $0$, the origin of our system of coordinates, is a collapsed point and that
\begin{align*}
T_0\gammaup & =\R^{m-1}\times\{0\}\\
T_0\Sigma & =\R^{m+\overline n}\times\{0\}\ \text{and}\\
\R^n & =\R^{m+\overline n+l}\,.
\end{align*}
Therefore, the tangent cone of $T$ at $p=0$ is $Q\a{\pi_0^+}+(Q-1)\a{\pi_0^-}$, where 
\[
\pi_0^\pm = \{x\in \R^n :\pm x_m>0, x_{m+1} = \ldots = x_{n+m}=0\}\, .
\]

As in the previous chapters, we denote by $\gammado$ the projection on $\pi_0$ of $\gammaup$ and, given any sufficiently small open set $\Omega\subset \pi_0$ which is contractible and contains $0$,
we denote by $\Omega^\pm$ those portions of $\Omega$ lying on the right and left of $\gammado$. 
We are going to build two separate  $m$-dimensional surfaces $\mathcal{M}^\pm$\index{aalm\mathcal{M}^\pm@$\mathcal{M}^\pm$} of class $C^3$ which will be called (respectively)  \emph{left and right center manifolds}\index{Left/right center manifold}. Both surfaces lie in the manifold $\Sigma$. $\mathcal{M}^+$ will be a graph over $B_{3/2}^+ (0, \pi_0)$ (which from now on we denote by $B^+_{3/2}$) of some function ${\bm \varphi}^+$\index{aagv{\bm\varphi}^\pm@${\bm\varphi}^+$ (resp. ${\bm\varphi}^-$)} and $\mathcal{M}^-$ a graph over $B_{3/2}^- (0, \pi_0)$ of some function ${\bm \varphi}^-$ . Both center manifolds will have $\gammaup\cap \bC_{3/2} (0, \pi_0)$ as a boundary, when considered as surfaces in the cylinder $\bC_{3/2} (0, \pi_0)$ and will be $C^3$ (in fact $C^{3, \kappa}$ for a suitable positive $\kappa$) {\em up to the boundary}. In addition, at each point $p\in \gammaup \cap \bC_{3/2} (0, \pi_0)$ the tangent space to both manifolds
will be the same and will coincide with the plane $\pi (q)$ of Theorem \ref{thm:decay_and_uniq}. In particular $\mathcal{M} = \mathcal{M}^+\cup \mathcal{M}^-$ will be
a $C^{1,1}$ submanifold of $\Sigma \cap \bC_{3/2} (0, \pi_0)$ {\em without boundary}. 

Finally we remark that at this stage we do not have  any information about higher regularity of $\mathcal{M}$: in particular we do not yet know that the second derivatives of the two functions ${\bm \varphi}^\pm$ coincide at $\gammado$. At the very end of the proof of Theorem \ref{thm:step4_bis}, which will be accomplished in the final chapter, it will however turn out that $\mathcal{M}$ is indeed $C^3$ and that $T\res \bC_{3/2} (0, \pi_0) = Q \a{\mathcal{M}^+} + (Q-1) \a{\mathcal{M}^-}$. 

\section{Construction of the center manifolds}

\subsection{Boundary dyadic cubes and non-boundary dyadic cubes}
We focus on the construction of $\mathcal{M}^+$ (the one of $\mathcal{M}^-$ follows a ``specular'' algorithm). We start by describing a procedure which reaches a suitable Whitney-type decomposition of $B_{3/2}^+$ with cubes whose sides are parallel to the coordinate axes and have sidelength $2 \ell(L)$. The center of any such cube $L$ considered in the procedure will be denoted by $c(L)$ and its sidelength will be denoted by $2 \ell (L)$. 
We start by introducing a family of dyadic cubes $L\subset \pi_0$ in the following way: for $j\geq N_0$ (an integer whose choice will be specified below),
we introduce the families 
\[
\sC_j:=\{L:\,L\text{ is a dyadic cube of side }\ell(L)=2^{-j}\text{ and } B^+_{3/2}\cap L\neq\emptyset\}\,,
\]
For each $L$ define a radius
\[
r_L:=M_0\sqrt m\ell(L)\,,
\]
with $M_0\geq 1$ to be chosen later.  
We then subdivide $\sC := \cup_j \mathscr{C}_j$ into, respectively, \emph{boundary cubes}\index{Boundary cube}\index{aalc\sC^\flat@$\sC^\flat$} and \emph{non-boundary cubes}\footnote{Observe that some boundary cubes can be completely contained in $B_{3/2}^+$. For this reason we prefer to use the term ``non-boundary'' rather than ``interior'' for the cubes in $\sC^\natural$.} \index{Non-boundary cube}\index{aalc\sC^\natural@$\sC^\natural$}
\begin{align*}
\sC^\flat & :=\{L\in \sC:\,\dist(c(L),\gammado)<64 r_L\}\\
\sC^\natural & :=\{L\in \sC:\,\dist(c(L),\gammado)\ge 64 r_L\}\,.
\end{align*}
Likewise we also use the notation $\mathscr{C}^\flat_j$ and $\mathscr{C}^\natural_j$ for $\mathscr{C}^\flat \cap \mathscr{C}_j$
and $\mathscr{C}^\natural_j = \mathscr{C}^\natural \cap \mathscr{C}_j$. Indeed in what follows, without mentioning it any further, we will often use the same convention for
several other subfamilies of $\mathscr{C}$.

\begin{definition}
If $H, L \in \mathscr{C}$ we say that:
\begin{itemize}
\item $H$ is a \emph{descendant}\index{Descendant (in a Whitney decomposition)} of $L$ (and $L$ is an \emph{ancestor}\index{Ancestor (in a Whitney decomposition)} of $H$) if $H\subset L$;
\item $H$ is a \emph{son}\index{Son (in a Whitney decomposition)} of $L$ (and $L$ is the \emph{father}\index{Father (in a Whitney decomposition)} of $H$) if $H\subset L$ and $\ell (H) = \frac{1}{2} \ell (L)$;
\item $H$ and $L$ are \emph{neighbors}\index{Neighbors (in a Whitney decomposition)} if $\frac{1}{2} \ell (L) \leq \ell (H) \leq \ell (L)$ and $H\cap L \neq \emptyset$. 
\end{itemize}
\end{definition}

Note, in particular, the following elementary consequence of the subdivision of $\mathscr{C}$:

\begin{lemma}\label{l:son_father}
Let $H$ be a boundary cube. Then any ancestor $L$ and any neighbor $L$ with $\ell (L) = 2 \ell (H)$ is necessarily a boundary cube. In particular: the descendant of a non-boundary cube is a non-boundary cube.
\end{lemma}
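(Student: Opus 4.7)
The plan is a direct triangle inequality argument using the definition of boundary cubes. Recall that $H \in \sC^\flat$ means $\dist(c(H),\gammado) < 64 r_H = 64 M_0\sqrt{m}\,\ell(H)$, and to show $L \in \sC^\flat$ we must prove $\dist(c(L),\gammado) < 64 M_0\sqrt{m}\,\ell(L)$. In both cases we bound $\dist(c(L),\gammado) \leq |c(L)-c(H)| + \dist(c(H),\gammado)$ and show the right-hand side is dominated by $64 M_0\sqrt{m}\,\ell(L)$ thanks to $M_0 \geq 1$.

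First I would handle the ancestor case. If $H \subseteq L$, then $c(H) \in L$, and since $L$ is a cube of sidelength $2\ell(L)$ centered at $c(L)$, we have $|c(L)-c(H)| \leq \sqrt{m}\,\ell(L)$. Combining with $\ell(H) \leq \ell(L)/2$, this gives
\[
\dist(c(L),\gammado) < \sqrt{m}\,\ell(L) + 64 M_0 \sqrt{m}\,\ell(H) \leq \sqrt{m}\,\ell(L)\bigl(1 + 32 M_0\bigr),
\]
which is strictly smaller than $64 M_0 \sqrt{m}\,\ell(L)$ whenever $M_0 \geq 1$ (in fact $M_0 > 1/32$ suffices).

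Next I would handle the neighbor case, where $\ell(L) = 2\ell(H)$ and $L \cap H \neq \emptyset$. Any point common to $L$ and $H$ lies within $\sqrt{m}\,\ell(L)$ of $c(L)$ and within $\sqrt{m}\,\ell(H)$ of $c(H)$, so $|c(L)-c(H)| \leq \sqrt{m}(\ell(L)+\ell(H)) = \tfrac{3\sqrt{m}}{2}\ell(L)$. Then
\[
\dist(c(L),\gammado) < \tfrac{3\sqrt{m}}{2}\ell(L) + 64 M_0 \sqrt{m}\,\ell(H) = \sqrt{m}\,\ell(L)\bigl(\tfrac{3}{2} + 32 M_0\bigr),
\]
again strictly smaller than $64 M_0 \sqrt{m}\,\ell(L)$ for $M_0 \geq 1$ (in fact $M_0 > 3/64$ suffices). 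The final statement, that every descendant of a non-boundary cube is non-boundary, is the contrapositive of the ancestor case.

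There is no genuine obstacle here: the content is entirely a geometric bookkeeping with constants that have already been fixed generously enough ($r_L = M_0\sqrt{m}\,\ell(L)$ with $M_0 \geq 1$) to absorb both the shift $|c(L)-c(H)|$ and the factor of $1/2$ lost in the sidelength. The only thing worth double-checking is that the definition of ``neighbor'' in the excerpt allows $\ell(H) \leq \ell(L)$ symmetrically, so the bound $|c(L)-c(H)| \leq \sqrt{m}(\ell(L)+\ell(H))$ is the correct one rather than a tighter one exploiting that $L$ and $H$ share a full face.
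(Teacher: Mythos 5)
Your proof is correct and follows essentially the same route as the paper: a triangle inequality $\dist(c(L),\gammado)\le |c(L)-c(H)|+\dist(c(H),\gammado)$ combined with the generous choice $r_L=M_0\sqrt m\,\ell(L)$, $M_0\ge 1$. The only cosmetic difference is that you treat a general ancestor directly via $|c(L)-c(H)|\le \sqrt m\,\ell(L)$, whereas the paper reduces the ancestor case to iterating the father case (a neighbor with $\ell(L)=2\ell(H)$); both give the same conclusion with room to spare in the constants.
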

\begin{proof} For the case of ancestors it suffices to prove that if $L$ is a father of a boundary cube $H$, then $L$ as well is a boundary cube, and
since the father of $H$ is a neighbor of $H$ with $\ell (L) = 2\ell (H)$, we only need to show the second part of the statement of the lemma. The latter is a simple consequence of the following chain of inequalities:
\begin{align}
\dist (c(L), \gammado) &\leq \dist (c(H), \gammado) + |c(H)-c(L)|\nonumber\\ 
&= \dist (c(H), \gammado) + 3 \sqrt{m} \ell (H)\nonumber\\ 
& < 64 r_H + 3 \frac{r_H}{M_0} \leq \left(64 + 3 M_0^{-1}\right) \frac{r_L}{2} \leq \frac{67}{2} r_L < 64 r_L\, . \nonumber\qedhere
\end{align}
\end{proof}

Moreover, we set the following:
\begin{itemize}
\item If $L\in\sC_j^\natural$, then $\bB_L$\index{aalb_l@$\bB_L$} is a  ball in $\R^{m+\overline n+l}$ with radius $64 r_L$ and center some chosen point $p_L\in\supp(T)$\index{aalp_l@$p_L$}  such that $\bp_{\pi_0}(p_L)=c(L)$ (note that such \(p_L\) is a priori not unique: we just make an arbitrary choice) and $\pi_L$ is a plane which minimizes the excess in $\bB_L$, namely $\bE (T, \bB_L)= \bE (T, \bB_L, \pi_L)$ and $\pi_L \subset T_{p_L} \Sigma$.
\item If $L\in \mathscr{C}^\flat$, then  $\bB^\flat_L$\index{aalb_l^\flat@ $\bB^\flat_L$} is the ball in $\R^{m+\overline n+l}$ with radius $2^7 64 r_L$ and center $p^\flat_L\in\gammaup$\index{aalp_l@$p_L^\flat$} such that $|\bp_{\pi_0}(p_L^\flat)-c(L)|=\dist(c(L),\gammado)$. Note that in this case the point $p^\flat_L$ is uniquely determined because $\gammaup$ is regular and $\bA$ is assumed to be sufficiently small. Likewise $\pi_L$\index{aagp_L@$\pi_L$} is a plane which minimizes the excess $\bE^\flat$, namely such that $\bE^\flat (T,\bB^\flat_L) = \bE (T, \bB^\flat_L, \pi_L)$ and $T_{p^\flat_L} \gammaup \subset \pi_L \subset T_{p^\flat_L} \Sigma$.
\end{itemize}

A simple corollary of Theorem \ref{thm:decay_and_uniq} and Corollary \ref{c:cone_cut} is the following lemma.

\begin{lemma}\label{lem:raffinamento}
Let $T, \Sigma$ and $\gammaup$ be as in Assumption \ref{ass:decay+cone}. Then there
is a positive dimensional constant $C (m,n)$ such that, if  the starting size of the Whitney decomposition is fine enough, namely if  $2^{N_0} \geq C (m,n) M_0$, then the balls $\bB^\flat_L$ and $\bB_L$ are all contained in $\bB_2$.

Moreover,  there exists $\varepsilon_1$ such that, for any choice of $M_0 , \alpha_\be>0$ and $\alpha_\bh<\frac 12$, if
\begin{equation}\label{e:eps_condition}
\bE^\flat(T,\bB_2)+ \|\Psi\|_{C^{3,a_0}}^2 + \|\psi\|_{C^{3,a_0}}^2<\varepsilon_1\,,
\end{equation}
then for every cube $L\in\sC^\flat$ we have 
\begin{align}
\bE^\flat(T, \bB^\flat_L) &\le C_0 \varepsilon_1 r_L^{2-2\alpha_\be}\, , \label{e:fa} \\
\bh(T, \bB_L^\flat,\pi_L)&\le C_0 \varepsilon_1^{\sfrac{1}{4}} r_L^{1+\alpha_\bh}\, , \label{e:fb}\\
|\pi_L - \pi_0| &\le C_0 \varepsilon_1^{\sfrac{1}{2}}\, ,\label{e:fc}\\
\abs{\pi_L - \pi(p_L^\flat)} &\le C_0  \varepsilon_1^{\sfrac{1}{2}} r_L^{1-a_\be} \label{e:fd}
\end{align} 
where, \(\pi(p_L^\flat)\) has been defined in (b) of Theorem  \ref{thm:decay_and_uniq} and $C_0$ depends only upon $\alpha_\be$, $\alpha_\bh$, $m$ and $n$. 
\end{lemma}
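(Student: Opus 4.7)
The strategy is to derive all four estimates by applying Theorem \ref{thm:decay_and_uniq} and Corollary \ref{c:cone_cut} at the boundary point $p_L^\flat \in \gammaup$, combined with an elementary averaging argument that bounds the tilt between two competing optimal planes by the sum of the corresponding excesses.

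First I would dispose of the containment $\bB_L,\bB_L^\flat\subset\bB_2$. Any cube $L\in\sC$ has center $c(L)$ within $\sqrt{m}\,\ell(L)$ of $B_{3/2}^+$ and satisfies $r_L=M_0\sqrt m\,\ell(L)\le M_0\sqrt m\,2^{-N_0}$. The ball $\bB_L$ has radius $64\,r_L$ and center $p_L$ with $\bp_{\pi_0}(p_L)=c(L)$, and by the cone condition in Assumption \ref{ass:decay+cone}(iv) we have $|p_L|\le (1+C\varepsilon_0^{1/2})|c(L)|\le 3/2+O(\ell(L))$. Similarly $p_L^\flat\in\gammaup\cap\bC_{3/2}$ and $\bB_L^\flat$ has radius $2^{13}r_L$. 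In both cases the distance from the origin is at most $3/2+O(M_0\,2^{-N_0})$, which is less than $2$ provided $2^{N_0}\ge C(m,n)\,M_0$ for a suitably large dimensional constant.

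Second I would fix $L\in\sC^\flat$ and apply Theorem \ref{thm:decay_and_uniq} at $q=p_L^\flat$ with $r=1$ (the hypotheses are granted by Assumption \ref{ass:decay+cone}(ii)). Part (c) of that theorem, applied with the exponent $\varepsilon=\alpha_\be$ at the radius $\rho=2^{13}r_L$, yields
\[
\bE^\flat(T,\bB_L^\flat)\le C(\alpha_\be)\,r_L^{2-2\alpha_\be}\bigl(\bE^\flat(T,\bB_2)+\bA^2\bigr)\le C_0\,\varepsilon_1\,r_L^{2-2\alpha_\be},
\]
which is (\ref{e:fa}). For (\ref{e:fd}), both $\pi_L$ and $\pi(p_L^\flat)$ contain $T_{p_L^\flat}\gammaup$ and lie in $T_{p_L^\flat}\Sigma$, so using the monotonicity bound $\|T\|(\bB_L^\flat)\ge c\,r_L^m$,
\[
|\pi_L-\pi(p_L^\flat)|^2\le\frac{C}{r_L^m}\int_{\bB_L^\flat}\!\bigl(|\vec T-\pi_L|^2+|\vec T-\pi(p_L^\flat)|^2\bigr)\,d\|T\|\le C\,\bE^\flat(T,\bB_L^\flat),
\]
since the second summand is by definition no larger than the first. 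Combined with (\ref{e:fa}) this gives the tilt estimate $|\pi_L-\pi(p_L^\flat)|\le C_0\,\varepsilon_1^{1/2}\,r_L^{1-\alpha_\be}$ (and hence (\ref{e:fd}) after reabsorbing $\varepsilon_1^{1/2}\le \varepsilon_1^{1/2}$ in $C_0$, up to possibly weakening the exponent of $\varepsilon_1$).

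Third I would derive (\ref{e:fc}) from Theorem \ref{thm:decay_and_uniq}(d) combined with the tilt estimate just proved: since $\pi(0)=\pi_0$ (the flat tangent cone fixed in Assumption \ref{ass:decay+cone}(iv)),
\[
|\pi_L-\pi_0|\le|\pi_L-\pi(p_L^\flat)|+|\pi(p_L^\flat)-\pi(0)|\le C\varepsilon_1^{1/2}r_L^{1-\alpha_\be}+C\bigl(\bE^\flat(T,\bB_2)^{1/2}+\bA\bigr)|p_L^\flat|^{1-\alpha_\be}\le C_0\,\varepsilon_1^{1/2}.
\]
Finally, (\ref{e:fb}) is obtained by first invoking Theorem \ref{thm:decay_and_uniq}(e) at $q=p_L^\flat$ with $\rho=2^{13}r_L$, which gives $\bh(T,\bB_L^\flat,\pi(p_L^\flat))\le C\varepsilon_1^{1/2}\,r_L^{3/2}$; the tilt (\ref{e:fd}) then transfers the height to $\pi_L$ at the cost of an error
\[
|\pi_L-\pi(p_L^\flat)|\cdot 2^{13}r_L\le C_0\,\varepsilon_1^{1/2}\,r_L^{2-\alpha_\be}.
\]
Both terms are dominated by $C_0\,\varepsilon_1^{1/4}\,r_L^{1+\alpha_\bh}$ as soon as $\alpha_\bh\le\min\{\tfrac12,1-\alpha_\be\}$.

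The only genuine difficulty is essentially notational: one has to track how the constants $C(\alpha_\be,\alpha_\bh,m,n)$ of Theorem \ref{thm:decay_and_uniq} absorb the various powers of $\varepsilon_1$, and verify that a \emph{single} application of that theorem with the choice $\varepsilon=\min\{\alpha_\be,1-\alpha_\bh,\tfrac12\}$ makes the three different decay rates $r_L^{2-2\alpha_\be}$, $r_L^{1+\alpha_\bh}$ and $r_L^{1-\alpha_\be}$ simultaneously available, with the constants depending only on $\alpha_\be,\alpha_\bh,m,n$ and not on the cube $L$.
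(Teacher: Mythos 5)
Your proposal is correct and follows essentially the same route as the paper's proof: the containment is a direct inspection, \eqref{e:fa} and \eqref{e:fb} come from parts (c) and (e) of Theorem \ref{thm:decay_and_uniq} applied at $q=p_L^\flat$, \eqref{e:fd} from the averaging comparison of the two planes together with the monotonicity lower bound $\|T\|(\bB^\flat_L)\ge c\,r_L^m$, and \eqref{e:fc} from a triangle inequality.

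Two small points to tighten. First, in the averaging step your inequality is stated in the wrong direction: $\pi_L$ is by definition the excess-minimizer in $\bB^\flat_L$, so $\bE(T,\bB^\flat_L,\pi_L)\le\bE(T,\bB^\flat_L,\pi(p_L^\flat))$, and the term you must actually control is the excess with respect to $\pi(p_L^\flat)$; this is supplied by the middle inequality of \eqref{e:decay-1} (the same application of Theorem \ref{thm:decay_and_uniq}(c) you already make for \eqref{e:fa}), which is exactly how the paper closes the estimate, rather than ``by definition'' from \eqref{e:fa} alone. Second, Assumption \ref{ass:decay+cone}(iv) does not literally identify $\pi_0$ with $\pi(0)$: $\pi_0$ is only the excess-minimizing plane for $\bB_2$, so in the proof of \eqref{e:fc} you should add the term $|\pi(0)-\pi_0|$, which is controlled by \eqref{e:tilt_pi(q)} of Corollary \ref{c:cone_cut} (this is the estimate the paper invokes), after which the triangle inequality still yields $C_0\varepsilon_1^{\sfrac{1}{2}}$. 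Your explicit constraint $\alpha_\bh\le\min\{\tfrac12,1-\alpha_\be\}$ for \eqref{e:fb} is consistent with the hierarchy of parameters in Assumption \ref{ass:cm}, so it causes no loss in the application.
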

\begin{proof} The first part of the statement is just a direct inspection. Estimate \eqref{e:fa}  is a direct consequence of \eqref{e:decay-1}. Consider now $\pi (p^\flat_L)$ as in Theorem \ref{thm:decay_and_uniq}.
By the monotonicity formula we know that
\[
\|T\| (\bB^\flat_L) \geq \omega_m (2^7 64 r_L)^m\, 
\]
because we know that $\Theta (T, p^\flat_L) = Q-\frac{1}{2} \geq \frac{3}{2}$.
Moreover \eqref{e:decay-1} implies
\[
\bE (T, \bB^\flat_L, \pi_L) \leq \bE (T, \bB^\flat_L, \pi (p^\flat_L)) \leq C_0 {\color{red} \varepsilon_1} r_L^{2-2\alpha_\be}\, .
\]
Thus
\[
|\pi (p^\flat_L) - \pi_L|^2 \leq C_0 \big (\bE (T, \bB^\flat_L, \pi_L) + \bE (T, \bB^\flat_L, \pi (p^\flat_L))\big) \leq C_0 {\color{red} \varepsilon_1^{\sfrac{1}{2}}} r_L^{2-2\alpha_\be}\, .
\]
which proves \eqref{e:fd}. \eqref{e:fc} is now a direct consequence of \eqref{e:tilt_pi(q)} and  \eqref{e:fd} while \eqref{e:fb} is  direct consequence of
 \eqref{e:decay-height}. 
\end{proof}

\subsection{Decomposition and stopping conditions}
We will now defined a suitable refining procedure of our initial Whitney decomposition. To this end let \(C_\be, C_\bh\) be two positive constants that will be fixed later, see Assumption \ref{ass:cm} below.  We take a cube $L\in\sC_{N_0}$ and we {\em do not} subdivide it if it belongs to one of the following sets:
\begin{enumerate}
\item[(1)] $\sW_{N_0}^{\be}:=\{L\in\sC_{N_0}^\natural:\, \bE(T, \bB_L)> C_\be \varepsilon_1 \ell(L)^{2-\alpha_\be}\}$;
\item[(2)] $\sW_{N_0}^{\bh}:=\{L\in\sC_{N_0}^\natural:\, \bh(T, \bB_L,\pi_L)> C_\bh\varepsilon_1^{\sfrac{1}{2m}} \ell(L)^{1+\alpha_\bh}\}$.
\end{enumerate}
We then define 
\[
\sS_{N_0}:=\sC_{N_0}\setminus \left(\sW_{N_0}^{\be}\cup\sW_{N_0}^{\bh}\right)\,.
\]
The cubes in $\sS_{N_0}$ will be subdivided in their sons.
In fact we will ensure that $\sW_{N_0} := \sW_{N_0}^{\be}\cup\sW_{N_0}^{\bh} = \emptyset$ (and therefore $\mathscr{C}_{N_0} = \sS_{N_0}$) by choosing $C_\be$ and $C_\bh$ large enough, depending only upon $\alpha_\bh, \alpha_\be, M_0$ and $N_0$, see Proposition \ref{pr:tilting_cm} below. 

We next describe the refining procedure assuming inductively that for a certain step $j\geq N_0+1$ we have defined
the families $\sW_{j-1}$ and $\sS_{j-1}$. 
In particular we consider all the cubes $L$ in $\mathscr{C}_j$ which are contained in some element of $\sS_{j-1}$. Among them we select and set aside in the classes $\sW_j:=\sW_j^{\be}\cup\sW_j^{\bh}\cup\sW_j^{\bf n}$ those cubes where the following stopping criteria are met:
\begin{enumerate}
\item[(1)] $\sW_j^{\be}:=\{L \mbox{ son of } K \in\sS_{j-1}^\natural:\, \bE(T, \bB_L)> C_\be\varepsilon_1 \ell(L)^{2-{\color{red} 2} \alpha_\be}\}$\index{aalw\sW_j^{\be}@$\sW_j^{\be}$};
\item[(2)] $\sW_j^{\bh}:=\{L \mbox{ son of } K\in \sS_{j-1}^\natural:\, L\not\in \sW_j^\be\mbox{ and }$\\
 $\phantom{.}\qquad\qquad\bh(T, \bB_L,\pi_L)> C_\bh\varepsilon_1^{\sfrac{1}{2m}} \ell(L)^{1+\alpha_\bh}\}$\index{aalw\sW_j^{\bh}@$\sW_j^{\bh}$};
\item[(3)] $\sW_j^{\bf n}:=\{L\mbox{ son of } K\in \sS_{j-1}:\, L\not\in \sW_j^\be\cup \sW_j^\bh \mbox{ but }$\\
$\phantom{.}\qquad\qquad\exists L'\in \sW_{j-1} \text{ with }L\cap L' \neq \emptyset\}$\index{aalw\sW_j^{\bf n}@$\sW_j^{\bf n}$}.
\end{enumerate}
Note, in particular, that the refinement of boundary cubes can {\em never} be stopped because of the conditions (1) and (2). Indeed we could have included analogous stopping conditions for boundary cubes as well, but Lemma \ref{lem:raffinamento} would have implied in any case that these conditions would never stop the refining of boundary cubes. In principle a boundary cube might still be stopped because of the third condition, but we will see in Lemma \ref{cor:no_stop_b} that this possibility can be excluded as well. Thus boundary cubes always belong to $\sS$. Clearly, descendants of boundary cubes might become non-boundary cubes and so their refining can be stopped. 

We finally set $\sW_j := \sW_j^{\be}\cup\sW_j^{\bh}\cup\sW_j^{\bf n}$\index{aalw\sW_j@$\sW_j$} and
we keep refining the decomposition in the set \index{aals\sS_j@$\sS_j$} 
\[
\sS_j:=\left\{ L\in \mathscr{C}_j \mbox{ son of } K\in \sS_{j-1}\right\} \setminus \sW_j \, .
\] 
Observe that it might happen that the son of a cube in $\sS_{j-1}$ does not intersect $B_{3/2}^+$: in that case, according to our definition, the cube does not belong to $\sS_j$ neither to $\sW_j$: it is simply discarded. 

As already mentioned, we use the notation $\sS_j^\flat$\index{aals\sS_j^\flat@$\sS_j^\flat$}  and $\sS_j^\natural$\index{aals\sS_j^\natural@$\sS_j^\natural$}  respectively for $\sS_j\cap \sC^\flat$ and $\sS_j\cap \sC^\natural$.
Furthermore we set\index{aals\mathbf S^+@$\mathbf S^+$}
\begin{align*}
\sW & :=\bigcup_{j\ge N_0} \sW_j\\
\sS & :=\bigcup_{j\ge N_0} \sS_j\\
\mathbf S^+ & :=\bigcap_{j\ge N_0}\Big( \bigcup_{L\in \sS_j} L\Big)=B_{3/2}^+\setminus \bigcup_{H\in \sW} H\, .
\end{align*}
 We emphasize that $B_{3/2}^+$ includes $\gamma\cap B_{3/2}$.

\begin{lemma}\label{cor:no_stop_b}
$\mathscr{C}^\flat_j \cap \sW = \emptyset$ for every $j\geq N_0$ and in particular $\gammado \cap B_{3/2}^+ \subset \mathbf S^+$. 
\end{lemma}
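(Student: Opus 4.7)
The approach will be induction on $j\geq N_0$. The base case will be immediate: by the sentence preceding the statement, the constants $C_\be$ and $C_\bh$ are chosen large enough so that $\sW_{N_0}=\emptyset$, hence there is nothing to check at level $N_0$. For the inductive step, the key observation is structural: the excess and height stopping conditions defining $\sW_j^\be$ and $\sW_j^\bh$ only admit cubes in $\mathscr{C}^\natural$, so the only possible way a boundary cube could enter $\sW_j$ is via the combinatorial neighbor condition defining $\sW_j^{\bf n}$. The whole proof thus reduces to excluding the existence of $L\in \mathscr{C}^\flat_j$ with a non-empty intersection with some $L'\in \sW_{j-1}$.

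The main (elementary) ingredient will be a geometric separation estimate, and this is where one should be a bit careful with the constants. By the inductive hypothesis $L'\notin \mathscr{C}^\flat_{j-1}$, hence $L'\in \mathscr{C}^\natural_{j-1}$ and $\dist(c(L'),\gammado)\geq 64\, r_{L'} = 128\, r_L$, using $\ell(L')=2\,\ell(L)$. On the other hand, since $L\cap L'\neq\emptyset$, the distance between the centers is bounded by $\sqrt{m}(\ell(L)+\ell(L'))=3\sqrt{m}\,\ell(L)=3r_L/M_0\leq 3 r_L$, where one uses $M_0\geq 1$. The triangle inequality then yields $\dist(c(L),\gammado)\geq 125\, r_L$, contradicting the defining inequality $\dist(c(L),\gammado)<64\, r_L$ of $\mathscr{C}^\flat$.

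The inclusion $\gammado\cap B_{3/2}^+\subset \mathbf S^+$ will then follow at once: any $H\in \sW$ containing a point of $\gammado$ would satisfy $\dist(c(H),\gammado)\leq \sqrt{m}\,\ell(H)=r_H/M_0 < 64\, r_H$, so $H\in \mathscr{C}^\flat$, which has just been excluded. I do not foresee a serious obstacle; the only thing to monitor is the exact relation $r_L=M_0\sqrt{m}\,\ell(L)$, which makes the separation $128\, r_L$ comfortably larger than the neighbor distance $3r_L/M_0$ for any $M_0\geq 1$, so the proof works regardless of how large $M_0$ has been fixed.
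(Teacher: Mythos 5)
Your proof is correct and follows essentially the paper's route: the separation estimate in your inductive step is exactly the content of Lemma \ref{l:son_father} (a cube touching a non-boundary cube of twice its side length cannot itself be a boundary cube, since $\dist(c(L),\gammado)\geq 128\,r_L-3\,r_L/M_0>64\,r_L$), and your induction on $j$ is just a repackaging of the paper's argument, which takes a boundary cube of $\sW$ with maximal side length, notes it can only lie in $\sW^{\mathbf n}_j$, and applies that lemma to its larger neighbor in $\sW_{j-1}$. The one adjustment you should make is in the base case: do not invoke $\sW_{N_0}=\emptyset$, which is only established in Proposition \ref{pr:tilting_cm} (whose proof in turn cites the present lemma); it suffices to observe that by definition $\sW_{N_0}^{\be}$ and $\sW_{N_0}^{\bh}$ are subsets of $\sC_{N_0}^\natural$, so no boundary cube is stopped at level $N_0$, which is all your induction needs.
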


\begin{proof}
Assume there is a boundary cube in $\sW$ and let $L$ be a boundary cube in $\sW$ with largest side length. The latter must then belong to  $\sW^{\bf n}_j$ for some $j$. However this would imply the existence of a neighbor $L' \in \sW$ with $\ell (L') = 2\ell (L)$: by Lemma \ref{l:son_father}
$L'$ would be a boundary cube in $\sW$, contradicting the maximality of $L$. 
\end{proof}

\subsection{Hierarchy of parameters}
From now on we specify a set of assumptions on the various choices of the constants involved in the construction.
\begin{ipotesi}\label{ass:cm}
$T, \Sigma$ and $\gammaup$ are as in Assumptions \ref{ass:decay+cone} and we also assume that
\begin{itemize}
\item[(a)] $\alpha_\bh$\index{aaga_h@$\alpha_\bh$} is smaller than $\frac{1}{2m}$ and $\alpha_\be$\index{aaga_e@$\alpha_\be$} is positive but small, depending only on $\alpha_{\bh}$,
\item[(b)] $M_0$\index{aaM_0@$M_0$} is larger than a suitable constant, depending only upon $\alpha_\be$,
\item[(c)] $2^{N_0} \geq C (m,n, M_0)$\index{aalN_0@$N_0$}, in particular it satisfies the condition of Lemma \ref{lem:raffinamento},
\item[(d)] $C_\be$\index{aalC_e@$C_\be$} is sufficiently large depending upon $\alpha_\be$, $\alpha_\bh$, $M_0$ and $N_0$,
\item[(e)] $C_\bh$\index{aalC_h@$C_\bh$} is sufficiently large depending upon $\alpha_\be, \alpha_\bh, M_0, N_0$ and $C_\be$,
\item[(f)] \eqref{e:eps_condition} holds with an $\varepsilon_1$\index{aage_1@$\varepsilon_1$} sufficiently small depending upon all
the other parameters.
\end{itemize}
Finally, there is an exponent $\sigmaexpcm$\index{aaga_L@$\sigmaexpcm$}, which depends only on $m,n,\bar n$ and $Q$ and which is independent of all the other parameters, in terms of which several important estimates in Theorem \ref{thm:cm_app} will be stated.
\end{ipotesi}

Note that the parameters are chosen following a precise hierarchy, in particular ensuring that there is a nonempty set of
parameters satisfying all the requirements. The hierarchy is consistent with that of \cite{DS4}, in particular the reader can
compare Assumption \ref{ass:cm} with \cite[Assumption 1.9]{DS4}.

\subsection{Interpolating functions}\label{s:interpolating} In this section we define the ``interpolating functions'' $g_L$ for each cube $L$. In particular, over the set $B_{3/2}^+\setminus \mathbf{S}^+$, the function $\boldsymbol{\varphi}^+$ is defined by patching together the $g_L$'s with a partition of unity subordinate to the cover $\sW$. Since however we need to define ${\bf \varphi}^+$ over $\mathbf{S}^+$ as well, we introduce all the necessary objects for {\em any} cube in $\sS\cup \sW$. 

\begin{proposition}\label{prop:yes_we_can}
If $T, \Sigma$ and $\gammaup$ are as in Assumptions \ref{ass:decay+cone} and if the various parameters $\alpha_\be,\alpha_\bh,M_0,N_0,C_\be,C_\bh,\varepsilon_1$ fulfill the Assumptions \ref{ass:cm} we have
\begin{align*}
\supp(T)\cap\bC_{36r_L}(p_L,\pi_L)& \subset\bB_L
\qquad
\mbox{when $L\in\sS^\natural_j\cup\sW_j$,}
\end{align*}
\begin{align*}
\supp(T)\cap\bC_{2^7 36r_L}(p^\flat_L,\pi_L)& \subset\bB_L^\flat \qquad \mbox{when $L\in\sS^\flat_j$,} 
\end{align*}
and the current $T$ satisfies the assumptions of Theorem  \ref{t:old_approx} in the cylinder $\bC_{36 r_L} (p_L, \pi_L)$, resp. the assumptions of Theorem \ref{thm:second_lip} in the cylinder $\bC_{2^736 r_L}(p^\flat_L,\pi_L)$. 
\end{proposition}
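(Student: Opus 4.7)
The statement is purely geometric: it says that the (larger) balls $\bB_L$ and $\bB_L^\flat$ capture everything that the associated cylinders see, and that these cylinders meet the requirements of the approximation theorems of Chapters \ref{chap:Lip1}--\ref{chap:Lip2}. My strategy is to package together (i) the cone--confinement Assumption \ref{ass:decay+cone}(iv), (ii) the non-stopping/stopping excess and height inequalities that are built into the refinement procedure, (iii) Lemma \ref{lem:raffinamento} for boundary cubes, and (iv) a standard triangle-inequality comparison between nearby optimal planes, to run three short steps; I expect the last step (verifying the approximation assumptions) to be routine given the first two, while the main obstacle will be propagating the height/tilt estimates from $\mathscr{S}^\natural$ to the stopped family $\mathscr{W}$ uniformly in the parameter hierarchy.

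\textbf{Step 1: tilting.} I will first show that for every $L\in\mathscr{S}\cup\mathscr{W}$ one has $|\pi_L-\pi_0|\le C\varepsilon_1^{\sfrac12}$, with $C=C(\alpha_\be,\alpha_\bh,M_0,N_0)$. For boundary cubes this is \eqref{e:fc}. For $L\in\mathscr{S}^\natural$, tracing the chain of ancestors back to $\mathscr{C}_{N_0}$ and using that each such ancestor $L'$ satisfied the non-stopping condition $\bE(T,\bB_{L'})\le C_\be\varepsilon_1\ell(L')^{2-\alpha_\be}$, the standard comparison
\[
|\pi_L-\pi_{L'}|^2\le C\bigl(\bE(T,\bB_L)+\bE(T,\bB_{L'})\bigr),
\]
valid whenever $\bB_L$ and $\bB_{L'}$ are comparable, combined with a geometric series in $\ell(L')^{2-\alpha_\be}=2^{-(2-\alpha_\be)j}$, yields the stated tilt bound. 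For $L\in\mathscr{W}$ the same bound follows because either the father is in $\mathscr{S}^\natural$ (hence Step 1 applies to it) or $L$ itself is a son of a boundary cube, in which case \eqref{e:fc}--\eqref{e:fd} combined with $\bB_L\subset\bB_{L'}^\flat$ do the job.

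\textbf{Step 2: the height-based containment.} For $L\in\mathscr{S}^\natural_j\cup\mathscr{W}_j$, by the very refinement rule, either $L\in\mathscr{S}^\natural_j$ (so the non-stopping of $L$ gives $\bh(T,\bB_L,\pi_L)\le C_\bh\varepsilon_1^{\sfrac1{2m}}\ell(L)^{1+\alpha_\bh}$), or $L\in\mathscr{W}_j^\be\cup\mathscr{W}_j^\bh\cup\mathscr{W}_j^{\bf n}$, in which case the same bound on the father $L'\in\mathscr{S}^\natural_{j-1}$, together with $\bB_L\subset\bB_{L'}$ and $|\pi_L-\pi_{L'}|\le C\varepsilon_1^{\sfrac12}$ from Step 1, gives the analogous estimate on $\bB_L$ with a factor $2^{1+\alpha_\bh}$ lost. (The case of a non-boundary $L$ whose father is a boundary cube is handled via \eqref{e:fb} applied to $\bB_{L'}^\flat\supset\bB_L$.) Since $p_L\in\supp(T)\cap\bB_L$, this height bound means
\[
\supp(T)\cap\bC_{36r_L}(p_L,\pi_L)\subset\Bigl\{x:|\bp_{\pi_L}^\perp(x-p_L)|\le C_\bh\varepsilon_1^{\sfrac1{2m}}r_L^{1+\alpha_\bh}\Bigr\},
\]
and combining with $|\bp_{\pi_L}(x-p_L)|\le 36r_L$ gives $|x-p_L|^2\le (36 r_L)^2+o(r_L^2)<(64r_L)^2$ once $\varepsilon_1$ is small, as required. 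For $L\in\mathscr{S}^\flat$, the identical computation based on \eqref{e:fb} in the larger ball $\bB_L^\flat$ of radius $2^7\cdot 64\, r_L$ and on the cylinder of radius $2^7\cdot 36\, r_L$ yields $\supp(T)\cap\bC_{2^7\cdot 36 r_L}(p_L^\flat,\pi_L)\subset\bB_L^\flat$.

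\textbf{Step 3: hypotheses of the approximation theorems.} For non-boundary $L$, I verify the two hypotheses of Theorem \ref{t:old_approx} with $\rho=9r_L$: that $\bC_{36r_L}(p_L,\pi_L)\cap\gammaup=\emptyset$ and that it is contained in a larger cylinder where $T$ satisfies Assumption \ref{Ass:app} (taking, for instance, an ancestor cube's cylinder), follows from Step 1 and Step 2 since the orthogonal projection of $\bC_{36r_L}(p_L,\pi_L)$ onto $\pi_0$ stays within distance $36 r_L+o(r_L)$ from $c(L)$, while $\dist(c(L),\gammado)\ge 64r_L$; and the excess smallness $\bE(T,\bC_{36 r_L}(p_L,\pi_L))+\bA^2 r_L^2\le C\varepsilon_1$ follows from $\bE(T,\bB_L)\le C_\be\varepsilon_1\ell(L)^{2-\alpha_\be}$ combined with the elementary inequality $\bE_{\mathrm{cyl}}\le C\bE_{\mathrm{ball}}$ made available by the containment of Step 2. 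For $L\in\mathscr{S}^\flat$, the containment from Step 2, the estimates \eqref{e:fa}--\eqref{e:fd}, and the regularity of $\gammaup$ imply that $\gammaup\cap \bC_{2^7\cdot 36 r_L}(p_L^\flat,\pi_L)$ is a $C^{3,a_0}$ graph over the boundary trace in $\pi_L$, that $\bp_\sharp (T\res\bC)$ is the expected multiplicity on each side, and that Assumption \ref{Ass:app} together with the smallness $\bE+\bA^2 r_L^2\ll\bar\varepsilon$ (needed for Theorem \ref{thm:second_lip}) hold; thus Theorem \ref{thm:second_lip} applies at the cube's scale, as claimed.
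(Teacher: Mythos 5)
Your skeleton (tilt bounds, then height bounds, then verification of the hypotheses of the approximation theorems) is the same as the paper's, but two steps contain genuine gaps. The first is a circularity between your Steps 1 and 2. The comparison $|\pi_L-\pi_{L'}|^2\le C\bigl(\bE(T,\bB_L)+\bE(T,\bB_{L'})\bigr)$ is only available once you know $\bB_L\subset\bB_{L'}$ (or a substantial overlap carrying mass), and this requires bounding $|p_L-p_{L'}|$: the horizontal part is $\le 3\sqrt m\,\ell(L')$, but the vertical part $|\bp_{\pi_0}^\perp(p_L-p_{L'})|$ is precisely a height bound for $\supp(T)$ at scale $\ell(L')$ — which is what your Step 2 is supposed to deliver, and Step 2 in turn invokes the tilt bound of Step 1. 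The cone condition of Assumption \ref{ass:decay+cone}(iv) does not help here, since at unit distance from the origin it only confines the current in a slab of thickness $\sim\varepsilon_0^{1/2}$, which is huge compared with $r_{L'}$ for small cubes. The paper resolves this by a \emph{simultaneous} induction along the chain of ancestors (statements (i)$^j$ and (ii)$^j$ in the proof of Proposition \ref{pr:tilting_cm}): the height bound in the cylinder $\bC_{48r_{H_j}}(p_{H_j},\pi_0)$, taken with respect to the \emph{fixed} plane $\pi_0$, gives $\bB_{H_{j+1}}\subset\bB_{H_j}$ via \eqref{eq:distance of centers-2}, which yields the tilt bound at step $j+1$, which in turn upgrades the ball height bound to the cylinder height bound at step $j+1$.

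The second gap is the step in your Step 2 that outright fails: from $\bh(T,\bB_L,\pi_L)\le C_\bh\varepsilon_1^{1/(2m)}\ell(L)^{1+\alpha_\bh}$ you conclude that all of $\supp(T)\cap\bC_{36r_L}(p_L,\pi_L)$ lies in a thin slab around $p_L+\pi_L$. But that height bound only constrains points of $\supp(T)$ which already lie in $\bB_L$; a point of the cylinder lying far above $\pi_L$ is outside $\bB_L$ and is not touched by it, so nothing excludes stray sheets of the current inside the cylinder. One needs an independent confinement of the cylinder's support, which is exactly what the fixed-plane statements (ii)$^j$ provide and what the separate chain-of-cylinders contradiction argument around \eqref{eq:inclusion of cylinders} converts into the statement for cylinders with tilted axis; this is the whole content of (b)$^\natural$/(b)$^\flat$ and (c)$^\natural$/(c)$^\flat$ of Proposition \ref{pr:tilting_cm} and cannot be bypassed. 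Finally, in Step 3 you assert without proof that $\bp_{\pi_L\,\sharp}(T\res\bC)$ has the expected multiplicities ($Q\a{B}$, resp. $Q\a{B^+}+(Q-1)\a{B^-}$); this is part of Assumption \ref{Ass:app} and is not automatic: the paper establishes it in Proposition \ref{prop:yes_we_can_more} by deforming $\pi_0$ continuously into the tilted plane and using that the integer multiplicities of the projected current cannot jump along the homotopy, the height bounds guaranteeing that no boundary crosses the lateral faces of the intermediate cylinders.
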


 We omit the proof here and in fact a strengthened version of the proposition is included in Proposition \ref{prop:yes_we_can_more}.
In each cube $L\in\sS_j^\flat$ (resp. $L\in\sS_j^\natural\cup\sW_j$) we define $(f_L^-, f_L^+)$ (resp. $f_L$) to be the Lipschitz approximation of $T$ in the cylinder $\bC_{2^7 9 r_L}(p^\flat_L,\pi_L)$ (resp.  $\bC_{9r_L}(p_L,\pi_L)$). Moreover we define the multifunctions $\bar f^\pm_L$ (respectively $\bar f_L$) by projecting the values of $f^\pm_L$ (resp. $f_L$) on the plane
$T_{p^\flat_L} \Sigma$ (resp. $T_{p_L} \Sigma$). More precisely, if we introduce the plane $\varkappa_L := \pi_L^\perp \cap T_{p^\flat_L} \Sigma$ (resp. $\varkappa_L := \pi_L^\perp \cap T_{p_L} \Sigma$), which is the orthogonal complement of $\pi_L$ in $T_{p^\flat_L} \Sigma$ (resp. in $T_{p_L} \Sigma$), the functions $f^\pm_L$ and $f_L$ are defined by
\[
\bar f^+_L = \sum_{i=1}^Q \a{\bp_{\varkappa_L} ((f^+_L)_i)}
\quad \bar f^-_L = \sum_{i=1}^{Q-1} \a{\bp_{\varkappa_L} ((f^-_L)_i)}
\]
\[
\mbox{and} \quad \bar f_L = \sum_{i=1}^Q \a{\bp_{\varkappa_L} ((f_L)_i)}\, .
\]
We can therefore regard each value $(f^\pm_L)_i (x)$ (resp. $(f_L)_i (x)$) as an element of the product space $\varkappa_L \times T^\perp_{p^\flat_L} \Sigma$ (resp.
$\varkappa_L \times T_{p_L}^\perp \Sigma$). Hence, if we let $\Psi_L:T_{p^\flat_L}\Sigma\to T^\perp_{p^\flat_L}\Sigma$  (resp. $\Psi_L:T_{p_L}\Sigma\to T^\perp_{p_L}\Sigma$) be the parametrization of the ambient manifold $\Sigma$ (in such a way that locally $\Sigma={\rm Graph}(\Psi_L)$), we have the identities
\begin{align*}
(f^\pm_L)_i (x) &= ((\bar{f}^\pm_L)_i (x), \Psi_L (x, (\bar f^\pm_L)_i (x)))\\
(f_L)_i (x) &= ((\bar f_L)_i (x), \Psi_L (x, (\bar f_L)_i (x)))\, .
\end{align*}
Although abusive, in order to make our notation less cumbersome 
we will then write 
$f_L^\pm=(\overline f_L^\pm,\Psi_L\circ\overline f_L^\pm)$ (resp. $f_L=(\overline f_L,\Psi_L\circ\overline f_L)$ and we will adopt the same convention for other maps with the same structure.

\begin{definition}\label{def:pi-approximation}
The maps $f_L^\pm$\index{aalf_L^\pm@$f_L^\pm$} and $f_L$\index{aalf_L@$f_L$} defined above will be called \emph{$\pi_L$-approximations}\index{p@$\pi_L$-approximation of $T$} of $T$ in the respective cylinders (indeed  $f_L^\pm$ approximates the current on
the ``half cylinder''  $\bp_{\pi_L}^{-1} (B_{2^7 9r_L}^\pm)$). 
\end{definition}

We next let $\overline h_L$\index{aalh_L@$\overline h_L$} be the solution of a suitable elliptic system (coming from the linearization of the mean curvature condition for minimal surfaces in $\Sigma$), subject to appropriate boundary conditions, which differ depending on whether $L$ is a non-boundary or a boundary cube. More precisely, for each cube, we introduce the constant matrix $\mathbf{L}$ as 
\begin{align}
\mathbf{L}^{ik} &= - \sum_j \Delta_x\Psi_L^j(p_L)\partial^2_{y_ix_k}\Psi_L^{j}(p_L)\qquad \mbox{if $L\in \mathscr{C}^\natural$}\label{e:matrix_L}\\
\mathbf{L}^{ik} &= - \sum_j \Delta_x\Psi_L^j(p^\flat_L)\partial^2_{y_ix_k}\Psi_L^{j}(p^\flat_L)\, \qquad
\mbox{if $L\in \mathscr{C}^\flat$.}
\end{align}
and we impose that
\begin{equation}\label{e:defining_PDE_i}
\left\{
\begin{array}{ll}
\Delta\overline h_L=\mathbf{L} \cdot (x-\bp_{\pi_L} (p_L)) & \\ \\
\overline h_L=\etaa\circ\overline f_L\qquad & \text{on }\partial B_{5r_L}(p_L,\pi_L)\, ,\
\end{array}
\right.
\end{equation}
when $L$ is a non-boundary cube and that
\begin{equation}\label{e:defining_PDE_b}
\left\{
\begin{array}{ll}
\Delta\overline h_L=\mathbf{L} \cdot (x-\bp_{\pi_L} (p^\flat_L)) & \\ \\
\overline h_L=\etaa\circ\overline f_L^+\qquad & \text{on }\partial  \big(B^+_{2^7 5r_L}(p^\flat_L,\pi_L) \big)\, ,
\end{array}
\right.
\end{equation}
when $L$ is a boundary cube.

\begin{definition}\label{def:tilted_interpolating}
The function\index{aalh_l@$h_L$}
\[
h_L:=(\overline h_L,\Psi_L\circ\overline h_L)\,
\]
will be called the \emph{tilted $L$-interpolating function}\index{tilted $L$-interpolating function}.
\end{definition}

We now are ready to define the final function, $g_L$, on our ``reference coordinate system''
(i.e. the domain of $g_L$ is contained in $\pi_0$ and its values are contained in $\pi_0^\perp$) with the property that its graph
coincides with (a suitable portion of) the graph of $h_L$. For this reason we need the following proposition ((cf. \cite[Appendix B]{DS4}).

\begin{proposition}\label{prop:yes_we_can_2}
Under the assumptions of Proposition \ref{prop:yes_we_can}, for every $L$ as above the function $h_L$ is Lipschitz on $B^+_{ 2^7\cdot 9r_L/2} (p^\flat_L, \pi_L)$ (resp. $B_{9r_L/2} (p_L, \pi_L)$) and we can define a function  $g_L:B^+_{2^7 4r_L}(p^\flat_L,\pi_0)\to \pi_0^\perp$ (resp.$g_L:B_{4r_L}(p_L,\pi_0)\to\pi_0^\perp $)  such that
\begin{align*}
&\bG_{g_L}=\bG_{h_L}\res B^+_{2^7 4r_L} (p^\flat_L, \pi_0) \times \R^{\bar n + l}\\
 \big(\mbox{resp. }\; &\bG_{g_L}=\bG_{h_L}\res \bC_{4r_L} (p_L, \pi_0)\big)\, .
\end{align*}
\end{proposition}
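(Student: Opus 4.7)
The plan is to prove the two claims separately: first, a quantitative Lipschitz bound on $h_L$ with a Lipschitz constant that is small (depending on the excess and the tilt); second, that this smallness, combined with the tilt bound between $\pi_L$ and $\pi_0$, allows us to reparametrize the graph of $h_L$ over $\pi_L$ as a graph $g_L$ over $\pi_0$.

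First I would establish the Lipschitz bound on $\bar h_L$. Since $\bar h_L$ solves a Poisson equation with a constant-coefficient right-hand side of the form $\mathbf{L}\cdot(x-\bp_{\pi_L}(p))$ (which is bounded by $C\bA r_L$ via \eqref{eq:joni-1}) and with Dirichlet data given by the classical function $\etaa\circ\bar f_L$ (respectively $\etaa\circ\bar f_L^+$), standard elliptic estimates reduce matters to controlling the Lipschitz constant of the boundary datum together with a contribution of order $\bA r_L$ from the forcing term. The boundary datum inherits its Lipschitz control from $\bar f_L$ (resp. $\bar f_L^+$): in the non-boundary case, Theorem \ref{t:old_approx} applied in $\bC_{36r_L}(p_L,\pi_L)$ combined with the stopping criterion $L\notin\sW^{\be}$ (which gives $\bE(T,\bB_L)\le C_\be\varepsilon_1 \ell(L)^{2-\alpha_\be}$ for $L$'s father) produces $\Lip(\bar f_L)\le C(\varepsilon_1^{\sigmaexp}r_L^{(2-\alpha_\be)\sigmaexp}+\bA r_L)^{\sigmaexp}$; in the boundary case, Theorem \ref{thm:second_lip} combined with \eqref{e:fa}--\eqref{e:fd} of Lemma \ref{lem:raffinamento} provides the analogous bound for $\bar f_L^+$. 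In both cases, choosing $\varepsilon_1$ small enough (and $\alpha_\be,\alpha_\bh$ as in Assumption \ref{ass:cm}), the H\"older/Lipschitz estimates for harmonic extensions (e.g. interior gradient estimates combined with maximum principle for the boundary-to-interior decay) give $\|\nabla \bar h_L\|_0\le C\varepsilon_1^{\sigmaexpcm}$ on $B^+_{2^7 9r_L/2}$ (resp. $B_{9r_L/2}$) for a suitable $\sigmaexpcm>0$.

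Next, since $h_L=(\bar h_L,\Psi_L\circ\bar h_L)$ and $\|D\Psi_L\|_0\le C(\varepsilon_1^{1/2}+\bA r_L)$ by Remark \ref{r:Psi} applied in the optimal frame at $p_L$ (resp.\ at $p_L^\flat$), the chain rule immediately transfers the Lipschitz bound from $\bar h_L$ to $h_L$:
\[
\Lip(h_L)\le (1+\|D\Psi_L\|_0)\Lip(\bar h_L)\le C\varepsilon_1^{\sigmaexpcm}.
\]
This proves the Lipschitz claim.

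To construct $g_L$, I would use the standard ``change of graph'' procedure (as in \cite[Appendix B]{DS4}). The graph $\bG_{h_L}$ is an $m$-dimensional $C^{1,\alpha}$ submanifold of $\mathbb R^{m+n}$ whose tangent planes lie within angle $C\varepsilon_1^{\sigmaexpcm}$ of $\pi_L$. By Lemma \ref{lem:raffinamento} (for boundary cubes) and by a parallel tilt estimate for non-boundary cubes that follows from the excess stopping condition $L\notin \sW^\be$ and Theorem \ref{thm:decay_and_uniq}(c)--(d) (propagated back through the cone condition Assumption \ref{ass:decay+cone}(iv)), we have $|\pi_L-\pi_0|\le C\varepsilon_1^{1/2}$. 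Composing these two bounds, the tangent planes of $\bG_{h_L}$ lie within angle $C\varepsilon_1^{\sigmaexpcm}$ of $\pi_0$; consequently the restriction of the orthogonal projection $\bp_{\pi_0}:\bG_{h_L}\to\pi_0$ is a bi-Lipschitz diffeomorphism onto its image, and that image contains $B^+_{2^7 4r_L}(p^\flat_L,\pi_0)$ (resp. $B_{4r_L}(p_L,\pi_0)$) because the projection $\bp_{\pi_0}$ carries $B^+_{2^7 9r_L/2}(p^\flat_L,\pi_L)$ (resp.\ $B_{9r_L/2}(p_L,\pi_L)$) onto a set containing these smaller balls once $\varepsilon_1$ is small enough and $r_L$ is sufficiently small. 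Hence one defines $g_L:=\bp_{\pi_0}^\perp\circ(\bp_{\pi_0}|_{\bG_{h_L}})^{-1}$ on the appropriate ball, and the identity $\bG_{g_L}=\bG_{h_L}\res\bC_{4r_L}(\cdot,\pi_0)$ (resp. with the half-cylinder) follows by construction.

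The main obstacle, and what requires the most care, is the boundary case: one must verify that the reparametrization respects the half-ball structure, i.e.\ that $\bp_{\pi_0}$ maps $\bG_{h_L}$ restricted to $B^+_{2^7 9r_L/2}(p^\flat_L,\pi_L)$ onto a region containing $B^+_{2^7 4r_L}(p^\flat_L,\pi_0)$ and that the boundary edge of $\bG_{h_L}$ lying over $\partial\mathbb H_{\pi_L}:=\pi_L\cap\{x_m=0\}$ projects precisely onto $\gamma$ modulo a $C^{1,\alpha}$ correction. This is where the explicit Dirichlet boundary condition $\bar h_L|_{\partial B^+_{2^7 5r_L}(p^\flat_L,\pi_L)}=\etaa\circ\bar f_L^+$ enters: since $\bar f_L^+$ collapses at the interface $(\gamma,\psi)$ (Theorem \ref{thm:second_lip}), the values of $\bar h_L$ on the flat piece of the boundary are forced to match $\psi$ up to an error controlled by $(\bE+\bA^2 r_L^2)^{1/2}r_L$, so the image of the flat edge under $\bp_{\pi_0}$ coincides with $\gamma$ after reparametrization. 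The quantitative tilt and Lipschitz bounds established in the first two steps ensure this can be done without destroying the Lipschitz constant of $g_L$ (the resulting $\Lip(g_L)$ is still $O(\varepsilon_1^{\sigmaexpcm})$), which is exactly what is needed for the subsequent patching arguments of the chapter.
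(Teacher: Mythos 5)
Your overall strategy is the paper's intended one: the proposition is really a combination of the Lipschitz estimates for $\overline h_L$ (proved in the paper as \eqref{e:lip_interior} and \eqref{e:lip_boundary} of Proposition \ref{pr:elliptic_regularization}), the tilt bound $|\pi_L-\pi_0|\leq C\varepsilon_1^{\sfrac{1}{2}}$ of Proposition \ref{pr:tilting_cm}(a), and the change-of-graph Lemma \ref{l:rotazioni_semplici} (i.e.\ \cite[Appendix B]{DS4}), and your second half follows exactly this route. However, there is a genuine gap in your first step in the boundary-cube case. The region $B^+_{2^7\frac92 r_L}(p^\flat_L,\pi_L)$ touches the interface portion of $\partial B^+_{2^7 5 r_L}(p^\flat_L,\pi_L)$, so the claimed Lipschitz bound must hold \emph{up to} the interface, and there interior gradient estimates degenerate like the inverse distance to the boundary. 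Your argument only feeds in the Lipschitz constant of the Dirichlet datum $\etaa\circ \overline f_L^+$, and Lipschitz boundary data do \emph{not} yield a Lipschitz harmonic (or Poisson) extension up to the boundary: the standard counterexample ${\rm Im}\,(z\log z)$ on the half-plane has piecewise linear boundary values but unbounded gradient at the boundary, and in general one only gets the Zygmund class. So ``harmonic extension estimates plus maximum principle'' do not close this step.

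What saves the boundary case — and what the paper's proof of \eqref{e:lip_boundary} actually does — is that on the interface portion the datum is not merely Lipschitz: since $(f^+_L,f^-_L)$ collapses at the interface, $\etaa\circ \overline f^+_L$ coincides there \emph{exactly} with the $C^{3,a_0}$ function $\psi$ parametrizing $\gammaup$ (your ``up to an error $(\bE+\bA^2r_L^2)^{\sfrac12}r_L$'' is an unnecessary weakening). One subtracts a $C^{3,a_0}$ extension $\phi$ of $\psi$ with $\|\phi\|_{3,a_0}\leq C\varepsilon_1^{\sfrac12}$, writes $\overline h_L=v+w+\overline\phi$ with $w$ solving the divergence-form problem with zero boundary values (Schauder gives $\|Dw\|_\infty\leq C\varepsilon_1^{\sfrac12}r_L^{1-\alpha_\be}$) and $v$ harmonic and vanishing on the smooth interface portion, for which the boundary gradient estimate $\|Dv\|^2_{C^0(B^+_{2^7\frac92 r_L})}\leq Cr_L^{-m}\int|Dv|^2$ applies; the smaller half-ball stays at distance comparable to $r_L$ from the spherical part of the boundary, so the only boundary regularity needed is along $\gammado$. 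With this correction your argument matches the paper's. Two further minor points: to apply Lemma \ref{l:rotazioni_semplici} and obtain that the projected image covers $B^+_{2^74r_L}(p^\flat_L,\pi_0)$ (resp.\ $B_{4r_L}(p_L,\pi_0)$) you also need the smallness of the offset $|h_L-\bp_{\pi_L}^\perp(p^\square_L)|$ relative to $r_L$, which follows from the height bounds (cf.\ Proposition \ref{p:block_estimates}(d)) and should be stated; and since $\overline h_L=\overline\psi$ exactly on the interface, the edge of $\bG_{h_L}$ is exactly the relevant piece of $\gammaup$, so no ``reparametrization modulo a correction'' of $\gammado$ is needed.
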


\begin{definition}\label{def:interpolating}
The function $g_L$ is called \emph{$L$-interpolating function}\index{L@$L$-interpolating function}.
\end{definition}

\subsection{Glued interpolations and center manifolds}\label{ss:glued_interpolations}
Let us define the Whitney cubes at the step $j$ as
\[
\mathscr P_j:=\sS_j\cup\bigcup_{i=N_0+1}^j \sW_i\,.
\]
Note that $\mathscr P_j$ is a ``Whitney family of dyadic cubes'' in the sense that if $K, L \in \mathscr{P}_j$ have non empty intersection, then $\frac{1}{2} \ell (L) \leq \ell (K) \leq 2 \ell (L)$. Consistently with the notation introduced in the previous section we let $\varkappa_0 := \pi_0^\perp \cap T_0 \Sigma$ be the orthogonal complement of $\pi_0$ in $T_0 \Sigma$. 
Recall then the map $\Psi:\pi_0\times \varkappa_0 = T_0 \Sigma\to T_0\Sigma^\perp$, which is the graphical parametrization of $\Sigma$ with respect to $T_0 \Sigma$. We fix a function $\vartheta\in C^\infty_c ([-\frac{17}{16}, \frac{17}{16}]^m, [0,1])$ which is identically $1$ on $[-1,1]^m$. For each cube $L$ we define further\index{aaghtilde_L@$\tilde{\vartheta}_L$}
\[  \tilde{\vartheta}_L(y)
 := \vartheta \left(\frac{y-c (L)}{\ell (L)}\right)\, .
\]
We obtain a partition of unity of $B_{3/2}^+$ by setting\index{aagh_L@$\vartheta_L$}
\[\vartheta_L(y) := \frac{\tilde{\vartheta}_L(y)}{\sum_{H \in \mathscr P_j} \tilde{\vartheta}_H(y)}\,.\] 
\begin{definition}\label{def:glued_interpolations}
We set \index{aagvbar_j@$\overline\varphi_j$}
\[
\overline\varphi_j:=\sum_{L\in\mathscr P_j}\vartheta_L  \overline g_L\, ,
\] 
and \index{aagv_j@$\varphi_j$}
\[
\varphi_j:=(\overline\varphi_j,\Psi\circ\overline\varphi_j)\,.
\]
The latter map is called the {\em glued interpolation at the step $j$.}\index{Glued interpolation} 
\end{definition}
We are now ready to state the main theorem regarding the construction of the {\em right center manifold}. 

\begin{theorem}\label{thm:center_manifold}\label{THM:CENTER_MANIFOLD}
If $T, \Sigma$ and $\gammaup$ are as in Assumptions \ref{ass:decay+cone} and $\alpha_\be,\alpha_\bh,M_0,N_0,C_\be,C_\bh,\varepsilon_1$ fulfill the Assumptions \ref{ass:cm}, then there is a $\kappa>0$, depending only upon $\alpha_\be$ and $\alpha_\bh$, such that
\begin{itemize}
\item[(a)] $\|\varphi_j\|_{3, \kappa, B_{3/2}^+} \leq C \varepsilon_1^{\sfrac{1}{2}}$, for some $C=C(\alpha_\be, \alpha_\bh, M_0, C_{\be}, C_{\bh})$;
\item[(b)] If \(i\le j\), $L\in \sW_{i-1}$ and $H$ is a cube concentric to $L$ with $\ell (H) = \frac{9}{8} \ell (L)$, then $\varphi_j = \varphi_i$ on $H$;
\item[(c)] $\varphi_j$ converges in $C^3$ to a map ${\bm \varphi}^+: B^+_{3/2}\to \R^n$\index{aagv{\bm\varphi}^\pm@${\bm\varphi}^+$ (resp. ${\bm\varphi}^-$)}, 
whose graph is a $C^{3,\kappa}$ submanifold $\mathcal M^+$ of $\Sigma$, which will be called {\em right center manifold}\index{Right (resp. left) center manifold};
\item[(d)] ${\bm \varphi}^+ = \psi$ on $\gammado\cap B_{3/2}$, namely $\partial \mathcal M^+ \cap \bC_{3/2} = \gammaup \cap \bC_{3/2}$;
\item[(e)] For any $q\in \partial \mathcal M^+ \cap \bC_{3/2}$, the tangent plane $T_q \mathcal M^+$ coincides with the plane $\pi (q)$ in Theorem \ref{thm:decay_and_uniq}.
\end{itemize}

\end{theorem}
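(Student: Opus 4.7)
\medskip

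\textbf{Plan of proof for Theorem \ref{thm:center_manifold}.} The scheme follows the strategy of \cite{DS4} for the interior center manifold, but has to be carefully adapted in two respects: (i) for boundary cubes $L\in\sC^\flat$ the planes $\pi_L$ optimise the \emph{constrained} excess (containing $T_{p_L^\flat}\gammaup$) and the interpolating function solves a Dirichlet problem on a half-disk with data from the collapsed $\qhalf$-approximation of Theorem \ref{thm:second_lip}; (ii) the refining procedure never stops on boundary cubes (Lemma \ref{cor:no_stop_b}), so the Whitney families $\mathscr P_j$ degenerate in a controlled way towards $\gammado$. The first thing I would do is establish a \emph{tilting/height} estimate: for any two cubes $L,H\in\sS\cup\sW$ with $L\cap H\neq\emptyset$,
\[
|\pi_L-\pi_H|\le C\,\varepsilon_1^{\sfrac12}\,\ell(L)^{1-\alpha_\be},\qquad |p_L-p_H|\le C\,\varepsilon_1^{\sfrac1{2m}}\ell(L)^{1+\alpha_\bh},
\]
together with the statement that none of the non-boundary cubes at scale $N_0$ gets stopped (Proposition of the ``tilting of optimal planes''). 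For non-boundary cubes this is exactly \cite[\S 4]{DS4} using the excess and height stopping thresholds $C_\be\varepsilon_1\ell^{2-\alpha_\be}$, $C_\bh\varepsilon_1^{\sfrac1{2m}}\ell^{1+\alpha_\bh}$; for pairs involving a boundary cube it follows instead from Lemma \ref{lem:raffinamento} combined with Theorem \ref{thm:decay_and_uniq}(c)(d), because $\pi_L$ is automatically close to $\pi(p_L^\flat)$ and these vary H\"older continuously along $\gammaup$.

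The second step is a pair of interpolation estimates for the interpolating functions of \S\ref{s:interpolating}. In the non-boundary cylinder the argument is essentially \cite[Proposition 4.2, Theorem 5.1]{DS4}: using Theorem \ref{t:old_approx} and interior Schauder estimates for the constant-coefficient elliptic system \eqref{e:defining_PDE_i}, I would derive
\[
\ell(L)^{-3-\kappa}\|\bar h_L-\bar h_L(c(L))\|_{C^0}+\ldots+\ell(L)^{-\kappa}\|D^3\bar h_L\|_{C^0}\le C\varepsilon_1^{\sfrac12},
\]
and, for two neighbours,
\[
\sum_{i=0}^{3}\ell(L)^{i-3-\kappa}\|D^i(h_L-h_H)\|_{C^0(B)}\le C\varepsilon_1^{\sfrac12}\ell(L)^{-\kappa}\,\mathbf e_{LH},
\]
where $\mathbf e_{LH}$ collects tilting, excess and height on $L\cup H$, and is $o(\ell(L)^{3+\kappa})$ by the stopping thresholds. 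For boundary cubes I would prove the same estimate but \emph{up to the curved part of the boundary}, replacing Theorem \ref{t:old_approx} by Theorem \ref{thm:second_lip} and interior Schauder by boundary Schauder estimates on the half-disk $B^+_{2^7 5 r_L}(p^\flat_L,\pi_L)$. Here the Dirichlet trace of $\bar h_L$ is $\etaa\circ\bar f_L^+$, which by the collapsed-interface property of Theorem \ref{thm:second_lip} coincides with $\bp_{\varkappa_L}(\psi)$ on the flat part $\{x_m=0\}\cap\partial B^+_{2^7 5 r_L}(p^\flat_L,\pi_L)$. Hence the PDE plus boundary data inherit a $C^{3,a_0}$ boundary datum bounded by $\|\psi\|_{C^{3,a_0}}\leq\varepsilon_1^{\sfrac12}$, and the boundary Schauder estimate closes the circle.

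Next I would assemble the glued interpolation $\varphi_j$. Because $\mathscr P_j$ is a Whitney family, on any point of $B_{3/2}^+$ at most $C(m)$ cutoffs $\vartheta_L$ do not vanish, and the derivatives satisfy $|D^k\vartheta_L|\le C\ell(L)^{-k}$. Writing
\[
D^k\bar\varphi_j(y)=\sum_{L}D^k\bar h_L\,\vartheta_L+\sum_{k'<k}(\text{cross terms in }D^{k-k'}\vartheta_L\otimes D^{k'}(\bar h_L-\bar h_{L(y)})),
\]
where $L(y)$ is a fixed reference cube containing $y$, the neighbour comparison estimate from step two absorbs the singular factors $\ell(L)^{-(k-k')}$ and yields the uniform bound (a). Statement (b) follows by construction: once $L\in\sW_{i-1}$, none of its descendants is refined at steps $j\ge i$, so on the slightly enlarged cube $H$ the family $\mathscr P_j$ restricted to $H$ is eventually stationary, and the partition of unity, together with the interpolating functions, are unchanged. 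Consequently (c) is a Arzel\`a–Ascoli consequence of (a) and (b), yielding a $C^3$ limit ${\bm\varphi}^+$ which, because $\varphi_j$ takes values in $\Sigma$ by construction (its last $l$ components are given by $\Psi$ applied to the first $m+\bar n$), inherits a $C^{3,\kappa}$ graph inside $\Sigma$.

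Finally, properties (d) and (e) concern the behaviour at $\gammado$ and are, in my opinion, the \emph{main obstacle}. The point is that near $\gammado$ only boundary cubes appear and they refine indefinitely (Lemma \ref{cor:no_stop_b}), so the $\mathscr P_j$ family becomes arbitrarily fine. I would argue as follows. Fix $q\in\gammado\cap B_{3/2}$ and a scale $r=2^{-j}$; any cube $L\in\mathscr P_j$ with $c(L)$ within distance $C r$ of $q$ is a boundary cube with $p^\flat_L\to(q,\psi(q))$ and, by Lemma \ref{lem:raffinamento} and the decay of the excess, $\pi_L\to\pi(q,\psi(q))$. Using that $\bar h_L=\bp_{\varkappa_L}(\psi)$ on the flat trace and that $\bar f_L^+$ collapses to $Q\a\psi$ there, a direct $C^0$ comparison shows $\bar\varphi_j\to\bp_{\varkappa_0}(\psi)$ uniformly on $\gammado\cap B_{3/2}$, and hence ${\bm\varphi}^+=\psi$ on $\gammado$, which is (d). For (e), since $\psi$ is $C^{3,a_0}$ and ${\bm\varphi}^+$ is $C^{3,\kappa}$ up to $\gammado$, differentiating the identity ${\bm\varphi}^+(x',0)=\psi(x')$ gives the tangential directions of $T_q\mathcal M^+$, while the normal direction is recovered from the tilting estimate $|\pi_L-\pi(p^\flat_L)|\le C\varepsilon_1 r_L^{1-\alpha_\be}\to 0$, which forces $T_q\mathcal M^+=\pi(q)$. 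The technical heart of the argument is the boundary Schauder estimate on the sequence of shrinking half-disks, uniform in $L$, since everything else then cascades from the scheme of \cite{DS4}.
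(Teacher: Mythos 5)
Your outline reproduces the overall architecture of the construction (tilting estimates for the optimal planes, estimates for the tilted interpolating functions, patching through the partition of unity, and the boundary statements (d), (e) recovered from the trace $\psi$ on $\gammado$ together with $|\pi_L-\pi(p^\flat_L)|\to 0$ and Theorem \ref{thm:decay_and_uniq}); steps one and three and your treatment of (b), (d), (e) are essentially the paper's arguments. The problem is your second step, which is the technical heart, and there the proposal has a genuine gap.

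First, the single-cube estimate you display is false as stated: the available bound is $\|D\bar h_L\|_{C^0}\le C\varepsilon_1^{\sfrac12}\ell(L)^{1-\alpha_\be}$, so $\ell(L)^{-3-\kappa}\|\bar h_L-\bar h_L(c(L))\|_{C^0}$ is of order $\ell(L)^{-1-\kappa-\alpha_\be}$ and blows up as $\ell(L)\to 0$; moreover Schauder estimates on the cylinder of size $r_L$ lose a factor $r_L^{-2}$ and cannot give a uniform $C^{3,\kappa}$ bound of size $\varepsilon_1^{\sfrac12}$. In the paper that uniform bound is obtained by telescoping along the chain of ancestors: each difference $\bar h_{HH_j}-\bar h_{HH_{j-1}}$ is harmonic, has $L^1$ norm $\le C\varepsilon_1\ell(H_{j-1})^{m+3+\alpha_\bh}$, hence $C^{3,\kappa}$ norm $\le C\varepsilon_1 2^{-j\kappa}$, and a genuine Schauder estimate is used only at the starting scale $N_0$ where the radius is of order one. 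Second, and more importantly, your neighbour comparison does not close: the quantity $\mathbf{e}_{LH}$ you invoke (``tilting, excess and height'') is \emph{not} $o(\ell(L)^{3+\kappa})$ --- the stopping thresholds only give excess $\lesssim \varepsilon_1\ell^{2-2\alpha_\be}$ and height $\lesssim \varepsilon_1^{\sfrac{1}{2m}}\ell^{1+\alpha_\bh}$. The required $\ell^{m+3+\alpha_\bh}$ smallness (in $L^1$, later upgraded by interpolation with the uniform $C^{3,\kappa}$ bounds, not by Schauder applied to the difference) comes from a mechanism you never invoke: testing the first variation of $T$ with vertical vector fields to show that $\etaa\circ\bar f_{HL}$ solves the linearized constant-coefficient system up to errors $C\varepsilon_1 r^{m+1+\alpha_\bh}(r\|D\zeta\|_0+\|\zeta\|_0)$, which yields $\|\bar h_{HL}-\etaa\circ\bar f_{HL}\|_{L^1}\le C\varepsilon_1 r^{m+3+\alpha_\bh}$; this must be combined with the superlinear (in the excess) agreement of the Lipschitz approximations taken in different cylinders and over different reference planes, and with the change-of-plane $L^1$ lemmas of \cite{DS4} (where the condition $\alpha_\bh>4\alpha_\be$ enters). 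Since $h_L$ and $h_H$ are graphs over \emph{different} planes and solve boundary value problems on different domains with different right-hand sides, there is no PDE satisfied by their difference to which a Schauder estimate could be applied; without the first-variation and change-of-plane ingredients the estimate $\|g_H-g_L\|_{C^i}\le C\varepsilon_1^{\sfrac12}\ell(L)^{3+\kappa-i}$, and with it parts (a) and the $C^{3,\kappa}$ regularity in (c), cannot be obtained from the ingredients you cite.
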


The construction of $\mathcal M^+$ made in Theorem \ref{thm:center_manifold} is based on the decomposition of $B_{3/2}^+$. Under Assumption \ref{ass:cm}, the same construction can be made for $B_{3/2}^-$ and gives a $C^{3,\kappa}$ map ${\bm\varphi}^-: B^-_{3/2}\to\R^n$ which agrees with $\psi$ on $\gammado\cap B_{3/2}$. The graph of ${\bm\varphi}^-$ is a $C^{3,\kappa}$ submanifold $\mathcal M^-\subset\Sigma$, which will be called {\em left center manifold}. Clearly its boundary in the cylinder $\bC_{3/2}$, namely $\partial \mathcal M^- \cap \bC_{3/2}$, coincides, in a set-theoretical sense, with $\partial \mathcal M^+ \cap \bC_1$, but it has opposite orientation, and moreover its tangent plane $T_q\mathcal M^-$ coincides with $\pi(q)$ for every point $q\in\partial\mathcal{M}^-\cap\bC_{3/2}$. In particular, the union $\mathcal M:= \mathcal M^+\cup\mathcal M^-$ of the two submanifolds  is a $C^{1,1}$ submanifold of $\Sigma\cap \bC_{3/2}$ without boundary (in $\bC_{3/2}$), which will be called {\em center manifold}\index{Center manifold}. Moreover, we will often state properties of the center manifold related to cubes $L$ in one of the collections $\sW_j$ described above. Therefore, we will denote by $\sW^+$ the union of all $\sW_j$ and by $\sW^-$ the union of the corresponding classes of cubes which lead to the left center manifold $\mathcal{M}^-$. 

\begin{remark}
We emphasize again that so far we can only conclude the $C^{1,1}$ regularity of $\mathcal M$, because we do not know that the traces of the second derivatives of ${\bm\varphi}^+$ and ${\bm\varphi}^-$ coincide on $\gammado$.
\end{remark}

\begin{definition}
Let us define the graph parametrization map of $\mathcal M^+$\index{aagv{\bm\Phi}^+@${\bm\Phi}^+$} as ${\bm\Phi}^+(x):=(x,{\bm\varphi}^+(x))$. We will call {\em right  contact set}\index{Right contact set} the subset $\mathbf{K}^+ := {\bm\Phi}^+(\mathbf S^+)$\index{aalk\mathbf{K}^+@$\mathbf{K}^+$ (resp. $\mathbf{K}^-$)}. For every cube $L\in \sW^+$ we associate a {\em Whitney region} $\mathcal L$\index{aalL@$\mathcal L$} on $\mathcal M^+$ as follows: 
\begin{itemize}
\item $L:= {\bm \Phi}^+(H\cap B_1)$ where $H$ is the cube concentric to $L$ such that $\ell(H)=\frac{17}{16}\ell(L)$.
\end{itemize}
Analogously we define the map $\mathbf \Phi^-$, the contact set $\mathbf{K}^-$ and the Whitney regions on the left center manifold $\mathcal{M}^-$. 
\end{definition}

\section{The approximation on the normal bundle of $\mathcal{M}$}

In what follows we assume that Theorem \ref{thm:center_manifold} may be applied and we fix a corresponding center manifold $\mathcal M$, subdivided into its left and right portions. For any Borel set $\mathcal{V} \subset \mathcal{M}$ we denote by $|\mathcal{V}|$ its Hausdorff $m$-dimensional measure and we write $\int_{\mathcal{V}} f$ for $\int_{\mathcal{V}} f\, d\cH^m$. 

Since the two portions $\mathcal{M}^-$ and $\mathcal{M}^+$ are $C^{3, \kappa}$ and they join with $C^1$ regularity along $\gammaup$, in a sufficiently small normal neighborhood of $\mathcal{M}$ there is a well defined orthogonal projection $\mathbf{p}$ onto $\mathcal{M}$. The thickness of the neighborhood is inversely proportional to the size of the second derivatives of ${\bm \varphi}^\pm$ and hence, for $\varepsilon_1$ sufficiently small, we can assume it is $2$. Summarizing, in the rest of the section we make the following assumptions:

\begin{ipotesi}\label{ass:cm_app}
$T, \Sigma$ and $\gammaup$ are as in Assumption \ref{ass:decay+cone} and the various parameters $\alpha_\be,\alpha_\bh,M_0,N_0,C_\be,C_\bh,\varepsilon_1$ satisfy Assumption \ref{ass:cm}. In particular Theorem \ref{thm:center_manifold} applies and we let $\mathcal{M}$ be the union of the left and right center manifolds. $\varepsilon_1$ is sufficiently small so that, if
\begin{align}
\mathbf{U} &:= \{q\in \R^{m+n} :\,\exists! q' = \mathbf{p} (q) \in \mathcal{M} \mbox{ s.t. $|q-q'|<1$}\nonumber\\
&\qquad\qquad \qquad\qquad\qquad\qquad\qquad\qquad \mbox{and $q-q' \perp \mathcal{M}$}\}\, ,\label{e:tub_neigh}
\end{align}
then the map $\mathbf{p}$ extends to a  Lipschitz map to  the closure $\overline{\mathbf{U}}$  which is  $C^{2, \kappa}$ on \(\mathbf{U}\setminus \mathbf{p}^{-1} (\Gamma)\) and
\[
\mathbf{p}^{-1} (q') = q' + \overline{B_1 (0, (T_{q'} \mathcal{M})^\perp)} \qquad \mbox{for all $q'\in \overline{\mathcal{M}}$.}
\]
\end{ipotesi}

We then have the following as a consequence of the construction algorithm:

\begin{corollary}\label{c:cm}\label{C:CM}
Under Assumption \ref{ass:cm_app} the following holds: 
\begin{itemize}
\item[(a)] $\supp (\partial (T\res\mathbf{U}))\cap \bC_1 \subset \Gamma \cup \mathbf{p}^{-1} (\partial \mathcal{M})$, $\supp (T) \cap \bC_1 \subset \mathbf{U}$ and
\[
\mathbf{p}_\sharp (T\res \mathbf{U}) = (Q-1) \a{\mathcal{M}^-} + Q \a{\mathcal{M}^+}\, ;
\]
\item[(b)] $\supp (\langle T, \mathbf{p},  x\rangle) \subset \{y: | x-y|\leq C \varepsilon_1^{\sfrac{1}{2m}} \ell (L)^{1+\alpha_{\bh}}\}$ for a $C = C(\allin)$ and every $x\in \mathcal{L}$ Whitney region corresponding to $L\in \sW^+ \cup \sW^-$;
\item[(c)] $\langle T, \mathbf{p},q \rangle = Q \a{q}$ $\forall q\in \mathbf{K}^+\setminus \gammaup$ and 
$\langle T, \mathbf{p}, q\rangle = (Q-1) \a{q}$ $\forall q\in \mathbf{K}^-\setminus \gammaup$;
\item[(d)] $\mathbf{K}^+ \cap \mathbf{K}^- = \gammaup \cap \bC_{3/2}$ and $\supp (T \cap \mathbf{p}^{-1} (q)) = \{q\}$ for every $q\in \gammaup \cap \bC_{3/2}$. 
\end{itemize}
\end{corollary}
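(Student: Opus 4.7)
The plan is to establish the four assertions roughly in order, leveraging the stopping conditions of the refining procedure together with the height and tilt estimates of Theorem~\ref{thm:decay_and_uniq} and Lemma~\ref{lem:raffinamento}. First I would prove the inclusion $\supp(T)\cap\bC_1\subset\mathbf{U}$: for any $x\in\supp(T)\cap\bC_1$, let $L$ be either a stopped cube in $\sW^+\cup\sW^-$ with $\bp_{\pi_0}(x)\in L$ (or, if $\bp_{\pi_0}(x)\in\mathbf S^+\cup\mathbf S^-$, a cube $L\in\sS_j$ at a sufficiently fine scale). If $L\in\sW^\bh$ the stopping criterion gives $\bh(T,\bB_L,\pi_L)\le C_\bh\varepsilon_1^{1/(2m)}\ell(L)^{1+\alpha_\bh}$; for $L\in\sW^\be\cup\sW^{\mathbf n}$ a neighbouring cube in $\sW^\bh$ controls the same quantity, while for boundary-type regions the analogous bound is supplied by Lemma~\ref{lem:raffinamento}. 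Combining with $\|D{\bm\varphi}^\pm\|_0\le C\varepsilon_1^{1/2}$ (from Theorem~\ref{thm:center_manifold}(a)), one shows that $\mathcal M$ lies within vertical distance $C\varepsilon_1^{1/(2m)}\ell(L)^{1+\alpha_\bh}$ of $\pi_L$ over $L$, so $\mathrm{dist}(x,\mathcal M)\ll 1$; the uniqueness of $\mathbf p(x)$ follows from the smallness of the second fundamental form of $\mathcal M$, which is of order $\varepsilon_1^{1/2}$.

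Next, for the pushforward identity in (a), I would combine $\partial T\res\bC_{3/2}=\a\gammaup$ with the inclusion just proved to conclude that $\partial(T\res\mathbf U)$ is supported in $\gammaup\cup\mathbf p^{-1}(\partial\mathcal M)\cup\partial\bC_{3/2}$; pushing forward under $\mathbf p$ yields the first conclusion of (a). On each of $\mathcal M^+\setminus\gammaup$ and $\mathcal M^-\setminus\gammaup$, which are connected $C^{3,\kappa}$ submanifolds without boundary inside $\bC_1$, the Constancy Theorem then forces $\mathbf p_\sharp(T\res\mathbf U)\res\mathcal M^\pm=N^\pm\a{\mathcal M^\pm}$ for integers $N^\pm$. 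To identify $N^\pm$ (which simultaneously proves (c)), I would compute the slice $\langle T,\mathbf p,q\rangle$ at a contact point $q=\mathbf\Phi^+(x)\in\mathbf K^+\setminus\gammaup$ (and analogously for $\mathbf K^-$). Since $x\in\mathbf S^+$, $x$ lies in the contact set of the relevant $\pi_L$-approximation in every neighbouring cube $L$; because $\bar g_L$ is built from the centre-of-mass $\etaa\circ\bar f_L$ and $\mathbf\Phi^+$ inherits this averaging in the limit, the slice of $T$ over $q$ must be $Q\a q$ on the right, and the parallel computation produces $(Q-1)\a q$ on the left.

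Assertion (b) follows from a localized version of the argument used for (a): given $x\in\mathcal L=\mathbf\Phi^\pm(H\cap B_1)$, I would select the relevant cube $L\in\sW^+\cup\sW^-$ (or the controlling boundary cube from Lemma~\ref{lem:raffinamento}), invoke the height estimate $\bh(T,\bB_L,\pi_L)\le C_\bh\varepsilon_1^{1/(2m)}\ell(L)^{1+\alpha_\bh}$, and then compare $\pi_L$ with $T_x\mathcal M$ using the tilt estimates built into the proof of Theorem~\ref{thm:center_manifold}. For (d), the equality $\mathbf K^+\cap\mathbf K^-=\gammaup\cap\bC_{3/2}$ is a direct consequence of Lemma~\ref{cor:no_stop_b} (which gives $\gammado\cap B_{3/2}\subset\mathbf S^+\cap\mathbf S^-$) combined with ${\bm\varphi}^+={\bm\varphi}^-=\psi$ on $\gammado$ (Theorem~\ref{thm:center_manifold}(d)), while the reverse inclusion uses $\mathcal M^+\cap\mathcal M^-=\gammaup\cap\bC_{3/2}$. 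The statement $\supp(T\cap\mathbf p^{-1}(q))=\{q\}$ for $q\in\gammaup\cap\bC_{3/2}$ follows by applying (b) on both sides of $\gammaup$ near $q$ together with Theorem~\ref{thm:decay_and_uniq}(e): any additional point in the fibre would, upon rescaling by the (vanishing) vertical scale $\varepsilon_1^{1/(2m)}\ell^{1+\alpha_\bh}$, survive to produce a tangent cone at $q$ not of the form $Q\a{\pi(q)^+}+(Q-1)\a{\pi(q)^-}$, contradicting Theorem~\ref{thm:decay_and_uniq}(b).

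The main obstacle will be the constancy-type identification of the multiplicities $N^\pm$ in (a) and (c), and in particular the verification that no ``multiplicity leakage'' occurs across the seam $\gammaup$ where $\mathcal M^+$ and $\mathcal M^-$ glue together. This relies crucially on the cooperation of three ingredients developed earlier: the uniform cone containment of Corollary~\ref{c:cone_cut}, the boundary height bounds of Lemma~\ref{lem:raffinamento}, and the boundary-compatible Lipschitz approximation of Theorem~\ref{thm:second_lip} that underlies the construction of $g_L$ on boundary cubes; without these, the Constancy Theorem cannot be applied up to $\gammaup$ and the jump $N^+-N^-=1$ forced by the prescribed boundary $\a\gammaup$ would not be guaranteed.
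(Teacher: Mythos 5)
Your overall architecture (height bounds $\Rightarrow$ $\supp(T)\cap\bC_1\subset\mathbf U$, then constancy on $\mathcal M^\pm$, then slices, then the fibre statement at $\gammaup$) matches the paper's, which reduces everything to the interior arguments of \cite[Section 6.1]{DS4} after observing that all cubes of $\sW$ are non-boundary cubes and that $\supp(\partial(T\res\mathbf U))\cap\bC_1\subset\Gamma\cup\mathbf p^{-1}(\partial\mathcal M)$ follows from (b)$^\flat$, (b)$^\natural$ of Proposition \ref{pr:tilting_cm}. The genuine gap is in how you pin down the multiplicities in (a), and consequently (c). The Constancy Theorem and the boundary condition $\partial(\bp_\sharp(T\res\mathbf U))\res\bC_1=\a{\gammaup}$ only give $\bp_\sharp(T\res\mathbf U)=N^+\a{\mathcal M^+}+N^-\a{\mathcal M^-}$ with $N^+-N^-=1$; nothing in your argument identifies $N^+=Q$. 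You propose to get this from the slice $\langle T,\mathbf p,q\rangle$ at a contact point $q\in\mathbf K^+\setminus\gammaup$, arguing that ``$\bar g_L$ is built from the centre of mass $\etaa\circ\bar f_L$, so the slice must be $Q\a q$''. This is not a proof: the averaging used to build $g_L$ says nothing about the multiplicity of the slice of $T$ over $q$; in fact the standard way to compute that slice (as in \cite[Corollary 2.2(iii)]{DS4}) \emph{presupposes} the pushforward identity of (a) together with (b), so your order of deduction is circular. Moreover $\mathbf K^+\setminus\gammaup$ may well be empty or $\cH^m$-null, in which case slice information there cannot be transferred to the a.e.\ defined pushforward multiplicity at all. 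The missing ingredient is the comparison of $\mathbf p$ with the planar projection $\bp_{\pi_0}$ (or $\bp_{\pi_L}$), whose pushforward is \emph{known}: by Assumption \ref{Ass:app}(iii) and Proposition \ref{prop:yes_we_can_more} one has $\bp_{\pi_0\,\sharp}T=Q\a{\Omega^+}+(Q-1)\a{\Omega^-}$ in the relevant cylinders. The paper fixes a small cylinder $\bC(x,r,\pi_0)$ with $2r<\dist(x,\gamma)$ and repeats the homotopy argument of \cite[Section 6.1]{DS4}, the height bounds guaranteeing that no mass crosses the lateral boundary while interpolating between the two projections; this yields $\mathbf p_\sharp T\res\bC=Q\a{\mathcal M^+\cap\bC}$ or $(Q-1)\a{\mathcal M^-\cap\bC}$ according to the side, and hence $N^+=Q$, $N^-=Q-1$. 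Without this step, (a) is not established and (c) has no basis.

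A secondary, smaller point: your derivation of (c) should then run in the paper's order — first (a) and (b), then the observation that at a point of $\mathbf S^+$ the refinement never stops, so the height of $T$ above the evolving planes decays like $\ell^{1+\alpha_\bh}$ at \emph{every} scale, forcing $\supp\langle T,\mathbf p,q\rangle=\{q\}$, and the degree $Q$ supplied by (a) then gives $\langle T,\mathbf p,q\rangle=Q\a q$. Your argument for (d) is essentially sound (a point $z\neq q$ in the fibre $\mathbf p^{-1}(q)$ with $z-q\perp T_q\mathcal M=\pi(q)$ violates the height decay $\bh(T,\bB_\rho(q),\pi(q))\leq C\rho^{3/2}$ of Theorem \ref{thm:decay_and_uniq}(e) at scale $\rho\sim|z-q|$), though phrasing it through tangent cones is unnecessary; the paper instead invokes that boundary cubes are always refined, that the balls $\bB^\flat_H$ are centered on $\gammaup$, and (b)$^\flat$ of Proposition \ref{pr:tilting_cm}.
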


\subsection{Local estimates}
The center manifold is coupled with a map on $\mathcal{M}$ taking values in the normal bundle which approximates the current $T$ with very high accuracy.

\begin{definition}\label{d:app_pair}
Given a center manifold $\mathcal{M}$ as in Assumption \ref{ass:cm_app}, an $\mathcal{M}$-normal approximation of $T$ is given by a triple $(\mathcal{K}, F^+, F^-)$ such that
\begin{itemize}
\item[(A1)] $F^+: \mathcal{M}^+\cap \bC_1 \to \mathcal{A}_Q (\mathbf{U})$ and $F^-: \mathcal{M}^-\cap \bC_1 \to \mathcal{A}_{Q-1} (\mathbf{U})$\index{aalF^\pm@$F^\pm$} are Lipschitz and take the form $F^\pm (x) = \sum_i \a{x+N^\pm_i (x)}$ with $N^\pm_i (x) \perp T_x \mathcal{M^\pm}$ and $x+ N^\pm_i (x) \in \Sigma$ for every $i$ and every $x\in \mathcal{M^\pm}$;
\item[(A2)] $\mathcal{K} \subset \mathcal{M}$\index{aalK@$\mathcal{K}$} is closed and $\mathbf{T}_{F^\pm} \res \mathbf{p}^{-1} (\mathcal{K}\cap \mathcal{M}^\pm) = T \res \mathbf{p}^{-1} (\mathcal{K}\cap \mathcal{M}^\pm)$, where \(\mathbf T_{F^\pm}:=F^\pm_\sharp \a{\mathcal M}\), see \cite{DS2}\index{aalt\mathbf{T}_{F^+}@$\mathbf{T}_{F^+}$}\index{aalt\mathbf{T}_{F^-}@$\mathbf{T}_{F^-}$} ;
\item[(A3)] $\mathbf{K}^+ \cup \mathbf{K}^- \subset \mathcal{K}$\index{aalK^\pm@$\mathbf{K}^\pm$} and moreover $F^+ (x) =  Q \a{x}$ (resp. $F^- (x) = (Q-1) \a{x}$) on $\mathbf{K}^+$ (resp. $\mathbf{K}^-$).
\end{itemize}
\end{definition}

 Observe that the pairs $(F^+, F^-)$ and $(N^+, N^-)$\index{aalN^\pm@$N^\pm$} can be regarded as $\qhalf$-valued maps.
The following theorem, which is a consequence of the construction and of the estimates leading to Theorem \ref{thm:center_manifold}, ensures the existence of an $\mathcal{M}$-normal approximation which describes the current $T$ with a high degree of accuracy:

\begin{theorem}[Local estimates for the $\mathcal{M}$-normal approximation]\label{thm:cm_app}\label{THM:CM_APP}
Under Assumption \ref{ass:cm_app} there is a constant $\sigmaexpcm>0$ (depending only on $m, n, \overline{n}, Q$) such that
there is an $\mathcal{M}$-normal approximation $(\mathcal{K}, (F+, F^-))$ satisfying the following estimates on any Whitney region $\mathcal{L}\subset \mathcal{M}$ associated to a cube $L \in \sW^+\cup \sW^-$ (where to simplify the notation we use $N$ in place of $N^+$ and $N^-$):
\begin{align}
\Lip (N|_{\mathcal{L}}) & \leq C\varepsilon_1^{\sigmaexpcm} \ell (L)^\sigmaexpcm \label{e:cm_app1}\\
\|N_{\mathcal{L}}\|_0 &\leq C \varepsilon_1^{\sfrac{1}{2m}} \ell (L)^{1+\alpha_\bh}\label{e:cm_app2}\\
|\mathcal{L}\setminus \mathcal{K}| + \|\mathbf{T}_F -T\| (\mathbf{p}^{-1} (\mathcal{L})) & \leq C \varepsilon_1^{1+\sigmaexpcm} \ell(L)^{m+2+\sigmaexpcm}\label{e:cm_app3}\\
\int_{\mathcal{L}} |DN|^2 & \leq C \varepsilon_1 \ell (L)^{m+2-2\alpha_\be}\label{e:cm_app4}
\end{align}
for a constant $C = C (\allin)$.

Moreover, for any $a>0$ and any Borel $\mathcal{V} \subset \mathcal{L}$,
\begin{align}
\int_{\mathcal{V}} |\etaa\circ N| &\leq C \varepsilon_1 \left(\ell(L)^{m+3+\alpha_\bh/3} + a \ell (L)^{2+\sigmaexpcm/2} 
|\mathcal{V}|\right) \nonumber\\
&\quad+ \frac{C}{a} \int_{\mathcal{V}} \mathcal{G} (N, Q \a{\etaa\circ N})^{2+\sigmaexpcm}\, .\label{e:cm_app5}
\end{align}
\end{theorem}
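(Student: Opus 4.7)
The plan is to build $(F^+,F^-)$ by gluing together local Lipschitz approximations on the Whitney regions, and then to derive the estimates \eqref{e:cm_app1}--\eqref{e:cm_app5} cube-by-cube. For a non-boundary cube $L\in\sW^\natural$ the local approximation comes from the interior approximation Theorem \ref{t:old_approx} applied in $\bC_{36r_L}(p_L,\pi_L)$, as already done in \cite{DS4}. For a boundary cube $L\in\sW^\flat$ we instead use the second Lipschitz approximation Theorem \ref{thm:second_lip} in $\bC_{2^7 36 r_L}(p_L^\flat,\pi_L)$: this produces a $\qhalf$-valued map $(u_L^+,u_L^-)$ collapsing at the interface which already has the correct asymmetric sheet count ($Q$ on one side, $Q-1$ on the other) needed for the definition of $F^\pm$. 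By Proposition \ref{prop:yes_we_can} the hypotheses of both approximation theorems are satisfied, and Lemma \ref{lem:raffinamento} converts the stopping conditions (1)--(3) of the refinement into quantitative excess and tilt bounds on each $\bB_L$ or $\bB_L^\flat$.

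\textbf{Tilting onto $\mathcal{M}$.} The approximating maps live over $\pi_L$, while we need graphs over $\mathcal{M}^\pm$. First one rewrites each multivalued graph as a graph over the ambient tangent plane $T_{p_L}\Sigma$ (respectively $T_{p_L^\flat}\Sigma$) by composing with the graphical parametrization $\Psi_L$, exactly as in Section \ref{s:interpolating}. Then one uses the $C^{2,\kappa}$ projection $\mathbf p$ onto $\mathcal{M}$ provided by Assumption \ref{ass:cm_app}, together with the $C^3$ estimate $\|\varphi_j\|_{3,\kappa}\le C\varepsilon_1^{1/2}$ from Theorem \ref{thm:center_manifold}(a) and the tilt bound $|\pi_L-T_{\bm\Phi^\pm(c(L))}\mathcal M^\pm|\le C\varepsilon_1^{1/2}\ell(L)^{1-\alpha_\be}$, to reexpress each sheet as $x\mapsto x+N_{L,i}^\pm(x)$ with $N_{L,i}^\pm(x)\perp T_x\mathcal{M}^\pm$. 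The change of variables is invertible on a region slightly larger than $\mathcal L$ by the implicit function theorem because of the smallness of $\Lip(u_L^\pm)$ and the tilt. For boundary cubes, since $\mathcal{M}^+$ and $\mathcal{M}^-$ have the same tangent plane $\pi(q)$ along $\gammaup$ (Theorem \ref{thm:center_manifold}(e)), the reparametrization respects the collapsing: the common trace of $u_L^\pm$ on $\gamma$ is exactly $\gammaup$, so $N^\pm(q)=\a{0}$ for every $q\in\mathbf K^\pm$, which gives (A3).

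\textbf{Global gluing and contact set.} For every $j$, the cubes of $\mathscr P_j$ form a Whitney family, so at most finitely many local approximations overlap on any region. One defines $F^\pm$ globally on $\mathcal M^\pm$ by selecting on each Whitney region $\mathcal L$ the approximation coming from the ``father'' stopped cube and, on regions where neighboring cubes provide potentially different values, interpolating via a Lipschitz interpolation of $\qhalf$ maps as in the analogous construction of \cite[Theorem 5.1]{DS4}; the interpolation Lemma \ref{l:interpolation} is the correct tool in the boundary case since it preserves the collapsing at $\gamma$. The contact set $\mathcal K$ is then the union of the $\bm\Phi^\pm(K_L\cap L)$ where $K_L$ is the good set of the cube, augmented with $\mathbf K^+\cup\mathbf K^-$; points (A1)--(A3) of Definition \ref{d:app_pair} follow from this construction and from Corollary \ref{c:cm}.

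\textbf{The five estimates.} Once the construction is set up, \eqref{e:cm_app1} and \eqref{e:cm_app2} are immediate translations of $\Lip(u_L^\pm)\le C(\bE+\bA^2\ell(L)^2)^{\sigmaexpcm}$ and $\mathrm{osc}(u_L^\pm)\le C(\bE+\bA\ell(L))^{1/2}\ell(L)$ from Theorem \ref{thm:second_lip} (or Theorem \ref{t:old_approx}), combined with the stopping thresholds $\bE(T,\bB_L)\le C_\be\varepsilon_1\ell(L)^{2-\alpha_\be}$ and $\bh(T,\bB_L,\pi_L)\le C_\bh\varepsilon_1^{1/(2m)}\ell(L)^{1+\alpha_\bh}$. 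Estimate \eqref{e:cm_app3} combines \eqref{e:slip4.25}--\eqref{e:slip4.5} (respectively \eqref{e:main(ii)} for interior cubes) with the fact that the reparametrization from $\pi_L$ to $\mathcal M^\pm$ distorts masses by at most $1+C\varepsilon_1^{1/2}$, and that the extra mass picked up by the interpolation on each cube is estimated by a geometrically convergent sum over the family. The Dirichlet bound \eqref{e:cm_app4} follows in the same way from \eqref{e:slip5}. The delicate estimate is \eqref{e:cm_app5}: the key point is that the tilted interpolating function $h_L$, defined by the elliptic system \eqref{e:defining_PDE_i}/\eqref{e:defining_PDE_b}, solves the linearization of the minimal surface equation with the correct Dirichlet datum $\etaa\circ \bar f_L^\pm$; the PDE produces the gain of one full power of $\ell(L)$ when comparing $\etaa\circ N$ with $h_L$, and the improvement from $\varepsilon_1$ to $\varepsilon_1\ell(L)^{\alpha_\bh/3}$ comes from using the height bound $\bh\le C_\bh\varepsilon_1^{1/(2m)}\ell(L)^{1+\alpha_\bh}$ together with the $L^\infty$ bound on $N$ and the $(2+\sigmaexpcm)$-moment of $\mathcal G(N,Q\a{\etaa\circ N})$, exactly as in \cite[Theorem 2.4 and Corollary 2.2]{DS4}.

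\textbf{Main obstacle.} The bulk of the proof is bookkeeping along the lines of \cite{DS4}, the novelty being the presence of boundary cubes. The main difficulty is the consistency of the tilting across the interface $\gammaup$: one must verify that the $Q$ sheets of $F^+$ on $\mathcal M^+$ and the $Q-1$ sheets of $F^-$ on $\mathcal M^-$ obtained by tilting collapse to the single sheet $q\mapsto q$ on $\gammaup$, so that (A3) holds and so that no Lipschitz or mass defect is produced along $\gammaup$. This requires the $C^1$ matching of $\mathcal M^+$ and $\mathcal M^-$ along $\gammaup$ from Theorem \ref{thm:center_manifold}(d)--(e), together with the fact that the boundary approximations $(u_L^+,u_L^-)$ collapse at the interface in the sense of Definition \ref{def:totally_collapsed}. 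The second delicate point is the average estimate \eqref{e:cm_app5}, which will be used in the frequency monotonicity of Chapter \ref{chap:frequency} and whose proof must exploit the full strength of the PDE defining $h_L$.
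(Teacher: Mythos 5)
There is a structural misunderstanding at the heart of your plan. You organize the construction around a case distinction between ``non-boundary cubes in $\sW^\natural$'' and ``boundary cubes $L\in\sW^\flat$'', using Theorem \ref{thm:second_lip} and the interpolation Lemma \ref{l:interpolation} for the latter, and you identify as the main obstacle the matching of the $Q$ sheets of $F^+$ and the $Q-1$ sheets of $F^-$ across $\gammaup$ on Whitney regions touching the boundary. But by Lemma \ref{cor:no_stop_b} the refinement of boundary cubes is \emph{never} stopped: $\mathscr{C}^\flat\cap\sW=\emptyset$, so every Whitney region $\mathcal L$ in the statement comes from a non-boundary cube and satisfies $\dist(c(L),\gammado)\ge 64\,r_L$, i.e.\ it stays at distance comparable to $\ell(L)$ from $\gammaup$. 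Consequently the cylinders involved in the construction of $N^\pm$ over the Whitney regions have empty intersection with $\gammaup$, only the interior approximation (Theorem \ref{t:old_approx}) is needed, and the whole construction together with the estimates \eqref{e:cm_app1}--\eqref{e:cm_app5} is verbatim the one of \cite[Section 6.2, Theorem 2.4]{DS4} once the parameters $a_0,\alpha_\be,\alpha_\bh,M_0,N_0,C_\be,C_\bh,\varepsilon_1$ are matched with those of \cite{DS4}. The case you treat as the hard one does not occur, and your proposal never invokes the one fact (Lemma \ref{cor:no_stop_b}) that makes the reduction work; Theorem \ref{thm:second_lip} enters the center-manifold construction through the $\pi_L$-approximations of boundary cubes in $\sS^\flat$, not through the normal approximation on Whitney regions.

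The only genuinely boundary-specific step in the actual proof is the one your scheme replaces by a matching argument: the procedure of \cite{DS4} defines $F^+$ on $\mathcal M^+\setminus\gammaup$ and $F^-$ on $\mathcal M^-\setminus\gammaup$, and one then extends them to $\gammaup$ as Lipschitz maps by setting $F^+(x)=Q\a{x}$ and $F^-(x)=(Q-1)\a{x}$ there; the continuity of this extension is guaranteed by the height bound in the boundary cylinders $\bC_{2^7 36 r_L}(p^\flat_L,\pi_L)$ from Proposition \ref{pr:tilting_cm}(c)$^\flat$, and property (A3) then follows from the definition of the contact sets together with Corollary \ref{c:cm}(c)--(d), not from tracking the collapse of boundary Lipschitz approximations. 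Your derivations of \eqref{e:cm_app1}--\eqref{e:cm_app4} from the stopping conditions are in the right spirit for interior cubes, but as written the proof hinges on a gluing across $\gammaup$ that is neither needed nor consistent with how the Whitney regions are defined, so the argument as proposed does not establish the theorem in the form stated.
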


\subsection{Separation and domains of influence} We next analyze suitable ``bounds from below'' induced by the stopping conditions in the center manifold construction. The next proposition shows that the current ``separates'' suitably on top of Whitney regions corresponding to cubes in $\sW^\bh$.

\begin{proposition}[Separation]\label{prop:sep}\label{PROP:SEP}
Under the assumptions of Theorem \ref{thm:cm_app} (recall, in particular, that $C_{\bh}\gg C_{\be}$), the following conclusions hold for every Whitney region $\mathcal{L}$ corresponding to a cube $L\in \sW^\bh \subset \sW^+$:
\begin{itemize}
\item[(S1)] $\Theta (T, p) \leq Q - \frac{1}{2}$ for every $p\in \bB_{16 r_L} (p_L)$;
\item[(S2)] $L\cap H =\emptyset$ for every $H\in \sW^{\mathbf{n}}$ with $\ell (H) \leq \frac{1}{2} \ell (L)$;
\item[(S3)] $\mathcal{G} (N^+ (x), Q \a{\etaa\circ  N^+ (x)}) \geq \frac{1}{4} C_{\bh} \varepsilon_1^{\sfrac{1}{2m}} \ell(L)^{1+\alpha_{\bh}}\ \forall x \in  \mathcal{M}^+ \cap \bC_{2\sqrt{m} \ell (L)} (p_L)$.  
\end{itemize}
For $L\in \sW^\bh \subset \sW^-$ the same conclusions, where in (S1) we replace $Q - \frac{1}{2}$ with $Q-\frac{3}{2}$.\footnote{Observe that, when $Q=2$, we actually draw the conclusion that no cube $L\subset \sW^-$ can belong to $\sW^\bh$: in fact when $Q=2$, we could use directly Allard's regularity theorem to prove that the ``left'' side of the current coincides with a single smooth classical graph over $B^-_{3/2}$. In order to make our work shorter we prefer however to treat the case $Q=2$ together with the general one $Q>2$.}
\end{proposition}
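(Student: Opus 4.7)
The plan is to establish the three separation conclusions in sequence, adapting the strategy of \cite[Proposition 3.1]{DS4} to the boundary setting. Throughout, I will focus on the case $L \in \sW^\bh \subset \sW^+$; the corresponding argument for $\sW^-$ is verbatim the same, with $Q$ replaced by $Q-1$ everywhere (reflecting the one-less sheet on the left side), which yields the density threshold $Q-\sfrac{3}{2}$ in (S1). A preliminary observation I will exploit repeatedly is that since $L \in \sC^\natural$, the ball $\bB_{16 r_L}(p_L)$ sits at distance at least $48 r_L - O(\varepsilon_1^{\sfrac{1}{2}})$ from $\gammaup$, so all points therein have integer density by the interior Constancy Theorem, and the interior monotonicity formula from Theorem \ref{thm:allard}(a) applies.

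For (S1), I will select the ancestor $\hat L \in \sS^\natural$ of $L$ (this exists because $L \in \sW^\bh$ is refined from a father that had neither the excess nor the height condition triggered) and apply Theorem \ref{t:old_approx} in the cylinder $\bC_{9 r_{\hat L}}(p_{\hat L}, \pi_{\hat L})$ provided by Proposition \ref{prop:yes_we_can} to produce a $Q$-valued $\pi_{\hat L}$-approximation $f_{\hat L}$. The mass identity \eqref{e:main(iii)} combined with the excess bound $\bE(T, \bB_{\hat L}) \leq C_\be \varepsilon_1 \ell(\hat L)^{2-\alpha_\be}$ yields $\|T\|(\bB_s(p)) \leq (Q + O(\varepsilon_1^{\sigmaexpcm}))\,\omega_m s^m$ for $p \in \bB_{16 r_L}(p_L)$ and any scale $s$ comparable to $r_L$. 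Interior monotonicity then forces $\Theta(T,p) < Q + \sfrac{1}{2}$; since the density is an integer on the interior, the stronger inequality $\Theta(T,p) \leq Q$ holds automatically, and a closer inspection of the centered monotonicity (exploiting that $\pi_L$-approximation is $Q$-valued rather than $(Q+1)$-valued) yields the claimed bound $Q - \sfrac{1}{2}$.

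For (S3), which is the heart of the statement, the height stopping condition gives $\bh(T, \bB_L, \pi_L) > C_\bh \varepsilon_1^{\sfrac{1}{2m}} \ell(L)^{1+\alpha_\bh}$, so the values of the $\pi_L$-approximation $f_L$ (which is $Q$-valued by (S1)) spread apart by at least this amount on a set of large measure in $B_{9 r_L}(p_L, \pi_L)$. The $L$-interpolating function $g_L$ and the glued map $\varphi_j$ leading to $\mathcal{M}^+$ are designed to track the \emph{average} of $f_L$; this is encoded in the PDE \eqref{e:defining_PDE_i} whose boundary data is $\etaa \circ \bar{f}_L$. Combining the estimate \eqref{e:cm_app3} (which controls $\|\mathbf{T}_{F^+} - T\|$) with the Lipschitz bound \eqref{e:cm_app1} on $N^+$ and the height bound of the $\pi_L$-approximation, I will show that the spread of sheets of $f_L$ transfers to a pointwise lower bound $\mathcal{G}(N^+(x), Q\a{\etaa\circ N^+(x)}) \geq \sfrac{1}{4} C_\bh \varepsilon_1^{\sfrac{1}{2m}} \ell(L)^{1+\alpha_\bh}$ on $\mathcal{M}^+ \cap \bC_{2\sqrt{m}\ell(L)}(p_L)$, using continuity of $N^+$ to extend the lower bound from a set of positive measure to the whole Whitney region. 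The key technical input here is that $C_\bh$ is chosen so large (relative to $C_\be$ and the implicit constants in Theorem \ref{thm:center_manifold}) that the error terms in the interpolation cannot eat away the sheet separation; this is where the parameter hierarchy of Assumption \ref{ass:cm} is decisive.

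For (S2), I will argue by contradiction: assume $H \in \sW^{\mathbf n}$ with $H \cap L \neq \emptyset$ and $\ell(H) \leq \sfrac{1}{2}\ell(L)$. Unwinding the definition of $\sW^{\mathbf n}$, there is a chain of consecutive cubes $H = H_0, H_1, \dots, H_k$ with $H_i \cap H_{i+1} \neq \emptyset$, $\ell(H_{i+1}) = 2\ell(H_i)$, and $H_k \in \sW^\be \cup \sW^\bh$ with $\ell(H_k) \leq \ell(L)$. In particular $H_k$ sits within bounded multiples of $\ell(L)$ from $L$, so by the interior decay (Theorem \ref{thm:decay_and_uniq}) and the smallness of $\pi_L - \pi_{H_k}$ in that neighborhood, one can compare the stopping criteria on $H_k$ with the ones on $L$: a violation of the height or excess bound at scale $\ell(H_k) < \ell(L)$ directly contradicts the already-established conclusion (S3) and the excess-refinement structure of the center manifold. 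The main obstacle throughout is (S3), specifically the bookkeeping required to transfer the sheet separation of $f_L$ (which lives over the tilted plane $\pi_L$) to the normal sheets of $N^+$ (which live over $\mathcal{M}^+$), controlling all the errors introduced by the change of frames, the interpolation, and the ambient manifold $\Sigma$; here the estimates \eqref{e:cm_app1}--\eqref{e:cm_app4} of Theorem \ref{thm:cm_app} and the choice $C_\bh \gg C_\be$ will be used in an essential way.
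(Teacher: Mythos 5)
Your plan has genuine gaps at each of the three steps, and the most serious one is in (S1). There you never use the defining feature of $L\in\sW^\bh$, namely the violated height bound, and without it the conclusion $\Theta(T,p)\leq Q-\sfrac{1}{2}$ is simply unreachable: a stack of $Q$ nearly coincident sheets has tiny excess, is well described by a $Q$-valued Lipschitz approximation, and has density exactly $Q$ at many points, so the mass bound $\|T\|(\bB_s(p))\leq (Q+o(1))\omega_m s^m$ you extract from \eqref{e:main(iii)} can at best give $\Theta\leq Q+o(1)$. The bridge you invoke — ``the density is an integer on the interior'' by the Constancy Theorem — is false (interior densities of area-minimizing currents need not be integers; the Constancy Theorem says nothing about pointwise densities), and the final jump from $\Theta\leq Q$ to $\Theta\leq Q-\sfrac{1}{2}$ via ``a closer inspection of the centered monotonicity'' is not an argument. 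The actual mechanism (this is the proof of \cite[Proposition 3.1]{DS4}, which the paper invokes verbatim after observing that all cubes involved are non-boundary cubes, so no cylinder meets $\gammaup$) is: the height violation $\bh(T,\bB_L,\pi_L)>C_\bh\varepsilon_1^{\sfrac{1}{2m}}\ell(L)^{1+\alpha_\bh}$ together with the small excess inherited from the father forces the current near $\bB_L$ to split into two pieces supported in disjoint slabs, each projecting onto the base with multiplicity at most $Q-1$; a point with $\Theta>Q-\sfrac{1}{2}$ would then, by monotonicity, carry mass $\geq (Q-\sfrac12)\omega_m s^m$ inside a single slab whose mass is at most about $(Q-1)\omega_m s^m$, a contradiction. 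This two-slab splitting, propagated over the whole cylinder, is also the engine behind (S3), and it is exactly the step you leave unproven: the stopping condition only provides two points of $\supp(T)$ with large vertical separation, and passing from this to a separation of the sheets of $F^+$ over \emph{every} point of $\mathcal{M}^+\cap\bC_{2\sqrt m\ell(L)}(p_L)$ requires showing that the two slabs carry positive multiplicity over the entire base (a constancy/no-crossing argument); your substitute, ``extend a lower bound from a set of positive measure by continuity of $N^+$,'' neither establishes the positive-measure bound nor controls the oscillation correctly, since on the Whitney region you only know $\Lip(N^+)\leq C\varepsilon_1^{\sigmaexpcm}\ell(L)^{\sigmaexpcm}$, and the comparison of $C\varepsilon_1^{\sigmaexpcm}\ell(L)^{1+\sigmaexpcm}$ with $\tfrac14 C_\bh\varepsilon_1^{\sfrac{1}{2m}}\ell(L)^{1+\alpha_\bh}$ is not addressed.

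Your (S2) has the logic backwards. A cube $H$ with $\ell(H)\leq\tfrac12\ell(L)$ touching $L$ has a ball $\bB_H$ which still contains both separated slabs, so it \emph{does} violate the height condition at its own scale (because $C_\bh\varepsilon_1^{\sfrac{1}{2m}}\ell(L)^{1+\alpha_\bh}\gg C_\bh\varepsilon_1^{\sfrac{1}{2m}}\ell(H)^{1+\alpha_\bh}$): the conclusion is therefore not a contradiction with (S3) but rather that any such stopped cube is placed in $\sW^\be\cup\sW^\bh$ and hence never in $\sW^{\bf n}$, which is precisely (S2). Declaring that a height or excess violation at a smaller nearby scale ``directly contradicts (S3)'' is unfounded, since (S3) is a lower bound on sheet separation and is perfectly compatible with (indeed implies) large height at all smaller scales in that region. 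Moreover the chain you unwind from the definition of $\sW^{\bf n}$ has doubling side lengths, so there is no reason for $\ell(H_k)\leq\ell(L)$ or for $H_k$ to stay within a bounded multiple of $\ell(L)$ of $L$; and relying on the chain structure of Corollary \ref{c:domains} is circular, since that corollary is deduced from (S2). Finally, note that the paper's own proof is a direct reduction to \cite[Section 7.1]{DS4}: the only new point is that every cube in $\sW^+\cup\sW^-$ is a non-boundary cube, so all balls and cylinders in the interior argument avoid $\gammaup$ and the interior proof applies without change; if you want a self-contained argument you must reproduce the slab-splitting construction rather than the shortcuts proposed here.
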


A simple corollary of the previous proposition is then the following

\begin{corollary}\label{c:domains}\label{C:DOMAINS}
Given any $H\in \sW^{\bf n}\subset \sW^+$ (resp. $\subset \sW^-$) there is a chain $L =L_0, L_1, \ldots, L_j = H$ such that:
\begin{itemize}
\item[(a)] $L_0\in \sW^\be\subset \sW^+$ (resp. $\subset \sW^-$) and $L_i\in \sW^{\bf n}\subset \sW^+$ (resp. $\sW^-$) for all $i>0$; 
\item[(b)] $L_i\cap L_{i-1}\neq\emptyset$ and $\ell (L_i) = \frac{1}{2} \ell (L_{i-1})$ for all $i>0$.
\end{itemize}
In particular,  $H\subset B_{3\sqrt{m}\ell (L_{0})} (x_{L_{0}}, \pi_0)$.
\end{corollary}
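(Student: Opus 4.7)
The strategy is a descending induction on the depth $j$ at which $H$ enters the class $\sW^{\bf n}$, combined with the use of the separation property (S2) of Proposition \ref{prop:sep} to rule out that the chain crosses a cube in $\sW^{\bh}$. We focus on the $\sW^+$ case, the other being identical after the obvious replacements.

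Fix $H \in \sW^{\bf n}_{j}$ for some $j \geq N_0+1$. By the very definition of the class $\sW^{\bf n}_j$, there is a cube $L_{j-1} \in \sW_{j-1}$ with $L_{j-1} \cap H \neq \emptyset$; in particular $\ell(L_{j-1}) = 2\ell(H)$. I claim that $L_{j-1} \notin \sW^{\bh}$. Indeed, if $L_{j-1} \in \sW^{\bh}$, then $\ell(H) = \tfrac{1}{2}\ell(L_{j-1})$ would force, by Proposition \ref{prop:sep}(S2), $L_{j-1} \cap H = \emptyset$, contradicting the choice of $L_{j-1}$. Hence $L_{j-1} \in \sW^{\be} \cup \sW^{\bf n}$.

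If $L_{j-1} \in \sW^{\be}$, then the chain $L_0 := L_{j-1}$, $L_1 := H$ already satisfies (a) and (b) and we are done. If $L_{j-1} \in \sW^{\bf n}$, apply the same argument to $L_{j-1}$ in place of $H$: this produces a cube $L_{j-2} \in \sW_{j-2}$ with $L_{j-2} \cap L_{j-1} \neq \emptyset$, $\ell(L_{j-2}) = 2\ell(L_{j-1})$, and $L_{j-2} \notin \sW^{\bh}$. Iterating, at each step the depth drops by one, so after at most $j - N_0$ iterations the procedure must terminate. It cannot terminate at depth $\leq N_0$: by Assumption \ref{ass:cm}, the constants $C_{\be}, C_{\bh}$ have been chosen so that $\sW_{N_0} = \emptyset$ (see Proposition \ref{pr:tilting_cm}), hence no cube at depth $N_0+1$ can belong to $\sW^{\bf n}_{N_0+1}$ either (since its defining property requires a cube in $\sW_{N_0}$ with which it intersects). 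Therefore the iteration terminates at a cube $L_0 \in \sW^{\be}$, delivering the required chain with properties (a) and (b).

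For the inclusion $H \subset B_{3\sqrt{m}\ell(L_0)}(c(L_0), \pi_0)$, observe that consecutive cubes $L_{i-1}$ and $L_i$ in the chain intersect and have side-lengths $2\ell(L_{i-1})$ and $2\ell(L_i)=\ell(L_{i-1})$, so
\[
|c(L_i) - c(L_{i-1})| \,\le\, \sqrt{m}\bigl(\ell(L_{i-1}) + \ell(L_i)\bigr) \,=\, 3\sqrt{m}\,\ell(L_i)\,=\,3\sqrt{m}\,2^{-i}\ell(L_0).
\]
Summing and adding the bound $|x - c(L_j)| \leq \sqrt{m}\ell(L_j)$ for $x \in H = L_j$, the geometric series $\sum_{i\ge 1} 2^{-i} = 1$ yields $|x - c(L_0)| \leq C\sqrt{m}\ell(L_0)$ with an absolute constant $C$; a slightly more careful grouping of terms (or, if preferred, trading the constant in the statement for this $C$) gives the stated inclusion.

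The only nontrivial ingredient in the whole argument is the use of Proposition \ref{prop:sep}(S2), which prevents the chain from ever hitting a cube in $\sW^{\bh}$; without this separation statement one could in principle have $L_{i-1}\in\sW^{\bh}$ and the chain would not be forced to terminate in $\sW^{\be}$. Everything else is bookkeeping on the Whitney decomposition.
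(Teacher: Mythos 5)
Your argument is correct and is essentially the paper's own proof: the paper simply defers to \cite[Corollary 3.2]{DS4}, whose argument is exactly your chain construction — use the separation property (S2) of Proposition \ref{prop:sep} to exclude that the larger intersecting cube lies in $\sW^{\bh}$, and use $\sW_{N_0}=\emptyset$ to force termination at a cube of $\sW^{\be}$. For the final inclusion, your own estimates already close with the stated constant, since $|x-c(L_0)|\le \sqrt{m}\,\ell(L_0)\bigl(2^{-j}+3\sum_{i=1}^{j}2^{-i}\bigr)=\sqrt{m}\,\ell(L_0)\,(3-2^{1-j})<3\sqrt{m}\,\ell(L_0)$, so no weakening of the constant is needed.
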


We use this last corollary to partition $\sW^{\bf n}$.

\begin{definition}[Domains of influence]\label{d:domains}
We first fix an ordering of the cubes in $\sW^\be\subset \sW^+$ (resp. $\subset \sW^-$) as $\{J_i\}_{i\in \N}$ so that their side lengths do not increase. Then $H\in \sW^{\bf n}$
belongs to $\sW^{\bf n} (J_0)$ (the domain of influence of $J_0$) if there is a chain as in Corollary \ref{c:domains} with $L_0 = J_0$.
Inductively, $\sW^{\bf n} (J_r)$ is the set of cubes $H\in \sW^{\bf n} \setminus \cup_{i<r} \sW^{\bf n} (J_i)$ for which there is
a chain as in Corollary \ref{c:domains} with $L_0 = J_r$.
\end{definition}

\subsection{Splitting before tilting} Next we show that even around cubes $L\in \sW^\be$ the sheets of the current ``open up'' in a suitable quantitative way. Again we bundle the estimates for the two maps $N^\pm$ in single statements using the letter $N$ to denote both of them.

\begin{proposition}[Splitting]\label{p:splitting}\label{P:SPLITTING}
Under the Assumptions of Theorem \ref{thm:cm_app}
the following holds. If $L\in \sW^\be\subset \sW^+$ (resp. $\subset\sW^-$), $q\in \pi_0$ with $\dist (L, q) \leq 4\sqrt{m} \,\ell (L)$ and $\Omega = \bC_{\ell (L)/4} (q)\cap \mathcal{M}$, then (with $C, C^* = C (\allin)$):
\begin{align}
&C_\be \varepsilon_1 \ell(L)^{m+2-2\alpha_\be} \leq \ell (L)^m \mathbf{E} (T, \bB_L) \leq C \int_\Omega |DN|^2\, ,\label{e:split_1}\\
&\int_{\mathcal{L}} |DN|^2 \leq C \ell (L)^m \mathbf{E} (T, \bB_L) \leq C^* \ell (L)^{-2} \int_\Omega |N|^2\, . \label{e:split_2}
\end{align}
\end{proposition}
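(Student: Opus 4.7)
The leftmost inequality in \eqref{e:split_1} is immediate from the stopping condition: since $L\in \sW^\be$, at the step $j$ at which it was stopped we had $\bE(T, \bB_L) > C_\be \varepsilon_1 \ell(L)^{2-\alpha_\be}$, which rearranged gives exactly the claim. For the second inequality in \eqref{e:split_1}, the plan is to compare $\bE(T, \bB_L)$ with the Dirichlet energy of the $\pi_L$-approximation $f_L$ on a cylinder of comparable size. By Proposition \ref{prop:yes_we_can} the current $T$ satisfies the hypotheses of Theorem \ref{t:old_approx} in $\bC_{36 r_L}(p_L, \pi_L)$, so $\bE(T,\bB_L) \leq C \bE(T, \bC_{36 r_L}(p_L,\pi_L))$ and the latter is in turn comparable, via \eqref{e:main(iii)}, to $\ell(L)^{-m}\int_{B_{8r_L}(p_L,\pi_L)}|Df_L|^2$ up to an error of order $(\bE+\bA^2\ell(L)^2)^{1+\sigmaexp}$. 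One then reroutes the integral onto the center manifold: on the Whitney region $\mathcal{L}$ the current coincides with $\mathbf{T}_F$ off a set of small measure (by \eqref{e:cm_app3}), and the graph of $N$ over $\mathcal{M}$ is (after the change of variable mapping $\mathcal{M}$ to $\pi_L$) essentially the same multi-graph as $f_L$; the difference between $\bG_{f_L}$ and $\mathbf{T}_F \res \mathbf{p}^{-1}(\Omega)$ is absorbed by the higher-order errors, together with the smoothness of $\mathcal{M}$, whose intrinsic curvature contributes at most $C \varepsilon_1 \ell(L)^{m+2-2\alpha_\be}$. Since $\Omega$ is a neighborhood of a point whose projection on $\pi_0$ lies within distance $4\sqrt{m}\ell(L)$ of $L$, the shifted cylinder $\bC_{\ell(L)/4}(q)\cap \mathcal{M}$ is comparable to the portion of $\mathcal{L}$ over which $f_L$ is controlled, yielding $\ell(L)^m \bE(T,\bB_L) \leq C \int_\Omega |DN|^2 + \mathrm{Err}$, where the error is strictly smaller than $C_\be \varepsilon_1 \ell(L)^{m+2-2\alpha_\be}/2$ provided $\varepsilon_1$ is small enough and $C_\be$ is large, so it can be absorbed.

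The first inequality in \eqref{e:split_2} is a straightforward consequence of \eqref{e:cm_app4} combined with the stopping condition: on the Whitney region $\mathcal{L}$ associated to $L$, $\int_{\mathcal{L}} |DN|^2 \leq C\varepsilon_1 \ell(L)^{m+2-2\alpha_\be} \leq (C/C_\be) \ell(L)^m \bE(T,\bB_L)$ after multiplying by a harmless constant, and the ordering of constants in Assumption \ref{ass:cm} makes this effective. Alternatively, one can bypass \eqref{e:cm_app4} and argue directly via the Taylor expansion relating $\be_T$ to $\int|Df_L|^2$ on any subcylinder, exactly as in the preceding paragraph but in the opposite direction.

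The main obstacle is the rightmost inequality in \eqref{e:split_2}, a reverse Poincaré/Caccioppoli bound $\int_\Omega |DN|^2 \leq C \ell(L)^{-2}\int_\Omega |N|^2$ (up to a harmless enlargement of $\Omega$ on the right-hand side, which is fine because of a Fubini-type selection of good slices). My plan is to argue by contradiction and compactness, along the lines of the analogous interior statement in \cite{DS4}. Assume a sequence $T_k, \Sigma_k, \gammaup_k$ satisfying Assumption \ref{ass:decay+cone} and cubes $L_k\in \sW^\be$ for which the ratio
\[
\ell(L_k)^{-2}\int_\Omega |N_k|^2 \Big/ \int_{\mathcal{L}_k}|DN_k|^2
\]
tends to zero. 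Rescale so that $L_k$ becomes a unit cube, translate $p_{L_k}$ to the origin, and rotate $\pi_{L_k}$ onto $\pi_0$; by Theorem \ref{thm:center_manifold}(a)--(b) the rescaled center manifolds converge in $C^{2}$ to a plane, while the rescaled normal approximations $\tilde N_k$ normalized in $L^2$ converge (strongly away from the interface, by the second Lipschitz approximation Theorem \ref{thm:second_lip} and the harmonic approximation Theorem \ref{t:harm_1}) to a $\qhalf$-valued $\D$-minimizer $(\bar N^+,\bar N^-)$ on the limit domain. Depending on whether the limit of the rescaled $\gammado_k$ lies inside $\Omega$ or not, one is either in the purely interior setting (where the classical interior reverse Poincaré for $\D$-minimizers applies, cf.\ \cite[Proposition 3.10]{DS1}) or in the boundary setting. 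In the boundary case the key fact is that $(\bar N^+, \bar N^-)$ collapses at its interface with $\varphi\equiv 0$ (because the stopping criterion for $\sW^\be$ together with the collapsed nature of every boundary point of $T_k$ forces $\etaa\circ \bar N^\pm$ to vanish on the trace of $\gammado$); Theorem \ref{thm:collasso} then identifies $(\bar N^+,\bar N^-)$ with a single harmonic sheet passing through the interface, and the standard reverse Poincaré inequality for harmonic functions closes the argument and contradicts the assumed vanishing ratio. The technically delicate point is to justify the strong $W^{1,2}$ convergence of $\tilde N_k$ to $(\bar N^+,\bar N^-)$ up to the boundary, which in turn requires combining Theorem \ref{thm:cpt_dm} with uniform control of the boundary traces inherited from the uniform $C^{3,\kappa}$ bounds on the center manifolds and on $\gammaup_k$; the separation estimate of Proposition \ref{prop:sep} is not available here (since $L\in \sW^\be$, not $\sW^\bh$), which is why a direct barrier construction does not suffice and the compactness approach seems the cleanest route.
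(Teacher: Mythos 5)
The easy halves of your argument are fine: the leftmost inequality in \eqref{e:split_1} is the stopping condition for $\sW^\be$, and the first inequality in \eqref{e:split_2} does follow from \eqref{e:cm_app4} together with $\ell(L)^m\mathbf{E}(T,\bB_L)\geq C_\be\varepsilon_1\ell(L)^{m+2-2\alpha_\be}$. The two hard inequalities, however, are not reached by your plan, and the missing ingredient is the same in both: the ancestors of $L$. The quantity $\mathbf{E}(T,\bB_L)$ lives on a ball of radius $64r_L=64M_0\sqrt m\,\ell(L)$, while $\Omega$ is a cylinder of radius $\ell(L)/4$ around an essentially arbitrary nearby point $q$; no ``rerouting onto the center manifold and absorbing higher-order errors'' can bound a quantity at the large scale by one on the small region. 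Worse, $N$ is the deviation of $T$ from $\mathcal{M}$, and $\mathcal{M}$ is built precisely from (smoothings of) the averages $\etaa\circ f_L$; a priori the excess of $\bB_L$ could be accounted for entirely by the tilting of the average sheet, leaving $\int_\Omega|DN|^2$ negligible. Ruling this out is the whole content of ``splitting before tilting'', and in \cite[Proposition 3.4]{DS4} it is done by working at the father's scale: one uses the $\pi_H$-approximation and the harmonic (Dir-minimizing) approximation on the father's cylinder, together with the fact that $L$ was stopped for excess while its father was not, so that $\mathbf{E}(T,\bB_L)$ and $\mathbf{E}(T,\bB_H)$ are comparable; interior estimates for Dir-minimizers then force a definite fraction of that excess into the deviation from the average on every comparable subregion, which yields both the right inequality of \eqref{e:split_1} and the $L^2$ lower bound in \eqref{e:split_2}. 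Your proposal never invokes the stopping comparison with the father, so neither inequality can be closed: in particular, in your compactness scheme for \eqref{e:split_2} there is no mechanism guaranteeing that the blow-up limit is simultaneously nontrivial and rigid enough (you cannot simply quote unique continuation for $Q$-valued Dir-minimizers, and your identification of the limit as a single harmonic sheet rests on an unsubstantiated claim that $\etaa\circ\bar N^\pm$ vanishes at the interface), nor does a ``Fubini-type selection of slices'' bridge the gap between $\Omega$ and $\bB_L$.

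A second point: the boundary case you try to handle never arises, and this is exactly how the paper proves the proposition. Since $L\in\sW^\be$ is a non-boundary cube, the argument of Proposition \ref{pr:tilting_cm} shows (Lemma \ref{l:super_technical}) that all ancestors of $L$ up to any fixed number of generations are non-boundary once $\varepsilon_1$ is small, so every ball and cylinder entering the argument stays at distance $\gtrsim M_0\ell(L)$ from $\Gamma$ and the interior proof of \cite[Proposition 3.4]{DS4} applies verbatim. The only genuinely new ingredient supplied in the paper is the reconciliation of the two notions of excess: here $\mathbf{E}(T,\bB_L)$ is minimized only over planes contained in $T_{p_L}\Sigma$, whereas \cite{DS4} minimizes over all planes, and the bulk of the paper's proof establishes \eqref{e:super_technical} (with $\delta_2=\alpha_\be/2$), i.e.\ that the constrained and unconstrained excesses are comparable, using the stopping condition to absorb the error terms. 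So the correct route is a reduction to the interior statement plus this excess comparison, rather than a new boundary blow-up; to make your blow-up route work you would in any case have to reintroduce the father-scale harmonic approximation and the excess comparison, at which point you would essentially be reproving \cite[Proposition 3.4]{DS4}.
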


\section{Estimates on tilting and optimal planes}

\begin{proposition}[Tilting and optimal planes]\label{pr:tilting_cm}
Under the Assumptions \ref{ass:decay+cone} and \ref{ass:cm} we have $\sW_{N_0} = \emptyset$.  Then the following estimates hold for any couple of neighbors $H,L\in \sS \cup \sW$ and for every $H, L \in \sS\cup \sW$ with $H$ descendant of $L$:
\begin{itemize}
\item[(a)]  denoting by $\pi_H, \pi_L$ the excess-minimizing planes in $\bB_H$ and $\bB_L$, respectively, 
\[
|\pi_H - \pi_L|\leq \bar C \eps_1^{\sfrac{1}{2}} \ell (L)^{1-\alpha_\be}\qquad |\pi_H-\pi_0| \leq \bar C\varepsilon_1^{\sfrac{1}{2}};
\]
\item[(b)$^\natural$] $\bh (T, \bC_{48 r_H} (p_H, \pi_0)) \leq C \varepsilon_1^{\sfrac{1}{2m}} \ell (H)$ and\\ 
$\supp (T)\cap \bC_{48r_H} (p_H, \pi_0) \subset \bB_H$ if $H\in \mathscr{C}^\natural$; 
\item[(b)$^\flat$] $\bh (T, \bC_{2^7 48 r_H} (p^\flat_H, \pi_0)) \leq C \varepsilon_1^{\sfrac{1}{4}} \ell (H)$ and\\ 
$\supp (T)\cap \bC_{2^7 48 r_H} (p^\flat_H, \pi_0) \subset \bB^\flat_H$ if $H\in \mathscr{C}^\flat$;
\item[(c)$^{\natural}$] $\bh (T, \bC_{36 r_L} (p_L, \pi_H))\leq C \varepsilon_1^{\sfrac{1}{2m}} \ell (L)^{1+\alpha_\bh}$ and\\ 
$\supp (T) \cap \bC_{36 r_L} (p_L, \pi_H) \subset \bB_L$ if $H, L \in \mathscr{C}^\natural$;
\item[(c)$^{\flat}$] $\bh (T, \bC_{2^7 36r_L} (p^\flat_L, \pi_H))\leq C \varepsilon_1^{\sfrac{1}{4}} \ell (L)^{1+\alpha_\bh}$\\ 
and $\supp (T) \cap \bC_{2^7 36 r_L} (p^\flat_L, \pi_H)) \subset \bB^\flat_L$ if $L \in \mathscr{C}^\flat$;
\end{itemize}
where $\bar C = \bar C (\alpha_{\be}, \alpha_{\bh}, M_0, N_0, C_\be)$ and $C = C (\alpha_{\be}, \alpha_{\bh}, M_0, N_0, C_\be, C_\bh)$.
\end{proposition}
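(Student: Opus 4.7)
The argument follows the structure of \cite[Proposition 4.1]{DS4}, with the new ingredient being the uniform control coming from Lemma \ref{lem:raffinamento} for boundary cubes and the cone condition \eqref{eq:height-envelope2} of Corollary \ref{c:cone_cut} for handling the interface between boundary and non-boundary cubes. All four conclusions will be proved simultaneously by a downward induction on the side length $\ell(L)$, starting from the fact that $\sW_{N_0}=\emptyset$.

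\medskip

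The vanishing of $\sW_{N_0}$ is essentially a choice of constants. For $L\in\mathscr C_{N_0}^\flat$ no stopping criterion applies. For $L\in\mathscr C_{N_0}^\natural$, $\ell(L)=2^{-N_0}$ is fixed, the ball $\bB_L$ has radius $64 M_0\sqrt m\,2^{-N_0}$ and sits inside $\bB_2$ by the choice of $N_0$; hence $\bE(T,\bB_L)\le C(m,n,M_0,N_0)\,\bE^\flat(T,\bB_2)\le C(\dots)\varepsilon_1$, which is dominated by $C_\be\varepsilon_1\ell(L)^{2-\alpha_\be}=C_\be\varepsilon_1\,2^{-N_0(2-\alpha_\be)}$ as soon as $C_\be$ is large enough. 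The height threshold is handled identically after $C_\be$ has been fixed, by taking $C_\bh$ large enough, using Theorem \ref{t:height_bound}.

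\medskip

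For the inductive step, consider a pair $(H,L)$ of either neighbors with $\ell(H)\le\ell(L)$ or with $H$ a descendant of $L$. For (a), the key tilting inequality follows from the standard device: if both balls are admissible (i.e.\ their centers are far enough from $\Gamma$ to use the non-boundary version, or we use the boundary balls $\bB_H^\flat,\bB_L^\flat$ in the boundary case), then $\bB_H\cap\bB_L$ contains a ball of radius $c\ell(L)$ around a common point of $\supp(T)$, whose mass is bounded below by $c\ell(L)^m$ via monotonicity. Then one writes
\begin{equation*}
|\pi_H-\pi_L|^2\,\|T\|(\bB_H\cap\bB_L)\le 2\int_{\bB_H\cap\bB_L}\left(|\vec T-\pi_H|^2+|\vec T-\pi_L|^2\right)d\|T\|,
\end{equation*}
and uses the stopping criteria in $\sS^\natural$ together with Lemma \ref{lem:raffinamento} (for boundary balls) to bound the right-hand side by $C\varepsilon_1\ell(L)^{m+2-2\alpha_\be}$. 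The mixed case, where $H$ is boundary and $L$ is non-boundary, is handled by interposing a boundary ball $\bB_{L'}^\flat$ with $L'$ a boundary neighbor of $L$ (which exists once one has descended far enough) and using the triangle inequality. For a descendant chain, one sums a geometric series in $\ell(L_j)^{1-\alpha_\be}$, which converges thanks to $\alpha_\be<1$. The bound $|\pi_H-\pi_0|\le\bar C\varepsilon_1^{1/2}$ then follows by chaining back to a cube containing $0$ and exploiting the minimality of $\pi_0$ in $\bB_2$.

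\medskip

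Given (a), the height statements (b)$^\natural$, (b)$^\flat$, (c)$^\natural$, (c)$^\flat$ reduce to Theorem \ref{t:height_bound} plus the elementary triangle-type inequality
\begin{equation*}
\bh(T,\bC,\pi)\le \bh(T,\bC,\pi')+C|\pi-\pi'|\,\mathrm{diam}(\bC),
\end{equation*}
applied inside the appropriate cylinder. For (b)$^\natural$, the cone condition from Corollary \ref{c:cone_cut}, together with the crude bound $|\pi_H-\pi_0|\le C\varepsilon_1^{1/2}$, yields $\supp(T)\cap\bC_{48r_H}(p_H,\pi_0)\subset\bB_H$; one then applies Theorem \ref{t:height_bound} in the cylinder $\bC_{48r_H}(p_H,\pi_H)$, where the excess is bounded by $C\bE(T,\bB_H)\le C_\be\varepsilon_1\ell(H)^{2-\alpha_\be}$ because $H\in\sS^\natural$ has not been stopped, and finally uses the tilt bound to transfer the height back to $\pi_0$, losing the factor $\varepsilon_1^{1/2m}$ rather than $\varepsilon_1^{1/2}$ because of $\alpha_\bh<\alpha_\be$. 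Statements (c)$^\natural$ and (c)$^\flat$ are then immediate from the stopping criterion on height in $\bB_L$, the tilt between $\pi_L$ and $\pi_H$ proven in (a), and the elementary inequality above; the boundary versions (b)$^\flat$ and (c)$^\flat$ use the improved estimates of Lemma \ref{lem:raffinamento} in place of the stopping criterion.

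\medskip

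The main obstacle I anticipate is the careful treatment of \emph{mixed} pairs $(H,L)$ where one is a boundary cube and the other is not, since the two families use different balls ($\bB_L$ versus $\bB_L^\flat$) and different excess bounds. The hierarchy in Assumption \ref{ass:cm}, in particular $C_\bh\gg C_\be\gg 1$ and $\alpha_\be\ll\alpha_\bh\ll 1$, is what allows Lemma \ref{lem:raffinamento} (giving an absolute gain in the boundary case) to dominate the possibly larger stopping thresholds on the non-boundary side, and the bookkeeping of these constants is where most of the technical care must be spent.
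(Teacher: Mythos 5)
There is a genuine gap, and it sits exactly at the step you flag as ``the standard device.'' To compare $\pi_H$ and $\pi_L$ you assert that $\bB_H\cap\bB_L$ contains a ball of radius $c\,\ell(L)$ around a common point of $\supp(T)$. The horizontal distance of the centers is indeed $\le 3\sqrt m\,\ell(L)$, but $p_H$ and $p_L$ are merely points of $\supp(T)$ lying over $c(H)$ and $c(L)$, and nothing at this stage controls their \emph{vertical} separation: the global cone condition of Assumption \ref{ass:decay+cone}(iv) only gives $|\bp_0^\perp(p_H-p_L)|\lesssim \varepsilon_1^{\sfrac12}$, which is useless once $\ell(L)\ll\varepsilon_1^{\sfrac12}$. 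The only way to know that $\bB_H\subset\bB_L$ (or that the two balls overlap in a set of mass $\gtrsim\ell(L)^m$) is a \emph{local} height bound for $\supp(T)$, relative to the fixed plane $\pi_0$, in a cylinder around $p_L$ — which is precisely conclusion (b) of the proposition. Your plan proves (a) first and then derives (b), (c)$^{\natural}$, (c)$^{\flat}$ from (a), so the argument is circular (or at least leaves the crucial overlap unproved). The paper avoids this by running a simultaneous downward induction along the chain of ancestors $H=H_{j_0+1}\subset H_{j_0}\subset\cdots\subset H_{\bar j}$ (with $H_{\bar j}$ the first boundary ancestor or a cube of generation $N_0$): the cylinder height bound $\bh(T,\bC_{48 r_{H_j}}(p_{H_j},\pi_0))\lesssim \varepsilon_1^{\sfrac{1}{2m}}\ell(H_j)$ at step $j$ is what guarantees $\bB_{H_{j+1}}\subset\bB^\square_{H_j}$ and hence the tilt estimate at step $j+1$, while the tilt estimate at step $j+1$ (together with the height bound in the ball from the stopping criteria, or from Lemma \ref{lem:raffinamento} at the boundary base step) is what propagates the cylinder height bound to step $j+1$. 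This intertwining is the essential idea of the proof and it is absent from your outline; the boundary case enters only as the base step of the induction, via Lemma \ref{lem:raffinamento}, Theorem \ref{thm:decay_and_uniq} and the cone condition of Corollary \ref{c:cone_cut}, not through an ``interposed'' boundary neighbor.

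A second, related omission concerns the support inclusions in (c)$^{\natural}$ and (c)$^{\flat}$: you treat them as immediate from the height stopping criterion plus a tilt–triangle inequality, but to transfer the height bound in $\bB_L$ (with respect to $\pi_L$) to the tilted cylinder $\bC_{36r_L}(p_L,\pi_H)$ you must first know $\supp(T)\cap\bC_{36r_L}(p_L,\pi_H)\subset\bB_L$, which is itself part of the claim. The paper proves this by a separate chain argument: it shows the tilted cylinders are nested along the ancestor chain and argues by contradiction at the first index where the inclusion fails, a point escaping the ball would have $|\bp_{\pi_0}^\perp(p-p_{H_{j'}})|\gtrsim r_{H_{j'}}/\varepsilon_1$, contradicting the cylinder height bound of the previous generation. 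Without this (or an equivalent mechanism), your derivation of (c) does not close.
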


\begin{proof} In this proof, constants denoted by $C$ will be assumed to depend on $m,n,Q$ and all the parameters
$\alpha_{\be}, \alpha_{\bh}, M_0, N_0, C_\be, C_\bh$, constants denoted by $\bar C$ will be assumed to depend on $m,n,Q, \alpha_{\be}, \alpha_{\bh}, M_0,$ 
$N_0, C_\be$ and constants denoted by $C_0$ will be assumed to depend only upon $m,n$ and $Q$. Constants depending on other subsets of the parameters above will be explicitly mentioned. We first show that $\sW_{N_0} = \emptyset$. We have already proved that $\sW$ does not contain boundary cubes in Lemma \ref{cor:no_stop_b}. Next, if $H\in \mathscr{C}_{N_0}^\natural$, $\bB_H \subset \bB_2$ by Lemma \ref{lem:raffinamento} and thus we can estimate
\begin{align}
\bE(T, \bB_H, \pi_0) \leq C (M_0, N_0) \bE(T, \bB_2, \pi_0) \leq C (M_0, N_0) \varepsilon_1\, .
\end{align}
Next, let $\pi$ be the projection of the plane $\pi_0$ in $T_{p_H} \Sigma$. Since $\pi_0 \subset T_0 \Sigma$, by the regularity assumption \eqref{e:eps_condition} on $\Sigma$,
\[
|\pi_0 - \pi|\leq C_0 \varepsilon_1^{\sfrac{1}{2}}\, .
\]
In particular, since by the monotonicity formula we can assume 
\[
\|T\| (\bB_H) \leq C_0(64 r_H)^m\, ,
\] 
we conclude
\[
\bE (T, \bB_H) \leq \bE (T, \bB_H, \pi) \leq C (M_0, N_0) \varepsilon_1 \leq C (M_0, N_0) \varepsilon_1 \ell (H)^{2-2\alpha_\be}\, .
\]
By our assumptions on the parameters, since $C_\be \geq C (M_0, N_0)$, we conclude that $L\not\in \sW^\be$. 

Next, notice that, since $p_H \in \supp (T)$, by the monotonicity formula we know
\begin{equation}\label{e:lower_bound_BH}
\|T\| (\bB_H) \geq \frac{1}{2} \omega_m (64 r_H)^m\, .
\end{equation}
Thus we can estimate
\begin{align*}
|\pi_H - \pi_0|^2 & \leq C_0 \bE (T, \bB_H) + C_0 \bE (T, \bB_H, \pi_0)\\
& \leq C_0 \varepsilon_1 + C (M_0, N_0) \bE (T, \bB_2, \pi_0)
\nonumber\\
& \leq C (M_0, N_0) \varepsilon_1\, .
\end{align*}
Hence,
\begin{align*}
\bh (T, \bB_H) & =  \bh (T, \bB_H, \pi_H)\\
& \leq C_0 |\pi_H - \pi_0| (r_H + \bh (T, \bB_H, \pi_0)) +  \bh (T, \bB_H, \pi_0)\\
& \leq  C (M_0, N_0) \varepsilon_1^{\sfrac{1}{2m}}\, .
\end{align*}
Since $C_{\bh}$ is assumed to be large enough compared to $M_0$ and $N_0$, we conclude that $H\not\in \sW^\bh$. 

\medskip

We next prove  (b)$^\flat$, (c)$^\flat$ and (a) when $H\in \mathscr{C}^\flat$. Since  the conclusions (b)$^\flat$ and (c)$^\flat$ are direct consequences of Corollary \ref{c:cone_cut} and (a), it will be enough to prove \((a)\) for $H\in \mathscr{C}^\flat$. To this end, note that  the second part of the statement is in Lemma \ref{lem:raffinamento}. 
We start with the first part of (a) in the case of $L$ is a boundary cube. 
In this is case the we can use  Lemma \ref{lem:raffinamento} and Theorem \ref{thm:decay_and_uniq} part (c) to conclude that 
\begin{align}\label{eq:(b) for boundary cubes}
	\abs{\pi_H - \pi_L}^2 &\le 3 \left( \abs{\pi_H - \pi(p^\flat_H) }^2+ \abs{\pi_L - \pi(p^\flat_L) }^2   + \abs{\pi(p^\flat_H) - \pi(p^\flat_L) }^2\right)\\
	&\le 3 C_0 \varepsilon_1 \ell(H)^{2-2\alpha_\be} + 3C_0 \varepsilon_1 \ell(L)^{2-2\alpha_\be} + 3 C_0\varepsilon_1 \ell(L)^{2-2\alpha_\be} \nonumber.
\end{align}
where we have also used that, by regularity of \(\gammaup\),    $\abs{p^\flat_H - p^\flat_L} \le C_0 \abs{c (H) - c (L)} \le C_0 \ell(L)$. Since   \(\ell(H) \le 2\ell(L)\) this proves (a) when \(L\in \mathscr{C}^\flat\).

It remains the case that $L$ is not a boundary cube. Since \(H\) is a boundary cube, Lemma \ref{l:son_father} implies that  $\frac{1}{2} \ell(H)\le \ell(L) \le \ell(H)$. In this case from Corollary \ref{c:cone_cut}, equation \eqref{eq:height-envelope2}, and the very definition of \(p^\flat_{H}\)  we deduce that
\begin{align}
& (1-C_0\varepsilon_1^{\frac12})\abs{p_{L}- p^\flat_{H}} \le \abs{\bp_{\pi_0}(p_{L}- p^\flat_{H})}\nonumber\\
 &\le \abs{c (L) - c (H)} + \abs{c (H)- \bp_{\pi_0}(p^\flat_{H})} \le 65 r_H.\label{eq:distance of centers-0}
\end{align}
Hence we conclude that $\bB_L\subset \bB_{H}^\flat$ and so arguing as above  
\[ 
\abs{\pi_{L} - \pi_{H}}^2 \le C_0\bE(T,\bB_{L}) + C_0 \bE^\flat(T,\bB^\flat_{H}).
\]
If $L \notin \sW^\be$ we conclude that $\abs{\pi_{L} - \pi_{H}} \le \overline{C}\varepsilon_1^{\frac12} \ell(H)^{1-\alpha_\be}$. Otherwise let $\pi$ be the projection of $\pi_{H}$ onto $T_{p_L}\Sigma$. By the regularity assumptions on $\Sigma$ and the estimate \eqref{eq:distance of centers-0} we have $\abs{\pi- \pi_{H}} \le C_0 \varepsilon_1^{\frac12}\ell(H)$ and so 
\begin{align*}
\bE(T,\bB_{L}) \le & \bE(T, \bB_{L}, \pi) \le C_0 \bE^\flat(T, \bB^\flat_{H}) + C_0 \abs{\pi- \pi_{H}}^2\\ 
\le & C_0 \varepsilon_1^{\frac12} \ell(H)^{2-2\alpha_\be}.
\end{align*}
Hence we conclude as well if $L \in \sW$ $\abs{\pi_{L} - \pi_{H}} \le \overline{C}\varepsilon_1^{\frac12} \ell(H)^{1-\alpha_\be}$, since \(\ell(H) \le  2\ell(L)\), this concludes the proof of (a) if \(H\) is a boundary cube.

\bigskip

Now we now turn to the proof of  (a),  (b)$^\natural$ and (c)$^\natural$.  To do so we first pick \(H\in \sC^\natural\) and we start by considering  a chain of ancestor-cubes $H=H_{j_0+1} \subset H_{j_0} \subset \dotsm \subset H_{\bar j}$ such that  $H_j$ is the father of $H_{j+1}$ and $H_{\bar j}$ is the first ancestor that is a boundary cube or $\bar j=N_0$. We want to show by  induction that
\begin{itemize}
\item[(i)\textsuperscript{j}] $\abs{\pi_{H_j}- \pi_{H_{j-1}}} \le \overline{C}_1 \varepsilon_1^{\frac12} \ell(H_j)^{1-\alpha_\be}$ and $\abs{\pi_{H_j}- \pi_0} \le \overline{C}_1 \varepsilon_1^{\frac12}$;
\item[(ii)\textsuperscript{j}] $\supp(T) \cap \bC_j \subset \bB_{H_j}$ and  $\bh(T, \bC_j, \pi_0)\le C_1 \varepsilon_1^{\frac{1}{2m}} \ell(H_j)$ with $\bC_j:=\bC_{48 r_j}(p_{H_j}, \pi_0)$; 
\end{itemize}
for suitable constants  $\overline C_1 = \overline C_1 (\alpha_{\be}, \alpha_{\bh}, M_0, N_0, C_\be)$\\ 
and $C _1= C_1 (\alpha_{\be}, \alpha_{\bh}, M_0, N_0, C_\be, C_\bh)$.

\medskip
\noindent
{\em Base Step, \(j=\bar j\)}:  If $H_{\bar j}=H_{N_0}$ we have shown already  that 
\[
\abs{\pi_{H_{N_0}}- \pi_0} \le C(M_0,N_0) \varepsilon_1^{\frac12}\ell(H_{N_0})^{1-\alpha_\be}
\] 
and $\supp(T)\cap \bC_{N_0} \subset \bB_{H_{N_0}}$. Hence we need to consider only the case in which $H_{\bar j}$ is a boundary cube. In this case  we argue as in \eqref{eq:distance of centers-0} to deduce
\begin{align}
& (1-C_0\varepsilon_1^{\frac12})\abs{p_{H_{\bar j+1}}- p^\flat_{H_{\bar j}}} \le \abs{\bp_{\pi_0}(p_{H_{\bar j+1}}- p^\flat_{H_{\bar j}})}\nonumber\\ 
\le & \abs{c_{H_{\bar j+1}} - c_{H_{\bar j}}} + \abs{c_{H_{\bar j}}- \bp_{\pi_0}(p^\flat_{H_{\bar j}})} \le 65 r_{H_{\bar j}}.\label{eq:distance of centers-1}
\end{align}
In particular this implies that $\bB_{H_{\bar j+1}} \subset \bB^\flat_{H_{\bar j}}$. Hence we have 
\[ 
\abs{\pi_{H_{\bar j+1}} - \pi_{H_{\bar j}}}^2 \le C_0\bE(T,\bB_{H_{\bar j+1}}) + C_0 \bE^\flat(T,\bB^\flat_{H_{\bar j}}).
\]
As before if $H_{\bar j+1} \in \sS_{\bar j+1}$ we directly conclude that 
\[
\abs{\pi_{H_{\bar j+1}} - \pi_{H_{\bar j }}} \le \overline{C} \varepsilon_1^{\frac12} \ell(H_{\bar j+1})^{1-\alpha_\be}.
\] 
Otherwise let $\pi$ be the projection of $\pi_{H_{\bar j}}$ onto the tangent space of \(\Sigma\) at \(p_{H_{\bar j+1}}\). By the regularity of $\Sigma$ and the estimate \eqref{eq:distance of centers-1} we have $\abs{\pi- \pi_{H_{\bar j+1}}} \le C(M_0) \varepsilon_1^{\frac12}\ell(H_{\bar j+1})$. Since 
\(\|T\|(\bB^\flat_{H_{\bar j}})\ge \omega_m r_{H_{\bar j}}^m/2\), 
\begin{align}
\bE(T,\bB_{H_{\bar j+1}}) &\le \bE(T, \bB_{H_{\bar j+1}}, \pi) \le C_0 \bE^\flat (T, \bB^\flat _{H_{\bar j}}) + C_0 \abs{\pi- \pi_{H_{\bar j+1}}}^2\nonumber\\ 
&\le C\varepsilon_1^{\frac12} \ell(H_{\bar j+1})^{2-2\alpha_\be}.\label{e:odio}
\end{align}
We conclude the first part of (i)\textsuperscript{l} for \(j=\bar j\),  while the second one follows from \eqref{e:tilt_pi(q)} and the estimate:
\[
|\pi (p_{H_{\bar j}}^\flat)-\pi_{H_{\bar j}}|\le C_0 \varepsilon_1 r_{H_{\bar j}}^{1-\alpha_\be}.
\]

\bigskip
\noindent
{\em Induction Step}: Let us assume the validity of  (i)\textsuperscript {j'}, (ii)\textsuperscript {j'}  for all  $\bar j\le j' \le j$, we want to show that $(i)\textsuperscript{j+1}, (ii)\textsuperscript{j+1}$ hold true. First note that   $p_{H_{j+1}} \in \bC_j$, and thus, by  (ii)\textsuperscript{j},
\begin{align} 
\abs{p_{H_{j+1}} - p_{H_j}}^2 &\le \abs{c (H_{j+1}) - c (H_j)}^2 + \abs{ \bp^\perp_{\pi_0}(p_{H_{j+1}} - p^\square_{H_j})}^2\nonumber\\
&\le  \left(\frac{9}{M_0^2} + 4C_1 \varepsilon_1 \right) r_{H_{j+1}}^2\, , \label{eq:distance of centers-2}
\end{align}
where \(\square=\flat \) or \(\square=\ \) depending on whether \(H_{l}\) is a boundary or a non-boundary cube.
Hence, provided \(M_0^{-1}\) and \(\varepsilon_1\) are sufficiently small,  $\bB_{H_{j+1}} \subset \bB^\square_{H_j}$. Thus
\[ 
\abs{\pi_{H_{j+1}} - \pi_{H_{j}}}^2 \le C_0\bE^\square(T,\bB^\square_{H_{j}}) + C_0 \bE(T,\bB_{H_{j+1}}).
 \]
Note now that $H_{j}\in \sS_{j}$ (since otherwise it would have not been subdivided to produce \(H_{j+1}\)), hence
\[
 \bE(T,\bB_{H_{j+1}})\le C_0 \bE^\square(T,\bB^\square_{H_{j}})\le C_0 C_{\be} \varepsilon_1 \ell(H_{j})^{2-2\alpha_\be}\le \overline{C} \varepsilon_1 \ell(H_{j})^{2-2\alpha_\be}
\] 
for a constant \(\overline C\) which depends only on  \(m,n,Q,\) and \(C_\be\). This proves   the first part of (i)\textsuperscript{j+1} if  we choose \(\overline C_1\ge  \overline C\). The second part follows from the first one and the inductive assumption via the estimate
\[ 
\abs{\pi_{H_{j+1}} - \pi_0} \le \sum_{j' = \bar j}^{j+1} \abs{\pi_{H_{j'}}- \pi_{H_{j'-1}}} \le \overline C_1 \varepsilon_1^{\frac12}\sum_{j'=\bar j+1}^{j+1} 2^{-(1-\alpha_\be)j'} \le \overline C_1 \varepsilon_1^{\frac12}.
\]
since we can choose \(N_0\)  big enough to ensure \[\sum^\infty_{j'=N_0}2^{-(1-\alpha_\be)j'}\le 1\, .\]

We now prove (ii)\textsuperscript{j+1}. The idea is to first use the inductive assumption (namely the height bound in $\bC_j$) in order to prove that $\supp (T)\cap \bC_{j+1} \subset \bB_{H_{j+1}}$ and hence to use the height bound in $\bB_{H_{j+1}}$ in order to conclude an height bound in $\bC_{j+1}$: in the second step it is crucial that the tilt $|\pi_{H_{j+1}} - \pi_0|$ has already been proved to be under control, cf. Figure \ref{f:induttiva}. Indeed, by (ii)\textsuperscript{j}   for all $x \in \supp(T) \cap \bC_{j+1}\subset \supp(T) \cap \bC_{j}$ we have 
\begin{align}
\abs{x-p_{H_{j+1}}}^2 &\le \left(48r_{H_{j+1}}\right)^2 + \bh (T, \bC_j, \pi_0)\nonumber\\ 
&\leq
 \left(48r_{H_{j+1}}\right)^2 + C_1 4 \varepsilon_1 \ell(H_{j+1})^2 \le (64 r_{H_{j+1}})^2. \label{eq:hight bad induction step} 
\end{align}
provided \(\varepsilon_1\) is small enough. This implies that $\supp(T)\cap \bC_{j+1} \subset \bB_{H_{j+1}}$ and thus the first part of  (ii)\textsuperscript{j+1}. We now  note that, if $H_{j+1} \in \sS_{j+1}$, then
\begin{align*}
\bh (T, \bC_{j+1}, \pi_0) &\leq C_0 r_{H_{j+1}} \abs{\pi_{H_{j+1}} - \pi_0} + \bh (T, \bB_{H_{j+1}}, \pi_{H_{j+1}})\\ 
&\leq C_1 \varepsilon_1^{\sfrac{1}{2m}} \ell (H_{j+1})\, .
\end{align*}
provided \(C_1\) is chosen big enough. If instead  $H_{j+1} \notin \sS_{j+1}$ (which can just happen for \(j=j_0\)) we   just observe that $\bC_{j+1} \subset \bC_j$  and that \(H_{j}\in \sS_j\) (otherwise it would have not been subdivided) and thus, by choosing   \(C_1\) possibly bigger, 
\begin{align*}
\bh(T, \bC_{j+1}, \pi_0) &\le \bh(T, \bC_j, \pi_0)\le C_0 r_{H_{j}} \abs{\pi_{H_{j}} - \pi_0} + \bh (T, \bB^\square_{H_{j}}, \pi_{H_{j}})
\\
&\le  C_0 r_{H_{j+1}} \abs{\pi_{H_{j+1}} - \pi_0} + C_\bh  \varepsilon_1^{\sfrac{1}{2m}} \ell(H_j)^{1+\alpha_\bh}\\
&\le C_1 \varepsilon_1^{\sfrac{1}{2m}} \ell(H_{j+1})
\end{align*}
This complete the proof of (ii)\textsuperscript{j+1} and of the claim.  Note in particular that (ii)\textsuperscript{j+1} implies (b)$^\natural$. 

\begin{figure}[htbp]
\begin{center}\label{f:induttiva}
\input{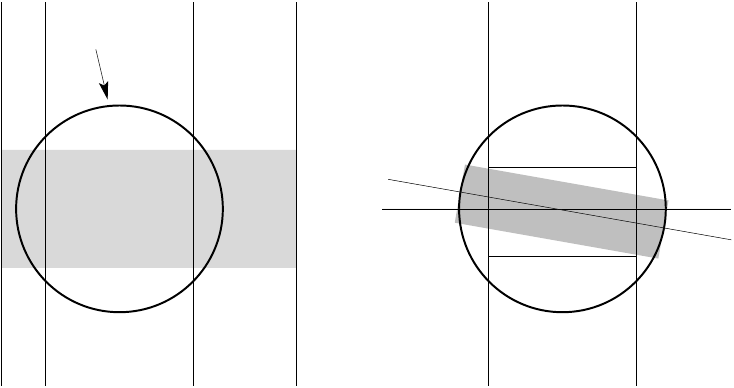_t}
\end{center}
\caption{The inductive proof of (ii)\textsuperscript{j+1} consists of two steps: first the height bound in the cylinder $\bC_{j}$ is used to prove that $\supp (T) \cap \bC_{j+1} \subset \bB_{H_{j+1}}$; then the height bound in $\bB_{H_{j+1}}$ is used to prove the height bound in the cylinder $\bC_{j+1}$.}
\end{figure}

Let us now prove (a), and  (c)\(^\natural\). For (a),  let  $L$  be an ancestor of \(H\), then  either $L=H_{i}$ for some $i\le \bar j$ or $L$ is a boundary cube with $H_{\bar j}\subset L$. In the first case the we use (i)\textsuperscript{j} to deduce that 
\[ 
\begin{split}
\abs{\pi_{H} - \pi_{L}}&=\abs{\pi_{H_{j_0+1}} - \pi_{H_i}}  \le \sum_{j= i+1}^{j_0+1} \abs{\pi_{H_{j}}- \pi_{H_{j-1}}}
\\
 &\le C \varepsilon_1^{\frac12}\ell(H_{i})^{1-\alpha_\be} \sum_{j=1}^{j_0-i} 2^{-(1-\alpha_\be)l'} \le C\varepsilon_1^{\frac12} \ell(H_{j})^{1-\alpha_\be}.
\end{split}
\]
In the second case we use the triangle inequality and (a) for boundary cubes (which has already been shown) to deduce 
\begin{align*}
\abs{\pi_{H}- \pi_{L}} &\le \abs{\pi_{H}- \pi_{H_{\bar j}}}+ \abs{\pi_{H_{\bar j}}- \pi_{H_{L}}}\nonumber\\ 
&\le C\varepsilon_1^{\frac12}\ell(H_{\bar j})^{1-\alpha_\be} + C\varepsilon_1^{\frac12} \ell(L)^{1-\alpha_\be}\le C\varepsilon_1^{\frac12} \ell(L)^{1-\alpha_\be} 
\end{align*}
It remains to prove the second part of (a) in the case that $L,H$ are neighbors and both are non-boundary cubes. Let $M$ be the father of $L$ and we may assume that $\ell(H) \le \ell(L)=\frac12 \ell(M)$. Since $\abs{c (H)-c (M)} \le 3\sqrt{m} \ell(L)$ we have that $p_H \in \bC_{32r_M}(p_M, \pi_0)\cap \supp(T)$ or $p_H \in \bC_{2^7 32 r_M}(p_M^\flat, \pi_0) \cap \supp(T)$ if $M$ is a boundary cube. In both cases, by (b),  $\bB_H \subset \bB_M$ (or $\bB_H \subset \bB^\flat_M$), hence 
\[ 
\abs{\pi_H - \pi_M} \le C \varepsilon_1^{\frac12} \ell(M)^{1-\alpha_\be}.
 \]
Since a symmetric argument holds for \(L\) we  obtain 
 \[
 \abs{\pi_H - \pi_L} \le \abs{\pi_H - \pi_M} + \abs{\pi_L - \pi_M} \le 4C \varepsilon_1^{\frac12} \ell(L)^{1-\alpha_\be}.
 \]
 and this concludes the proof of (a). To prove  (c)$^\natural$ we consider again the chain of ancestors $H=H_{j_0+1} \subset H_{j_0} \subset \dotsm \subset H_{\bar j}$  where \(  H_{\bar j}\) is either the first boundary cube in this chain or \(H_{\bar j}\in \sC_{N_0}\). Let us set  $\bC_j:=\bC_{48 r_{H_j}}(p^\square_{H_j}, \pi_0)$,  (c)$^\natural$ will  follow if we  show  that for all $j \ge \bar j$ 
\begin{equation}\label{eq:inclusion of cylinders} 
\supp(T) \cap  \bC_{36 r_{H_j}}(p^\square_{H_j}, \pi_{H_j}) \subset \supp(T) \cap \bC_j
\end{equation}
(note that the possibility $\square = \flat$ can only occur for $j = \bar j$). 
Indeed the inclusion \(\supp(T) \cap  \bC_{36 r_{H_j}}(p^\square_{H_j}, \pi_H)\subset \bB^\square_L\) will then follow from (b), the arguments in the last step and simple geometric considerations. Moreover, assuming \eqref{eq:inclusion of cylinders} and using (a) we will have 
\[ 
\begin{split}
\bh(T,\bC_{36 r_{H_j}}(p^\square_{H_j}, \pi_{H})) &\le \bh(T, \bC_j, \pi_H) \le \bh(T, \bB^\square_{H_j}, \pi_H) 
\\
&\le \bh(T, \bB^\square_{H_j}, \pi_{H_j}) + C \abs{\pi_H- \pi_{H_j}}  r_{H_j} 
\\
&\le C_\bh \varepsilon_1^{\frac{1}{2m}} \ell(H_j)^{1+\alpha_\bh }+C \varepsilon_1 \ell(H_j)^{2-\alpha_{\be}},
\end{split}
\]
from which we easily conclude. 

We are thus left to show \eqref{eq:inclusion of cylinders}.  First,  note  that from \eqref{eq:distance of centers-2} and (a)  for $j \ge  \bar j$
\[ 
\begin{split}
\abs{\bp_{\pi_H}( p_{H_{j+1}}- p^\square_{H_j})} &\le \abs{\bp_{\pi_0}( p_{H_{j+1}}- p^\square_{H_j})} + C\abs{\pi_0-\pi_H}\abs{ p_{H_{j+1}}- p^\square_{H_j}} 
\\
&\le (3\sqrt{m} + C\varepsilon_1^{\frac12}) \ell(H_j)
\end{split}
 \]
 (recall that $H_{j+1}$ is a non-boundary cube by assumption).
Hence, by choosing first \(M_0\) large and then \(\varepsilon_1\) small,  we always have 
\begin{equation}\label{e:mausiamolo}
\bC_{36 r_{H_{j+1}}}(p_{H_{j+1}}, \pi_H)\subset \bC_{36 r_{H_j}}(p^\square_{H_j}, \pi_H).
\end{equation}
Now, if $H_{\bar j} = H_{N_0}$ we deduce from $\abs{\pi_H - \pi_{H_{N_0}}} \le C \varepsilon_1^{\frac12}$ that 
\[
\bC_{36 r_{H_{N_0}}}(p_{H_{N_0}}, \pi_H)\subset \bC_{N_0}
\] 
if $\varepsilon_1$ is sufficient small.  If $H_{\bar j}$ is a boundary cube,  Corollary \ref{c:cone_cut} implies that $\bC_{2^7 36 r_{H_{\bar j}}}(p^\flat_{H_{\bar j}}, \pi_H) \subset \bC_{2^7 48 r_{H_{\bar j}}}(p^\flat_{H_{\bar j}}, \pi_0)$. Hence, in both cases, \eqref{eq:inclusion of cylinders} holds for \(j=\bar j\). Let us assume now that there exists a first index \(j'\ge \bar j +1\) such that \eqref{eq:inclusion of cylinders} fails. Then there is a point \(p\in \supp(T)\) such that 
\[
 p \in \supp(T) \cap  \bC_{36 r_{H_{j'}}}(p_{H_{j'}}, \pi_H) \setminus \bC_{j'}. 
 \]
 By a simple  geometric  argument and (a),  this implies that
  \[ 
  \abs{\bp_{\pi_0}^\perp( p - p_{H_{j'}} )} \ge \frac{36r_{H_{j'}}}{C \abs{\pi_0 - \pi_H}}\ge \frac{Cr_{H_{j'}}}{\varepsilon_1}.
  \]
  On the other hand, by the inclusion \eqref{e:mausiamolo},  the validity of   \eqref{eq:inclusion of cylinders}  at the step  \(j'-1\) and (b), we have
  \[
  \begin{split}
    \abs{\bp_{\pi_0}^\perp( p - p_{H_{j'}} )}&\le  \abs{\bp_{\pi_0}^\perp( p - p_{H_{j'-1}} )}+\abs{\bp_{\pi_0}^\perp( p_{H_{j'}} - p_{H_{j'-1}} )}
 \\
    &\le 2 \bh(T, \bC_{j'-1}, \pi_0)\le Cr_{H_{j'}}. 
  \end{split}
  \]
  Taking \(\varepsilon_1\) small enough the last two inequality are in contradiction, from which we deduce the validity of \eqref{eq:inclusion of cylinders} for \(j'\).
\end{proof}

In particular, a simple additional argument implies Proposition \ref{prop:yes_we_can}, in the following strengthened version:

\begin{proposition}\label{prop:yes_we_can_more}
Under the Assumptions \ref{ass:decay+cone} and \ref{ass:cm} the following holds for every
couple of neighbors $H, L \in \sS \cup \sW$ and any $H, L\in \sS \cup \sW$ with $H$ descendant of $L$:
\begin{align*}
&\supp(T)\cap\bC_{36r_L}(p_L,\pi_H) \subset\bB_L
&\mbox{when $L\in\mathscr{C}^\natural$,}\\
&\supp(T)\cap\bC_{2^7 36r_L}(p^\flat_L,\pi_H) \subset\bB_L^\flat  &\mbox{when $L\in\sC^\flat$,} 
\end{align*}
and the current $T$ satisfies the assumptions of Theorem \ref{t:old_approx} in the cylinder $\bC_{36 r_L} (p_L, \pi_H)$ (resp. of Theorem \ref{thm:second_lip} in $\bC_{2^7 36r_L}(p^\flat_L,\pi_H)$). 
\end{proposition}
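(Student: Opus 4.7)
The spatial inclusions in the two displayed equations are in fact a direct restatement of conclusions (c)$^\natural$ and (c)$^\flat$ of Proposition \ref{pr:tilting_cm}: that proposition was proved for any pair $H,L$ which are either neighbors or one a descendant of the other, covering precisely the two cases in which Proposition \ref{prop:yes_we_can_more} has to be invoked. Therefore the only real content of the statement is the verification, in each of the two cases, that the hypotheses of Theorem \ref{t:old_approx} (respectively Theorem \ref{thm:second_lip}) are met in the tilted cylinder $\bC_{36r_L}(p_L,\pi_H)$ (respectively $\bC_{2^7 36 r_L}(p^\flat_L,\pi_H)$).

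\textbf{Non-boundary case.} Fix $L\in\sC^\natural$. I would first observe that the cylinder $\bC_{36r_L}(p_L,\pi_H)$ is disjoint from $\gammaup$: any $q$ in that intersection would lie in $\supp(T)$, hence, by the inclusion already established, in $\bB_L$, giving $|\bp_{\pi_0}(q)-c(L)|<64r_L$; but $q\in\gammaup$ forces $\bp_{\pi_0}(q)\in\gammado$, contradicting $\dist(c(L),\gammado)\ge 64 r_L$. The containment in some fixed outer cylinder (say $\bC_2$) follows from Lemma \ref{lem:raffinamento} and the tilt bound $|\pi_H-\pi_0|\le \bar C\varepsilon_1^{1/2}$ of Proposition \ref{pr:tilting_cm}(a). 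It remains the smallness of $\bA^2 \ell(L)^2+\bE(T,\bC_{36r_L}(p_L,\pi_H))$, which I would obtain as follows: using the spatial inclusion $\supp(T)\cap\bC_{36r_L}(p_L,\pi_H)\subset\bB_L$ together with the monotonicity bound $\|T\|(\bB_L)\le C(64 r_L)^m$,
\begin{equation*}
\bE(T,\bC_{36r_L}(p_L,\pi_H))\le C\bigl(\bE(T,\bB_L,\pi_L)+|\pi_H-\pi_L|^2\bigr).
\end{equation*}
For $L\in\sS^\natural$ the stopping criterion gives $\bE(T,\bB_L)\le C_\be\varepsilon_1\ell(L)^{2-2\alpha_\be}$ directly; for $L\in\sW$ I would compare with the father $L^*\in\sS$, using $\bB_L\subset\bB^\square_{L^*}$ (as in \eqref{eq:distance of centers-2}) and the $C^{2}$ regularity of $\Sigma$ to project $\pi_{L^*}$ onto $T_{p_L}\Sigma$, exactly as in the computation \eqref{e:odio} in the induction of Proposition \ref{pr:tilting_cm}. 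Combining this with the tilt estimate $|\pi_H-\pi_L|\le \bar C\varepsilon_1^{1/2}\ell(L)^{1-\alpha_\be}$ from Proposition \ref{pr:tilting_cm}(a) yields
\begin{equation*}
\bA^2\ell(L)^2+\bE(T,\bC_{36r_L}(p_L,\pi_H))\le C\varepsilon_1\ell(L)^{2-2\alpha_\be},
\end{equation*}
which is smaller than the threshold $\bar\varepsilon$ of Theorem \ref{t:old_approx} once $\varepsilon_1$ is chosen sufficiently small in Assumption \ref{ass:cm}.

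\textbf{Boundary case.} For $L\in\sC^\flat$ the cylinder is allowed to meet $\gammaup$; I would instead verify the assumptions of Proposition \ref{p:good_cylinders}, which underlie Theorem \ref{thm:second_lip}. The role of $q$ in that proposition is played by $p^\flat_L\in\gammaup$ and the role of the reference plane by $\pi_H$, so one needs $T_{p^\flat_L}\gammaup\subset\pi_H\subset T_{p^\flat_L}\Sigma$ up to negligible error, which follows from the $C^{3,a_0}$ regularity of $\Sigma$ and $\gammaup$ combined with the tilt bound $|\pi_H-\pi_L|\le\bar C\varepsilon_1^{1/2}\ell(L)^{1-\alpha_\be}$ and $|\pi_L-\pi(p^\flat_L)|\le C_0\varepsilon_1\ell(L)^{1-\alpha_\be}$ of Lemma \ref{lem:raffinamento}. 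The excess smallness is obtained as in the non-boundary case, now starting from the bound $\bE^\flat(T,\bB^\flat_L)\le C_0\varepsilon_1\ell(L)^{2-2\alpha_\be}$ of \eqref{e:fa}, using the inclusion $\supp(T)\cap\bC_{2^7 36 r_L}(p^\flat_L,\pi_H)\subset\bB^\flat_L$ and paying the tilt $|\pi_H-\pi_L|$.

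\textbf{Main technical point.} The only estimate that requires care is the passage from the excess on the ball $\bB_L$ (controlled by the stopping criterion or by Theorem \ref{thm:decay_and_uniq}) to the excess on the tilted cylinder $\bC_{36r_L}(p_L,\pi_H)$; everything else is a mechanical application of Proposition \ref{pr:tilting_cm}. This passage is what dictates the hierarchy of parameters in Assumption \ref{ass:cm}: the tilt loss $|\pi_H-\pi_L|^2$ is of the same order $\varepsilon_1\ell(L)^{2-2\alpha_\be}$ as the excess, and therefore the constant $C_\be$ entering the stopping condition must be chosen large with respect to the absolute tilt constant $\bar C$, so that $\bE(T,\bC_{36r_L}(p_L,\pi_H))$ falls below the universal threshold $\bar\varepsilon$ in Theorem \ref{t:old_approx}.
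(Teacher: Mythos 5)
Your handling of the two set-theoretic inclusions and of the excess smallness in the tilted cylinders is consistent with the paper (the inclusions are exactly conclusions (c)$^\natural$ and (c)$^\flat$ of Proposition \ref{pr:tilting_cm}, and the paper regards the excess/tilt bookkeeping as already settled by that proposition). However, you have skipped the hypothesis to which the paper's proof is in fact entirely devoted: the approximation theorems require, besides small excess, that the projection of the current onto the reference plane has the correct multiplicity, i.e. $(\bp_{\pi_H})_\sharp \big(T\res \bC_{36 r_L}(p_L,\pi_H)\big) = Q\a{B}$ (or $(Q-1)\a{B}$) in the non-boundary case, and $(\bp_{\pi_H})_\sharp\big(T\res \bC_{2^7 36 r_L}(p^\flat_L,\pi_H)\big) = Q\a{B^+}+(Q-1)\a{B^-}$ in the boundary case. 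This is condition (iii) of Assumption \ref{Ass:app}, on which Theorem \ref{thm:second_lip} rests, and the analogous structural hypothesis inherited from \cite{DS3} behind Theorem \ref{t:old_approx}; without it one does not even know how many sheets the Lipschitz approximation is supposed to have. Small cylindrical excess does not give it: the excess is a pure tilt quantity, and the identity \eqref{e:excess_2} relating it to a mass defect already presupposes the projection formula \eqref{eq:joni-2}. The projection identity is known only for the reference plane $\pi_0$ (Assumption \ref{ass:decay+cone}), not for the tilted plane $\pi_H$, so it must be transferred, and your proposal contains no argument for this. (Your remark that $T_{p^\flat_L}\gammaup\subset\pi_H\subset T_{p^\flat_L}\Sigma$ holds ``up to negligible error'' does not substitute for it.)

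The paper closes exactly this gap with a deformation argument: it connects $\pi_0$ to $\pi_H$ by a continuous path of planes $\pi(t)$ with $|\pi(t)-\pi_0|\le C\varepsilon_1^{1/2}$, uses the height bound in a larger cylinder around the father cube (items (b)$^\flat$, (c)$^\flat$ of Proposition \ref{pr:tilting_cm}) to guarantee that for every $t$ the boundary of the restricted current inside $\bC(t)=\bC_{2^7 36 r_L}(p^\flat_L,\pi(t))$ lies in $\gammaup$, invokes the Constancy Theorem to write $(\bp_{\pi(t)})_\sharp (T\res\bC(t)) = k(t)\a{B(t)^+}+(k(t)-1)\a{B(t)^-}$ with $k(t)\in\mathbb Z$, and concludes that $k(t)$ is continuous in $t$, hence constant, hence equal to $k(0)=Q$ (the interior case is analogous, with the additional remark for $L\in\mathscr C_{N_0}$ handled by enlarging the auxiliary cylinder). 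Some argument of this type (or an equivalent degree-theoretic one) must be added to your proof; as written, verifying the inclusions and excess smallness alone does not license the application of Theorems \ref{t:old_approx} and \ref{thm:second_lip}.
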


\begin{proof} The first two claims have already been proved in the previous proposition.
We now wish to prove the applicability of Theorem \ref{t:old_approx} in $\bC_{36 r_L} (p_L, \pi_H)$, resp. of Theorem \ref{thm:second_lip} in $\bC_{2^7 36r_L}(p^\flat_L,\pi_H)$. In both cases let $\bC$  be the corresponding cylinder and $B$ their bases, namely $B_{36 r_L} (\bp_{\pi_H} (p_L), \pi_H)$ and $B_{2^7 36 r_L} (\bp_{\pi_H} (p^\flat_L), \pi_H)$.  We only have to show the following properties:
\begin{align}
& \bp_{\pi_H} (T\res \bC) = Q \a{B}\qquad &\mbox{if $L\in \mathscr{C}^\natural$}\\ 
& \bp_{\pi_H} (T \res \bC) = Q \a{B^+} + (Q-1) \a{B^-} \qquad &\mbox{if $L\in \mathscr{C}^\flat$}
\end{align}
where, in the second identity, we consider $B^+$ and $B^-$ as the regions of $B$ which are separated by $\bp_{\pi_H} (\gammaup)$.

We just show the argument for the second case, since the first one is entirely analogous and already contained in \cite{DS4}
(in fact  also the argument for the second case is just a modification of the one contained in \cite{DS4}).

Assume first that $L\not\in \mathscr{C}_{N_0}$, let $M$ be the father of $L$ and let $\bC' = \bC_{2^7 36 r_M} (p^\flat_M, \pi_0)$. Consider that, by case (c)$^\flat$ of the previous proposition, we clearly have $\supp (T)\cap \bC \subset \bC'$. Consider thus a continuous path of planes $[0,1] \ni t\mapsto \pi (t)$ such that $\pi (0) = \pi_0$, $\pi (1) = \pi_H$ and $|\pi (t) - \pi_0| \leq C \varepsilon_1^{\sfrac{1}{2}}$ and let $S:= T\res \bC'$, $\bC (t) := \bC_{2^7 36 r_L} (p^\flat_L, \pi (t))$ and $T (t) := \bp_{\pi (t)} (S\res \bC (t))$. Observe that, by the height bound on $\bC'$, if $\varepsilon_1$ is sufficiently small, then $\supp (\partial S) \cap \bC(t) \subset \gammaup$. In particular, if $B(t) = B_{2^7 36 r_L} (\bp_{\pi (t)} (p^\flat_L), \pi (t))$ and $B(t)^\pm$ are the corresponding regions in which $\bp_{\pi (t)}$ subdivides it, we must have
\[
T (t) = k(t) \a{B(t)^+} + (k (t)-1) \a{B(t)^-}\, 
\] 
for a suitable integer $k(t)$. However, by a simple continuity argument on $t\mapsto T(t)$, the map $t\mapsto k(t)$ must be as well continuous, that is constant. Since $k(0) = Q$, we thus must have $k(1)=Q$ as well. On the other hand $ T(1) = \bp_{\pi_H} (T \res \bC)$, thus implying the desired claim.

In case $L\in \mathscr{C}_{N_0}$ we use the same argument where we define $\bC'$ to be the cylinder $\bC_{2^7 72 r_L} (p^\flat_L, \pi_0)$.
\end{proof}

\section{Interpolating functions and linearized system} 

Consider now a pair $H, L \in \sS \cup \sW$ which are either neighbors or such that $H$ is a descendant of $L$. By Proposition \ref{prop:yes_we_can_more} we can consider corresponding maps $f_{HL}^+$ and $f_{HL}$ as in Section \ref{s:interpolating}, by applying Theorem \ref{thm:second_lip} and Theorem \ref{t:old_approx} in the cylinders
$\bC_{2^7 36r_L}(p^\flat_L,\pi_H)$ and $\bC_{36 r_L} (p_L, \pi_H)$, respectively. Hence we introduce the
corresponding maps $h_{HL} (x)= (\bar h_{HL}(x), \Psi_H(x,\bar h_{HL}(x)))$ where $\bar h_{HL}$ solves
\begin{equation}\label{e:defining_PDE_i_2}
\left\{
\begin{array}{ll}
\Delta\overline h_{HL}=\mathbf{L} \cdot (x-\bp_{\pi_H} (p_H)) & \\ \\
\overline h_{HL}=\etaa\circ\overline f_{HL}\qquad & \text{on }\partial B_{8r_L}(p_L,\pi_H)\, ,\
\end{array}
\right.
\end{equation}
if $H$ and $L$ are both nonboundary cubes,
\begin{equation}\label{e:defining_PDE_b_2}
\left\{
\begin{array}{ll}
\Delta\overline h_{HL}=\mathbf{L} \cdot (x-\bp_{\pi_H} (p_H)) & \\ \\
\overline h_{HL}=\etaa\circ\overline f_{HL}^+\qquad & \text{on }\partial B^+_{2^78}r_L(p^\flat_L,\pi_H)\, ,
\end{array}
\right.
\end{equation}
if $L$ is a boundary cube and $H$ is a non-boundary cube,

\begin{equation}\label{e:defining_PDE_i_3}
\left\{
\begin{array}{ll}
\Delta\overline h_{HL}=\mathbf{L} \cdot (x-\bp_{\pi_H} (p^\flat_H)) & \\ \\
\overline h_{HL}=\etaa\circ\overline f_{HL}\qquad & \text{on }\partial B_{8r_L}(p_L,\pi_H)\, ,\
\end{array}
\right.
\end{equation}
if $L$ is a nonboundary cube and $H$ is a boundary cube and finally
\begin{equation}\label{e:defining_PDE_b_3}
\left\{
\begin{array}{ll}
\Delta\overline h_{HL}=\mathbf{L} \cdot (x-\bp_{\pi_H} (p^\flat_H)) & \\ \\
\overline h_{HL}=\etaa\circ\overline f_{HL}^+\qquad & \text{on }\partial B^+_{2^7 8r_L}(p^\flat_L,\pi_H)\, ,
\end{array}
\right.
\end{equation}
if both $H$ and $L$ are boundary cubes.
The constant coefficient matrix $\mathbf{L}$ is given by
\begin{align}
\mathbf{L}^{ik} &= -  \sum_j \Delta_x\Psi_H^j(p_H)\partial^2_{y_ix_k}\Psi_H^{j}(p_H)\qquad \mbox{if $H\in \mathscr{C}^\natural$}\label{e:matrix_L_2}\\
\mathbf{L}^{ik} &= -  \sum_j \Delta_x\Psi_H^j(p^\flat H)\partial^2_{y_ix_k}\Psi_H^{j}(p^\flat_H)\, \qquad
\mbox{if $H\in \mathscr{C}^\flat$.}
\end{align}
Observe that the third case cannot happen when $H$ is a descendant of $L$ and thus it can only happen
when $H$ and $L$ are neighbors.

In order to simplify our discussion, in what follows we always use the convention that $\varkappa_H$ is the orthogonal complement in $T_{p_H} \Sigma$ (resp. $T_{p^\flat_H} \Sigma)$ of $\pi_H$. Moreover, for every map $u$ defined on a domain $\Omega\subset \pi_H$ and taking values in $\pi_H^\perp$, we denote by $\bar u$ its projection on $\varkappa_H$. In particular, if the graph of $u$ is contained in $\Sigma$, then
we have $ u = (\bar u, \Psi_H \circ \bar u)$. The same convention, given the obvious adjustments, is adopted for multivalued maps.  

The key estimate leading to the proof of Theorem \ref{thm:center_manifold} is contained in the following proposition.

\begin{proposition}\label{pr:elliptic_regularization}
Under the Assumptions \ref{ass:decay+cone} and \ref{ass:cm} the following estimates hold for every pair of cubes $H$ and $L$ which are either neighbors or such that $H$ is a descendant of $L$:

\begin{align}
&\int \left( D (\etaa\circ \bar f_{HL}): D\zeta + \zeta^t \cdot \mathbf{L} \cdot (\bp_{\pi_H} (x- p_H))\right)\nonumber\\ 
\leq & C \varepsilon_1 r_L^{m+1+\alpha_{\bh}} (r_L \|D \zeta\|_0 + \|\zeta\|_0)\label{e:weak_ell_1}\\
&\qquad\qquad \forall \zeta \in C_c^\infty (B_{8r_L} (p_L, \pi_H), \varkappa_H)\qquad \mbox{if $L, H\in \mathscr{C}^\natural$;}
\nonumber\\
&\int \left( D (\etaa\circ \bar f_{HL}): D\zeta + \zeta^t \cdot \mathbf{L} \cdot (\bp_{\pi_H} (x- p^\flat_H))\right)\nonumber\\ 
\leq& C \varepsilon_1 r_L^{m+1+\alpha_{\bh}} (r_L \|D \zeta\|_0 + \|\zeta\|_0)\label{e:weak_ell_4}\\
&\qquad\qquad \forall \zeta \in C_c^\infty (B_{8r_L} (p_L, \pi_H), \varkappa_H)\qquad \mbox{if $L\in \mathscr{C}^\natural$ and $H\in \mathscr{C}^\flat$;}
\nonumber\\
&\int \left( D (\etaa\circ \bar f_{HL}): D\zeta + \zeta^t \cdot \mathbf{L} \cdot (\bp_{\pi_H} (x- p_H))\right)\nonumber\\ 
\leq & C \varepsilon_1 r_L^{m+1+\alpha_{\bh}} (r_L \|D \zeta\|_0 + \|\zeta\|_0)\label{e:weak_ell_2}\\
&\qquad\qquad \forall \zeta \in C_c^\infty (B^+_{2^78 r_L} (p^\flat_L, \pi_H), \varkappa_H)\qquad \mbox{if $L\in \mathscr{C}^\flat$ and $H\in \mathscr{C}^\natural$;}\nonumber\\
&\int \left( D (\etaa\circ \bar f_{HL}): D\zeta + \zeta^t \cdot \mathbf{L} \cdot (\bp_{\pi_H} (x- p^\flat_H))\right)\nonumber\\ 
\leq & C \varepsilon_1 r_L^{m+1+\alpha_{\bh}} (r_L \|D \zeta\|_0 + \|\zeta\|_0)\label{e:weak_ell_3}\\
&\qquad\qquad \forall \zeta \in C_c^\infty (B^+_{2^78 r_L} (p^\flat_L, \pi_H), \varkappa_H)\qquad \mbox{if $L, H\in \mathscr{C}^\flat$.}\nonumber
\end{align}
Moreover,
\begin{align}
\|\bar h_{HL} - \etaa\circ \bar f_{HL}\|_{L^1 (B_{8r_L} (p_L, \pi_H))} & \leq C \varepsilon_1 r_L^{m+3+\alpha_{\bh}}
\;\;\mbox{if $L\in\mathscr{C}^\natural$;}\label{e:L1_interior}\\
\|\bar h_{HL} - \etaa\circ \bar f_{HL}\|_{L^1 (B^+_{2^7 8 r_L} (p^\flat_L, \pi_H))} & \leq C \varepsilon_1 r_L^{m+3+\alpha_{\bh}}
\;\;\mbox{if $L\in\mathscr{C}^\flat$;}\label{e:L1_boundary}\\
\|D \bar h_{HL}\|_{L^\infty (B_{7r_L} (p_L, \pi_H))} & \leq C \varepsilon_1^{\frac12} r_L^{1-\alpha_\be}
\;\;\mbox{if $L\in\mathscr{C}^\natural$;}\label{e:lip_interior}\\
\|D \bar h_{HL}\|_{L^\infty (B^+_{2^7 7 r_L} (p^\flat_L, \pi_H))} &\leq C \varepsilon_1^{\frac12} r_L^{1-\alpha_\be}
\;\;\mbox{if $L\in\mathscr{C}^\flat$.}
\label{e:lip_boundary}
\end{align}
\end{proposition}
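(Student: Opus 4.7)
\medskip

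\textbf{Proof plan for Proposition \ref{pr:elliptic_regularization}.}

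The strategy is to derive the weak formulation by exploiting the minimality of $T$ through a carefully chosen one-parameter family of variations, and then replace $T$ by the graph of the Lipschitz approximation with a well-controlled error. The approach is essentially the adaptation to our boundary setting of the interior argument in \cite{DS4}; the structural reason the same argument works is that both the interior approximation (Theorem \ref{t:old_approx}) and the $\qhalf$-boundary approximation (Theorem \ref{thm:second_lip}) enjoy the same superlinear closeness bound between $T$ and the graph. First, I would fix a test $\zeta$ as in the statement and produce a vector field $V_\zeta$ on $\R^{m+n}$ by setting $V_\zeta(y) := (\zeta \circ \bp_{\pi_H})(y) - D\Psi_H(\bp_{T_{p_H^{(\flat)}}\Sigma}(y))\cdot (\zeta\circ \bp_{\pi_H})(y)$, so that $V_\zeta$ is tangent to $\Sigma$ and its projection on $\varkappa_H$ equals $\zeta$ at first order. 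Applying the first variation identity \eqref{e:first_var} to $T$ and $V_\zeta$ produces the mean curvature term $\int V_\zeta \cdot \vec H_T \, d\|T\|$, which via the formula \eqref{e:mean_curv} and a Taylor expansion of $\Psi_H$ at $p_H^{(\flat)}$ yields exactly the quadratic contribution $\int \zeta^t\cdot \mathbf L\cdot \bp_{\pi_H}(x - p_H^{(\flat)})$, plus higher-order remainders of order $\bA \cdot (|x - p_H^{(\flat)}|^2)$ that absorb into $\varepsilon_1 r_L^{m+1+\alpha_\bh}$ by Proposition \ref{pr:tilting_cm}(b).

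Next I would use the approximation theorems to rewrite the first-variation integral as an integral over $\bG_{f_{HL}}$ (or $\bG_{f_{HL}^+}$), with discrepancy controlled by $\|T - \bG_{f_{HL}}\|(\bC)\leq C\bE(T,\bB_L)^{1+\sigmaexp}r_L^m$ (resp.\ \eqref{e:slip4.5}). Combining this with the area formula of \cite{DS2} for graphs of multivalued maps, one expands
\[
\int_{\bG_{f_{HL}}}\!\! \mathrm{div}_{\vec{\bG}}\, V_\zeta\, d\|\bG_{f_{HL}}\| = \int D(\etaa\circ \bar f_{HL}) : D\zeta + (\text{curvature term})+ O\!\left(\Lip(f_{HL})^2 \int |Df_{HL}|\,|D\zeta|\right),
\]
where the curvature term matches $\int \zeta^t\cdot \mathbf{L}\cdot \bp_{\pi_H}(x-p_H^{(\flat)})$ from the previous step, and the quadratic error is bounded by $C\varepsilon_1^{1+\sigmaexp}r_L^{m+1+\alpha_\bh}(\|\zeta\|_0 + r_L\|D\zeta\|_0)$ via the Lipschitz and Dirichlet estimates \eqref{eq:strong-approx-in_L_1}--\eqref{eq:strong-approx-in_L_3} and \eqref{e:slip1}--\eqref{e:slip5}, together with the excess bound from Proposition \ref{pr:tilting_cm}. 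This proves \eqref{e:weak_ell_1}--\eqref{e:weak_ell_3}.

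The key boundary subtlety in \eqref{e:weak_ell_2}--\eqref{e:weak_ell_3} is the contribution $\int_{\gammaup} V_\zeta \cdot \vec n\, d\cH^{m-1}$ coming from \eqref{e:first_var}. Since $\zeta \in C_c^\infty(B^+_{2^7 8 r_L}(p^\flat_L, \pi_H), \varkappa_H)$, it is compactly supported in the open half-ball and therefore vanishes in a neighborhood of $\partial B^+$ on the flat side as well. For the collapsed $\qhalf$-approximation $f_{HL}^+$, the interface trace is $Q\a{\psi}$ and the identities $\bp_\sharp T = Q\a{\Omega^+}+(Q-1)\a{\Omega^-}$ guarantee that the two "sheets meeting at $\gammaup$" produce cancellations of the leading-order boundary contribution; what remains is $O(\bA\, r_L)$ times $\|\zeta\|_0 \cH^{m-1}(\gammaup\cap B^+)$, which is again absorbed in the target error. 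This is the step I expect to be the most delicate in the write-up: one must carefully pair the boundary integrand against $\vec n$, expand $\psi$ to first order at $p^\flat_L$, and use the $C^{3,a_0}$ regularity of $\gammaup$ together with Proposition \ref{pr:tilting_cm}(a) to verify the cancellation.

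Finally, the $L^1$ and Lipschitz estimates \eqref{e:L1_interior}--\eqref{e:lip_boundary} are a direct consequence of the weak formulations via classical elliptic theory. Indeed, the difference $w := \bar h_{HL}-\etaa\circ \bar f_{HL}$ is the weak solution in $B_{8r_L}$ (resp.\ $B^+_{2^7 8 r_L}$) of $\Delta w = \mathbf{L}\cdot \bp_{\pi_H}(\cdot - p_H^{(\flat)}) - \Delta(\etaa \circ \bar f_{HL})$, with zero boundary data; by \eqref{e:weak_ell_1}--\eqref{e:weak_ell_3} the right-hand side is bounded in $W^{-1,\infty}$ by $C\varepsilon_1 r_L^{1+\alpha_\bh}$, so duality yields the $L^1$ bound \eqref{e:L1_interior}--\eqref{e:L1_boundary}. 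The Lipschitz bounds \eqref{e:lip_interior}--\eqref{e:lip_boundary} follow from interior/boundary Schauder estimates: the $W^{1,2}$ norm of $\bar h_{HL}$ is controlled by that of $\etaa\circ \bar f_{HL}$ plus the source term, and Proposition \ref{pr:tilting_cm} together with the approximation estimates yields $\int |D\bar f_{HL}|^2 \lesssim \varepsilon_1 r_L^{m+2-2\alpha_\be}$; standard $L^\infty$--gradient estimates for the Poisson equation then give the claim on the smaller concentric ball.
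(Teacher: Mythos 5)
Your skeleton (first variation of $T$ along a field built from $\zeta$, replacement of $T$ by the graph of the Lipschitz approximation with error controlled by the mass difference, Taylor expansion of the graph's first variation, then duality for the $L^1$ bounds) is indeed the route the paper takes, but two of your steps do not work as written. First, the origin of the $\mathbf{L}$-term: if $V_\zeta$ is tangent to $\Sigma$ (as you assert), then $V_\zeta\cdot \vec{H}_T\equiv 0$, because by \eqref{e:mean_curv} the vector $\vec{H}_T(x)$ is orthogonal to $T_x\Sigma$; hence the mean-curvature pairing in \eqref{e:first_var} cannot ``yield exactly'' $\int \zeta^t\cdot\mathbf{L}\cdot\bp_{\pi_H}(x-p_H)$. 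In the paper this term arises entirely from the Taylor expansion of $D\Psi_H$ inside the first variation of $\bG_{f_{HL}}$ (followed by an integration by parts), while the current only enters through $\delta(T\res\bC)(\chi)=0$ and the bound on $\bM(T\res\bC-\bG_{f_{HL}})$. As you set it up, the $\mathbf{L}$-term appears both from $\vec{H}_T$ and from the graph expansion, so the resulting identity is either double-counted or loses the term altogether; note also that your $V_\zeta$, with the correction subtracted in the $\varkappa_H$-factor, is not the tangential lift of $\zeta$. Second, the boundary cases \eqref{e:weak_ell_2}--\eqref{e:weak_ell_3}: a residual interface term of size $O(\bA\, r_L)\,\|\zeta\|_0\,\cH^{m-1}(\gammado\cap B_{2^7 8 r_L})\sim \varepsilon_1^{1/2} r_L^{m}\|\zeta\|_0$ is far larger than the target $C\varepsilon_1 r_L^{m+1+\alpha_\bh}\|\zeta\|_0$, so no ``cancellation between sheets'' at that precision can save the estimate; if such a term were genuinely present the proposition would fail. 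The correct (and much simpler) point, which you state and then abandon, is that $\zeta$ has compact support in the open half-ball, hence the variation field vanishes in a neighborhood of $\gammaup$, and $T$ restricted to the cylinder over the open half-ball carries no boundary there; consequently the first variation vanishes exactly and no interface contribution appears at all.

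Concerning \eqref{e:lip_interior}--\eqref{e:lip_boundary}: the interior bound goes as you say, but \eqref{e:lip_boundary} is an estimate up to the interface, since $B^+_{2^7 7 r_L}(p^\flat_L,\pi_H)$ touches $\gammado$, where the Dirichlet datum of $\bar h_{HL}$ is not zero but the $\varkappa_H$-projection of the graph map of $\gammaup$; ``standard gradient estimates on a smaller concentric ball'' are therefore not applicable. One needs boundary Schauder estimates in divergence form together with a controlled extension $\phi$ of the interface datum satisfying $\|D\phi\|_0\le C\varepsilon_1^{1/2} r_L^{1-\alpha_\be}$ — which relies on the tilt estimates of Proposition \ref{pr:tilting_cm} and Theorem \ref{thm:decay_and_uniq}(d) — plus the classical estimate for harmonic functions vanishing on a $C^{3,a_0}$ boundary portion for the harmonic part of the splitting. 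Without this additional step your plan does not produce the stated scaling $\varepsilon_1^{1/2} r_L^{1-\alpha_\be}$ up to $\gammado$.
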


\begin{proof} {\bf Proof of \eqref{e:weak_ell_1}, \eqref{e:weak_ell_2} and \eqref{e:weak_ell_3}.}
The argument follows that of \cite[Proposition 5.2]{DS4} with essentially no variations and we report it here for the
reader's convenience. 

In order to simplify our notation we let $p=p_H$ in the first and third cases and $p=p_H^\flat$ in the second and fourth ones and we write $\pi, \varkappa$ and $\varpi$ for the planes $\pi_H, \varkappa_H$ and $T_p \Sigma^\perp$. With a slight abuse of notation we denote by $\Psi$ the map $\Psi_H$, so that the graph of $\Psi: T_p \Sigma \to T_p \Sigma^\perp$ is $\Sigma$. Finally we use the coordinates $(x,y,z)\in \pi\times \varkappa\times \varpi$ to identify points in $\R^{m+\bar n + l} = \R^{m+n}$ and we set $f = f_{HL}$, $f^+= f^+_{HL}$, $r=r_L$.  To avoid cumbersome notation we use $\norm{ \cdot }_0$ for $\norm{ \cdot }_{C^0}$ and $\norm{ \cdot }_1$ for $\norm{ \cdot }_{C^1}$.

In all the cases  the identities are derived by testing the first variation condition $\delta T (\chi) =0$ for the vector field $\chi (x,y,z)= (0, \zeta (x), D_y \Psi (x,y) \cdot \zeta (x))$: in the first case the condition will be tested in the cylinder $\bC := \bC_{8r_L} (p_L, \pi_H)$, whereas in the second and third cases it will be tested in the domain $\bC^+ := B^+_{2^78 r_L} (p^\flat_L, \pi_H) \times \pi_H^\perp$. Note that in both cases the vector field $\chi$ vanishes at the boundaries of the respective domains, whereas the current $T$  has zero boundary in both $\bC$ and $\bC^+$. Finally, although $\chi$ does not have compact support, the currents $T\res \bC$ and $T\res \bC^+$ have both bounded support and thus we have $\delta (T\res \bC) (\chi) =0$, $\delta (T \res\bC^+) (\chi) =0$. Using the formula for the first variation and the estimates in the Theorem \ref{t:old_approx}, in the first case we conclude
\begin{align}
|\delta \bG_f (\chi)| & = |\delta (\bG_f - T \res \bC) (\chi) \leq
\|D\chi\|_0 \bM (T\res\bC - \bG_{f})\nonumber\\
&\leq C_0 \|D\chi\|_0 r^m (\bE (T, \bC, \pi_H)+r^2 \bA^2)^{1+\sigma}\\
&\leq C_0 \|D\chi\|_0 r^m (\varepsilon_1 r^{2-2\alpha_{\be}})^{1+\sigma}\, .
\end{align}
On the other hand $\|\chi\|_0\leq 2\|\zeta\|_0$ and $\|D\chi\|_0 \leq 2\|\zeta\|_0 + 2 \|D\zeta\|_0$, provided $\varepsilon_1$ is sufficiently small. Choosing $\alpha_\bh \leq \frac{\sigma}{2}$ and $\alpha_\be$ small enough so that $(2-2\alpha_\be) (1+\sigma) \geq 2+\frac{\sigma}{2}$, we conclude that
\begin{equation}\label{e:first_var_1}
|\delta \bG_f (\chi)| \leq C \varepsilon_1 r^{m+1+\alpha_{\bh}} (r \|D \zeta\|_0 + \|\zeta\|_0)\, .
\end{equation}
Using the same argument and the estimates in Theorem \ref{thm:second_lip}, we gain the same estimate for the second and third case. 

The remaining computations are the same for all the cases and we give them for case two and three. First we write  $f^+= \sum_i \a{f^+_i}$ and $\bar{f}^+ = \sum_i \a{\bar{f}^+_i}$.
$\gr (f^+) \subset \Sigma$ implies
$f^+ = \sum_i \a{(\bar{f}^+_i, \Psi (x, \bar{f}^+_i))}$.
From \cite[Theorem~4.1]{DS2} we can infer that
\begin{align}
&\delta \bG_{f^+} (\chi) = \nonumber\\
&\int_B \sum_i \Big( \underbrace{D_{xy}\Psi (x, \bar{f}^+_i)\cdot \zeta}_{(A)} + 
\underbrace{(D_{yy} \Psi (x, \bar{f}^+_i)\cdot D_x \bar{f}^+_i) \cdot \zeta}_{(B)}\nonumber\\ 
&\qquad\quad+ \underbrace{D_y \Psi (x, \bar{f}^+_i) \cdot D_x \zeta}_{(C)}\Big)
: \Big(\underbrace{D_x \Psi (x, \bar{f}^+_i)}_{(D)} + \underbrace{D_y \Psi (x, \bar{f}^+_i)\cdot D_x \bar{f}^+_i}_{(E)}\Big)\nonumber\\
&+ \int_B \sum_i D_x \zeta : D_x \bar f^+_i + {\rm Err}\, .\label{e:tayloraccio}
\end{align}
Recalling \cite[Theorem 4.1]{DS2},
the error term ${\rm Err}$ in 
\eqref{e:tayloraccio} satisfies the inequality
\begin{equation}\label{e:tayloraccio2}
|{\rm Err}|\leq C \int |D\chi| |Df^+|^3 \leq \|\chi\|_1 \int |Df|^3 \leq C \|\chi\|_1 \Lip (f^+) \int |Df^+|^2\, .
\end{equation}
Using now the estimates of Theorem \ref{thm:second_lip} and arguing as above we achieve
\begin{equation}\label{e:errore_buono}
|{\rm Err}|\leq \varepsilon_1 r^{m+1+\alpha_{\bh}} (r \|D \zeta\|_0 + \|\zeta\|_0)\, .
\end{equation}
The second integral in \eqref{e:tayloraccio} is obviously 
$Q \int_B D\zeta : D (\etaa \circ \bar f^+)$.
We therefore expand the product in the first integral and estimate
all terms separately. In order to simplify our computations we shift coordinates so that $p = (0,0,0)$.  Recall that this implies that $\abs{\bp_{\pi}(p_L)} \le C_0 \ell(L)$, or $\abs{\bp_{\pi}(p^\flat_L)} \le C_0 64r$ if $L$ is a boundary cube. 

In particular we have $\Psi (0,0) =0$ and $D\Psi (0,0) = 0$. Taking into account the bounds on $\bA$, we then can write the
Taylor expansion 
\[
D \Psi (x,y) = D_x D \Psi (0,0) \cdot x + D_y D\Psi (0,0) \cdot y + O \big(\eps_1^{\sfrac{1}{2}} (|x|^2 + |y|^2)\big)\, .
\] 
In particular we gather the following estimates:
\begin{align}
|D \Psi (x, \bar{f}^+_i)| &\leq C \eps_1^{\sfrac{1}{2}} r\notag\\
D\Psi (x, \bar{f}^+_i) &= D_x D\Psi (0,0)\cdot x + O \big(\varepsilon_1^{\sfrac{1}{2}} r^{1+\ba_h}\big),\notag\\
|D^2 \Psi (x, \bar{f}^+_i)| &\leq \eps_1^{\sfrac{1}{2}}\notag\\
D^2 \Psi (x, \bar{f}^+_i) &= D^2 \Psi (0,0) + O \big(\eps_1^{\sfrac{1}{2}} r\big)\, .\nonumber
\end{align}
We are now ready to compute the behavior of the summands in \eqref{e:tayloraccio}. First
\begin{align}
& \int \sum_i (A):(D) = \int \sum_i (D_{xy} \Psi (0,0) \cdot \zeta) : D_x \Psi (x, \bar{f}^+_i)\nonumber\\
&\qquad\qquad + O\Big( \eps_1r^2 \int |\zeta|\Big) \nonumber\\
= & Q \int \sum_i (D_{xy} \Psi (0,0)\cdot \zeta : D_{xx} \Psi (0,0)\cdot x + O \Big( \eps_1\, r^{1+\alpha_\bh} \int |\zeta|\Big)\, .\label{e:(AD)}
\end{align}
Next, we estimate
\begin{align}
\int \sum_i (A):(E) &  = O \Big(\eps_1 r^{1+\alpha_\bh} \int |\zeta|\Big),\label{e:(AE)}\\
\int \sum_i (B):\left((D)+(E)\right) &  = O\Big(\eps_1
 r^{1+\alpha_\bh} \int |\zeta|\Big),\label{e:(B(D+E))}\\
\int \sum_i (C):(E) &  = O \Big(\eps_1 r^{2+\alpha_\bh} \int |D\zeta|\Big).\label{e:(CE)}
\end{align}
Finally we compute 
\begin{align*}
& \int \sum_i (C):(D) = \int \sum_i ((D_{xy}\Psi (0,0) \cdot x) \cdot D_x \zeta) : D_x \Psi (x, \bar{f}^+_i)\\
&\qquad\qquad\qquad\qquad+ O \Big(\eps_1\,r^{2+\alpha_\bh} \int |D\zeta|\Big)\\
=  & \, Q \int \sum_i (D_{xy}\Psi (0,0) \cdot x) \cdot D_x \zeta) : (D_{xx} \Psi (0,0) \cdot x)\\ 
&\quad+ O \Big(\eps_1 \, r^{2+\alpha_\bh} \int |D\zeta|\Big)\, .
\end{align*}
Summarizing, the first integral in \eqref{e:tayloraccio} takes the following form:
\begin{align*}
& Q \int \sum_{i,j,k,s} \partial^2_{x_i y_j} \Psi^k (0,0) \zeta^j (x) \partial^2_{x_ix_s} \Psi^k (0,0) x_s\, dx\\
+ \, & Q \int \sum_{i,j,k,s,r} \partial^2_{x_i y_j} \Psi^k (0,0) x_i \partial_s \zeta^j (x) \partial^2_{x_rx_s} \Psi^k (0,0) x_r\, dx
+{\rm Err}\, ,
\end{align*}
where ${\rm Err}$ satisfies the estimate \eqref{e:errore_buono}. Integrating by parts the second term we achieve
\[
- Q \int \sum_{i,j} x_i \left(\sum_j \Delta_x \Psi^k (0,0) \partial^2_{x_i y_j} \Psi^k (0,0)\right) \zeta^j (x)\, dx + {\rm Err}\, , 
\]
which completes the proof of the claim.

\medskip

{\bf Proof of \eqref{e:L1_interior} and \eqref{e:L1_boundary}.} The estimate is the same in all cases: we denote by $\Omega$ the domain of the function $\bar h := \bar h_{HL}$ and observe that for the difference $u:=  \bar h - \etaa\circ \bar f$, resp. $u:= \bar h - \etaa\circ \bar f^+$, the function $u$ satisfies $u|_{\partial \Omega}=0$ and 
\[
\left| \int_\Omega Du : D\zeta\right|\leq C r^{m+1+\alpha_\bh} (\|\zeta\|_0 + r \|D\zeta\|_0) \qquad \forall \zeta\in W^{1,2}_0 (\Omega)
\]
(although the estimates in \eqref{e:weak_ell_1}, \eqref{e:weak_ell_2} and \eqref{e:weak_ell_3} were proved for $\zeta\in C^\infty_c (\Omega)$, a simple density argument extends it to the case above). Now, for every $v\in L^2$ consider the unique solution $\zeta :=P(v)\in W^{1,2}_0 (\Omega)$ of $\Delta \zeta = v$. We then have the estimates
\[
r^{-1} \|P (v)\|_0 + \|D ( P(v))\|_0 \leq r \|v\|_0\, .
\]
Therefore we can write
\begin{align*}
\|u\|_{L^1 (\Omega)} &= \sup_{v: \|v\|_0 \leq 1} \int_\Omega u \cdot v = \sup_{v: \|v\|_0 \leq 1} \int_\Omega u \cdot \Delta ( P(v))\\
&= \sup_{v: \|v\|_0 \leq 1} \left(-\int_\Omega Du: D ( P(v))\right)\\
& \leq C \eps_1 r^{m+1+\alpha_{\bh}} \sup_{v:\|v\|_0\leq 1} \left(\| P(v)\|_0 + r\|D(P(v))\|_0\right)\\
& \leq C \eps_1 r^{m+3+\alpha_{\bh}}\, .
\end{align*}

\medskip

{\bf Proof of \eqref{e:lip_interior}}.
We split $h$ as $v+w$, where
\begin{equation}
\left\{
\begin{array}{ll}
\Delta v = 0 &\text{in }B_{8r_L}(p_L,\pi_H) \\ \\
v= \etaa\circ \bar{f} &\text{on } \partial B_{8r_L}(p_L,\pi_H) \, 
\end{array}\right. 
\end{equation}
and
\begin{equation}
\left\{
\begin{array}{ll}
\Delta w =  \mathbf{L} \cdot x &\text{in }B_{8r_L}(p_L,\pi_H) \\ \\
w= 0 &\text{on } \partial B_{8r_L}(p_L,\pi_H)\,
\end{array}\right.
\end{equation}
The estimate \eqref{e:lip_interior} follows from the interior regularity for the Laplace equation. More precisely, for the harmonic part we have 
\begin{align*}
\|Dv\|^2_{L^\infty (B_{7r_L}(p_L))} &\leq C r_L^{-m}\int_{B_{8r_L}(p_L)} \abs{Dv}^2\\
& \le C r_L^{-m}\int_{B_{8r_L}(p_L)} \abs{D\, (\etaa \circ \bar{f})}^2 \le C \varepsilon_1 r_L^{2-2\alpha_\be}\, ,
\end{align*}
whereas for $w$ the estimate holds up to the boundary
\[
\|Dw\|_{L^\infty (B_{8r_L}(p_L))} \leq Cr_L \|\Delta w\|_\infty \leq C \varepsilon_1 r_L^2\, . 
\]
For later use let us note that in particular if $L \in \mathscr{C}_{N_0}^\natural$ we have (for some constant $C$ depending on $N_0$)
\begin{align*} 
\sum_{k=0}^4 \norm{D^kv}_{B_{7r_L}(p_L)} &\le C \norm{Dh}_{L^2(B_{8r_L}(p_L))} \le C \varepsilon^{\frac12}_1 \\ 
\sum_{k=0}^4 \norm{D^kw}_{B_{7r_L}(p_L)}
 & \le C \norm{\Delta w}_{C^2(B_{8r_L}(p_L))} \le C \varepsilon_1\,. 
\end{align*}
Therefore we conclude that, for any $L \in \mathscr{C}_{N_0}^\natural$, 
\begin{equation}\label{e:C^3,kappa estimate for N_0 interior} 
\norm{h_{HL}}_{C^{3,\kappa}(B_{7r_{L}}(p_L))} \le C \varepsilon_1^{\frac12}\,.
 \end{equation}

\medskip
\noindent
{\bf Proof of \eqref{e:lip_boundary}}.  Let  $L$ be  a boundary cube, we want to apply Schauder estimates to prove \eqref{e:lip_boundary}. To this aim we first observe that $\etaa\circ f$ coincides with the $C^{3,a_0}$ function whose graph describes $\Gamma$ on $\gamma = \bp_{\pi} (\gamma)$. For this reason we fix a $C^{3,a_0}$ extension of it to the whole domain $\Omega$. We will show below that, by our assumption on $\Gamma$, we can impose
$\|\phi\|_{3,a_0} \leq C \varepsilon_1^{\sfrac{1}{2}}$. As customary we write $\phi = (\bar \phi, \Psi (x, \bar \phi))$.

We then split $h$ as $v+w+\bar \phi$, where
\begin{equation}
\left\{
\begin{array}{ll}
\Delta v = 0 &\text{in }B^+_{2^7 8r_L}(p^\flat_L,\pi_H) \\ \\
v= \etaa\circ \bar{f} - \bar \phi &\text{on } \partial B^+_{2^7 8r_L}(p^\flat_L,\pi_H) \, 
\end{array}\right. 
\end{equation}
and
\begin{equation}
\left\{
\begin{array}{ll}
\Delta w =  \mathbf{L} \cdot x - \Delta \bar \phi &\text{in }B_{2^7\, 8r_L}(p^\flat_L,\pi_H) \\ \\
w= 0 &\text{on } \partial B^+_{2^7 8r_L}(p^\flat_L,\pi_H)\,.
\end{array}\right.
\end{equation}

\medskip

\emph{Step 1: Definition of $\phi$.} Recall that $\Gamma$ is a $C^{3, a_0}$ graph of a function $\psi_L$ over $\tau_1 := T_{p_L^\flat} \Gamma$ with $\|\psi_L\|_{3, a_0} \leq C \varepsilon_1^{\sfrac{1}{2}}$. Consider now that $|\pi - \pi_L^\flat| \leq C \varepsilon_1^{\sfrac{1}{2}} \ell (L)^{1-\alpha_{\be}} \leq C \varepsilon_1^{\sfrac{1}{2}}$ and hence, if we define $\tau := \bp_{\pi} (\tau_1)$, under the assumption that $\varepsilon_1$ is smaller than a geometric constant we conclude as well that$|\tau - \tau_1|\leq C \varepsilon_1^{\sfrac{1}{2}} \ell (L)^{1-\alpha_\be} \leq C \varepsilon_1^{\sfrac{1}{2}}$. We can now invoke Lemma \ref{l:rotazioni_semplici} below (namely \cite[Lemma B.1]{DS4}) to conclude that $\Gamma$ is the graph of a function $\psi$ over $\tau$ with $\|\psi\|_{3, a_0} \leq C \varepsilon_1^{\sfrac{1}{2}}$. Fix next a unit vector $e$ orthogonal to $\tau$. We can then write $\psi = \tilde \psi e + \tilde{\phi}$, where $\tilde{\phi} = \bp_{\pi^\perp} (\psi)$.
Since 
$\partial B^+_{2^7 8r_L}(p^\flat_L,\pi_H)\cap B_{2^7 8r_L}(p^\flat_L,\pi_H) \subset \bp_{\pi} (\Gamma)$, we infer that 
the graph of $\tilde{\psi}$ over a suitable subdomain of $\tau$ describes $\partial B^+_{2^7 8r_L}(p^\flat_L,\pi_H)\cap B_{2^7 8r_L}(p^\flat_L,\pi_H)$.

Next, for every $x\in \pi$ we let $x = v + t e$ with $v\in \tau$ and define
$\phi (x) = \tilde{\phi} (v)$. Clearly $\|\phi\|_{3, a_0} \leq C \varepsilon^{\sfrac{1}{2}}$. Moreover, when restricted to
$\partial B^+_{2^7 8r_L}(p^\flat_L,\pi_H)\cap B_{2^7 8r_L}(p^\flat_L,\pi_H)$ the graph of the function $\phi$ gives the portion of $\Gamma$ lying over it. Hence $\phi = \etaa \circ f$ over $\partial B^+_{2^7 8r_L}(p^\flat_L,\pi_H)\cap B_{2^7 8r_L}(p^\flat_L,\pi_H)$.
Note in addition that for every $q\in \bB^\flat_L$,
\begin{align*}
|T_q\Gamma - \tau| &\leq |T_q \Gamma - \tau_1| + |\tau_1 - \tau| = |T_q \Gamma - T_{p_L^\flat} \Gamma|\\
&\leq C \varepsilon+1^{\sfrac{1}{2}} |q-p_L^\flat| + C \varepsilon_1^{\sfrac{1}{2}} \ell (L)^{1-\alpha_\be}
\leq C \varepsilon_1^{\sfrac{1}{2}} \ell (L)^{1-\alpha_\be}\, .
\end{align*}
This estimate implies 
\[
\|D\phi\|_\infty \leq C \varepsilon^{\sfrac{1}{2}} \ell (L)^{1-\alpha_\be}\, .
\] 

\medskip
\noindent
\emph{Step 2: Schauder estimates.}
By interpolation
\[
[D\phi]_\alpha\leq C \norm{D\phi}_\infty^{1-\alpha} \norm{D^2\phi}_\infty^\alpha \le C \varepsilon^{\frac12} \ell(L)^{(1-\alpha_\be)(1-\alpha)}.
\]
Since $\frac{1}{m+1}\operatorname{div}( x \otimes x) =x$, we have
\[
\mathbf{L}x - \Delta \phi = \operatorname{div} \Bigg( \frac{1}{m+1} \mathbf{L} x \otimes x  - \nabla \phi\Bigg) = \operatorname{div}(F).
\]
 By classical Schauder theory for operators in divergence form and $0$-boundary conditions, we have
\[ [Dw]_{\alpha} \le C [F]_{\alpha} \le \left[ \frac{1}{m+1} \mathbf{L} x \otimes x  - \nabla \phi \right]_\alpha \le C \varepsilon_1^{\frac12} r_L^{(1-\alpha_\be)(1-\alpha)}. \]
 We moreover have the elementary estimate
\[
\|D w\|_{L^2} \leq C \|F\|_{L^2}\, ,
\]
which follows from multiplying the equation by $w$ and integrating by parts.
Hence we conclude 
\[
\norm{Dw}_{\infty} \le C r^\alpha [Dw]_\alpha \le C \varepsilon^{\frac12} r_L^{1-\alpha_\be}.
\]
It remains to estimate the harmonic part $\norm{Dv}_\infty$. Since  \(v=0\) on \( \partial B^+_{2^7 8r_L}(p^\flat_L,\pi_H) \cap  B_{2^7 8r_L}(p^\flat_L,\pi_H)$ we can use a classical estimate on harmonic functions vanishing on a smooth boundary to deduce that
\begin{align*}
&\norm{Dv}^2_{C^0 (B^+_{2^7 7 r_L} (p^\flat_L, \pi_H))} \le Cr^{-m} \int_{B^+_{2^7 8 r_L}(p^\flat_L, \pi_H)}\abs{Dv}^2\\
\le & Cr^{-m} \int_{B^+_{2^7 8 r_L}(p^\flat_L, \pi_H)}\abs{D(\etaa\circ \bar{f} - \phi)}^2 \le C \varepsilon_1 r_L^{2-2\alpha_\be}.
\end{align*}
Combining all estimates give \eqref{e:lip_boundary}. As in the interior situation let us remark that for $L \in \mathscr{C}_{N_0}^\flat$ there is a constant depending on $N_0$ such that for \(\kappa \le a_0\)
\begin{equation*} 
[D^3v]_{\kappa, B'} +\sum_{k=0}^3 \norm{D^kv}_{C^0 (B')} \le C \|\etaa\circ \bar{f}\|_{C^0} +\norm{\phi}_{C^0} \le C \varepsilon^{\frac12}_1
\end{equation*}
and
\begin{equation*}
 \quad [D^3w]_{\kappa, B'} + \sum_{k=0}^3 \norm{D^kw}_{C^0 (B')} \le C \norm{\Delta w}_{C^{1,\kappa}} \le C \varepsilon^{\frac12}_1\, , \end{equation*}
 where $B' = C^{3,\kappa}(B^+_{2^7 7r_{L}})$. 
Therefore
\begin{equation}\label{e:C^3,kappa estimate for N_0 boundary} 
\norm{h_{HL}}_{C^{3,\kappa}(B^+_{2^7 7r_{L}} (p^\flat_L, \pi_H))} \le C \varepsilon_1^{\frac12}\,. 
\end{equation}

\end{proof}

We end this section by recalling the following simple consequence of the regularity theory for harmonic functions vanishing at a sufficiently smooth portion of the boundary.
\begin{lemma}\label{l:boundary_reg_harm}
Let $r<1$ and consider any $m-1$ dimensional $C^{3,a_0}$ hypersurface $\gammado\subset \mathbb R^m$ which passes through the origin and is the graph of a $C^{3,a_0}$ function $\varphi$ with $\|\varphi\|_{C^{3,a_0}}\leq 1$. Let $B^+$ the subset of $B_1$ lying over $\gammado$. Then there is a constant $C(r, a_0, m)$ such that the following estimate holds for every harmonic function $h$ in $B^+$ which vanishes along $\gammado$:
\begin{equation}\label{e:boundary_reg_harm}
\|h\|_{C^{3, a_0}(B_r\cap B^+)} \leq C (r, a_0, m) \|h\|_{L^1 (B^+)}\,  .
\end{equation}
\end{lemma}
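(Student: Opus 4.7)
My plan is a standard flatten-and-Schauder argument. The three ingredients are: (i) a $C^{3,a_0}$ change of variables straightening $\gamma$; (ii) a boundary De Giorgi--Nash--Moser estimate providing an $L^\infty$ bound from an $L^1$ bound for solutions with zero Dirichlet data on a flat portion of the boundary; (iii) boundary Schauder estimates for divergence-form equations with $C^{2,a_0}$ coefficients and zero Dirichlet data, upgrading the $L^\infty$ bound to $C^{3,a_0}$.

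First I would define the $C^{3,a_0}$ diffeomorphism $\Phi(x',x_m) := (x', x_m-\varphi(x'))$. Because $\|\varphi\|_{C^{3,a_0}}\le 1$, both $\Phi$ and $\Phi^{-1}$ are $C^{3,a_0}$ with norm bounded by a dimensional constant, $\Phi(\gamma)\subset\{y_m=0\}$ and $\tilde B^+:=\Phi(B^+)$ is a domain with flat portion of boundary $\{y_m=0\}\cap\Phi(B_1)$. The function $\tilde h:=h\circ\Phi^{-1}$ then satisfies a uniformly elliptic equation in divergence form
\begin{equation*}
\partial_i\bigl(a^{ij}(y)\,\partial_j \tilde h\bigr)=0 \quad\text{in }\tilde B^+,
\end{equation*}
with coefficients $a^{ij}\in C^{2,a_0}$ (their regularity matches that of $D\Phi^{-1}$), uniformly bounded and uniformly elliptic with constants depending only on $m$; moreover $\tilde h$ vanishes on $\{y_m=0\}\cap\tilde B^+$.

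Next, pick radii $r<r_1<r_2<r_3<1$. A standard Moser iteration for uniformly elliptic equations with $C^0$ coefficients and zero Dirichlet data on a flat boundary portion gives
\begin{equation*}
\|\tilde h\|_{L^\infty(\tilde B_{r_2}^+)} \le C(r_2,r_3,m)\,\|\tilde h\|_{L^1(\tilde B_{r_3}^+)}.
\end{equation*}
Then boundary Schauder estimates (applied in, e.g., the formulation for divergence-form equations with $C^{2,a_0}$ coefficients and zero Dirichlet data on a flat boundary) give
\begin{equation*}
\|\tilde h\|_{C^{3,a_0}(\tilde B_{r_1}^+)} \le C(r_1,r_2,a_0,m)\,\|\tilde h\|_{L^\infty(\tilde B_{r_2}^+)}.
\end{equation*}
Composing with $\Phi$, whose $C^{3,a_0}$ norm is controlled by a dimensional constant, and using that the Jacobian of $\Phi^{-1}$ is uniformly bounded above and below (so $L^1$-norms are comparable), yields \eqref{e:boundary_reg_harm} with a constant depending only on $r,a_0,m$.

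I do not expect any real obstacles: each step is classical. The one point that deserves a sentence in the full write-up is to track that the coefficients $a^{ij}$ really inherit $C^{2,a_0}$ regularity from $\varphi$ (this is immediate from the explicit formula $a^{ij}=J_\Phi^{-1}(D\Phi\,(D\Phi)^T)^{ij}\circ\Phi^{-1}$ and the fact that $D\Phi\in C^{2,a_0}$), and, for step two, that the Moser iteration really applies up to the flat boundary with zero Dirichlet data, which is standard and reduces, in any case, to interior Moser applied to the odd extension after a further change of variables removing the first-order part, or to the direct boundary version given e.g.\ in Gilbarg--Trudinger.
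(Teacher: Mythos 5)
Your argument is correct and is exactly the classical flatten--Moser--Schauder proof; the paper itself offers no proof of this lemma, merely ``recalling'' it as a standard consequence of boundary regularity theory for harmonic functions vanishing on a smooth boundary portion, so your write-up supplies precisely the argument being invoked. The only points worth spelling out in a full version are the two you already flag: that $a^{ij}=\bigl(\det D\Phi\bigr)^{-1}\bigl(D\Phi\,(D\Phi)^{T}\bigr)^{ij}\circ\Phi^{-1}$ inherits $C^{2,a_0}$ regularity (so Schauder in divergence form, or the non-divergence form with $C^{1,a_0}$ coefficients after expanding, yields $C^{3,a_0}$ up to the flat portion), and that the $L^\infty$--from--$L^1$ bound up to the flat boundary follows from the local maximum principle for subsolutions (e.g.\ extending $\tilde h^{\pm}$ by zero across $\{y_m=0\}$), together with the routine covering argument separating the flat boundary portion from the interior part of $\partial(B_r\cap B^+)$.
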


\section{Tilted $L^1$ estimate}

\begin{definition}
Four cubes $H,J,L,M\in \mathscr{C}$ make a \emph{distant relation} between $H$ and $L$ if $J,M$ are neighbors (possibly the same cube) with same side length and $H$ and $L$ are descendants respectively of $J$ and $M$. 
\end{definition}

\begin{lemma}[Tilted $L^1$ estimate]\label{lem:tilted_L1}
Under the Assumptions \ref{ass:decay+cone} and \ref{ass:cm} the following holds for every quadruple $H,J,L$ and $M$ in $\sS\cup \sW$ which makes a distant relation between $H$ and $L$.
\begin{itemize}
\item If $J\in \mathscr{C}^\natural$, then there is a map $\hat{h}_{LM} : B_{4r_J} (p_J, \pi_H) \to \pi_H^\perp$ such that
\[
\bG_{\hat h_{LM}} = \bG_{h_{LM}} \res \bC_{4r_J} (p_J, \pi_H)
\]
and
\begin{equation}\label{e:tilted_L1_int}
\|h_{HJ}- \hat h^\square_{LM}\|_{L^1 (B_{2r_J} (p_J, \pi_H))} \leq C \varepsilon_1 \ell (J)^{m+3+\alpha_{\bh}/2}\, ,
\end{equation}
where $\square = +$ or $\square =\ $ depending on whether $M$ is a boundary or a non-boundary cube.
\item If both $J$ and $M$ belong to $\mathscr{C}^\flat$, then there is a map $\hat{h}_{LM} : B^+_{2^7 4r_J} (p^\flat_J, \pi_H) \to \pi_H^\perp$ such that
\[
\bG_{\hat h_{LM}} = \bG_{h_{LM}} \res \bC_{2^7 4 r_J} (p^\flat_J, \pi_H)
\]
and
\begin{equation}\label{e:tilted_L1_b}
\|h^+_{HJ}- \hat h_{LM}\|_{L^1 (B^+_{2^7 2 r_J} (p^\flat_J, \pi_H))} \leq C \varepsilon_1 \ell (J)^{m+3+\alpha_{\bh}/2}\, .
\end{equation}
\end{itemize}
\end{lemma}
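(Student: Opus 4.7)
My overall strategy is to reduce the comparison between the two elliptic solutions $h_{HJ}$ and $\hat{h}_{LM}$ to a comparison between their associated Lipschitz approximations of the same current $T$. The heart of the matter is that the Lipschitz approximations must agree on a common ``good set'' (whose complement has very small measure) because, restricted to that set, both describe exactly the support of $T$.

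I would first construct $\hat{h}_{LM}$. By Proposition \ref{pr:tilting_cm}(a) the planes satisfy $|\pi_H-\pi_L|\leq C\varepsilon_1^{1/2}\ell(J)^{1-\alpha_\be}$, and by \eqref{e:lip_interior} (resp.\ \eqref{e:lip_boundary}) the function $h_{LM}$ has Lipschitz constant at most $C\varepsilon_1^{1/2}\ell(M)^{1-\alpha_\be}$. Hence, as in Proposition \ref{prop:yes_we_can_2}, the graph of $h_{LM}$ remains a graph over $\pi_H$ on the relevant smaller cylinder, which yields the required $\hat{h}_{LM}$. The very same reparametrization produces a $Q$-valued (or $\qhalf$-valued) map $\tilde{f}_{LM}$ over $\pi_H$ whose graph coincides with that of $f_{LM}$ in $\bC_{4r_J}(p_J,\pi_H)$ (resp.\ $\bC_{2^7 4 r_J}(p^\flat_J,\pi_H)$).

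The core estimate then comes from a three-term triangle inequality
\[
\|h_{HJ}-\hat{h}_{LM}\|_{L^1}\leq \|h_{HJ}-\etaa\circ f_{HJ}\|_{L^1}+\|\etaa\circ f_{HJ}-\etaa\circ\tilde{f}_{LM}\|_{L^1}+\|\etaa\circ\tilde{f}_{LM}-\hat{h}_{LM}\|_{L^1}.
\]
The first term is controlled directly by \eqref{e:L1_interior} (resp.\ \eqref{e:L1_boundary}) and is of order $\varepsilon_1 \ell(J)^{m+3+\alpha_\bh}$. The third term reduces, via a change-of-variable on planes whose Jacobian is $1+O(\varepsilon_1)$, to the same $L^1$ estimate for $h_{LM}$ on its original domain, with negligible extra error. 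For the middle term, the crucial point is that on the intersection of the two contact sets, the $Q$-tuples $f_{HJ}(x)$ and $\tilde{f}_{LM}(x)$ represent the very same $Q$ points of $\supp(T)$ over $x$, so in particular their averages agree. On the complement, which by Theorem \ref{t:old_approx} (resp.\ Theorem \ref{thm:second_lip}) has Lebesgue measure at most $C\varepsilon_1^{1+\sigmaexpcm}\ell(J)^{m+2+\sigmaexpcm}$, I would use the $L^\infty$ height bound $C\varepsilon_1^{1/(2m)}\ell(J)^{1+\alpha_\bh}$ from Proposition \ref{pr:tilting_cm}(c). Combining everything produces the desired $L^1$ estimate, with the exponent $\alpha_\bh/2$ providing comfortable slack with respect to the sharper exponent $\alpha_\bh$ produced by the PDE bound, once the parameters are chosen according to Assumption \ref{ass:cm}.

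The main obstacle I anticipate is the boundary case, when both $J$ and $M$ lie in $\mathscr{C}^\flat$. In that situation both Lipschitz approximations have a $\qhalf$-valued structure, with the $Q$-valued and $(Q-1)$-valued halves separated by two distinct projected boundary curves $\bp_{\pi_H}(\gammaup)$ and $\bp_{\pi_L}(\gammaup)$. The delicate point is to verify that the reparametrization from $\pi_L$ to $\pi_H$ maps the interface for $f_{LM}$ onto the interface for $f_{HJ}$, so that the $Q$-sheet and $(Q-1)$-sheet portions correspond correctly; since both interfaces are projections of the very same curve $\gammaup\subset\Sigma$, this is automatic, but it does require a careful bookkeeping of the half-ball geometry. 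The geometric distortion between $\bp_{\pi_H}(\gammaup)\cap B_{2^7 4 r_J}$ and $\bp_{\pi_L}(\gammaup)\cap B_{2^7 4 r_J}$ is controlled by \eqref{e:fa}--\eqref{e:fd}, and contributes only a small symmetric-difference term of higher order that is absorbed into the final estimate.
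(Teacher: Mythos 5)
Your overall route is essentially the paper's: a triangle inequality through the averages of the Lipschitz approximations, the elliptic $L^1$ estimates \eqref{e:L1_interior}--\eqref{e:L1_boundary}, the fact that the two approximations describe the same current off a set of measure $C r_J^m(\varepsilon_1 r_J^{2-2\alpha_\be})^{1+\sigma}$ (which, together with the height bound, handles your middle term), and Lemma \ref{l:rotazioni_semplici} to move graphs from $\pi_L$ to $\pi_H$. The genuine gap is in your third term and in the claim that it ``reduces, via a change of variable with Jacobian $1+O(\varepsilon_1)$, to the same $L^1$ estimate for $h_{LM}$ on its original domain.'' The estimate you possess in $\pi_L$-coordinates controls $h_{LM}$ against the graph of the \emph{average} of $f_{LM}$; transporting both graphs to $\pi_H$ via Lemma \ref{l:rotazioni_semplici} controls $\hat h_{LM}$ against $\hat\bef_{LM}$, the reparametrization of the averaged graph. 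But your term involves $\etaa\circ\tilde f_{LM}$, the average of the \emph{reparametrized} multi-valued map, and averaging does not commute with reparametrization: $\etaa\circ\tilde f_{LM}\neq \hat\bef_{LM}$ in general. The discrepancy between these two objects is precisely what Lemma \ref{l:cambio_tre_piani}, estimate \eqref{e:che_fatica}, is designed to bound, in terms of $\|f_{LM}\|_{C^0}\le C\varepsilon_1^{\sfrac{1}{2m}}\ell(J)^{1+\alpha_\bh}$, the tilt $|\pi_H-\pi_L|\le C\varepsilon_1^{\sfrac12}\ell(J)^{1-\alpha_\be}$ and the Dirichlet energy $C\varepsilon_1\ell(J)^{m+2-2\alpha_\be}$. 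It is this commutator term --- not the Jacobian factor, which is indeed harmless --- that degrades the exponent from $m+3+\alpha_\bh$ to $m+3+\alpha_\bh/2$ and is the reason the hierarchy requires $\alpha_\bh>4\alpha_\be$; in your bookkeeping no error of that size ever appears, so the ``negligible extra error'' claim is false as stated and the proof is incomplete at its most delicate point.

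Two smaller remarks. First, you compare against the full average $\etaa\circ f_{HJ}$, whose normal component is the average of the $\Psi_H$-values of the sheets, whereas the PDE estimate compares $h_{HJ}=(\bar h_{HJ},\Psi_H(x,\bar h_{HJ}))$ with $\bef_{HJ}=(\etaa\circ\bar f_{HJ},\Psi_H(x,\etaa\circ\bar f_{HJ}))$; the gap between ``average of $\Psi$'' and ``$\Psi$ of the average'' is second order in the sheet separation and, estimated through the height bound alone, is of size $C\varepsilon_1^{\sfrac12+\sfrac1m}\ell(J)^{m+2+2\alpha_\bh}$, which for small $\ell(J)$ is \emph{not} below the target $C\varepsilon_1\ell(J)^{m+3+\alpha_\bh/2}$ --- the paper sidesteps this by always working with the ``$\Psi$ of the average'' objects $\bef$ and $\beg$, and you should do the same. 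Second, in the case of boundary cubes the paper does not track the two projected interfaces: it extends $h_{LM}$ and $f_{LM}$ across $\gamma$ by the boundary data $\psi$ and $Q\a{\psi}$ (keeping the relevant Lipschitz and $C^2$ bounds) so that Lemmas \ref{l:rotazioni_semplici} and \ref{l:cambio_tre_piani} apply on full balls; your symmetric-difference bookkeeping for the half-ball geometry would need to be replaced or supplemented by such an extension argument.
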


Before coming to the proof we recall the following two lemmas from \cite{DS4}.

\begin{lemma}[Lemma B.1 in \cite{DS4}]\label{l:rotazioni_semplici}
For any $m, n \in \mathbb N \setminus \{0\}$ there are constants $c_0,C_0>0$ with the following properties.
Assume that
\begin{itemize}
\item[(i)] $\varkappa, \varkappa_0\subset \mathbb R^{m+n}$ are $m$-dimensional planes with $|\varkappa-\varkappa_0|\leq c_0$ and $0<r\leq 1$;
\item[(ii)] $p=(q,u)\in\varkappa\times \varkappa^\perp$ and $f,g: B^m_{7r} (q, \varkappa)\to \varkappa^\perp$ are Lipschitz functions such that
\begin{equation*}
\Lip (f), \Lip (g) \leq c_0\quad\text{and}\quad |f(q)-u|+|g(q)-u|\leq c_0\, r.
\end{equation*}
\end{itemize}
Then there are two maps $f', g': B_{5r} (p, \varkappa_0)\to \varkappa_0^\perp$ such that
\begin{itemize}
\item[(a)] $\bG_{f'} = \bG_f \res \bC_{5r} (p, \varkappa_0)$ and $\bG_{g'} = \bG_g \res \bC_{5r} (p, \varkappa_0)$;
\item[(b)] $\|f'-g'\|_{L^1 (B_{5r} (p, \varkappa_0))}\leq C_0\,\|f-g\|_{L^1 (B_{7r} (p, \varkappa))}$;
\item[(c)] { if $f\in C^{3, \kappa} (B_{7r} (p, \varkappa))$
then $f'\in C^{3, \kappa} (B_{5r} (p, \varkappa_0))$ with the estimates
\begin{align}
&\|f'- u'\|_{C^0}\leq C \|f-u\|_{C^0} + C |\varkappa-\varkappa_0| r\label{e:ruota_uno}\\
&\|Df'\|_{C^0} \leq C \|Df\|_{C^0} + C |\varkappa-\varkappa_0|\label{e:uota_due}\\
&\|D^2 f'\|_{C^{1, \kappa}} \leq \Phi (|\varkappa-\varkappa_0|, \|D^2 f\|_{C^{1, \kappa}})\label{e:ruota_tre}
\end{align}
where $(q',u')\in \varkappa_0 \times \varkappa_0^\perp$ coincides with the point $(q,u)\in \varkappa\times \varkappa^\perp$ and $\Phi$ is a smooth function with $\Phi (\cdot, 0) \equiv 0$.} 
\end{itemize}
All the conclusions of the Lemma still hold if we replace the exterior radius $7r$ and interior radius $5r$ with
$\rho$ and $s$: the corresponding constants $c_0$ and $C_0$ (and the function $\Phi$) will then depend also on the ratio
$\frac{\rho}{s}$.
\end{lemma}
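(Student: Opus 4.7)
The plan is to reparameterize the graphs of $f$ and $g$ over the nearby plane $\varkappa_0$ via the orthogonal projection $\bp_0 := \bp_{\varkappa_0}$. Identifying $\R^{m+n} = \varkappa \oplus \varkappa^\perp = \varkappa_0 \oplus \varkappa_0^\perp$, I would define
\[
\Phi_f : B_{7r}(q,\varkappa) \to \varkappa_0, \qquad \Phi_f(x) := \bp_0(x + f(x)),
\]
and analogously $\Phi_g$. Since $|\varkappa-\varkappa_0| \le c_0$ and $\Lip(f) \le c_0$ are small, the differential $D\Phi_f(x) v = \bp_0 v + \bp_0(Df(x) v)$ is a small perturbation of the restriction $\bp_0|_\varkappa : \varkappa \to \varkappa_0$, which is an invertible linear map whenever $|\varkappa - \varkappa_0|$ is small. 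A quantitative inverse function theorem then shows that $\Phi_f$ is a bi-Lipschitz diffeomorphism onto its image, with Lipschitz constants close to $1$, and that its image contains $B_{5r}(p,\varkappa_0)$, where $p := \bp_0(q+u)$ coincides with $(q',u')$ under the identifications. One then \emph{defines}
\[
f'(y) := \bp_0^\perp\bigl(\Phi_f^{-1}(y) + f(\Phi_f^{-1}(y))\bigr), \qquad y\in B_{5r}(p,\varkappa_0),
\]
and analogously $g'$. Claim (a) follows immediately from the construction: each point $x+f(x) \in \gr(f)$ decomposes uniquely as $\Phi_f(x) + f'(\Phi_f(x))$, so the graphs of $f$ and $f'$ agree over $B_{5r}(p,\varkappa_0)$.

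For the $L^1$ estimate (b), I would compare $f'(y)$ and $g'(y)$ at a common base point $y \in B_{5r}(p,\varkappa_0)$ by setting $x := \Phi_f^{-1}(y)$, $\bar x := \Phi_g^{-1}(y)$ and $P := x+f(x)$, $Q := \bar x + g(\bar x)$. Then $\bp_0(P) = \bp_0(Q) = y$, so $P-Q \in \varkappa_0^\perp$, hence $|f'(y) - g'(y)| = |\bp_0^\perp(P-Q)| = |P-Q|$. Introducing the auxiliary point $R := x + g(x)$, the triangle inequality gives $|P-Q| \le |f(x) - g(x)| + (1 + \Lip(g))|x - \bar x|$, and the bi-Lipschitz bound on $\Phi_g$ combined with the identity $\bp_0(R-Q) = \bp_0\bigl(0, g(x) - f(x)\bigr)$ yields $|x - \bar x| \le C|f(x) - g(x)|$. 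Therefore $|f'(\Phi_f(x)) - g'(\Phi_f(x))| \le C|f(x) - g(x)|$, and the change of variable $y = \Phi_f(x)$, whose Jacobian is uniformly bounded above and below, gives (b) after integrating over $B_{7r}(q,\varkappa)$.

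Part (c) will be extracted by differentiating the pointwise identity $f' \circ \Phi_f = \bp_0^\perp \circ (\mathrm{Id} + f)$: the chain rule, together with the inverse function theorem applied to $\Phi_f$ (which is of the same regularity class as $f$), gives
\[
Df'(y) = \bp_0^\perp\bigl(\mathrm{Id}_\varkappa + Df(x)\bigr) \bigl(D\Phi_f(x)\bigr)^{-1}, \qquad x = \Phi_f^{-1}(y),
\]
and the bounds \eqref{e:ruota_uno}--\eqref{e:uota_due} follow from this formula together with the pointwise identity $u' = \bp_0^\perp(q+u)$. The expected main obstacle is the structural estimate \eqref{e:ruota_tre}: the statement $\Phi(\cdot, 0) \equiv 0$ encodes the fact that when $D^2 f \equiv 0$ the graph of $f$ is an affine $m$-plane, hence also an affine graph over $\varkappa_0$, so $D^2 f' \equiv 0$. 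Establishing this quantitatively amounts to expanding $D^k f'$ (for $k=2,3$ and for the H\"older seminorm) as a multilinear sum in which \emph{every} term contains at least one factor $D^j f$ with $j \ge 2$; the combinatorics of the chain rule applied to $\Phi_f^{-1}$, whose derivatives up to order $k$ involve all the $D^j f$ with $j \le k$, must be tracked carefully to extract the pure-$D^{\ge 2}f$ dependence. Finally, the variant for general exterior and interior radii $\rho, s$ in place of $7r, 5r$ follows from the same argument, the constants $c_0,C_0$ now depending on $\rho/s$ via the margin needed in the quantitative inverse function theorem to guarantee $B_s(p,\varkappa_0) \subset \Phi_f(B_\rho(q,\varkappa))$.
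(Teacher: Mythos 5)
Your argument is correct and coincides with the standard proof: this paper does not prove the lemma but imports it from \cite[Lemma B.1]{DS4}, and the proof there is exactly your reparametrization of the graph through the map $x\mapsto \bp_{\varkappa_0}(x+f(x))$, which is bi-Lipschitz with constants close to $1$ and covers the smaller disk, followed by the same fiberwise comparison and change of variables for (b) and the chain rule for (c). The only step you leave unexecuted, the bookkeeping behind \eqref{e:ruota_tre}, is routine given your (correct) observation that every term in the expansion of $D^k f'$ for $k\geq 2$, and of the H\"older seminorm of $D^3f'$, contains at least one factor $D^jf$ with $j\geq 2$, while all remaining factors are bounded in terms of $c_0$; this is precisely what yields a smooth $\Phi$ with $\Phi(\cdot,0)\equiv 0$.
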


\begin{lemma}[Lemma 5.6 of \cite{DS4}]\label{l:cambio_tre_piani}
Fix $m,n,l$ and $Q$. There are geometric constants $c_0, C_0$ with the following property.
Consider two triples of planes $(\pi, \varkappa, \varpi)$ and $(\bar\pi, \bar\varkappa, \bar\varpi)$, where
\begin{itemize}
\item $\pi$ and $\bar\pi$ are $m$-dimensional;
\item $\varkappa$ and $\bar\varkappa$ are
$\bar{n}$-dimensional and orthogonal, respectively, 
to $\pi$ and $\bar\pi$;
\item $\varpi$ and $\bar\varpi$ $l$-dimensional and orthogonal, respectively, to $\pi\times \varkappa$ and $\bar\pi\times
\bar\varkappa$.
\end{itemize}
Assume ${\rm An} := |\pi-\bar\pi| + |\varkappa-\bar\varkappa|\leq c_0$ and
let $\Psi: \pi\times \varkappa \to \varpi$,
$\bar\Psi: \bar\pi\times \bar\varkappa \to \bar\varpi$ be two maps
whose graphs coincide and such that $|\bar \Psi (0)| \leq c_0 r$ and $\|D\bar \Psi\|_{C^{0}} \leq c_0$.
Let $u: B_{8r} (0, \bar{\pi}) \to \Iq (\bar\varkappa)$ be a map with
$\Lip (u) \leq c_0$ and $\|u\|_{C^0} \leq c_0 r$
and set $f (x)= \sum_i \llbracket(u_i (x), \bar\Psi (x, u_i (x)))\rrbracket$ and $\bef (x) = 
(\etaa \circ u (x), \bar\Psi (x, \etaa \circ u (x)))$. Then there are
\begin{itemize}
\item a map $\hat{u}: B_{4r} (0, \pi) \to \Iq (\varkappa)$ such
that the map 
\[
\hat{f} (x) := \sum_i \a{(\hat{u}_i (x), \Psi (x, \hat{u}_i (x)))}
\] 
satisfies $\bG_{\hat{f}}
= \bG_{f} \res \bC_{4r} (0, \pi)$
\item and a map $\hat\bef: B_{4r} (0, \pi) \to \varkappa \times \varpi$ such that
\[
\bG_{\hat\bef} = \bG_{\bef} \res \bC_{4r} (0, \pi)\, .
\] 
\end{itemize} 
Finally, if $\beg (x) := (\etaa \circ \hat{u} (x),
\Psi (x, \etaa \circ \hat{u} (x)))$, then
\begin{align}
\|\hat\bef-\beg\|_{L^1} &\leq C_0 \left(\| f \|_{C^0}+ r {\rm An}\right) \big(\D (f) + r^m \big(\|D\bar \Psi\|^2_{C^0} + {\rm An}^2\big)\big)\, .\label{e:che_fatica}
\end{align}
\end{lemma}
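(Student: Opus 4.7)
\textbf{Plan of proof of Lemma \ref{l:cambio_tre_piani}.} The heart of the statement is that the operation of taking the average and the operation of reparametrizing over a different base plane do not commute, and the failure can be quantified in terms of the Dirichlet energy of $f$ plus quadratic terms in the angles $\mathrm{An}$ and in $\|D\bar\Psi\|_{C^0}$. My plan is to reduce to a pointwise estimate at each $x\in B_{4r}(0,\pi)$, then Taylor-expand and integrate.

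\textbf{Step 1 (Pointwise description of the two maps).} Fix $x\in B_{4r}(0,\pi)$. By the change of parametrization, the vertical fiber $x+\pi^\perp$ intersects $\bG_f$ in $Q$ points $P_1(x),\dots,P_Q(x)$; write $P_i(x)=(y_i(x),u_{k_i}(y_i(x)),\bar\Psi(y_i(x),u_{k_i}(y_i(x))))$ in the $(\bar\pi,\bar\varkappa,\bar\varpi)$-coordinates, so that $\hat u_i(x)$ and $\hat f_i(x)=(\hat u_i(x),\Psi(x,\hat u_i(x)))$ are just the orthogonal projections of $P_i(x)$ onto $\varkappa$ and $\varkappa\times\varpi$. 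Then
\begin{equation*}
\beg(x)=\Big(\tfrac{1}{Q}\sum_i\hat u_i(x),\,\Psi\big(x,\tfrac{1}{Q}\sum_i\hat u_i(x)\big)\Big).
\end{equation*}
On the other hand, $\hat\bef(x)$ is the unique intersection of the fiber $x+\pi^\perp$ with $\bG_{\bef}$, i.e.\ there is a unique $y_*(x)\in\bar\pi$ with $(y_*(x),\etaa\circ u(y_*(x)),\bar\Psi(y_*(x),\etaa\circ u(y_*(x))))\in x+\pi^\perp$, and $\hat\bef(x)$ is the projection of this point. Everything is well defined, with Lipschitz bounds on $y_i, y_*$ and $x\mapsto\hat u_i(x)$, because the angles are controlled by $c_0$ and $\Lip(u),\|D\bar\Psi\|\le c_0$.

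\textbf{Step 2 (Comparison of the projections onto $\varkappa$).} Let $A,\bar A$ be the orthogonal projections $\R^{m+n}\to\varkappa$ and $\R^{m+n}\to\bar\varkappa$. Since $|A-\bar A|\le C\,\mathrm{An}$ and $P_i(x)=(y_i(x),u_{k_i}(y_i(x)),\bar\Psi(\cdots))$, a direct computation gives
\begin{equation*}
\hat u_i(x)=u_{k_i}(y_i(x))+R_i(x),\qquad |R_i(x)|\le C\,\mathrm{An}\big(|y_i(x)|+|u_{k_i}(y_i(x))|+\|\bar\Psi\|_{C^0}\big).
\end{equation*}
Averaging in $i$ yields $\frac{1}{Q}\sum_i\hat u_i(x)=\frac{1}{Q}\sum_i u_{k_i}(y_i(x))+O(\mathrm{An}(r+\|f\|_{C^0}))$. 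Analogously, $\hat\bef(x)$ differs from $(\etaa\circ u)(y_*(x))$ by an $O(\mathrm{An}(r+\|f\|_{C^0}))$ term. This already matches part of the right-hand side of \eqref{e:che_fatica}, but these crude bounds are too weak: I need to gain an extra factor from Taylor expansion.

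\textbf{Step 3 (Taylor expansion around $y_*(x)$).} The second order contribution is the real point. Write $y_i(x)=y_*(x)+\delta_i(x)$; the $\delta_i$ satisfy $\sum_i\delta_i(x)=O(\|f\|_{C^0}\,\mathrm{An})$ by the defining identity $(\frac{1}{Q}\sum_i P_i(x))\in x+\pi^\perp$ combined with $\bef(y_*(x))\in x+\pi^\perp$, and $|\delta_i(x)|\le C(\|f\|_{C^0}+\mathrm{An}\,r)$ pointwise. Taylor-expanding,
\begin{equation*}
\tfrac{1}{Q}\!\sum_i u_{k_i}(y_i(x))=(\etaa\circ u)(y_*(x))+\tfrac{1}{Q}\!\sum_i Du_{k_i}(y_*(x))\cdot\delta_i(x)+O\big(\Lip(Du)\,\|\delta\|^2\big),
\end{equation*}
and the linear piece is bounded by $|\delta_i(x)|\cdot\sum_i|Du_{k_i}(y_*(x))|$, hence by Cauchy--Schwarz (once integrated) it is controlled by $(\|f\|_{C^0}+\mathrm{An}\,r)(\int|Df|^2)^{1/2}(\cdots)^{1/2}$.

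\textbf{Step 4 (Integration and conclusion).} Integrating the pointwise estimates over $B_{4r}(0,\pi)$ and using $\int|Df|^2=\D(f)$, one gets
\begin{equation*}
\|\hat\bef-\beg\|_{L^1}\le C(\|f\|_{C^0}+\mathrm{An}\,r)\Big(\D(f)+r^m(\|D\bar\Psi\|_{C^0}^2+\mathrm{An}^2)\Big),
\end{equation*}
where the term $r^m(\|D\bar\Psi\|_{C^0}^2+\mathrm{An}^2)$ comes from the ``trivial'' energy of $\bef$ in the tilted coordinates (it is present even if $f$ is constant) and is controlled by the oscillation of $\bar\Psi$ on $B_{8r}$ and by the relative rotation of the two frames.

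\textbf{Main obstacle.} The bookkeeping in Step 3 is the delicate part: one must carefully split the Jacobian of the change of variables $x\mapsto(y_1(x),\dots,y_Q(x))$ into a symmetric piece (which kills the $Du$ term after averaging, because $\sum_i\delta_i$ is negligible) and an antisymmetric piece contributing the $\D(f)^{1/2}$ factor via Cauchy--Schwarz. Keeping the dependence on $\mathrm{An}$ and on $\|D\bar\Psi\|_{C^0}$ sharp enough to produce a purely quadratic term $r^m(\mathrm{An}^2+\|D\bar\Psi\|_{C^0}^2)$, rather than a linear one, is what makes \eqref{e:che_fatica} useful in the applications to the tilted $L^1$ estimate (Lemma \ref{lem:tilted_L1}).
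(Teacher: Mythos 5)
This lemma is not proved in the paper at all: it is imported verbatim as \cite[Lemma 5.6]{DS4}, so there is no in-paper argument to compare with, and your plan has to be judged on its own. Its overall shape is reasonable (reduce to comparing the $\varkappa$-components of two points of $\Sigma$ lying over the same $x\in\pi$, control the horizontal displacements $\delta_i$, expand, integrate), but two of your steps fail as written. First, the Taylor expansion in Step 3 is illegitimate: $u$ is only a Lipschitz $Q$-valued map, so there are in general no globally defined single-valued ``sheets'' $u_{k_i}$ along which to expand (branch points), no classical $Du_{k_i}(y_*)$, and certainly no bound on ${\rm Lip}(Du)$ to control a second-order remainder. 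Worse, the cancellation mechanism you rely on in the ``Main obstacle'' paragraph --- that $\sum_i\delta_i\approx 0$ kills the linear term after averaging --- would require factoring a \emph{common} gradient out of $\sum_i Du_{k_i}(y_*)\cdot\delta_i$, which is exactly what multivaluedness forbids; for genuinely $Q$-valued $u$ this sum does not reduce to $(\text{matrix})\cdot\sum_i\delta_i$.

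Second, your displacement bound is too weak and your projection errors never cancel. Since $P_i(x)$ and the point of $\bG_{\bef}$ over $x$ lie in the same fiber $x+\pi^\perp$, and $\bp_{\bar\pi}$ restricted to $\pi^\perp$ has norm $O({\rm An})$, one gets (after absorbing) $|\delta_i|\leq C\,{\rm An}\,\|f\|_{C^0}$ --- in particular $\delta_i$ vanishes when the planes coincide --- and it is this extra factor ${\rm An}$, not the $\sum_i\delta_i$ cancellation, that closes the first-order term: $\int |Du|\,|\delta|\leq C{\rm An}\|f\|_{C^0} r^{m/2}\D(f)^{1/2}\leq C\|f\|_{C^0}(\D(f)+r^m{\rm An}^2)$. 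With your stated bound $|\delta_i|\leq C(\|f\|_{C^0}+{\rm An}\,r)$ you would only reach $(\|f\|_{C^0}+r{\rm An})(\D(f)+r^m)$, missing the crucial quadratic factor ${\rm An}^2+\|D\bar\Psi\|_{C^0}^2$ in \eqref{e:che_fatica}. Relatedly, in Step 2 you estimate $R_i$ and the analogous error for $\hat\bef$ \emph{separately} by $O({\rm An}(r+\|f\|_{C^0}))$; the $O({\rm An}\,r)$ pieces (coming from $\bp_\varkappa$ acting on the base coordinates $y_i$, resp. $y_*$) never cancel in your bookkeeping, and after integration they produce a term of order $r^{m+1}{\rm An}$, far larger than the right-hand side of \eqref{e:che_fatica}. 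They must be handled by applying the off-diagonal projections to the \emph{difference} vector $P_i(x)-(\text{point of }\bG_{\bef}\text{ over }x)$, whose $\bar\pi$-, $\bar\varkappa$-, $\bar\varpi$-components are respectively $O({\rm An}\|f\|_{C^0})$, $O(\|u\|_{C^0})$, $O(\|D\bar\Psi\|_{C^0}(\|u\|_{C^0}+{\rm An}\|f\|_{C^0}))$, which then yield only admissible terms; and once the $\varkappa$-components are compared, the $\varpi$-components come for free because both $(x,\hat\bef(x))$ and $(x,\beg(x))$ lie on $\Sigma={\rm graph}(\Psi)$ over the same $x$.
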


\begin{proof}[Proof of Lemma \ref{lem:tilted_L1}] We start by examining the first case. Using Proposition \ref{pr:elliptic_regularization} we know that
$\|\bar h_{HJ} - \etaa \circ \bar f_{HJ}\|_{L^1 (B_{8r_J} (p_J, \pi_H))} \leq C\eps_1 r_J^{m+3+\alpha_\bh}$. Now, since $\Psi_H$ is Lipschitz and
$h_{HJ} = (\bar h_{HJ}, \Psi (x, \bar h_{HJ}))$, $\bef_{HJ} = (\etaa\circ \bar f_{HJ}, \Psi_H (\etaa\circ \bar f_{HJ}))$, we easily conclude that
\begin{equation}\label{e:triangolo_1}
\|h_{HJ} - \bef_{HJ}\|_{L^1 (B_{8r_J} (p_J, \pi_H))} \leq C \eps_1 r_J^{m+3+\alpha_\bh}\, .
\end{equation}
Similarly, 
\[
\|h_{LM} - \bef_{LM}\|_{L^1 (B_{8r_M} (p_M, \pi_L))}\leq C \eps_1 r_M^{m+3+\alpha_\bh}
\leq C \eps_1 r_J^{m+3+\alpha_\bh}
\]
in case $M$ is a non-boundary cube or
\[
\|h^+_{LM} - \bef^+_{LM}\|_{L^1 (B_{2^7 8r_M} (p^\flat_M, \pi_L))}
\leq C \eps_1 r_J^{m+3+\alpha_\bh}
\]
if it is a boundary cube. 
Since the two situations are entirely analogous, we just focus on the case where $M$ is a non-boundary cube.

Now both $h_{LM}$ and $\bef_{LM}$ are Lipschitz (and well defined!) over $B_{6 r_J} (p_J, \pi_L)$ and recall that, due to Proposition \ref{pr:tilting_cm}, $|\bp_{\pi_L} (p_M - p_J)| \le 3\sqrt{m} \ell(M)$. Moreover they satisfy the assumption (ii) of Lemma \ref{l:rotazioni_semplici} by a simple Chebyshev argument on the $L^1$ estimate above. So we can apply Lemma \ref{l:rotazioni_semplici} to get a function $\hat\bef_{LM}$ the function such that
\[
\bG_{\hat \bef_{LM}} \res \bC_{4r_J} (p_J, \pi_H) = \bG_{\bef_{LM}} \res \bC_{4r_J} (p_J, \pi_H)\, ,
\]  
similarly for $h_{LM}$ and to conclude that
\begin{equation}\label{e:triangolo_2}
\|\hat h_{LM} - \hat\bef_{LM}\|_{L^1 (B_{4r_J} (p_J, \pi_H))}\leq C  \eps_1 r_J^{m+3+\alpha_\bh}\, .
\end{equation}
In order to simplify the notation, shift the center $p_J$ to the origin and 
consider next $\hat f_{LM}$, $\hat u$ and $\beg$ as in Lemma \ref{l:cambio_tre_piani} once we define $f=f_{LM}$, $\pi=\pi_H$ and $\bar \pi = \pi_L$. Now, the graphs of $\hat u$ and $\bar f_{HJ}$ coincides except for a set of  Lebesgue measure bounded by $C r_J^m (\varepsilon_1 r_J^{2-2\alpha_\be})^{1+\sigma}$ because of the Lipschitz approximation theorems. On the other hand the  oscillations  of both functions are bounded
by $C \varepsilon_1^{\sfrac{1}{2m}} r_J^{1+\alpha_\bh}$. It is thus easy to verify that 
\begin{equation}\label{e:triangolo_3}
\|\bef_{HJ} - \beg\|_{L^1 (B_{4r_J} (p_J, \pi_H))}\leq C  \eps_1 r_J^{m+3+\alpha_\bh}\, .
\end{equation}
We now claim that 
\begin{equation}\label{e:triangolo_4}
\|\hat \bef_{LM} - \beg\|_{L^1 (B_{4r_J} (p_J, \pi_H))}\leq C  \eps_1 r_J^{m+3+\alpha_\bh/2}\, ,
\end{equation}
which combined with \eqref{e:triangolo_1}, \eqref{e:triangolo_2} and \eqref{e:triangolo_3} would give the desired estimate. 

In order to reach \eqref{e:triangolo_4} we wish to apply the estimate \eqref{e:che_fatica} in Lemma \ref{l:cambio_tre_piani}. Recall that in our context we have the following estimates:
\begin{align*}
\|f\|_0 &\leq C \varepsilon_1^{\sfrac{1}{2m}} r_J^{1+\alpha_\bh}\\
r &=r_J\\
{\rm An}&\leq C \varepsilon_1^{\sfrac{1}{2}} r_J^{1-\alpha_\be}\\
{\rm Dir}\, (f) & \leq C \varepsilon_1 r_J^{m+2 -2 \alpha_\be}\\
\|D\bar\Psi\|_{C^0} &\leq C \varepsilon_1^{\sfrac{1}{2}} r_J\, .
\end{align*}
Hence the estimate \eqref{e:triangolo_4} follows easily from \eqref{e:che_fatica} once we impose $\alpha_\bh > 4 \alpha_\be$.

\medskip

In the case where both $M$ and $J$ are boundary cubes, the argument is entirely analogous. The only subtlety is that we cannot apply directly
the lemmas \ref{l:rotazioni_semplici} and \ref{l:cambio_tre_piani} since the functions we are dealing with are only defined on a portion of the respective ball, namely on $B^+_{2^7 6 r_J} (p^\flat_J, \pi_L)$. Note however that all functions can be easily extended to the whole 
ball $B_{2^7 6r_J} (p^\flat_J, \pi_L)$ with the following simple trick: on the boundary $\gammado = B_{2^7 6r_J} (p_J^\flat,\pi_L) \cap \partial B^+_{2^7 6 r_J} (p_J^\flat,\pi_L)$ the graph of $h_{LM}$ coincides with the boundary $\gammaup$, hence with a $C^3$ function $\psi$, and the graph of $f_{LM}$ coincides with $Q\a{\psi}$. Note moreover that $\psi$ satisfies the estimates $r_J^{-2} \|\psi\|_0 + r_J^{-1} \|D\psi\|_0 + \|D^2\psi\|_0 \leq C \varepsilon_1^{\sfrac{1}{2}}$. Hence it suffices to extend $\psi$ to $B^-_{2^76r_J} (p_J^\flat,\pi_L)$ to a function $\varphi$ with the same estimates and hence extend $h_{LM}$ and $f_{LM}$ to $B^-_{2^76r_J}  (p_J^\flat,\pi_L)$ by setting them respectively equal to $\psi$ and $Q\a{\psi}$. In this way we keep all the estimates which were essential for the argument above. 
\end{proof}

\section{Construction estimates and proof of Theorem \ref{thm:center_manifold}}

In what follows we use the shorthand notations $x_H$ (resp. $x^\flat_H$) for  the center $c(H) = \bp_{\pi_0} (p_H)$ (resp. $\bp_{\pi_0} (p^\flat_H)$) and
we write $B_r (x)$ for $B_r (x, \pi_0)$. 

\begin{proposition}\label{p:block_estimates}
Let $\kappa := \min \{\alpha_{\bh}/4, a_0/2\}$. 
Under the Assumptions \ref{ass:decay+cone} and \ref{ass:cm} the following holds for every pair of cubes $H, L \in \mathscr{P}_j$ \footnote{Recall the definition of $\mathscr{P}_j$ given in Section \ref{ss:glued_interpolations}}.
\begin{itemize}
\item[(a)] $\|g_H\|_{C^{3,\kappa} (B)}\leq C \varepsilon_1^{\sfrac{1}{2}}$, where $B = B_{4r_H} (x_H)$ when $H\in \mathscr{C}^\natural$
and $B = B^+_{2^7 4r_H} (x^\flat_H)$ when $H\in \mathscr{C}^\flat$;
\item[(b)] If $H$ and $L$ are neighbors then
\begin{align}
&\|g_H- g_L\|_{C^i (B_{r_H} (x_H))} \leq C \varepsilon_1^{\sfrac{1}{2}} \ell (H)^{3+\kappa-i} \quad \forall i\in \{0, 1,2,3\}\nonumber\\
&\qquad\qquad\qquad\qquad \mbox{when $H\in \mathscr{C}^\natural$,}\\
&\|g_H- g_L\|_{C^i (B^+_{2^7 r_H} (x^\flat_H))} \leq C \varepsilon_1^{\sfrac{1}{2}} \ell (H)^{3+\kappa-i} \quad \forall i\in \{0, 1,2,3\}\nonumber\\
&\qquad\qquad\qquad\qquad \mbox{when $H, L\in \mathscr{C}^\flat$;}
\end{align}
\item[(c)] $|D^3 g_H (x_H^\square) - D^3 g_L (x^\square_L)|\leq C \varepsilon_1^{\sfrac{1}{2}}|x^\square_H - x^\square_L|^\kappa$, where $\square = \phantom{\flat}$ if the corresponding cube is a non-boundary cube and $\square = \flat$ if it is a boundary cube;
\item[(d)] $\|g_H - \bp_{\pi_0}^\perp (p_H)\|_{C^0 (B)} \leq C \varepsilon_1^{\sfrac{1}{2m}} \ell (H)$ if $H\in \mathscr{C}^\natural$ and $g_H |_{\gammado \cap \overline{B}} = \psi$ if $H\in \mathscr{C}^\flat$,  where $B$ is as in (a);
\item[(e)] $|\pi_H - T_{(x, g_H (x))} \bG_{g_H} |\leq C \varepsilon_1^{\sfrac{1}{2}} \ell (H)^{1-\alpha_\be}$ for every $x\in B$, where
$B$ is as in (a);
\item[(f)] If $H'$ is the cube concentric to $H\in \sW_j$ with $\ell (H') = \frac{9}{8} \ell (H)$, then
\begin{equation}
\|\varphi_i - g_H\|_{L^1 (H')}\leq C \varepsilon_1 \ell (H)^{m+3+\alpha_\bh/2}\qquad \forall i\geq j+1\, . 
\end{equation}
\end{itemize}
\end{proposition}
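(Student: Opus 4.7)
The plan is to derive each of (a)--(f) as a consequence of the elliptic estimates in Proposition \ref{pr:elliptic_regularization}, the tilting estimates in Proposition \ref{pr:tilting_cm}, and the tilted $L^1$ estimate in Lemma \ref{lem:tilted_L1}, with heavy use of the graph change-of-coordinates Lemmas \ref{l:rotazioni_semplici} and \ref{l:cambio_tre_piani} to pass between the tilted functions $h_H$ and the functions $g_H$ defined over $\pi_0$.

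First I would establish (a), (d), and (e), which concern a single cube. For non-boundary $H$, Schauder theory applied to the PDE \eqref{e:defining_PDE_i} gives $\|\bar h_H\|_{C^{3,\kappa}(B_{7r_H}(p_H,\pi_H))}\leq C\varepsilon_1^{1/2}$ by interpolating the $L^\infty$ control from \eqref{e:lip_interior} with the smooth right-hand side $\mathbf{L}\cdot(x-\bp_{\pi_H}(p_H))$. For boundary cubes the analogous bound follows from \eqref{e:lip_boundary} together with the $C^{3,a_0}$ regularity of the boundary datum $\psi$ extended as in the proof of Proposition \ref{pr:elliptic_regularization}. Passing from $h_H$ (a graph over $\pi_H$) to $g_H$ (a graph over $\pi_0$) via Lemma \ref{l:rotazioni_semplici}, together with the tilt bound $|\pi_H-\pi_0|\leq C\varepsilon_1^{1/2}$ from Proposition \ref{pr:tilting_cm}(a), yields (a). Point (d) is immediate: for non-boundary $H$ we use the height estimate (b)$^\natural$ (resp.\ (b)$^\flat$) of Proposition \ref{pr:tilting_cm} combined with the fact that $\supp(T)\subset \bB_H$ forces $|\bar h_H|$ to be of order $\varepsilon_1^{1/2m}\ell(H)$; for boundary $H$ the boundary value of $\bar h_H^+$ on $\gammado$ coincides with $\etaa\circ \bar f_H^+|_\gammado$, which in turn coincides with $\bar\psi$ because the Lipschitz approximation collapses at the interface, and this identity is preserved under the change of coordinates. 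Point (e) follows directly from $\|D\bar h_H\|_0\leq C\varepsilon_1^{1/2}\ell(H)^{1-\alpha_\be}$, the bounds on $\Psi_H$, and the tilt estimate $|\pi_H-\pi_0|\leq C\varepsilon_1^{1/2}\ell(H)^{1-\alpha_\be}$ between neighboring or ancestor--descendant cubes in $\mathscr{P}_j$.

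Next I would address (b) and (c). For a pair of neighbors $H,L$ with $\ell(H)\leq\ell(L)$, the quadruple $(H,H,L,L)$ (interpreting the two middle entries as the $J,M$ of Lemma \ref{lem:tilted_L1}) makes a distant relation between $H$ and $L$, so Lemma \ref{lem:tilted_L1} produces a function $\hat h_L$ with $\bG_{\hat h_L}=\bG_{h_L}\res\bC_{4r_H}(p_H,\pi_H)$ (or the boundary analogue) satisfying
\[
\|h_H-\hat h_L\|_{L^1}\leq C\varepsilon_1\ell(H)^{m+3+\alpha_\bh/2}.
\]
Pulling this $L^1$ bound over to graphs over $\pi_0$ via Lemma \ref{l:rotazioni_semplici} (whose $L^1$ conclusion (b) is preserved up to a geometric factor), I obtain the same estimate for $\|g_H-g_L\|_{L^1(B_{r_H}(x_H))}$. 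Interpolating this $L^1$ bound against the uniform $C^{3,\kappa}$ bound from (a) gives (b) for all $i\in\{0,1,2,3\}$. Then (c) follows by writing
\[
|D^3 g_H(x_H)-D^3 g_L(x_L)|\leq |D^3 g_H(x_H)-D^3 g_L(x_H)|+|D^3 g_L(x_H)-D^3 g_L(x_L)|,
\]
bounding the first summand by the $i=3$ case of (b) (when $H,L$ are neighbors) or by an ancestor--descendant telescoping argument, and the second by the $C^{0,\kappa}$ bound on $D^3 g_L$ from (a).

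Finally I would prove (f), which is the main quantitative output. For $H\in\sW_j$ and $i\geq j+1$, unpack
\[
\varphi_i-g_H=\sum_{L\in\mathscr{P}_i}\vartheta_L(g_L-g_H)+(\text{correction from }\Psi),
\]
where on $H'$ only cubes $L$ with $\ell(L)\in[\tfrac12\ell(H),2\ell(H)]$ contribute, since $H$ itself no longer gets refined. Each such $L$ is either a neighbor of $H$ or a neighbor of the father of $H$, hence a distant relation pairs $(H,J,L,M)$ as in Lemma \ref{lem:tilted_L1} can be produced, giving $\|g_L-g_H\|_{L^1(H')}\leq C\varepsilon_1\ell(H)^{m+3+\alpha_\bh/2}$. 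Since $\sum_L\vartheta_L=1$ and only boundedly many terms contribute pointwise, summing yields (f). The correction involving $\Psi$ is handled as in the proof of Lemma \ref{lem:tilted_L1}, since $\Psi$ is Lipschitz with constant $C\varepsilon_1^{1/2}$, hence it does not affect the final order $\ell(H)^{m+3+\alpha_\bh/2}$.

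The main technical obstacle is in (b) and (f), where the different reference planes for $h_H$ and $h_L$ force the repeated use of Lemmas \ref{l:rotazioni_semplici} and \ref{l:cambio_tre_piani}, together with a careful tracking of the tilt $|\pi_H-\pi_L|\leq C\varepsilon_1^{1/2}\ell(H)^{1-\alpha_\be}$; one has to verify that the loss of $\alpha_\bh/2$ in the exponent of $\ell(H)$ (as opposed to the full $\alpha_\bh$) in Lemma \ref{lem:tilted_L1} absorbs all the error terms coming from the change of reference plane, which in turn dictates the choice $\alpha_\bh>4\alpha_\be$ already made in the hierarchy of Assumption \ref{ass:cm}. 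The boundary case requires additional care because one must ensure that the boundary values on $\gammado$ match exactly (so that differences lie in spaces of functions vanishing on $\gammado$, on which interior Schauder theory can be used up to the boundary via Lemma \ref{l:boundary_reg_harm}).
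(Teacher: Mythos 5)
Your treatment of (b)--(f) is broadly in line with the paper's argument (tilted $L^1$ estimates via Lemma \ref{lem:tilted_L1}, change of plane via Lemma \ref{l:rotazioni_semplici}, interpolation between $L^1$ smallness and $C^{3,\kappa}$ bounds, and the partition of unity for (f)). The genuine gap is in your proof of (a), and it propagates. You propose to obtain $\|g_H\|_{C^{3,\kappa}(B)}\leq C\varepsilon_1^{\sfrac12}$ from a single application of Schauder theory to \eqref{e:defining_PDE_i} on the cube $H$ itself, ``interpolating the $L^\infty$ control from \eqref{e:lip_interior} with the smooth right-hand side''. This cannot work: \eqref{e:lip_interior} is only a gradient bound of size $\varepsilon_1^{\sfrac12}\ell(H)^{1-\alpha_\be}$ and the available $C^0$ control is of size $\varepsilon_1^{\sfrac{1}{2m}}\ell(H)$, so any interior Schauder estimate on a ball of radius $\sim\ell(H)$ produces bounds on $D^3\bar h_H$ that degenerate like a negative power of $\ell(H)$ (e.g. $\varepsilon_1^{\sfrac12}\ell(H)^{-1-\alpha_\be}$), whereas the claim is uniform in $\ell(H)$. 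Moreover, up-to-the-boundary Schauder is not even available on the ball $B_{8r_H}(p_H,\pi_H)$, because the Dirichlet datum $\etaa\circ\bar f_H$ is merely Lipschitz; in the boundary-cube case only the portion of the datum on $\gammado$ is $C^{3,a_0}$, not the spherical part.

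The mechanism the paper uses (and which your argument is missing) is a telescoping along the chain of ancestors $H=H_i\subset H_{i-1}\subset\ldots\subset H_{N_0}$: the differences $\bar h_{HH_j}-\bar h_{HH_{j-1}}$ are \emph{harmonic} (the Poisson right-hand sides cancel), and they are small in $L^1$, of order $\varepsilon_1\ell(H_{j-1})^{m+3+\alpha_\bh}$, because of the $L^1$ estimates \eqref{e:L1_interior}--\eqref{e:L1_boundary} of Proposition \ref{pr:elliptic_regularization} together with the fact that the two Lipschitz approximations coincide off a set of measure $C\ell^m(\varepsilon_1\ell^{2-2\alpha_\be})^{1+\sigma}$ while their oscillation is $C\varepsilon_1^{\sfrac{1}{2m}}\ell^{1+\alpha_\bh}$. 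Interior estimates for harmonic functions (and Lemma \ref{l:boundary_reg_harm} in the boundary case, where the differences vanish on $\gammado$) convert this superlinear $L^1$ smallness into $C^{3,\kappa}$ smallness of order $\varepsilon_1 2^{-j\kappa}$ on slightly shrunk domains; summing the geometric series and anchoring at the coarsest scale $N_0$ --- where the radius is of order one and the estimates \eqref{e:C^3,kappa estimate for N_0 interior}, \eqref{e:C^3,kappa estimate for N_0 boundary} give $\|h_{HH_{N_0}}\|_{C^{3,\kappa}}\leq C\varepsilon_1^{\sfrac12}$ --- yields the uniform bound; the extra gain $\alpha_\bh$ in the exponent is exactly what makes the series summable, and dictates $\kappa\leq\alpha_\bh/4$. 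Since your proofs of (b), (c) and (f) rest on the uniform $C^{3,\kappa}$ bound (for the interpolation step, which is \cite[Lemma C.2]{DS4}) and, for (c), on the $C^3$ decay of $g_{HH_i}-g_{HH_{i-1}}$ along ancestors, you need this telescoping argument there as well; the neighbor estimate of (b) alone does not cover pairs $H,L\in\mathscr{P}_j$ at mutual distance much larger than their side lengths.
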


%

\begin{proof} {\bf Proof of  (a).} Consider the chain of ancestors $H = H_i \subset H_{i-1} \subset \ldots \subset H_{N_0}$.
Fix any $j$ and consider the two cases where $H_j$ is a boundary cube or where $H_j$ is a non-boundary cube. In the first case observe that $H_{j-1}$ must also be a boundary cube. It follows then that $\bar h_{H H_j} - \bar h_{H H_{j-1}}$ is an harmonic function on $\Omega_j := B_{2^7 7 r_{H_j}} (p^\flat_{H_j}, \pi_H)$ in the first case and in $\Omega_j := B_{7r_{H_j}} (p_{H_j}, \pi_H)$ in the second case. Notice next that, by Proposition \ref{pr:elliptic_regularization}, we have
\[
\|\bar h_{H H_j} - \bar h_{H H_{j-1}}\|_{L^1 (\Omega_j)} \leq \|\etaa\circ \bar f_{HH_j} - \etaa\circ \bar f_{HH_{j-1}}\|_{L^1 (\Omega_j)}\, +  C \varepsilon_1 r_{H_{j-1}}^{m+3+\alpha_{\bh}}.
\]
On the other hand $\etaa\circ \bar f_{HH_j} - \etaa\circ \bar f_{HH_{j-1}}$ vanishes except for a set of Lebesgue measure at most $C \ell (H_{j-1})^m (\varepsilon_1 \ell (H_{j-1})^{2-2\alpha_\be})^{1+\sigma}$.  Taking into account that the oscillation of both functions are bounded by $C \varepsilon_1^{\frac{1}{2m}} r_{H_{j-1}}^{1+\alpha_{\bh}}$ we also know that 
\[
\|\etaa\circ \bar{f}_{HH_j} - \etaa \circ \bar{f}_{HH_{j-1}}\|_{L^1(\Omega_j)} \leq C \varepsilon_1 \ell (H_{j-1})^{m+3+2\alpha_\bh}.
\]
We thus conclude
\[
\|\bar h_{H H_j} - \bar h_{H H_{j-1}}\|_{L^1 (\Omega_j)} \leq C \varepsilon_1 \ell (H_{j-1})^{m+3+\alpha_\bh}\, .
\]
Now, if $H_j$ is a non-boundary cube we immediately conclude from the mean-value inequality for harmonic functions that
\begin{equation}
\sum_{k=0}^4 \ell (H_{j-1})^{k} \|D^k (\bar h_{H H_j} - \bar h_{H H_{j-1}})\|_{C^0 (B_{4r_{H_j}} (p_{H_j}, \pi_H))} \leq C \varepsilon_1 \ell (H_{j-1})^{3+\alpha_\bh}\, .
\end{equation}
In particular we conclude the estimates
\begin{equation}
\|\bar h_{H H_j} - \bar h_{H H_{j-1}}\|_{C^{3,\kappa} (B_{4r_{H_j}} (p_{H_j}, \pi_H))} \leq C \varepsilon_1 2^{-j\kappa}\, .
\end{equation}
Similarly, using an obvious scaling argument together with Lemma \ref{l:boundary_reg_harm}, when $H_j$ is a boundary cube we conclude
\begin{align}
&\sum_{k=0}^3 \ell (H_{j-1})^{k} \|D^k (\bar h_{H H_j} - \bar h_{H H_{j-1}})\|_{C^0 (B_{2^{7} 4r_{H_j}} (p^\flat_{H_j}, \pi_H))} \nonumber\\
\leq & C \varepsilon_1 \ell (H_{j-1})^{3+\alpha_\bh}\\
& [D^3(\bar h_{H H_j} - \bar h_{H H_{j-1}})]_{0, a_0, B_{2^{7} 4r_{H_j}} (p^\flat_{H_j}, \pi_H)}\nonumber\\
\leq & C \varepsilon_1 \ell (H_{j-1})^{\alpha_\bh - a_0}\, .
\end{align}
In particular, 
\begin{equation}
\|\bar h_{H H_j} - \bar h_{H H_{j-1}}\|_{C^{3,\kappa} (B_{2^7 4r_{H_j}} (p^\flat_{H_j}, \pi_H))} \leq C \varepsilon_1 2^{-j\kappa}\, .
\end{equation}
Summing all the estimates we conclude that if $H$ is not a boundary cube then
\begin{equation}
\|\bar h_H\|_{C^{3, \kappa} (B_{4r_h}(p_H,\phi_H))} \leq \|\bar h_{HH_{N_0}}\|_{C^{3, \kappa} (\Omega_{N_0})} + C \varepsilon_1\, .
\end{equation}
If $H$ is a boundary cube we have 
\[\|\bar h_H\|_{C^{3, \kappa} (B^+_{2^7 4r_h}(p^\flat_H,\phi_H))} \leq \|\bar h_{HH_{N_0}}\|_{C^{3, \kappa} (\Omega_{N_0})} + C \varepsilon_1\, .\]
Recall that in previously in \eqref{e:C^3,kappa estimate for N_0 interior}, \eqref{e:C^3,kappa estimate for N_0 boundary} we already showed that 
\[ 
 \|\bar h_{HH_{N_0}}\|_{C^{3, \kappa} (\Omega_{N_0})} \le C \varepsilon_1^{\frac12}\,,
\]
composing with $\Psi_H$ we find the desired regularity for $h_H$.
The regularity for $g_H$ follows then from Lemma \ref{l:rotazioni_semplici}.

\medskip

{\bf Proof of (b).} Consider the function $\hat h_L$ defined by Lemma \ref{lem:tilted_L1} when we take $H=J$ and $L=M$. We then have the two estimates 
\begin{align}
\|h_H- \hat h_L\|_{L^1 (B_{2r_J} (p_J, \pi_H))}\hphantom{t} & \leq C \varepsilon_1 \ell (J)^{m+3+\alpha_{\bh}/2}\, .\\
\|h_H- \hat h_L\|_{L^1 (B^+_{2^7 2 r_J} (p^\flat_J, \pi_H))} & \leq C \varepsilon_1 \ell (J)^{m+3+\alpha_{\bh}/2}\, ,
\end{align}
depending on the two cases under examination ($H$ non-boundary cube or both $H$ and $L$ boundary cube). 

Observe that the graph of $g_L$ coincides with (a portion) of the graph $\hat h_L$. We can thus use Lemma \ref{l:rotazioni_semplici} to prove
\[
\|g_H - g_L\|_{L^1 (\Omega)} \leq C \varepsilon_1 \ell (J)^{m+3+\alpha_{\bh}/2}\, 
\]
 where $\Omega_i$ is either $B_{r_{J}} (x_{J}, \pi_0)$ or $B^+_{2^7 r_{J}} (x^\flat_{J}, \pi_0)$ depending on whether $J$ is a non-boundary cube or a boundary cube
(in the second case we argue as in the proof of Proposition \ref{lem:tilted_L1}: in order to apply Lemma \ref{l:rotazioni_semplici} we extend both maps $h_H$ and $\hat h_L$ so that they are equal on $B^-_{2^7 2 r_J} (p_J, \pi_H)$ and the Lipschitz constant of both remains bounded by $C \varepsilon_1^{\sfrac{1}{2}}$). In order to conclude the estimates we then apply \cite[Lemma C.2]{DS4}. In the case of boundary cubes it is easy to see that the proof given in \cite{DS4} of Lemma \cite[Lemma C.2]{DS4} extends to $B^+_{2^7 2 r_J} (p_J, \pi_H)$ with trivial modifications.

\medskip

{\bf Proof of (c).} If the distance between $H$ and $L$ is larger than $2^{-N_0}$ then there is nothing to prove. Otherwise we can find an ancestor $J$ of $H$ and an ancestor $M$ of $L$ which make a distant relation and such that $\ell (J) = \ell (M)$ is comparable to $|x^\square_H - x^\square_L|$ up to a geometric constant. Consider then the chain of ancestors $H\subset H_{j-1} \subset \ldots \subset J$. Observe that, by the same arguments given in the previous step we can find maps $g_{HH_i}$ whose graphs coincide with (subsets of ) the graphs $h_{HH_i}$ and satisfy the estimates
\[
\|g_{HH_i} - g_{HH_{i-1}}\|_{C^3 (\Omega_i)} \leq C \varepsilon_1^{\sfrac{1}{2}} \ell (H_{i-1})^\kappa\, 
\]
where the domains $\Omega_i$ are either $B_{r_{H_i}} (x_{H_i}, \pi_0)$ or $B_{2^7 r_{H_i}} (x^\flat_{H_i}, \pi_0)$ depending on whether $H_i$ is a non-boundary cube or a boundary cube. Moreover, all the maps $g_{HH_i}$ enjoy uniform $C^{3,\kappa}$ bounds by the same arguments of point (a).
We thus conclude that
\[
|D^3 g_{HH_i} (x^\square_{H_i}) - D^3 g^\square_{HH_{i-1}}(x^\square_{H_{i-1}})|\leq C \varepsilon_1^{\sfrac{1}{2}} 2^{-i\kappa}\, .
\]
Summing all the estimates we then reach
\[
|D^3 g_H (x^\square_H) - D^3 g_{HJ} (x^\square_J)| \leq C \eps_1^{\sfrac{1}{2}} \ell (J)^\kappa \leq C \eps_1^{\sfrac{1}{2}} |x^\square_H - x^\square_L|^\kappa\, .
\]
Arguing similarly we conclude the corresponding estimate
\[
|D^3 g_L (x^\square_L) - D^3 g_{LM} (x^\square_M)| \leq C \eps_1^{\sfrac{1}{2}} |x^\square_H - x^\square_L|^\kappa\, .
\]
Finally, the obvious adaptation of the argument for (b) gives 
\[
|D^3 g_{HJ}(x^\square_J) - D^3 g_{LM} (x^\square_M)| \leq C \eps_1^{\sfrac{1}{2}} |x^\square_H - x^\square_L|^\kappa\, .
\]

\medskip

{\bf Proof of (d).} The claim is obvious by construction for boundary cubes. For non-boundary cubes, consider that the height bound for $T$ and the Lipschitz regularity for $f_H$ give that 
$\|\bp_{\pi^\perp_H} (p_H) - \etaa \circ f_H\|_\infty \leq C \varepsilon_1^{\sfrac{1}{2m}} \ell (H)$. If we set $\bef_H := (\etaa \circ \bar f_H, \Psi_H (x, \etaa\circ \bar f_H))$ we also get $\|\bp_{\pi^\perp_H} (p_H) - {\bf f}_H\|_\infty \leq C \varepsilon_1^{\sfrac{1}{2m}} \ell (H)$. On the other hand the Lipschitz regularity of the  tilted $H$-interpolating function  $h_H$ and the $L^1$ estimate on $h_H - \bef_H$ easily gives $\|\bp_{\pi^\perp_H} (p_H) - h_H\|_\infty \leq C \varepsilon_1^{\sfrac{1}{2m}} \ell (H)$. The estimate claimed in (d) follows then from Lemma \ref{l:rotazioni_semplici}.

\medskip

{\bf Proof of (e).}  The estimates \eqref{e:lip_interior} and \eqref{e:lip_boundary} show that the distance between any tangent to the graph of $h_H$ and $\pi_H$ is at most $C \varepsilon_1^{\sfrac{1}{2}} \ell (H)^{1-\alpha_\be}$ in the corresponding regions, which is just a reformulation of (e).

\medskip

{\bf Proof of (f).} For nearby neighbors $H$ and $L$ we can conclude the estimate $\|g_H-g_L\|_{L^1 (H\cup L)} \leq C \varepsilon_1 \ell (H)^{m+3+ \alpha_{\bh}/2}$ from the corresponding estimate for $h_H-h_L$ and Lemma \ref{l:rotazioni_semplici}. The conclusion is then an obvious consequence of the definition of  the glued interpolation maps $\varphi_i$.
\end{proof}

\begin{proof}[Proof of Theorem \ref{thm:center_manifold}] The estimate in  (a) is a consequence of Proposition \ref{p:block_estimates}: the argument is entirely analogous to that of \cite[Theorem 1.17(i)]{DS4}. Point (b) is a direct consequence of the definition of $\varphi_i$. Points (c) and (d) are a consequence of (a) and of the obvious facts that by construction the graphs of $\varphi_j$ are contained in $\Sigma$ and coincide with $\gammaup \cap \bC_{3/2}$ over  $\gammado \cap B_{3/2}$. Next, take any point $q\in \gammado$ and consider $\varphi_i$. Let $H\in \mathscr{C}_i$ be any cube which contains $q$ and observe that, since $H$ is a boundary cube, it must necessarily be that $H\in \mathscr{S}_i$. In particular we have 
$|\pi_H - T_q \bG_{\varphi_i}|\leq C \varepsilon_1^{\sfrac{1}{2}} 2^{-i(1-\alpha_\be)}$ by Proposition \ref{p:block_estimates} (b)\&(e). Note moreover that by Theorem \ref{thm:decay_and_uniq} we have $|\pi_H - \pi (q)|\leq C \varepsilon_1^{\sfrac{1}{2}} 2^{-i (1-\alpha_\be)}$. On the other hand, as $i\to \infty$ the planes $T_q \bG_{\varphi_i}$ converge to $T_q \mathcal{M}^+$, thus completing the proof of the theorem.
\end{proof}

\section{Proof of Cor. \ref{c:cm} and \ref{c:domains}, Prop. \ref{prop:sep} and Theo. \ref{thm:cm_app}}

Since all of the cubes in $\sW$ are non-boundary cubes, the proofs follow literally the ones of the corresponding corollaries, proposition and theorem  in \cite{DS4}, where Corollary \ref{c:cm} corresponds to \cite[Corollary 2.2]{DS4}, Corollary \ref{c:domains} corresponds to
\cite[Corollary 3.2]{DS4}, Proposition \ref{prop:sep} corresponds to \cite[Proposition 3.1]{DS4} and Theorem \ref{thm:cm_app} corresponds to \cite[Theorem 2.4]{DS4}. Note in particular that the estimates claimed in our statements match the ones of the statements in \cite{DS4} once we identify our parameters $a_0, \alpha_\be, \alpha_\bh, M_0, N_0, C_{\be}, C_{\bh}, \varepsilon_1$ with the parameters $\varepsilon_0, \delta_2, \beta_2, M_0, N_0, C_e, C_h, \mathbf{m}_0$ in \cite{DS4}. Moreover, although the excess $\bE (T, \bB_L)$ used in \cite{DS4} differs slightly from ours (since
it corresponds to minimizing $\bE (T, \bB_L, \pi)$ over all planes $\pi$, whereas in this note we minimize over all planes $\pi \subset T_{p_L} \Sigma$), it is obvious that it is smaller than the one used in this note, which suffices to prove all the estimates claimed. For the reader's convenience we briefly outline the arguments:

\begin{proof}[Proof of Corollary \ref{c:cm}]
First of all, while in \cite[Corollary 2.2]{DS4} it is claimed that the boundary of $T\res \mathbf{U}$ is supported in $\partial_l \mathbf{U}$, in our case we claim that it is supported in $\partial_l \mathbf{U} \cup \Gamma$. This is a consequence of the height bounds in (b)$^\flat$ and (b)$^\natural$ of Proposition \ref{pr:tilting_cm}. In order to prove the second claim of (a) we proceed similarly to the proof of the corresponding statement of \cite[Corollary 2.2]{DS4}. First of all consider that from the first part of the claim we conclude that the current $S := \mathbf{p}_\sharp T \res \bC_1 (0, \pi_0)$ is integer rectifiable and $\partial S \res \bC_1 (0, \pi_0) \subset \Gamma$. In particular we must have $S= k_+ \a{\mathcal{M}^+ \cap \bC_1 (0, \pi_0)}
+ k_- \a{\mathcal{M}^- \cap \bC_1 (0, \pi_0)}$ for some integers $k_0$ and $k_1$. Next fix any cylinder $\bC = \bC (x, r, \pi_0)$ for some point $x\in B_1 (0, \pi_0)\setminus \gamma$ and some $2 r< \dist (x, \gamma)$. We can then repeat literally the argument of \cite[Section 6.1]{DS4} to show that $\mathbf{p}_\sharp T \res \bC  (x, r, \pi_0)$ is either $Q \a{\mathcal{M}^+ \cap \bC}$ or $(Q-1) \a{\mathcal{M}^-\cap \bC}$, depending on whether $x$ belongs to $B_1^+$ or $B_1^-$. We then must have $k_+ = Q$ and $k_- = Q_1$

For the proof of (b) and (c) we can apply the same argument of \cite[Section 6.1]{DS4} used to prove (ii) and (iii) of \cite[Corollary 2.2]{DS4}, since the cylinders and balls considered in the corresponding argument do not touch $\Gamma$. 
The final conclusion (d) of the corollary follows from the fact that boundary cubes are always refined, that the corresponding balls $\bB^\flat_H$ are always centered on points of $\Gamma$ and from (b)$^\flat$ of Proposition \ref{pr:tilting_cm}.
\end{proof}

\begin{proof}[Proof of Theorem \ref{thm:cm_app}] The construction of $(F^+, F^-)$ is done separately on the two manifolds $\mathcal{M}^+$ and $\mathcal{M}^-$ following the exact same procedure of \cite[Section 6.2]{DS4}. Note that for all $L\in \sW^+$ and for all $L\in \sW^-$ the cylinders $\bC_{8r_L} (p_L, \pi_L)$ which are involved in the corresponding argument have empty intersection with $\Gamma$ and enjoy the relevant estimates once we identify our parameters $a_0, \alpha_\be, \alpha_\bh, M_0, N_0, C_{\be},$ $C_{\bh}, \varepsilon_1$ with the parameters $\varepsilon_0, \delta_2, \beta_2, M_0, N_0, C_e, C_h, \mathbf{m}_0$ in \cite{DS4}. This procedure defines $F^+$ on $\mathcal{M}^+\setminus \Gamma$ and $F^-$ on $\mathcal{M}^-\setminus \Gamma^-$. However, using the height bound in the boundary cylinders $\bC_{2^7 36 r_L} (p^\flat_L, , \pi_L)$ of (c)$^\flat$ in Proposition \ref{pr:tilting_cm} it is easy to see that $F^+$ (resp. $F^-$) on $\mathcal{M}^+\setminus \Gamma$ (resp. $\mathcal{M}^-\setminus \Gamma)$ can be extended to a unique Lipschitz map on the whole $\mathcal{M}^+$ (resp. $\mathcal{M}^-$) by setting $F(x)= Q \a{x}$ (resp. $(Q-1) \a{x}$) for every $x\in \Gamma \cap \mathcal{M}^+$ (resp. $\Gamma \cap \mathcal{M}^-$).
\end{proof}

\begin{proof}[Proof of Proposition \ref{prop:sep}] We follow literally the argument given in \cite[Proof of Proposition 3.1]{DS4} given in \cite[Section 7.1]{DS4}. Note in particular that all the cylinders involved in the argument of that proof do not intersect $\Gamma$, because the cubes $H$ and $L$ involved in the statement of Proposition \ref{prop:sep} are all non-boundary cubes. 
\end{proof}

\begin{proof}[Proof of Corollary \ref{c:domains}]
Again we can repeat word by word the proof of \cite[Corollary 3.2]{DS4} given at the end of \cite[Section 7.1]{DS4}: note indeed that all the cubes involved in the argument are necessarily non-boundary cubes.
\end{proof}

\section{Proof of Proposition \ref{p:splitting}}

The proof follows the one of the corresponding statement in \cite{DS4}, namely \cite[Proposition 3.4]{DS4}, with one minor adjustment, which is needed because our excess is not exactly the excess of \cite{DS4} (namely here we minimize only among planes contained in \(T_p\Sigma\)). The adjustment goes as follows. Note first that we know that a cube $H\in \sW^\be$ must be a non-boundary cube. In fact the very same argument given in Proposition \ref{pr:tilting_cm} shows the following simple fact:

\begin{lemma}\label{l:super_technical}
For any fixed $i\in \mathbb N$, if $\varepsilon_1$ is chosen sufficiently small, then for every $H\in \sW^\be$ the chain of ancestors $H = H_j \subset H_{j-1} \subset \ldots \subset H_{j-i}$ consists all of non-boundary cubes (and in particular $j-i \leq N_0$). 
\end{lemma}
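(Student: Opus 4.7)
The plan is to argue by contradiction, directly generalizing the proof that $\sW_{N_0}=\emptyset$ in Proposition \ref{pr:tilting_cm}. Suppose $H\in\sW_j^\be$ has a boundary ancestor $K:=H_{j-s}$ with $1\le s\le i$, taken to be the most recent such (so that $H_j,H_{j-1},\dots,H_{j-s+1}$ are non-boundary, while $H_{j-s}$ and all earlier ancestors are boundary cubes by Lemma \ref{l:son_father}). The strategy is to show this is incompatible with the excess-stopping criterion $\bE(T,\bB_H)>C_\be\varepsilon_1\ell(H)^{2-\alpha_\be}$.

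First I would apply Lemma \ref{lem:raffinamento} to the boundary cube $K$, which yields $\bE^\flat(T,\bB^\flat_K)\le C_0\varepsilon_1\,r_K^{2-2\alpha_\be}$ with $C_0=C_0(\alpha_\be,\alpha_\bh,m,n)$. Then I would verify the geometric inclusion $\bB_H\subset\bB^\flat_K$: horizontally $|c(H)-c(K)|\le\sqrt{m}\,\ell(K)$ (since $H\subset K$) and $|c(K)-\bp_{\pi_0}(p^\flat_K)|<64\,r_K$ (definition of boundary cube), while vertically the cone condition \eqref{eq:height-envelope2} based at $p^\flat_K$ controls $|\bp_{\pi_0}^\perp(p_H-p^\flat_K)|\le C\varepsilon_1^{1/2}r_K$, so $|p_H-p^\flat_K|\le Cr_K$. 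Since $\bB^\flat_K$ has radius $2^7\!\cdot\! 64\,r_K$ while $\bB_H$ has radius $2^{-s}\cdot 64\,r_K$, the inclusion follows for $\varepsilon_1$ small enough and $M_0$ large.

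With the inclusion in hand I would transfer the excess bound. Letting $\tilde\pi_K$ be the orthogonal projection of $\pi_K$ onto $T_{p_H}\Sigma$, the regularity of $\Sigma$ gives $|\tilde\pi_K-\pi_K|\le C\bA\,r_K\le C\varepsilon_1^{1/2}\,r_K$, and a standard triangle inequality together with the monotonicity formula yields
\[
\bE(T,\bB_H)\le 2\,\bE(T,\bB_H,\pi_K)+C|\tilde\pi_K-\pi_K|^2\le 2\Big(\tfrac{2^7 r_K}{r_H}\Big)^{\!m}\bE^\flat(T,\bB^\flat_K)+C\varepsilon_1 r_K^2\le C(i)\,\varepsilon_1\,\ell(H)^{2-2\alpha_\be},
\]
where the factor $(r_K/r_H)^m=2^{sm}$ with $s\le i$ produces an $i$-dependent but otherwise harmless constant $C(i)$.

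Combining with the stopping criterion $\bE(T,\bB_H)>C_\be\varepsilon_1\ell(H)^{2-\alpha_\be}$ gives $C_\be<C(i)\,\ell(H)^{-\alpha_\be}$. The \emph{main obstacle} is that $\ell(H)^{-\alpha_\be}$ is not a priori bounded, so the contradiction does not close at once; this is precisely the content of the parenthetical ``$j-i\le N_0$''. The remedy is to iterate the same analysis along the chain: any non-boundary ancestor $H_{j-k}$ of $H$ lies in $\sS$, hence satisfies $\bE(T,\bB_{H_{j-k}})\le C_\be\varepsilon_1\ell(H_{j-k})^{2-\alpha_\be}$, and this bound combined with the one propagated from $K$ forces $j\le N_0+i$, so that $\ell(H)^{-\alpha_\be}\le 2^{(N_0+i)\alpha_\be}$ is uniformly bounded. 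With this pin-down on the generation, the inequality $C_\be<C(i)\,2^{(N_0+i)\alpha_\be}$ is finally violated by the choice of $C_\be$ sufficiently large within the hierarchy of Assumption \ref{ass:cm}(d), once $\varepsilon_1$ is taken small enough depending on $i$. The delicate part is the careful tracking of constants through the propagation step to ensure the coupling between the boundary decay of excess and the stopping threshold is tight enough to close.
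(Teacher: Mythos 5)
The first half of your proposal is exactly the argument the paper intends: the paper offers no proof beyond the remark that ``the very same argument given in Proposition \ref{pr:tilting_cm}'' applies, and your steps (reduce to the nearest boundary ancestor $K=H_{j-s}$ via Lemma \ref{l:son_father}, verify $\bB_H\subset \bB^\flat_K$ from the horizontal estimates and the height/cone bound, transfer the decay of Lemma \ref{lem:raffinamento} by projecting $\pi_K$ onto $T_{p_H}\Sigma$) reproduce the computation leading to \eqref{e:odio}, giving $\bE(T,\bB_H)\le C(i)\,\varepsilon_1\,\ell(H)^{2-2\alpha_\be}$ with $C(i)$ independent of $C_\be$.

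The gap is in how you close the contradiction. You assert that the stopping inequality for $H$ ``combined with the one propagated from $K$ forces $j\le N_0+i$'', so that $\ell(H)^{-\alpha_\be}$ is bounded. No argument is given for this, and it is false as a statement about the decomposition: at every generation there exist non-boundary cubes whose father is a boundary cube (take a father whose center lies at distance just below $64r$ from $\gammado$ and its son farthest from $\gammado$; the son's center is then at distance well above $64$ times its own radius), so the mere configuration ``$H$ non-boundary with a boundary ancestor within $i$ generations'' puts no upper bound on $j$; the fact that the intermediate ancestors lie in $\sS$ only gives upper bounds on their excesses, compatible with any $j$. The parenthetical you lean on cannot serve this purpose either: as written it is incompatible with the existence of the $i$ ancestors (which requires $j-i\ge N_0$), so it should be read as $j-i\ge N_0$, i.e.\ as a consequence of the lemma (stopped cubes are small enough that the chain exists), not as an ingredient. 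Finally, your last step needs $C_\be$ large depending on $i$ and invokes $\varepsilon_1$ small, but $C_\be$ in Assumption \ref{ass:cm}(d) is fixed independently of $i$, and $\varepsilon_1$ appears linearly on both sides of the excess comparison, so its smallness cannot separate $C_\be\varepsilon_1\ell^{2-2\alpha_\be}$ from $C(i)\varepsilon_1\ell^{2-2\alpha_\be}$.

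The contradiction closes without any bound on $\ell(H)$ once you use the exponent with which the stopping criterion is actually employed throughout the paper (see the father bound $\bE(T,\bB_{H_{j+1}})\le C_0C_\be\varepsilon_1\ell(H_j)^{2-2\alpha_\be}$ in the proof of Proposition \ref{pr:tilting_cm} and \eqref{e:super_technical_2}): the threshold is $C_\be\varepsilon_1\ell(L)^{2-2\alpha_\be}$, the ``$2-\alpha_\be$'' in the displayed definition of $\sW^\be_j$ being an internal inconsistency you should resolve rather than work around. With matching exponents, $H\in\sW^\be$ yields $C_\be<C(i)$, and since the lemma is only invoked for the fixed value $i=6$, $C(i)$ is an admissible constant depending on $m,n,\alpha_\be,\alpha_\bh,M_0$, so the hierarchy of Assumption \ref{ass:cm} (with $C_\be$ dominating such constants) gives the contradiction; the smallness of $\varepsilon_1$ is needed only for the geometric prerequisites (validity of Lemma \ref{lem:raffinamento} and the inclusion $\bB_H\subset\bB^\flat_K$), exactly as in Proposition \ref{pr:tilting_cm}.
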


The proof given in \cite[Section 7.3]{DS4} of \cite[Proposition 3.4]{DS4} is then based on the following two facts:
\begin{itemize}
\item[(a)] If $H\in \mathscr{sW}^\be$, then the chain of ancestors $H = H_j \subset L = H_{j-1} \subset \ldots \subset H_{j-6}$ consists all of non-boundary cubes;
\item[(b)] The following inequality holds:
\begin{equation}\label{e:super_technical}
\min_\pi \bE (T, \bB_H, \pi) \geq 2^{-2+2\delta_2} \min_{\bar \pi} \bE (T, \bB_L, {\bar \pi})\, ,
\end{equation}
for some positive $\delta_2$: correspondingly $M_0$ will have to be chosen large depending on such $\delta_2$.
\end{itemize}
The first condition is covered by Lemma \ref{l:super_technical}. As for the second condition, observe that we actually have 
\begin{align}
\min_{\pi\subset T_{p_H} \Sigma} \bE (T, \bB_H, \pi) &= \bE (T, \bB_H) \geq 2^{-2+2\alpha_\be} \bE (T, \bB_L)\nonumber\\
& = 2^{-2+2\alpha_\be} \min_{\bar \pi\subset T_{p_L} \Sigma} \bE (T, \bB_L, {\bar \pi})\, .\label{e:super_technical_2}
\end{align}
We now want to show that \eqref{e:super_technical} will indeed follow from \eqref{e:super_technical_2}, provided $\delta_2 = \alpha_\be/2$. In order to apply the argument of \cite[Section 7.3]{DS4} we then  just need $M_0$ to be sufficiently large with respect to $\alpha_\be$, which is indeed one of the requirements of Assumption \ref{ass:cm}.

\medskip

{\bf Proof of \eqref{e:super_technical}}
First of all, in order to simplify our notation, for every $q\in \Sigma$ we denote by $\bp_q$ the orthogonal projection onto $T_q \Sigma$. Moreover, if $\pi$ is an $m$-dimensional (oriented) plane, we let $\vec{\pi}$ be the unit $m$-vector orienting it. Consistently, we denote by $\vec{T} (p)$ the unit $m$-vector orienting the approximate tangent plane of $T$ at $p$ (which exists for $\|T\|$-a.e. $p$). 

Next, clearly 
\begin{equation}\label{e:super_technical_3}
\bE (T, \bB_L) \geq \min _{\bar \pi} \bE (T, \bB_L, {\bar \pi})\, .
\end{equation}
So, we need a reverse inequality between the quantities $\bE (T, \bB_H)$ and $\min_\pi \bE (T, \bB_H, \pi)$. We select thus a $\pi$ which attains the latter minimum. Notice that we have the following inequality
\begin{align*}
& \, \frac{1}{\|T\| (\bB_H)} \int_{\bB_H} |\bp_{p_H} (\vec \pi) - \vec T (q)|^2\, d\|T\| (q)\nonumber\\
 \leq &
\, \frac{2}{\|T\| (\bB_H)} \int_{\bB_H} |\bp_{p_H} (\vec \pi) - \bp_{p_H} (\vec T (q))|^2\, d\|T\| (q)\nonumber\\
& + \frac{2}{\|T\| (\bB_H)} \int_{\bB_H} |\bp_{p_H} (\vec{T} (q)) - \vec T (q)|^2\, d\|T\| (q)\\
\leq & \,C_0 \bE (T, \bB_H) + C_0 \sup_{q\in \Sigma \cap \bB_H} |\bp_{p_H} - \bp_{q}|^2\\
\leq & \,C_0 C_\be \varepsilon_1 \ell (H)^{2-2\alpha_\be} + \bar C \varepsilon_1 \ell (H)^2\, ,
\end{align*}
where $C_0$ is a geometric constant and the constant $\bar C$ depends only upon $M_0$. In particular, since $C_\be$ is assumed to be sufficiently large compared to $M_0$ and $N_0$, we conclude
\[
\frac{1}{\|T\| (\bB_H)} \int_{\bB_H} |\bp_{p_H} (\vec \pi) - \vec T (q)|^2\, d\|T\| (q) \leq C_0 C_\be \varepsilon_1 \ell (H)^{2-2\alpha_\be}\, .
\]
We next use the obvious inequality $|1-|\bp_{p_H} (\vec\pi)|| = ||\vec T (q)| - |\bp_{p_H} (\vec\pi)||\leq |\vec T (q) -  \bp_{p_H} (\vec\pi)|$ to infer
\[
|1-|\bp_{p_H} (\vec\pi)||^2 \leq C_0 C_\be \varepsilon_1 \ell (H)^{2-2\alpha_\be}\, .
\]
Observe also that $|\bp_{p_H} (\vec\pi)|$ is necessarily smaller than $1$, because $\bp_{p_H}$ is a projection. We thus reach
\begin{equation}\label{e:super_technical_4}
1-C_0 C_\be \varepsilon_1 \ell (H)^{2-2\alpha_\be} \leq |\bp_{p_H} (\vec \pi)| \leq 1\, .
\end{equation}
In particular, since $\eps_1$ is assumed to be  small with respect to $\bC_e$, we have $|\bp_{p_H} (\vec \pi)|\geq \frac{1}{2}$. 
Consider now the $m$-dimensional plane $\pi'$ which is oriented by $\bp_{p_H} (\vec\pi)/|\bp_{p_H} (\vec\pi)|$. Clearly $\pi' \subset T_{p_H} \Sigma$. Moreover, since $\vec{T} (q)$ has norm $1$ whereas $\bp_{p_H} (\vec \pi)$ has norm at most $1$, we have the pointwise inequality
\[
|\vec{T} (q) - \pi'|^2 = \left|\vec{T} (q) - \frac{\bp_{p_H} (\vec\pi)}{|\bp_{p_H} (\vec\pi)|}\right|^{2}\leq \frac{1}{|\bp_{p_H} (\vec\pi)|}
|\vec{T} (q)- \bp_{p_H} (\vec\pi)|^{2}\, .
\]
We can thus repeat the computations above to conclude
\begin{align}
|\bp_{p_H} (\vec\pi)| \bE (T, \bB_H) & \leq |\bp_{p_H} (\vec \pi)| \bE (T, \bB_H, \pi')\nonumber\\
& = \frac{|\bp_{p_H}|}{2\omega_m (64 r_H)^m} \int_{\bB_H} \left|\vec{T} (q) - \frac{\bp_{p_H} (\vec\pi)}{|\bp_{p_H} (\vec\pi)|}\right|^2\,
d\|T\| (q)\nonumber\\
& \leq \frac{1}{2\omega_m (64 r_H)^m} \int_{\bB_H} | \vec{T} (q) - \bp_{p_H} (\vec\pi)|^2\, d\|T\| (q)\, .\label{e:super_technical_5}
\end{align}
Next, arguing as few lines above
\begin{align}
&\left(\int_{\bB_H} | \vec{T} (q) - \bp_{p_H} (\vec\pi)|^2\, d\|T\| (q)\right)^{\sfrac{1}{2}}\nonumber\\
\leq & \left(\int_{\bB_H} | \bp_{p_H} (\vec{T} (q)) - \bp_{p_H} (\vec\pi)|^2\, d\|T\| (q)\right)^{\sfrac{1}{2}}\nonumber\\
&+ \left(\int_{\bB_H} | \bp_{p_H} (\vec{T} (q)) - \vec{T} (q)|^2\, d\|T\| (q)\right)^{\sfrac{1}{2}}\nonumber\\
\leq & \left(\int_{\bB_H} | \bp_{H} (\vec{T} (q)) - \bp_{p_H} (\vec\pi)|^2\, d\|T\| (q)\right)^{\sfrac{1}{2}} \\
&\qquad + \bar C (\omega_m (64 r_H)^m)^{\sfrac{1}{2}} \varepsilon_1^{\sfrac{1}{2}} \ell (H)\, .
\end{align}
Combining the latter inequality with \eqref{e:super_technical_5} and with
\begin{align}
 &\frac{1}{2\omega_m (64 r_H)^m} \int_{\bB_H} |\bp_{p_H} (\vec{T} (q)) - \bp_{p_H} (\vec\pi)|^2\, d\|T\| (q)\nonumber\\
&\leq \frac{1}{2\omega_m (64 r_H)^m} \int_{\bB_H} |\vec{T} (q) - \vec\pi|^2\, d\|T\| (q)\nonumber\\
&= \bE (T, \bB_H, \pi) = \min_{\bar\pi} \bE (T, \bB_H, \bar\pi)\, ,
\end{align}
we reach the inequality
\begin{align}
|\bp_{p_H} (\vec\pi)| \bE (T, \bB_H) & \leq \min_{\bar\pi} \bE (T, \bB_H, \bar\pi) + \bar C \left(\min_{\bar\pi} \bE (T, \bB_H, \bar\pi)\right)^{\frac{1}{2}} \eps_1^{\frac{1}{2}} \ell (H)\nonumber\\ 
&+ \bar C \eps_1 \ell (H)^2\label{e:super_technical_7}\, ,
\end{align}
where $\bar C$ depends only upon $M_0$. 
By Young inequality we thus deduce that
\[
|\bp_{p_H} (\vec\pi)| \bE (T, \bB_H)\le 2^{\frac{\alpha_{\be}}{2}}\min_{\bar\pi} \bE (T, \bB_H, \bar\pi)+\hat{C} \eps_1 \ell(H)^{2}
\]
where \(\hat{C}\) depends on \(M_0\) and \(\alpha_\be\). Since  \(H\in \sW^{\be}\),
\[
\bE (T, \bB_H) \geq C_\be \varepsilon_1 \ell (H)^{2-2\alpha_\be}\, ,
\]
hence,  by  also using  \eqref{e:super_technical_4} and that  \(\ell(H) \le 1\), 
\[
(1-C_0 C_{\be} \varepsilon_1) \bE (T, \bB_H)\le 2^{\frac{\alpha_{\be}}{2}}\min_{\bar\pi} \bE (T, \bB_H, \bar\pi) +\frac{\hat C}{C_\be} \ell (H)^{2\alpha_\be} \bE (T, \bB_H),
\]
i.e.
\[
\left(1- C_0 C_{\be} - \frac{\hat C}{C_\be} \ell (H)^{2\alpha_\be}\right) \bE(T, \bB_H) \le  2^{\frac{\alpha_{\be}}{2}}\min_{\bar\pi} \bE (T, \bB_H, \bar\pi)\, .
\]
Since the constant $\hat{C}$ depends only on $M_0$, choosing $N_0$ sufficiently large (which implies that
$\ell (H)^{2\alpha_\be} \leq 2^{-2\alpha_\be N_0}$ is sufficiently small) and
then \(\varepsilon_1\) small we deduce that
\begin{align}\label{e:super_technical_9}
2^{-\alpha_\be} \bE (T, \bB_H) \leq 
\min_{\bar\pi} \bE (T, \bB_H, \bar\pi) \, .
\end{align}
Combining \eqref{e:super_technical_2}, \eqref{e:super_technical_3} and the latter inequality we conclude
\begin{align}
\min_\pi \bE (T, \bB_H, \pi) &\geq 2^{-\alpha_\be} \bE (T, \bB_H)
\geq 2^{-2+\alpha_\be} \bE (T, \bB_L)\nonumber\\
&\geq 2^{-2+ \alpha_\be}  \min_{\bar \pi} \bE (T, \bB_L, {\bar \pi})\, ,\label{e:super_technical_10}
\end{align}
thus \eqref{e:super_technical} holds with $\delta_2 = \alpha_\be/2$ as promised.

\chapter{Monotonicity of the frequency function}\label{chap:frequency}

In this chapter we establish the monotonicity of a suitable frequency function at a  collapsed point. We
assume therefore that $0\in \gammaup$ is a  collapsed point and that Assumption~\ref{ass:cm_app} holds.
In particular we fix a center manifold $\mathcal{M} = \mathcal{M}^+\cup \mathcal{M}^-$ as in Theorem \ref{thm:center_manifold} and
an $\mathcal{M}$-normal approximation as in Theorem \ref{thm:cm_app}. We will indeed consider two different frequency functions:
one related to the ``left side'' of the approximation and the other one related to the ``right side''. Without loss of generality we will
carry on our discussion on $\mathcal{M}^+$. 

\begin{remark}\label{r:tildaM}
By our construction $\mathcal{M}^+$ is the graph of a map $\boldsymbol{\varphi}^+: \pi_0^+ \supset B_1^+ \to \pi_0^\perp$, where we assume that $\pi_0$ is the tangent plane to $T$ in $0 \in \gammaup$. 
For convenience we can extend $\boldsymbol{\varphi}^+$ to a $C^3$ map $\tilde{\boldsymbol{\varphi}}$ 
on the whole ball $B_1\cap \pi_0$. When referring to $\boldsymbol{\varphi}^+$ we will then drop the superscript $+$, but we will keep the notation $\mathcal{M}^+$ for that portion of the extended graph $\{ (x, \tilde{\boldsymbol{\varphi}} (x)) \colon x \in B_1 (0, \pi_0)\}$ which lies over $B_1^+$. The graph of the function $\tilde{\boldsymbol{\varphi}}$ on the whole $B_1 (0, \pi_0)$ will instead be denoted by $\widetilde{\mathcal{M}}$. Note that in this setting the projection \(\bp: \bp^{-1} (\mathcal M^+)\to \mathcal M^+\) is of class \(C^{2,\kappa}\), cf. with Assumption \ref{ass:cm_app}.  
\end{remark}

\section{Frequency function and main monotonicity formula}\label{sec:ff}

In order to define our main quantities, we start with the following simple lemma which is the curvilinear version of Lemma \ref{l:good_vector_field}.

\begin{lemma}\label{l:good_vector_field on M}
There exists a continuous function $d^+:\mathcal M^+ \to \R^+$ which belongs to $C^2(\mathcal M^+\setminus\{0\})$ and satisfies the following properties:
\begin{itemize}
\item[(a)] $d^+(x)=\dist_{\mathcal{M^+}}(x,0) + O(\dist_{\mathcal{M^+}}(x,0)^2)= \abs{x} + O(\abs{x}^2)$;
\item[(b)] $\abs{\nabla d^+(x)} = 1 + O(d^+)$, where $\nabla$ is the gradient on the manifold $\mathcal{M}$;
\item[(c)] $\frac{1}{2}\nabla^2d^2(x)= g + O (d^+)$, where \(\nabla^2\) denotes the covariant Hessian on $\mathcal{M}$ (which we regard as a $(0,2)$ tensor) and $g$ is the induced metric on $\mathcal{M}$ as a submanifold of $\mathbb R^{m+n}$;
\item[(d)] $\nabla d^+(x) \in T_x\gammaup$ for all $x \in \gammaup$, i.e.
\begin{equation}\label{e:sign_condition}
\nabla d^+ \cdot \vec{n}^+ = 0 \qquad \mbox{on $\gammaup$,}
\end{equation}
where $\vec{n}^+$ denotes the outer unit normal to $\mathcal{M}^+$ inside $\widetilde{\mathcal{M}}$.
\end{itemize}
In particular this implies 
\begin{equation}\label{e:hessian d}
\nabla^2d^+ (x)=\frac{1}{d} \Big(g-\nabla d^+(x) \otimes \nabla d^+(x)\Big)+O(1)
\end{equation}
and 
\begin{equation}\label{e:laplacian of the good vectorfield} 
	\Delta\, d^+ = \frac{m-1}{d^+} + O(1)\, 
\end{equation}
where $\Delta$ denotes the Laplace-Beltrami operator on $\mathcal{M}$, namely the trace of the Hessian $\nabla^2$. Moreover:
\begin{itemize}
\item[(S)] All the constants estimating the $O (\cdot)$ error terms in the above estimates can be made smaller than any given $\eta>0$, provided the parameter $\varepsilon_1$ in Assumption \ref{ass:cm} is chosen appropriately small (depending on $\eta$). 
\end{itemize}

On the ``left side'' there exists an analogous function $d^-: \mathcal{M}^- \to \mathbb R^+$ satisfying the properties corresponding to (a), (b), (c), (d) and (S).
\end{lemma}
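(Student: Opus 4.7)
The proof will follow the blueprint of Lemma \ref{l:good_vector_field}, adapted to the curvilinear setting. I will work with the extended hypersurface $\widetilde{\mathcal M}$ (which has no boundary and whose tangent space at the origin is $\pi_0$), view $\gammaup$ as a codimension-one embedded submanifold inside $\widetilde{\mathcal M}$, and produce a $C^2$-diffeomorphism $\Phi$ from a neighborhood $U \subset \widetilde{\mathcal M}$ of the origin onto a neighborhood $V \subset \pi_0$ of the origin with the three properties analogous to those used in Lemma \ref{l:good_vector_field}: $\Phi(0)=0$, $D\Phi|_0 = \mathrm{Id}_{\pi_0}$, $\Phi(\gammaup\cap U) \subset T_0\gammaup$, and at every $p\in\gammaup\cap U$ the differential $D\Phi|_p$ sends vectors of $T_p\widetilde{\mathcal M}$ normal to $T_p\gammaup$ to vectors of $\pi_0$ normal to $T_0\gammaup$.

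To build $\Phi$ I would use normal coordinates around $\gammaup$ inside $\widetilde{\mathcal M}$ with respect to the induced metric: fix a unit normal field $\vec n^+$ to $\gammaup$ in $\widetilde{\mathcal M}$, parametrize a tubular neighborhood of $\gammaup$ by $(y,t)\mapsto \exp^{\widetilde{\mathcal M}}_y\big(t\,\vec n^+(y)\big)$, couple it with an identification of $\gammaup$ with $T_0\gammaup$ (for instance via the graphical chart of $\gammado=\bp_{\pi_0}(\gammaup)$ over $T_0\gammaup$) and glue it on the complement with the orthogonal projection $\bp_{\pi_0}: \widetilde{\mathcal M}\to\pi_0$ using a cutoff that equals $1$ on a smaller tube. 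Once $\Phi$ is available I set $d^+(x):=|\Phi(x)|$, restricted to $\mathcal M^+$, where $|\cdot|$ is the Euclidean norm in $\pi_0\subset\R^{m+n}$.

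Property (d) will then be immediate from the construction: at any $p\in\gammaup$, $\Phi(p)\in T_0\gammaup$ while $D\Phi|_p(\vec n^+(p))$ is orthogonal to $T_0\gammaup$, so the directional derivative of $|\Phi|$ along $\vec n^+$ vanishes, which is exactly \eqref{e:sign_condition}. Properties (a)--(c) will come from a Taylor expansion of $\Phi$ at $0$ mimicking Lemma \ref{l:good_vector_field}: since $\Phi(0)=0$ and $D\Phi|_0=\mathrm{Id}_{\pi_0}$, $|\Phi(x)|=|x|+O(|x|^2)$ along $\widetilde{\mathcal M}$ (here we use that $\widetilde{\mathcal M}$ is tangent to $\pi_0$ at the origin), and the first- and second-order \emph{Euclidean} derivatives of $|\Phi(x)|$ coincide with those of $|x|$ up to errors of size $O(|x|^{2-|J|})$. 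Passing from Euclidean derivatives to the covariant gradient and Hessian of $\mathcal M^+$ produces further $O(d^+)$ corrections, since the Christoffel symbols of the induced metric vanish at $0$. Identities \eqref{e:hessian d} and \eqref{e:laplacian of the good vectorfield} then follow from (c) by tracing with respect to $g$.

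The stability statement (S) will be obtained by noting that $\Phi$ and the $O(\cdot)$ constants above depend continuously on $\widetilde{\mathcal M}$ in the $C^3$ topology; but Theorem \ref{thm:center_manifold}(a) gives $\|\boldsymbol\varphi\|_{C^{3,\kappa}}\le C\varepsilon_1^{1/2}$, so $\widetilde{\mathcal M}\to\pi_0$ and $\gammaup\to T_0\gammaup$ in $C^3$ as $\varepsilon_1\downarrow 0$, a limit in which $\Phi$ becomes the identity and every error constant vanishes. The symmetric construction produces $d^-$ on $\mathcal M^-$. The main technical subtlety I foresee is realizing simultaneously the straightening of $\gammaup$ and the normal-to-normal property of $D\Phi$ along $\gammaup$, while keeping $\Phi$ regular enough for $d^+$ to be $C^2$ on $\mathcal M^+\setminus\{0\}$ \emph{up to} $\gammaup$; this is exactly what the normal-coordinate construction on the \emph{extended} manifold $\widetilde{\mathcal M}$ buys us, since the bilateral $C^{3,\kappa}$ regularity of $\widetilde{\mathcal M}$ across $\gammaup$ transfers to $\Phi$ and hence to $d^+$.
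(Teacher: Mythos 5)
Your overall strategy is the same as the paper's: the paper also works on the extension $\widetilde{\mathcal{M}}$, produces a $C^2$ chart $\Xi$ which straightens $\gammaup$, whose differential at the origin is (an isometry onto) the identity and whose transversal coordinate direction stays orthogonal to $\gammaup$ along $\gammaup$ (your $\Phi$ is exactly $\Xi^{-1}$), then sets $d^+=|\Xi^{-1}|$ and verifies (a)--(d) by the very same coordinate computation, obtaining \eqref{e:hessian d} and \eqref{e:laplacian of the good vectorfield} from (b)--(c). So in substance you have reproduced the paper's argument, including the derivation of (d) from the normal-to-normal property and the stability claim (S) from $C^3$-convergence of $\widetilde{\mathcal{M}}$ and $\gammaup$ to $\pi_0$ and $T_0\gammaup$ as $\varepsilon_1\downarrow 0$.

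There is, however, one concrete step that fails as written: the construction of the chart through the intrinsic exponential map $(y,t)\mapsto \exp^{\widetilde{\mathcal{M}}}_y(t\,\vec n^+(y))$. Since $\widetilde{\mathcal{M}}$ is only $C^3$, the induced metric is $C^{2,\kappa}$ and geodesic/Fermi coordinates lose a derivative: the exponential map, hence $\Phi$ and $d^+=|\Phi|$, is only $C^{1,\kappa}$, which falls short of the claimed $C^2(\mathcal M^+\setminus\{0\})$ and leaves the second-order estimates (c), \eqref{e:hessian d}, \eqref{e:laplacian of the good vectorfield} without classical meaning. The remedy is to use a non-geodesic tubular parametrization, e.g.\ $(y,t)\mapsto \Pi\bigl(\iota(y)+t\,\vec n^+(\iota(y))\bigr)$ where $\Pi$ is the nearest-point projection onto $\widetilde{\mathcal{M}}$ (which is $C^2$ for a $C^3$ manifold, and $\vec n^+\in C^2$): this chart is $C^2$ and its $t$-derivative at $t=0$ is still $\vec n^+$, which is all your argument for (d) uses; this is in effect the chart the paper's $\Xi$ postulates. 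Two further minor points: your appeal to ``the Christoffel symbols vanish at $0$'' is neither guaranteed in your chart (only $D\Phi|_0=\mathrm{Id}$ is arranged, no second-order normalization) nor needed, because in the chart $d^2=|y|^2$ gives $\tfrac12(\nabla^2 d^2)_{ij}=\delta_{ij}-\Gamma^k_{ij}y_k$ and $g_{ij}=\delta_{ij}+O(|y|)$, so bounded Christoffel symbols together with $g(0)=\delta$ already yield the stated $O(d)$ errors (and their smallness for (S), since all these quantities are $O(\varepsilon_1^{1/2})$); and the gluing with $\bp_{\pi_0}$ should come with the one-line remark that a cutoff interpolation of two maps $C^1$-close to the identity is still a diffeomorphism.
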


\begin{proof} For the sake of simplicity we focus on the ``right side'' and we drop the subscript $+$ from the function $d$. 
As noted in Remark \ref{r:tildaM} we can extend  $\mathcal{M}^+$ to a   $C^3$ manifold \(\widetilde{\mathcal{M}}\)  such that   $\gammaup\subset \widetilde{\mathcal{M}}$ is a $C^3$ submanifold of $\widetilde{\mathcal{M}}$ passing through the origin. Hence there exists a $C^2$ regular map  $\Xi: U\times (-\delta, \delta) \to \widetilde{\mathcal{M}}$, $U\subset \R^{m-1}$, with the properties that 
\begin{enumerate}
\item $\Xi(0)=0$ and $D\Xi(0)=0$;
\item $\Xi$ is a local parametrization of $\widetilde{\mathcal{M}}$  and $ y'\ni U \mapsto \Xi(y',0)$  is a local parametrization of $\gammaup$;
\item $\partial_m\Xi(y',0) \perp T_{\Xi(y',0)}\gammaup$ for all $y' \in U$.  	
\end{enumerate}
Hence, if $g:=\Xi^\#\delta$ is the pullback metric of \(\widetilde{\mathcal{M}}$  on  $U\times (-\delta, \delta)$, we have 
\[
g_{ij}(y)=\delta_{ij} + O(\abs{y}^2), \qquad \partial_kg_{ij} = O(\abs{y}),
\]
and similarly  for $g^{ij}$. In particular this implies that $\dist_\mathcal{M}(\Xi(y),0)= \abs{y}+O(\abs{y}^2)$ on \(\mathcal M^+\).
We claim that $d(x):=\abs{\Xi^{-1}(x)}$ has the desired properties. We will check (a) - (c) using the coordinates associated to the map $\Xi$. 
Since 
\[ \abs{\nabla d}^2(\Xi(y)) = g^{ij}\partial_id \partial_jd = g^{ij}(y) \frac{y^iy^j}{\abs{y}^2} = 1 + O(\abs{y}^2) \]
we have that (b) is satisfied. For the Christoffel symbols we have $\Gamma_{ij}^k(y)= O(\abs{y})$ since   $\partial_ig_{ij}= O(\abs{y})$. Hence (c) follows, because 
\[ 
\frac{1}{2}\nabla^2d(\Xi(y))_{ij}= \frac12\partial_{ij}d^2 - \frac12 \Gamma_{ij}^k \partial_kd^2= \delta_{ij} + O(\abs{y}^2) = g_{ij} (y) + O (|y|^2)\, .
 \]
Concerning (d) we just note that, by (3), we have $g^{im}(y',0)=0$ for all $y' \in U$, hence $g^{ij}\partial_jd \in \R^{m-1}\times \{0\}$ for all $y' \in U$ and 
$\nabla d(\Xi(y))= \Xi_{\#} (g^{ij}\partial_jd e_i)$.
Equations  \eqref{e:laplacian of the good vectorfield}  and \eqref{e:hessian d} are now simple consequences of (c) and (b).

Claim (S) follows easily from a closer inspection of the above argument.
\end{proof}

%
 We now fix a cutoff function
\begin{equation}
\phi (t) :=
\left\{
\begin{array}{ll}
1\qquad &\mbox{for $0\leq t \leq \frac{1}{2}$}\\
2 (1-t) &\mbox{for $\frac{1}{2}\leq t \leq 1$}\\
0\qquad &\mbox{for $t\geq 1$.}
\end{array}\right.
\end{equation}
and define
\begin{align}
D_{\phi,d^+} (N^+,r) & := \hphantom{-}\int_{\mathcal{M}^+} \phi\left(\frac{d^+(x)}{r}\right)|DN^+|^2(x)\,\label{e:richiamo_pm}
\\
H_{\phi,d^+} (N^+,r) &:=-\int_{\mathcal{M}^+} \phi'\left(\frac{d^+(x)}{r}\right) |\nabla d^+ (x)|^2 \frac{|N^+(x)|^2}{d^+(x)}\, ,\label{e:richiamo_pm_2}
\end{align}
where all integrals are taken with respect to the standard volume form on $\mathcal{M}^+$.\footnote{The convention of omitting the volume form in the integrals taken over $\mathcal{M}^+$ and $\mathcal{M}^-$ will be used systematically in the rest of the paper.}
The \emph{frequency function} is then defined as  the ratio
\[
I_{\phi,d^+} (N^+,r) :=\frac{rD_{\phi,d^+} (N^+,r)}{H_{\phi,d^+} (N^+,r)}\,.
\]
Analogously we define $D_{\phi, d^-} (N^-, r)$, $H_{\phi, d^-} (N^-, r)$ and $I_{\phi, d^-} (N^-, r)$.


The main theorem of this chapter is then the following counterpart to Theorem \ref{thm:limit_ff}, where we use the notation\index{aalC^\pm@${\mathcal C}^\pm$}
\[
\mathcal C^{\pm}=\big\{y\in \bB_{1}:  \bp(y)\in \mathcal M^{\pm}\textrm{ and } |y-\bp(y)|\le \dist(y,\gammaup)^{3/2}  \big\}
\]
for the horned neighborhoods of $\mathcal{M}^\pm$ in which $T$ is supported (compare with  Corollary \ref{c:cone_cut} and Theorem \ref{thm:center_manifold} (e)).

\begin{theorem}\label{thm:ff_estimate_current}\label{THM:FF_ESTIMATE_CURRENT}
Let \(T\), \(\Sigma\)  and \(\gammaup\) be as in Assumption~\ref{ass:cm_app} and consider $\phi$ and $d$ as above.
Then:
\begin{itemize}
\item[(a)] either $T\res \mathcal{C}^+$ equals $Q\a{\mathcal{M}^+}$ in a neighborhood of $0$, in which case we set $I_0^+=+\infty$;
\item[(b)] or there is a positive number $I_0^+$ such that
\begin{equation}
I^+_0 = \lim_{r\downarrow 0} I_{\phi,d^+} (N^+, r)\, .
\end{equation}
\end{itemize} 
The corresponding statements hold on the left side for the current $T\res \mathcal{C}^-$ and the frequency function $I_{\phi, d^-} (N^-, r)$.
\end{theorem}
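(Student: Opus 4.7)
\textbf{Proposal for the proof of Theorem~\ref{thm:ff_estimate_current}.} The plan is to mirror the strategy of Theorem~\ref{thm:limit_ff} at the level of the current $T$, using $N^+$ as a proxy for a $\qhalf$-valued minimizer on $\mathcal M^+$, and transferring the first variation identities into inequalities with acceptable error terms. For definiteness I describe the argument on the right side; the left side is entirely analogous. I may freely assume we are not in case (a), so in particular $D_{\phi,d^+}(N^+,r)>0$ for every sufficiently small $r$ (otherwise $N^+\equiv Q\a{0}$ near the origin by unique continuation of harmonic sheets, which combined with (A2) of Definition~\ref{d:app_pair} and the estimates of Theorem~\ref{thm:cm_app} forces $T\res\mathcal C^+=Q\a{\mathcal M^+}$ near $0$).

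The first step is to derive the curvilinear analogues of Proposition~\ref{p:H'_maiala}, Lemma~\ref{l:outer} and Lemma~\ref{l:inner}, now as \emph{inequalities} for $D_{\phi,d^+}(N^+,r)$ and $H_{\phi,d^+}(N^+,r)$. For $H'$ one uses the coarea formula on $\mathcal M^+$ together with \eqref{e:laplacian of the good vectorfield}, the fact that $N^+$ equals $Q\a{0}$ on $\gammaup$ by (A3) of Definition~\ref{d:app_pair}, and the tangency \eqref{e:sign_condition} (so no boundary term from $\gammaup$ appears when integrating by parts). This gives
\begin{equation*}
H_{\phi,d^+}'(r)=\Big(\tfrac{m-1}{r}+O(1)\Big)H_{\phi,d^+}(N^+,r)+2E(r),
\end{equation*}
where $E(r)$ is the analogue of \eqref{e:E} with $|f|^2$ replaced by $|N^+|^2$. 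The formula for $D'$ is a straightforward differentiation. For the outer and inner variation identities one tests $\delta T=0$ modulo $\vec H_T$ and boundary contributions against two vector fields on $\R^{m+n}$: the outer variation field
\[
X_o(y):=\phi\!\left(\tfrac{d^+(\bp(y))}{r}\right)(y-\bp(y)),
\]
which vanishes on $\mathcal M^+$ and hence on $\gammaup$, and the inner variation field
\[
X_i(y):=\phi\!\left(\tfrac{d^+(\bp(y))}{r}\right)\tfrac{d^+(\bp(y))\nabla d^+(\bp(y))}{|\nabla d^+(\bp(y))|^2},
\]
transported to $\R^{m+n}$ through $\bp$. By Lemma~\ref{l:good_vector_field on M}(d), $X_i$ is tangent to $\gammaup$, so it generates an admissible one-parameter family of diffeomorphisms which preserve both $\Sigma$ (up to orthogonal projection) and $\gammaup$. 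The corresponding first variation identities, modulo the mean curvature of $\Sigma$ and the graphical error $T-\bG_{F^+}$, yield the desired outer/inner variation relations for $N^+$.

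The second, and main, step is to quantify all the error terms generated by the three operations: replacing the exact minimizer $f$ in Chapter~\ref{chap:Qhalf} by the Lipschitz $N^+$; the Taylor expansion of the area functional around the center manifold; and the discrepancy $\bG_{F^+}\neq T$. Here the estimates \eqref{e:cm_app1}--\eqref{e:cm_app5} of Theorem~\ref{thm:cm_app} are used precisely as in \cite{DS5}: each error is bounded by $\varepsilon_1^{\sigmaexpcm}$ times a power of $r$ strictly larger than what is needed to be absorbed into the monotonicity. A subtle point is the term $\|\etaa\circ N^+\|_{L^1}$, for which \eqref{e:cm_app5} is exactly tailored. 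Using Cauchy--Schwarz as in Theorem~\ref{thm:limit_ff} and the splitting estimate \eqref{e:split_1}--\eqref{e:split_2} of Proposition~\ref{p:splitting} (which guarantees the denominator $H_{\phi,d^+}(N^+,r)$ is not too small compared to the error terms), one arrives at
\begin{equation*}
-\tfrac{d}{dr}\ln I_{\phi,d^+}(N^+,r)\le O(1)+O(r^{\sigmaexpcm}) I_{\phi,d^+}(N^+,r)
\end{equation*}
for $r$ small. Integrating yields that $r\mapsto e^{Cr}I_{\phi,d^+}(N^+,r)$ is essentially monotone (up to a multiplicative factor tending to $1$), whence the limit $I_0^+$ exists and is positive by Corollary~\ref{cor:lb} applied at the linear level and carried over through the splitting estimate.

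The principal obstacle I foresee is bookkeeping the inner variation error. Unlike the linear setting of Theorem~\ref{thm:limit_ff}, the vector field $X_i$ is not tangent to $\Sigma$ and its divergence on the current $T$ contains contributions coming from the curvature of $\mathcal M^+$ (through $D^2\bp$), from the second fundamental form of $\Sigma$, and from the graphical error $T-\bG_{F^+}$ near $\gammaup$. Each of these must be shown to be absorbed in $O(r^{\sigmaexpcm})D_{\phi,d^+}(N^+,r)+O(r^{\sigmaexpcm})H_{\phi,d^+}(N^+,r)/r$; the boundary contribution is the most delicate because the Lipschitz bound \eqref{e:cm_app1} degenerates as $\ell(L)\to 0$ only with the small power $\sigmaexpcm$, so the smallness must ultimately come from the horned inclusion $\supp(T)\subset\mathcal C^+\cup\mathcal C^-$ and from the fact that $d^+$ was chosen to have gradient tangent to $\gammaup$, which kills the leading order boundary term in the integration by parts.
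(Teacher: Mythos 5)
Your scheme is the same as the paper's (the distorted distance $d^+$ with $\nabla d^+$ tangent to $\gammaup$, the outer field $\phi(d^+\circ\bp/r)(y-\bp(y))$ and the inner field built from $d\nabla d/|\nabla d|^2$, comparison of $\delta \mathbf{T}_{F^+}$ with the first variation of the current, and integration of a differential inequality for $\log I$), but the mechanism you propose for absorbing the errors is not viable as stated. You write that each error is ``bounded by $\varepsilon_1^{\sigmaexpcm}$ times a power of $r$ strictly larger than what is needed''; however, before the theorem is proved there is no a priori lower bound on $D_{\phi,d^+}(N^+,r)$ or $H_{\phi,d^+}(N^+,r)$ by \emph{any} fixed power of $r$ (ruling out such infinite-order degeneracy is precisely the point), so no error of size $\varepsilon_1 r^K$ can be absorbed into $\frac{d}{dr}\log I$. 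What is actually needed is to bound every error term \emph{superlinearly in the energy itself}, i.e.\ by $C D(r)^{1+\tau}$, $Cr^2D(r)$, or $CD(r)^{\tau}(D(r)+rD'(r))$, as in Proposition \ref{p:quasi_fatta_2}. This requires more than the splitting estimate you invoke: one covers $\{d^+<r\}$ by the Whitney regions of the stopped cubes, selects a disjoint family of satellite balls, and combines the separation estimate of Proposition \ref{prop:sep}(S3) for cubes in $\sW^{\bh}$ with \eqref{e:split_1} for cubes in $\sW^{\be}$ and the domains of influence of Corollary \ref{c:domains} for $\sW^{\mathbf n}$, so as to convert the natural bounds $\sum_j \varepsilon_1\ell_j^{m+2+\dots}$ into powers of $D(r)$ and $rD'(r)$ (Lemma \ref{l:stimazze}). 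Without this, your final inequality $-\frac{d}{dr}\ln I \le O(1)+O(r^{\sigmaexpcm})I$ also cannot be closed, since you have no a priori upper bound on $I$ to make the right-hand side integrable; the correct error structure yields instead $\log(I(r)/I(s))\ge -Cr^{\tau}$ directly.

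Two further gaps. The first variations must be taken of $T^+=T\res\mathcal{C}^+$ alone (the fields $X_o,X_i$ are nonzero on $\gammaup$ and do not extend by zero across it), so one must first know that $T^+$ is itself area minimizing with $\partial T^+ = Q\a{\gammaup}$; this is exactly the content of Lemma \ref{lem:cone-splitting}, a step your proposal skips by ``testing $\delta T=0$''. Moreover, the positivity of $I_0^+$ does not come from Corollary \ref{cor:lb} ``carried over'' ($N^+$ is not ${\rm Dir}$-minimizing); it is the elementary trace--Poincar\'e inequality $H(r)\le CrD(r)$ for the Lipschitz map $N^+$ vanishing on $\gammaup$ (Proposition \ref{p:poincare_traccia}). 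Similarly, in case (a) the identity $T\res\mathcal{C}^+ = Q\a{\mathcal{M}^+}$ near $0$ does not follow from (A2) of Definition \ref{d:app_pair} and the estimates of Theorem \ref{thm:cm_app} alone (these identify $T$ with $\mathbf{T}_{F^+}$ only over $\mathcal{K}$): one argues that $N^+\equiv Q\a{0}$ on $\{d^+<r\}$ is incompatible with the stopping conditions, so by Corollary \ref{c:domains} a neighborhood of $0$ lies in the contact set, where $T$ coincides with $Q\a{\mathcal{M}^+}$.
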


\section{Poincar\'e inequality}

From now on, in order to simplify our notation, we drop the supscripts $+$ from $N$ and $d$ and the subscripts $d$ and $\phi$ from $H$, $D$ and $I$.

We notice here the following simple consequence of the fact that $N|_{\gammaup}$ vanishes identically.

\begin{proposition}\label{p:poincare_traccia}
There is a geometric constant $C$ such that
\begin{equation}\label{e:H<rD}
H (r) \leq C r D(r)\, \qquad \mbox{for all sufficiently small $r$.}
\end{equation}
In particular 
\begin{equation}\label{e:lower_bound_I}
I(r) \geq C^{-1}  \qquad \mbox{for all sufficiently small $r$.}
\end{equation}
Moreover,
\begin{equation}\label{e:poincare_1000}
\int_{\{d<r\}\cap \mathcal{M}^+} |N|^2 \leq C r^2 D(r)\, \mbox{for all sufficiently small $r$.}
\end{equation}
\end{proposition}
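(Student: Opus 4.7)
The crucial observation is that $N$ vanishes identically on $\gammaup$: by Corollary \ref{c:cm}(d), $\gammaup\cap \bC_{3/2}\subset \mathbf K^+$, and Definition \ref{d:app_pair}(A3) then forces $F^+ = Q\a{\mathrm{id}}$ on $\mathbf K^+$, whence $N^+\equiv 0$ on $\gammaup$. All three assertions of the proposition are thus manifestations of a Poincar\'e inequality for the $Q$-valued map $N$ with vanishing trace on the hypersurface $\gammaup\subset \mathcal M^+$, combined with the specific structure of the cutoff $\phi$.

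My plan is to establish the weighted Poincar\'e inequality
\begin{equation}\label{e:wPcplan}
\int_{\mathcal M^+}\phi(d/r)\,|N|^2 \,\le\, C\,r^2\, D(r)
\end{equation}
for a geometric constant $C$ and all sufficiently small $r$. To reduce to the scalar setting I would apply Almgren's bi-Lipschitz embedding $\bm{\xi}_Q:\Iqs\to\R^{N(Q,n)}$: the classical map $u:=\bm{\xi}_Q\circ N$ vanishes on $\gammaup$ (since $\bm{\xi}_Q(Q\a{0})=0$) and satisfies $|u|\sim|N|$ and (a.e.) $|Du|\sim|DN|$. For $u$, I foliate $\{d<r\}\cap\mathcal M^+$ using $d$ as a radial parameter, which is possible thanks to $|\nabla d|\approx 1$ and the fact that $\nabla d$ is tangent to $\gammaup$ (points (b) and (d) of Lemma \ref{l:good_vector_field on M}): the boundary $\gammaup$ is then compatible with this foliation, permitting a decomposition of the integrals into a radial and an angular component. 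Via Fubini, the scalar weighted inequality reduces to the one-dimensional estimate
\[
\int_0^\rho \phi(s/r)\,v(s)^2\,ds \,\le\, C\,r^2 \int_0^\rho \phi(s/r)\,v'(s)^2\,ds, \qquad v(0)=0,\ \rho\le r,
\]
which follows from $v(s)^2\le s\int_0^s v'(\tau)^2\,d\tau$, the exchange of the order of integration, and the elementary bound $\int_s^\rho \phi(t/r)\,t\, dt \le C\,r^2\,\phi(s/r)$ valid for all $s\in[0,r]$ --- the latter being verifiable by direct calculus from the explicit form $\phi(t)=\min\{1,2(1-t)_+\}$.

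Granted \eqref{e:wPcplan}, the three assertions of the proposition follow by elementary manipulations. For \eqref{e:poincare_1000}, I apply \eqref{e:wPcplan} at the scale $2r$: since $\phi(d/(2r))\equiv 1$ on $\{d<r\}$, it yields $\int_{\{d<r\}}|N|^2\le C r^2 D(2r)$, and I deduce the claim via a doubling estimate $D(2r)\le CD(r)$. Such doubling is not automatic from the sole monotonicity of $r\mapsto D(r)$; at a collapsed point I would extract it from the polynomial-growth structure of the Dirichlet energy, summing the bound \eqref{e:cm_app4} over Whitney regions meeting $\{d<2r\}$ and comparing with an analogous lower bound coming from the excess decay of Theorem \ref{thm:decay_and_uniq} (alternatively, via an iteration scheme that absorbs a small multiple of $D(2r)$). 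For \eqref{e:H<rD}, the explicit form $\phi'(t)=-2\,\mathbf{1}_{(1/2,1)}(t)$, together with the bounds $|\nabla d|^2\le 2$ and $d\ge r/2$ on the support of $-\phi'(d/r)$, gives $H(r)\le \frac{C}{r}\int_{\{d<r\}}|N|^2$, and \eqref{e:poincare_1000} then yields $H(r)\le Cr\,D(r)$. Finally, \eqref{e:lower_bound_I} is immediate from the very definition of $I$.

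The main obstacle will be the precise foliation argument: $\mathcal M^+$ is only a $C^{3,\kappa}$ graph close to a flat half-space and $d$ is only approximately the geodesic distance, so one must track carefully the Jacobian in the change to ``generalized polar coordinates'' and verify that the deviation from the flat model is controlled by the smallness of $\varepsilon_1$ --- using the quantitative refinement (S) of Lemma \ref{l:good_vector_field on M} --- so that the resulting Poincar\'e constant remains \emph{geometric}, i.e., independent of the construction parameters. The doubling $D(2r)\le CD(r)$ is a secondary but genuine technical point, not following from monotonicity of $D$ but made accessible at a collapsed point by the polynomial-growth structure of the Dirichlet energy inherited from Theorem \ref{thm:cm_app} and Theorem \ref{thm:decay_and_uniq}.
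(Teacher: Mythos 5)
The key estimate in your plan, the weighted Poincar\'e inequality $\int_{\mathcal M^+}\phi(d/r)\,|N|^2\le C r^2 D(r)$, is not established by the argument you describe, and the reduction you propose is unsound. You foliate $\{d<r\}\cap\mathcal M^+$ by the level sets of $d$ and reduce ``via Fubini'' to a one--dimensional inequality in the radial parameter $s=d$ with the sole condition $v(0)=0$. But $d$ is (a perturbation of) the distance to the \emph{origin}, not to $\gammaup$: since $\nabla d$ is tangent to $\gammaup$ (Lemma \ref{l:good_vector_field on M}(d)), the integral curves of $\nabla d$ issuing from interior points never meet $\gammaup$, so along a generic ``ray'' the only vanishing available is at the single point $d=0$. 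An argument using only that hypothesis cannot prove a Poincar\'e inequality in dimension $m\ge 2$, because a point has zero capacity: a function equal to $1$ outside a tiny neighborhood of the origin and cut off to $0$ at the origin satisfies $v(0)=0$, has arbitrarily small (weighted) Dirichlet energy, yet $\int\phi(d/r)|v|^2\gtrsim r^m$. Concretely, the step that breaks is the Fubini computation itself: in polar-type coordinates the volume element carries the Jacobian $s^{m-1}$, so after $v(s)^2\le s\int_0^s|v'|^2$ the kernel to control is $\int_\tau^\rho\phi(s/r)\,s^m\,ds$ against $Cr^2\phi(\tau/r)\,\tau^{m-1}$, which fails for $\tau\ll r$; your ``elementary bound'' $\int_s^\rho\phi(t/r)\,t\,dt\le Cr^2\phi(s/r)$ is only its $m=1$ version. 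The boundary condition must be used through the whole hypersurface $\gammaup$, which has positive capacity. This is what the paper does: for small $r$ the sublevel sets $\{d<s\}\cap\mathcal M^+$ are uniformly bi-Lipschitz to half-balls whose flat portion lies in $\gammaup$, where $|N|$ vanishes, whence the trace inequality $\int_{\{d=s\}\cap\mathcal M^+}|N|^2\le C s\int_{\{d<s\}\cap\mathcal M^+}|DN|^2$; writing $H(r)$ by coarea as an integral over $s\in(r/2,r)$ of such traces and performing one integration by parts in $s$ (which moves $\phi'$ back to $\phi$ and reweights the unweighted Dirichlet integrals into $D(r)$, plus a boundary term $r\,\phi(\sfrac12)\int_{\{d<r/2\}}|DN|^2\le CrD(r)$) yields \eqref{e:H<rD} directly. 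A repair closer to your scheme would be to foliate by curves \emph{transversal} to $\gammaup$ (so the one--dimensional vanishing occurs on $\gammaup$) and use that $\phi(d/r)$ is nonincreasing in $d$; in any case some use of the $(m-1)$--dimensional vanishing set is indispensable.

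A second, more minor problem is the doubling $D(2r)\le C D(r)$ you invoke for \eqref{e:poincare_1000}. It cannot be extracted at this stage from \eqref{e:cm_app4} and Theorem \ref{thm:decay_and_uniq}: those give upper bounds per Whitney region and lower bounds only on cubes of type $\sW^\be$, with no mechanism to compare the energy in $\{r<d<2r\}$ with $D(r)$; energy doubling of this kind is in substance a consequence of the almost-monotonicity of the frequency, i.e.\ of the very result this proposition feeds into, so your route risks circularity. It is also unnecessary: once \eqref{e:H<rD} is known, \eqref{e:poincare_1000} follows at the single scale $r$ by splitting $\{d<r\}$ into $\{d<r/2\}$, where $\phi(d/r)\equiv 1$ and the plain Poincar\'e inequality with vanishing on $\gammaup$ gives $\int|N|^2\le Cr^2\int_{\{d<r/2\}}|DN|^2\le Cr^2D(r)$, and the annulus $\{r/2<d<r\}$, where $\phi'(d/r)=-2$ and $d\ge r/2$ give $\int_{\{r/2<d<r\}}|N|^2\le C r H(r)\le Cr^2 D(r)$. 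The remaining ingredients of your proposal (the vanishing of $N$ on $\gammaup$, the reduction to the scalar $|N|$ or $\xii_Q\circ N$, the deduction of \eqref{e:lower_bound_I}, and the passage from the annulus $L^2$ bound to \eqref{e:H<rD}) are correct.
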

\begin{proof}
We start noticing that, for $r$ sufficiently small, we can assume
\begin{equation}\label{e:nabla_d_2sided}
\frac{1}{2} \leq |\nabla d|\leq 2\, 
\end{equation}
and that the domains $\{d=r\}\cap \mathcal M^+$ and $\{d<r\}\cap \mathcal M^+$ are diffeomorphic to the corresponding half-sphere and half-ball in $\mathbb R^m_+ =\{x_1 \geq 0\}$, with uniform controls on the first derivative of the diffeomorphism and its inverse. In particular we have the trace Poincar\'e inequality
\[
\int_{\{d=s\}\cap \mathcal{M}^+} |N|^2 \leq C s \int_{\{d<s\}\cap \mathcal{M}^+} |D|N||^2 \leq C s \int_{\{d<s\}\cap \mathcal{M}^+} |DN|^2\, ,
\]
because $|N|$ vanishes identically on $\gammaup$. 

Integrating the latter inequality, using \eqref{e:nabla_d_2sided} and the coarea formula, we achieve
\begin{align*}
H(r) & =-\int_{\frac{r}{2}}^r \frac{1}{s} \phi' \left(\frac{s}{r}\right) \left(\int_{\{d=s\}\cap \mathcal{M}^+} |\nabla d| |N|^2\right)\, ds\\ 
& \leq - C \int_{\frac{r}{2}}^r \phi' \left(\frac{s}{r}\right) \left(
\int_{\{d<s\}\cap \mathcal{M}^+} |DN|^2\right)\, ds\\
&= C r \int_{\frac{r}{2}}^r \left(\int_{\{d=s\}\cap \mathcal{M}^+} |DN|^2 |\nabla d|^{-1}\right)
\phi \left(\frac{s}{r}\right)\\ 
&\quad + C r \phi \left(\frac{r}{2}\right) \int_{\{d<r/2\}\cap \mathcal{M}^+} |DN|^2\\
&\leq C r D(r)\, .
\end{align*}

Next, the inequality \eqref{e:lower_bound_I} is a trivial consequence of \eqref{e:H<rD}. Moreover, \eqref{e:H<rD} and 
\eqref{e:nabla_d_2sided} give 
\[
\int_{\{r/2<d<r\}\cap \mathcal{M}^+} |N|^2 \leq C r^2 D(r)\, .
\]
On the other hand 
\[
\int_{\{d<r/2\}\cap \mathcal{M}^+} |N|^2 \leq C r^2 \int_{\{d<r/2\}\cap \mathcal{M}^+} |DN|^2 \leq C r^2 D(r)
\]
follows from the usual Poincar\'e inequality since $|N|$ vanishes identically on $\gammaup$. Thus \eqref{e:poincare_1000}
can be achieved summing the last two inequalities. 
\end{proof}

\section{Differentiating $H$ and $D$}
 We compute here the derivatives of $H$ and $D$. 

\begin{proposition}\label{p:H'_maiala_corrente} If $D$ and $H$ be as in the definitions of Section \ref{sec:ff}, then
\begin{eqnarray}
D'(r) & = & -\int\phi'\left(\frac{d (x)}{r}\right)\frac{d (x)}{r^2}|DN|^2\,; \label{der_D_corrente}\\
H' (r) & = & \left(\frac{m-1}{r} + O(1)\right) H (r) + 2 E(r)\,,\label{der_H_corrente}
\end{eqnarray}
where
\[ 
E(r) := - \frac{1}{r} \int \phi' \left(\frac{d(x)}{r}\right) \sum_i N_i (x) \cdot (DN_i (x) \nabla d (x))\,.
\]
\end{proposition}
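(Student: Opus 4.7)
The plan is to follow essentially verbatim the proof scheme of Proposition~\ref{p:H'_maiala} in the linearized setting, with the key observation that Lemma~\ref{l:good_vector_field on M} has been designed precisely to make that scheme work in the curvilinear setting. The formula for $D'$ is immediate: differentiating under the integral sign (after approximating $\phi$ with Lipschitz cut-offs if needed) yields \eqref{der_D_corrente} at once, since only the $\phi$-factor depends on $r$.

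For $H'$, the strategy is again to apply the coarea formula to rewrite
\[
H(r) = -\int_0^{\infty}\frac{\phi'(\sigma)}{\sigma}\, h(r\sigma)\, d\sigma,\qquad h(t):=\int_{\{d=t\}\cap\mathcal{M}^+}|\nabla d|\,|N|^2,
\]
and then to compute $h'(t)$ via the intrinsic divergence theorem on the annular region $\{t<d<t+\varepsilon\}\cap\mathcal{M}^+$. The boundary of this region consists of the two level sets $\{d=t\}\cap\mathcal{M}^+$ and $\{d=t+\varepsilon\}\cap\mathcal{M}^+$, on which the outward unit conormal is $\nabla d/|\nabla d|$, together with a piece lying on $\gammaup$, on which the outward unit conormal is $\vec n^+$.

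The only non-routine step is to show that the flux through the $\gammaup$-piece vanishes, so that the divergence theorem produces only the two level-set contributions. For this we have two independent reasons, each sufficient on its own: on the one hand, by Corollary~\ref{c:cm}(d) together with property (A3) of Theorem~\ref{thm:cm_app}, we have $N\equiv 0$ on $\gammaup$, so $|N|^2\nabla d\cdot\vec n^+=0$ there; on the other hand, condition~(d) of Lemma~\ref{l:good_vector_field on M} guarantees $\nabla d\cdot\vec n^+=0$ on $\gammaup$, giving the same vanishing. This is the manifold counterpart of the remark following Proposition~\ref{p:H'_maiala} and is precisely the reason why the distance function has been carefully modified. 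Once the boundary term is discarded, the standard computation gives
\[
h(t+\varepsilon)-h(t)=\int_{\{t<d<t+\varepsilon\}\cap\mathcal{M}^+}\operatorname{div}_{\mathcal{M}^+}\!\bigl(|N|^2\nabla d\bigr),
\]
and expanding the divergence produces $2\sum_i N_i\cdot(DN_i\cdot\nabla d)+|N|^2\Delta d$.

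Substituting the estimate \eqref{e:laplacian of the good vectorfield}, namely $\Delta d=(m-1)/d+O(1)$, and differentiating the coarea representation of $H$ with respect to $r$, one obtains
\[
H'(r)=2E(r)-\frac{1}{r}\int\phi'\!\left(\frac{d(x)}{r}\right)|N|^2\,\frac{m-1+O(d)}{d}.
\]
Since $\phi'(d/r)\neq 0$ forces $d\le r$, the error term $O(d)/d$ contributes only an $O(1)$ factor, yielding \eqref{der_H_corrente}. The main potential obstacle I anticipate is verifying that the various $O(1)$ error terms indeed depend only on $d$ and on the geometry of $\mathcal{M}^+$ (and not on $\phi$); this is however built into Lemma~\ref{l:good_vector_field on M}, whose estimates for $\nabla d$, $\nabla^2 d$ and $\Delta d$ are independent of the choice of cut-off. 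A minor regularity remark is that the identities extend from Lipschitz $\phi$ to the specific piecewise-linear $\phi$ used later by a standard approximation argument, exactly as in the linearized case.
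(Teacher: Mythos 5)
Your argument is correct and follows essentially the same route as the paper: the coarea representation of $H$, the divergence theorem on $\{t<d<t+\varepsilon\}\cap\mathcal{M}^+$ with the flux through $\gammaup$ vanishing, and the expansion $\Delta d=\frac{m-1}{d}+O(1)$ from Lemma \ref{l:good_vector_field on M}, with the $O(1)$ constants independent of $\phi$ since $d=O(r)$ on $\{\phi'\neq 0\}$. Your second, independent justification for discarding the boundary term (namely $N\equiv 0$ on $\gammaup$, via (A3) and Corollary \ref{c:cm}) is also valid, though the paper relies only on the tangency condition $\nabla d\cdot\vec n^+=0$ of Lemma \ref{l:good_vector_field on M}(d).
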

\begin{proof}
The identity \eqref{der_D_corrente} is an obvious computation. In order to compute $H'$ we first use the coarea formula on embedded manifolds to write
\begin{align}
H(r) & = -\int_0^\infty \int_{\{d=s\}} \frac 1s \phi' \left(\frac{s}{r}\right) |\nabla d (x)| |N|^2 (x)\, d\mathcal{H}^{m-1} (x)\, ds\nonumber\\
&=- \int_0^\infty \frac{\phi' (t)}{t} \underbrace{\int_{\{d= r t\}} |\nabla d (x)| |N|^2 (x)\, d\mathcal{H}^{m-1} (x)\,}_{=: h (rt)}\, dt.  \label{e:H_coarea_2}
\end{align}
In order to compute $h' (t)$ we consider that $\nu (x) = \frac{\nabla d (x)}{\abs{\nabla d (x)}}$ is orthogonal to the 
level sets of $d$ in $\mathcal{M}^+$ and it is parallel to \(\gammaup\). Thus, using  the divergence theorem on $\mathcal{M}^+$ we obtain
\begin{align*}
h(t+\varepsilon) - h(t) &= \int_{\{d=t+\varepsilon\}\cap \mathcal M^+} |N|^2 \nabla d \cdot \nu\,
d\mathcal{H}^{m-1}\\ 
&\quad- \int_{\{d=t\}\cap \mathcal M^+}|N|^2 \nabla d \cdot \nu\,
d\mathcal{H}^{m-1}
\\
&= \int_{\{t<d<t+\varepsilon\}\cap \mathcal M^+} {\rm div}\, (|N|^2 \nabla d (x))
\\
&= \int_{\{t<d<t+\varepsilon\}\cap \mathcal M^+} 2 \sum_i N_i (x)\cdot(D N_i (x) \nabla d (x))
 \\
 &+\int_{\{t<d<t+\varepsilon\}\cap \mathcal M^+} |N|^2 \Delta d (x)\, ,
\end{align*}
Dividing by $\varepsilon$, taking the limit (and using the coarea formula once again) we conclude
\begin{equation}\label{e:der_h_piccola_2}
h' (t) = \int_{\{d=t\}\cap \mathcal M^+} |\nabla d|^{-1} \left(2 \sum_i N_i \cdot(D N_i \nabla^\mathcal{M} d)\, + |N|^2 \Delta d\right)\, d\mathcal{H}^{m-1}\, .  
\end{equation}
Differentiating \eqref{e:H_coarea_2} in $r$, inserting \eqref{e:der_h_piccola_2}  and using the fact that, if \(\phi(d (x)/r)\ne 0\), then \(d (x)=O(r)\), we conclude
\begin{align}
&H' (r)\\ 
& =  \int_0^\infty \phi' (\sigma) \int_{\{d=\sigma r\}} \frac{1}{|\nabla d|} \Big(2 \sum_i N_i \cdot(DN_i \nabla d)\, + |N|^2 \Delta d)\Big)\, d\mathcal{H}^{m-1}\, d\sigma\nonumber\\
&= 2E (r) - \frac{1}{r} \int \phi' \left(\frac{d(x)}{r}\right) |N|^2 \Delta d (x)\nonumber\\
&\stackrel{\eqref{e:laplacian of the good vectorfield}}{=} 2E(r) - \frac{1}{r} \int \phi' \left(\frac{d(x)}{r}\right) |N|^2 \left(\frac{m-1}{r} + O(1)\right)\\
&= 2E (r)+\left(\frac{m-1}{r}+ O(1)\right) H(r)\, .\qedhere
\end{align}
\end{proof}

\section{First variations}
In order to derive the two key identities leading to the monotonicity of the frequency function we will use the first variations of the currents. 

\begin{lemma}\label{lem:cone-splitting}
Let \(T\), \(\Sigma\)  and \(\gammaup\) be as in Assumption~\ref{ass:cm_app}.
Then, provided \(\eps_1\) is sufficiently small, we have that
\begin{itemize}
\item[(a)] \({\mathcal C}^+\cap {\mathcal C}^-=\gammaup\);
\item[(b)] \(T\res \bB_1=T^{+}+T^{-} \) where \( T^\pm =T\res {\mathcal C}^{\pm} \);
\item [(c)] \(\|T\|(\bB_1)=\|T^+\|(\bB_1)+\|T^{-}\|(\bB_1)\);
\item[(d)] \(\partial T^+\res \bB_1= Q\a{\gammaup} \) and \(\partial T^-\res \bB_1= -(Q-1)\a{\gammaup} \) ;
\item[(e)]  For any current \(S^{\pm}\) such that \(\supp (S^{\pm}) \subset \Sigma\cap \overline{\bB_1}\) and  \(\partial S^{\pm}=\partial (T^{\pm}\res \bB_1)\) we have that \(\|T^{\pm}\|(\bB_1)\le \|S^{\pm}\|(\bB_1)\).
\end{itemize}
\end{lemma}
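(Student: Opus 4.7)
The plan is to establish (a) first, use it to deduce the decomposition (b)--(c), then exploit the resulting disjointness of the horns away from $\gammaup$ to handle (d), and finally deduce (e) by a direct competitor argument against the minimality of $T$.

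Part (a) follows from Corollary \ref{c:cm}(d): if $y \in \mathcal{C}^+ \cap \mathcal{C}^-$ then $\bp(y)$ lies in $\mathcal{M}^+ \cap \mathcal{M}^- \cap \bC_{3/2} = \gammaup \cap \bC_{3/2}$, hence $\dist(y,\gammaup) \le |y - \bp(y)| \le \dist(y,\gammaup)^{3/2}$, which for $y \in \bB_1$ forces $\dist(y,\gammaup) = 0$. For (b)--(c), Corollary \ref{c:cone_cut} applied at every boundary point $q \in \gammaup \cap \bB_1$, together with the $C^{1,1}$ regularity of $\mathcal{M}$ (so that the support of $T$ near $\gammaup$ is close to $\mathcal{M}^+ \cup \mathcal{M}^-$ in the horn sense), and Corollary \ref{c:cm}(a) away from $\gammaup$, yield $\supp(T) \cap \bB_1 \subset \mathcal{C}^+ \cup \mathcal{C}^-$; combining with (a) and $\cH^m(\gammaup) = 0$ gives both the decomposition $T \res \bB_1 = T^+ + T^-$ and the mass additivity.

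For (d), the key observation is that every point of $\supp(T) \cap (\bB_1 \setminus \gammaup)$ admits an open neighborhood meeting only one of $\mathcal{C}^+$ and $\mathcal{C}^-$ (by (a) these horns are disjoint off $\gammaup$), so $T^+$ locally coincides either with $T$ or with $0$. Since $(\partial T) \res (\bB_1 \setminus \gammaup) = 0$, this forces $\partial T^+ \res (\bB_1 \setminus \gammaup) = 0$ and hence $\supp(\partial T^+) \cap \bB_1 \subset \gammaup$. Federer's constancy theorem (cf.\ \cite[Section 4.1.7]{Fed}) then gives $\partial T^+ \res \bB_1 = c\, \a{\gammaup \cap \bB_1}$ for some integer $c$; pushing forward by $\bp$, using $\bp_\sharp T^+ = Q\a{\mathcal{M}^+}$ from Corollary \ref{c:cm}(a), the identity $\partial \mathcal{M}^+ \cap \bC_{3/2} = \gammaup \cap \bC_{3/2}$ from Theorem \ref{thm:center_manifold}(d), and the fact that $\bp$ is the identity on $\gammaup$, one concludes $c = Q$. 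The analogous argument on the left side yields $\partial T^- \res \bB_1 = -(Q-1)\a{\gammaup \cap \bB_1}$, the sign reflecting the opposite orientation of $\partial \mathcal{M}^-$; the identity $\partial T^+ + \partial T^- = \a{\gammaup} = \partial T$ provides a consistency check.

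For (e), given $S^+$ with $\supp(S^+) \subset \Sigma \cap \overline{\bB_1}$ and $\partial S^+ = \partial T^+ = Q \a{\gammaup \cap \bB_1}$, define the global competitor $T' := T - T^+ + S^+$. Then $\partial(T - T') \res \bB_2 = 0$ and $\supp(T - T') \subset \Sigma \cap \bB_2$, so minimality of $T$ gives $\bM(T) \le \bM(T')$; cancelling the common mass contributions of $T^-$ and of $T \res (\bB_2 \setminus \bB_1)$ with the help of (c) reduces this to $\|T^+\|(\bB_1) \le \|S^+\|(\bB_1)$ (after choosing, if needed, a radius $\rho$ slightly less than $1$ to arrange $\|T\|(\partial \bB_\rho) = 0$). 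The argument for $T^-$ is symmetric. The main technical obstacle in the proof is (d): ruling out that the cut along $\mathcal{C}^+ \cap \mathcal{C}^- = \gammaup$ generates spurious boundary on $T^+$ and $T^-$; this is overcome by separating the analysis into the open set $\bB_1 \setminus \gammaup$, where the horns are genuinely disjoint and $\partial T$ vanishes, and the thin boundary set $\gammaup$, where the constancy theorem fixes the boundary up to a single integer multiplicity that is then identified by push-forward under $\bp$.
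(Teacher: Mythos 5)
Your proposal is correct and, for (a), (b), (c), (e) and the first half of (d), it is essentially the paper's own argument: (a) is elementary, (b)--(c) follow from Corollary \ref{c:cone_cut}, the center manifold construction and $\|T\|(\gammaup)=0$, (e) is the same cut-and-paste competitor argument, and in (d) you, like the paper, first show $\partial T^+\res(\bB_1\setminus\gammaup)=0$ (your local disjointness argument is the justification the paper leaves implicit) and then conclude $\partial T^+\res \bB_1=c\,\a{\gammaup}$; the paper is only slightly more careful here, invoking Federer's flatness theorem \cite[Section 4.1.15]{Fed} before the Constancy Theorem, since one must first know that the flat chain supported in $\gammaup$ is a current \emph{of} $\gammaup$. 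The genuinely different step is the identification $c=Q$: the paper blows up at $p\in\gammaup\cap\bB_1$ and uses the uniqueness of the tangent cone $Q\a{\pi(p)^+}$ (Theorem \ref{thm:decay_and_uniq} together with Theorem \ref{thm:center_manifold}(e)), so that $\partial((\iota_{p,r})_\sharp T^+)\to Q\a{T_p\gammaup}$ while also $\to c\a{T_p\gammaup}$, whereas you compare boundaries after pushing forward by $\bp$. Your route works, but be aware of two points it compresses: Corollary \ref{c:cm}(a) gives $\bp_\sharp (T\res\mathbf{U})=Q\a{\mathcal{M}^+}+(Q-1)\a{\mathcal{M}^-}$, not directly $\bp_\sharp T^+=Q\a{\mathcal{M}^+}$, so you must split the pushforward using that $\bp_\sharp T^\pm$ is supported in $\mathcal{M}^\pm$ and that these $m$-dimensional currents do not charge the $\mathcal{H}^m$-null set $\gammaup$; moreover $\partial$ and $\bp_\sharp$ commute only before restricting, so you should localize near an interior point of $\gammaup\cap\bB_1$ (the height bound in Corollary \ref{c:cm}(b) keeps $\supp(T)\cap\bp^{-1}(W)$ inside $\bB_1$ for small $W$) so that the lateral boundary created by the restriction stays away from the point where you compare $c\a{\gammaup}$ with $\partial(Q\a{\mathcal{M}^+\cap W})=Q\a{\gammaup\cap W}$. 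The paper's tangent-cone identification avoids this bookkeeping; yours avoids any appeal to uniqueness of tangent cones beyond what is already encoded in the center manifold. Finally, in (e) the radius adjustment $\rho<1$ is unnecessary: since $T-T'=T^+-S^+$ and $\|T-T^+\|=\|T\|\res(\mathcal{C}^+)^c$, the masses cancel exactly as in the paper's chain $\|T^+\|(\bB_1)+\|T^-\|(\bB_1)=\|T\|(\bB_1)\le\|T^-+S^+\|(\bB_1)\le\|S^+\|(\bB_1)+\|T^-\|(\bB_1)$, with no assumption on $\|T\|(\partial\bB_1)$.
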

\begin{proof}
Statement (a) is obvious. Statement (b) is a consequence of Corollary \ref{c:cone_cut} and of Theorem \ref{thm:center_manifold}(c)\&(d). Statement (c) comes directly from (a), (b) and the fact that $\|T\|(\gammaup)=0$. Statement (e) can be inferred from (c) and (d): for instance, if $S^+$ is as in the statement then $\partial (T^-+S^+)=\partial (T\res\bB_1)$ and by minimality of $T$
\begin{align*}
\|T^+\|(\bB_1)+\|T^{-}\|(\bB_1) &=\|T\|(\bB_1)\le \|T^-+S^+\|(\bB_1)\\
&\le\|S^+\|(\bB_1)+\|T^-\|(\bB_1).
\end{align*} 
The proof of point (d) follows the same idea of the proof of Corollary \ref{c:almgren_problem}. Indeed, first remark that $\partial T^+\res(\bB_1\setminus\gammaup)=0$, thus ${\rm spt}(\partial T^+)\cap\bB_1\subset\gammaup$. Let $\boldsymbol{r}$ be a retraction of a neighborhood of $\gammaup$ onto $\gammaup$. Since $\partial T^+\res\bB_1$ is a flat chain supported in $\gammaup$, Federer's flatness theorem, cf. \cite[Section 4.1.15]{Fed}, implies that $R := \boldsymbol {r}_\sharp (\partial T^+\res\bB_1) = \partial T^+\res\bB_1$. On the other hand, since $\partial (\partial T^+\res\bB_1)\res\bB_1 =0$, we also have $\partial R\res\bB_1=0$ and we conclude from the Constancy Theorem, cf. \cite[Section 4.1.7]{Fed}, that $R = c \a{\gammaup}\res\bB_1$ for some $c\in \R$. Thus $\partial T^+ = c\a{\gammaup}\res\bB_1$.

Fix a point $p\in\gammaup\cap\bB_1$ and recall that, from Theorem \ref{thm:decay_and_uniq}  and Theorem \ref{thm:center_manifold} (e), at every $p\in\gammaup\cap\bB_1$ there is a unique tangent cone to $T^+$ and it is $T^+_p = Q \a{\pi(p)^+}$, 
where $\pi(p)$ is tangent to $T_p{\mathcal M}$, by Theorem \ref{thm:center_manifold}, and $\pi(p)^+$ is the inner half portion of $\pi(p)$, where we consider ${\mathcal M}^+$ as a manifold with boundary $\gammaup$. Hence 
\[
\lim_{r\to 0} \partial((\iota_{p,r})_\sharp T^+)=\partial(Q \a{\pi(p)^+})=Q\a{T_p\gammaup}.
\]
Since we also know that 
\[
\lim_{r\to 0}\partial((\iota_{p,r})_\sharp T^+)=\lim_{r\to 0}(\iota_{p,r})_\sharp (c\a{\gammaup}\res\bB_1)=c \a{T_p\gammaup},
\] 
then we conclude $c=Q$. A similar argument holds for $T^-$.
\end{proof}

\begin{lemma}\label{lem:variations}
Under the same assumptions and with the same notations of Lemma~\ref{lem:cone-splitting}, for all \(X\in C^1_c(\bB_1,\mathbb R^{m+n})\) which are tangent to $\gammaup$, we have that
\begin{equation}\label{eq:fv+}
\delta T^{+}(X)=-\int X^\perp(x)\cdot \vec{H}_T(x)\,d\|T^+\|(x)
\end{equation}
where $X^\perp$ is the component of $X$ orthogonal to $\Sigma$ and \(\vec{H}_T(x)\) is the mean curvature vector of \eqref{e:mean_curv}. Analogously 
\[
\delta T^{-}(X)=-\int X^\perp(x)\cdot \vec{H}_T(x)\,d\|T^-\|(x)\, .
\]
\end{lemma}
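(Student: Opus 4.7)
My plan is to decompose $X$ into its tangential and normal components with respect to $\Sigma$ and treat each piece separately. Since $X$ is tangent to $\gammaup \subset \Sigma$, at every point $p\in\gammaup$ the vector $X(p)$ lies in $T_p\gammaup\subset T_p\Sigma$, so its $T_p\Sigma^\perp$-component vanishes on $\gammaup$. I would fix a $C^1$ decomposition $X=X^\top+X^\perp$ on a tubular neighborhood of $\Sigma$ (using the nearest point projection $\bp_\Sigma$) so that, at every $p\in\Sigma$, one has $X^\top(p)\in T_p\Sigma$ and $X^\perp(p)\in T_p\Sigma^\perp$, and so that $X^\top$ is globally tangent to $\Sigma$ along $\Sigma$ while $X^\perp\equiv 0$ on $\gammaup$. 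Only the values of $X^\top,X^\perp$ on $\supp T^+\subset \Sigma$ enter $\delta T^+$, so the precise choice of extension off $\Sigma$ is irrelevant for the identity but relevant for running the flow argument below.

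For the normal component I would use the pointwise first variation formula $\delta T^+(X^\perp)=\int \mathrm{div}_T X^\perp\,d\|T^+\|$ and compute the tangential divergence by hand. At $\|T^+\|$-a.e.~$x$, pick an orthonormal basis $v_1,\dots,v_m$ of the approximate tangent plane to $T^+$ at $x$; since $\supp T^+\subset\Sigma$, each $v_i\in T_x\Sigma$. Because $X^\perp\perp T_\cdot\Sigma$ on $\Sigma$, differentiating the identity $\langle X^\perp,v_i\rangle=0$ tangentially along $\Sigma$ in the direction $v_i$ (with $v_i$ extended locally to a tangent frame of $\Sigma$) yields $\langle D_{v_i}X^\perp,v_i\rangle=-\langle X^\perp,A_\Sigma(v_i,v_i)\rangle$, so summing and invoking \eqref{e:mean_curv} gives $\mathrm{div}_T X^\perp=-X^\perp\cdot\vec H_T$ pointwise. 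Integration then yields $\delta T^+(X^\perp)=-\int X^\perp\cdot\vec H_T\,d\|T^+\|$; note that no boundary term is produced because $X^\perp$ vanishes on $\gammaup=\supp\partial T^+$.

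For the tangential component I would exploit the minimality conclusion (e) of Lemma~\ref{lem:cone-splitting}. Let $\Phi_t$ denote the flow of $X^\top$. Since $X^\top$ is tangent to $\Sigma$ everywhere on $\Sigma$, the flow preserves $\Sigma$; and since $X^\top|_\gammaup=X|_\gammaup$ and $X$ is tangent to $\gammaup$, it also preserves $\gammaup$. Consequently, for $|t|$ small, the push-forward $S_t:=(\Phi_t)_\sharp T^+$ is supported in $\Sigma\cap\overline{\bB_1}$, agrees with $T^+$ near $\partial\bB_1$, and satisfies $\partial S_t=Q\a{\gammaup}=\partial T^+$ in $\bB_1$. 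Lemma~\ref{lem:cone-splitting}(e) therefore gives $\|T^+\|(\bB_1)\le\|S_t\|(\bB_1)$ for every small $t$; differentiating at $t=0$ and applying the same argument to $-X^\top$ yields $\delta T^+(X^\top)=0$. Summing with the previous step produces $\delta T^+(X)=-\int X^\perp\cdot\vec H_T\,d\|T^+\|$, and the identical argument with $T^-$ closes the lemma.

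The most delicate point is the first step, namely justifying the pointwise divergence identity globally by the approximation $X^\top+X^\perp$ on a neighborhood of $\Sigma$ without assuming that the extended $X^\perp$ remains orthogonal to $\Sigma$ off $\Sigma$. This is not a real obstruction because $\delta T^+$ only sees the restriction of $DX^\perp$ to $\supp T^+\subset\Sigma$, but it is the place where some care with the extension is required. Everything else is routine: the decomposition $X=X^\top+X^\perp$ is standard in a $C^2$ tubular neighborhood of $\Sigma$, and the flow/minimality argument for $X^\top$ is the usual stationarity consequence of the comparison in Lemma~\ref{lem:cone-splitting}(e).
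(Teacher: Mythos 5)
Your proposal is correct and takes essentially the same route as the paper: there the identity is deduced from the stationarity of $T^\pm$ under variations tangential to both $\gammaup$ and $\Sigma$ (a consequence of Lemma~\ref{lem:cone-splitting}(e), exactly your flow argument), with the constrained first-variation computation delegated to \cite[Lemma 9.6]{Sim} rather than carried out by hand as in your splitting $X=X^\top+X^\perp$ and the divergence identity $\mathrm{div}_T X^\perp=-X^\perp\cdot\vec H_T$. The only cosmetic difference is that the paper first writes the formula with $X$ in the integrand and then replaces it by $X^\perp$ using the orthogonality of $\vec H_T$ to $T_x\Sigma$, which is the same observation you use.
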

\begin{proof}
This proof follows the same ideas of Section \ref{sec:prufs}. Without loss of generality, we focus on $T^+$. 
Since $T^+$ is stationary with respect to variations which are tangential to $\gammaup$ and $\Sigma$, we have the identity
\[
\delta T^+ (X) = - \int X (x) \cdot \vec{H}_T (x)\, d\|T^+\| (x)
\]
for all $X\in C^1_c (\bB_2)$ tangent to $\gammaup$,
where $\vec{H}_T$ is defined in \eqref{e:mean_curv} (cf. for instance \cite[Lemma 9.6]{Sim}). Note next that, by the explicit formula for $\vec{H}_T$ in \eqref{e:mean_curv}, $\vec{H}_T (x)$ is orthogonal to $T_x \Sigma$, which in turn contains the tangent plane to $T$ at $x$. Thus in the integral of the right hand side we can substitute $X$ with $X^\perp$. 
\end{proof}

In what follows we  let $\bp: \bp^{-1}(\mathcal M^+)\to \mathcal M^+ $ be the retraction of a normal neighborhood of \(\mathcal M^+\) to \(\mathcal M^+\).
In this section we will use Lemma \ref{lem:variations} with two specific choices of vector fields:
\begin{itemize}
\item the {\em outer variations}, where $X_o (p) := \phi \left(\frac{d({\bf p}(p))}{r}\right) \, (p - {\bf p}(p) )$\index{aalx X_o@$X_o$}.
\item the {\em inner variations}, where $X_i (p):= -Y({\bf p} (p))$ \index{aalx X_i@$X_i$} with
\begin{equation}\label{eq:vectorfield for inner variation} 
Y = \frac12\phi\left(\frac{d}{r}\right)\frac{ \nabla d^2}{\abs{\nabla d}^2}\,. 
\end{equation}
Note that $Y$ tangent is to $\mathcal{M}$ and to \(\gammaup\).
\end{itemize}
Consider now the map $F(p) := \sum_i \a{p+ N_i (p)}$ on ${\mathcal M}^+$ and the current ${\bf T}_F$
associated to its image, cf.~\cite{DS2}. By Lemma \ref{lem:variations} \index{aale Err_4^o@${\rm Err}_4^o$} \index{aale Err_5^o@${\rm Err}_5^o$}, 
\begin{align*}
\delta {\bf T}_F(X_o) & =\underbrace{(\delta {\bf T}_F(X_o)-\delta {T}^+(X_o))}_{{\rm Err}_4^o}+\delta {T}^+(X_o)\\
&\stackrel{\eqref{eq:fv+}}{=} {\rm Err}_4^o-\underbrace{\int X_o^\perp(x)\cdot \vec{H}_T(x)\,d\|T^+\|(x)}_{{\rm Err}_5^o}\,.
\end{align*}
Since $X_i$ is also tangent to $\gammaup$, by Lemma \ref{lem:variations}, we write\index{aale  Err_4^i@${\rm Err}_4^i$} \index{aale Err_5^i@${\rm Err}_5^i$}
\begin{align*}
\delta {\bf T}_F(X_i) &=\underbrace{(\delta {\bf T}_F(X_i)-\delta {T}^+(X_i))}_{{\rm Err}_4^i}+\delta {T}^+(X_i)\\
& \stackrel{\eqref{eq:fv+}}{=} {\rm Err}_4^i-\underbrace{\int X_i^\perp(x)\cdot \vec{H}_T(x)\,d\|T^+\|(x)}_{{\rm Err}_5^i}\,.
\end{align*}
Hence
\[
\delta{\bf T}_F(X_i)= {\rm Err}_4^i+{\rm Err}_5^i\,.
\]
\subsection{Outer variation} 

The following proposition holds (for the proof, see \cite[Theorem 4.2]{DS2}).

\begin{proposition}[Expansion of outer variations]\label{p:outer}
Consider the function $\varphi:=\phi\left(\frac{d(p)}{r}\right)$ and denote by $A$ and $H_\mathcal{M}$ the second fundamental form and the mean curvature of ${\mathcal M}^+$, respectively. Then \index{aale  Err_1^o@${\rm Err}_1^o$} \index{aale  Err_2^o@${\rm Err}_2^o$} \index{aale  Err_3^o@${\rm Err}_3^o$}
\begin{align}
\delta \mathbf{T}_F (X_o) = & \int_{{\mathcal M}^+}\! \Big(\varphi \, |DN|^2 + 
\sum_i (N_i \otimes D\varphi) : D N_i\Big)\nonumber\\
& - \underbrace{Q \int_{{\mathcal M}^+}\! \varphi \langle H_{\mathcal{M}}, \etaa\circ N\rangle}_{{\rm Err}_1^o} + \sum_{j=2}^3{\rm Err}_j^o 
\end{align}
where
\begin{align}
|{\rm Err}_2^o| & \leq C \int_{{\mathcal M}^+} |\varphi| |A|^2|N|^2\label{e:outer_resto_2}\\
|{\rm Err}_3^o| & \leq C \int_{{\mathcal M}^+}\!\! \Big(|\varphi| \big(|DN|^2 |N| |A| + |DN|^4\big)\nonumber\\ 
&\qquad\qquad +
|D\varphi| \big(|DN|^3 |N| + |DN| |N|^2 |A|\big)\Big)\label{e:outer_resto_3}.
\end{align}
\end{proposition}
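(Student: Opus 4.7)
The plan is to deduce the expansion directly from the general Taylor expansion of the first variation of a multi-valued graph developed in \cite[Theorem 4.2]{DS2}, specialized to our outer variation vector field. The preliminary observation is that $X_o$ is designed to be normal to $\mathcal{M}^+$: since $N_i(p)\perp T_p\mathcal{M}^+$ by construction of the normal approximation, the nearest point projection satisfies $\bp(p+N_i(p))=p$, and consequently
\[
X_o\bigl(p+N_i(p)\bigr)=\varphi(p)\,N_i(p),\qquad \text{where }\varphi(p)=\phi\!\left(\tfrac{d(p)}{r}\right).
\]
Thus $X_o$ restricted to the graph of $F$ is exactly the ``outer'' perturbation multiplying each sheet $N_i$ by the same base-manifold scalar $\varphi$, which is the setting covered by \cite[Theorem 4.2]{DS2}.

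Next I would apply that theorem sheet by sheet. The first variation decomposes as
\[
\delta\mathbf{T}_F(X_o)=\sum_i\int_{\mathcal{M}^+}\bigl(D(\varphi N_i):DN_i+\varphi\,|DN_i|^2\bigr)-\sum_i\int_{\mathcal{M}^+}\varphi\,\langle H_{\mathcal{M}},N_i\rangle+\mathrm{Err},
\]
where the first two summands are the principal quadratic terms arising from the linearization of the area integrand on a normal graph, and the third comes from the coupling of the normal vector field $\varphi N_i$ with the mean curvature of the base manifold $\mathcal{M}^+$. Expanding $D(\varphi N_i):DN_i=\varphi|DN_i|^2+(N_i\otimes D\varphi):DN_i$ and summing over $i$, we recover the displayed principal term together with the mean curvature contribution $-Q\int\varphi\langle H_{\mathcal{M}},\etaa\circ N\rangle$ (using $\sum_i N_i=Q\,\etaa\circ N$), which is ${\rm Err}_1^o$.

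Finally I would track the remainders coming from \cite[Theorem 4.2]{DS2}. They fall into two types: (i) curvature corrections to the normal bundle of $\mathcal{M}^+$ which, when intertwined with the quadratic principal part, produce contributions bounded pointwise by $|\varphi|\,|A|^2|N|^2$ (this is exactly ${\rm Err}_2^o$, see \eqref{e:outer_resto_2}); and (ii) higher-order terms in the Taylor expansion of the area integrand (cubic and quartic in $DN$, together with cross terms with $D\varphi$ and $|N||A|$), which are estimated by the quantity in \eqref{e:outer_resto_3} and account for ${\rm Err}_3^o$. Each of these bounds is verified by the same arithmetic used in the interior case treated in \cite[Section 4]{DS2}, because the integrations are carried out on $\mathcal{M}^+$ and no integration by parts across $\gammaup$ is performed (the vector field $X_o$ is purely algebraic in $N$, so no boundary integrals arise).

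The only genuine obstacle is a bookkeeping one: ensuring that the expansion of \cite[Theorem 4.2]{DS2}, originally stated for multi-valued graphs over a smooth submanifold, applies verbatim to $\mathcal{M}^+$, which is only $C^{3,\kappa}$ up to $\gammaup$. This causes no trouble because all the computations in \cite[Theorem 4.2]{DS2} require only $C^2$ regularity of the base, and the construction in Theorem \ref{thm:center_manifold} grants this up to and including $\gammaup$. Hence the expansion transfers without modification, and Proposition \ref{p:outer} follows.
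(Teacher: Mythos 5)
Your proposal is correct and follows essentially the same route as the paper, whose proof of Proposition \ref{p:outer} consists precisely in invoking the outer-variation Taylor expansion of \cite[Theorem 4.2]{DS2} on $\mathcal{M}^+$, exactly as you do (no boundary terms arise since the identity is a pure Taylor expansion of $\delta\mathbf{T}_F(X_o)$, and the $C^{3,\kappa}$ regularity of the center manifold is more than sufficient). One small slip worth fixing: your intermediate display $\sum_i\int\bigl(D(\varphi N_i):DN_i+\varphi|DN_i|^2\bigr)$ double counts $\varphi|DN|^2$ — the principal part should be just $\sum_i\int D(\varphi N_i):DN_i$, which expands to $\int\bigl(\varphi|DN|^2+\sum_i(N_i\otimes D\varphi):DN_i\bigr)$, as you in fact use in the subsequent step.
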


\subsection{Inner variation} 
We denote by $\Xi_\varepsilon$ the one-parameter family of biLipschitz homeomorphisms  of $\mathcal{M}^+$ generated by $-Y$.  
We observe  that $X_i$ is then the infinitesimal generator of the one-parameter family of biLipschitz homeomorphisms $\Phi_\varepsilon$ of ${\bf p}^{-1} (\mathcal{M})$ defined by
\[
\Xi_\eps (p):= \Psi_\eps ({\bf p} (p)) + p - {\bf p} (p)\, .
\]
Therefore, we can follow the computations of \cite[Theorem 4.3]{DS2} to prove a suitable Taylor expansion for the inner variation. In what
follows, we will denote by $D^\mathcal{M} Y$ the $(1,1)$ tensor which expresses the covariant derivative of the vector field
$Y$ (which is tangent to $\mathcal{M}$), in particular, when $Z$ is a vector field tangent to $\mathcal{M}$, $D^{\mathcal{M}}_Z Y$ is the projection onto $T \mathcal{M}$ of the standard euclidean derivative $D_Z Y$. Accordingly ${\rm div}_{\mathcal{M}} Y$ will denote the trace of $D^\mathcal{M} Y$, namely 
\[
{\rm div}_{\mathcal{M}} Y =  \sum_{i=1}^m \langle D^\mathcal{M} Y (e_i), e_i \rangle
\]
where $e_1, \ldots, e_m$ is an orthonormal frame of $T \mathcal{M}$. Note that, in particular,
\[
{\rm div}_{\mathcal{M}} Y =  \sum_{i=1}^m \langle D_{e_i} Y, e_i \rangle\, .
\] 

\begin{proposition}[Expansion of inner variations]\label{p:inner}
The following formula holds:\footnote{Recall that each $N_j$ is a map taking values in $\mathbb R^{m+n}$ and thus we understand $D N_j$ as a map from $T \mathcal{M}$ into $\mathbb R^{m+n}$. More precisely, if $N_j= (N_j^1, \ldots , N^{m+n}_j)$ is the expression of $N_j$ into its components and if $Z$ is a vector field tangent to $\mathcal{M}$, then 
\[
DN_j (Z) = (D_X N_j^1, \ldots , D_Z N_j^{m+n})\, .
\]
With $DN_j D^\mathcal{M} Y$ we then understand the following map on $T \mathcal{M}$:
\[
DN_j D^\mathcal{M} Y (Z) = DN_j (D^\mathcal{M} Y (Z)) = (D_{D^{\mathcal{M}} Y (Z)} N_j^1, \ldots 
D_{D^{\mathcal{M}} Y (Z)} N_j^{m+n})\, .
\]
Accordingly, the scalar product $D N_j : ( D N_j D^{\mathcal M} Y)$ is given by
\[
D N_j : ( D N_j D^{\mathcal M} Y) = \sum_\ell \langle D_{e_\ell} N_j, D_{D^\mathcal{M} Y (e_\ell)} N_j \rangle =
\sum_{k, \ell} D_{e_\ell} N_j^k D_{D^{\mathcal{M}} Y (e_\ell)} N_j^k\,  
\]
where $e_1, \ldots , e_m$ is an orthonormal frame on $T \mathcal{M}$.
}
\begin{align}
\delta \mathbf{T}_F (X_i) &= \int_{{\mathcal M}^+} \left( \sum_j  D N_j : ( D N_j D^{\mathcal M} Y)-\frac{|DN|^2}{2} {\rm div}_{\mathcal M}\, Y\right)\nonumber\\
&\qquad\qquad  + \sum_{j=1}^3{\rm Err}_j^i,\label{e:inner} 
\end{align}
where\index{aale  Err_1^i@${\rm Err}_1^i$} \index{aale  Err_2^i@${\rm Err}_2^i$} \index{aale  Err_3^i@${\rm Err}_3^i$}
\begin{align}
{\rm Err}_1^i & = Q \int_{{\mathcal M}^+}\big( \langle H_{\mathcal{M}}, \etaa \circ N\rangle\, {\rm div}_{\mathcal M} Y + \langle D_Y H, \etaa\circ N\rangle\big)\,,\label{e:inner_resto_1}\allowdisplaybreaks\\
|{\rm Err}_2^i| &  \leq C \int_{{\mathcal M}^+} |A|^2 \left(|DY| |N|^2  +|Y| |N|\, |DN|\right), \label{e:inner_resto_2}\allowdisplaybreaks\\
|{\rm Err}_3^i| & \leq C \int_{{\mathcal M}^+} \Big( |Y| |A| |DN|^2 \big(|N| + |DN|\big)\nonumber\\
&\qquad\qquad + |DY| \big(|A|\,|N|^2 |DN| + |DN|^4\big)\Big)\label{e:inner_resto_3}\, .
\end{align}
\end{proposition}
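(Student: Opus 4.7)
\textbf{Plan for the proof of Proposition \ref{p:inner}.} The strategy is to adapt the interior argument of \cite[Theorem 4.3]{DS2} to the present boundary situation, exploiting the fact that the vector field $Y$ is chosen tangent to $\mathcal{M}$ and to $\gammaup$, so that the diffeomorphisms $\Xi_\varepsilon$ preserve $\mathcal{M}^+$ and the boundary structure. The starting point is the first variation formula
\[
\delta \mathbf{T}_F(X_i) = \left.\tfrac{d}{d\varepsilon}\right|_{\varepsilon=0} \mathbf{M}\big((\Phi_\varepsilon)_\sharp \mathbf{T}_F\big),
\]
which, using $\mathbf{T}_F = F_\sharp \a{\mathcal M^+}$ and the identity $\Phi_\varepsilon\circ F = F_\varepsilon \circ \Psi_\varepsilon$ for the natural perturbed graph map $F_\varepsilon(q) := \Psi_\varepsilon(q) + N(\Psi_\varepsilon^{-1}\cdot\ldots)$ (the precise definition being dictated by the construction of $\Phi_\varepsilon$ via $\mathbf{p}$), allows us to compute the derivative as an integral over $\mathcal{M}^+$ of the derivative of the Jacobian of the $Q$-valued graph map with respect to the new parametrization.

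Concretely, first I would change variables on $\mathcal M^+$ via $\Psi_\varepsilon^{-1}$ so that the integral is reduced to one over $\mathcal{M}^+$ with a Jacobian factor $J\Psi_\varepsilon = 1 + \varepsilon \operatorname{div}_{\mathcal M} Y + o(\varepsilon)$ multiplying an area element that depends on the composed map. Using the Taylor expansion of the area functional for $Q$-valued graphs from \cite[Proposition 3.2]{DS2} applied to each sheet $p\mapsto p + N_i(p)$ reparametrized by $\Psi_\varepsilon$, one obtains to first order in $\varepsilon$
\[
\mathbf{M}\big((\Phi_\varepsilon)_\sharp \mathbf{T}_F\big) = \int_{\mathcal M^+} J\Psi_\varepsilon\,\Big(Q + \tfrac12 |D(N\circ \Psi_\varepsilon^{-1})|^2 + \text{h.o.t.} \Big) + \text{mean-curvature terms}.
\]
Differentiating at $\varepsilon=0$, the term $Q \operatorname{div}_{\mathcal M} Y$ integrates to zero (since $Y$ is compactly supported in $\mathcal M^+\cup \gammaup$ and tangent to $\gammaup$, by the divergence theorem on $\mathcal M^+$), while the linear-in-$DN$ term produces the leading quadratic expression $\sum_j DN_j:(DN_j D^{\mathcal M} Y) - \tfrac12 |DN|^2 \operatorname{div}_{\mathcal M} Y$ after the chain rule $D(N\circ\Psi_\varepsilon^{-1}) = DN\,(D\Psi_\varepsilon)^{-1}$. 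The mean-curvature contribution of $\mathcal M^+$ appears upon integration by parts on the normal component of $N$ and yields precisely ${\rm Err}_1^i$, which has the form \eqref{e:inner_resto_1} because differentiating the mean-curvature form along the flow of $Y$ produces both $\langle H_{\mathcal M}, \etaa\circ N\rangle\,\operatorname{div}_{\mathcal M} Y$ and $\langle D_Y H_{\mathcal M}, \etaa\circ N\rangle$.

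The error terms ${\rm Err}_2^i$ and ${\rm Err}_3^i$ are then obtained by carefully bounding the remainders in the Taylor expansion of the area: ${\rm Err}_2^i$ collects quadratic contributions in $|N|$ coming from the deviation of $\mathcal M^+$ from a flat plane (hence the factor $|A|^2$), while ${\rm Err}_3^i$ collects the genuinely cubic-or-higher terms $|DN|^3,|DN|^4,|A||N||DN|^2,\ldots$, estimated pointwise exactly as in \cite[equations (4.6)--(4.7)]{DS2}. The main technical issue to check is that the boundary of $\mathcal M^+$ produces no extra boundary term in the integrations by parts. This is precisely where the hypothesis that $Y$ is tangent to $\gammaup$ (guaranteed by \eqref{e:sign_condition} and the definition \eqref{eq:vectorfield for inner variation}) enters: the divergence theorem applied to $\operatorname{div}_{\mathcal M}(|DN|^2 Y)$ on $\mathcal M^+$ gives a boundary term $\int_\gammaup |DN|^2\, Y\cdot \vec{n}^+$ which vanishes identically. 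This is the only structural departure from the interior proof in \cite{DS2}; once this point is cleared, the interior computation carries over line by line and gives \eqref{e:inner} with the stated estimates for the three remainder terms.
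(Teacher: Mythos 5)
Your proposal is correct and takes essentially the same route as the paper: the paper's proof consists precisely in importing the computations of \cite[Section 4.3]{DS2} verbatim and observing that the only boundary-sensitive point is a single integration by parts, whose boundary contribution vanishes in the present setting. The only imprecision is the location of that step: it is not an integration by parts of ${\rm div}_{\mathcal M}(|DN|^2\,Y)$ (the term $\tfrac{1}{2}|DN|^2\,{\rm div}_{\mathcal M}\,Y$ is kept as it stands in \eqref{e:inner}), but the one generating $\langle D_Y H, \etaa\circ N\rangle$ in \eqref{e:inner_resto_1} (the term $J_2$ of \cite[Eq.\ (4.17)]{DS2}); there the boundary term on $\gammaup$ vanishes because the vector field on which one integrates by parts carries the factor $\etaa\circ N$, which is identically zero on $\gammaup$, while the tangency of $Y$ to $\gammaup$ (together with the vanishing of $\phi(d/r)$ on $\{d\ge r\}$) that you invoke provides an alternative justification of the same flavor.
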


The proof of the previous theorem follows literally the same computations of \cite[Section 4.3]{DS2}. The only subtle point is that in the final part of that proof the integration by parts needed to handle the term $J_2$ in \cite[Eq. (4.17)]{DS2} is valid in our context because the vectorfield $Z$, on which the integration by parts is performed, vanishes on $\gammaup$.

\section{Key identities}

In this section we use the Taylor expansions of the first variations to derive the key identities which lead to the monotonicity of the frequency function. We introduce therefore the quantity
\[ 
G(r):= - \frac{1}{r^2} \int_{\mathcal{M}^+} \phi'\left(\frac{d}{r}\right)\frac{d}{\abs{\nabla d}^2}\sum_{j}|DN_j\cdot \nabla d|^2 \,. 
\]
\begin{proposition}\label{prop:quasi_fatta}
The following two inequalities hold
\begin{align}
|D(r)-E(r)| & \le \sum_{j=1}^5 |{\rm Err}_j^o| \label{e:quasi_fatta_1}\\
\left|D'(r)-\left(\frac{m-2}{r}+O(1)\right)D(r)-2G(r)\right| & \leq \frac{2}{r}\left(\sum_{j=1}^5|{\rm Err}_j^i|\right)\, . \label{e:quasi_fatta_2}
\end{align}
\end{proposition}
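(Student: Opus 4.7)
\medskip

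The plan is to derive both inequalities as consequences of the Taylor expansions in Propositions \ref{p:outer} and \ref{p:inner}, combined with the first variation identities $\delta \mathbf{T}_F(X_o) = {\rm Err}_4^o - {\rm Err}_5^o$ and $\delta \mathbf{T}_F(X_i) = {\rm Err}_4^i + {\rm Err}_5^i$ established from Lemma \ref{lem:variations}. The only effective work is an explicit computation of the main (non-error) terms in each expansion so that they match $D(r)-E(r)$ and $D'(r) - \frac{m-2}{r}D(r) - 2G(r)$ respectively, modulo $O(1)D(r)$ errors.

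For \eqref{e:quasi_fatta_1}, I would first set $\varphi(p):=\phi(d(p)/r)$ and observe that $\int_{\mathcal{M}^+}\varphi\,|DN|^2 = D(r)$ by definition. For the second main term, since $D\varphi = \frac{1}{r}\phi'(d/r)\,\nabla d$, one has
\[
\int_{\mathcal{M}^+}\sum_i (N_i\otimes D\varphi):DN_i \;=\; \frac{1}{r}\int_{\mathcal{M}^+}\phi'(d/r)\sum_i N_i\cdot(DN_i\cdot\nabla d) \;=\; -E(r).
\]
Substituting into Proposition \ref{p:outer} yields the exact identity
$D(r)-E(r) = {\rm Err}_4^o - {\rm Err}_5^o + {\rm Err}_1^o - {\rm Err}_2^o - {\rm Err}_3^o$, and \eqref{e:quasi_fatta_1} follows by the triangle inequality.

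For \eqref{e:quasi_fatta_2}, the key is to compute $D^\mathcal{M} Y$ and ${\rm div}_\mathcal{M}\,Y$ for $Y = \frac{1}{2}\phi(d/r)\,\nabla d^2/|\nabla d|^2$. Using Lemma \ref{l:good_vector_field on M}, namely $\frac{1}{2}\nabla^2 d^2 = g + O(d)$ and $|\nabla d|^2 = 1 + O(d)$, one obtains, as a $(1,1)$-tensor,
\[
D^\mathcal{M} Y \;=\; \phi'(d/r)\,\frac{d/r}{|\nabla d|^2}\,\nabla d\otimes\nabla d \;+\; \phi(d/r)\bigl({\rm Id} + O(d)\bigr),
\]
whose trace is ${\rm div}_\mathcal{M}\,Y = \phi'(d/r)\,d/r + \phi(d/r)(m + O(d))$. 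Then
\[
\sum_j DN_j:(DN_j D^\mathcal{M} Y) \;=\; \phi'(d/r)\,\frac{d/r}{|\nabla d|^2}\sum_j|DN_j\cdot\nabla d|^2 \;+\; \phi(d/r)\bigl(1+O(d)\bigr)|DN|^2.
\]
Integrating over $\mathcal{M}^+$, the first piece produces $-rG(r)$ by definition of $G$, while from \eqref{der_D_corrente} one has $\tfrac{1}{2}\int\phi'(d/r)(d/r)|DN|^2 = -\tfrac{r}{2}D'(r)$. Since $d = O(r)$ on ${\rm spt}\,\phi(d/r)$, the residual $O(d)$ factors contribute only $O(r)D(r)$. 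Collecting the main terms, Proposition \ref{p:inner} becomes
\[
-rG(r) + \tfrac{r}{2}D'(r) - \tfrac{m-2}{2}D(r) + O(r)D(r) \;=\; {\rm Err}_4^i + {\rm Err}_5^i - {\rm Err}_1^i - {\rm Err}_2^i - {\rm Err}_3^i,
\]
and multiplying by $2/r$ and taking absolute values yields \eqref{e:quasi_fatta_2}.

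The main obstacle is book-keeping the $O(d)$-curvature corrections in Lemma \ref{l:good_vector_field on M} and verifying that, after multiplication by $2/r$ in the inner-variation step, they indeed assemble into the $O(1)D(r)$ term. Two structural facts must be used explicitly: that $Y$ is tangent both to $\mathcal{M}^+$ and (crucially) to $\gammaup$, so that Lemma \ref{lem:variations} applies and no boundary term appears in the divergence computation of ${\rm div}_\mathcal{M}\,Y$; and that the chosen $\phi$ is Lipschitz, so all integrations by parts in the derivation of \eqref{der_D_corrente}--\eqref{der_H_corrente} are justified without further approximation.
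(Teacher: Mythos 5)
Your proposal is correct and follows essentially the same route as the paper's own proof: you identify the main terms of the outer-variation expansion with $D(r)-E(r)$, compute $D^{\mathcal M}Y$ and ${\rm div}_{\mathcal M}Y$ via Lemma \ref{l:good_vector_field on M} to match the inner-variation main term with $rD'(r)-((m-2)+O(r))D(r)-2rG(r)$, and then combine with $\delta\mathbf{T}_F(X_o)={\rm Err}_4^o-{\rm Err}_5^o$ and $\delta\mathbf{T}_F(X_i)={\rm Err}_4^i+{\rm Err}_5^i$. The sign bookkeeping and the factor $2/r$ in \eqref{e:quasi_fatta_2} check out, so nothing further is needed.
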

\begin{proof}
For the first identity it suffices to check that
\[
\int_{{\mathcal M}^+}\! \Big(\varphi \, |DN|^2 + 
\sum_i (N_i \otimes D\varphi) : D N_i\Big)=D(r)-E(r)\,,
\]
which is an obvious computation. For the second identity we need to show that
\begin{align*}
 &\int_{{\mathcal M}^+} 2\sum_j  D N_j : ( D N_j D^{\mathcal M} Y)- |DN|^2 {\rm div}_{\mathcal M}\, Y\\
&=rD'(r)-((m-2)+O(r))D(r)-2rG(r)
\end{align*}
Recalling the definition of \(Y\) in \eqref{eq:vectorfield for inner variation}, that is
\begin{equation*}
Y = \frac12\phi\left(\frac{d}{r}\right)\frac{ \nabla d^2}{\abs{\nabla d}^2}\,,
\end{equation*}
 we  easily compute, using Lemma \ref{l:good_vector_field on M} (b) (c) and \eqref{e:hessian d}
 \begin{align}
 D^{\mathcal M}Y&=\frac{d}{r}\phi'\left(\frac{d}{r}\right)\frac{ \nabla d\otimes \nabla d}{\abs{\nabla d}^2}+\frac 12\phi\left(\frac{d}{r}\right)\frac{\nabla^2d^2}{\abs{\nabla d}^2}\nonumber\\
&\qquad-\phi\left(\frac{d}{r}\right)\frac{2 (d\nabla^2d\nabla d) \otimes \nabla d)}{\abs{\nabla d}^4}\nonumber
 \\
 &=\frac{d}{r}\phi'\left(\frac{d}{r}\right)\frac{ \nabla d\otimes \nabla d}{\abs{\nabla d}^2} +\phi\left(\frac{d}{r}\right)\big(g+O(d)\big)\, ,\label{eq:derivative of vectorfield for inner variation}
 \end{align}
where we recall that $g$ is the metric induced on $\mathcal{M}$ by the Euclidean ambient manifold.
In particular
\[
\operatorname{div}_{\mathcal{M}}(Y)=\frac{d}{r}\phi'\left(\frac{d}{r}\right)+\phi\left(\frac{d}{r}\right)\left(m+O(d)\right).
\] 
Hence, using also that, on \(\{\phi\ne 0\}\), \(d=O(r)\), we obtain

\begin{eqnarray*}
& & \int_{{\mathcal M}^+} 2\sum_j  D N_j : ( D N_j D^{\mathcal M} Y)- |DN|^2 {\rm div}_{\mathcal M}\, Y\\
& = & \frac{2}{r} \int_{\mathcal{M}^+} \phi'\left(\frac{d}{r}\right)\frac{d}{\abs{\nabla^\mathcal{M}d}^2}\sum_{j}|DN_j \nabla d|^2\\ 
&  &+\int_{{\mathcal M}^+} \phi\left(\frac{d}{r}\right)(2-m+O(r))|DN|^2-\int_{{\mathcal M}^+}\phi'\left(\frac{d}{r}\right)|DN|^2\\
& = &-2r G(r)-\big((m-2)+O(r))D(r)+rD'(r),
\end{eqnarray*}

which concludes the proof.
\end{proof}

\section{Estimates on the error terms}

\subsection{Families of subregions} In order to estimate the various error terms we select an appropriate family of subregions of  $\mathscr{B}^+_r :=\{p\in \pi_0^+: d (\boldsymbol{\varphi}(p)) <r\})$ \index{aalBcal@$\mathscr{B}^+_r$}. First of all we introduce a suitable family of cubes in the Whitney decomposition:

\begin{definition} The family $\mathcal{T} \subset \mathscr{W}$ consists of :
\begin{itemize}
\item[(i)] all $L\in \sW^{\be}\cup \sW^{\bh}$ which intersect $\mathscr{B}^+_r$;
\item[(ii)] all $L\in \sW^{\be}$ which are domains of influence of some $L'\in \sW^{\bf n}$ intersecting $\mathscr{B}^+_r$, i.e., $L'\in \sW^{\bf n} (L)$ (cf. Definition \ref{d:domains}).
\end{itemize}
\end{definition}

Next, for any $L\in \mathcal{T}$ note that 
\[
{\rm sep} (L, \mathscr{B}^+_r) := \inf \{|q-p|: q\in L, p\in \mathscr{B}^+_r\} \leq 3 \sqrt{m} \ell (L)\, . 
\]
For each such $L$ we define an appropriate ``satellite'' ball $B (L)$ with the following properties:
\begin{itemize}
\item[(A)] $B (L)$ has radius comparable to $\ell (L)$ (say $\ell (L)/4)$);
\item[(B)] the concentric ball with twice the radius is contained in $\mathscr{B}^+_r$;
\item[(C)] $B (L)$ is close to $L$ (comparably to $\ell (L)$).
\end{itemize}
If \(B_{\ell (L)/2} (c(L))\subset \mathscr{B}^+_r\), then we simply set \(B(L)=B_{\ell (L)/4} (c(L))\).

If instead \(B_{\ell (L)/2} (c(L))\not\subset \mathscr{B}^+_r\), we then use the following selecting procedure.

\begin{itemize}
\item[(i)] First consider a point $q\in \partial \mathscr{B}^+_r$ at minimum distance from $L$.
\item[(ii)] Observe that, since $L\in \sW$, it is a non-boundary cube.  Thus $\dist (q, \gamma)\geq \ell (L)$ and in
particular $d (\boldsymbol{\varphi} (q)) = r$.
\item[(iii)] Let $v$ be the exterior unit normal to $\partial \mathscr{B}^+_r$ at $q$ and let 
$q_L := q - \frac{\ell (L)}{2} v$.
\item[(iv)] Recalling claim (S) in Lemma \ref{l:good_vector_field on M} and the estimates on $\boldsymbol{\varphi}$ we see that $\partial \mathscr{B}_r^+\setminus \gamma$ is locally convex and that the principal curvatures of $\partial\mathscr{B}_r^+\setminus \gamma$ can be assumed to be all smaller than $\frac{2}{r}$. Since $\ell (L) <r$, this implies that $B_{\ell(L)/2} (q_L) \subset \mathscr{B}^+_r$. We finally set 
$B(L) := B_{\ell (L)/4} (q_L)$.
\end{itemize}

\begin{definition}
Given a cube $L\in  \mathcal{T}$, the ball $B(L)$\index{aalB(L)@$B(L)$} chosen above will be called the {\em satellite ball of $L$}\index{Satellite ball}. 
\end{definition}

Note that, by simple geometric arguments and by the properties of $d$, we can assume that 
\begin{equation}\label{e:dist_5_ell}
|q_L-c (L)| \leq 5 \sqrt{m} \ell (L)\qquad\textrm{and}\qquad  \dist(L,q_L) \le 4 \sqrt{m} \ell(L).
\end{equation}

We next select a suitable countable subfamily $\mathscr{T}$ of $\mathcal{T}$ with the property that, for any pair of distinct $H, L\in \mathscr{T}$, the corresponding balls $B(L)$ and $B (H)$ are disjoint. We denote by $S$ the supremum of $\ell(L)$ for $L\in \mathcal{T}$. We start selecting a maximal subfamily $\mathscr{T}_1$ in $\mathcal{T}$ of cubes $L$ with $\ell (L) \geq S/2$ such that the corresponding balls $B(L)$ are pairwise disjoint. We then add to $\mathscr{T}_1$ a maximal subfamily $\mathscr{T}_2$ in $\mathcal{T}$ of cubes $L$ with $S/4 \leq \ell (L) \leq S/2$ such that the balls $B(L')$ corresponding to $L'\in \mathscr{T}_1 \cup \mathscr{T}_2$ are all pairwise disjoint. We proceed inductively with the selection of the family $\mathscr{T}_k\subset \mathcal{T}$ such that:
\begin{itemize}
\item[(i)] it consists of cubes with side $2^{-k-1} S \leq \ell (L) \leq 2^{-k} S$;
\item[(ii)] the balls $B(L')$ with $L'\in \mathscr{T}_1 \cup \ldots \cup \mathscr{T}_{k-1} \cup \mathscr{T}_k$ are pairwise disjoint;
\item[(iii)] $\mathscr{T}_k$ is maximal among the families satisfying (i) and (ii).
\end{itemize}
 $\mathscr{T}$ is the union of all the $\mathscr{T}_j$. A simple geometric argument and
\eqref{e:dist_5_ell} ensures that
\begin{itemize}
\item[(Cov)] If $H\in \mathcal{T}$, then there is $L\in \mathscr{T}$ such that 
the distance between $H$ and $L$ is at most $20 \sqrt{m} \ell (L)$  and even though there might be more than one $L$, we fix for each $H$ an arbitrary choice of an $L$ with such a property. 
\end{itemize}
Therefore we can partition $\mathcal{T}$ into (disjoint!) families $\mathcal{T} (L)$ with $L\in \mathscr{T}$ with the property that
for each $H\in \mathcal{T} (L)$, the distance between $H$ and $L$ is at most $20\sqrt{m} \ell(L)$ and $\ell(H) \le 2 \ell (L)$. For each $L\in \mathscr{T}$ we denote by $\sW (L)$ the family of cubes
\[
\bigcup_{H\in \mathcal{T} (L)} \sW^{\mathbf n} (H) \cup \{H\}\, .
\] 
Furthermore we denote by $\mathcal{U} (L)$ the following region in $\mathcal{M}^+$:
\[
\bigcup_{H\in \sW (L)} \mathbf{\Phi} (H)\, .
\]
From now on we fix an enumeration $\{L_i\}$ of $\mathscr{T}$ and we denote:
\begin{itemize}
\item by $\mathcal{U}_i$ the corresponding regions $\mathcal{U} (L_i)\cap \mathcal B_r^+$ \index{aalu\mathcal{U}_i@$\mathcal{U}_i$};
\item by $\mathcal{B}^i$ the regions $\mathbf{\Phi} (B (L_i))$\index{aalb\mathcal{B}^i@$\mathcal{B}^i$};
\item by $\ell_i$ the scale $\ell (L_i)$\index{aall\ell_i@$\ell_i$}.
\end{itemize}
where, here and in the following, we set 
\[
\mathcal B_r^+=\mathcal M^+\cap \{d<r\}\index{aalb\mathcal B_r^+@$\mathcal B_r^+$}\,.
\]
\subsection{Lower and upper bounds in the subregions}
First of all observe that
\begin{equation}\label{e:controllo_cutoff_0 g}
c \frac{\ell_i}{r}\leq \inf_{\bp^{-1}(\mathcal{B}^i)} \varphi  
\end{equation}
for a geometric constant $c$ (recall that $\varphi (p) = \phi \big(\frac{d (\bp (p))}{r}\big)$). In particular 
\[
\sup_{\bp^{-1}(\mathcal{U}_i)}  \varphi  - \inf_{\bp^{-1}(\mathcal{U}_i)}  \varphi  \leq C \frac{\ell_i}{r} \leq C \inf_{\bp^{-1}(\mathcal{B}^i)}  \varphi\, ,  
\]
which leads to
\begin{equation}\label{e:controllo_cutoff g}
\sup_{\bp^{-1}(\mathcal{U}_i)}  \varphi\leq C \inf_{\bp^{-1}(\mathcal{B}^i)}  \varphi \, ,
\end{equation}
where $C$ is a geometric constant. Since we have \(\bp^{-1}(\mathcal{U}_i)\cap \mathcal M^+=\mathcal U_i\) and the same for \(\mathcal B^i\), the above estimates, when restricted to \(\mathcal M^+\), become:
\begin{equation}\label{e:controllo_cutoff_0}
c \frac{\ell_i}{r}\leq \inf_{\mathcal{B}^i} \varphi  
\end{equation}
and
\begin{equation}\label{e:controllo_cutoff}
\sup_{\mathcal{U}_i}  \varphi\leq C \inf_{\mathcal{B}^i}  \varphi \,.
\end{equation}
Observe that
\[
\max \{\ell (H): H\in \mathscr{W} (L_i)\} \leq C \ell_i
\]
and
\[
\sum_{H\in \mathscr{W} (L_i)} \ell (H)^m \leq C \ell_i^m
\]
Thus, as a consequence of the estimates in Theorem \ref{thm:cm_app}  and Corollary \ref{c:cm} (b) (namely, applying the corresponding estimates in each cube in $\mathscr{W} (L_i)$ and summing the respective contributions) we achieve the following:
\begin{align}
{\rm Lip} \left(\left. N\right|_{\mathcal{U}_i}\right) &\leq C \eps_1^\sigmaexpcm \ell_i^\sigmaexpcm \label{e:bad_regions_1}\\
\|N\|_{C^0 (\mathcal{U}_i)} + \sup_{p\in \supp (T^+) \cap \bp^{-1} (\mathcal{U}_i)} |p-\bp (p)| &\leq C \eps_1^{\sfrac{1}{2m}} \ell_i^{1+\alpha_\bh}\label{e:bad_regions_2}\\
\|T^+-\mathbf{T}_F\| (\bp^{-1} (\mathcal{U}_i)) &\leq C \eps_1^{1+\sigmaexpcm} \ell_i^{m+2+\sigmaexpcm}\label{e:bad_regions_3}\\
\int_{\mathcal{U}_i} |DN|^2 &\leq C \eps_1 \ell_i^{m+2-2\alpha_\be}\label{e:bad_regions_4}\\
\int_{\mathcal{U}_i} |\etaa\circ N| &\leq C \eps_1 \ell_i^{2+m +\frac{\sigmaexpcm}{2}} + C \int_{\mathcal{U}_i} |N|^{2+\sigmaexpcm}\label{e:bad_regions_5}\, .
\end{align}
Note in particular that \eqref{e:bad_regions_5} follows from choosing $a=1$ in \eqref{e:cm_app5} and $\mathcal{V} = \mathcal{L}$. 

The second important ingredients in order to estimate the various error is the following lemma.

\begin{lemma}\label{l:stimazze}
Under the assumptions of Theorem \ref{thm:ff_estimate_current}, for a sufficiently small $r$ the following inequalities hold:
\begin{align}
\eps_1 \sum_i \ell_i^{m+2+2 \alpha_\bh}\inf_{\bp^{-1}(\mathcal{B}^i)} \varphi\;  & \leq C D(r)\label{e:stimazza1}\\
\eps_1 \sum_i \ell_i^{m+2+2 \alpha_\bh} &\le { C \int_{\mathcal B_r^+} \abs{DN}^2} \leq C (D(r) + r D'(r))\, ,\label{e:stimazza2}
\end{align}
for a geometric constant $C$. 
Moreover we have
\begin{equation}\label{e:stimazza3-Jonas}
\eps_1 \sup_i \ell_i \le C \left( r D(r) \right)^\frac{1}{m+3+\alpha_\bh} \text{ and } \eps_1\sup_i \left(\inf_{\bp^{-1}(\mathcal{B}^i)} \varphi \, \ell_i\right) \le C D(r)^\frac{1}{m+2+\alpha_\bh}\, .
\end{equation}
\end{lemma}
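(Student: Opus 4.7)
The strategy rests on a per-cube Dirichlet lower bound
\[
\int_{\mathcal{B}^i} |DN|^2 \;\ge\; c\,\eps_1\,\ell_i^{m+2+2\alpha_\bh}
\qquad\text{for every } L_i\in\mathscr T,
\]
to be summed using the pairwise disjointness of the satellite balls (which is the very reason they were extracted via a Vitali-type argument in the previous section). Note that by construction each $L_i\in\mathscr T$ either lies in $\sW^\be$ (directly, or as the ancestor of a cube in $\sW^{\mathbf n}$) or in $\sW^\bh$, so it is enough to treat these two cases.

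\textbf{Case $L_i\in\sW^\be$.} I will feed $q=q_{L_i}$ into Proposition \ref{p:splitting}: estimate \eqref{e:dist_5_ell} guarantees $\dist(L_i,q_{L_i})\le 4\sqrt m\,\ell_i$, so \eqref{e:split_1} yields
\[
C_\be\,\eps_1\,\ell_i^{m+2-2\alpha_\be}\;\le\;\ell_i^m\,\mathbf E(T,\bB_{L_i})\;\le\;C\!\int_{\mathcal B^i} |DN|^2.
\]
Since $\ell_i\le 1$ and $\alpha_\be,\alpha_\bh>0$, the exponent $m+2-2\alpha_\be$ is stronger than $m+2+2\alpha_\bh$, and the per-cube bound follows. \textbf{Case $L_i\in\sW^\bh$.} I will use the separation property (S3) of Proposition \ref{prop:sep}, which provides a pointwise lower bound of order $\eps_1^{1/(2m)}\ell_i^{1+\alpha_\bh}$ for $\G(N,Q\a{\etaa\circ N})$ on a neighbourhood of size $\sim\ell_i$ around $p_{L_i}$. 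After integrating over a piece of this region of measure $\gtrsim\ell_i^m$ (comparable to $\mathcal B^i$ by the satellite construction), the Poincar\'e inequality applied to the zero-average multivalued map $N\ominus \etaa\circ N$ converts the pointwise spread into a Dirichlet bound
\[
\int_{\mathcal B^i} |DN|^2 \;\ge\; c\,\eps_1^{1/m}\,\ell_i^{m+2\alpha_\bh},
\]
which exceeds $C\,\eps_1\,\ell_i^{m+2+2\alpha_\bh}$ once $\eps_1$ is chosen small enough with respect to $C_\bh$ (parameter hierarchy of Assumption \ref{ass:cm}).

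\textbf{Assembling the three estimates.} For \eqref{e:stimazza1} I bound $\varphi\ge\inf_{\bp^{-1}(\mathcal B^i)}\varphi$ on $\mathcal B^i$ and use disjointness:
\[
D(r)=\int_{\mathcal M^+}\!\varphi\,|DN|^2 \;\ge\; \sum_i \inf_{\bp^{-1}(\mathcal B^i)}\!\varphi\;\int_{\mathcal B^i}|DN|^2 \;\ge\; c\,\eps_1\sum_i\ell_i^{m+2+2\alpha_\bh}\inf_{\bp^{-1}(\mathcal B^i)}\!\varphi.
\]
For \eqref{e:stimazza2} I drop $\varphi$, sum, and observe that since $\phi'\equiv-2$ on $(\tfrac12,1)$ one has $\phi(d/r)+2(d/r)=2$ on the support of $\phi'$, so a direct computation gives $\int_{\mathcal B_r^+}|DN|^2\le D(r)+rD'(r)$. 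Finally, \eqref{e:stimazza3-Jonas} is obtained by combining the per-cube bound with the second half of \eqref{e:split_2}, which promotes the Dirichlet bound on $\mathcal L_i$ to an $L^2$ bound $\int_{\mathcal B^i}|N|^2\gtrsim\eps_1\,\ell_i^{m+4-2\alpha_\be}$, and then applying the global Poincar\'e inequality \eqref{e:poincare_1000}, namely $\int_{\mathcal B_r^+}|N|^2\le Cr^2D(r)$; the two claimed inequalities follow by elementary exponent bookkeeping, using also $\inf_{\bp^{-1}(\mathcal B^i)}\varphi\ge c\,\ell_i/r$ (see \eqref{e:controllo_cutoff_0}) and the smallness of $\eps_1$ to close the small gap between the exponents $m+3+2\alpha_\bh$ coming out of the computation and the target $m+3+\alpha_\bh$.

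\textbf{Main obstacle.} The delicate step is the per-cube bound for $L_i\in\sW^\bh$. The separation property gives a \emph{pointwise} control on the spread of the sheets over a cylinder centred at $p_{L_i}$, while the satellite ball $\mathcal B^i$ sits near, but not necessarily inside, that cylinder. Reconciling these geometric regions, together with the conversion from pointwise spread to Dirichlet energy via a multivalued Poincar\'e inequality (which loses a factor $\ell_i^2$ and a fractional power of $\eps_1$), is where the parameter hierarchy $C_\bh\gg C_\be,\,M_0,\,N_0$ and $\eps_1$ very small plays its decisive role.
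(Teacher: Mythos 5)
There is a genuine gap, and it sits exactly where you flagged the ``main obstacle'': the per-cube Dirichlet lower bound for $L_i\in\sW^\bh$. The separation property (S3) of Proposition \ref{prop:sep} gives a \emph{pointwise} lower bound on $\G\big(N,Q\a{\etaa\circ N}\big)$, hence on $|N|$, near $p_{L_i}$, but it carries no information whatsoever on $|DN|$: a map whose sheets are locally constant and mutually separated satisfies (S3) with zero Dirichlet energy. The Poincar\'e inequality cannot repair this because it goes in the opposite direction — it bounds the $L^2$ spread of $N$ around its mean \emph{by} the energy, not the energy by the spread — and there is no forced transition inside $\mathcal{B}^i$ that would generate gradient, since the satellite ball lies at distance $\gtrsim M_0\ell_i$ from $\gammaup$, where the constraint $N=Q\a{0}$ lives. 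So your claimed bound $\int_{\mathcal{B}^i}|DN|^2\ge c\,\eps_1^{1/m}\ell_i^{m+2\alpha_\bh}$ for $\bh$-cubes is not obtainable from (S3) (and is false for maps merely satisfying it), and with it the summation arguments you use for \eqref{e:stimazza1} and \eqref{e:stimazza2} break down on the $\bh$-part of $\mathscr{T}$. The paper's proof records for $\bh$-cubes only the $L^2$ bound $\int_{\mathcal{B}^i}|N|^2\ge c_0\,\eps_1^{1/m}\ell_i^{m+2+2\alpha_\bh}$, sums it together with the $\be$-cube bounds coming from \eqref{e:split_1} (your treatment of that case coincides with the paper's), and only \emph{after} summation converts the total $\int_{\mathcal{B}_r^+}|N|^2$ into $Cr^2D(r)\le CD(r)$ via the global boundary Poincar\'e inequality \eqref{e:poincare_1000} of Proposition \ref{p:poincare_traccia}, which uses that $N$ vanishes on all of $\gammaup$ over $\mathcal{B}_r^+$; the same global step, with $\int_{\mathcal{B}_r^+}|N|^2\le Cr^2\int_{\mathcal{B}_r^+}|DN|^2$, yields \eqref{e:stimazza2}. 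This mechanism is intrinsically global and cannot be localized to the satellite balls, which is why no per-cube energy bound for $\bh$-cubes is ever claimed.

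Two smaller remarks. First, the upper bound $\int_{\mathcal{B}_r^+}|DN|^2\le C(D(r)+rD'(r))$ is fine and is argued as in the paper (on $\{d<r/2\}$ one has $\phi=1$, on $\{r/2<d<r\}$ one has $\phi'=-2$ and $d\ge r/2$, so that region is controlled by $rD'(r)$). Second, for \eqref{e:stimazza3-Jonas} the paper does not pass through \eqref{e:split_2} at all: both inequalities are read off from the \emph{single-term} version of \eqref{e:stimazza1} combined with $\inf_{\bp^{-1}(\mathcal{B}^i)}\varphi\ge c\,\ell_i/r$ (cf. \eqref{e:controllo_cutoff_0}). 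Your appeal to ``the smallness of $\eps_1$'' to bridge the exponent gap between $m+3+2\alpha_\bh$ and $m+3+\alpha_\bh$ is not a real mechanism: the deficit is a positive power of $\ell_i$, and $\ell_i$ can be arbitrarily small independently of $\eps_1$, so smallness of $\eps_1$ buys nothing there; the bookkeeping must use only $\eps_1,\ \ell_i,\ \inf\varphi\le 1$ as in the paper. But this is secondary — the essential missing idea is the replacement of your per-cube energy bound on $\bh$-cubes by the $|N|^2$ bound plus the global Poincar\'e inequality.
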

\begin{proof} First of all observe that every cube $L_i\in \mathscr{T}$ belongs to either $\mathscr{W}^\bh$ or to $\mathscr{W}^\be$.
 For every cube $L_i \in \mathscr{T} \cap \mathscr{W}^{\bh}$, as a consequence of  Corollary \ref{c:domains}, we must have $L_i \cap {\mathcal B}_r^+ \neq \emptyset$. Hence $\mathcal{B}^i \subset \mathcal{M} \cap \bC_{2\sqrt{m}\ell(L_i)}(p_{L_i})$ and therefore Proposition~\ref{prop:sep}(S3) applies. Recalling that $\mathcal{G} (N(x), Q\a{\etaa \circ N (x)})\leq |N|$, for every cube $L_i \in \mathscr{T} \cap \mathscr{W}^{\bh}$ we can estimate
\begin{equation}\label{e:stimazza4}
\int_{\mathcal{B}^i}  |N|^2 \geq c_0 \eps_1^{\sfrac{1}{m}} \ell_i^{m+2+2\alpha_\bh} \, .
\end{equation}
By estimate \eqref{e:split_1} in Proposition \ref{p:splitting} , for every $L_i \in \mathscr{T}\cap \mathscr{W}^\be$ we have
\begin{equation}\label{e:stimazza5}
\int_{\mathcal{B}^i} \varphi |D N|^2 \geq c_0 \eps_1 \ell_i^{m+2-2\alpha_\be}\inf_{\mathcal{B}^i} \varphi =c_0 \eps_1 \ell_i^{m+2-2\alpha_\be}\inf_{\bp^{-1}(\mathcal{B}^i)} \varphi\, . 
\end{equation}
Summing the last two inequalities over $i$, using that $\{\mathcal{B}^i\}$ are disjoint and contained in $\{d<r\}\cap \mathcal{M}^+$ and the simple observation that $2+\alpha_\bh \ge 2-2\alpha_\be$, we easily conclude
\[
\eps_1 \sum_i \ell_i^{m+2+2 \alpha_\bh} \inf_{\bp^{-1}(\mathcal{B}^i)} \varphi \leq
C_0 \int_{\mathcal B_r^+} \left(|N|^2 + \varphi |DN|^2\right)\, .
\]
Thus, \eqref{e:stimazza1} can be inferred from \eqref{e:poincare_1000}. 

Note that, analogously, for $L_i \in \mathscr{T}\cap \mathscr{W}^\be$ we have also
\begin{equation}\label{e:stimazza6}
\int_{\mathcal{B}^i}  |D N|^2 \geq c_0 \eps_1  \ell_i^{m+2-2\alpha_\be}\, .
\end{equation}
Arguing as above with \eqref{e:stimazza6} in place of \eqref{e:stimazza5} and exploiting that $2+\alpha_\bh \ge 2-2\alpha_\be$, we conclude
\[
\eps_1 \sum_i \ell_i^{m+2+2 \alpha_\bh} \leq
C_0 \int_{\mathcal B_r^+}|DN|^2\, .
\]
Since $\phi' (t) =- 2$ on $[1/2,1]$, clearly
\[
 \int_{\{r/2 < d<r\}\cap \mathcal{M}^+}|DN|^2\leq r D'(r)\, .
\]
On the other hand we trivially have 
\[
 \int_{\{d<r/2\}\cap \mathcal{M}^+}|DN|^2\leq D(r)\, .
\]
Thus, \eqref{e:stimazza2} follows easily.

Finally the second estimate of \eqref{e:stimazza3-Jonas} is a direct consequence of \eqref{e:stimazza1} and the first follows combining \eqref{e:stimazza1} with \eqref{e:controllo_cutoff_0 g}.
\end{proof}

\subsection{Estimates on the error terms} We are ready to prove the main estimates on the various error terms appearing in the inequalities of Proposition \ref{prop:quasi_fatta}. We first introduce the auxiliary term
\begin{equation}\label{e:Sigma}
S (r) := \int \phi \left(\frac{d}{r}\right) |N|^2\, .
\end{equation}

\begin{proposition}\label{p:quasi_fatta_2}
There are positive numbers $C$ and $\tau$ such that
\begin{align}
|{\rm Err}^o_1|+|{\rm Err}^o_3| + |{\rm Err}^o_4| &\leq C D(r)^{1+\tau}\, \label{e:errori_esterni_134}\\
|{\rm Err}^o_2| &\leq C S (r) \leq C r^2 D(r)\label{e:errori_esterni_2}\\
|{\rm Err}^o_5| & \leq C S(r) + C D(r)^{1+\tau} \leq C r^2 D(r) + C D(r)^{1+\tau}\label{e:errori_esterni_5}\\
|{\rm Err}^i_1|+|{\rm Err}^i_3| + |{\rm Err}^i_4| &\leq C  D(r)^\tau (D(r) + r D'(r))\label{e:errori_interni_134}\\
|{\rm Err}^i_2| &\leq C r D(r) \label{e:errori_interni_2}\\
|{\rm Err}^i_5|&\leq C r D(r) + C D(r)^\tau (D(r)+r D'(r)) .\label{e:errori_interni_5}
\end{align}
\end{proposition}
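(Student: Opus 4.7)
The plan is to estimate each error term by splitting the region $\mathcal{B}_r^+$ into the ``good'' contact set $\mathcal{K}\cap \mathcal{B}_r^+$, where $T^+$ coincides with $\mathbf{T}_F$ and the multivalued map $N$ collapses on a single sheet of the center manifold, and the ``bad'' region $\mathcal{B}_r^+\setminus \mathcal{K}$, which is covered by the satellite regions $\mathcal{U}_i$. On each $\mathcal{U}_i$ we systematically apply the local Whitney estimates \eqref{e:bad_regions_1}--\eqref{e:bad_regions_5}, and then sum using Lemma~\ref{l:stimazze} to convert the geometric sums $\varepsilon_1\sum_i \ell_i^{m+2+2\alpha_\bh}$ into either $D(r)$ or $D(r)+rD'(r)$. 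The exponent $\tau$ will be any small positive power obtained from the interpolation $\alpha_\bh > 0$ and $\sigmaexpcm > 0$ and will depend on the other parameters.

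The easiest terms are ${\rm Err}^o_2$ and ${\rm Err}^i_2$. Since $|A_{\mathcal M}|\leq C\bA\leq C\varepsilon_1^{1/2}$ is uniformly bounded by the $C^{3,\kappa}$ estimate in Theorem~\ref{thm:center_manifold}(a), the bound \eqref{e:outer_resto_2} gives $|{\rm Err}^o_2|\leq C\int \varphi|N|^2 = CS(r)$ and the Poincaré inequality \eqref{e:poincare_1000} yields $S(r)\leq Cr^2D(r)$. For ${\rm Err}^i_2$, noting $\|DY\|_0\leq C$ and $\|Y\|_0 \leq Cr$ on $\supp \varphi$, the bound \eqref{e:inner_resto_2} together with Cauchy--Schwarz gives $|{\rm Err}^i_2|\leq C(S(r)+r\sqrt{S(r)}\sqrt{rD(r)}) \leq CrD(r)$.

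For the pairs ${\rm Err}^o_{3,4,5}$ and ${\rm Err}^i_{3,4,5}$, one bounds each by a sum over $i$ of local contributions on $\mathcal{U}_i$. For ${\rm Err}^o_3$, the Lipschitz bound \eqref{e:bad_regions_1} allows us to trade $|DN|^2$ against $\varepsilon_1^\sigmaexpcm\ell_i^\sigmaexpcm |DN|^2$, so that (using also $|A|\leq C\varepsilon_1^{1/2}$ and $\|N\|_0\leq C\varepsilon_1^{1/2m}\ell_i^{1+\alpha_\bh}$ via \eqref{e:bad_regions_2}) the full integrand is estimated by $C\varepsilon_1^\sigma \ell_i^\sigma (|DN|^2+|N|^2)$ on each $\mathcal{U}_i$. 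Summing, applying \eqref{e:bad_regions_4} and using \eqref{e:stimazza3-Jonas} to bound $\sup_i \ell_i$ by a power of $D(r)$, one obtains a bound of the form $C D(r)^{1+\tau}$. The same scheme with the derivative $DY$ in place of $D\varphi$ handles ${\rm Err}^i_3$, producing the factor $(D(r)+rD'(r))$ via \eqref{e:stimazza2}. The terms ${\rm Err}^o_4$ and ${\rm Err}^i_4$ are controlled by $\|X_o\|_{C^1}\cdot \|T^+-\mathbf{T}_F\|(\bp^{-1}(\mathcal{U}_i))$ and $\|X_i\|_{C^1}\cdot \|T^+-\mathbf{T}_F\|(\bp^{-1}(\mathcal{U}_i))$ respectively, using \eqref{e:bad_regions_3}; summing the superlinear estimate $\varepsilon_1^{1+\sigmaexpcm}\sum_i\ell_i^{m+2+\sigmaexpcm}$ and applying Lemma~\ref{l:stimazze} gives the required $D(r)^{1+\tau}$ and $D(r)^\tau(D(r)+rD'(r))$ bounds. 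For ${\rm Err}^o_5$ and ${\rm Err}^i_5$, we exploit that $|\vec H_T|\leq C\bA$ and that $X_o^\perp$ (resp.~$X_i^\perp$) involves the normal component of $X$, which is at most $C|N|$ on $\supp(T^+)\cap \bp^{-1}(\mathcal{U}_i)$ by \eqref{e:bad_regions_2}, together with the mass bound $\|T^+\|(\bp^{-1}(\mathcal{U}_i))\leq C\ell_i^m$. A similar splitting plus the estimate $\int \varphi|N|\leq \sqrt{S(r)}(\int \varphi)^{1/2}\leq Cr^{(m+2)/2}\sqrt{D(r)}$ on the good part produces the linear-in-$S(r)$ contribution, while the remainder collapses to $D(r)^{1+\tau}$, respectively $D(r)^\tau(D(r)+rD'(r))$.

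The hardest term is ${\rm Err}^o_1 = Q\int \varphi\langle H_{\mathcal M},\etaa\circ N\rangle$ (and analogously ${\rm Err}^i_1$). A naive pointwise bound $|\etaa\circ N|\leq |N|$ only yields $CS(r)\leq Cr^2D(r)$, which is insufficient: we need a superlinear gain in $D(r)$. Here the key is to use the refined ``almost-harmonicity'' estimate \eqref{e:cm_app5} of Theorem~\ref{thm:cm_app} on each Whitney region (rather than the pointwise bound on $|\etaa\circ N|$), trading the linear bound for a controlled contribution from $\mathcal{G}(N,Q\a{\etaa\circ N})^{2+\sigmaexpcm}$, which in turn is controlled via $|N|^{2+\sigmaexpcm}\leq \|N\|_0^\sigmaexpcm |N|^2$, gaining the factor $\varepsilon_1^{\sigmaexpcm/2m}\ell_i^{\sigmaexpcm(1+\alpha_\bh)}$ from \eqref{e:bad_regions_2}. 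Summing and applying Lemma~\ref{l:stimazze} (and using $|H_{\mathcal M}|\leq C\bA \leq C\varepsilon_1^{1/2}$ and the fact that $N$ vanishes on $\mathbf{K}^+$) produces the bound $CD(r)^{1+\tau}$. The inner variation analog ${\rm Err}^i_1$ contains the additional term $\langle D_Y H_{\mathcal M}, \etaa\circ N\rangle$, which is handled identically since $\|D H_{\mathcal M}\|_0\leq C\bA$ and $\|Y\|_0\leq Cr$; the factor $rD'(r)$ appears through \eqref{e:stimazza2} when the estimate is applied to the integrals localized on the annulus $\{r/2<d<r\}$. This is the main technical obstacle, and it is precisely the mechanism through which the $\qhalf$ ${\rm Dir}$-minimality of the linearized problem, carefully matched to the center manifold construction, is used to close the monotonicity argument.
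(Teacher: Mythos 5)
Most of your scheme coincides with the paper's proof: the Whitney regions $\mathcal{U}_i$, the local estimates \eqref{e:bad_regions_1}--\eqref{e:bad_regions_5}, the summation via Lemma~\ref{l:stimazze}, the Poincar\'e inequality for ${\rm Err}^o_2$, ${\rm Err}^i_2$, and in particular the treatment of ${\rm Err}^o_1$, ${\rm Err}^i_1$ through the superlinear estimate on $\int|\etaa\circ N|$ coming from \eqref{e:cm_app5} is exactly the mechanism used in the paper. The terms ${\rm Err}^{o/i}_3$ and ${\rm Err}^{o/i}_4$ are also handled along the same lines as in the paper.

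There is, however, a genuine gap in your treatment of ${\rm Err}^o_5$ and ${\rm Err}^i_5$. You bound $|X_o^\perp|$ by $C|N|$, reading $X^\perp$ as the component of $X$ normal to the current (or to $\mathcal{M}$). But in the first variation identity of Lemma~\ref{lem:variations}, $X^\perp$ is the component of $X$ orthogonal to the \emph{ambient manifold} $\Sigma$, and the whole point is that this component is \emph{quadratically} small: since $X_o(p)=\varphi(\bp(p))\,(p-\bp(p))$ with both $p\in\supp(T)\subset\Sigma$ and $\bp(p)\in\mathcal{M}\subset\Sigma$, one has $|X_o^\perp(p)|\le C\varphi\,\|A_\Sigma\|_0\,|p-\bp(p)|^2\le C\varepsilon_1^{\sfrac12}\varphi\,|N|^2$, which is what yields the bound by $CS(r)$. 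Your substitute, $\varepsilon_1^{\sfrac12}\int\varphi|N|\le C\varepsilon_1^{\sfrac12} r^{(m+2)/2}\sqrt{D(r)}$, is \emph{sublinear} in $D(r)$ and is not controlled by $CS(r)+CD(r)^{1+\tau}$: at this stage there is no lower bound on $D(r)$ (the frequency may be arbitrarily large, so $D(r)$ may decay much faster than $r^{m+2}$), and a term that is not $o(1)\cdot D(r)$ cannot be absorbed in \eqref{e:fame} and destroys the monotonicity argument. The inner term ${\rm Err}^i_5$ is even more delicate and is not addressed by ``a similar splitting'': since $X_i(p)=-Y(\bp(p))$ is tangent to $\Sigma$ only at the projected point, one has to expand $h(p)-\hat h(\bp(p))$ in a $C^{2,a_0}$ orthonormal frame $\nu_j$ of $T\Sigma^\perp$, use that $Y(\bp(p))$ is orthogonal to each $\nu_j(\bp(p))$, and Taylor-expand $\mathbf{ex}_x^{-1}(F_i(x))$ so that the sum over the sheets produces $|\etaa\circ N|+O(|N|^2)$; only then can the superlinear average estimate (the same one you used for ${\rm Err}^o_1$) be invoked to reach $CrD(r)+CrD(r)^{1+\tau}$. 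Without these two ingredients the stated bounds \eqref{e:errori_esterni_5} and \eqref{e:errori_interni_5} do not follow.
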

\begin{proof} Since $\sigmaexpcm$ is independent of $\alpha_\be, \alpha_\bh$ (compare Theorem \ref{thm:cm_app}), we can choose $\alpha_\be, \alpha_\bh$ such that 
\[
\frac{\sigmaexpcm}{2} \ge 4 \alpha_\bh \ge 4\alpha_{\be}\,.
\]
We let \(\tau\ll \alpha_\be\le  \alpha_\bh\le \sigmaexpcm/8\).

\medskip

{\bf Proof of \eqref{e:errori_esterni_134}.} Recalling that $\|\boldsymbol{\varphi}\|_{C^{3,\kappa}}\leq C \eps_1^{\sfrac{1}{2}}$, which in turn implies $\|H_{\mathcal{M}}\|_{C^0(\mathcal{M}^+)} \leq C \eps_1^{\sfrac{1}{2}}$, we get from 
\eqref{e:bad_regions_5} 
\begin{align*}
|{\rm Err}^o_1| &\leq C \int_{\mathcal{M}^+} \varphi |H_{\mathcal{M}^+}| |\etaa\circ N|\\
&\stackrel{ \eqref{e:controllo_cutoff}}{\leq} C \eps_1^{\sfrac{1}{2}} \sum_j \left(\sup_{\mathcal{U}_j}\varphi\, \eps_1 \ell_j^{2+m+\sfrac{\sigmaexpcm}{2}} + C \int_{\mathcal{U}_j} \varphi\, |N|^{2+\sigmaexpcm}\right)\\
&\stackrel{\eqref{e:controllo_cutoff}}{\leq} C \eps_1^{\sfrac{1}{2}} \sum_j \left(\inf_{\mathcal{B}_j} \varphi\; \eps_1 \ell_j^{2+m+\sfrac{\sigmaexpcm}{2}} + C \int_{\mathcal{U}_j} \varphi\; |N|^{2+\sigmaexpcm}\right)\\
&\stackrel{\eqref{e:bad_regions_2}}{\leq} C \eps_1^{\sfrac{1}{2}} \sum_j \left(\inf_{\mathcal{B}_j} \varphi\; \eps_1 \ell_j^{2+m+4\alpha_\bh} + C \ell_j^{8\alpha_\bh} \int_{\mathcal{U}_j} \varphi |N|^{2}\right)\\
&\stackrel{\eqref{e:stimazza1}\&\eqref{e:stimazza3-Jonas}}{\leq} C D(r)^{1+\tau} + C D(r)^\tau \int_{\mathcal{B}_r^+} \varphi  |N|^{2}\, ,
\end{align*}
where in the last line we have used also that the intersection of distinct domains $\mathcal{U}_j$ has zero measure. 
Using \eqref{e:poincare_1000} we conclude
\[
|{\rm Err}^o_1| \leq C D(r)^{1+\tau}\, .
\]
Concerning ${\rm Err}^o_3$, from Proposition \ref{p:outer} and recalling that $|D\varphi|\leq \frac{C}{r}$ we get
\begin{align*}
\left|{\rm Err}^o_3\right| &\leq  \underbrace{\int \varphi \left(|DN|^2 |N| + |DN|^4\right)}_{I_1}
+ C\underbrace{r^{-1} \int_{\mathcal{B}^+_r} |DN|^3 |N|}_{I_2}\\
&\quad  + C\underbrace{r^{-1} \int_{\mathcal{B}_r^+} |DN| |N|^2}_{I_3}\,.
\end{align*}
We estimate separately the three terms:
\begin{eqnarray*}
 	I_1  & \leq & \left(\sup_{\mathcal{B}_r^+} \abs{N} + \sup_{\mathcal{B}_r^+} |DN|^2\right) \int_{\mathcal{B}^+_r} \varphi \abs{DN}^2
	\\
	& \le & C\sup_i\left(\sup_{\mathcal{U}_i} \abs{N} +{\rm Lip} \left(N\big|_{\mathcal{U}_i}\right)\right) \int_{\mathcal{B}^+_r} \varphi \abs{DN}^2
	\\
	 &\stackrel{\eqref{e:bad_regions_1}\& \eqref{e:bad_regions_2}}{\le} & C \sup_{ i } \ell_i^{2\sigmaexpcm} \int_{\mathcal{B}^+_r} \varphi \abs{DN}^2 \le C D(r)^{1+\tau}\,.
\end{eqnarray*}
Moreover, recalling that $\sigmaexpcm \geq 4 \alpha_\be$,
\begin{eqnarray*}
I_2 & \stackrel{\eqref{e:bad_regions_1}\&\eqref{e:bad_regions_2}}{\leq} & C r^{-1} \sum_j \eps_1^{\sfrac{1}{2m}+\sigmaexpcm} \ell_j^{1+\alpha_\bh + \sigmaexpcm} \int_{\mathcal{U}_j} \abs{DN}^2\\
& \stackrel{\eqref{e:bad_regions_4}}{\leq} & C r^{-1} \sum_j \eps_1^{1+\sfrac{1}{2m}+\sigmaexpcm} \ell_j^{m+3+\alpha_\bh + \sigmaexpcm - 2\alpha_{\be}}
\\
& \stackrel{\eqref{e:controllo_cutoff_0}}{\leq} &C  \sum_j  \ell_j^{m+2+7\alpha_{\bh}} \inf_{\mathcal{B}^j} \varphi \stackrel{\eqref{e:stimazza1}\,\&\,\eqref{e:stimazza3-Jonas}}{\leq} C D (r)^{1+\tau},
\end{eqnarray*}
and
\begin{align*}
I_3 &\stackrel{\eqref{e:bad_regions_1}}{\leq}
C r^{-1} \sum_j \eps_1^{\sigmaexpcm} \ell_j^{\sigmaexpcm} \int_{\mathcal{U}_j} |N|^2
\stackrel{\eqref{e:stimazza3-Jonas}}{\leq} C  r^{-1}D (r)^{\tau}  \int_{\mathcal{B}^+_r} |N|^2\\
&\stackrel{\eqref{e:poincare_1000}}{\leq} Cr D (r)^{1+\tau}\,,
\end{align*}
provided $\tau>0$ is sufficiently small.

Recalling  that 
\[
{\rm Err}^o_4 = \delta (\mathbf{T}_F - T^+) (X^o)\, ,
\]
we can estimate
\begin{align*}
|{\rm Err}^o_4| &\leq \int_{\bp^{-1} (\mathcal{B}_r^+)} |DX^o|\, d \|\mathbf{T}_F - T^+\|\, .
\end{align*}
Since
\[
|DX_o (p)| \leq C \left(\frac{|p - \bp (p)|}{r} + \varphi (p)\right)\, ,
\]
we can estimate
\begin{eqnarray*}
|{\rm Err}^o_4| & \leq & C  \sum_j \int_{\bp^{-1} (\mathcal{U}_j)} \left(\frac{|p- \bp (p)|}{r} + \varphi (p) \right)  d \|\mathbf{T}_F - T^+\|\\
&\stackrel{\eqref{e:bad_regions_2}\&\eqref{e:bad_regions_3}}{\leq} &
C \sum_j \left(r^{-1} \eps_1^{\sfrac{1}{2m}} \ell_j^{1+\alpha_\bh} + \sup_{\bp^{-1}(\mathcal{U}_j)} \varphi\right) \eps_1^{1+\sigmaexpcm} \ell_j^{m+2+\sigmaexpcm}\\
&\stackrel{\eqref{e:controllo_cutoff_0 g}\&\eqref{e:controllo_cutoff g}}{\leq} &
C \sum_j  \inf_{\bp^{-1}(\mathcal{B}_j)} \varphi\, \eps_1^{1+\sigmaexpcm} \ell_i^{m+2+\sigmaexpcm}
\stackrel{\eqref{e:stimazza1}\&\eqref{e:stimazza3-Jonas}}{\leq} C D(r)^{1+\tau}\, .  
\end{eqnarray*}

\medskip

{\bf Proof of \eqref{e:errori_esterni_2}.} Since $\|A_{\mathcal{M}^+}\|_{C^0} \leq C \|\mathbf{\phi}\|_{C^2} \leq C \eps_1^{\sfrac{1}{2}}$, it follows easily that
\[
|{\rm Err}^o_2|\leq C S(r) \leq C \int_{\mathcal{B}_r^+} |N|^2\, .
\]
Thus the estimate follows from \eqref{e:poincare_1000}.

\medskip

{\bf Proof of \eqref{e:errori_esterni_5}.} Recall that
\[
{\rm Err}^o_5 = - \int X^\perp_o \cdot \vec{H}_T (x) \,d\|T^+\| (x)\, ,
\]
where $\vec{H}_T (x)$ is the trace of the second fundamental form $A_\Sigma$ of $\Sigma$ restricted to the tangent space $\vec{T} (x)$ to the current $T^+$ at $x$. For further use we introduce the notation $h (\vec\lambda)$ for the trace
of $A_\Sigma$ on the $m$-plane oriented by the $m$-vector $\vec{\lambda}$. In particular $\vec{H}_T (x) =
h (\vec{T} (x))$. We can therefore write
\begin{equation}\label{e:errore_5_spezzato}
|{\rm Err}^o_5| \leq \underbrace{\left|\int \langle X_o^\perp, h (\vec{T}_F)\rangle d\|\mathbf{T}_F\|\right|}_{I_1}
 + C {\|A_\Sigma\|}_0 \underbrace{\int |X_o^\perp| d\|T^+-\mathbf{T}_F\|}_{I_2}\, .
\end{equation} 
Recall that $\|A_\Sigma\|_0 \leq \varepsilon_1^{\sfrac{1}{2}}$. Since $|X^o (p)|\leq C \varphi (\bp (p))$,  the second term is estimated by $C D(r)^{1+\tau}$ by arguing as in the bound for ${\rm Err}^o_4$.  As for the first term note that 
\[
|X_o^\perp (p)| \leq \varphi (\bp (p)) |\bp_{T_p \Sigma^\perp} (p-\bp (p))|\leq C \varphi (\bp (p)) {\|A_\Sigma\|}_0 |p-\bp (p)|^2\, .
\]
Hence, using the Lipschitz bound for $N$ to pass the integration on the domain $\mathcal{B}^+_r$, we conclude
\[
I_1 \leq C \int \varphi |N|^2 = C S(r) \stackrel{\eqref{e:poincare_1000}}{\leq} C r^2 D(r)\, .
\]

\medskip
We now estimate  the error terms coming from  inner variations. First let us record here the following easy consequence of \eqref{eq:vectorfield for inner variation} and \eqref{eq:derivative of vectorfield for inner variation}:
\begin{equation}\label{eq:bounds on inner variation} \abs{Y(p)} \le \varphi(\bp(p)) \,d(\bp(p)) \quad \abs{DY}(p) \leq C \mathbf{1}_{\mathcal{B}^+_r} (\bp (p))\,.  
\end{equation}
{\bf Proof of \eqref{e:errori_interni_134}.} 
By Proposition \ref{p:inner},
\begin{eqnarray*}
|{\rm Err}^i_1| & \leq &  C \int_{\mathcal{B}^+_r} (|H_\mathcal{M}| + |DH_\mathcal{M}|) |\etaa\circ N|
\leq C \int_{\mathcal{B}^+_r} |\etaa\circ N|\\
&\stackrel{\eqref{e:bad_regions_5}}{\leq} & \sum_j \left(\eps_1 \ell_j^{m+2+\sfrac{\sigmaexpcm}{2}} + \int_{\mathcal{U}_j}  |N|^{2+\sigmaexpcm}\right)\\
&\stackrel{\eqref{e:bad_regions_2}}{\leq} &  \sum_j \left(\eps_1 \ell_j^{m+2+\sfrac{\sigmaexpcm}{2}} + \ell_j^{\sigmaexpcm} \int_{\mathcal{U}_j}  |N|^{2}\right)\\
&\stackrel{\eqref{e:stimazza2}\&\eqref{e:stimazza3-Jonas}}{\leq} & C D(r)^\tau (D(r)+r D'(r)) + C D(r)^{\tau} \int_{\mathcal{B}_r^+} |N|^2\\
& \stackrel{\eqref{e:poincare_1000}}{\leq}& C D(r)^\tau (D(r)+r D'(r))\, .
\end{eqnarray*}
Using \eqref{eq:bounds on inner variation} and Proposition \ref{p:inner},
\[
|{\rm Err}^i_3| \leq C \int_{\mathcal{B}^+_r} (|DN|^3 + |DN|^2|N| + |DN||N|^2)\,.
\]
The third integrand can be treated like $I_3$ in the estimate of ${\rm Err}^o_3
$ and thus can be bounded by $C r^2 D(r)^{1+\tau}$. As for the first two we argue as follows:
\begin{align*}
&  \int_{\mathcal{B}^+_r} (|DN|^3 + |DN|^2|N|) \stackrel{\eqref{e:bad_regions_1}\&\eqref{e:bad_regions_2}}{\leq} 
 \sum_j \eps_1^\sigmaexpcm \ell_j^\sigmaexpcm \int_{\mathcal{U}_j} |DN|^2\\
& \stackrel{\eqref{e:stimazza3-Jonas}}{\leq} C D(r)^\tau \int_{\mathcal{B}^+} |DN|^2 \leq C D(r)^\tau (D(r) + r D'(r))\, .
\end{align*} 
Concerning ${\rm Err}^i_4$, using again  \eqref{eq:bounds on inner variation}, we estimate
\begin{eqnarray}
|{\rm Err}^o_4| &\leq & C \sum_j \|\mathbf{T}_F - T^+\| (\bp^{-1} (\mathcal{U}_i))\\
&\stackrel{\eqref{e:bad_regions_3}}{\leq}& C \sum_j \eps_1^{1+\sigmaexpcm} \ell_j^{m+2+\sigmaexpcm}\nonumber\\
&\stackrel{\eqref{e:stimazza2}\&\eqref{e:stimazza3-Jonas}}{\leq} & C D(r)^{\tau} ( D(r) + r D'(r))\, .  \label{e:mass_error_1000}
\end{eqnarray}

\medskip

{\bf Proof of \eqref{e:errori_interni_2}.} By Proposition~\ref{p:inner} and  once more \eqref{eq:bounds on inner variation}, 
\begin{align*}
|{\rm Err}^i_2| &\leq C \int_{\mathcal{B}_r^+} |N|^2 + C r \int \varphi |N| |DN|\\
&\leq C \int_{\mathcal{B}_r^+} |N|^2 + r^2 \int \varphi |DN|^2 \stackrel{\eqref{e:poincare_1000}}{\leq} 
C r^2 D(r)\, .
\end{align*}

\medskip

{\bf Proof of \eqref{e:errori_interni_5}.} Arguing as for ${\rm Err}^5_o$, we write
\begin{equation}\label{e:errore_5_spezzato_2}
|{\rm Err}^i_5| \leq \underbrace{\left|\int \langle X_i^\perp, h (\vec{\mathbf{T}}_F)\rangle d\|\mathbf{T}_F\|\right|}_{J_1}
 + C {\|A_\Sigma\|}_0 \underbrace{\int |X_i^\perp| d\|T-\mathbf{T}_F\|}_{J_2}\, .
\end{equation} 
The term $J_2$ can be estimated arguing exactly as for the term $I_2$ in \eqref{e:errore_5_spezzato} and we get
$J_2\leq C r D(r)^{1+\tau}$ (recall also \eqref{eq:bounds on inner variation}). 

In order to treat the first term we proceed as in \cite[Section 4.3]{DS5}. Denote by $\nu_1, \ldots, \nu_l$
an orthonormal frame for $T_p\Sigma^\perp$ of class $C^{2,a_0}$ (cf.~\cite[Appendix A]{DS2}) and set 
$h^j_p (\vec \lambda) := - \sum_{k=1}^m \langle D_{v_k}\nu_j (p),  v_k \rangle$
whenever $v_1\wedge\ldots\wedge v_m = \vec \lambda$ is an $m$-vector of $T_p \Sigma$ (with 
$v_1, \ldots, v_m$ orthonormal). 
For the sake of simplicity, we write 
\[
h^j (p) := h^j_p (\vec{T}_F (p)) \quad \text{and}\quad
h (p) := \sum_{j=1}^l h^j (p)  \nu_j(p),
\]
\[
\hat h^j ({\bp(p)}) := h^j_{\bp(p)} (\vec{\mathcal{M}}^+ (\bp (p)))
\quad \text{and}\quad
\hat h ({\bp(p)}) := \sum_{j=1}^l \hat h^j ({\bp(p)}) \nu_j(\bp(p)).
\]
where $\vec{\mathcal{M}}(p)$ denotes the $m$-vector orienting $T_p\mathcal{M}$. Consider the exponential map $\mathbf{ex}_{\mathbf{p} (p)}: T_{\mathbf{p} (p)} \Sigma \to \Sigma$ and its inverse $\mathbf{ex}^{-1}_{\mathbf{p} (p)}$. Recall that:
\begin{itemize}
\item the geodesic distance $d_\Sigma (p, q)$ is comparable to $|p-q|$ up to a constant factor;
\item $\nu_j$ is $C^{2, a_0}$ and $\|D\nu_j\|_{C^{1, a_0}} \leq C \eps_1^{\sfrac{1}{2}}$;
\item $\mathbf{ex}_{\bp (p)}$ and $\mathbf{ex}^{-1}_{\bp (p)}$ are both $C^{2, a_0}$\\ and $\|{\rm d}\,\mathbf{ex}_{\bp (p)}\|_{C^{1, a_0}} + \|{\rm d}\,\mathbf{ex}^{-1}_{\bp (p)}\|_{C^{1, a_0}} \leq \eps_1^{\sfrac{1}{2}}$;
\item $|h^j_p| \leq  C\|A_\Sigma\|_{C^0} \leq C \eps_1^{\sfrac12}$;
\end{itemize} 
where all the constants involved are geometric.
We then conclude that
\begin{align}
 h (p) - \hat h ({\bp(p)}) &= \sum_{j} (\nu_j(p) - \nu_j(\bp(p)))  h^j(p) \nonumber\\ 
&\quad+ \sum_j \nu_j(\bp(p)) (h^j(p)- \hat{h}^j(\bp(p))) \nonumber \\
= & \sum_{j} D\nu_j(\bp(p))\cdot \mathbf{ex}^{-1}_{\bp (p)}(p)\, h^j (p)+ O(|p-\bp(p)|^2)\nonumber\\ 
&\quad + \sum_j \nu_j(\bp(p)) (h^j(p)- \hat{h}^j(\bp(p))).
\end{align}
On the other hand, $X_i(p) = Y(\bp(p))$ is tangent to $\mathcal{M}^+$ in $\bp (p)$ and hence orthogonal to
$\hat{h}(\bp (p))$ and $\langle X_i(p), \nu_j(\bp(p)) \rangle =0 $ for all $j$.
Thus using \eqref{eq:bounds on inner variation}
\begin{align}
& \langle X_i(p), h (p) \rangle  = \langle X_i(p), h (p) - \hat h ({\bp(p)}) \rangle \nonumber \\
&= \sum_j \langle Y(\bp(p)), D\nu_j(\bp(p))\cdot \mathbf{ex}^{-1}_{\bp (p)} (p)\rangle h^j (p) + O\left(r |p-\bp(p)|^2\right)\,.
\end{align}
Recalling that
$p\in \supp (\mathbf{T}_F)$, we can bound $|p - \bp (p)| \leq |N (p)|$ and therefore conclude the estimate
\begin{align}
\langle X_i(p), h (p) \rangle & = \sum_j \langle Y(\bp(p)), D\nu_j(\bp(p))\cdot \mathbf{ex}^{-1}_{\bp (p)} (p)\rangle  h^j (p)\nonumber\\
&\qquad + O \big(r |N|^2 (\bp (p)) \big)\, .\label{e:pezzo lineare}
\end{align}

We now use the area formula for multivalued maps and the Taylor expansion for the area functional in \cite[Theorem 3.2]{DS2}. Recalling that $\bp (F_i (x))= x$ we get
\begin{align*}
J_1  = \hphantom{!} & \left|\int \langle X_i, h (p) \rangle d\|\mathbf{T}_F\| \right|
= \left| \sum_{i=1}^Q\int_{\mathcal{M}^+} \langle Y, h ({F_i (x)}) \rangle \mathbf{J} F_i (x) d \mathcal{H}^m (x)\right|
\allowdisplaybreaks\\
\stackrel{\eqref{e:pezzo lineare}}{\leq} & \left|\int_{\mathcal{M}^+}\! \sum_{j=1}^l \sum_{i=1}^Q \langle Y (x), D\nu_j  (x) \cdot \mathbf{ex}^{-1}_{x} (F_i (x))\rangle { h^j  (F(x))} d\mathcal{H}^m (x)\right|\\
&\quad  + Cr\!\int\! \varphi \,(|N|^2 +|DN|^2)
\end{align*}
Using the Taylor expansion for $\mathbf{ex}^{-1}_x$ at $x$ (and recalling that $F_i (x) - x = N_i (x)$) we conclude
\begin{align*}
\Big|\sum_{i=1}^Q \mathbf{ex}_x^{-1} (F_i (x))\Big| &\leq \left| {\rm d}\, \mathbf{ex}_x^{-1} (\etaa \circ N (x))\right| + O (|N|^2)\\
&\leq C |\etaa \circ N (x)| + C |N|^2\, .
\end{align*}
Next consider that $|\langle Y, D\nu_j\cdot v \rangle | \leq C r \varphi \|A_\Sigma\|_{C^0} |v| \leq C r \varphi \, \eps_1^{\sfrac12} |v|$
for every tangent vector $v$
and $|h^j (F(x))| \leq  C\|A_\Sigma\|_{C^0} \leq \eps_1^{\sfrac12}$. We thus conclude with the estimate
\[
J_1\leq  C\, \eps_1 r \int \varphi\,|\etaa\circ N|+ C r \,\int \varphi (|N|^2 + |DN|^2)\, .
\]
Using the Poincar\'e inequality and the same argument as for ${\rm Err}^o_1$, we conclude
\[
J_1 \leq C r D(r)^{1+\tau} + C r D(r)\,.\qedhere
\]
\end{proof}

\section{Proof of Theorem \ref{thm:ff_estimate_current}}

First of all notice that, if $D(r) =0$ for some $r$, then $N \equiv Q \a{0}$ on $\mathcal{B}_r^+$. This means that no
cube of $\mathscr{W}^\be\cup \mathscr{W}^\bh$ intersects $\overline{\mathscr{B}}^+_r = \{p\in \pi_0^+: d(\boldsymbol{\varphi}(p))\le r\}$. On the other hand from Corollary \ref{c:domains} we easily conclude that no cube of $\mathscr{W}$ intersects the region $\overline{\mathscr{B}}^+_{r/2}$
(observe that no cube $L\in \mathscr{W}$ is a boundary cube and thus, if it intersects $\overline{\mathscr{B}}^+_{r/2}$, we have $\ell (L) \ll r$). In particular, $\mathcal{B}^+_{r/2}$ is contained in the contact set and thus there is a neighborhood of $0$ where $T^+$ coincides with $Q \a{\mathcal{M}^+}$. 

Thus, without loss of generality we can assume that $D(r)>0$. Notice that for the same reason we can assume that
there is a sequence of radii $r_j\downarrow 0$ such that $H (r_j) >0$. More specifically, we claim that there is a radius $r_0$ sufficiently small for which, for all $r<r_0$, $H (r)>0$ and all the estimates of the previous sections apply.
Indeed, let $]\rho, r_0[$ be a maximal interval over which $H\neq 0$. On this interval we compute the derivative of
$\log I (r)$  using \eqref{der_H_corrente}:
\begin{align}
\frac{d}{dr} \log I (r) &= \frac{1}{r} + \frac{D'(r)}{D(r)} - \frac{H'(r)}{H(r)}
= O(1)+ \frac{2-m}{r} + \frac{D'(r)}{D(r)} - \frac{2 E(r)}{H(r)}\, . \label{e:derivata_logaritmica}
\end{align}
Next, by \eqref{e:quasi_fatta_1}, \eqref{e:errori_esterni_134}, \eqref{e:errori_esterni_2} and \eqref{e:errori_esterni_5},
\begin{equation}\label{e:fame}
|D(r) - E(r)|\leq C (D(r)^{1+\tau} + C S(r)) \leq C (D(r)^{1+\tau} + r^2 D(r))\, .
\end{equation}
Note that 
\begin{align*}
D(r) &\leq \sum_j \int_{\mathcal{U}_j} |DN|^2 \stackrel{\eqref{e:bad_regions_4}}{\leq} C \sum_j \eps_1 \ell_j^{m+2-2\alpha_\be} \leq C r^{2-2\alpha_\be} \sum_j \ell_j^m\, .
\end{align*}
Recalling that all $L_j$'s are disjoint and contained in $B_{4\sqrt{m} r}$, we easily conclude that $D(r) \leq C r^{m+2-2\alpha_\be}$. In particular, \eqref{e:fame} implies
\begin{equation}\label{e:molta_fame}
D(r) (1-C r^\tau) \leq E(r) \leq D(r) (1+C r^\tau)\, .
\end{equation}
Assuming $r_0$ is sufficiently small, we infer 
\begin{equation}\label{e:molta_fame_2}
\frac{D(r)}{2} \leq E(r) \leq 2 D(r)\, .
\end{equation}
In particular, inserting \eqref{e:molta_fame}
in \eqref{e:derivata_logaritmica}, we obtain
\begin{equation}\label{e:derivata_logaritmica_2}
\frac{d}{dr} \log I (r) \geq O(1)+ \frac{2-m}{r} + \frac{D'(r)}{E (r)} - \frac{2 E(r)}{H(r)} - C \frac{D'(r) (S(r) + D(r)^{1+\tau})}{D(r)^2}\, .
\end{equation}
Using \eqref{e:quasi_fatta_2}, \eqref{e:errori_interni_134}, \eqref{e:errori_interni_2} and \eqref{e:errori_interni_5}, 
\begin{align}
\frac{d}{dr} \log I(r) &\hphantom{a}\geq  \hphantom{a} O(1) + \frac{2 G(r)}{E(r)} - \frac{2E(r)}{H(r)} -  C \frac{D'(r) (S(r) + D(r)^{1+\tau})}{D(r)^2}\nonumber\\
&\hphantom {\ge a} -
\frac{1}{ r E (r)} \sum_{j=1}^5 |{\rm Err}^i_j|\nonumber\\
&\hphantom{a}\geq  \hphantom{a} O(1)+ \frac{2 G(r)}{E(r)} - \frac{2E(r)}{H(r)} - C \frac{D'(r) (S(r) + D(r)^{1+\tau})}{D(r)^2}\nonumber\\
& \hphantom {\ge a}- C \frac{ D(r)}{E(r)}\left(1+ \frac{D(r)^\tau}{r} + \frac{D'(r)}{D(r)^{1-\tau}}\right)\nonumber\\
&\stackrel{\eqref{e:molta_fame_2}}{\geq} O(1)+ \frac{2 G(r)}{E(r)} - \frac{2E(r)}{H(r)} - C \frac{D'(r) S(r)}{D(r)^2}\nonumber\\
&\hphantom {\ge a}- C\frac{D(r)^\tau}{r} - C \frac{D'(r)}{D(r)^{1-\tau}}
 \, .
\label{e:la_richiamiamo}
\end{align}
By Cauchy--Schwartz $G(r) H(r) \geq E(r)^2$. Moreover, we have already estimated $-D(r) \geq - C r$. Inserting the latter inequalities in \eqref{e:la_richiamiamo} and integrating, we obtain
\begin{align}
\log \frac{I(r)}{I (s)} &\geq - C (r^\tau-s^\tau) - C (D(r)^\tau - D(s)^\tau) - C \int_s^r \frac{D' (\sigma)}{D(\sigma)^2} S(\sigma)\, d\sigma\nonumber\\
&\geq - C r^\tau + C \left(\frac{S(r)}{D(r)} - \frac{S(s)}{D(s)}\right) - C \int_s^r \frac{S'(\sigma)}{D(\sigma)}\, d\sigma\,,
\end{align}
for every $\rho<s<r<r_0$.
Recall that $S (\sigma) \leq C \sigma^2 D (\sigma)$ for every $\sigma\in ]\rho,r_0[$. Moreover,
\[
S' (\sigma) = - \int \frac{d}{\sigma^2} \phi' \left(\frac{d}{\sigma}\right) |N|^2 \leq C H(\sigma) \stackrel{\eqref{e:H<rD}}{\leq} C \sigma D (\sigma)\, .
\]
In particular, we conclude
\begin{equation}\label{e:semplice}
\log \frac{I(r)}{I(s)} \geq - C r^\tau\, .
\end{equation}
From the latter inequality we conclude immediately that $I(s)$ is uniformly bounded and
thus that $H(\rho) = \lim_{r\downarrow \rho} H (r)$ cannot vanish if $\rho>0$. Since $]\rho, r_0[$ is a maximal interval on which $H$ is positive, we conclude that it is positive on the whole $]0, r_0[$. 

Furthermore, it follows directly from \eqref{e:semplice} that the limit 
\[
I^+_0 := \lim_{r\downarrow 0} I^+ (r)
\]
exists. Finally, from \eqref{e:lower_bound_I} we conclude $I_0>0$. 

\chapter{Final blow-up argument}\label{chap:blowup}

In this chapter we conclude the proof of Theorem \ref{thm:main}. In particular we show that alternative (b) in Theorem \ref{thm:ff_estimate_current} cannot hold. This leaves alternative (a), which therefore shows that, under the assumptions of the theorem, the origin is in fact a regular boundary point.
On the other hand, such point was a generic collapsed point of an area-minimizing current which was later suitably rescaled and translated in order to fulfill the Assumption \ref{ass:cm_app}. 

The core of the argument is to derive a suitable contradiction to the linear theory with a blow-up of the approximating $\qhalf$-map $(N^+, N^-)$. In order to state our main theorem we introduce the following notation. 

Recall that $\mathcal{M}$ is the union of $\mathcal{M}^+$ and $\mathcal{M}^-$ and is, therefore, a $C^{1,1}$ submanifold. Moreover $\mathcal{M}$ coincides with the graph of the functions 
${\bm \varphi}^+$ and ${\bm \varphi}^-$ on the domains $B_1^+$ and $B_1^-$. In order to simplify the notation we denote by ${\bm \varphi}$ the map on $B_1$ which coincides with both on the respective domains. In particular we are ready to define suitable multivalued maps
\[
\fancyN^\pm (x) = \sum_i \a{\fancyN\,_i^\pm (x)}
\]
given by the formulas
\[
\fancyN\,_i^\pm (x) = {\bm p}_{\varkappa_0} \big(N^\pm_i (x, {\bm \varphi}^\pm (x))\big)\, ,
\]
where we recall that $\varkappa_0$ is the plane $T_0 \Sigma \cap T_0 \mathcal{M}^\perp = \{0\}\times \R^{\bar n}\times \{0\}$. 
Observe that the pair $(\fancyN^+, \fancyN^-)$ is a $\qhalf$-valued function with interface $(\gammado, 0)$. We next define
\[
\fancyD (r) = \int_{B_r^+} |D\fancyN^+|^2 +\int_{B_r^-} |D\fancyN^-|^2= \fancyD^+(r) + \fancyD^-(r) \, 
\]
and the corresponding rescaled multivalued functions
\[
\fancyN\,_r^\pm (x) := \sum_i \a{ r^{\sfrac{m}{2}-1} \fancyD (r)^{-\sfrac{1}{2}} \fancyN\,_i^\pm (rx)}\, .
\]
\begin{definition}
The domains of the rescaled functions $\fancyN\,_r^\pm$ are divided by (suitable) rescalings of $\gamma$, which in turn are converging to the $(m-1)$-dimensional plane $T_0 \gamma$. For this reason we introduce the notation $B_{r,\rho}^+$ (and $B_{r, \rho}^-$) for the intersection of the domain of $\fancyN\,_r^+$ (respectively of $\fancyN\,_r^-$) with the disk $B_\rho (0, \pi_0)$.
\end{definition}

Note that the regions $B^\pm_r$, which are subsets of the domains of the maps $\fancyN^\pm$, coincide with the sets $B^\pm_{1,r}$.
Observe that a simple consequence of the estimates in the previous chapter is that 
\begin{align}
\fancyD (r) &\leq C \varepsilon_1 r^{m+2-2\alpha_\be} \, ,\label{e:estimate_on_fancyD}\\
\Lip (\fancyN^\pm|_{B_r}) &\leq C \varepsilon_1^\sigmaexpcm r^\sigmaexpcm\, . \label{e:Lip_est_on_fancy_N}
\end{align}

We are now ready to state the key step of our final contradiction argument.

\begin{theorem}\label{thm:the_end_my_friend}\label{THM:THE_END_MY_FRIEND}
If alternative (b) in Theorem \ref{thm:ff_estimate_current} would hold in any of the two regions $\mathcal{C}^\pm$, then, up to a subsequence, the pair $(\fancyN\,_r^+, \fancyN\,_r^-)$ would converge in \(B_1\) locally strongly in $L^2$ and in energy to a $\qhalf$ ${\rm Dir}$-minimizer $(\fancyN\,^+_0, \fancyN\,^-_0)$ which collapses at the interface $(T_0 \gammado, 0)$ such that
\begin{itemize}
\item[(i)] $(\fancyN\,^+_0, \fancyN\,^-_0)$ is nontrivial;
\item[(ii)] $\etaa\circ \fancyN\,^\pm_0 \equiv 0$.
\end{itemize} 
\end{theorem}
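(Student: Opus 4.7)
\textbf{Proof plan for Theorem \ref{thm:the_end_my_friend}.}

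The plan is to mimic at the level of the approximating maps the blow-up analysis performed in Chapter~\ref{chap:Qhalf} for the linearized problem, using crucially the frequency monotonicity of Theorem~\ref{thm:ff_estimate_current} (which in turn requires the sharp center-manifold estimates of Theorems~\ref{thm:center_manifold}~and~\ref{thm:cm_app}). Without loss of generality I will argue assuming alternative (b) holds on $\mathcal{C}^+$, writing $I_0^+ \in (0,\infty)$ for the limit frequency; the discussion for $\mathcal{C}^-$ is identical. The first step is to translate the estimates on $N^\pm$ living on $\mathcal{M}^\pm$ into estimates on $\fancyN^\pm$ on the flat domains $B^\pm_r$: since $\mathcal{M}$ is a $C^{1,1}$ graph over $B_1$ with $\|\bm\varphi\|_{C^1}\le C\varepsilon_1^{1/2}$, the change of variables $x\mapsto(x,\bm\varphi(x))$ is bi-Lipschitz with Jacobian $1+O(\varepsilon_1^{1/2})$, so the Dirichlet energy and $L^2$ norm transfer with multiplicative errors comparable to $1$. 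In particular, for the ``intrinsic'' quantities $D,H,I$ of Chapter~\ref{chap:frequency} one has $D(r) \sim \fancyD(r)$ and $H(r)\sim \int_{\partial\mathcal{B}^+_r}|N|^2$ (up to the boundary contributions coming from $d^\pm$, which Lemma~\ref{l:good_vector_field on M} (d) keeps under control).

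\emph{Compactness and nontriviality.} From Theorem~\ref{thm:ff_estimate_current}, the frequency $I(r)$ converges, so it is uniformly bounded; combining this with the differential identities \eqref{der_H_corrente}--\eqref{der_D_corrente} and the estimate \eqref{e:fame}, exactly as in Corollary~\ref{cor:consequence of monotone frequency for H and D}, I obtain a two-sided power-law comparison
\[
\bar C^{-1}\Bigl(\frac{t}{s}\Bigr)^{m-2+2\lambda^{-1}I_0^+} \le \frac{\fancyD(t)}{\fancyD(s)} \le \bar C\Bigl(\frac{t}{s}\Bigr)^{m-2+2\lambda I_0^+}
\quad \text{for all } 0<\lambda^2 s<t<r_1,
\]
with $\lambda\to 1$ as $r_1\to 0$. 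Rescaling and using $\int_{B_1^+}|\fancyN_r^+|^2 + \int_{B_1^-}|\fancyN_r^-|^2 \lesssim 1$ (Poincaré and the normalization), I get a uniform $W^{1,2}$ bound of $\fancyN_r^\pm$ on every $B_\rho$, $\rho<1$. Extract a weak-$L^2$/weak-$W^{1,2}$ subsequential limit $(\fancyN_0^+,\fancyN_0^-)$. The above power-law bound also forces a positive lower bound of $\fancyD_r^\pm$ on, say, $B_{1/2}^\pm$ (at least one side; a splitting argument as in Lemma~\ref{lem:blowup} redistributes the energy if one of $\fancyN_r^\pm$ degenerates), so the limit is nontrivial, proving (i).

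\emph{Strong convergence and Dir-minimality.} This is the main technical step. I would first upgrade the weak convergence to strong $L^2$ convergence via the compactness of the trace and the interior Hölder regularity of $\qhalf$ maps (Theorem~\ref{thm:cont_dm}), which is available because, after renormalization, the sheets are uniformly Lipschitz on the contact set thanks to \eqref{e:Lip_est_on_fancy_N}. To prove that $(\fancyN_0^+,\fancyN_0^-)$ is a Dir-minimizer collapsing at the flat interface $(T_0\gamma,0)$, I would argue by contradiction against the minimality of $T$. Given a competitor $(\widetilde{\fancyN}_0^+,\widetilde{\fancyN}_0^-)$ with strictly smaller energy on $B_s$, $s<1$, for large $k$ I would patch $\widetilde{\fancyN}_0$ to $\fancyN_{r_k}^\pm$ in a thin annulus via the interpolation Lemma~\ref{l:interpolation}, then unrescale and graph the resulting map over $\mathcal{M}^\pm$ using $F(x)=\sum_i\a{x+\widetilde N_i(x)}$. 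The error between $\mathbf{T}_F$ and $T$ inside $\mathbf{p}^{-1}(\mathcal{U})$ is controlled by \eqref{e:cm_app3} and the Taylor expansion of the area functional, and the boundary match is supplied by an isoperimetric current $S_k$ of mass $o(\fancyD(r_k))$, exactly along the lines of the proof of Theorem~\ref{t:harm_1}. This provides a competitor whose mass is strictly less than $T$'s, contradicting minimality. The same competitor argument simultaneously yields the claimed strong $W^{1,2}$ convergence of $\fancyN_r^\pm$ on every compactly contained subdomain, since any ``defect'' in the energy would itself be enough to construct an energy-decreasing competitor.

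\emph{Zero average — (ii).} This is where the geometric content of the center manifold is decisive, and it is the step I expect to be the most delicate. The key estimate is \eqref{e:cm_app5}, which in each Whitney region $\mathcal{L}$ reads
\[
\int_{\mathcal{V}}|\etaa\circ N| \le C\varepsilon_1\bigl(\ell(L)^{m+3+\alpha_\bh/3}+a\,\ell(L)^{2+\sigmaexpcm/2}|\mathcal{V}|\bigr) + \frac{C}{a}\int_{\mathcal{V}}\G(N,Q\a{\etaa\circ N})^{2+\sigmaexpcm}.
\]
Together with the splitting estimate \eqref{e:split_1}--\eqref{e:split_2}, this gives
\[
\int_{B_r^\pm}|\etaa\circ \fancyN^\pm|^{2} \le C\varepsilon_1^{\sigmaexpcm/2}r^{\sigmaexpcm/2}\fancyD(r),
\]
after a Whitney-type decomposition of $\mathcal B_r^\pm$ analogous to the one used in the proof of Lemma~\ref{l:stimazze} (summing the $\ell_j^{m+3+\alpha_\bh/3}$ contributions and using $\sup_j\ell_j\le C r$, together with the power-law lower bound on $\fancyD(r)$ to dominate the right-hand side). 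After rescaling by $\fancyD(r)^{-1/2}$ the right-hand side vanishes as $r\downarrow 0$, hence $\etaa\circ\fancyN_0^\pm\equiv 0$. Combining (i), (ii) and the collapsed Dir-minimality now contradicts Corollary~\ref{cor:av=0} (which strengthens Theorem~\ref{thm:collasso} in the zero-average case), completing the proof.
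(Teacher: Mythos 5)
Your overall route is the same as the paper's: the doubling-type bound on $\fancyD$ from the frequency limit, strong $L^2$ compactness, minimality and energy convergence via a patched competitor (interpolation lemma, isoperimetric filling of the boundary mismatch, Taylor expansion of the mass), the average estimate from \eqref{e:cm_app5} summed over a Whitney-type family, and the final contradiction with the linear theory. However, two steps as written would not go through. First, the zero-average step is mis-scaled. Since $\fancyN_r^\pm(x)=r^{\sfrac{m}{2}-1}\fancyD(r)^{-\sfrac12}\fancyN^\pm(rx)$, one has $\int_{B_1}|\etaa\circ\fancyN_r^\pm|^2=r^{-2}\fancyD(r)^{-1}\int_{B_r}|\etaa\circ\fancyN^\pm|^2$, so your claimed bound $\int_{B_r}|\etaa\circ\fancyN^\pm|^2\le C\varepsilon_1^{\sigmaexpcm/2}r^{\sigmaexpcm/2}\fancyD(r)$ gives $C\varepsilon_1^{\sigmaexpcm/2}r^{\sigmaexpcm/2-2}$, which diverges; it is even weaker than the trivial Poincar\'e bound $Cr^2\fancyD(r)$, which after rescaling only yields boundedness, not vanishing. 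What is actually needed (and what the paper proves in Lemma \ref{l:appiattimento}) is the superlinear $L^1$ estimate $\int_{B_r^\pm}|\etaa\circ\fancyN^\pm|\le C\,r\,\big(\int_{B_r^\pm}|D\fancyN^\pm|^2\big)^{1+\tau}$, obtained from \eqref{e:cm_app5} with $a=r$, the sup bound on $N$, the Whitney summation and Poincar\'e; this must then be combined with the a priori decay \eqref{e:estimate_on_fancyD}, i.e.\ $\fancyD(r)^{\sfrac12}/r^{\sfrac{m}{2}}\to 0$, which you never invoke. The gain $\fancyD(r)^{\tau}$ together with that upper bound is exactly what makes the rescaled averages vanish; without them your conclusion (ii) does not follow.

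Second, your ordering hides a genuine dependence. In the minimality/energy-convergence step you say the comparison runs ``exactly along the lines of Theorem \ref{t:harm_1}'', but there the base is a plane and the mass expansion has no first-order term; over the curved center manifold the Taylor expansion of $\mathbf{M}(\mathbf{T}_F)$ and of the mass of the patched competitor contains a linear term of size $\int|\etaa\circ N|$ (the mean curvature of $\mathcal{M}^\pm$ is only $O(\varepsilon_1^{\sfrac12})$, not zero), and this term must be shown to be $o(\fancyD(r_k))$ for both currents. That requires precisely the average estimate you postpone to the last step, a reduction to competitors with $\etaa\circ u^\pm\equiv 0$ (which uses $\etaa\circ\fancyN_0^\pm\equiv 0$), and a gluing construction that preserves the control $|\etaa\circ\hat N_k^\pm|\le C|\etaa\circ N^\pm|$. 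The fix is to prove the $L^1$ average estimate first — it uses only the center-manifold estimates and not the minimality of the blow-up — and then feed it into the competitor comparison, as the paper does. A minor related point: nontriviality (i) needs convergence in energy, not just the doubling bound plus weak convergence, so it should be concluded after (or together with) the strong $W^{1,2}$ convergence rather than in the compactness paragraph.
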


\begin{remark}
Observe that, although the notation $\fancyN\,_0^\pm$ might suggest that the ``blow-up'' map is unique, namely independent of the sequence $\{r_k\}_k$, we do not claim such uniqueness, nor we need it for our purposes. 
\end{remark}

By convergence in energy we mean that  for every $R\in (0,1)$
\[
\lim_{k\to\infty} \left(\int_{B_R^+} |D\fancyN\,_{r_k}^+|^2 + \int_{B_R^-} |D\fancyN\,_{r_k}^-|^2\right)
= \int_{B_R^+} |D\fancyN\,_{0}^+|^2 + \int_{B_R^-} |D\fancyN\,_{0}^-|^2
\]

Since by Theorem \ref{thm:collasso} any $\qhalf$ ${\rm Dir}$ minimizer $(\fancyN\,^+_0, \fancyN\,^-_0)$ which collapses at the interface must satisfy
\[
\fancyN\,^+_0 = Q \a{\etaa \circ \fancyN\,^+_0} \quad \mbox{and}\quad \fancyN\,^-_0 = (Q-1) \a{\etaa \circ \fancyN\,^-_0}\, ,
\]
the two properties (i) and (ii) above are incompatible. In particular we conclude 

\begin{corollary}\label{cor:the_end}
Alternative (a) in Theorem \ref{thm:ff_estimate_current} must hold for both \(T\res \mathcal C^+\) and $T\res\mathcal C^-$, i.e. $0$ is a boundary regular point for the current $T$. 
\end{corollary}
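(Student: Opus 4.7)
The plan is to mimic the compactness and blow-up scheme developed in Theorem~\ref{thm:cpt_dm}, replacing interior Almgren-type monotonicity with the boundary frequency estimate of Theorem~\ref{thm:ff_estimate_current}. First, the frequency monotonicity, combined with arguments analogous to Corollary~\ref{cor:Dirichlet comparison}, yields uniform upper bounds on $\int_{B_{r,\rho}^\pm}|D\fancyN\,_r^\pm|^2$ for every fixed $\rho\in(0,1)$ and $r$ small, while Proposition~\ref{p:poincare_traccia} transfers them into uniform $L^2$ bounds thanks to the vanishing trace on the rescaled interface $r^{-1}\gammado\to T_0\gammado$ (in $C^1$). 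By \cite[Proposition~2.11]{DS1} we extract a subsequence converging locally in $L^2$ to a pair $(\fancyN\,_0^+,\fancyN\,_0^-)\in W^{1,2}$, which by continuity of the trace is a $\qhalf$-valued map collapsing at the interface $(T_0\gammado,0)$. Nontriviality (i) follows from the lower Dirichlet comparison $\int_{B_{r,\rho}^\pm}|D\fancyN\,_r^\pm|^2\ge c\,\rho^{m-2+2\lambda I_0^\pm}$ (valid for small $\rho$ since $I_0^\pm$ is a finite positive number), once energy convergence is in hand: the total energy of $(\fancyN\,_r^+,\fancyN\,_r^-)$ on $B_1$ equals $1$ by the choice of the normalization factor $\fancyD(r)^{1/2}$, so some positive amount of energy must survive in the limit.

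Next, the strong energy convergence and Dir-minimality of $(\fancyN\,_0^+,\fancyN\,_0^-)$ are proved simultaneously by the argument of Section~\ref{s:gluing}. Assuming by contradiction that either fails, one produces a Lipschitz $\qhalf$-valued competitor $(\fancyG^+,\fancyG^-)$ with interface $(T_0\gammado,0)$ and strictly smaller Dirichlet energy on some ball $B_R$, and uses Lemma~\ref{l:interpolation} to interpolate it with $\fancyN\,_r^\pm$ on a thin annulus, so that the resulting competitor agrees with $\fancyN\,_r^\pm$ on $\partial B_R$. One then lifts this $\qhalf$-valued competitor to an integral current $Z_r$ supported in $\Sigma$ by composing with the graph map associated to the center manifolds $\mathcal{M}^\pm$, closing the boundary by an isoperimetric filling of the slice error $\langle T-\mathbf{T}_{F^+}-\mathbf{T}_{F^-},\mathbf p,\cdot\rangle$. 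The Taylor expansion of the area functional (as in the computations leading to Proposition~\ref{p:outer}) together with \eqref{e:cm_app3} shows that the mass of $Z_r$ equals the rescaled Dirichlet energy of the competitor plus an error $o(\fancyD(r))$; by the same expansion applied to $T$ the minimality of $T$ is contradicted.

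Finally, conclusion (ii) follows from the averaging estimate \eqref{e:cm_app5}. Summing it over all Whitney regions $\mathcal{L}$ for $L\in \sW^\pm$ contained in $\{d^\pm<r\}$ and choosing the parameter $a$ appropriately, one uses the Lipschitz bound \eqref{e:cm_app1} and the splitting estimate \eqref{e:split_2} to control the remainder $\int\mathcal{G}(N^\pm,Q\a{\etaa\circ N^\pm})^{2+\sigmaexpcm}$ in terms of $\int|DN^\pm|^2$; together with \eqref{e:estimate_on_fancyD} this yields
\[
\int_{\{d^\pm<r\}}|\etaa\circ N^\pm|=o\bigl(r^{m/2+1}\fancyD(r)^{1/2}\bigr)\qquad\text{as }r\downarrow 0,
\]
whence rescaling produces $\etaa\circ\fancyN\,_r^\pm\to 0$ in $L^1_{\rm loc}$, and (ii) follows. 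The main technical obstacle will be the construction of the competitor current $Z_r$ in the middle step: one must carefully patch across the interface $\gammado$, where $\mathcal M^+$ and $\mathcal M^-$ join only with $C^{1,1}$ regularity, and one must verify that both the isoperimetric filling and the Taylor remainders are of order strictly smaller than $\fancyD(r)$, so that they cannot absorb the fixed energy gap provided by the hypothetical non-minimality of the limit.
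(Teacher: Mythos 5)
Your blow-up machinery is essentially the paper's own proof of Theorem \ref{thm:the_end_my_friend}: compactness from the frequency bound together with the Poincar\'e inequality of Proposition \ref{p:poincare_traccia}, nontriviality from the uniform two-sided bound on $\fancyD(\lambda r)/\fancyD(r)$ (Lemma \ref{l:asymptotic} in the paper), minimality and energy convergence via the interpolation argument of Section \ref{s:gluing} lifted to a competitor current through the center manifolds, and the vanishing of the averages via \eqref{e:cm_app5} (the paper's Lemma \ref{l:appiattimento}). The only technical slip in that part is the citation of \eqref{e:split_2}, which bounds $\int |DN|^2$ by $\ell(L)^{-2}\int |N|^2$ and is of no use for the remainder $\int \mathcal{G}(N, Q\a{\etaa\circ N})^{2+\sigmaexpcm}$; what is actually needed there are the $C^0$ bound \eqref{e:cm_app2} (i.e.\ \eqref{e:bad_regions_2} on the Whitney regions) together with the Poincar\'e inequality.

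However, as a proof of the corollary your argument is not closed: assuming that alternative (b) holds on one of the two sides, you construct a nontrivial $\qhalf$-valued Dir-minimizer $(\fancyN_0^+,\fancyN_0^-)$ which collapses at the interface $(T_0\gammado,0)$ and satisfies $\etaa\circ\fancyN_0^\pm\equiv 0$, but you never derive any contradiction from its existence, nor do you explain how the exclusion of (b) yields the regularity of $0$. The missing idea is precisely the application of the linear theory: by Theorem \ref{thm:collasso} (equivalently Corollary \ref{cor:av=0}, since here the interface is the flat plane $T_0\gammado$ with vanishing interface function) any $\qhalf$ Dir-minimizer collapsing at the interface has the form $Q\a{h}$, $(Q-1)\a{h}$ for a single harmonic $h$, so that $\etaa\circ\fancyN_0^\pm\equiv 0$ forces $(\fancyN_0^+,\fancyN_0^-)$ to be trivial, contradicting nontriviality; hence alternative (b) fails on both sides. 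One must then also record why (a) on both sides gives the claim: by Lemma \ref{lem:cone-splitting}, $T= T\res\mathcal{C}^+ + T\res\mathcal{C}^-$ near $0$, so (a) gives $T= Q\a{\mathcal{M}^+} + (Q-1)\a{\mathcal{M}^-}$ in a neighborhood of $0$, and $\supp(T)$ is therefore contained in the submanifold $\mathcal{M}=\mathcal{M}^+\cup\mathcal{M}^-$, which has no boundary in $\bC_{3/2}$, i.e.\ $0$ is a boundary regular point. These closing observations are short, but they are the entire purpose of producing a blow-up with properties (i) and (ii); without them the statement is not proved.
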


\section{Asymptotics for $\fancyD (r)$}

\begin{lemma}\label{l:asymptotic}
Under the assumptions of Theorem \ref{thm:the_end_my_friend} for every  $\lambda\in (0,1)$ one has
\begin{equation}\label{e:asymptotic_D}
\infty > \limsup_{r\downarrow 0}  \frac{\fancyD (\lambda r)}{\fancyD (r)} \geq \liminf_{r\downarrow 0} \frac{\fancyD (\lambda r)}{\fancyD (r)} > 0\, .
\end{equation}
\end{lemma}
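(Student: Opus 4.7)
The upper bound in \eqref{e:asymptotic_D} is immediate: since $\fancyD$ is the sum of Dirichlet energies over growing disks, the map $r\mapsto \fancyD(r)$ is non-decreasing, so $\fancyD(\lambda r)\le \fancyD(r)$ and the limsup is at most $1$. Hence the only real content is the strict positivity of the liminf, which I would prove by transferring the almost-monotonicity of the frequency function on the center manifold (Theorem \ref{thm:ff_estimate_current}) into a polynomial two-sided comparison for $\fancyD$.

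\emph{Step 1: Comparability of $\fancyD^\pm(r)$ with the intrinsic energies on $\mathcal M^\pm$.} Set $\mathcal B_r^\pm=\{p\in\mathcal M^\pm:d^\pm(p)<r\}$ and $\Phi^\pm(x)=(x,\bm\varphi^\pm(x))$. Using $\|\bm\varphi^\pm\|_{C^{3,\kappa}}\le C\varepsilon_1^{1/2}$ together with Lemma \ref{l:good_vector_field on M}(a), the sets $(\Phi^\pm)^{-1}(\mathcal B_r^\pm)$ and $B_r^\pm$ differ in measure by $O(r)\,|B_r^\pm|$, the pullback metric on $\mathcal M^\pm$ is $\mathrm{Id}+O(\varepsilon_1^{1/2})$, and, since $\fancyN^\pm=\bp_{\varkappa_0}\!\circ N^\pm\!\circ\Phi^\pm$ and the non-$\varkappa_0$ part of $N^\pm$ is the graph of $\Psi$ over the $\varkappa_0$-component, the chain rule yields $|D\fancyN^\pm|^2=|DN^\pm|^2(1+O(\varepsilon_1^{1/2}))$ plus terms of order $\varepsilon_1|N^\pm|^2$. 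Combining these facts I obtain, for all small $r$,
\[
c_0\!\int_{\mathcal B_{r/2}^\pm}\!|DN^\pm|^2 - C\varepsilon_1 r^{m+2-2\alpha_\be}\ \le\ \fancyD^\pm(r)\ \le\ C_0\!\int_{\mathcal B_{2r}^\pm}\!|DN^\pm|^2 + C\varepsilon_1 r^{m+2-2\alpha_\be}.
\]

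\emph{Step 2: Polynomial comparison for $D^\pm$ via the frequency.} Assume alternative (b) of Theorem \ref{thm:ff_estimate_current} holds on the $+$ side, with $I^+(r)\to I_0^+\in(0,\infty)$; the minus side is handled identically (or is trivial if alternative (a) holds and then contributes $0$). From \eqref{der_H_corrente}, Proposition \ref{prop:quasi_fatta} and the error bounds \eqref{e:errori_esterni_134}--\eqref{e:errori_interni_5}, one has, following exactly the derivation of Corollary \ref{cor:consequence of monotone frequency for H and D} but with the $r^\tau$-errors from \eqref{e:semplice},
\[
\frac{H^{+\prime}(r)}{H^+(r)} = \frac{m-1}{r} + \frac{2I^+(r)}{r}\bigl(1+O(r^\tau)\bigr) + O(1).
\]
Integrating between $s<t$ small and using $I^+(r)\in[I_0^+/2,2I_0^+]$ yields two-sided bounds $c_1(t/s)^{m-1+I_0^+}\le H^+(t)/H^+(s)\le c_2(t/s)^{m-1+4I_0^+}$, and therefore, via $D^+(r)=I^+(r)H^+(r)/r$, the analogous power-law sandwich for $D^+(t)/D^+(s)$. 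Since $\phi\equiv 1$ on $[0,1/2]$, the definition of $D^+(\rho)$ in \eqref{e:richiamo_pm} gives
\[
\int_{\mathcal B_{\rho/2}^+}|DN^+|^2\ \le\ D^+(\rho)\ \le\ \int_{\mathcal B_{\rho}^+}|DN^+|^2,
\]
so the same power-law comparability holds for $\rho\mapsto\int_{\mathcal B_\rho^+}|DN^+|^2$.

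\emph{Step 3: Conclusion.} Plugging the result of Step 2 into Step 1 (applied at radii $\lambda r$ and $r$) yields, for small $r$,
\[
\fancyD^+(\lambda r) \ \ge\ c\,\lambda^{\,m-2+4I_0^+}\fancyD^+(r) - C\varepsilon_1 r^{m+2-2\alpha_\be}.
\]
The positivity of $I_0^+$ forces $\fancyD^+(r)$ to decay no faster than $r^{m-2+4I_0^+}$, so the error is absorbed as $r\downarrow 0$, giving $\liminf \fancyD^+(\lambda r)/\fancyD^+(r)>0$. The same argument on the $-$ side (when applicable) and the trivial contribution from any side where alternative (a) holds yield the strict positivity of $\liminf \fancyD(\lambda r)/\fancyD(r)$.

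\emph{Main obstacle.} The crux is making Step 1 genuinely useful: the raw bound \eqref{e:estimate_on_fancyD} gives only $\fancyD\le C\varepsilon_1 r^{m+2-2\alpha_\be}$, so the additive error $C\varepsilon_1 r^{m+2-2\alpha_\be}$ could a priori dominate $\fancyD^+(r)$. Overcoming this requires the bootstrapping of Step 2: the positive limit $I_0^+>0$, combined with the explicit formula $D^+(r)\sim r^{m-2+2I_0^+}H^+(r)/r$ and $H^+(r)\ge cr D^+(r)/C$ from the Poincaré-type bound \eqref{e:H<rD}, forces a matching two-sided power decay $D^+(r)\ge c r^{m-2+CI_0^+}$ with the exponent strictly below $m+2-2\alpha_\be$ (since $\alpha_\be$ is small and $I_0^+$ is arbitrary positive). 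Once this is pinned down, the additive error in Step 1 is negligible and the scheme closes.
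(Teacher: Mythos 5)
Your Step 2 is exactly the paper's argument and is correct: one integrates $\frac{d}{dr}\log\big(H^+(r)/r^{m-1}\big)=\frac{2}{r}(1+O(r^\tau))I^+(r)+O(1)$, uses $I^+(r)\to I_0^+\in(0,\infty)$ to get the two-sided power law for $H^+$, passes to $D^+$ through $D^+=I^+H^+/r$, and to the unmollified energies through $\phi\equiv1$ on $[0,\tfrac12]$. The genuine gap is in how you pass from the energies of $N^\pm$ to $\fancyD^\pm$. Your Step 1 carries an additive error $C\varepsilon_1 r^{m+2-2\alpha_\be}$, and your proposed absorption rests on the claim that the frequency forces $\fancyD^+(r)\ge c\,r^{m-2+CI_0^+}$ with exponent strictly below $m+2-2\alpha_\be$ ``since $\alpha_\be$ is small and $I_0^+$ is arbitrary positive''. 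This is backwards: $I_0^+$ is dictated by the current, it is not a parameter you may take small, and Theorem \ref{thm:ff_estimate_current} provides no upper bound for it. As soon as $I_0^+$ is larger than (roughly) $2$, the exponent $m-2+CI_0^+$ exceeds $m+2-2\alpha_\be$, so the lower bound you extract for $\fancyD^+(r)$ is of smaller order than the additive error, which by \eqref{e:estimate_on_fancyD} is of the same order as the largest admissible value of $\fancyD^+(r)$. In that regime the inequality of your Step 3 is vacuous and the positivity of the liminf does not follow.

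The error should simply never be additive. Since $T_0\mathcal M=\pi_0$, $D{\bm\varphi}(0)=0$ and $\|{\bm\varphi}\|_{C^2}\le C\varepsilon_1^{\sfrac12}$, all metric and projection distortions over $B_r$ are of relative size $O(r)$, and the only genuinely new term produced by the chain rule, of size $C\varepsilon_1|N^\pm|^2$, is controlled after integration by the Poincar\'e inequality \eqref{e:poincare_1000} (available precisely because $N^\pm$ vanishes on $\gammaup$): it is bounded by $C\varepsilon_1 r^2$ times the Dirichlet energy, hence again a relative error. This gives the purely multiplicative comparison $\fancyD^\pm(r)=(1+O(r))\int_{B_r^\pm}|DN^\pm|^2$, which is exactly \eqref{e:diri} in the paper, and with it your Step 2 yields \eqref{e:asymptotic_D} directly, with no absorption needed and regardless of the size of $I_0^+$. (Your remarks that $\limsup\le 1$ by monotonicity of $\fancyD$ and that a side on which alternative (a) holds contributes trivially are both fine.)
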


Observe that (i) in  Theorem \ref{thm:the_end_my_friend} is then a simple consequence of the above lemma and convergence in energy. 

\begin{proof}
Observe that, since $T_0 \mathcal{M} = \pi_0$ and $N^\pm$ are orthogonal to $\mathcal{M}$, we easily conclude that
\begin{equation}\label{e:diri}
\fancyD^\pm (r) = (1+ O(r)) \int_{B_r^\pm} |DN^\pm|^2\, . 
\end{equation}
Furthermore, if one among $I_0^+$ and $I_0^-$ is $+\infty$, then the corresponding energy vanishes identically. Thus, under the assumption that 
they are finite, it suffices to show 
\begin{align}
\infty > \limsup_{r\downarrow 0} \left(\int_{B_r^\pm} |DN^\pm|^2\right)^{-1} \int_{B_{\lambda r}^\pm} |DN^\pm|^2\nonumber\\
\geq
\liminf_{r\downarrow 0} \left(\int_{B_r^\pm} |DN^\pm|^2\right)^{-1} \int_{B_{\lambda r}^\pm} |DN^\pm|^2 > 0\, .\label{e:sandwich}
\end{align}
To fix ideas consider the case of $N^+$ and notice that, in the notation of the previous chapter, we must simply show 
\begin{equation}\label{e:sandwich2}
\infty > \limsup_{r\downarrow 0} D (r)^{-1} D (\lambda r)\geq \liminf_{r\downarrow 0} D (r)^{-1} D (\lambda r) > 0\, .
\end{equation}
Observe that the quantities $D$ and $H$ defined in \eqref{e:richiamo_pm} and \eqref{e:richiamo_pm_2} are integrals over (portions of) the ``right center manifold'' $\mathcal{M}^+$. Hence, from now on we use a more consistent notation for the remaining computations of this chapter, namely $D^+$ and $H^+$ (and analogously $I^+$ and $E^+$).
In order to prove the desired estimate notice first that, by Proposition \ref{p:H'_maiala_corrente},  and \eqref{e:molta_fame} we have
\[
	\frac{d}{dr} \log \left(\frac{H^+(r)}{r^{m-1}}\right) = \frac{2E^+(r)}{H^+(r)} +O(1) = \frac{2}{r}(1+ O(r^\tau)) I^+(r) +O(1)
\]
Next, by choosing $r$ sufficiently small, we can assume that
\[
\frac{I_0^+}{2} \leq (1+ O(r^\tau))I^+ (r)\leq 2I_0^+ \, .
\]
Thus, integrating the inequality above between $s$ and $t\geq s$, we conclude
\[
e^{- C (t-s)} \left(\frac{t}{s}\right)^{m-1+I_0^+} \leq \frac{H^+(t)}{H^+ (s)} \leq e^{C (t-s)} \left(\frac{t}{s}\right)^{m-1+4 I_0^+}\, .
\]
Since 
\[
\lim_{r\downarrow 0} \frac{r D^+(r)}{H^+(r)} = I_0^+\, ,
\]
we can argue as in Corollary \ref{cor:consequence of monotone frequency for H and D} (c) to conclude \eqref{e:sandwich2}. 
\end{proof}

\section{Vanishing of the average}

In this section we wish to show that 
\begin{lemma}\label{l:appiattimento}
Under the assumptions of Theorem \ref{thm:the_end_my_friend} we have
\begin{equation}\label{e:peggio}
\lim_{r\to 0}\left(\int_{B_1^+} |\etaa\circ \fancyN\,_{r}^+|+ \int_{B_1^-} |\etaa\circ \fancyN\,_{r}^-|\right) = 0\, .
\end{equation}
Indeed we have the stronger estimate
\begin{align}
\lim_{r\downarrow 0} \fancyD (r)^{-1} &r^{- (1+\tau')} \left(\int_{B_r^+} |\etaa\circ \fancyN^+|+ \int_{B_r^-} |\etaa\circ \fancyN^-|\right)\nonumber\\
 \leq\; & \lim_{r\downarrow 0} \fancyD (r)^{-(1+\tau')} r^{-1} \left(\int_{B_r^+} |\etaa\circ \fancyN^+|+ \int_{B_r^-} |\etaa\circ \fancyN^-|\right) = 0\, .\label{e:meglio}
\end{align}
 for any $\tau'$ smaller than the parameter \(\tau\) of Proposition \ref{p:quasi_fatta_2}.
\end{lemma}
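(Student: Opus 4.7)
The proof is driven by the estimate \eqref{e:cm_app5} of Theorem \ref{thm:cm_app}, which was precisely designed to express that the average of the normal approximation $N^\pm$ is superlinearly small with respect to the excess. As a preliminary step, I would reduce the statement from $\fancyN^\pm$ to $N^\pm$: the two differ by post-composition with the graphical parametrization of $\mathcal{M}^\pm$, and the regularity $\|\boldsymbol{\varphi}^\pm\|_{C^{3,\kappa}}\le C\varepsilon_1^{1/2}$ from Theorem \ref{thm:center_manifold}(a), combined with $|N^\pm|\le C\varepsilon_1^{1/2m}\ell(L)^{1+\alpha_\bh}$, easily yields $|\etaa\circ\fancyN^\pm-\etaa\circ N^\pm|\le C\varepsilon_1^{1/2}|N^\pm|$, an error that is controlled by $\int|N^\pm|^2$ and hence by $r^2\fancyD(r)$ via Proposition \ref{p:poincare_traccia}. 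Since $\etaa\circ N^\pm\equiv 0$ on the contact set $\mathbf{K}^\pm$ by Corollary \ref{c:cm}(c), it then suffices to control $\sum_{L\in\sW^\pm}\int_{\mathcal{L}}|\etaa\circ N^\pm|$ where the sum is restricted to cubes $L$ with $\mathcal{L}\cap\mathcal{B}_r^\pm\ne\emptyset$; notice that, by Lemma \ref{cor:no_stop_b}, only non-boundary cubes appear in this sum, exactly the ones covered by Lemma \ref{l:stimazze}.

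On each such cube I would apply \eqref{e:cm_app5} with $\mathcal{V}=\mathcal{L}$ and a parameter $a=a(L)>0$ to be tuned, giving contributions of three types: $C\varepsilon_1\ell(L)^{m+3+\alpha_\bh/3}$, $Ca\varepsilon_1\ell(L)^{m+2+\sigmaexpcm/2}$, and $Ca^{-1}\int_{\mathcal{L}}\mathcal{G}(N,Q\a{\etaa\circ N})^{2+\sigmaexpcm}$. For the first two I would factor out a positive power $r^\theta$ — using $\ell(L)\le Cr$ and choosing $\sigmaexpcm/2\ge 4\alpha_\bh$ as in Proposition \ref{p:quasi_fatta_2} — so as to recover the exponent $m+2+2\alpha_\bh$ that feeds into \eqref{e:stimazza2}, which then bounds $\varepsilon_1\sum_L\ell(L)^{m+2+2\alpha_\bh}\le C(D^\pm(cr)+crD^{\pm\prime}(cr))$, and by Fubini the latter is integrable and controlled by $\fancyD(cr)$ up to constants (using \eqref{e:diri}). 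For the third term I would use $\mathcal{G}(N,Q\a{\etaa\circ N})\le 2|N|$ together with the pointwise bound \eqref{e:bad_regions_2} to extract a factor $\ell(L)^{\sigmaexpcm(1+\alpha_\bh)}\le Cr^{\sigma'}\ell(L)^{2\alpha_\bh}$ and reduce to $\int_{\mathcal{L}}|N|^2$, which after summation and Poincar\'e \eqref{e:poincare_1000} is controlled by $r^2\fancyD(cr)$. Optimizing $a(L)$ between the second and third contributions produces a final bound of the form
\begin{equation}\label{e:plan_final}
\int_{\mathcal{B}_r^+}|\etaa\circ N^+|\le Cr^\theta\,\fancyD(cr)^{1+\tau'}
\end{equation}
for some $\theta>0$ and $\tau'>0$ depending only on $\sigmaexpcm,\alpha_\be,\alpha_\bh$.

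Once \eqref{e:plan_final} is in hand, the second (stronger) statement of the lemma follows by dividing by $r^{1+\tau}\fancyD(r)$ and invoking Lemma \ref{l:asymptotic} to replace $\fancyD(cr)$ by $\fancyD(r)$ up to a multiplicative constant, provided $\tau$ is chosen small enough (so that $\theta-\tau>0$ and $\tau'>\tau$); the chain of inequalities in the statement amounts to the observation that $\fancyD(r)\le Cr^{m+2-2\alpha_\be}\ll r$, which follows from \eqref{e:estimate_on_fancyD}. The first (weaker) statement is then a direct consequence: substituting the definition of $\fancyN_r^\pm$ one sees that
\begin{equation*}
\int_{B_1^\pm}|\etaa\circ\fancyN_r^\pm|=r^{-m/2-1}\fancyD(r)^{-1/2}\int_{B_r^\pm}|\etaa\circ\fancyN^\pm|,
\end{equation*}
and plugging \eqref{e:plan_final} together with the upper bound on $\fancyD(r)$ produces a positive power of $r$ on the right-hand side.

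\textbf{Main obstacle.} The delicate point is the exponent bookkeeping: the three contributions in \eqref{e:cm_app5} must be balanced through a judicious choice of $a=a(L)$, and one must make sure that the extra power $r^\theta$ one extracts is genuinely positive after all reductions have been performed. This requires using the hierarchy $\tau\ll\alpha_\be\ll\alpha_\bh\le\sigmaexpcm/8$ already established in Proposition \ref{p:quasi_fatta_2}, and applying Lemma \ref{l:stimazze} with the sharper exponent $m+2+2\alpha_\bh$ rather than just $m+2$. A secondary but important subtlety is that the rescaled functions $\fancyN_r^\pm$ live on domains bounded by slightly curved hypersurfaces converging to $T_0\gammado$; the local $L^2$ and energy convergence to $(\fancyN_0^+,\fancyN_0^-)$ that is tacitly used in Theorem \ref{thm:the_end_my_friend} is thus of the kind covered by Theorem \ref{thm:cpt_dm}, so that the present lemma combined with the energy bounds does give the vanishing of $\etaa\circ\fancyN_0^\pm$ in the limit.
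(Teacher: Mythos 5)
Your plan follows essentially the same route as the paper (reduce from $\fancyN^\pm$ to $N^\pm$, cover $\mathcal{B}_r^\pm$ minus the contact set by the Whitney regions, sum \eqref{e:cm_app5}, invoke Lemma \ref{l:stimazze} and the Poincar\'e inequality \eqref{e:poincare_1000}, then rescale; the paper simply fixes $a=r$ in \eqref{e:cm_app5} instead of optimizing $a(L)$). However, there is a genuine gap in the exponent bookkeeping, and it sits exactly at the point you flag as the "main obstacle". The steps you describe convert all surplus powers of $\ell(L)$ only into powers of $r$ (via $\ell(L)\le Cr$). Then each of the three contributions from \eqref{e:cm_app5} is bounded by a power of $r$ times $\int_{\mathcal{B}_{Cr}^\pm}|DN^\pm|^2$, i.e.\ \emph{linearly} in $\fancyD$: the first contribution gives $C r^{1-5\alpha_\bh/3}\,\fancyD(Cr)$ (note $m+3+\alpha_\bh/3=(m+2+2\alpha_\bh)+(1-5\alpha_\bh/3)$), and optimizing $a(L)$ between the second and third contributions and summing (Cauchy--Schwarz in $L$, then Poincar\'e) again yields $(\text{power of } r)\cdot\fancyD(Cr)$. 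None of this produces the superlinear factor $\fancyD(Cr)^{1+\tau'}$ that your final claimed bound asserts, and a bound that is merely linear in $\fancyD$ cannot give \eqref{e:meglio}: dividing by $r\,\fancyD(r)^{1+\tau}$ leaves a factor $r^{\theta-1}\fancyD(r)^{-\tau}$ (times the bounded ratio $\fancyD(Cr)/\fancyD(r)$ from Lemma \ref{l:asymptotic}), and under alternative (b) the only lower bound available for $\fancyD(r)$ is of the form $c\,r^{\beta}$ with $\beta$ depending on the frequency $I_0^\pm$, which is finite but not controlled by any universal constant; so $r^{\theta-1}\fancyD(r)^{-\tau}$ need not vanish, and with your $\theta=1-5\alpha_\bh/3<1$ it actually blows up. (Your weaker bound does suffice for \eqref{e:peggio}, thanks to the upper bound \eqref{e:estimate_on_fancyD}, but not for \eqref{e:meglio}.)

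The missing ingredient is the second half of Lemma \ref{l:stimazze}, namely \eqref{e:stimazza3-Jonas}: $\varepsilon_1\sup_j\ell_j\le C(rD(r))^{1/(m+3+\alpha_\bh)}$. It is this estimate — not $\ell\le Cr$ — that must absorb the surplus $\ell_j$-powers in the first two contributions and the factor $\sup|N|^{\sigmaexpcm}\le C\ell_j^{\sigmaexpcm(1+\alpha_\bh)}$ (via \eqref{e:bad_regions_2}) in the third, because it converts them into positive powers of the Dirichlet energy itself. Carried out this way (with $a=r$, \eqref{e:stimazza2} for $\varepsilon_1\sum_j\ell_j^{m+2+2\alpha_\bh}$, and then Poincar\'e for $\int|N|^2$), one obtains $\int_{B_r^\pm}|\etaa\circ N^\pm|\le Cr\bigl(\int_{B_r^\pm}|DN^\pm|^2\bigr)^{1+\tau}$, which is the superlinear bound that yields \eqref{e:meglio} and is what is actually used in the final blow-up. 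A minor further point: the quantity $D(r)+rD'(r)$ in \eqref{e:stimazza2} is not "controlled by $\fancyD(cr)$ by Fubini"; you should simply use the intermediate bound $C\int_{\mathcal{B}_r^+}|DN|^2$ appearing in \eqref{e:stimazza2}, which is all that is needed here, together with Lemma \ref{l:asymptotic} to compare energies at comparable radii.
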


Notice that (ii) in Theorem \ref{thm:the_end_my_friend} is then a trivial consequence of the lemma and of   Lemma \ref{l:asymptotic}.

\begin{proof}
In view of the same considerations used in the proof of Lemma \ref{l:asymptotic}, in order to prove \eqref{e:peggio} it suffices to show that, under the condition that alternative (b) holds, 
\begin{align}
&\lim_{r\to 0} \frac{1}{r^{m/2+1}D^+ (r)^{1/2}}\int_{B^+_{r}} |\etaa\circ N^+|\nonumber\\
= &\lim_{r\to 0} \frac{D^+(r)^{1/2}}{r^{m/2}}\frac{1}{rD(r)}\int_{B^+_{r}} |\etaa\circ N^+|=0.\label{e:eee3}
\end{align}
where we are using the notation of the previous chapter. By \eqref{e:estimate_on_fancyD} and \eqref{e:diri},
\begin{equation}\label{e:eee1}
\lim_{r\to 0} \frac{D^+ (r)^{1/2}}{r^{m/2}}=0.
\end{equation}
We now  claim that 
\begin{equation}\label{e:eee2}
\int_{B^+_r} |\etaa\circ N^+| \leq C r \left(\int_{B^+_r} |DN^+|^2 \right)^{1+\tau}\, .
\end{equation}
where \(C\) and \(\tau\) are as in Proposition \ref{p:quasi_fatta_2}. The latter inequality, together with \eqref{e:estimate_on_fancyD}, clearly implies \eqref{e:meglio}. Moreover  the combination of \eqref{e:eee1} and \eqref{e:eee2} implies \eqref{e:eee3}. Hence  the proof of the lemma will be concluded once we show \eqref{e:eee2}. To this aim, with  the notation of the previous chapter, we estimate
\[
\int_{B^+_r} |\etaa\circ N^+| \leq \sum_j \int_{\mathcal{U}_j} |\etaa\circ N^+|\,.
\]
Applying \eqref{e:cm_app5} with $a=r$ we easily conclude
\[
\int_{B^+_r} |\etaa\circ N^+|\leq C r \sum_j \eps_1 \ell_j^{m+2+\sfrac{\sigmaexpcm}{2}} + \frac{C}{r} \int_{B^+_r} |N^+|^{2+\sigmaexpcm}\, . 
\]
On the other hand, using \eqref{e:bad_regions_2}, \eqref{e:stimazza2} and \eqref{e:stimazza3-Jonas} we then conclude
\[
\int_{B^+_r} |\etaa\circ N^+|\leq C r \left( \int_{B^+_r} |DN^+|^2\right)^{1+\tau} + \frac{C}{r}  \left( \int_{B^+_r} |DN^+|^2\right)^\tau
\int_{B^+_r} |N^+|^2\, .
\] 
Combining the above estimates with the Poincar\'e inequality
\[
\int_{B^+_r} |N^+|^2\leq C r^2  \int_{B^+_r} |DN^+|^2
\]
we then conclude the proof of \eqref{e:eee2} and of the Lemma.
\end{proof}

%
%

\section{Minimality and convergence in energy}

In this section we complete the proof of Theorem \ref{thm:the_end_my_friend}.  In order to be consistent with our notation on the domains of the functions $\fancyN\,_r^\pm$, we let $B_{0,R}^\pm$ denote the intersections of the domain of
definitions of the blow-up maps $\fancyN\,_0^\pm$ with the disk $B_r (0, \pi_0)$.
By the  Rellich-Kondrakov embedding we know that we can extract a subsequence
$(\fancyN\,_{r_k}^+,\fancyN\,_{r_k}^-)$ converging locally strongly in $L^2 (B_1)$ to some $\qhalf$-map $(\fancyN\,_0^+, \fancyN\,_0^-)$. The fact that the latter collapses at the  interface $(T_0 \gammado, 0)$ comes from trace theory (cf. for instance \cite{DS1}, \cite{Jonas}). Observe that, by semicontinuity of the Dirichlet energy we have
\begin{align}
&\liminf_{k\to\infty} \left(\int_{B_{r_k, R}^+} |D\fancyN\,_{r_k}^+|^2 + \int_{B_{r_k, R}^-} |D\fancyN\,_{r_k}^-|^2\right)\nonumber\\
\geq & \int_{B_{0,R}^+} |D\fancyN\,_0^+|^2 + \int_{B_{0,R}^-} |D\fancyN\,_0^-|^2\label{e:weak_semicontinuity}
\end{align}
for every \(R\in (0,1)\).  

Assume without loss of generality that the inferior limit on the left hand side is actually a limit.
Choose now any $\qhalf$ competitor $(u^+, u^-)$ with interface $(T_0 \gammado, 0)$ which coincides with $(\fancyN\,_0^+, \fancyN\,_0^-)$ on  $B_1\setminus B_R$. 
We now want to show that, for any given positive $\eta>0$, 
\begin{align}
&\lim_{k\to\infty} \left(\int_{B_{r_k, R}^+} |D\fancyN\,_{r_k}^+|^2 + \int_{B_{r_k, R}^-} |D\fancyN\,_{r_k}^-|^2\right)\nonumber\\
\leq &\int_{B_{0, R}^+} |D u^+|^2 + \int_{B_{0, R}^-} |Du^-|^2 + \eta\, .
\label{e:upper_limit}
\end{align}
Clearly this will show both the convergence in energy (by choosing \(u^\pm=\fancyN\,^\pm_0\)) and the local minimality of \(\fancyN\,^\pm_0\). Hence  the proof of Theorem \ref{thm:the_end_my_friend} will be concluded once we show \eqref{e:upper_limit}.

Without loss of generality we can assume that $\etaa\circ u^\pm =0$. Indeed, recall that $\etaa\circ \fancyN\,_0^\pm \equiv 0$ and thus, since
\[
\int_{B_1^\pm} \abs{Du^\pm}^2 \ge \int_{B_1^\pm} \sum_i \abs{D(u_i^\pm - \etaa\circ u^{\pm})}^2\, ,
\]
$\sum_i \a{u^\pm - \etaa \circ u^\pm}$ would be a better competitor with zero average.

It is convenient to introduce the energy difference
\begin{align*}
\fancyE_k &:= \left(\int_{B_{r_k, 1}^+} |D\fancyN\,_{r_k}^+|^2 + \int_{B_{r_k, 1}^-} |D\fancyN\,_{r_k}^-|^2\right) \\
&\qquad - \left(\int_{B_{0,1}^+} |D u^+|^2 + \int_{B_{0,1}^-} |Du^-|^2\right)\, ,
\end{align*}
so that our claim reduces to 
\[
\lim_{k\to\infty} \fancyE_k \leq \eta\, .
\]
Note  also that we can assume that \(\fancyE_k\ge 0\) otherwise there is nothing to prove, in particular
\begin{align}
 & \left(\int_{B_{0,1}^+} |D u^+|^2 + \int_{B_{0,1}^-} |Du^-|^2\right)\nonumber\\
 \le & \lim_{k\to \infty} \int_{B_{r_k, 1}^+} |D\fancyN\,_{r_k}^+|^2 + \int_{B_{r_k, 1}^-} |D\fancyN\,_{r_k}^-|^2=1\,,\label{e:us}
\end{align}
where the last equality follows by the normalization of \(\fancyN\,_{r_k}^\pm\). 

Our first step is then to produce a new $\qhalf$-map $(\hat\fancyN\,_k^+, \hat\fancyN\,_k^-)$ with interface $(\gammado, 0)$ and satisfying the following four properties:
\begin{itemize}
\item[(a)] $(\hat\fancyN\,_k^+, \hat\fancyN\,_k^-)$ coincides with $(\fancyN^+, \fancyN^-)$ outside $B_{r_k}$;
\item[(b)] the Lipschitz constants ${\rm Lip} (\hat\fancyN\,_k^\pm)$ converge to $0$ as $k\to\infty$;
\item[(c)] the following inequality holds for the energy:
\begin{align}
& \int_{B_{r_k}^+} |D\hat \fancyN\,_{k}^+|^2 + \int_{B_{r_k}^-} |D\hat\fancyN\,_{k}^-|^2 \leq \int_{B_{r_k}^+} |D\fancyN^+|^2\nonumber\\
&\qquad\qquad\qquad + \int_{B_{r_k}^+} |D\fancyN^-|^2 + r_k^{2-m}\fancyD (r_k) \left(- \fancyE_k + \frac{\eta}{2}\right)\, ;\label{e:upper_limit_2}
\end{align}
\item[(d)] $|\etaa\circ \hat \fancyN\,_{k}^+|\leq C |\etaa\circ \fancyN^\pm|$;  
\end{itemize}
First, observe that by Lemma \ref{l:lip_app}, we can choose a sequence of approximants $(u_j^+, u_j^-)$ which converge in energy to
$(u^+, u^-)$ in $B_{0,1}$, satisfy $\etaa\circ u^\pm_j \equiv 0$ and with Lipschitz constant controlled by $j$,
\[
{\rm Lip}(u^\pm_j)\le j.
\]
Next, choose a sequence of diffeomorphisms $\Phi_k$ of $B_1$ which converges in $C^1$ to the identity and maps the rescalings 
\(
\gammado_{r_k} := r_k^{-1} \gammado
\)
onto $T_0 \gammado$.
We then define
\[
(u^+_{j,k},u^-_{j,k})=(u^+_j \circ \Phi_k, u^-_j \circ \Phi_k).
\]
Note that 
\begin{equation}\label{e:fe1}
\lim_{k\to \infty} \lim_{j\to \infty} \int_{B^{\pm}_{r_k,1}}|D u^\pm_{j,k}|^2=\lim_{k\to \infty} \int_{B^{\pm}_{r_k,1}}|D (u^\pm\circ \Phi_k)|^2=\int_{B^{\pm}_{0,1}}|D u^\pm|^2
\end{equation}
and
\begin{equation}\label{e:fe2}
\lim_{k\to \infty} \lim_{j\to \infty} \int_{B^{\pm}_{r_k, 1}\setminus \Phi_k^{-1} (B_{0, R}^\pm)}\mathcal G^2 (u^\pm_{j,k},\fancyN\,_{r_k}^\pm)=0\,.
\end{equation}
Using the interpolation Lemma \ref{l:interpolation}   and proceeding as in Section \ref{s:gluing} we obtain $\qhalf$-maps $(w_{j,k}^+, w_{j,k}^-)$ with the following properties for a sufficiently large $k$ and small \(\lambda\):
\begin{itemize}
\item[(a1)] $(w_{j,k}^+, w_{j,k}^-)$ coincide with \((u^\pm_{j,k},u^\pm_{j,k})\) on \(\Phi_k^{-1}(B_R (0, \pi_0))\) and with  $(\fancyN\,_{r_k}^+, \fancyN\,_{r_k}^-)$ outside $B_{s_k} (0, \pi_0)$ for some $R < s_k < 1$ such that \(\Phi_k^{-1}(B_R (0, \pi_0))\subset B_{s_k} (0, \pi_0)\);
\item[(b1)] The Lipschitz constant of $(w_{k,j}^+, w_{k,j}^-)$ is estimated as\footnote{Here we are using the simple  inequality \(\|f\|_{L^\infty(E)} \le |E|^{-1}\|f\|_{L^1(B_1)}+\diam(E)\Lip(f)\)}  
\[
\begin{split}
\Lip(w_{k,j}^\pm) &\le C\Big(  \Lip (\fancyN\,_{r_k}^\pm) +  \Lip (u^\pm_{k,j})+\frac{1}{\lambda} \sup_{B_1^\pm\setminus \Phi_k^{-1} (B_{0,R}^\pm)} \mathcal G(u_{j,k}^\pm, \fancyN\,^\pm_{r_k})\Big)
\\
&\le C\Big(  \Lip (\fancyN\,_{r_k}^\pm) +   \Lip (u^\pm_{k,j})\\
&\qquad\qquad+\frac{1}{\lambda(1-R)} \int_{B_1^\pm\setminus  \Phi_k^{-1} (B_{0,R}^\pm)} \mathcal G(u_{j,k}^\pm, \fancyN\,^\pm_{r_k})\Big);
\end{split}
\] 
\item[(c1)] The energy of $(w_{j,k}^+, w_{j,k}^-)$ can be estimated as 
\begin{align}
& \int_{B_{r_k,1}^+} |Dw_{j,k}^+|^2 + \int_{B_{r_k,1}^-} |Dw_{j,k}^-|^2\nonumber\\ 
& \leq  (1+\|\Phi_k-{\rm Id}\|_{C^1})\Big( \int_{B_{0, R}^+} |Du_{j}^+|^2 + \int_{B_{0, R}^-} |Du_{j}^-|^2\Big)\nonumber\\
&\quad + \int_{B_{r_k,1}^+\setminus B_{s_k} (0, \pi_0)} |D\fancyN\,_{r_k}^+|^2 + \int_{B_{r_k,1}^-\setminus B_{s_k} (0, \pi_0)} |D\fancyN\,_{r_k}^-|^2\nonumber\\
&\quad +C\lambda \int_{B_{r_k, 1}^+\setminus \Phi_k^{-1} (B_R (0, \pi_0))} (|Du_{j,k}^+|^2 +|D\fancyN\,_{r_k}^+|^2)\nonumber\\
& \quad +C \lambda \int_{B_{r_k, 1}^-\setminus \Phi_k^{-1} (B_R (0, \pi_0))} (|Du_{j,k}^-|^2 + |D\fancyN\,_{r_k}^-|^2)
\nonumber\\
&\quad +\frac{C}{\lambda} \int_{B_{r_k,1}^+ \setminus \Phi_k^{-1} (B_R (0, \pi_0))} \mathcal G^2(u_{j,k}^+, \fancyN\,^+_{r_k})\nonumber\\
&\qquad +\frac{C}{\lambda} \int_{B_{r_k,1}^-\setminus \Phi_k^{-1} (B_R (0, \pi_0))} \mathcal G^2(u_{j,k}^-, \fancyN\,^-_{r_k})
\nonumber\\
& \leq \int_{B_{r_k,1}^+} |D\fancyN\,_{r_k}^+|^2 + \int_{B_{r_k,1}^-} |D\fancyN\,_{r_k}^-|^2 + \frac{\eta}{4} - \fancyE_k + o_{j,k}(1)\,.
\label{e:upper_limit_3}
\end{align}
where  
\[
\lim_{j\to \infty}\lim_{k\to \infty} o_{j,k}(1) =0 
\]
and we have chosen \(\lambda\ll \eta\) (recall also \eqref{e:us}).
 
\item[(d1)] $|\etaa\circ w_k^\pm|\leq C |\etaa\circ \fancyN\,_{r_k}^\pm|$. This can be easily seen as follows: first of all we can subtract the average from $\fancyN\,_{r_k}^\pm$, and interpolate it to $0$, which is the average of the competitors $u_j^{\pm}$, hence we can interpolate between the maps $(u^+, u^-)$ and the average-free part of $(\fancyN\,_{r_k}^+, \fancyN\,_{r_k}^-)$: a simple inspection of the proof of Lemma \ref{l:interpolation}  shows that this can be done while keeping the average of the interpolation equal to $0$. Hence we can add back the average to the resulting maps in order to get $w_k^\pm$. Note that in estimating the Dirichlet energies we are using the crucial fact that the Dirichlet energy of a multivalued map equals the sum of the Dirichlet energies of its average and average-free part.
\end{itemize}
Next we set 
\[
\hat\fancyN\,_{j,k}^\pm (x) = \sum_i \a{r_k^{1-\sfrac{m}{2}} \fancyD (r_k)^{\sfrac{1}{2}} (w_{j,k}^\pm)_i (r_k^{-1} x)}\, 
\]
and 
\[
\hat\fancyN\,_{j}^\pm = \hat\fancyN\,_{j, k_j}^\pm\, 
\]
for $k_j$ appropriately large.
Observe that $(\hat\fancyN\,_j^+, \hat\fancyN\,_j^-)$ clearly satisfies property (a). Moreover, 
\[
{\rm Lip} (\hat\fancyN\,_{j,k}^\pm) \leq C \Lip (\fancyN^\pm) + C  r_k^{-\sfrac{m}{2}} \fancyD (r_k)^{\sfrac{1}{2}} j+C\eta^{-1} o_{j,k}(1)\, .
\]
In particular, taking into account \eqref{e:estimate_on_fancyD} and \eqref{e:Lip_est_on_fancy_N},
\begin{align*}
{\rm Lip} (\hat\fancyN\,_{j,k}^{\pm}) &\leq C \eta^{-1} \varepsilon_1^\sigmaexpcm r_k^\sigmaexpcm + C  \varepsilon_1^{\sfrac{1}{2}} r_k^{1-\alpha_\be} j+C \eta^{-1} r_k^{-\sfrac{m}{2}} \fancyD (r_k)^{\sfrac{1}{2}} j\nonumber\\
&\qquad\qquad+C\eta^{-1} o_{j,k}(1) .
\end{align*}
Thus, choosing first  \(j\) large and then \(k_j\) much larger, we achieve (b). Finally  \eqref{e:upper_limit_2} follows from \eqref{e:upper_limit_3}. 
\bigskip

We next define a suitable Lipschitz map $\Lambda$ between a neighborhood $U$ of the origin in $\Sigma$ onto a neighborhood of the origin in $T_0 \Sigma$. Fix therefore $z\in U \cap \Sigma$. First of all we define $x\in \pi_0 = T_0 \mathcal{M}$ as the only point such that $(x, {\bm \varphi} (x)) = {\bm p} (z)$, where \({\bm p}\) is the projection onto \(\mathcal M\). Next, we let $\varkappa_0:= T_0 \Sigma \cap T_0 \mathcal{M}^\perp$ and we define $y := {\bm p}_{\varkappa_0} (z- {\bm p} (z))$. We then set $\Lambda (z) := (x,y)\in T_0 \Sigma$
and $\Lambda^v (z) = y$.

We partition $U$ into $U^+$ and $U^-$ according on whether ${\bm p} (z)$ belongs to $\mathcal{M}^+$ or $\mathcal{M}^-$. So, we can regard $\Lambda$ as two maps $\Lambda^+$ and $\Lambda^-$ which are $C^{2,\kappa}$ on the corresponding domains and which agree on the common
boundary $U^+\cap U^- = {\bm p}^{-1} (\gammaup) \cap U$. Observe that the differentials of $\Lambda^\pm$ at the origin are the identity in both cases. Thus, using the inverse function theorem, we can find two inverse maps ${\bm \Psi}^\pm$ defined on $B_r^\pm (\pi_0) \times B_r (\varkappa_0)$. 

We are thus ready to define the competitor maps $(\hat N_k^+, \hat N_k^-)$ in the form 
\[
\hat N_k^\pm (x, {\bm \varphi} (x)) = {\bf \Psi}^\pm (x, \hat \fancyN\,_k^\pm (x)) - (x, {\bm \varphi} (x))\,,
\]
namely
\[
\hat N_k^\pm (x, {\bm \varphi} (x)) =  \sum_i \a{ {\bf \Psi}^\pm (x, (\hat \fancyN\,_k^\pm)_i (x)) - (x, {\bm \varphi} (x))}\, .
\]
Observe that
\[
\hat \fancyN\,_k^\pm (x)) = {\bm p}_{\varkappa_0} (\hat N_k (x, {\bm \varphi} (x)))\, .
\]
We thus conclude easily that:
\begin{itemize}
\item[(a2)] $(\hat N_k^+,\hat N_k^-)$ coincide with $(N^+, N^-)$ outside of $\bC_{2r_k} \cap \mathcal{M}$;
\item[(b2)] the Lipschitz constants of $\hat N_k^\pm$ on $\bC_{2r_k} \cap \mathcal{M}$ converge to $0$; 
\item[(c2)] for $k$ large enough we have the energy comparison
\begin{align}
&\int_{\bC_{2r_k}\cap \mathcal{M}^+} |D\hat{N}_k^+|^2 + \int_{\bC_{2r_k}\cap \mathcal{M}^-} |D\hat{N}_k^-|^2\nonumber\\
& \leq \int_{\bC_{2r_k}\cap \mathcal{M}^+} |DN^+|^2 + \int_{\bC_{2r_k}\cap \mathcal{M}^-} |DN^-|^2  +
\fancyD (r_k) \left(-\fancyE_k + \frac{3\eta}{4}\right)\, .\label{e:upper_limit_4}
\end{align}
\item[(d2)] $|\etaa\circ \hat N_k^\pm|\leq C |\etaa\circ N^\pm|$, since on $\bp^{-1}(B_{r_k})$ we have $0 = \etaa \circ \hat \fancyN\,_k^\pm (x)) = {\bm p}_{\varkappa_0} (\etaa \circ \hat N_k (x, {\bm \varphi} (x)))$.
\end{itemize}
Now we consider the current $S_k$ in $\bC_{2r_k}$ induced by the multi-valued map
\[
\hat F_k^\pm (x, {\bm \varphi} (x)) = \sum_i \a{ (x, {\bm \varphi} (x)) + (\hat{N}_k^\pm)_i (x, {\bm \varphi} (x))}\, 
\]
Observe that, since $S_k = {\bm T}_F$ on $\bC_{2r_k} \setminus \bC_{r_k}$, arguing as for the estimate in \eqref{e:mass_error_1000} we easily conclude that  
\begin{align*}
& \|S_k-T\| (\bC_{2r_k} \setminus \bC_{r_k})\\
 \leq & C  \left(\int_{\bC_{3r_k}\cap \mathcal{M}^+} |DN_k^+|^2 +
\int_{\bC_{4r_k}\cap \mathcal{M}^-} |DN_k^-|^2\right)^{1+\tau}\, .
\end{align*}
In turn, using Lemma \ref{l:asymptotic}, we can control the right hand side with $\fancyD (r_k)^{1+\tau}$. In particular, for a suitable $\sigma_k\in (r_k,2r_k)$ 
\[
\mathbf{M} (\partial ((S_k-T) \res \bC_{\sigma_k}))\leq \frac{C}{r_k} \fancyD (r_k)^{1+\tau}\, . 
\]
In particular, by the isoperimetric inequality we conclude the existence of a current $Z_k$ with $\partial Z_k = \partial ((S_k-T) \res \bC_{\sigma_k})$,
$\supp (Z_k) \subset \Sigma$ and such that
\begin{align*}
\mathbf{M} (Z_k) &\leq C r_k^{-m/(m-1)} \fancyD (r_k)^{m (1+\tau)/(m-1)}\\ 
&\leq C \fancyD (r_k)^{1+\tau} \left(\frac{\fancyD (r_k)^{ 1+ \tau}}{r_k^m}\right)^{\frac{1}{m-1}}\leq C \fancyD (r_k)^{1+\tau}\, ;
\end{align*}
 where we used the bound $\fancyD(r) \le C r^{m+2-2\alpha_\be}$ (compare the argument leading to \eqref{e:molta_fame_2}).
In particular, the current 
\[
\hat{T}_k = S_k \res \bC_{\sigma_k} + T\res (\mathbb R^{m+n}\setminus \bC_{\sigma_k}) + Z_k
\]
is an admissible competitor to check the minimality of $T$, since it coincides with $T$ outside a compact set
and it has boundary $\a{\gammaup}$. In particular we conclude that
\begin{equation}\label{e:upper_limit_5}
\mathbf{M} (S_k\res \bC_{\sigma_k}) \geq \mathbf{M} (T\res \bC_{\sigma_k}) - C  \fancyD (r_k)^{1+\tau}\, .
\end{equation}
Next, since \(T\) coincides with \(\bm T_{F}\) on a large set (compare with \eqref{e:mass_error_1000})  using again the same estimate as above, we conclude also
\[
\mathbf{M} (S_k\res \bC_{\sigma_k}) \geq \mathbf{M} ({\bm T}_{F^+} \res \bC_{\sigma_k}) + \mathbf{M} ({\bm T}_{F^-} \res \bC_{\sigma_k})- C  \fancyD (r_k)^{1+\tau}\, .
\]
On the other hand, since $F$ and $\hat{F}_k$ coincide outside of $\bC_{r_k}$, we can write 
\begin{align}
\mathbf{M} ({\bm T}_{\hat F_k^+} \res \bC_{r_k}) + \mathbf{M} ({\bm T}_{\hat F_k^-} \res \bC_{ r_k}) &\ge   \mathbf{M} ({\bm T}_{F^+} \res \bC_{r_k}) + \mathbf{M} ({\bm T}_{F^-} \res \bC_{r_k})\nonumber\\
&\qquad- C  \fancyD (r_k)^{1+\tau}\, .\label{e:upper_limit_6}
\end{align}
Using now the Taylor expansion in \cite[Theorem 3.2]{DS2} we easily conclude that
\begin{align*}
 &\left|\mathbf{M}({\bm T}_{F^+} \res \bC_{r_k}) - \frac{1}{2} \int_{\bC_{r_k}\cap \mathcal{M}^+} |DN^+|^2 - Q 
\mathcal{H}^m (\bC_{r_k}\cap \mathcal{M}^+)\right|\\
&\leq C \int_{\bC_{r_k}\cap \mathcal{M}^+} (|\etaa\circ N^+| + |N^+|^2 +\abs{N^+}\abs{DN^+}^2+ |DN^+|^3)\, .
\end{align*}
By the estimate on $\abs{N^+}$ and $\Lip (N^+)$, we have
\begin{align*}
&\int_{\bC_{r_k}\cap \mathcal{M}^+} \abs{N^+}\abs{DN^+}^2+|DN^+|^3\\ 
\stackrel{\eqref{e:bad_regions_1}\& \eqref{e:bad_regions_2}\&\eqref{e:stimazza3-Jonas}}{\leq} & C \left(\int_{\bC_{2r_k}\cap \mathcal{M}^+} |DN^+|^2\right)^{1+\tau} \leq C
\fancyD (r_k)^{1+\tau}\,,
\end{align*}
where in the last inequality we have also used Lemma \ref{l:asymptotic}. By the Poincar\'e inequality (and Lemma \ref{l:asymptotic})
\[
\int_{\bC_{r_k}\cap \mathcal{M}^+} |N^+|^2 \leq C r_k^2 \int_{\bC_{r_k}\cap \mathcal{M}^+} |DN^+|^2 \leq C r_k^2 \fancyD(r_k)\, .
\]
Finally, by  Lemma \ref{l:appiattimento},
\[
\int_{\bC_{r_k}\cap \mathcal{M}^+} |\etaa\circ N^+| \leq C r_k \fancyD (r_k)^{1+\tau}\, .
\]
We thus conclude
\begin{align}
&\left|\mathbf{M} ({\bm T}_{F^+} \res \bC_{2r_k}) - \frac{1}{2} \int_{\bC_{2r_k}\cap \mathcal{M}^+} |DN^+|^2 - Q 
\mathcal{H}^m (\bC_{2r_k}\cap \mathcal{M}^+)\right|\nonumber\\ 
\leq & C r_k^2 \fancyD (r_k) + C \fancyD (r_k)^{1+\tau}\, .\label{e:upper_limit_7}
\end{align}
Similarly,
\begin{align}
 &\left|\mathbf{M} ({\bm T}_{F^-} \res \bC_{2r_k}) - \frac{1}{2} \int_{\bC_{2r_k}\cap \mathcal{M}^-} |DN^-|^2 - (Q-1) 
\mathcal{H}^m (\bC_{2r_k}\cap \mathcal{M}^-)\right|\nonumber\\
\leq\; & C r_k^2 \fancyD (r_k) + C \fancyD (r_k)^{1+\tau}\, .\label{e:upper_limit_8}
\end{align}
Observe next that the similar Taylor expansions hold for $\hat F_k^\pm$ replacing $F^\pm$, namely
\begin{align}
& \left|\mathbf{M}({\bm T}_{\hat F_k^+} \res \bC_{2r_k}) - \frac{1}{2} \int_{\bC_{2r_k}\cap \mathcal{M}^+} |D\hat N_k^+|^2 - Q 
\mathcal{H}^m (\bC_{2r_k}\cap \mathcal{M}^+)\right|\nonumber\\
 \leq & C r_k^2 \fancyD (r_k) + o(1) \fancyD (r_k)\, ,\label{e:upper_limit_9}
\end{align}
and
\begin{align}
& \left|\mathbf{M} ({\bm T}_{\hat F_k^-} \res \bC_{2r_k}) - \frac{1}{2} \int_{\bC_{2r_k}\cap \mathcal{M}^-} |D\hat N_k^-|^2 - (Q-1) 
\mathcal{H}^m (\bC_{2r_k}\cap \mathcal{M}^-)\right|\nonumber\\
\leq\; & C r_k^2 \fancyD (r_k) + o(1) \fancyD (r_k)\, .\label{e:upper_limit_10}
\end{align}
Indeed:
\begin{itemize}
\item the linear term is estimated in the same way using $|\etaa\circ \hat N_k^\pm|\leq C |\etaa\circ N_k|$;
\item the quadratic term is estimated by the Poincar\'e inequality and 
\[
\int_{\bC_{r_k}\cap \mathcal{M}^+} |D\hat N_k^+|^2 + \int_{\bC_{r_k}\cap \mathcal{M}^-} |D\hat N_k^-|^2 \leq
C \fancyD (r_k)\,,
\]
since we can assume without loss of generality that  $\fancyE_k\geq -2$;
\item finally  $\abs{\hat{N}_k^+}\abs{D\hat{N}_k^+}^2+ |D\hat{N}_k^+|^3= o(1) \abs{D\hat{N}_k^+}^2$. Indeed, by (b2)  $\Lip(\hat{N}_k^+)=o(1)$ and $\sup_{x \in B^+_{2r_k}} \abs{\hat{N}^+_k(x)} \le C r_k \Lip(\hat{N}_k^+) =o(r_k)$, since $\hat{N}^+_k$ is vanishing on $\gammaup$.  
\end{itemize}
Inserting the Taylor expansions \eqref{e:upper_limit_7}--\eqref{e:upper_limit_10}, we conclude
\begin{align}
& \int_{\bC_{r_k}\cap \mathcal{M}^+} |D\hat N_k^+|^2 + \int_{\bC_{r_k}\cap \mathcal{M}^-} |D\hat N_k^-|^2\nonumber\\
\geq & \int_{\bC_{r_k}\cap \mathcal{M}^+} |D N^+|^2 + \int_{\bC_{r_k}\cap \mathcal{M}^-} |D N^-|^2 - o(1) \fancyD (r_k)\, .
\label{e:upper_limit_11}
\end{align}
Combining now \eqref{e:upper_limit_4} and \eqref{e:upper_limit_11} we achieve
\[
\fancyD (r_k) \left( - \fancyE_k +\frac{3\eta}{4}\right) \geq - o(1) \fancyD (r_k)\, .
\]
Dividing by $\fancyD (r_k)$ and choosing $k$ large enough we achieve the desired inequality $\fancyE_k \leq \eta$. 

\bibliographystyle{amsalpha}

C.~{De Lellis} and E.~Spadaro.
\newblock {{$Q$}-valued functions revisited}.
\newblock {\em Mem. Amer. Math. Soc.}, 211(991):vi+79, 2011.

\bibitem{DS3}
C.~{De Lellis} and E.~Spadaro.
\newblock {Regularity of area minimizing currents {I}: gradient {$L^p$}
  estimates}.
\newblock {\em Geom. Funct. Anal.}, 24(6):1831--1884, 2014.

\bibitem{DS2}
C.~{De Lellis} and E.~Spadaro.
\newblock {Multiple valued functions and integral currents}.
\newblock {\em Ann. Sc. Norm. Super. Pisa Cl. Sci. (5)}, 14(4):1239--1269,
  2015.

\bibitem{DS4}
C.~{De Lellis} and E.~Spadaro.
\newblock {Regularity of area minimizing currents {II}: center manifold}.
\newblock {\em Ann. of Math. (2)}, 183(2):499--575, 2016.

\bibitem{DS5}
C.~{De Lellis} and E.~Spadaro.
\newblock {Regularity of area minimizing currents {III}: blow-up}.
\newblock {\em Ann. of Math. (2)}, 183(2):577--617, 2016.

\bibitem{DSS4}
C.~{De Lellis}, E.~{Spadaro}, and L.~{Spolaor}.
\newblock {Regularity theory for $2$-dimensional almost minimal currents III:
  blowup}.
\newblock {\em ArXiv e-prints. To appear in {Jour. of Diff. Geom}}, August
  2015.

\bibitem{DSS3}
C.~{De Lellis}, E.~Spadaro, and L.~Spolaor.
\newblock {Regularity {T}heory for 2-{D}imensional {A}lmost {M}inimal
  {C}urrents {II}: {B}ranched {C}enter {M}anifold}.
\newblock {\em Ann. PDE}, 3(2):3:18, 2017.

\bibitem{DSS1}
C.~{De Lellis}, E.~Spadaro, and L.~Spolaor.
\newblock {Uniqueness of tangent cones for two-dimensional almost-minimizing
  currents}.
\newblock {\em Comm. Pure Appl. Math.}, 70(7):1402--1421, 2017.

\bibitem{DSS2}
C.~De~Lellis, E.~Spadaro, and L.~Spolaor.
\newblock Regularity theory for {$2$}-dimensional almost minimal currents {I}:
  {L}ipschitz approximation.
\newblock {\em Trans. Amer. Math. Soc.}, 370(3):1783--1801, 2018.

\bibitem{Guido}
G.~{De Philippis} and E.~Paolini.
\newblock {A short proof of the minimality of {S}imons cone}.
\newblock {\em Rend. Semin. Mat. Univ. Padova}, 121:233--241, 2009.

\bibitem{Fed}
H.~Federer.
\newblock {\em {Geometric measure theory}}.
\newblock {Die Grundlehren der mathematischen Wissenschaften, Band 153}.
  Springer-Verlag New York Inc., New York, 1969.

\bibitem{FF}
H.~Federer and W.~H. Fleming.
\newblock {Normal and integral currents}.
\newblock {\em Ann. of Math. (2)}, 72:458--520, 1960.

\bibitem{Fleming}
W.~H. Fleming.
\newblock {On the oriented {P}lateau problem}.
\newblock {\em Rend. Circ. Mat. Palermo (2)}, 11:69--90, 1962.

\bibitem{Gulliver}
R.~Gulliver.
\newblock A minimal surface with an atypical boundary branch point.
\newblock In {\em Differential geometry}, volume~52 of {\em Pitman Monogr.
  Surveys Pure Appl. Math.}, pages 211--228. Longman Sci. Tech., Harlow, 1991.

\bibitem{HS}
R.~Hardt and L.~Simon.
\newblock {Boundary regularity and embedded solutions for the oriented
  {P}lateau problem}.
\newblock {\em Ann. of Math. (2)}, 110(3):439--486, 1979.

\bibitem{Hardt}
R.~M. Hardt.
\newblock On boundary regularity for integral currents or flat chains modulo
  two minimizing the integral of an elliptic integrand.
\newblock {\em Comm. Partial Differential Equations}, 2(12):1163--1232, 1977.

\bibitem{Jonas}
J.~Hirsch.
\newblock {Boundary regularity of {D}irichlet minimizing {$Q$}-valued
  functions}.
\newblock {\em Ann. Sc. Norm. Super. Pisa Cl. Sci. (5)}, 16(4):1353--1407,
  2016.

\bibitem{Jonas2}
J.~Hirsch.
\newblock Examples of holomorphic functions vanishing to infinite order at the
  boundary.
\newblock {\em Trans. Amer. Math. Soc.}, 370(6):4249--4271, 2018.

\bibitem{HM}
J.~Hirsch and M.~Marini.
\newblock Uniqueness of tangent cones to boundary points of two-dimensional
  almost-minimizing currents, 2019.

\bibitem{DeLellisZhao}
Camillo~De Lellis and Zihui Zhao.
\newblock Dirichlet energy-minimizers with analytic boundary, 2019.

\bibitem{NV}
A.~{Naber} and D.~{Valtorta}.
\newblock {The Singular Structure and Regularity of Stationary and Minimizing
  Varifolds}.
\newblock {\em J. Eur. Math. Soc.}, 22(10):3305--3382, 2020.

\bibitem{Riesz}
F.~{Riesz} and M.~{Riesz}.
\newblock \"{U}ber die {R}andwerte einer analytischen {F}unktion, 1916.

\bibitem{Sim}
L.~Simon.
\newblock {\em {Lectures on geometric measure theory}}, volume~3 of {\em
  {Proceedings of the Centre for Mathematical Analysis, Australian National
  University}}.
\newblock Australian National University Centre for Mathematical Analysis,
  Canberra, 1983.

\bibitem{Simon2}
L.~Simon.
\newblock {Rectifiability of the singular sets of multiplicity {$1$} minimal
  surfaces and energy minimizing maps}.
\newblock In {\em {Surveys in differential geometry, {V}ol.\ {II} ({C}ambridge,
  {MA}, 1993)}}, pages 246--305. Int. Press, Cambridge, MA, 1995.

\bibitem{Simons}
J.~Simons.
\newblock {Minimal varieties in riemannian manifolds}.
\newblock {\em Ann. of Math. (2)}, 88:62--105, 1968.

\bibitem{Emanuele}
E.~N. Spadaro.
\newblock Complex varieties and higher integrability of {D}ir-minimizing
  {$Q$}-valued functions.
\newblock {\em Manuscripta Math.}, 132(3-4):415--429, 2010.

\bibitem{Spolaor}
L.~{Spolaor}.
\newblock {Almgren's type regularity for {S}emicalibrated {C}urrents}.
\newblock {\em Adv. Math.}, 350:747--815, 2019.

\bibitem{Stein}
E.~M. Stein.
\newblock {\em Singular integrals and differentiability properties of
  functions}.
\newblock Princeton Mathematical Series, No. 30. Princeton University Press,
  Princeton, N.J., 1970.

\bibitem{SW}
E.~M. Stein and G.~Weiss.
\newblock {\em {Introduction to {F}ourier analysis on {E}uclidean spaces}}.
\newblock Princeton University Press, Princeton, N.J., 1971.
\newblock Princeton Mathematical Series, No. 32.

\bibitem{White_branching}
B.~White.
\newblock Classical area minimizing surfaces with real-analytic boundaries.
\newblock {\em Acta Math.}, 179(2):295--305, 1997.

\bibitem{White97}
B.~White.
\newblock {Stratification of minimal surfaces, mean curvature flows, and
  harmonic maps}.
\newblock {\em J. Reine Angew. Math.}, 488:1--35, 1997.

\end{thebibliography}
Jr. F.~J. Almgren.
\newblock {\em {Almgren's big regularity paper}}, volume~1 of {\em {World
  Scientific Monograph Series in Mathematics}}.
\newblock World Scientific Publishing Co. Inc., River Edge, NJ, 2000.

\bibitem{BDG}
E.~Bombieri, E.~{De Giorgi}, and E.~Giusti.
\newblock {Minimal cones and the Bernstein problem}.
\newblock {\em Invent. Math.}, 7:243--268, 1969.

\bibitem{Theodora}
T.~{Bourni}.
\newblock {Allard-type boundary regularity for {$C^{1,\alpha}$} boundaries}.
\newblock {\em Adv. Calc. Var.}, 9(2):143--161, 2016.

\bibitem{Chang}
S.~X. Chang.
\newblock {Two-dimensional area minimizing integral currents are classical
  minimal surfaces}.
\newblock {\em J. Amer. Math. Soc.}, 1(4):699--778, 1988.

\bibitem{DG}
E.~{De Giorgi}.
\newblock {\em {Frontiere orientate di misura minima}}.
\newblock {Seminario di Matematica della Scuola Normale Superiore di Pisa,
  1960-61}. Editrice Tecnico Scientifica, Pisa, 1961.

\bibitem{DeGiorgi5}
E.~{De Giorgi}.
\newblock {Una estensione del teorema di {B}ernstein}.
\newblock {\em Ann. Scuola Norm. Sup. Pisa (3)}, 19:79--85, 1965.

\bibitem{DDH}
C.~De~Lellis, G.~De~Philippis, and J.~Hirsch.
\newblock Nonclassical minimizing surfaces with smooth boundary, 2019.

\bibitem{Matrix}
C.~{De Lellis}, G.~{De Philippis}, J.~Hirsch, and A.~Massaccesi.
\newblock {Boundary regularity of mass-minimizing integral currents and a
  question of Almgren}.
\newblock 2018.

\bibitem{DS1}
C.~{De Lellis} and E.~Spadaro.
\newblock {{$Q$}-valued functions revisited}.
\newblock {\em Mem. Amer. Math. Soc.}, 211(991):vi+79, 2011.

\bibitem{DS3}
C.~{De Lellis} and E.~Spadaro.
\newblock {Regularity of area minimizing currents {I}: gradient {$L^p$}
  estimates}.
\newblock {\em Geom. Funct. Anal.}, 24(6):1831--1884, 2014.

\bibitem{DS2}
C.~{De Lellis} and E.~Spadaro.
\newblock {Multiple valued functions and integral currents}.
\newblock {\em Ann. Sc. Norm. Super. Pisa Cl. Sci. (5)}, 14(4):1239--1269,
  2015.

\bibitem{DS4}
C.~{De Lellis} and E.~Spadaro.
\newblock {Regularity of area minimizing currents {II}: center manifold}.
\newblock {\em Ann. of Math. (2)}, 183(2):499--575, 2016.

\bibitem{DS5}
C.~{De Lellis} and E.~Spadaro.
\newblock {Regularity of area minimizing currents {III}: blow-up}.
\newblock {\em Ann. of Math. (2)}, 183(2):577--617, 2016.

\bibitem{DSS4}
C.~{De Lellis}, E.~{Spadaro}, and L.~{Spolaor}.
\newblock {Regularity theory for $2$-dimensional almost minimal currents III:
  blowup}.
\newblock {\em ArXiv e-prints. To appear in {Jour. of Diff. Geom}}, August
  2015.

\bibitem{DSS3}
C.~{De Lellis}, E.~Spadaro, and L.~Spolaor.
\newblock {Regularity {T}heory for 2-{D}imensional {A}lmost {M}inimal
  {C}urrents {II}: {B}ranched {C}enter {M}anifold}.
\newblock {\em Ann. PDE}, 3(2):3:18, 2017.

\bibitem{DSS1}
C.~{De Lellis}, E.~Spadaro, and L.~Spolaor.
\newblock {Uniqueness of tangent cones for two-dimensional almost-minimizing
  currents}.
\newblock {\em Comm. Pure Appl. Math.}, 70(7):1402--1421, 2017.

\bibitem{DSS2}
C.~De~Lellis, E.~Spadaro, and L.~Spolaor.
\newblock Regularity theory for {$2$}-dimensional almost minimal currents {I}:
  {L}ipschitz approximation.
\newblock {\em Trans. Amer. Math. Soc.}, 370(3):1783--1801, 2018.

\bibitem{Guido}
G.~{De Philippis} and E.~Paolini.
\newblock {A short proof of the minimality of {S}imons cone}.
\newblock {\em Rend. Semin. Mat. Univ. Padova}, 121:233--241, 2009.

\bibitem{Fed}
H.~Federer.
\newblock {\em {Geometric measure theory}}.
\newblock {Die Grundlehren der mathematischen Wissenschaften, Band 153}.
  Springer-Verlag New York Inc., New York, 1969.

\bibitem{FF}
H.~Federer and W.~H. Fleming.
\newblock {Normal and integral currents}.
\newblock {\em Ann. of Math. (2)}, 72:458--520, 1960.

\bibitem{Fleming}
W.~H. Fleming.
\newblock {On the oriented {P}lateau problem}.
\newblock {\em Rend. Circ. Mat. Palermo (2)}, 11:69--90, 1962.

\bibitem{Gulliver}
R.~Gulliver.
\newblock A minimal surface with an atypical boundary branch point.
\newblock In {\em Differential geometry}, volume~52 of {\em Pitman Monogr.
  Surveys Pure Appl. Math.}, pages 211--228. Longman Sci. Tech., Harlow, 1991.

\bibitem{HS}
R.~Hardt and L.~Simon.
\newblock {Boundary regularity and embedded solutions for the oriented
  {P}lateau problem}.
\newblock {\em Ann. of Math. (2)}, 110(3):439--486, 1979.

\bibitem{Hardt}
R.~M. Hardt.
\newblock On boundary regularity for integral currents or flat chains modulo
  two minimizing the integral of an elliptic integrand.
\newblock {\em Comm. Partial Differential Equations}, 2(12):1163--1232, 1977.

\bibitem{Jonas}
J.~Hirsch.
\newblock {Boundary regularity of {D}irichlet minimizing {$Q$}-valued
  functions}.
\newblock {\em Ann. Sc. Norm. Super. Pisa Cl. Sci. (5)}, 16(4):1353--1407,
  2016.

\bibitem{Jonas2}
J.~Hirsch.
\newblock Examples of holomorphic functions vanishing to infinite order at the
  boundary.
\newblock {\em Trans. Amer. Math. Soc.}, 370(6):4249--4271, 2018.

\bibitem{HM}
J.~Hirsch and M.~Marini.
\newblock Uniqueness of tangent cones to boundary points of two-dimensional
  almost-minimizing currents, 2019.

\bibitem{DeLellisZhao}
Camillo~De Lellis and Zihui Zhao.
\newblock Dirichlet energy-minimizers with analytic boundary, 2019.

\bibitem{NV}
A.~{Naber} and D.~{Valtorta}.
\newblock {The Singular Structure and Regularity of Stationary and Minimizing
  Varifolds}.
\newblock {\em J. Eur. Math. Soc.}, 22(10):3305--3382, 2020.

\bibitem{Riesz}
F.~{Riesz} and M.~{Riesz}.
\newblock \"{U}ber die {R}andwerte einer analytischen {F}unktion, 1916.

\bibitem{Sim}
L.~Simon.
\newblock {\em {Lectures on geometric measure theory}}, volume~3 of {\em
  {Proceedings of the Centre for Mathematical Analysis, Australian National
  University}}.
\newblock Australian National University Centre for Mathematical Analysis,
  Canberra, 1983.

\bibitem{Simon2}
L.~Simon.
\newblock {Rectifiability of the singular sets of multiplicity {$1$} minimal
  surfaces and energy minimizing maps}.
\newblock In {\em {Surveys in differential geometry, {V}ol.\ {II} ({C}ambridge,
  {MA}, 1993)}}, pages 246--305. Int. Press, Cambridge, MA, 1995.

\bibitem{Simons}
J.~Simons.
\newblock {Minimal varieties in riemannian manifolds}.
\newblock {\em Ann. of Math. (2)}, 88:62--105, 1968.

\bibitem{Emanuele}
E.~N. Spadaro.
\newblock Complex varieties and higher integrability of {D}ir-minimizing
  {$Q$}-valued functions.
\newblock {\em Manuscripta Math.}, 132(3-4):415--429, 2010.

\bibitem{Spolaor}
L.~{Spolaor}.
\newblock {Almgren's type regularity for {S}emicalibrated {C}urrents}.
\newblock {\em Adv. Math.}, 350:747--815, 2019.

\bibitem{Stein}
E.~M. Stein.
\newblock {\em Singular integrals and differentiability properties of
  functions}.
\newblock Princeton Mathematical Series, No. 30. Princeton University Press,
  Princeton, N.J., 1970.

\bibitem{SW}
E.~M. Stein and G.~Weiss.
\newblock {\em {Introduction to {F}ourier analysis on {E}uclidean spaces}}.
\newblock Princeton University Press, Princeton, N.J., 1971.
\newblock Princeton Mathematical Series, No. 32.

\bibitem{White_branching}
B.~White.
\newblock Classical area minimizing surfaces with real-analytic boundaries.
\newblock {\em Acta Math.}, 179(2):295--305, 1997.

\bibitem{White97}
B.~White.
\newblock {Stratification of minimal surfaces, mean curvature flows, and
  harmonic maps}.
\newblock {\em J. Reine Angew. Math.}, 488:1--35, 1997.

\end{thebibliography}

\printindex

\end{document}